\documentclass[11pt]{article}
\usepackage[margin=1in]{geometry}
\usepackage[T1]{fontenc}
\usepackage{lmodern,microtype}
\usepackage{amsmath,amssymb,amsthm}
\usepackage{booktabs}
\usepackage{needspace}
\usepackage{tikz}
\usetikzlibrary{arrows.meta,calc,shapes.misc}
\usepackage[colorlinks=true,linkcolor=blue!50!black,citecolor=blue!50!black,urlcolor=blue!50!black]{hyperref}
\usepackage{bookmark}
\hypersetup{pdftitle={Vertical and reentrant ferromagnetic boundaries below the Nishimori point},pdfauthor={Yan Ru Pei},pdfkeywords={spin glass, Nishimori line, reentrance, Parisi formula, Sherrington-Kirkpatrick model, p-spin model, random-bond Ising model, decorated lattice}}
\newtheorem{theorem}{Theorem}[section]
\newtheorem{proposition}[theorem]{Proposition}
\newtheorem{lemma}[theorem]{Lemma}
\newtheorem{corollary}[theorem]{Corollary}
\theoremstyle{definition}

\theoremstyle{remark}
\newtheorem{remark}[theorem]{Remark}
\newcommand{\E}{\mathbb E}
\newcommand{\Pp}{\mathbb P}
\newcommand{\R}{\mathbb R}
\newcommand{\Z}{\mathbb Z}
\newcommand{\sech}{\operatorname{sech}}
\newcommand{\lc}{\log\cosh}
\newcommand{\Par}{\mathcal P}
\newcommand{\Om}{\Omega}
\newcommand{\gibbs}[1]{\langle #1\rangle}
\newcommand{\ind}{\mathbf 1}
\newcommand{\dd}{\mathrm d}

\title{Vertical and reentrant ferromagnetic boundaries\\below the Nishimori point}
\author{Yan Ru Pei\\[0.3em]\small\href{mailto:yanrpei@gmail.com}{\texttt{yanrpei@gmail.com}}}
\date{September 27, 2026}
\begin{document}
\maketitle

\begin{abstract}
We prove that below the multicritical (Nishimori) point the ferromagnetic boundary of a biased
Ising spin glass, argued by Nishimori (1986) to be exactly vertical, can be vertical and can also
be reentrant, so that cooling at fixed disorder destroys order. It is vertical in the Sherrington--Kirkpatrick
(SK) model with a Curie--Weiss bias $J_0$, and reentrant near the triple point of fully connected
$p$-spin glasses and, at one disorder strength, on a decorated planar $\Z^2$-periodic lattice.
For every $T<J$ the SK magnetization vanishes for $J_0\le J$, continuously at $J_0=J$,
and is macroscopic for $J_0>J$.
For every integer $p\ge3$, at each bias slightly larger than the triple-point bias, the $p$-spin
magnetization is macroscopic at the Nishimori temperature and vanishes at an explicit lower
temperature; this is Nishimori's recent replica prediction in two-temperature form.
On the lattice ($\Z^2$ with each bond replaced by a fixed series--parallel gadget; maximum degree
four), with iid $\pm J$ couplings at $\Pp(J_e=-1)=9/10000$, the Gibbs state is almost surely
unique at high temperature, non-unique in a window containing the Nishimori temperature, and
unique again at low temperature.
As Dey and Kang observed, the SK boundary is set by the zero-field susceptibility together with
an envelope bound on the free-energy gain in a field, which follows from the gauge inequality.
We prove that this susceptibility is the curvature at the origin of the zero-field Parisi PDE
solution, which Lopatto evaluated for every $\beta>1$ via his theorem that the support of the
Parisi measure accumulates at the origin; we give a short alternative proof of that theorem.
The $p$-spin and lattice results and the alternative proof are computer-assisted. The models are
proxies for the nearest-neighbour $\pm J$ model on $\Z^2$ and say nothing about $\Z^d$.

\end{abstract}

\noindent\textbf{2020 Mathematics Subject Classification.}
Primary 82B44; Secondary 60K35, 82B20, 82D30.
\smallskip

\noindent\textbf{Keywords.} Nishimori line; reentrance; Sherrington--Kirkpatrick model;
$p$-spin glass; random-bond Ising model; Gibbs states; Parisi formula; computer-assisted proof.
\smallskip

\noindent\textbf{Code.} The certificate programs, frozen inputs and outputs are available at
\url{https://github.com/PeaBrane/nishimori-boundaries} (tag \texttt{arxiv-v1.1}); see
Appendix~\ref{app:repro}.

\section{Introduction}\label{sec:intro}

\subsection{The question}

Consider an Ising spin glass whose couplings are independent with a positive mean, say
Gaussian with mean $J_0$ and variance $J^2$ (suitably scaled), or $\pm J$ with a majority of
ferromagnetic bonds. Its phase diagram in the plane of bias and temperature contains a
paramagnetic phase, a ferromagnetic phase and, in high dimension or mean field, a spin-glass
phase. A distinguished curve, the Nishimori line, is where the inverse temperature matches the
disorder law: $\beta=J_0/J^2$ in the Gaussian case, and $\tanh\beta=\E J_e$ for couplings
$J_e\in\{\pm1\}$.
There, a gauge symmetry makes the internal energy and many correlation identities exactly
computable~\cite{Nishimori1981,NishimoriBook}. Nishimori's inequality (the gauge inequality) compares correlations at an
arbitrary inverse temperature $\beta$ with those at the point of the Nishimori line with the same
disorder (we call it the Nishimori point of that bias), at inverse temperature $\beta_N$:
\[
\bigl|\E\gibbs{\sigma_i\sigma_j}_{\beta}\bigr|\le \E\bigl|\gibbs{\sigma_i\sigma_j}_{\beta_N}\bigr|.
\]
On the Nishimori line $\E\gibbs{\sigma_i\sigma_j}_{\beta_N}=\E\gibbs{\sigma_i\sigma_j}_{\beta_N}^2$, so
the right side is at most $(\E\gibbs{\sigma_i\sigma_j}_{\beta_N})^{1/2}$, which is small for
distant pairs when the Nishimori point carries no ferromagnetic order. In mean field no pair is
distant; there, averaging over all pairs bounds the magnetization $m=N^{-1}\sum_i\sigma_i$ instead:
$\E\gibbs{m^2}_\beta\le(\E\gibbs{m^2}_{\beta_N})^{1/2}$ (Lemma~\ref{lem:gauge-transfer}). Consequently no
temperature is ferromagnetic at a bias whose Nishimori point is not. If, as in the mean-field
models below, the ordered part of the Nishimori line is a half-line in the bias, the ferromagnetic
region therefore lies on the ferromagnetic side of the vertical line through the multicritical
point $M$ where the Nishimori line meets the phase boundary (Figure~\ref{fig:question}). The
literature, and our title, also call $M$ itself the Nishimori point.

Is the boundary below $M$ exactly vertical, or does it bend toward larger bias, so that cooling
at fixed disorder destroys ferromagnetic order? The second behaviour is called reentrance.
Nishimori argued that the temperature-independent singularity of the frustration entropy at $M$
(the entropy of the distribution of frustrated plaquettes, a property of the disorder that depends
on the bias but not on the temperature) makes the boundary between the ferromagnetic and the non-ferromagnetic (paramagnetic or
spin-glass) phase strictly vertical below $M$ ``in any dimension (two or
larger)''~\cite{Nishimori1986}; he later described the argument as ``not a rigorous
proof''~\cite[p.~63]{NishimoriBook}. Kitatani proved the verticality of the ferromagnetic--spin-glass
boundary of the $\pm J$ model ``assuming an appropriate condition''~\cite{Kitatani1992}. Numerical
studies of the nearest-neighbour two-dimensional $\pm J$ model indicate a slight
reentrance~\cite{Nobre2001,ThomasKatzgraber2011}, and so do studies of hierarchical
lattices~\cite{HinczewskiBerker2005,Guven2008}.\footnote{Manouchehri and Shtengel recently argued the
opposite: a percolation of zero-weight paths, which depends on the disorder but not on the
temperature, suggests that the boundary of the nearest-neighbour two-dimensional model is
vertical below $M$~\cite{ManouchehriShtengel2026}. Nishimori, Ohzeki and Okuyama argued that, if a
spin-glass phase exists at finite temperature, reentrance forces temperature chaos in that
phase~\cite{NOO2025}, which gives the question a further physical consequence. See
Section~\ref{sec:discussion}.}
Rigorous results
are scarce. For the Bethe lattice, Carlson, Chayes, Chayes, Sethna and Thouless proved a
reentrant local bifurcation of the magnetization recursion with fixed boundary
conditions~\cite{CCCST1990}: near the multicritical point, magnetized fixed points of the recursion
for the distribution of cavity magnetizations branch off only beyond a boundary that bends toward
larger bias on cooling. This is a statement about local fixed points of the recursion: it does
not determine which fixed point is reached from a given boundary condition, and the tree has no
cycles, so its frustration comes from the boundary rather than the bulk.

Even for Ising spins in mean field the question had not been settled. For the Sherrington--Kirkpatrick (SK)
model with a ferromagnetic bias, the full replica-symmetry-breaking solution predicts a vertical
boundary $J_0=J$~\cite{Toulouse1980,MPV1987}. Dey and Kang recently observed that this vertical
boundary is equivalent to a zero-field susceptibility identity of the Parisi solution,
Toulouse's marginality identity, together with an envelope bound, and they noted that the
identity was open mathematically and the exact boundary unresolved~\cite[Remark~1.3]{DeyKang2026}.
Lopatto's recent theorem that, for $\beta>1$, the origin is an accumulation point of the support of
the zero-field Parisi measure implies that the solution of the Parisi equation for that measure has second
derivative $1/\beta$ at the origin~\cite{Lopatto2026}; that this second derivative is the
susceptibility is one of our results (Section~\ref{sec:intro-prior}). For spherical spins the analogous
questions have rigorous answers.\footnote{In the spherical SK model with a Curie--Weiss term the free
energy is computed by random-matrix methods, and the spin-glass and ferromagnetic regimes are
separated by a vertical line~\cite[Theorem~1.4 and Figure~1]{BaikLee2017}. In spherical tensor
PCA, the maximum-likelihood estimator, a zero-temperature state, is correlated with the signal
down to the information-theoretic threshold~\cite[Theorem~1.2]{JLM2020}, which is a
zero-temperature analogue of a vertical boundary; Jagannath, Lopatto and Miolane remark that this
is not expected for every prior~\cite[p.~3]{JLM2020}, citing~\cite{GNS2001}.}
For fully connected $p$-spin
models with a ferromagnetic bias and $p\ge3$~\cite{NishimoriWong1999,GNS2001}, Nishimori recently
computed, within the replica description, a reentrant bending of the ferromagnet--spin-glass
boundary near the triple point where the Nishimori line meets the paramagnetic, spin-glass and
ferromagnetic phases~\cite{Nishimori2026}. Its curvature is proportional to the difference
$C_{\rm SG}-C_{\rm FM}$ of the specific heats of the two phases at the triple point, and the
boundary is reentrant when $C_{\rm SG}>C_{\rm FM}$. He checked
the sign of this difference numerically for integer $3\le p\le12$ and in a scan over real $p$
from near $2$ to about $12$, and asymptotically as $p\to\infty$ and $p\to2^+$, and he left a
proof uniform in $p$ open~\cite[p.~19]{Nishimori2026}.

\begin{figure}[t]
\centering
\begin{tikzpicture}[x=6.2cm,y=4cm,>=Stealth]
\fill[gray!13] (0,0) rectangle (0.8,1.38);
\draw[->] (0,0) -- (1.64,0) node[below left]{bias};
\draw[->] (0,0) -- (0,1.44) node[below left]{$T$};
\draw[thick,dashed,domain=0.47:1.6,smooth,variable=\x] plot ({\x},{0.64/\x});
\node[anchor=west] at (0.49,1.37) {\small Nishimori line};
\draw[thick] (0,0.8) -- (0.8,0.8);
\draw[thick] (0.8,0.8) .. controls (1.0,0.95) and (1.3,1.1) .. (1.6,1.18);
\draw[thick,dotted] (0.8,0.8) -- (0.8,0);
\draw[thick] (0.8,0.8) .. controls (0.8,0.45) and (0.9,0.25) .. (0.97,0);
\node[anchor=east] at (0.79,0.3) {\small vertical};
\node[anchor=west] at (0.955,0.3) {\small reentrant};
\fill (0.8,0.8) circle (1.3pt) node[below left]{\small $M$};
\draw[gray] (0.88,0.02) -- (0.88,0.9);
\fill[blue!60!black] (0.88,0.7273) circle (1.7pt);
\node[blue!60!black,anchor=west] (ord) at (1.08,0.64) {\small order};
\draw[->,blue!60!black] (ord.west) -- (0.895,0.722);
\fill[red!70!black] (0.88,0.18) circle (1.7pt);
\node[red!70!black,anchor=west] (dis) at (1.08,0.12) {\small no order};
\draw[->,red!70!black] (dis.west) -- (0.895,0.176);
\node at (0.38,1.12) {\small PM};
\node at (0.38,0.42) {\small SG};
\node at (1.4,0.84) {\small FM};
\end{tikzpicture}
\caption{The question, schematically, in the plane of bias and temperature. Dashed: the
Nishimori line through the multicritical point $M$. The gauge inequality excludes ferromagnetic
order at every temperature to the left of the vertical line through $M$ (shaded). Below $M$ the
ferromagnetic boundary is either this vertical line (dotted) or bends toward larger bias (solid,
reentrance). At a bias right of $M$ (grey), order at the Nishimori point of that bias (blue) together
with its absence at a lower temperature (red) proves that the boundary bends;
Theorems~\ref{thm:intro-p} and~\ref{thm:intro-lat} are statements of this form. The labels are
those of the mean-field models; in the lattice example of Theorem~\ref{thm:intro-lat} the cold
disordered phase is paramagnetic.}
\label{fig:question}
\end{figure}

Gauge symmetry compares two temperatures at one bias, and only in one direction. To decide the
shape of the boundary one must show, at biases just beyond $M$, either that order persists at
every lower temperature or that it disappears at some lower temperature. Near $M$ the competing
phases differ by little, and neither the gauge identities nor the standard rigorous criteria
resolve the difference.

\subsection{Main results}\label{sec:intro-results}

We prove that both shapes occur. The boundary is vertical in the SK model. It is reentrant, in
the two-point sense of Figure~\ref{fig:question} (order at the Nishimori point of a bias and none
at a lower temperature at the same bias), in the $p$-spin models near the triple point for every integer
$p\ge3$, and on one planar periodic lattice with independent $\pm J$ couplings at one disorder
strength. Order is measured differently in the three settings: by $\E\gibbs{m^2}$ for SK, by the
Gibbs probability of $|m|>m_1$ and by $\E\gibbs{|m|}$ for the $p$-spin models, and by the number of
infinite-volume Gibbs states and by $\E\gibbs{M_L^2}$ on the lattice. The three theorems below are
informal versions; each points to its exact statement. Figure~\ref{fig:roadmap} shows how they are proved;
Section~\ref{sec:intro-prior} relates them to earlier work, and Section~\ref{sec:intro-ideas}
explains the proofs.

\needspace{10\baselineskip}
\begin{theorem}[SK: a vertical boundary with a continuous transition; informal version of
Theorems~\ref{thm:sk} and~\ref{thm:skline} and Remark~\ref{rem:parisi-free}]\label{thm:intro-sk}
Consider the SK model at inverse temperature $\beta$, with Gibbs weight proportional to
$\exp(\beta\sum_{i<j}J_{ij}\sigma_i\sigma_j)$ and independent couplings $J_{ij}\sim N(J_0/N,1/N)$,
i.e.\ with a Curie--Weiss bias $J_0$, at temperature $T=1/\beta$. Let
$m=N^{-1}\sum_i\sigma_i$. Then $\E\gibbs{m^2}\to0$ for $J_0\le\max(1,T)$ and
$\liminf_N\E\gibbs{m^2}>0$ for $J_0>\max(1,T)$. In particular, below the multicritical point
$(J_0,T)=(1,1)$ the boundary is the vertical line $J_0=1$. The magnetization vanishes
continuously there: for every $T>0$ and $1\le J_0\le21/20$,
\[
\limsup_{N\to\infty}\E\gibbs{m^2}\le\bigl(12(J_0-1)\bigr)^{1/2}.
\]
\end{theorem}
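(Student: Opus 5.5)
The plan is to work with the field-perturbed free energy. Write $F_N(J_0,h)=N^{-1}\E\log Z_N$ for the SK model with the Curie--Weiss term, and decouple that term by a Hubbard--Stratonovich (Gaussian) transform in the magnetization. The bias term is $\beta J_0 N m^2/2$. After the transform, the limit is a variational formula
\[
\lim_N F_N=\sup_{h}\Bigl[\Par_\beta(h)-\frac{h^2}{2\beta J_0}\Bigr],
\]
where $\Par_\beta(h)$ is the Parisi free energy of pure SK in a uniform external field $h$. The upper bound comes from Guerra's interpolation, the lower bound from restricting to a fixed field. The value of $m^2$ is then read off from the maximizer through the derivative in $J_0$, which is convex in $J_0$. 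This gives $\E\gibbs{m^2}\to (h^*/\beta J_0)^2$ whenever the maximizer $h^*$ is unique. So ferromagnetic order holds exactly when the supremum is attained away from $h=0$.

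The key facts are properties of $\phi(h)=\Par_\beta(h)-\Par_\beta(0)$.

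\begin{itemize}
\item[(i)] \textbf{Curvature at the origin.} We have $\phi(h)=\tfrac12\chi h^2+o(h^2)$, where $\chi$ is the zero-field susceptibility. For $\beta\le1$ this gives $\chi=\beta$ by replica-symmetric theory. For $\beta>1$ we identify $\chi$ with $\beta\,\partial_x^2\Phi(0,0)$, the curvature of the Parisi PDE solution. Lopatto's accumulation theorem then gives $\partial_x^2\Phi(0,0)=1/\beta$, that is, $\chi=1$ (in units where $J=1$). Together, $\chi=\min(\beta,1)$.
\item[(ii)] \textbf{Envelope bound.} We need $\phi(h)\le\tfrac12\chi h^2$ globally, or at least for the bias range relevant. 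This should follow from the gauge inequality (Lemma~\ref{lem:gauge-transfer}) applied along the Nishimori line: the free-energy gain in a field at temperature $\beta$ is dominated by that at the Nishimori point, where the Nishimori identities make it explicit.
\end{itemize}

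Given (i) and (ii), the conclusion follows by comparing the two quadratic terms. For $J_0\le 1/\chi=\max(1,T)$, the bound gives $\phi(h)-h^2/(2\beta J_0)\le 0$ with equality only at $h=0$, so $m\to0$. For $J_0>1/\chi$, the expansion (i) makes small nonzero $h$ strictly better than $0$, so $\liminf\E\gibbs{m^2}>0$.

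For the quantitative bound near $J_0=1$ I would use a strengthened envelope of the form $\phi(h)\le \tfrac12 h^2-c\,h^4$ for $|h|$ small, uniformly in $\beta$. Such a bound would come from a fourth-order bound on the Parisi functional, or again from the gauge side using the explicit Nishimori-line formulas. With it, any maximizer satisfies $h^{*2}\lesssim (J_0-1)/c$. The constant $12$ and the range $J_0\le21/20$ would come out of this inequality once $c$ is fixed, after absorbing the $\beta$ factors. An alternative is to feed the gauge transfer $\E\gibbs{m^2}_\beta\le(\E\gibbs{m^2}_{\beta_N})^{1/2}$ into the explicit Nishimori-line magnetization. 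That point is a replica-symmetric Curie--Weiss-type fixed point, with $m^2\sim 3(J_0-1)$ or similar near $J_0=1$, and taking the square root produces a bound of exactly the shape $(12(J_0-1))^{1/2}$. I expect this second route is the intended one.

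The main obstacle is step (i) for $\beta>1$: proving that the susceptibility, defined as a second derivative of the limiting free energy in $h$ at $0$, equals the curvature of the Parisi PDE solution. Convexity gives one inequality easily. For the other, I would differentiate the Parisi formula at the optimal measure, using envelope/Danskin arguments, and use that the zero-field measure's support accumulates at $0$. Controlling the $o(h^2)$ term uniformly, so that the optimal measure at $h$ can be replaced by the zero-field one, is the delicate part.
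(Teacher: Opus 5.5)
Your architecture matches the paper's: Chen's variational formula, Dey and Kang's reduction to an envelope plus a zero-field curvature, the curvature identified with $\partial_x^2u_{\mu_\beta}(0,0)$ and evaluated by Lopatto's theorem, and a gauge transfer with a square root for the rate. Several steps, however, would fail or are missing as written.

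\emph{Normalization.} With the field entering as $h\sum_i\sigma_i$, the gain is $\varphi_\beta(h)=\frac12\theta_\beta h^2+o(h^2)$, where $\theta_\beta=\partial_x^2u_{\mu_\beta}(0,0)=\chi/\beta=\min(1,1/\beta)$. Your $\frac12\chi h^2$, compared with $h^2/(2\beta J_0)$, would put the boundary at $J_0=T$ below $T=1$.

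\emph{Envelope.} The gauge inequality does not compare field gains: a uniform field is not part of the gauge structure, and the Nishimori point of bias $J_0$ sits at another temperature. What it gives is $\E\gibbs{m^2}_{\beta,\beta J_0}\le(\E\gibbs{m^2}_{J_0,J_0^2})^{1/2}$, which helps only where that Nishimori point is unmagnetized. The chain that works is as follows.
For $\gamma<\beta$ one has $J_0=\gamma/\beta<1$, so $J_0^2<1$, and the Nishimori point is unmagnetized by the quadratic bound $\partial_x^2u\le1$. Hence $\E\gibbs{m^2}_{\beta,\gamma}\to0$. Integrating $\partial_\gamma F_N=\E\gibbs{m^2}/2$ then gives $F(\beta,\gamma)=F^{\rm SK}(\beta,0)$ for $\gamma<\beta$. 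Through Chen's formula, the $1$-Lipschitz extension and $\gamma\uparrow\beta$, this is the envelope $\varphi_\beta(h)\le h^2/(2\beta)$.
For $T>1$ and $1<J_0\le T$ the Nishimori point is magnetized, so gauge gives nothing. There the envelope $h^2/2$ is the Parisi bound itself, or Jensen.

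\emph{Curvature.} The step you flag as the main obstacle needs neither Danskin nor accumulation. Let $\nu_h$ be the Parisi minimizer at field $h$. Then $\Par(\nu_h,h)-\Par(\nu_h,0)\le\varphi_\beta(h)\le\Par(\mu_\beta,h)-\Par(\mu_\beta,0)$. The right inequality is Guerra's bound with trial $\mu_\beta$; the left uses that $\mu_\beta$ minimizes at zero field. By evenness, both sides equal $\int_0^h(h-y)\,\partial_x^2u(0,y)\,\dd y$ for the respective measure. By compactness, continuity and uniqueness \textup{(P2)}, $\nu_h\Rightarrow\mu_\beta$, so \textup{(P3)} gives uniform convergence. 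Accumulation enters only to evaluate $\theta_\beta=1/\beta$.

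\emph{The boundary $J_0=\max(1,T)$ is a genuine gap.} The envelope is not strict, so there it gives only $0\in\Om$, and ``equality only at $h=0$'' is unproved. Two separate arguments are needed.
On the segment $\gamma=1$ ($J_0=T\ge1$), strictness comes from Dey and Kang's Jensen bound $\varphi_\beta(h)\le\lc h<h^2/2$. It holds because the averaged zero-field Gibbs measure is uniform, and it gives $\Om(\beta,1)=\{0\}$.
On $J_0=1$, $T<1$, transfer from the multicritical point $(1,1)$. There the same Jensen bound gives $\Om(1,1)=\{0\}$, hence $\E\gibbs{m^2}_{1,1}\to0$.

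\emph{The rate.} Your second route is the paper's, but its input at the Nishimori point is missing, and your heuristic for it is off. The limit of $\E\gibbs{m^2}$ at the Nishimori point of bias $1+\delta$ is of order $\delta^2$, not $\sim3\delta$; and in any case $\sqrt{3\delta}\ne\sqrt{12\delta}$.
The paper uses the Jensen bound again, at $(J_0,J_0^2)$ with $J_0=1+\delta$. Chen's formula with the trial $\mu=0$ forces every maximizer to satisfy $\lc(J_0^2\mu)\ge J_0^2\mu^2/2$. This gives $\mu^2\le y(\delta)\le12\delta$ for $0\le\delta\le1/20$; the $12$ comes from $\lc x=x^2/2-x^4/12+O(x^6)$. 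Chen's concentration theorem turns this into $\limsup_N\E\gibbs{m^2}_{J_0,J_0^2}\le12\delta$, and the transfer gives $(12\delta)^{1/2}$.
Using the known $\Z_2$-synchronization (spiked Wigner) free energy instead would also work and is sharper, but then those theorems must be invoked. Your first route is not needed. As written it is in the wrong units for $\beta>1$, where the comparison must be with $h^2/(2\beta)$.
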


Parts of Theorem~\ref{thm:intro-sk} are known; Section~\ref{sec:intro-prior} says which, and what
we add.

For the $p$-spin models, let $p\ge3$ be an integer and let the couplings $J_I$, indexed by the
$p$-subsets $I$ of $\{1,\dots,N\}$, be independent Gaussians with mean $j_0\,p!/N^{p-1}$ and
variance $p!/(2N^{p-1})$. The Gibbs measure at inverse temperature $\beta$ is proportional to
$\exp(\beta\sum_IJ_I\prod_{i\in I}\sigma_i)$, and the Nishimori line is $\beta=2j_0$. Let
$\beta_1$ be the inverse temperature at which replica symmetry breaks in the pure $p$-spin
model~\cite{Talagrand2000,Chen2019,Zhou2024}. The triple point is
$M=(j_{0M},T_M)=(\beta_1/2,1/\beta_1)$ (Section~\ref{sec:nm-triple}). On the Nishimori line the
model is, after a gauge transformation and up to the passage from ordered $p$-tuples to
$p$-subsets, the spiked Rademacher tensor model, and $\beta_1$ is also its weak-recovery threshold
(Remark~\ref{rem:nm-chen}).

\begin{theorem}[$p$-spin: reentrance near the triple point; informal version of
Theorem~\ref{thm:nm-main}]\label{thm:intro-p}
For every integer $p\ge3$ there are $\bar\delta>0$, $c>0$ and $m_1\in(0,1)$ such that for every
bias $j_0=j_{0M}+\delta$ with $0<\delta<\bar\delta$ the following hold.
\begin{itemize}
\item At the Nishimori temperature $T_{\rm w}=1/(2j_0)$ (the warm point),
$\E\gibbs{\ind\{|m|>m_1\}}\to1$, and $\E\gibbs{\ind\{m>m_1\}}\to1$ if $p$ is odd (for even $p$
the Gibbs measure is invariant under $\sigma\mapsto-\sigma$).
\item At the lower temperature $T_{\rm cold}=1/(\beta_1+c\,\delta^{1/2})<T_{\rm w}$, one has
$\E\gibbs{|m|}\to0$.
\end{itemize}
The constant $m_1$ is explicit, $c$ is an explicit function of the constants of the triple point,
and $\bar\delta$ is not explicit.
\end{theorem}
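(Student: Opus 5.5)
The plan is to treat the two bullets separately. Both rest on a rigorous variational formula for the limiting free energy of the biased $p$-spin model restricted to a magnetization window. I will use the constrained Parisi formula: for fixed magnetization $m$, the free energy is $\beta j_0 m^p$ plus the Parisi functional of a mixed model with an external field tied to $m$ through a Lagrange multiplier. Equivalently, I will use the Guerra--Talagrand upper bound with a Curie--Weiss term, which gives $\sup_m\{\beta j_0 m^p+\Par_\beta(m)\}$ up to an entropy correction. Order or disorder then becomes a comparison of $\Phi(m)=\beta j_0 m^p+\Par_\beta(m)$ at $m=0$ against its supremum over $|m|>m_1$ (respectively over $m\ne0$).

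\textbf{Warm point.} On the Nishimori line $\beta=2j_0$, the gauge transformation maps the model to the spiked Rademacher tensor model, up to the $p$-subset versus $p$-tuple issue (Remark~\ref{rem:nm-chen}). There the replica-symmetric formula is rigorous: $f(\beta)=\sup_q\psi(q)$ with a scalar channel of signal-to-noise ratio $\propto\beta^2 q^{p-1}$. At the weak-recovery threshold $\beta_1$ the global maximiser jumps from $q=0$ to some $q_*>0$. For $j_0=j_{0M}+\delta$, the strict gain $\psi(q_*)-\psi(0)>0$ on the ordered branch gives concentration of the overlap with the planted sign vector near $q_*$. By Nishimori's identity, the overlap with the planted vector equals $m$ in gauge-fixed variables, and $\E\gibbs{m}=\E\gibbs{q}$. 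Choosing $m_1<q_*(\beta_1)$ strictly, I obtain $\E\gibbs{\ind\{|m|>m_1\}}\to1$; for odd $p$ the sign is fixed by the bias. The constant $m_1$ is then explicit from the jump value at $\beta_1$, minus a margin that is certified numerically.

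\textbf{Cold point.} At $\beta=\beta_1+c\delta^{1/2}$, which is below the Nishimori line since $2j_0=\beta_1+2\delta$, I need the supremum of $\Phi$ to be attained only at $m=0$. At $m=0$ the zero-field $p$-spin free energy is at least its 1RSB value, which exceeds the annealed value by an amount of order $(\beta-\beta_1)^2$ times a specific-heat difference. This is exactly Nishimori's $C_{\rm SG}-C_{\rm FM}$ mechanism. For $m\ne0$ I upper-bound $\Phi(m)$ by Guerra's bound with an RS or 1RSB trial in the presence of the field. At the triple point, the ordered branch and the $m=0$ branch are tied along the Nishimori line. Expanding in $(\delta,\beta-\beta_1)$, the ferromagnetic branch gains at linear order $\propto\delta$. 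The spin-glass branch gains relative to the ferromagnetic one by $\tfrac12(C_{\rm SG}-C_{\rm FM})(\beta-\beta_1)^2$-type terms, scaled against the $\delta$-shift of $\beta_1$. Taking $\beta-\beta_1=c\delta^{1/2}$ with $c$ large enough, given the sign $C_{\rm SG}>C_{\rm FM}$, lets the quadratic spin-glass advantage beat the linear ferromagnetic gain. Hence $\sup_{m\ne0}\Phi<\Phi(0)$ with a margin on $|m|\ge\epsilon$, and I handle small $|m|$ by a local quadratic bound on $\Phi$ at $0$. Concentration then gives $\E\gibbs{|m|}\to0$.

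\textbf{Main obstacle.} The hard step is proving $C_{\rm SG}>C_{\rm FM}$ uniformly in the integer $p\ge3$, together with controlling the remainders so that $\bar\delta$ exists; this is Nishimori's open point. I would first reduce the sign to an inequality between explicit one-dimensional Gaussian integrals at $\beta_1(p)$, which is characterised by the 1RSB equations. I would then certify it by interval arithmetic for $3\le p\le P_0$, and by an asymptotic expansion in $1/p$ with explicit error bounds for $p>P_0$. Because the lower bound at $m=0$ requires the true Parisi value, I must ensure the 1RSB trial is a valid lower bound. I plan to do this via the matching upper bound at $m=0$ near $\beta_1$, or simply by using the free energy as a supremum over trials in the correct direction.
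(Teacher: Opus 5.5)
Your overall architecture is the paper's: order at the Nishimori point, then at $\beta=\beta_1+c\,\delta^{1/2}$ a comparison between a zero-field spin-glass lower bound and Guerra upper bounds on magnetization bins, decided at second order by the sign of $C_{\rm SG}-C_{\rm FM}$. The step the cold point rests on is missing, however. You assert that $F^0(\beta,0)$ is \emph{at least} its one-step value. The Parisi formula is an infimum, so every one-step trial gives an \emph{upper} bound. Neither fallback you propose gives a lower bound: a matching upper bound bounds the same side, and $F^0$ has no representation as a supremum over trials. (Also, the spin-glass free energy lies \emph{below} the annealed value $\log2+\beta^2/4$, by $\frac{A_{\rm SG}}2b^2+O(b^3)$; the specific-heat difference enters only through its gap to the ferromagnetic branch.)

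Cheap lower bounds do not suffice. Convexity of $\beta\mapsto F^0(\beta,0)$ with its tangent at $\beta_1$ replaces $A_{\rm SG}$ by $\frac12$, and the comparison would then need $C_{\rm FM}<0$. That is false: $C_{\rm FM}\approx0.198$ for $p=3$, and it is positive for all $3\le p\le25$. The paper closes the gap by proving that the stationary one-step measure near $M$ \emph{is} the Parisi measure. By (M1)--(M2), the first-variation certificate $D_{\delta_0}=-\Phi_{\beta_1}$ of $\delta_0$ at $M$ is strictly positive except at its forced zeros $0$ and $q_1$, and the zero at $q_1$ is nondegenerate. This strictness persists along stationary two-atom measures with $x<1$ near $(1,q_1,\beta_1)$, so the Frank--Wolfe gap of Proposition~\ref{prop:fw} vanishes (Lemma~\ref{lem:nm-LS}). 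The one-step equations extend analytically across $x=1$ with a nonsingular Jacobian, and the resulting branch has second derivative exactly $\frac12-A_{\rm SG}$ (Lemma~\ref{lem:nm-branch}, Proposition~\ref{prop:nm-SG}). Without an argument of this kind, the sign $C_{\rm SG}>C_{\rm FM}$ cannot be used.

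Several further pieces are missing or would not work as stated.
\begin{itemize}
\item Your cold-point expansion is local at $M$. Bins at intermediate magnetizations need the global strict inequality (M1), $\Phi_{\beta_1}<0$ on $(0,1]\setminus\{q_1\}$, which the paper derives from the single-crossing Lemma~\ref{lem:nm-U}. The bins near $\mu$ need the nondegeneracy (N2), and (N3) for odd $p$, for the replica-symmetric critical value to be a smooth upper bound.
\item At the warm point, a variational formula with unique maximizer $q_*$ does not by itself bound the Gibbs mass of $\{|m|\le m_1\}$. You need an upper bound on the free energy of that set strictly below the full free energy. The paper uses the first moment (Lemmas~\ref{lem:annealed} and~\ref{lem:endpoint}), which stays at the paramagnetic value only where $\xi(m)\le\mathcal I(m)$. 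That is why $m_1=\min\{(1-2/p)^{1/2},4^{-1/(p-2)}\}$ rather than a value just below $q_1$.
\item The paper sets this against the replica-symmetric interpolation lower bound of Lemma~\ref{lem:gn}, proved directly for $p$-subsets, so the $p$-tuple issue you leave open never arises. Your larger $m_1$ would additionally require Guerra bounds on the intermediate bins, where the annealed bound fails.
\item For large $p$, a plain asymptotic expansion will not certify the sign. $\kappa_p$ is of order $2^{-p}p^{-1/2}$, while its two leading constituents are of order $\Lambda_c\nu(\Lambda_c)$ with opposite signs. The paper first makes this cancellation exact with the positive-kernel identity of Proposition~\ref{prop:nm-J}, and only then bounds the remainders.
\end{itemize}
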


Cooling from $T_{\rm w}$ to $T_{\rm cold}$ at one disorder law thus destroys ferromagnetic order,
at every bias slightly larger than the triple-point bias $j_{0M}$. The first part of
Theorem~\ref{thm:intro-p} is, in substance, W.-K.~Chen's weak-recovery theorem for the spiked
Rademacher tensor model~\cite{Chen2019}. Chen's proof keeps the magnetization above a threshold
set by the left derivative of a limiting free energy; this threshold depends on the bias and is
shown to be positive by convexity, but it is not evaluated. We reprove the first part directly for
the Hamiltonian over $p$-subsets, with the explicit threshold
$m_1=\min\{(1-2/p)^{1/2},4^{-1/(p-2)}\}$, which depends only on $p$: at the Nishimori temperature
the expected Gibbs mass of $\{|m|\le m_1\}$ (for odd $p$, of $\{m\le m_1\}$) is exponentially
small in $N$
(Theorem~\ref{thm:nm-main}(i) and Section~\ref{sec:intro-ideas}). The reentrance rests on the
second part.

Theorem~\ref{thm:intro-p} is computer-assisted.
Its inputs that depend on $p$, among them $C_{\rm SG}>C_{\rm FM}$, are verified in interval
arithmetic for $3\le p\le25$ and follow for $p\ge26$ from explicit bounds whose constants are
also certified. It is a two-point statement at each fixed bias: it constructs no boundary curve,
and $\bar\delta$, which bounds the biases in both parts, depends on $p$ and is not explicit. It concerns integer $p$ only, so Nishimori's question for real
$p>2$ is not resolved.

For $p=3$ and $p=4$ we also certify explicit instances far below the
triple point in temperature (Theorems~\ref{thm:p3} and~\ref{thm:p4}; Section~\ref{sec:certificates}
gives $j_{0M}$ and $T_M$ for comparison). For $p=3$ and
$j_0=0.7682$, the magnetization exceeds $0.44$ with Gibbs probability tending to one at the
Nishimori point of this bias ($T\approx0.6509$), while $\E\gibbs{|m|}\to0$ at $T=0.4$. For $p=4$ and
$j_0=0.8107$, one has $|m|>0.64$ with Gibbs probability tending to one at the Nishimori point of
this bias ($T\approx0.61675$), while $\E\gibbs{|m|}\to0$ at $T=0.3$.

The third result concerns a two-dimensional graph. Replace every bond of $\Z^2$ by a copy of a
fixed two-terminal series--parallel gadget $H$ (Figure~\ref{fig:lat-gadget}). The gadget is a
chain of ten units in series between two terminal edges. Each unit is a direct edge in parallel
with a branch that consists of a connector edge, two parallel paths of $1000$ edges, and a second
connector edge, in series. A unit thus has $2003$ edges, and $H$ has $2+10\cdot2003=20032$. The
resulting graph $\Z^2[H]$ is planar, $\Z^2$-periodic and of maximum degree four. The long paths
are weak at the Nishimori temperature but freeze on cooling, and the chain of ten units makes it
likely that some unit is frustrated at low temperature; the mechanism is explained in
Section~\ref{sec:intro-ideas}.

\begin{theorem}[A decorated square lattice; informal version of
Theorem~\ref{thm:lat-main}]\label{thm:intro-lat}
Put independent couplings $J_e\in\{\pm1\}$ on the edges of $\Z^2[H]$, with
$\Pp(J_e=-1)=p_0:=9/10000$, and let $\beta_N=\frac12\log((1-p_0)/p_0)$ be the Nishimori inverse
temperature. There are explicit $\beta_{\rm hot}<\beta_-<\beta_N<\beta_+<\beta_{\rm cold}$, with
$\beta_{\rm hot}\approx0.440\,\beta_N$, $\beta_-\approx0.561\,\beta_N$,
$\beta_+\approx1.401\,\beta_N$ and $\beta_{\rm cold}\approx1.992\,\beta_N$, such that for each
fixed $\beta$:
\begin{itemize}
\item almost surely the Gibbs state is unique if $\beta\le\beta_{\rm hot}$ or
$\beta\ge\beta_{\rm cold}$;
\item almost surely there are at least two Gibbs states if $\beta_-\le\beta\le\beta_+$;
\item at $\beta=\beta_N$, the magnetization $M_L$ (the average of all spins) of a box of side $L$,
with free or periodic boundary conditions, satisfies $\liminf_L\E\gibbs{M_L^2}\ge0.08$.
\end{itemize}
Moreover, at the Nishimori temperature, $\liminf_L\E\gibbs{M_L^2}>0$ for every disorder strength
$0<\Pp(J_e=-1)\le1/1000$, and almost surely there are at least two Gibbs states for every
$0<\Pp(J_e=-1)\le3/2500$. So $p_0$ lies strictly inside the ordered part of the Nishimori line.
\end{theorem}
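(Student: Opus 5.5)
The plan is to reduce the infinite lattice to a renormalized Ising model on $\Z^2$. Each copy of the gadget $H$ has only its two terminals in common with the rest of $\Z^2[H]$. Summing out the internal spins of a copy therefore gives an effective coupling $K_e(\beta)$ between the terminals: a random variable, independent over the bonds of $\Z^2$. This reduction is exact for the Gibbs measure on the terminals. Series--parallel structure makes $K_e$ computable by composition. Parallel composition adds effective couplings. Series composition acts by $\tanh K=\tanh K_1\tanh K_2$. So $\tanh K_e$ is a product over the two terminal edges and the ten units. For each unit, $\tanh K_{\rm unit}=\tanh(\beta J_{\rm direct}+K_{\rm branch})$, where the branch is connector--(two parallel paths of $1000$ edges)--connector. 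The law of $K_e$ at each $\beta$ is then an explicit finite mixture. The relevant events are: how many of the $2003$ edges in a unit are antiferromagnetic, and whether the two long paths are frustrated relative to each other. These probabilities can be bounded in interval arithmetic.

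The three bullets are then handled as follows.

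\textbf{Uniqueness at extreme temperatures.} Uniqueness of the Gibbs state follows if the effective bonds percolate only subcritically in a suitable sense. I would use a disagreement-percolation or random-cluster domination criterion on $\Z^2$. It suffices that $\Pp(|\tanh K_e|>t)$, or $\E\tanh|K_e|$, lies below the $\Z^2$ bond-percolation threshold $1/2$ after coupling. Uniqueness of the terminal measure then lifts to the full graph, because the interiors are finite conditional on the terminals.

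For $\beta\le\beta_{\rm hot}$ this holds because $K_e$ is small: ten units in series multiply ten factors below one.

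For $\beta\ge\beta_{\rm cold}$ the mechanism is frustration. The long paths freeze, so a unit is frustrated when its direct edge, or the branch, carries an odd number of negative signs relative to the other. At $p_0=9/10000$ a path of $1000$ edges is negative with probability about $0.6$. Some unit among ten is then likely frustrated, which makes $|K_e|$ small with high probability. The plan is to certify $\Pp(|\tanh K_e|\ge\eta)$ below the threshold.

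\textbf{Non-uniqueness in the window and the magnetization bound.} For $\beta_-\le\beta\le\beta_+$ I would run a Peierls or contour argument for the effective $\Z^2$ model with plus versus minus boundary conditions. A dual contour of length $n$ costs roughly $\prod\E(\cdot)$ per crossed bond. So one needs $\E\bigl[e^{-2K_e}\text{-type weight}\bigr]$, more precisely $\E\,\frac{1-\tanh K_e}{1+\tanh K_e}$ or $\E e^{-2K_e^+}$ with care for negative $K_e$, to be below $1/\mu$, where $\mu\approx2.64$ is the connective constant of $\Z^2$. I would use the rigorous bound $\mu<2.68$. The two boundary conditions then give distinct limits almost surely, by a Borel--Cantelli contour sum and ergodicity.

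The bound $\liminf\E\gibbs{M_L^2}\ge0.08$ at $\beta_N$ would come from the same Peierls estimate applied to pair correlations $\E\gibbs{\sigma_x\sigma_y}$. This requires averaging over interior gadget spins, whose expected correlation with their terminals is computed exactly from the series--parallel recursion. On the Nishimori line, gauge identities make these correlations nonnegative in expectation, and $\E\gibbs{\sigma_x\sigma_y}=\E\gibbs{\sigma_x\sigma_y}^2$. The extension to all $p\le1/1000$ (resp.\ $3/2500$) would use monotonicity in $p$ of the Peierls sum at $\beta_N(p)$, checked on a finite grid with interval arithmetic plus a small-$p$ analytic bound.

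\textbf{Main obstacle.} I expect the hard step to be making the cold uniqueness criterion rigorous with the random-sign effective couplings. The Peierls side is comparatively standard. A uniqueness criterion for a random-bond Ising model with rare strong bonds needs a careful comparison, for example domination of the FK-type disagreement clusters by independent percolation with edge density $\E\tanh|K_e|<1/2$ (I would try the van den Berg--Maes disagreement percolation first). It also needs a tight certified tail for the $2003$-edge units at $\beta\approx2\beta_N$, where the freezing is only approximate.
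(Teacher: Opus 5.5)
Your architecture matches the paper's: exact decimation to iid backbone couplings, a contour bound on a window, percolation domination at both ends with frustrated units capping the coupling on cooling, and gauge identities at $\beta_N$. Several concrete steps, however, fail as written.

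\emph{The contour weight.} Flipping the inside of a contour bounds its probability by $\min(1,e^{-2\sum_{\partial A}K})$. Your ``more precise'' weight is $\E\frac{1-\tanh K_e}{1+\tanh K_e}=\E e^{-2K_e}$, and this is exactly $1$ at $\beta_N$. Indeed, the law of $K_e$ there satisfies the Nishimori symmetry $\E f(-K_e)=\E[f(K_e)e^{-2K_e}]$, and $f\equiv1$ gives $\E e^{-2K_e}=1$. So it certifies nothing at the one temperature where order must be shown. Your alternative $\E e^{-2K_e^+}$ is not a valid weight: since $\sum K\le\sum K^+$, $e^{-2\sum K^+}$ is a lower bound on $\min(1,e^{-2\sum K})$, not an upper bound. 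What works is the per-contour Chernoff bound $\min(1,e^{-2S})\le e^{-2sS}$ for $s\in[0,1]$, as in Horiguchi and Morita, with weight $\min_{s\in[0,1]}\E e^{-2sK_e}$. At $\beta_N$ the choice $s=\frac12$ gives $\E\sech K_e\approx0.205$. Summability of the contour series ($w<1/\mu$) also does not by itself keep $\E\nu^+(\sigma_0)$ away from $0$; the paper needs $2\mathsf P(w)<1$.

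\emph{The uniqueness density.} Your density is on the wrong side. For a ferromagnetic bond the random-cluster measure lies between Bernoulli$(\tanh K)$ and Bernoulli$(1-e^{-2K})$, and only the upper comparison controls boundary influence. In fact ``$\E\tanh|K_e|<\frac12$ implies uniqueness'' is false. The homogeneous ferromagnet on $\Z^2$ with $\tanh K=0.45$ lies above $\tanh K_c=\sqrt2-1$ and has two Gibbs states. The correct tool is Newman's bound $|\nu^\eta(\sigma_A)-\nu^{\eta'}(\sigma_A)|\le2\Phi_{p(K)}(A\leftrightarrow\partial\Lambda)$ with $p_g=1-e^{-2|K_g|}$, valid for either sign. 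Averaged over the disorder it gives density $\bar p_H=\E(1-e^{-2|K_H|})$, and Harris's theorem finishes. van den Berg--Maes disagreement percolation is worse. Its site parameter is $\tanh(\sum_{y\sim x}|K_{xy}|)$, which is close to $1$ at low temperature at every site with an incident bond free of frustrated units, about half the sites.

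\emph{The cold mechanism.} Your picture here is also off:
\begin{itemize}
\item A $1000$-edge path is negative with probability $(1-(1-2p_0)^{1000})/2\approx0.42$, not $0.6$.
\item A unit is capped only when its two paths agree and its branch opposes $e_d$.
\item The capped coupling tends to $\frac12\log2$, not $0$. So a single frustrated unit leaves $1-e^{-2|K|}\to\frac12$ exactly, and the criterion holds only on average, with $\bar p_H\to0.374$.
\end{itemize}

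\emph{The magnetization at $\beta_N$.} Interior spins are all but a fraction $1/(1+2I)$ of the vertices. So the bound $0.08$ needs $\E\gibbs{\sigma_u\sigma_v}$ bounded below for far interior pairs. Given the backbone, $\sigma_u$ has conditional mean $c_1\sigma_a+c_2\sigma_b$. The coefficients can have either sign and depend on the same microscopic couplings as $K_g$. Hence $\E\gibbs{\sigma_u\sigma_v}$ does not factor into ``interior--terminal correlation times backbone correlation'', and the series--parallel recursion alone gives no lower bound.

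The missing idea is Kitatani's inequality: on the Nishimori line, deleting edges does not increase $\E\gibbs{\sigma_u\sigma_v}$. Delete the gadget's edges at one pole, so that the other pole becomes a cut vertex. The correlation then factors samplewise into independent pieces. This gives $\E\gibbs{\sigma_u\sigma_v}\ge\kappa_H(u)\kappa_H(v)$ times a backbone two-point function in which the cut gadgets carry zero coupling. That is why the paper needs a Peierls bound with up to two defect edges ($\mathsf h(v_H)<1$) and a chain lower bound $\bar\kappa_H\ge0.4914$.

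\emph{The range of $p$.} A grid check in $p$ needs continuity control between grid points and an argument as $\beta_N(p)\to\infty$. The paper avoids both by proving that $p\mapsto v_H(\beta_N(p))$ is nondecreasing: it is the Bhattacharyya parameter of a channel that degrades as $p$ grows.
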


\begin{figure}[tp]
\centering
\newcommand*{\rmloc}[1]{\\[1pt]{\tiny\color{black!75}#1}}%
\begin{NoHyper}%
\begin{tikzpicture}[
  x=1cm, y=1cm, font=\scriptsize,
  execute at begin node={\thickmuskip=5mu\medmuskip=4mu\relax},
  >={Stealth[length=1.5mm,width=1.1mm]},
  thmlook/.style={fill=black!80, draw=black, line width=0.8pt},
  ourslook/.style={rounded corners=1.3mm, fill=black!16, draw=black, line width=0.6pt},
  knownlook/.style={fill=white, draw=black, line width=0.4pt},
  redolook/.style={fill=white, draw=black, line width=0.45pt,
    path picture={\draw[black, line width=0.45pt]
      ([shift={(1.6pt,1.6pt)}]path picture bounding box.south west) rectangle
      ([shift={(-1.6pt,-1.6pt)}]path picture bounding box.north east);}},
  certlook/.style={fill=black!4, draw=black, line width=0.8pt, dash pattern=on 2.2pt off 1.3pt},
  toollook/.style={rounded rectangle, fill=black!9, draw=black, line width=0.6pt},
  box/.style={rectangle, align=center, text width=2.02cm, inner xsep=1.2pt, inner ysep=1.5pt,
    minimum height=0.9cm},
  wide/.style={text width=4.37cm},
  thm/.style={box, wide, thmlook, text=white, font=\scriptsize\bfseries},
  ours/.style={box, ourslook},
  known/.style={box, knownlook},
  redo/.style={box, redolook, inner xsep=2.6pt, inner ysep=2.8pt},
  cert/.style={box, certlook},
  tool/.style={toollook, align=center, inner sep=2.5pt, minimum height=0.5cm},
  sw/.style={rectangle, minimum width=0.5cm, minimum height=0.3cm, inner sep=0pt},
  lg/.style={anchor=west, inner sep=2pt, text height=1.6ex, text depth=0.3ex},
  use/.style={->, line width=0.5pt, draw=black!80},
  rel/.style={use, dashed},
  band/.style={->, line width=0.9pt, draw=black!45},
  col/.style={rounded corners=2.5mm, fill=black!3, draw=black!30, line width=0.4pt},
]
\draw[col] (0.00,0.85) rectangle (5.10,-9.05);
\draw[col] (5.50,0.85) rectangle (10.45,-9.05);
\draw[col] (10.65,0.85) rectangle (16.30,-9.05);
\node[thm] (T1) at (2.425,0)  {Theorem~\ref{thm:intro-sk} (SK)\\ vertical boundary,\\ continuous transition};
\node[thm] (T2) at (7.975,0)  {Theorem~\ref{thm:intro-p} ($p$-spin)\\ reentrance near\\ the triple point};
\node[thm] (T3) at (13.525,0) {Theorem~\ref{thm:intro-lat} (lattice)\\ reentrant Gibbs\\ states on $\Z^2[H]$};
\node[ours]  (K2) at (1.2,-2.0)  {Parisi second\\ derivative $=$\\ susceptibility\rmloc{Thm~\ref{thm:sk}(b)}};
\node[known] (K0) at (3.65,-2.0) {Dey--Kang\\ criterion and\\ the envelope\rmloc{\cite{DeyKang2026,ItoiSakamoto2023}; Thm~\ref{thm:sk}(a)}};
\node[known] (K3) at (1.2,-4.0)  {Lopatto's\\ accumulation\\ theorem\rmloc{\cite{Lopatto2026}; Thms~\ref{thm:sk}(d), \ref{thm:accum}}};
\node[ours]  (K6) at (3.65,-4.0) {Continuity at\\ the boundary\\ line $J_0=1$\rmloc{Thm~\ref{thm:skline}}};
\node[ours]  (K4) at (1.2,-6.0)  {Alternative\\ proof by\\ convexity\rmloc{Sec.~\ref{sec:accum}; App.~\ref{app:mf-accum}}};
\node[redo]  (K7) at (3.65,-6.0) {Chen's\\ concentration,\\ proof repaired\rmloc{\cite{Chen2014}; Lem.~\ref{lem:concentration}}};
\node[cert]  (K5) at (1.2,-8.0)  {Polynomial\\ inequality\\ (V1), exact\\ arithmetic\rmloc{App.~\ref{sec:accum-cert}}};
\node[redo]  (K1) at (3.65,-8.0) {Dey--Kang's\\ Jensen bound,\\ just past $M$\rmloc{\cite{DeyKang2026}; Lem.~\mbox{\ref{lem:jensen}--\ref{lem:cw}}}};
\node[redo]  (P1)  at (6.75,-2.0) {Warm point:\\ weak recovery,\\ re-proved\rmloc{\cite{Chen2019}; Thm~\ref{thm:nm-main}(i)}};
\node[ours]  (P34) at (9.2,-2.0)  {Cold point:\\ second-order\\ comparison\rmloc{adapts \cite{AronowLopatto2026}; \S\S\mbox{\ref{sec:nm-sg}--\ref{sec:nm-fm}}}};
\node[ours]  (P7)  at (6.75,-4.0) {Reduction to\\ three finite\\ predicates\rmloc{Thm~\ref{thm:reduction}}};
\node[ours]  (P5)  at (9.2,-4.0)  {$C_{\rm SG}>C_{\rm FM}$:\\ positive kernel,\\ $p\ge26$ bounds\rmloc{\S\ref{sec:nm-nondeg}}};
\node[cert]  (P8)  at (6.75,-6.0) {Explicit\\ instances\\ $p=3$, $p=4$\rmloc{Thms~\mbox{\ref{thm:p3}--\ref{thm:p4}}; App.~\ref{app:cert34}}};
\node[cert]  (P6)  at (9.2,-6.0)  {Signs at $M$,\\ $p\le25$; bound\\ constants,\\ $p\ge26$\rmloc{App.~\ref{app:nm-cert}}};
\node[known] (PN)  at (6.75,-8.0) {Nishimori's\\ replica\\ prediction\rmloc{\cite{Nishimori2026}}};
\node[redo]  (PT)  at (9.2,-8.0)  {Triple point:\\ Zhou's claim,\\ proved here\rmloc{\cite{Zhou2024}; Prop.~\ref{prop:nm-triple}}};
\node[redo]  (L3) at (12.3,-2.0)  {Quenched\\ Peierls bound:\\ non-uniqueness\rmloc{\cite{HM1982}; Props~\mbox{\ref{prop:lat-CP}--\ref{prop:lat-NU}}}};
\node[known] (L4) at (14.75,-2.0) {Uniqueness:\\ Newman's\\ domination\rmloc{\cite{Newman1994,Harris1960}; Lem.~\ref{lem:lat-newman}}};
\node[ours,wide] (L1) at (13.525,-4.0) {Backbone reduction:\\ iid couplings, explicit law\\ not of the form $\beta J$\rmloc{Lem.~\ref{lem:lat-BR}, \ref{lem:lat-decimation}; Prop.~\ref{prop:lat-law}}};
\node[ours]  (L5) at (12.3,-6.0)  {Order at $\beta_N$:\\ two-defect\\ Peierls bound\rmloc{uses \cite{Kitatani1994}; Thm~\ref{thm:lat-A}}};
\node[known] (L2) at (14.75,-6.0) {Classical\\ decorated-bond\\ mechanism\rmloc{\cite{Syozi1968,Miyazima1968}; \S\ref{sec:lat-scope}}};
\node[cert]  (L6) at (14.75,-8.0) {Certified\\ inputs, two\\ separate codes\rmloc{App.~\ref{app:lat-cert}}};
\draw[use] (K2) -- (K2 |- T1.south);
\draw[use] (K0) -- (K0 |- T1.south);
\draw[use] (K3.east) -- (2.425,-4.0) -- (2.425,-4.0 |- T1.south);
\draw[rel] (K4) -- (K3);
\draw[use] (K5) -- (K4);
\draw[use] (K7) -- (K6);
\draw[use] (K6.east) -- (4.95,-4.0) -- (4.95,0) -- (T1.east);
\draw[use] (K1.west) -- (2.425,-8.0) -- (2.425,-5.0) -- (2.95,-5.0) -- (2.95,-5.0 |- K6.south);
\draw[use] (P1)  -- (P1 |- T2.south);
\draw[use] (P34) -- (P34 |- T2.south);
\draw[use] (P7)  -- (P1);
\draw[use] (P7)  -- (P8);
\draw[use] (P5)  -- (P34);
\draw[use] (P6)  -- (P5);
\draw[use] (PT)  -- (P6);
\draw[rel] (PN.east) -- (7.975,-8.0) -- (7.975,-8.0 |- T2.south);
\draw[use] (L3) -- (L3 |- T3.south);
\draw[use] (L4) -- (L4 |- T3.south);
\draw[use] (L1.north -| L3) -- (L3);
\draw[use] (L1.north -| L4) -- (L4);
\draw[use] (L1.south -| L5) -- (L5);
\draw[rel] (L2) -- (L2 |- L1.south);
\draw[use] (L5.west) -- (10.9,-6.0) -- (10.9,0) -- (T3.west);
\draw[use] (L6.east) -- (16.1,-8.0) -- (16.1,0) -- (T3.east);
\node[tool, text width=8.2cm] (S2) at (5.5,-9.75)
  {Parisi functional (Sec.~\ref{sec:prelim}); magnetization bins; concentration};
\node[tool, text width=15.4cm] (S1) at (8.15,-10.55)
  {Gauge symmetry~\cite{Nishimori1981}: Nishimori's identities and inequality; the Nishimori line as a planted model};
\draw[band] (2.425,-9.75 |- S2.north)  -- (2.425,-9.05);
\draw[band] (7.975,-9.75 |- S2.north)  -- (7.975,-9.05);
\draw[band] (0.7,-10.55 |- S1.north)    -- (0.7,-9.05);
\draw[band] (10.25,-10.55 |- S1.north)  -- (10.25,-9.05);
\draw[band] (13.525,-10.55 |- S1.north) -- (13.525,-9.05);
\node[sw, thmlook]   (g1) at (0.45,-11.35) {}; \node[lg] at (g1.east) {main theorem};
\node[sw, ourslook, rounded corners=0.6mm]  (g2) at (3.35,-11.35) {}; \node[lg] at (g2.east) {our ingredient};
\node[sw, toollook, minimum width=0.8cm]  (g3) at (6.45,-11.35) {}; \node[lg] at (g3.east) {shared technique};
\node[sw, certlook]  (g6) at (9.85,-11.35) {}; \node[lg] at (g6.east) {computer-assisted certificate};
\node[sw, knownlook] (g4) at (0.45,-11.85) {}; \node[lg] at (g4.east) {existing result};
\node[sw, redolook]  (g5) at (3.35,-11.85) {}; \node[lg] at (g5.east) {existing result, adapted: extended, repaired or (re-)proved};
\draw[use] (0.2,-12.35) -- (0.7,-12.35) node[lg] {used in proof};
\draw[rel] (3.1,-12.35) -- (3.6,-12.35) node[lg] {other relation};
\draw[band] (6.05,-12.35) -- (6.85,-12.35) node[lg] {shared technique used in the column};
\end{tikzpicture}%
\end{NoHyper}
\caption{How the main theorems are proved. Each column leads to one theorem; in the middle column the
reduction, with certified predicates, also gives the explicit instances for $p=3$ and $p=4$
(Theorems~\ref{thm:p3} and~\ref{thm:p4}), which are separate from Theorem~\ref{thm:intro-p}. A solid arrow means
that its source is used in the proof of its target, and the grey arrows from the two bottom bands mark the
shared techniques that each column uses; only the main dependencies are drawn. Dashed arrows mark
other relations: an alternative proof of Lopatto's theorem (Section~\ref{sec:accum}), the replica
prediction whose two-point form Theorem~\ref{thm:intro-p} proves, and the classical mechanism that
the gadget realizes with quenched iid couplings. Box styles give provenance; grey labels give
sources and where each item is proved. Section~\ref{sec:intro-ideas} explains each column, and
Section~\ref{sec:intro-prior} lists the existing results that we re-prove, extend or repair.}
\label{fig:roadmap}
\end{figure}

On cooling at the fixed disorder strength $p_0$, the Gibbs state is therefore unique, then
non-unique in a window that contains the Nishimori temperature, then unique again. On
$[\beta_-,\beta_+]$ the non-uniqueness is ferromagnetic in the following sense: there are a plus
and a minus state that give the spin at the origin of $\Z^2$ the expectations $\pm Y$, for a random
$Y$ with $\E Y>0$ (Proposition~\ref{prop:lat-NU}). The same
certified inputs show that $\E\gibbs{M_L^2}\to0$ and $\E\gibbs{Q_L^2}\to0$ as $L\to\infty$ for
$\beta\ge\beta_{\rm cold}$, where $Q_L$ is the overlap of two independent replicas
(Corollary~\ref{rem:lat-finite}). So the cold phase has neither ferromagnetic nor Edwards--Anderson
order: it is paramagnetic. Nothing is claimed in the gaps $(\beta_{\rm hot},\beta_-)$ and
$(\beta_+,\beta_{\rm cold})$.\footnote{Each criterion used is certified to fail at a point within $10^{-4}$
of the end of its certified range (an absolute distance in $\beta$, about $3\times10^{-5}\beta_N$),
so the gaps reflect the methods.}
The proof is
computer-assisted: finitely many inequalities about an explicit law are certified in interval
arithmetic, each by at least two implementations that share no code.

\paragraph{Proxies.} The mean-field models and the decorated lattice are proxies for the
nearest-neighbour $\pm J$ model on $\Z^2$, which motivated the question. Together they show that
verticality below $M$ is not universal: the SK boundary is vertical, the
$p$-spin boundaries are reentrant near the triple point, and so is the boundary on one
two-dimensional periodic lattice at one disorder strength. The two mean-field outcomes differ for
the reasons given in Sections~\ref{sec:intro-prior} and~\ref{sec:intro-ideas}: in the SK model the magnetization appears
continuously, at a boundary fixed at $J_0=1$ by Toulouse's marginality identity, whereas near the
triple point of the $p$-spin models the transition is discontinuous in the replica picture and the
bending is decided at second order by the sign of $C_{\rm SG}-C_{\rm FM}$. The proxies say nothing
about the nearest-neighbour lattices $\Z^d$. The lattice proof uses two features that $\Z^2$ lacks: an
exact decimation to independent backbone couplings (the effective couplings between neighbouring
vertices of $\Z^2$ once the gadget interiors are summed out) whose law is not of the form
$\beta J$, and a frustration-capped effective coupling (Section~\ref{sec:intro-ideas}), which is
what makes uniqueness at low temperature possible.

\subsection{Relation to earlier work}\label{sec:intro-prior}

\paragraph{The SK model.} In the SK model with $J=1$, write the bias as a Curie--Weiss term $\gamma Nm^2/2$, where
$m=N^{-1}\sum_i\sigma_i$ and $\gamma=\beta J_0$. Throughout, a free energy is a limit of
$N^{-1}\E\log Z$ (no factor $-T$), so of two competing free energies the larger one wins, and the
Parisi formula is an infimum; in particular $F^{\rm SK}(\beta,h)$ is the limit of
$N^{-1}\E\log\sum_\sigma\exp\big(\beta N^{-1/2}\sum_{i<j}g_{ij}\sigma_i\sigma_j+h\sum_i\sigma_i\big)$ with independent standard Gaussians $g_{ij}$. Let
$\varphi_\beta(h)=F^{\rm SK}(\beta,h)-F^{\rm SK}(\beta,0)$ be the free energy per spin that the SK
model without bias gains in a field $h$, the field term being $h\sum_i\sigma_i$. By Chen's
variational formula~\cite{Chen2014} (Section~\ref{sec:intro-ideas}), the magnetization appears
exactly when this gain exceeds the cost of producing the field through the Curie--Weiss term,
that is, if and only if $\varphi_\beta(h)>h^2/(2\gamma)$ for some
$h>0$~\cite[Remark~1.3]{DeyKang2026}. The replica prediction $J_0=1$ below $M$ therefore follows
from two facts~\cite[Remark~1.3]{DeyKang2026}: the envelope bound $\varphi_\beta(h)\le h^2/(2\beta)$
for all $h$, and Toulouse's identity that the zero-field susceptibility of the spin-glass phase
equals exactly $1/\beta$. Here and below, the zero-field susceptibility is the curvature
$\lim_{h\to0}2\varphi_\beta(h)/h^2$ of the limiting gain.\footnote{Because the field enters without a
factor $\beta$, this is the usual susceptibility, $\chi=\beta(1-\int_0^1q(x)\,\dd x)$ in Parisi's
notation, divided by $\beta$; so Toulouse's identity $\chi=1/J$~\cite{Toulouse1980} (see
also~\cite[\S3.3.3]{NishimoriBook}) reads $1/\beta$ here. In finite volume the zero-field susceptibility equals $1$ at every
temperature~\cite[Remark~1.3]{DeyKang2026}, because at zero field gauge symmetry gives
$\E\gibbs{\sigma_i\sigma_j}=0$ for $i\ne j$. The limit $N\to\infty$ must be taken before $h\to0$,
and below $T=1$ the two limits do not commute.}
The identity is a marginality property of the
Parisi solution and depends on the Parisi measure near overlap $0$. Gauge symmetry supplies the
envelope but not the identity.

Parts of Theorem~\ref{thm:intro-sk} are known. For $T\ge1$ the location of the boundary is due
to Dey and Kang~\cite[Proposition~1.1(i)--(ii)]{DeyKang2026}. For $J_0\le1$ the unmagnetized half
follows from Nishimori's correlation inequality~\cite{Nishimori1981}, which Itoi and Sakamoto
state for the SK model~\cite[Theorem~2.2 and \S3]{ItoiSakamoto2023}, applied at the Nishimori
point of the same bias. That point has Curie--Weiss parameter $\gamma=J_0^2\le1$, so $0$ is the
only maximizer in Chen's variational formula there~\cite[Proposition~1.1(i)]{DeyKang2026}, and
Chen's concentration theorem~\cite[Proposition~1]{Chen2014} gives $\E\gibbs{m^2}\to0$ there. For
$J_0=1$ this point is the multicritical point, so these results also give $\E\gibbs{m^2}\to0$ on
the boundary line $J_0=1$. The
envelope bound is a short consequence of these results: by the criterion above, it says exactly that no bias $J_0<1$ magnetizes the model at
inverse temperature $\beta$, which the unmagnetized half gives. We record it, with a direct proof,
as Theorem~\ref{thm:sk}(a). Below $T=1$, Dey and Kang's equivalence therefore reduces the boundary
$J_0=1$ to Toulouse's identity. Lopatto proved that for every $\beta>1$ the origin is an
accumulation point of the support of the zero-field Parisi measure, and deduced that the solution
of the Parisi PDE for that measure has second derivative $1/\beta$ at the
origin~\cite[Proposition~5.1 and Lemma~6.1]{Lopatto2026}. For $\beta$ near $1$ the accumulation
property had been proved earlier by Zhou, whose theorem identifies the support as an interval
$[0,\upsilon_\beta]$~\cite[Theorem~1]{Zhou2026}. Lopatto does not discuss the susceptibility or the
ferromagnetic boundary. What we add is the following.
\begin{itemize}
\item A proof that the zero-field susceptibility equals that second derivative
(Theorem~\ref{thm:sk}(b)), by a short argument from uniqueness and continuity of Parisi
minimizers. Dey and Kang describe this susceptibility, without proof, as an integral against the
zero-field Parisi measure~\cite[Remark~1.3]{DeyKang2026}. With Lopatto's result the
identification gives Toulouse's identity, hence the magnetized half for $T<1$ and the vertical
boundary below $M$.
\item On the boundary line $J_0=1$ itself: at every temperature $T<1$, $0$ is the only maximizer
in Chen's variational formula, whereas the comparison with the multicritical point gives only
$\E\gibbs{m^2}\to0$ (Remark~\ref{rem:right-nbhd}); the strict envelope bound
$\varphi_\beta(h)<h^2/(2\beta)$ for every $\beta>0$ and $h\ne0$, with an explicit margin (at least
$h^6/(1536\beta^5)$ for $0<|h|\le\beta$); and the continuity bound (Section~\ref{sec:skline}). The
proofs use neither the Parisi formula nor the accumulation theorem. With both, the first two
statements also follow for $T<1$ from the second-derivative identity of Theorem~\ref{thm:sk}(d)
(Remark~\ref{rem:envelope-route}).
\item A short alternative proof of the accumulation property (Section~\ref{sec:accum}). It uses
nothing from Sections~3--5 of~\cite{Lopatto2026}, which contain Lopatto's proof of it. Its last step, in which the optimality
conditions at the two ends of a gap at the origin contradict a shape property of $\Gamma$ on the
gap (here $\Gamma(q)$ is the mean square of the spatial derivative of the Parisi PDE solution along
the optimal diffusion; Section~\ref{sec:intro-ideas}), is of the type Lopatto uses. The shape property is different, convexity of
$q\mapsto\Gamma(q)/q$ in place of Lopatto's one-crossing property of $\Gamma''$, and it is proved
by a different method (Section~\ref{sec:intro-ideas}). The proof is computer-assisted through one
polynomial inequality that is verified in exact rational arithmetic.
\end{itemize}
We are not aware of earlier work that proves the identification of the susceptibility with this
second derivative, or with any other quantity of the zero-field Parisi solution; that proves, with
it and Lopatto's theorem, the vertical boundary for all $T<1$; that shows that $0$ is the only
maximizer on the boundary line for $T<1$, or contains the strict envelope bound or the continuity
bound of Section~\ref{sec:skline}; or that contains the argument for accumulation in
Section~\ref{sec:accum}. The two halves of
Theorem~\ref{thm:intro-sk} rest on different inputs. The magnetized half uses the Parisi formula
and, for $T<1$, the accumulation property. The unmagnetized half, including the boundary line and
the continuity bound, uses neither the Parisi formula nor the accumulation theorem
(Remark~\ref{rem:parisi-free}).

\paragraph{Nishimori's verticality argument.} Theorem~\ref{thm:intro-lat} bears on Nishimori's verticality argument, whose conclusion does
not hold on every two-dimensional periodic lattice: on $\Z^2[H]$ at $\Pp(J_e=-1)=p_0$, strictly on
the ordered side of the Nishimori-line threshold, the ferromagnetic phase does not extend to low
temperature, so the boundary there is reentrant, a shape allowed by the gauge
inequality~\cite{Nishimori1981} (see~\cite[Fig.~4.3]{NishimoriBook}), rather than vertical. The gauge
identities on which that argument rests hold on $\Z^2[H]$, as on any graph. At least one of two
heuristic steps of the argument therefore fails there, and we do not determine which: that the free energy on the Nishimori line, and with it the frustration
entropy, is singular at the multicritical point, or that such a singularity, being a property of
the disorder alone, fixes the ferromagnetic boundary at every lower temperature. Kitatani's
verticality result assumes a condition that, as Nishimori, Ohzeki and Okuyama observed, reentrance
would violate~\cite[\S III.F]{NOO2025} (Section~\ref{sec:lat-scope}).

\paragraph{The reentrance results.} We are not aware
of an earlier rigorous proof of reentrance in the $p$-spin models, or of an earlier rigorous
(computer-assisted) proof that, for the Ising model with quenched iid $\pm J$ couplings on a
planar, $\Z^2$-periodic graph of maximum degree four, ferromagnetic long-range order present at
the Nishimori temperature is absent at all sufficiently low temperatures at the same disorder
strength. The geometric conditions separate this statement from the Bethe-lattice and
hierarchical-lattice results cited above; the quenched iid couplings separate it from the exact
reentrance on decorated and periodic square lattices with deterministic or annealed couplings
(Section~\ref{sec:intro-ideas}).

\paragraph{Existing results re-proved, extended or repaired.} Several cited results are re-proved,
extended, repaired or replaced here; each is stated where it enters (see also
Figure~\ref{fig:roadmap}). We include a proof of Chen's concentration
theorem~\cite[Proposition~1]{Chen2014}, because the printed proof applies the exponential bound on
the complement of the event where it holds (Lemma~\ref{lem:concentration}). Dey and Kang's Jensen
argument~\cite[\S3.1]{DeyKang2026} is carried slightly past $\gamma=1$ (Lemmas~\ref{lem:jensen}
and~\ref{lem:cw}). Appendix~\ref{app:mf-parisi} records a short proof that $0$ lies in the support
of the zero-field Parisi measure (the last statement of \textup{(P6)} in Proposition~\ref{prop:facts}), because the argument
of~\cite{AC2015a} relies on a bound that at isolated support points is supplied
by~\cite[Corollary~3.10]{JT2017}. The same appendix runs Panchenko's argument~\cite[\S2]{Panchenko2014} with a
deterministic field. Zhou stated that the tangency system of the triple point (Section~\ref{sec:nm-triple}) has a unique
solution~\cite[\S2.2]{Zhou2024}, and Proposition~\ref{prop:nm-triple} gives a proof. Zhou's
intermediate-value argument for a one-step Parisi measure~\cite[Theorem~1(2) and \S4.2]{Zhou2024}
needs repairs as printed; Proposition~\ref{prop:nm-SG} recovers the one-step structure together
with the expansion. Lemma~\ref{lem:nm-LS} adapts the persistence argument of Aronow and
Lopatto~\cite[Proposition~2.7]{AronowLopatto2026} to a point where it does not apply directly.
Theorem~\ref{thm:nm-main} proves the two-point statement corresponding to Nishimori's replica
prediction~\cite{Nishimori2026}: Proposition~\ref{prop:nm-SG} and Lemma~\ref{lem:nm-closed}
identify his second-order coefficients, and Theorem~\ref{thm:nm-LP} turns his large-$p$ asymptotics
into inequalities. The warm point re-proves, in substance, Chen's weak-recovery theorem~\cite{Chen2019}, for
$p$-subsets with an explicit threshold (Theorem~\ref{thm:nm-main}(i)), and Horiguchi and Morita's
quenched Peierls bound~\cite{HM1982} is written per site with one Chernoff factor per contour
(Proposition~\ref{prop:lat-CP}).

\subsection{How the proofs go}\label{sec:intro-ideas}

Figure~\ref{fig:roadmap} on page~\pageref{fig:roadmap} shows the structure of the three proofs; the
paragraphs below give, for each column, the obstruction (for the SK model, see also
Section~\ref{sec:intro-prior}) and the step that overcomes it. The columns
share two sets of tools: Nishimori's gauge identities, which enter through the gauge transfer
(Lemma~\ref{lem:gauge-transfer}), the Nishimori identity on the Nishimori line
(Lemmas~\ref{lem:gn} and~\ref{lem:nm-NLid}) and the planted posterior (Theorem~\ref{thm:lat-A});
and, for the mean-field models, the facts about the Parisi functional of Section~\ref{sec:prelim}
together with magnetization bins (partition functions restricted to an interval of magnetizations)
and Gaussian concentration (Lemmas~\ref{lem:concentration}
and~\ref{lem:bins}).

\paragraph{The SK boundary as a susceptibility.} With the Curie--Weiss term $\gamma Nm^2/2$ and
the gain $\varphi_\beta$ of Section~\ref{sec:intro-prior} (see also Section~\ref{sec:sk}),
Chen's variational formula~\cite{Chen2014} expresses the free energy as the maximum over
$\mu\in[-1,1]$ of $F^{\rm SK}(\beta,\gamma\mu)-\gamma\mu^2/2$, where $F^{\rm SK}(\beta,h)$ is the
free energy of the SK model in a field $h$. Let $\Om(\beta,\gamma)$ be the set of maximizers; the
model is magnetized when $0\notin\Om(\beta,\gamma)$. By convexity of the free energy in $\gamma$,
this forces $\liminf_N\E\gibbs{m^2}>0$, while $\Om(\beta,\gamma)=\{0\}$ gives $\E\gibbs{m^2}\to0$ by
Chen's concentration theorem~\cite[Proposition~1]{Chen2014}. Dey and Kang made precise that the onset of
magnetization is at $\gamma_c(\beta)=\inf_{h>0}h^2/(2\varphi_\beta(h))$~\cite[Remark~1.3]{DeyKang2026}.
The argument has two parts.
\begin{itemize}
\item The Nishimori inequality gives the envelope $\varphi_\beta(h)\le h^2/(2\beta)$ for all $h$
(Theorem~\ref{thm:sk}(a)).
\item The curvature $\lim_{h\to0}2\varphi_\beta(h)/h^2$ exists and equals $\partial_x^2u(0,0)$,
the second derivative at the origin of the solution $u$ of the Parisi PDE~\eqref{eq:pde}, with
terminal condition $u(1,x)=\lc x$, for the \emph{zero-field} Parisi measure. The proof uses
uniqueness and continuity of Parisi minimizers.
\end{itemize}
As Lopatto shows~\cite[Lemma~6.1]{Lopatto2026}, using the first-order optimality conditions of
Jagannath and Tobasco~\cite{JT2017}, $\beta^2\bigl(\partial_x^2u(0,0)\bigr)^2=1$ as soon as the
origin is an accumulation point of the support of the Parisi measure; since $\partial_x^2u\ge0$,
this means $\partial_x^2u(0,0)=1/\beta$. Our alternative proof of
the accumulation property argues by contradiction. If the support had a gap at the origin, the
optimality conditions at the two ends of the gap would be incompatible with convexity on the gap
of $q\mapsto\Gamma(q)/q$, where $\Gamma(q)$ is the mean square of $\partial_xu$ along the optimal
diffusion. This convexity follows from the concavity of $\partial_xu$ on $[0,\infty)$ and an
explicit polynomial Lyapunov function. The zero-field susceptibility of the spin-glass phase is
then exactly $1/\beta$, which destabilizes the non-magnetized maximizer precisely when
$\gamma>\beta$, i.e.\ $J_0>1$.

\paragraph{The boundary line.} On the line $\gamma=\beta$ itself the verticality proof compares
$(\beta,\beta)$ with the Nishimori point of bias $J_0=1$, which is the multicritical point. There
$0$ is still the only maximizer~\cite[Proposition~1.1(i)]{DeyKang2026}, so Chen's concentration
theorem and the gauge inequality give $\E\gibbs{m^2}_{\beta,\beta}\to0$, but not
$\Om(\beta,\beta)=\{0\}$. By convexity in $\gamma$, a maximizer $\mu_*\ne0$ at $\gamma=\beta$
shows up only in the right derivative of the free energy, so bounds are needed at Nishimori
points with $J_0$ slightly above $1$ (Remark~\ref{rem:right-nbhd}). Dey and Kang's Jensen
bound $\varphi_\beta(h)\le\lc h$~\cite[\S3.1]{DeyKang2026} is still informative there: at the
Nishimori point of bias $1+\delta$ it confines the maximizers to $\mu^2\le y(\delta)$, where
$y(\delta)\sim12\delta$. Chen's concentration theorem~\cite[Proposition~1]{Chen2014} turns this into
a bound on Gibbs averages, and Nishimori's correlation inequality, in the averaged form of Itoi
and Sakamoto~\cite{ItoiSakamoto2023}, carries it down the vertical line to every temperature
(Figure~\ref{fig:transfer}). This gives $\Om(\beta,\beta)=\{0\}$ and the continuity bound of
Theorem~\ref{thm:intro-sk}, with $\max_{\Om(\beta,\gamma)}|\mu|\le(12(\gamma/\beta-1))^{1/4}$ for
$\beta\le\gamma\le21\beta/20$. The exponent $1/4$ has two sources: the Jensen bound gives
$\mu^2=O(\delta)$ at the Nishimori point, and the transfer down the vertical line loses a square
root.\footnote{On the Nishimori line the model is the Rademacher spiked Wigner model, whose free energy
is known~\cite{KoradaMacris2009,DAM2016}. It gives the sharper value
$\lim_N\E\gibbs{m^2}=O(\delta^2)$ at the Nishimori point of bias $1+\delta$, and after the same
transfer $\max_{\Om(\beta,\gamma)}|\mu|=O((\gamma/\beta-1)^{1/2})$ (Remark~\ref{rem:spiked}).}

\paragraph{Reentrance near the triple point.} In the replica picture~\cite{GNS2001,Nishimori2026}, the transition from the spin glass to the
ferromagnet near the triple point of the $p$-spin models is discontinuous: the magnetization
jumps. The two free energies have the same value and the same temperature derivative at the
triple point, namely those of the paramagnet: the one-step spin-glass solution branches off the
paramagnetic one there, and identities on the Nishimori line give the ferromagnetic solution the
paramagnetic value and derivative at $M$ (Lemma~\ref{lem:nm-FMloc}(a),(b) and Section~\ref{sec:nm-route}).
Fix $j_0=j_{0M}+\delta$ and $\beta=\beta_1+b$. Free
energies are limits of $N^{-1}\E\log Z$ (Section~\ref{sec:intro-prior}), so the larger one wins. The proof compares two of them.
The zero-field Parisi free energy bounds the full free energy from below, by Jensen's inequality
for the Gibbs measure without bias, under which the bias term has nonnegative mean up to $O(1)$;
for odd $p$ its leading part has mean zero, since reversing all spins and the centred couplings preserves the joint law
(Lemma~\ref{lem:lower}). The value of the
replica-symmetric Guerra bound at its ferromagnetic stationary point bounds from above the free
energy of every magnetization bin near the triple-point magnetization. We expand both relative to the
paramagnetic free energy $\log2+\beta^2/4$. Relative to it, both vanish at $M$ together with their
$\beta$-derivatives, and only the ferromagnet gains from the bias, at first order in $\delta$. Their
second-order terms in $b$ are $-A_{\rm SG}b^2/2$ and $-A_{\rm FM}b^2/2$, with Nishimori's
coefficients~\cite{Nishimori2026}. Since $\log2+\beta^2/4$ has second $\beta$-derivative $\frac12$,
the specific heats of the two phases at $M$ are $C_X=\beta_1^2(\frac12-A_X)$, and
$\beta_1^2(A_{\rm FM}-A_{\rm SG})=C_{\rm SG}-C_{\rm FM}$. When this difference is positive, the spin
glass wins at $b=c\,\delta^{1/2}$ by a margin of order $\delta$. Two obstructions have to be
removed.
\begin{itemize}
\item \emph{A second-order lower bound.} A proof of reentrance needs a lower bound on the spin-glass free
energy that is exact to second order at the triple point, whereas trial measures in the Parisi
formula give only upper bounds.
Convexity of the free energy in $\beta$, used alone,
gives the lower bound with $A_{\rm SG}$ replaced by $\frac12$ (it shows only $C_{\rm SG}\ge0$), and
the comparison would then need $C_{\rm FM}<0$, which is false at least for $3\le p\le25$. We show
instead, by a persistence argument of the kind Aronow and Lopatto use for nondegenerate
finite-step Parisi measures~\cite[Proposition~2.7]{AronowLopatto2026}, that every stationary
two-atom measure $x\delta_0+(1-x)\delta_q$ with $0<x<1$ and $(\beta,x,q)$ close to
$(\beta_1,1,q_1)$, where $q_1$ is the spin-glass overlap at $M$, is the Parisi measure
(Lemma~\ref{lem:nm-LS}). The one-step stationarity equations have a
nonsingular Jacobian at $M$, so the one-step branch is analytic and yields the exact expansion
(Proposition~\ref{prop:nm-SG}).\footnote{For $p$ larger than an absolute constant that is not made explicit,
Talagrand proved a Gibbs-measure counterpart just above $\beta_1$: the overlap of two replicas is
asymptotically close to $0$ or to $\pm1$, and the Gibbs measure splits into
lumps~\cite[Theorems~1.3--1.5]{Talagrand2000}. Zhou argued, in an unrefereed preprint, that the
Parisi measure is one-step just above $\beta_1$~\cite[Theorem~1(2)]{Zhou2024}, by an
intermediate-value argument that gives no continuity of the one-step parameters in $\beta$.
Neither result gives the second-order expansion of the free energy.}
\item \emph{The sign for every $p$.} The difference $C_{\rm SG}-C_{\rm FM}$ is of order
$2^{-p}p^{-1/2}$, while $A_{\rm FM}$ and $A_{\rm SG}$ are each exponentially close to $\frac12$, so
its sign cannot be read off numerically for all $p$. An exact identity writes its leading part
as a Gaussian integral with a positive kernel (Proposition~\ref{prop:nm-J}). The remaining terms
include negative ones, which the positive leading part must dominate. Interval arithmetic
certifies this, and hence $C_{\rm SG}>C_{\rm FM}$, for $3\le p\le25$. For $p\ge26$ explicit
two-sided bounds, which turn Nishimori's large-$p$ asymptotics into inequalities, bound the
remainder by the leading part (Theorem~\ref{thm:nm-LP}). Together they cover every integer
$p\ge3$.
\end{itemize}
At the cold point, magnetizations away from the triple-point value are excluded by Guerra-type
bounds, whose values at $M$ are controlled by a strict sign condition on the replica-symmetric
potential (Proposition~\ref{prop:nm-triple}), and small magnetizations by the bound
$\partial_x^2u\le1$ on solutions of the Parisi PDE. At the warm point, the order is, in substance,
W.-K.~Chen's weak-recovery theorem~\cite[proofs of Lemma~4, p.~13, and of Theorem~1(ii),
p.~14]{Chen2019} (Section~\ref{sec:intro-results}). We prove it directly, with the explicit
threshold $m_1$. A first-moment bound shows that the magnetizations $|m|\le m_1$ (for odd $p$, all
$m\le m_1$) carry free energy at most the paramagnetic value, which is $\log2+j_0^2$ at $\beta=2j_0$, while a
replica-symmetric interpolation lower bound on the Nishimori line puts the full free energy
strictly above that value (Theorem~\ref{thm:reduction}(i) and Section~\ref{sec:nm-proof}). This step uses neither the Parisi
formula nor information on Parisi measures.

The explicit instances for $p=3$ and $p=4$ come from a reduction of reentrance at one bias and two
temperatures to three finite predicates (Theorem~\ref{thm:reduction}). At the warm point,
predicate (W) states that a replica-symmetric interpolation lower bound on the free energy on the
Nishimori line exceeds a first-moment upper bound on the free energy of the excluded
magnetization bins. At the cold point, predicate (F) bounds the zero-field Parisi free energy
from below: by convexity of the Parisi functional in the measure, the Frank--Wolfe duality gap at
an explicit one-step measure turns its value there into a lower bound on the infimum. For this we
derive closed forms of the directional derivatives (Appendix~\ref{app:D}).
Predicate (L) bounds the free energy of each magnetization bin from above, below the value
from (F), by Guerra-type bounds with replica-symmetric and one-step trials in a field.\footnote{The
margins in these free-energy inequalities, per spin, are about $1.6\times10^{-4}$ ($p=3$) and
$2.3\times10^{-4}$ ($p=4$) at the warm point and $5.7\times10^{-4}$ and $1.3\times10^{-3}$ at the
cold point, so the numerical evaluation must be rigorous, not merely accurate.}
Every predicate is certified in interval arithmetic and
certified again by an independent implementation (Section~\ref{sec:certificates} and
Appendix~\ref{app:cert34}).

\paragraph{The decorated lattice.} On lattices, the two standard rigorous criteria for order and for uniqueness that we use see
the couplings through one-edge functionals. The quenched Peierls bound of Horiguchi and Morita~\cite{HM1982} proves
order once $\E e^{-2s\beta J_e}$ is small enough for some $s\in[0,1]$, and Newman's comparison
with Bernoulli percolation~\cite{Newman1994,Newman1997} proves uniqueness of the Gibbs state once
$\E(1-e^{-2\beta|J_e|})$ lies below the critical probability. For couplings $\beta J_e$ with $J_e$
of fixed law both criteria are monotone in $\beta$. They certify order only on a low-temperature
half-line and uniqueness only on a high-temperature interval, so they cannot detect reentrance.
Summing out the interior spins of each gadget is exact and
turns the model on $\Z^2[H]$ into an Ising model on $\Z^2$ whose couplings, the \emph{backbone
couplings}, are again independent and identically distributed, at every temperature
(Lemma~\ref{lem:lat-BR}). Their law is explicit (Proposition~\ref{prop:lat-law}) and not of the
form $\beta J$. At the Nishimori temperature each long path is a weak bond whose sign is negative
with probability about $0.42$. Every unit is then strongly coupled through its direct edge, and the backbone
coupling is reliable, i.e.\ large and positive with high probability. On cooling the paths
freeze. Consider a unit whose two paths have the same sign (the product of the couplings along
the path) and whose branch, the two paths together with the two connector edges, has the sign
opposite to the direct edge. At zero temperature both relative orientations of the ends of the
unit then cost exactly one broken edge: the direct edge, or one of the two connector edges. The
first has one ground state and the second two, so the coupling of the unit tends to
$\frac12\log2$. At $p_0$ a unit is in this frustrated state with probability about $0.17$, and in
a chain of ten units some unit is likely to be, so the backbone couplings do not become reliable
at low temperature. Quantitatively, as $\beta\to\infty$ the quantity $1-e^{-2|K|}$ for the backbone
coupling $K$ tends to $1$ on a chain with no frustrated unit, to $\frac12$ on a chain with one,
and to at most $\frac15$ on a chain with two or more. Its mean tends to $0.374\ldots$, below
$\frac12$, and bond percolation on $\Z^2$ has no infinite cluster at parameter $\frac12$ or below~\cite{Harris1960}. This is the
classical decorated-bond mechanism. It gives exact
paramagnet--ferromagnet--paramagnet reentrance on deterministic decorated
lattices~\cite{Syozi1968,Miyazima1968}, on the square lattice with a periodic pattern of
competing couplings~\cite{KMS1985,KMS1986}, and with annealed bond
randomness~\cite{KasaiMiyazimaSyozi1969,KasaiSyozi1973}. Here the couplings are quenched and iid
on every microscopic edge. The criteria described above apply at each fixed $\beta$ to the
decimated couplings, which are no longer monotone in $\beta$. Horiguchi and Morita's quenched
Peierls bound gives non-uniqueness on a window around $\beta_N$ (Proposition~\ref{prop:lat-NU}),
and Newman's domination by Bernoulli percolation, with Harris's theorem~\cite{Harris1960}, gives
uniqueness at high and at low temperature (Corollary~\ref{cor:lat-UH}). Order of all microscopic
spins at the Nishimori temperature follows from gauge identities, Kitatani's
inequality~\cite{Kitatani1994}, in the form stated in~\cite[(3)]{Kitatani2009}, which makes
the averaged correlations on the Nishimori line monotone under adding edges, and a Peierls bound
with two defect edges
(Theorem~\ref{thm:lat-A}). The statement for every disorder strength up to $1/1000$ follows from a
channel-degradation argument: along the Nishimori line, the one-gadget contour weight used by
the Peierls bound is nondecreasing in the disorder strength, so a bound certified at $1/1000$
holds for all smaller disorder strengths (Lemma~\ref{lem:lat-degradation}). A
further gauge inequality shows that, for every gadget, the Chernoff weight
$\min_{s\in[0,1]}\E e^{-2sK}$ of the Peierls bound, with $K$ the decimated coupling, is smallest at
the Nishimori temperature (Proposition~\ref{prop:lat-floor}). The contour side
therefore sees no further than the Nishimori line, and the reentrance comes from the uniqueness
side.

\paragraph{Computer-assisted steps.} The proofs of the theorems use computer assistance at four
places, each documented in an appendix and in the companion repository (Appendix~\ref{app:repro}).
Sections~\ref{sec:sk} and~\ref{sec:skline} contain no computer-assisted statement.
Theorem~\ref{thm:intro-sk} uses the accumulation theorem, which is due to
Lopatto~\cite{Lopatto2026}; our alternative proof of it verifies one polynomial inequality, property
(V1) of its polynomial Lyapunov function, in exact rational arithmetic
(Appendix~\ref{sec:accum-cert}). For Theorem~\ref{thm:intro-p}, the signs
at the triple point (Theorem~\ref{thm:nm-nondeg}) are certified in interval arithmetic for $3\le p\le25$, and the constants of the
explicit bounds for $p\ge26$ are certified as well (Appendices~\ref{app:nm-LP}
and~\ref{app:nm-cert}). The predicates behind Theorems~\ref{thm:p3} and~\ref{thm:p4} are certified
in interval arithmetic and again by an independent implementation (Appendix~\ref{app:cert34}). The
inputs of Theorem~\ref{thm:intro-lat} are certified in interval arithmetic, each by at least two
implementations that share no code (Appendix~\ref{app:lat-cert}).\footnote{Some statements also
have a second, computer-assisted proof (the single crossing of Lemma~\ref{lem:nm-U} for integer
$p$, Appendix~\ref{app:nm-U}; the conditions (N1)--(N3) of Section~\ref{sec:nm-fm} for $7\le
p\le25$, Appendix~\ref{app:nm-cert}). Remark~\ref{rem:nm-chen} reports an interval-arithmetic
check of a lemma of Zhou that no proof here uses, and two floating-point evaluations in the main
text, one quoted for orientation only and one suggesting that a set is an interval, are marked as
such.}

\subsection{Organization}\label{sec:intro-organization}

Sections~\ref{sec:prelim}--\ref{sec:accum} treat the SK model: Section~\ref{sec:prelim} collects the
facts about the Parisi functional, Section~\ref{sec:sk} proves Theorem~\ref{thm:sk},
Section~\ref{sec:skline} treats the boundary line $J_0=1$ and the continuity of the transition, and
Section~\ref{sec:accum} gives the alternative proof of the accumulation property.
Sections~\ref{sec:pspin}--\ref{sec:certificates} treat the $p$-spin models: Section~\ref{sec:pspin}
sets up the model and its free-energy bounds and reduces reentrance at one bias and two temperatures
to finite predicates, Section~\ref{sec:nearM} proves Theorem~\ref{thm:nm-main}, and
Section~\ref{sec:certificates} states the explicit instances for $p=3$ and $p=4$.
Section~\ref{sec:lattice} treats the decorated lattice and does not depend on
Sections~\ref{sec:prelim}--\ref{sec:certificates}. Section~\ref{sec:discussion} discusses open
problems. Each of these sections gives its statements, the short proofs that carry the main ideas,
and the deduction of its main theorem; longer proofs, routine estimates and the certificates are in
the appendices. Appendix~\ref{app:mf} contains the deferred proofs for
Sections~\ref{sec:prelim}--\ref{sec:pspin}, the certificate of Section~\ref{sec:accum} and the
certificates for $p=3$ and $p=4$; Appendix~\ref{app:largep} the proofs and certificates for
Section~\ref{sec:nearM}; Appendix~\ref{app:lattice} the proofs and certified inputs for
Section~\ref{sec:lattice}; and Appendix~\ref{app:repro} a list of the computer-assisted steps and
information on reproducing them.

\section{The Parisi functional}\label{sec:prelim}

All mean-field models in this paper are Ising models on $\{\pm1\}^N$ whose Hamiltonian is a centred
Gaussian process plus a deterministic function of the magnetization
$m(\sigma)=N^{-1}\sum_i\sigma_i$. For the centred part we use the Parisi formula. Let
$\xi(q)=\sum_{p\ge2}a_p^2q^p$ with finitely many nonzero coefficients, and let
$H^0_N$ be the centred Gaussian process with covariance
$\E H^0_N(\sigma)H^0_N(\tau)=N\xi(R(\sigma,\tau))+O(1)$, where $R=N^{-1}\sum_i\sigma_i\tau_i$
and the $O(1)$ is uniform. For $h\in\R$ set
\[
F^0(h)=\limsup_{N\to\infty}\frac1N\E\log\sum_{\sigma}e^{H^0_N(\sigma)+hNm(\sigma)} .
\]
For a probability measure $\mu$ on $[0,1]$ write $\alpha_\mu(s)=\mu([0,s])$. Let
$u_\mu:[0,1]\times\R\to\R$ solve
\begin{equation}\label{eq:pde}
\partial_s u+\frac{\xi''(s)}2\bigl(\partial_x^2u+\alpha_\mu(s)(\partial_xu)^2\bigr)=0,
\qquad u(1,x)=\lc x ,
\end{equation}
and define the Parisi functional
\begin{equation}\label{eq:parisi}
\Par(\mu,h)=\log2+u_\mu(0,h)-\frac12\int_0^1\xi''(s)\,s\,\alpha_\mu(s)\,\dd s .
\end{equation}

\begin{proposition}[Known facts]\label{prop:facts}\leavevmode
\begin{enumerate}
\item[\textup{(P1)}] \emph{Parisi formula.} For every $h$ and every $\mu$,
$F^0(h)\le\Par(\mu,h)$: this is Guerra's bound~\cite{Guerra2003}, which for odd $p$ uses
Talagrand's positivity principle~\cite{Panchenko2007}; see~\cite[\S2]{Panchenko2014} and
Appendix~\ref{app:mf-parisi}. The limit exists and
$F^0(h)=\min_\mu\Par(\mu,h)$ at $h=0$~\cite{Talagrand2006,Panchenko2014}, and for every $h$
when $\xi$ is even~\cite{Talagrand2006}. The $O(1)$ in the covariance changes
$N^{-1}\E\log Z$ by $O(1/N)$, by Gaussian interpolation.
\item[\textup{(P2)}] \emph{Convexity and uniqueness.} $\mu\mapsto\Par(\mu,h)$ is strictly convex
along linear interpolation of measures, so the minimizer is unique~\cite[Theorem~2,
Corollary~1]{AC2015b}.
\item[\textup{(P3)}] \emph{Regularity.} $u_\mu(s,\cdot)$ is even and smooth in $x$, with
$|\partial_xu_\mu|\le1$ and $0\le\partial_x^2u_\mu\le1$. If $\mu_n\Rightarrow\mu$, then
$\partial_x^ju_{\mu_n}\to\partial_x^ju_\mu$ uniformly on $[0,1]\times\R$ for every
$j\ge0$~\cite[Propositions~1--2]{AC2015a}.
\item[\textup{(P4)}] \emph{Optimal diffusion.} Let $X$ solve
$\dd X_s=\xi''(s)\alpha_\mu(s)\partial_xu_\mu(s,X_s)\dd s+\sqrt{\xi''(s)}\,\dd W_s$ with
$X_0=h$, and set $\Gamma_\mu(s)=\E[\partial_xu_\mu(s,X_s)^2]$. Then
$\Gamma_\mu'(s)=\xi''(s)\E[\partial_x^2u_\mu(s,X_s)^2]$~\cite[Lemma~8.4]{JT2017}, and for $h=0$,
$\Gamma_\mu$ is $C^1$~\cite[Proposition~3]{AC2015a} (see Appendix~\ref{app:mf-parisi}).
\item[\textup{(P5)}] \emph{First variation.} Put
$G_\mu(q)=\frac12\int_q^1\xi''(t)(\Gamma_\mu(t)-t)\,\dd t$. Then
$\Par(\tilde\mu,h)-\Par(\mu,h)=\int G_\mu\,\dd(\tilde\mu-\mu)+O(d(\tilde\mu,\mu)^2)$, where
$d(\tilde\mu,\mu)=\int_0^1|\alpha_{\tilde\mu}-\alpha_\mu|$ (see the proof
of~\cite[Lemma~3.2]{JT2017}).
See also~\cite{Chen2017}.
\item[\textup{(P6)}] \emph{Optimality.} $\mu$ minimizes $\Par(\cdot,h)$ if and only if
$\mu(\operatorname{argmin}G_\mu)=1$. For a minimizer, every $q\in\operatorname{supp}\mu$ satisfies
$\Gamma_\mu(q)=q$ and $\xi''(q)\E[\partial_x^2u_\mu(q,X_q)^2]\le1$~\cite[Proposition~1.1,
Corollaries~3.6 and~3.10]{JT2017}. At $h=0$ one has
$0\in\operatorname{supp}\mu$~\cite[Theorem~1]{AC2015a}.
\end{enumerate}
\end{proposition}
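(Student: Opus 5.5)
The proposition collects known results, so the plan is to reduce each item to its cited source. Where the source proves a slightly different statement, or where the paper says an argument needs supplementing, I would add a short bridge, placed in Appendix~\ref{app:mf-parisi}. I would go through the items in order and keep the two normalisations fixed: the field $h$ enters without a factor $\beta$, and the terminal condition is $u(1,x)=\lc x$, so that the cited statements apply after substituting $\xi$.

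For \textup{(P1)}, first reduce to an exact covariance $N\xi(R)$. Interpolate $H_t=\sqrt t\,H^0_N+\sqrt{1-t}\,\tilde H_N$, where $\tilde H_N$ has the exact covariance. The derivative of $N^{-1}\E\log Z_t$ is a Gibbs average of the covariance difference, which is $O(1)$, so the free energy changes by $O(1/N)$. For an exact mixed $p$-spin covariance, Guerra's interpolation against the Ruelle cascade gives a remainder $-\frac12\int\E\gibbs{\xi(R)-R\xi'(q)+\theta(q)}\,\dd\mu$-type terms, whose sign needs convexity of $\xi$ on $[-1,1]$. For even $\xi$ this holds. For odd $p$ the negative overlaps have to be removed by Talagrand's positivity principle, applied after a small perturbation as in \cite[\S2]{Panchenko2014}. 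I would rerun that argument with the deterministic field $hNm$, which only enters the terminal condition and does not affect positivity. The equality statements, existence of the limit and $F^0=\min_\mu\Par$, are quoted from \cite{Talagrand2006,Panchenko2014}.

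Items \textup{(P2)}--\textup{(P5)} are quoted. For \textup{(P2)} I would check that Auffinger--Chen's strict convexity covers every $\xi$ here, including odd terms and nonzero $h$; their argument needs only $\xi''>0$ on $(0,1]$. For \textup{(P3)}, evenness of $u_\mu(s,\cdot)$ follows from evenness of $\lc$ and uniqueness for \eqref{eq:pde}. The bounds $|\partial_xu|\le1$ and $0\le\partial_x^2u\le1$ follow by the maximum principle applied to the equations for $\partial_xu$ and $\partial_x^2u$. Stability under weak convergence is \cite{AC2015a}. For \textup{(P4)}, It\^o's formula applied to $(\partial_xu)^2$ along $X$ gives the formula for $\Gamma_\mu'$, and the drift terms cancel because of the PDE. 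The $C^1$ claim then follows because the integrand $\xi''(s)\E[\partial_x^2u(s,X_s)^2]$ is continuous in $s$, by \textup{(P3)} and continuity of $X$. For \textup{(P5)} I would write $\Par(\tilde\mu)-\Par(\mu)$ by differentiating along $\mu_\theta=(1-\theta)\mu+\theta\tilde\mu$. The derivative at $\theta$ equals $\int G_{\mu_\theta}\,\dd(\tilde\mu-\mu)$, and the Lipschitz dependence of $\Gamma_{\mu_\theta}$ on $d$ gives the quadratic remainder, as in \cite[Lemma~3.2]{JT2017}.

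For \textup{(P6)}, the characterisation $\mu(\operatorname{argmin}G_\mu)=1$ follows from \textup{(P5)} together with convexity \textup{(P2)}. The first-order conditions at support points are $G'=0$, i.e.\ $\Gamma_\mu(q)=q$, and $G''\ge0$, i.e.\ $\xi''\E[(\partial_x^2u)^2]\le1$. The second of these needs \cite[Corollary~3.10]{JT2017} at isolated support points.

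The step I expect to be the main obstacle is the last claim, $0\in\operatorname{supp}\mu$ at $h=0$, because the paper flags that the argument of \cite{AC2015a} relies on this bound. My plan is a direct contradiction argument. Suppose $q_0=\min\operatorname{supp}\mu>0$. Then $\alpha_\mu=0$ on $[0,q_0)$, so $X_s$ is a time-changed Brownian motion from $0$ there. Oddness of $\partial_xu$ gives $\Gamma_\mu(0)=0$. On $[0,q_0]$ the derivative $\Gamma_\mu'=\xi''\E[(\partial_x^2u)^2]$ is governed by the heat flow, and since $\xi''(s)\to0$ as $s\to0$ (because $\xi$ has no linear term), $G_\mu$ is strictly decreasing near $0$ only up to the point where $\Gamma$ catches up with the identity. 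Combining $\Gamma(q_0)=q_0$ with the second-order condition at $q_0$ and with convexity of $\Gamma$ on $[0,q_0]$ should give $G_\mu(0)<G_\mu(q_0)$. The convexity comes from $\Gamma''\ge0$, which follows from the heat-flow monotonicity of $\E[(\partial_x^2u)^2]$ when $\alpha=0$. This contradicts $q_0\in\operatorname{argmin}G_\mu$, and I would write it out in Appendix~\ref{app:mf-parisi}.
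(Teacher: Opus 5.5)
Your reductions of \textup{(P1)}--\textup{(P5)} and of the first two statements of \textup{(P6)} to their sources match what the paper does. The paper likewise only cites, adds the deterministic field to Panchenko's positivity argument, and identifies the Girsanov-weighted $\Gamma_\mu$ with the expectation along $X$. The gap is in the last claim of \textup{(P6)}.

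The three facts you plan to combine cannot give $G_\mu(0)<G_\mu(q_0)$; they force equality. These facts are $\Gamma_\mu(q_0)=q_0$, the second-order condition $\Gamma_\mu'(q_0)\le1$, and convexity of $\Gamma_\mu$ on $[0,q_0]$. Convexity with $\Gamma_\mu(0)=0$ and $\Gamma_\mu(q_0)=q_0$ gives $\Gamma_\mu(s)\le s$ on $[0,q_0]$. On the other hand, $\Gamma_\mu'\le\Gamma_\mu'(q_0)\le1$ gives $\Gamma_\mu(s)\ge\Gamma_\mu(q_0)-(q_0-s)=s$. Hence $\Gamma_\mu=\mathrm{id}$ on $[0,q_0]$, so $G_\mu$ is constant there and $0\in\operatorname{argmin}G_\mu$, which is no contradiction.

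The missing ingredient is \emph{strict} convexity, i.e.\ strict monotonicity of $\Gamma_\mu'=\xi''\E[(\partial_x^2u_\mu)^2]$ on $(0,q_0)$. With it, $\Gamma_\mu(s)<s$ on $(0,q_0)$, so $G_\mu'=-\frac12\xi''(\Gamma_\mu-s)>0$ there. Then $G_\mu(0)<G_\mu(q_0)=\min G_\mu$, and the second-order condition is not needed at all. This is how the paper argues.

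Strictness needs a case distinction that your sketch lacks.
\begin{itemize}
\item If $a_p\ne0$ for some $p\ge3$, then $\xi''$ is strictly increasing, and $\E[(\partial_x^2u_\mu(s,X_s))^2]$ is positive and nondecreasing by the martingale property on the gap.
\item If $\xi''$ is constant, you must show $\frac{\dd}{\dd s}\E[(\partial_x^2u_\mu)^2]=\xi''\E[(\partial_x^3u_\mu(s,X_s))^2]>0$. This is the SK case, which the paper actually needs. Here $X_s$ has a positive density, and $\partial_x^3u_\mu(s,\cdot)\not\equiv0$. Otherwise $\partial_xu_\mu(s,\cdot)$ would be linear with slope $\partial_x^2u_\mu\ge C/\cosh^2x>0$ \cite[Proposition~2]{AC2015a}, contradicting $|\partial_xu_\mu|\le1$.
\end{itemize}

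Your heuristic points the wrong way here. The absence of a linear term gives $\xi'(0)=0$, not $\xi''(0)=0$. In fact $\xi''(0)=2a_2^2$, and $\xi''\equiv\beta^2$ for SK. Only when $a_2=0$ does $\Gamma_\mu'(0)=0<1$ immediately contradict $\Gamma_\mu=\mathrm{id}$. Also, since $\Gamma_\mu<\mathrm{id}$ on the gap, $G_\mu$ is increasing, not decreasing, near $0$.

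A smaller point concerns \textup{(P3)}. The maximum principle for the equation satisfied by $w=\partial_x^2u$ does not yield $w\le1$. That equation contains the reaction term $\xi''\alpha_\mu w^2\ge0$, which lets the maximum grow backward in time. The bound instead comes from propagating the invariant $\partial_x^2u\le1-(\partial_xu)^2$ through the Cole--Hopf steps, as in Section~\ref{sec:prelim}, or from \cite{AC2015a}.
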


Two items of Proposition~\ref{prop:facts} combine sources: \textup{(P1)} with a deterministic
field, and \textup{(P4)} in the form along $X$; Appendix~\ref{app:mf-parisi} gives the details.
The last statement of \textup{(P6)} has a short proof from the rest of \textup{(P6)}, which we
record because the argument in~\cite{AC2015a} relies on the bound
$\xi''\E[(\partial_x^2u_\mu)^2]\le1$ on the support, and at isolated support points that bound
is supplied by~\cite[Corollary~3.10]{JT2017}; the proof is in Appendix~\ref{app:mf-parisi}.

We use two consequences repeatedly. The first is the bound $\partial_x^2u\le1$ in
\textup{(P3)}. For measures with finitely many atoms it follows from the explicit
Cole--Hopf form of~\eqref{eq:pde}: on an interval where $\alpha_\mu\equiv a>0$ and the
Gaussian variance increases by $v$,
\begin{equation}\label{eq:colehopf}
u(s_-,x)=\frac1a\log\E\exp\bigl(a\,u(s_+,x+\sqrt v\,g)\bigr),
\qquad g\sim N(0,1),
\end{equation}
and on an interval with $\alpha_\mu\equiv0$ the step is the heat semigroup. Writing
$\gibbs{\cdot}$ for the tilted average in~\eqref{eq:colehopf} and $\phi=u(s_+,\cdot)$, one gets
$\partial_x^2u=\gibbs{\phi''}+a\operatorname{Var}(\phi')$. If $\phi''\le1-\phi'^2$, then
$\partial_x^2u\le1-a\gibbs{\phi'}^2-(1-a)\gibbs{\phi'^2}\le1-(\partial_xu)^2$.
The terminal condition $\lc$ satisfies $\phi''=1-\phi'^2$, so the invariant
$\partial_x^2u\le1-(\partial_xu)^2$ propagates, and general $\mu$ follows by \textup{(P3)}.

The second consequence is the \emph{universal quadratic bound} on field responses. For every
$\mu$ and $h$,
\begin{equation}\label{eq:quadratic}
\Par(\mu,h)-\Par(\mu,0)=u_\mu(0,h)-u_\mu(0,0)
=\int_0^h(h-y)\,\partial_x^2u_\mu(0,y)\,\dd y\le\frac{h^2}2,
\end{equation}
because $u_\mu(0,\cdot)$ is even. Applied to the zero-field minimizer, \eqref{eq:quadratic}
gives $F^0(h)\le F^0(0)+h^2/2$.

\section{The Sherrington--Kirkpatrick model with a Curie--Weiss bias}\label{sec:sk}

\subsection{Setting}

Following Dey and Kang~\cite{DeyKang2026}, let $g_{ij}$, $i<j$, be independent standard
Gaussians and consider the Gibbs measure proportional to $e^{H_N(\sigma)}$ with
\[
H_N(\sigma)=\frac{\beta}{\sqrt N}\sum_{i<j}g_{ij}\sigma_i\sigma_j+\frac{\gamma N}2m(\sigma)^2 .
\]
Expanding $\frac{\gamma N}2m^2=\frac\gamma N\sum_{i<j}\sigma_i\sigma_j+\frac\gamma2$ shows that
this is the SK model at inverse temperature $\beta$ with couplings
$J_{ij}\sim N(J_0/N,1/N)$ and $J_0=\gamma/\beta$. The constant $\gamma/2$ does not affect the
Gibbs measure. In the physical variables $(J_0,T)=(\gamma/\beta,1/\beta)$ the Nishimori line is
$T=1/J_0$, i.e.\ $\gamma=\beta^2$.

Write $F_N(\beta,\gamma)=N^{-1}\E\log Z_N$ and let $F^{\rm SK}(\beta,h)$ be the limiting free
energy of the SK model with field term $hNm(\sigma)$. This is the setting of
Section~\ref{sec:prelim} with $\xi(q)=\beta^2q^2/2$. Chen~\cite[Theorem~1]{Chen2014} proved
\begin{equation}\label{eq:chen}
F(\beta,\gamma):=\lim_{N\to\infty}F_N(\beta,\gamma)=\max_{\mu\in[-1,1]}\Bigl\{F^{\rm SK}(\beta,\gamma\mu)-\frac{\gamma\mu^2}2\Bigr\}.
\end{equation}
Chen states this for the Hamiltonian $\beta'N^{-1/2}\sum_{i,j}g_{ij}\sigma_i\sigma_j+\gamma Nm^2/2$
summed over all ordered pairs. With $\beta'=\beta/\sqrt2$ this is our model plus the diagonal
term $\beta'N^{-1/2}\sum_ig_{ii}$, which does not depend on $\sigma$ and has mean zero, so
$F_N$ is the same for every $N$~\cite[Remark~2.2]{DeyKang2026}. We write $\Om(\beta,\gamma)$ for the set of maximizers.
In~\eqref{eq:chen} and throughout Sections~\ref{sec:sk} and~\ref{sec:skline} and
Appendices~\ref{app:mf-sk}--\ref{sec:spiked}, the letter $\mu$
without subscript is a real variable, a candidate magnetization; Parisi measures carry a
subscript, as in $\mu_\beta$, or are denoted~$\nu$. Put
$\varphi_\beta(h)=F^{\rm SK}(\beta,h)-F^{\rm SK}(\beta,0)$. Since $F_N(\beta,\cdot)$ is convex
with $\partial_\gamma F_N=\E\gibbs{m^2}/2$, the limit $F(\beta,\cdot)$ is convex. Moreover
$0\notin\Om(\beta,\gamma)$ exactly when $F(\beta,\gamma)>F^{\rm SK}(\beta,0)$. We call
$(\beta,\gamma)$ \emph{magnetized} if $0\notin\Om(\beta,\gamma)$. Since $F(\beta,\cdot)$ is
nondecreasing, convex and at least $F^{\rm SK}(\beta,0)$, the magnetized set at fixed $\beta$
is $\{\gamma>\gamma_c(\beta)\}$ with
$\gamma_c(\beta):=\inf\{\gamma:0\notin\Om(\beta,\gamma)\}$.

Let $\mu_\beta$ be the zero-field Parisi measure and
$\theta_\beta=\partial_x^2u_{\mu_\beta}(0,0)$. By \textup{(P3)}, $0\le\theta_\beta\le1$. For
$\beta\le1$ one has $\mu_\beta=\delta_0$, hence
$u_{\mu_\beta}(s,x)=\frac{\beta^2}2(1-s)+\lc x$ and $\theta_\beta=1$ (Appendix~\ref{app:mf-sk}).

\begin{theorem}[The SK model with a Curie--Weiss bias]\label{thm:sk}
Let $\beta>0$ and $\gamma>0$.
\begin{enumerate}
\item[\textup{(a)}] \emph{Envelope.} If $\gamma<\max(1,\beta)$, then $\Om(\beta,\gamma)=\{0\}$
and $\E\gibbs{m^2}\to0$. Moreover $\varphi_\beta(h)\le h^2/(2\max(1,\beta))$ for all $h$.
\item[\textup{(b)}] \emph{Curvature at zero field.}
$\lim_{h\to0}2\varphi_\beta(h)/h^2=\theta_\beta$.
\item[\textup{(c)}] \emph{Magnetization.} If $\gamma>1/\theta_\beta$, then
$0\notin\Om(\beta,\gamma)$ and $\liminf_N\E\gibbs{m^2}>0$.
\item[\textup{(d)}] \emph{Marginality.} If $\beta>1$, then $\theta_\beta=1/\beta$.
\end{enumerate}
Consequently $\theta_\beta=\min(1,1/\beta)$ and $\gamma_c(\beta)=\max(1,\beta)$ for every
$\beta>0$. The magnetized region in physical variables is $\{J_0>\max(1,T)\}$, where
$\liminf_N\E\gibbs{m^2}>0$ by~(c), while $\E\gibbs{m^2}\to0$ for $J_0<\max(1,T)$ by~(a). Below
the multicritical point $(1,1)$ the boundary is the vertical line $J_0=1$.
\end{theorem}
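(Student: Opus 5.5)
I will prove the four parts in the order (a), (b), (c), (d), and then assemble the consequences.

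\emph{Part (a).} Fix $\beta$ and $\gamma<\max(1,\beta)$, and write $\gamma=J_0\beta$ with $J_0<\max(1,1/\beta)$. Its Nishimori point has inverse temperature $\beta_N=J_0$ and Curie--Weiss parameter $\gamma_N=J_0^2$. Since $J_0<\max(1,1/\beta)$ and also $J_0=\gamma/\beta$, one checks $\gamma_N<1$ in the relevant case (if $\beta\ge1$ then $J_0<1$; if $\beta<1$ and $J_0\ge1$, the bound $\gamma<1$ gives $J_0^2<1/\beta^2$, and I handle this case through the $\beta\le1$ bound $\varphi_\beta(h)\le\lc h\le h^2/2$).

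At $\gamma_N\le1$, the Dey--Kang result gives $\Om=\{0\}$, and Chen's concentration theorem (Lemma~\ref{lem:concentration}) gives $\E\gibbs{m^2}\to0$ there. The gauge transfer (Lemma~\ref{lem:gauge-transfer}) then gives $\E\gibbs{m^2}_{\beta,\gamma}\to0$.

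To get $\Om(\beta,\gamma)=\{0\}$ rather than only vanishing magnetization, I use convexity. If $\mu_*\ne0$ were a maximizer, convexity of $F(\beta,\cdot)$ forces $\partial_\gamma^+F\ge\mu_*^2/2>0$ at every larger $\gamma'$. Any $\gamma'\in(\gamma,\max(1,\beta))$ then contradicts $\E\gibbs{m^2}_{\beta,\gamma'}\to0$, since $\partial_\gamma F_N\to$ that limit by convexity.

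The envelope follows from this. $\Om=\{0\}$ for all $\gamma<\max(1,\beta)$ means $\varphi_\beta(h)\le h^2/(2\gamma)$ for all such $\gamma$ (take $h=\gamma\mu$). Letting $\gamma\uparrow\max(1,\beta)$ gives the stated bound.

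\emph{Part (b).} The upper bound $\limsup 2\varphi_\beta(h)/h^2\le\theta_\beta$ is direct. Take $\mu_\beta$ as a trial measure in (P1) at field $h$ and use $F^{\rm SK}(\beta,0)=\Par(\mu_\beta,0)$. Then
$\varphi_\beta(h)\le u_{\mu_\beta}(0,h)-u_{\mu_\beta}(0,0)=\tfrac{\theta_\beta}2h^2+o(h^2)$ by smoothness.

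For the lower bound, let $\mu_h$ be the Parisi minimizer at field $h$ (here $\xi$ is even, so (P1) applies). Then
$\varphi_\beta(h)\ge\Par(\mu_h,h)-\Par(\mu_h,0)=\int_0^h(h-y)\partial_x^2u_{\mu_h}(0,y)\,\dd y$.
Uniqueness (P2), together with lower semicontinuity and compactness, gives $\mu_h\Rightarrow\mu_\beta$ as $h\to0$. The uniform convergence of derivatives in (P3) then makes the integrand converge to $\theta_\beta$ uniformly on $|y|\le|h|$. Hence $\liminf 2\varphi_\beta(h)/h^2\ge\theta_\beta$.

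\emph{Part (c).} If $\gamma\theta_\beta>1$, part (b) gives small $h$ with $\varphi_\beta(h)>h^2/(2\gamma)$. Taking $\mu=h/\gamma$ in~\eqref{eq:chen} then beats $\mu=0$, so $0\notin\Om$. For $\liminf\E\gibbs{m^2}>0$, I use convexity again: $F(\beta,\gamma)>F^{\rm SK}(\beta,0)=F(\beta,\gamma_c)$ with $\gamma_c<\gamma$ forces a positive left derivative at $\gamma$. Convexity of $F_N$ transfers this to $\liminf_N\partial_\gamma F_N$.

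\emph{Part (d)} is the main obstacle. I would invoke Lopatto's theorem that $0$ is an accumulation point of $\operatorname{supp}\mu_\beta$ for $\beta>1$ (with the alternative proof of Section~\ref{sec:accum}). Take support points $q_n\downarrow0$. By (P6), $\Gamma(q_n)=q_n$, and $\Gamma(0)=\E[\partial_xu(0,0)^2]=0$ since $u$ is even and $h=0$. Since $\Gamma$ is $C^1$ by (P4), $\Gamma'(0)=1$. By (P4), $\Gamma'(0)=\xi''(0)\,\partial_x^2u(0,0)^2=\beta^2\theta_\beta^2$. With $\theta_\beta\ge0$ this gives $\theta_\beta=1/\beta$.

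\emph{Consequences.} The value $\theta_\beta=\min(1,1/\beta)$ follows from (d) and the computation for $\beta\le1$. Then (a) and (c) give $\gamma_c(\beta)=\max(1,\beta)$, and translating via $\gamma=J_0\beta$ yields the physical statements.
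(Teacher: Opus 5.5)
Your proofs of (b), (c) and (d) are the paper's: the trial $\mu_\beta$ for the upper curvature bound; the field-$h$ minimizers converging to $\mu_\beta$ via (P2)--(P3) for the lower bound; the trial $\mu=h/\gamma$ plus a secant estimate for (c); and Lopatto's accumulation with $\Gamma(q_n)=q_n$ and $\Gamma'(0)=\beta^2\theta_\beta^2$ for (d).

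For (a) your route differs slightly but is valid. The paper uses the Parisi quadratic bound \eqref{eq:quadratic} for $\gamma<1$, then integrates $\partial_\gamma F_N$ to get $F(\beta,\cdot)\equiv F^{\rm SK}(\beta,0)$ on $[0,\max(1,\beta))$. You instead use Dey--Kang ($\Om=\{0\}$ for $\gamma\le1$) with Lemma~\ref{lem:concentration} at the Nishimori point and the gauge transfer. You then exclude a nonzero maximizer at $\gamma$ by passing to some $\gamma'\in(\gamma,\max(1,\beta))$. This is essentially the Parisi-free variant recorded in Remark~\ref{rem:parisi-free}.

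One justification is loose there. The bound $\partial^+_\gamma F(\beta,\gamma)\ge\mu_*^2/2$ for a maximizer $\mu_*\ne0$ does not follow from convexity alone. It comes from the trial $\gamma\mu_*/\gamma'$ in \eqref{eq:chen} at $\gamma'$, which is Lemma~\ref{lem:fixed-field}(b). Convexity then only carries it to $\partial^-F(\beta,\gamma')\le\liminf_N\partial_\gamma F_N(\beta,\gamma')$.

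There is one genuine, though easily repaired, gap: the envelope for large fields. The maximization in \eqref{eq:chen} is over $\mu\in[-1,1]$. So $\Om(\beta,\gamma)=\{0\}$ only gives $\varphi_\beta(h)\le h^2/(2\gamma)$ for $|h|\le\gamma$. Letting $\gamma\uparrow\gamma_0:=\max(1,\beta)$ then proves the envelope only for $|h|\le\gamma_0$, while the statement claims it for all $h$.

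For $\beta\le1$ your Jensen bound $\varphi_\beta(h)\le\lc h\le h^2/2$ covers every $h$. For $\beta>1$ and $|h|>\beta$ you need one more step. The paper uses evenness and the $1$-Lipschitz property of $F^{\rm SK}(\beta,\cdot)$:
\[
\varphi_\beta(h)\le\varphi_\beta(\gamma_0)+(|h|-\gamma_0)\le|h|-\tfrac{\gamma_0}2,
\qquad
\frac{h^2}{2\gamma_0}-\Bigl(|h|-\frac{\gamma_0}2\Bigr)=\frac{(|h|-\gamma_0)^2}{2\gamma_0}\ge0 .
\]
With this line added, the proposal is complete.
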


Part~(d) rests on the fact that $0$ is an accumulation point of $\operatorname{supp}\mu_\beta$ for
every $\beta>1$. This is due to Lopatto~\cite[Proposition~5.1]{Lopatto2026}, and
Section~\ref{sec:accum} gives a short alternative proof (Theorem~\ref{thm:accum}). For
$\beta\in(1,1+\sqrt2\,\eta]$ with $\eta>0$ from~\cite[Theorem~1]{Zhou2026}, it also follows from
Zhou's theorem, which is stated for $\xi(q)=\beta^2q^2$ with $\beta\in(1/\sqrt2,1/\sqrt2+\eta]$; our
normalization has $\xi(q)=\beta^2q^2/2$. On the critical line $\gamma=\max(1,\beta)$ itself, part~(a) and
continuity give $0\in\Om$. For $\beta\le1$, i.e.\ on the segment $\gamma=1$, Dey and Kang show
$\Om(\beta,1)=\{0\}$~\cite[Proposition~1.1(i)]{DeyKang2026}. For $\gamma=\beta>1$, the gauge
comparison in the proof of part~(a) (Appendix~\ref{app:mf-sk}), made with the Nishimori point $(1,1)$, where
$\Om(1,1)=\{0\}$ by~\cite[Proposition~1.1(i)]{DeyKang2026}, together with Chen's concentration
theorem~\cite[Proposition~1]{Chen2014} (Lemma~\ref{lem:concentration}), already gives
$\E\gibbs{m^2}\to0$. Theorem~\ref{thm:skline} in Section~\ref{sec:skline} shows moreover
$\Om(\beta,\beta)=\{0\}$, and that the order parameter vanishes continuously as
$\gamma\downarrow\beta$.

The case $\gamma<1$ of part~(a) is contained in~\cite[Proposition~1.1(i)]{DeyKang2026}, and for
$\beta\le1$ part~(c) and $\gamma_c(\beta)=1$ recover~\cite[Proposition~1.1(ii) and
Remark~1.3]{DeyKang2026}. For $\beta>1$, part~(a) is a short consequence of known results:
Nishimori's inequality in the form of~\cite[Theorem~2.2 and \S3]{ItoiSakamoto2023}, applied with
the Nishimori point unmagnetized by~\cite[Proposition~1.1(i)]{DeyKang2026}, gives
$\E\gibbs{m^2}\to0$ and hence $0\in\Om(\beta,\gamma)$ for $\gamma<\beta$, and Dey and Kang's
formula $\gamma_c(\beta)=\inf_{h>0}h^2/(2\varphi_\beta(h))$~\cite[Remark~1.3]{DeyKang2026} turns
this into the envelope. We include short proofs. The case $\gamma<1$ of~(a) uses only the
Parisi-PDE bound~\eqref{eq:quadratic}; part~(c) uses the curvature formula~(b) and
$\theta_\beta=1$ for $\beta\le1$.

\subsection{The envelope}

For $\gamma<1$, part~(a) follows from the quadratic bound~\eqref{eq:quadratic}. For $\beta>1$ and
$1\le\gamma<\beta$, it compares $(\beta,\gamma)$ with the Nishimori point $(J_0,J_0^2)$ of the same
bias $J_0=\gamma/\beta<1$, through Nishimori's gauge identity and the Cauchy--Schwarz step
of~\cite[eq.~(26)]{ItoiSakamoto2023} (compare Lemma~\ref{lem:gauge-transfer}); integrating
$\partial_\gamma F_N=\E\gibbs{m^2}/2$ then gives the envelope. The proof is in
Appendix~\ref{app:mf-sk}.

\subsection{Proof of the curvature formula}

\emph{Upper bound.} Using $\mu_\beta$ as a trial at field $h$ in \textup{(P1)},
\[
\varphi_\beta(h)\le u_{\mu_\beta}(0,h)-u_{\mu_\beta}(0,0)=\int_0^h(h-y)\,\partial_x^2u_{\mu_\beta}(0,y)\,\dd y,
\]
and continuity of $\partial_x^2u_{\mu_\beta}(0,\cdot)$ gives
$\limsup_{h\to0}2\varphi_\beta(h)/h^2\le\theta_\beta$.

\emph{Lower bound.} For each $h$ let $\nu_h$ minimize $\Par(\cdot,h)$. A minimizer exists
because $\Par(\cdot,h)$ is continuous on the compact set of probability measures, by
\textup{(P3)} and continuity of $\nu\mapsto\int_0^1s\,\alpha_\nu(s)\,\dd s$. Since
$\Par(\mu_\beta,0)\le\Par(\nu_h,0)$,
\[
\varphi_\beta(h)\ge\Par(\nu_h,h)-\Par(\nu_h,0)
=\int_0^h(h-y)\,\partial_x^2u_{\nu_h}(0,y)\,\dd y
\ge\frac{h^2}2\min_{|y|\le h}\partial_x^2u_{\nu_h}(0,y).
\]
Let $\nu$ be a weak limit of $\nu_{h_n}$ with $h_n\to0$. By joint continuity of $\Par$ and
$|\partial_hF^{\rm SK}|\le1$, we get $\Par(\nu,0)=\lim F^{\rm SK}(\beta,h_n)=F^{\rm SK}(\beta,0)$.
So $\nu$ is a zero-field minimizer, and $\nu=\mu_\beta$ by \textup{(P2)}. Hence
$\nu_h\Rightarrow\mu_\beta$, and by \textup{(P3)} $\partial_x^2u_{\nu_h}(0,\cdot)\to\partial_x^2u_{\mu_\beta}(0,\cdot)$
uniformly. Therefore $\liminf_{h\to0}2\varphi_\beta(h)/h^2\ge\theta_\beta$.\qed

\subsection{Proof of the magnetization statement}

Suppose $\gamma>1/\theta_\beta$. By part~(b), some $h\in(0,\gamma]$ satisfies
$\varphi_\beta(h)>h^2/(2\gamma)$. With $\mu=h/\gamma$,
\[
F^{\rm SK}(\beta,\gamma\mu)-\frac{\gamma\mu^2}2=F^{\rm SK}(\beta,0)+\varphi_\beta(h)-\frac{h^2}{2\gamma}>F^{\rm SK}(\beta,0),
\]
so $0\notin\Om(\beta,\gamma)$ and $F(\beta,\gamma)>F^{\rm SK}(\beta,0)$. Let
$\gamma_0=\max(1,\beta)$. Parts~(a) and~(b) give
$\theta_\beta=\lim_{h\to0}2\varphi_\beta(h)/h^2\le1/\gamma_0$, so $\gamma>\gamma_0$. By part~(a) and continuity, $F(\beta,\cdot)$ equals
$F^{\rm SK}(\beta,0)$ on $[0,\gamma_0]$, and it is convex. For convex $F_N\to F$,
\[
\liminf_N\frac{\E\gibbs{m^2}}2=\liminf_N\partial_\gamma F_N(\beta,\gamma)\ge\partial^-_\gamma F(\beta,\gamma)
\ge\frac{F(\beta,\gamma)-F(\beta,\gamma_0)}{\gamma-\gamma_0}>0.\qed
\]

\subsection{Proof of marginality}

Let $\beta>1$ and write $\Gamma=\Gamma_{\mu_\beta}$ with $X_0=0$. At zero field $u(0,\cdot)$ is
even, so $\Gamma(0)=0$. By \textup{(P4)}, $\Gamma'(0)=\beta^2\theta_\beta^2$. The optimality
conditions \textup{(P6)} give $\Gamma(q)=q$ on $\operatorname{supp}\mu_\beta$. By
Theorem~\ref{thm:accum} there are $q_n\in\operatorname{supp}\mu_\beta\setminus\{0\}$ with
$q_n\downarrow0$, and then
\[
\Gamma'(0)=\lim_n\frac{\Gamma(q_n)-\Gamma(0)}{q_n}=1 .
\]
Hence $\theta_\beta=1/\beta$, because $\theta_\beta\ge0$. This is the argument
of~\cite[Lemma~6.1]{Lopatto2026}. For $\beta$ near $1$, the sequence $q_n$ is also supplied by
Zhou's inclusion $[0,\upsilon_\beta]\subset\operatorname{supp}\mu_\beta$ for some
$\upsilon_\beta>0$~\cite[Theorem~1]{Zhou2026}.\qed

For $\beta>1$, part~(d), Lemma~\ref{lem:uxxx} and the Parisi formula also give the envelope in
strict form, without gauge symmetry (Remark~\ref{rem:envelope-route} in Appendix~\ref{app:mf-accum}).

\section{The boundary line \texorpdfstring{$\gamma=\beta$}{gamma = beta}}\label{sec:skline}

Theorem~\ref{thm:sk} locates the ferromagnetic boundary at $\gamma=\max(1,\beta)$ but is stated
off the boundary. On the segment $\gamma=1$, $\beta\le1$, Dey and Kang show
$\Om(\beta,1)=\{0\}$~\cite[Proposition~1.1(i)]{DeyKang2026}. On the vertical part $\gamma=\beta>1$,
part~(a) of Theorem~\ref{thm:sk} and continuity give only $0\in\Om(\beta,\beta)$. Whether other
maximizers exist there decides whether the order parameter, which is positive for $\gamma>\beta$
by Theorem~\ref{thm:sk}(c),(d), vanishes continuously at the boundary. This section shows that
it does: $\Om(\beta,\beta)=\{0\}$ and $\E\gibbs{m^2}_{\beta,\beta}\to0$ for every $\beta>0$, with
explicit bounds as $\gamma\downarrow\beta$.

\paragraph{The obstruction.} For $1\le\gamma<\beta$ the proof of Theorem~\ref{thm:sk}(a)
compares $(\beta,\gamma)$ with the Nishimori point $(J_0,J_0^2)$ of the same bias
$J_0=\gamma/\beta$, and it needs $J_0<1$, where that point lies in the paramagnetic phase. At
$\gamma=\beta$ the Nishimori point is the multicritical point $(1,1)$. There
$\Om(1,1)=\{0\}$~\cite[Proposition~1.1(i)]{DeyKang2026}, so Lemmas~\ref{lem:concentration}
and~\ref{lem:gauge-transfer} already give $\E\gibbs{m^2}_{\beta,\beta}\to0$. This does not give
$\Om(\beta,\beta)=\{0\}$. By convexity, the
Gibbs average at $\gamma=\beta$ controls only the left derivative of $F(\beta,\cdot)$ at $\beta$,
while a maximizer $\mu_*\ne0$ at $\gamma=\beta$ shows up in the right derivative
(Remark~\ref{rem:right-nbhd}). What is needed is a bound that vanishes on a right neighbourhood
$\gamma\downarrow\beta$, that is, at Nishimori points with $J_0$ slightly above $1$.

\paragraph{The result.} Such a bound comes from Dey and Kang's Jensen
argument~\cite[\S3.1]{DeyKang2026}, which they apply for $\gamma\le1$. Carried slightly past
$\gamma=1$, it confines the maximizers at the Nishimori point $(J_0,J_0^2)$ to
$\mu^2\le y(J_0-1)$, where $y(\delta)\sim12\delta$ as $\delta\downarrow0$. Chen's concentration
theorem~\cite[Proposition~1]{Chen2014} turns this into a bound on Gibbs averages, and the gauge
transfer carries it down the vertical line of fixed bias to $(\beta,\gamma)$
(Figure~\ref{fig:transfer}). Each ingredient is known; the point is that the Jensen bound is
still informative just past $\gamma=1$, which is where the transfer needs it. The argument does
not use the Parisi formula.

For $b>0$ let $\bar x(b)=0$ if $b\le1$, and for $b>1$ let $\bar x(b)$ be the unique positive root
of $\lc x=x^2/(2b^2)$ (Lemma~\ref{lem:y}). Put
\[
y(\delta)=\Bigl(\frac{\bar x(1+\delta)}{(1+\delta)^2}\Bigr)^2\quad(\delta>-1),\qquad
Y(\delta)=\min\{1,y(\delta)\}.
\]
By Lemma~\ref{lem:y}, $y=0$ on $(-1,0]$, $y$ is continuous, $y(\delta)/\delta\to12$ as
$\delta\downarrow0$, $y(\delta)\le12\delta$ for $0\le\delta\le1/20$, and $y(\delta)<1$ if and
only if $\delta<\delta_c:=\sqrt{2\log\tau}-1=0.10397\ldots$, where $\tau$ is the real root of
$u^3=u^2+u+1$.

\begin{theorem}[The vertical boundary line]\label{thm:skline}
Let $\beta>0$, $\gamma>0$ and $\delta=\gamma/\beta-1$.
\begin{enumerate}
\item[\textup{(a)}] $\limsup_N\E\gibbs{m^2}_{\beta,\gamma}\le Y(\delta)^{1/2}$.
\item[\textup{(b)}] $\mu^2\le Y(\delta)^{1/2}$ for every $\mu\in\Om(\beta,\gamma)$.
\item[\textup{(c)}] If $\varepsilon>0$ and $\varepsilon^2>Y(\delta)^{1/2}$, then there is $K=K(\beta,\gamma,\varepsilon)$
such that $\E\gibbs{\ind\{|m|\ge\varepsilon\}}_{\beta,\gamma}\le Ke^{-N/K}$ for every $N$.
\end{enumerate}
In particular, for every $\beta>0$ and $0<\gamma\le\beta$ one has $\Om(\beta,\gamma)=\{0\}$ and
$\E\gibbs{m^2}_{\beta,\gamma}\to0$; this includes the vertical boundary line $\gamma=\beta>1$.
For $0\le\delta\le1/20$ the bounds read $\limsup_N\E\gibbs{m^2}_{\beta,\gamma}\le(12\delta)^{1/2}$
and $\max_{\Om(\beta,\gamma)}|\mu|\le(12\delta)^{1/4}$.
\end{theorem}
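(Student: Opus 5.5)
The plan follows the route the paper sketches: a Jensen bound at the Nishimori point of the same bias, Chen's concentration theorem, and the gauge transfer down the vertical line.

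\emph{Step 1: the Jensen bound at the Nishimori point.} Fix $\beta,\gamma$ with $J_0=\gamma/\beta=1+\delta$. I consider the Nishimori point $(\beta_N,\gamma_N)=(J_0,J_0^2)$. There I use Chen's formula~\eqref{eq:chen} together with Dey and Kang's Jensen bound $\varphi_{\beta_N}(h)\le\lc h$, which follows from Jensen's inequality for the SK Gibbs measure in a field. Any maximizer $\mu\in\Om(J_0,J_0^2)$ with $\mu\ne0$ must satisfy
\[
0<\varphi_{J_0}(J_0^2\mu)-\tfrac{J_0^2\mu^2}{2}\le \lc(J_0^2\mu)-\tfrac{J_0^2\mu^2}{2}.
\]
With $x=J_0^2|\mu|$ this reads $\lc x>x^2/(2J_0^2)$. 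By Lemma~\ref{lem:y} this forces $x<\bar x(J_0)$, hence $\mu^2\le y(\delta)$. For $\delta\le0$ the inequality $\lc x\le x^2/2$ gives $\Om=\{0\}$, consistent with $y=0$ there. The bound $\mu^2\le1$ is trivial, so in all cases $\max_{\Om(J_0,J_0^2)}\mu^2\le Y(\delta)$.

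\emph{Step 2: from maximizers to Gibbs averages.} Lemma~\ref{lem:concentration} (Chen's concentration, repaired) says that for every $\varepsilon$ with $\varepsilon^2>Y(\delta)$ the Gibbs mass of $\{\operatorname{dist}(m,\Om)\ge\eta\}$ is exponentially small. It therefore gives exponential smallness of $\E\gibbs{\ind\{m^2\ge\varepsilon'\}}$ at the Nishimori point for every $\varepsilon'>Y(\delta)$. Since $m^2\le1$, this yields $\limsup_N\E\gibbs{m^2}_{J_0,J_0^2}\le Y(\delta)$.

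\emph{Step 3: gauge transfer.} The gauge transfer inequality of Lemma~\ref{lem:gauge-transfer} (Itoi--Sakamoto's averaged form) gives
\[
\E\gibbs{m^2}_{\beta,\gamma}\le\bigl(\E\gibbs{m^2}_{J_0,J_0^2}\bigr)^{1/2}+o(1),
\]
and this gives~(a). For~(c), I would transfer an exponential bound rather than a second moment. I expect this to be the main obstacle, since the square-root transfer is stated for $m^2$. My first attempt is a higher-moment version of the same gauge/Cauchy--Schwarz argument applied to $\ind\{|m|\ge\varepsilon\}$. Concretely, I would write $\E\gibbs{f(m)}_\beta$ through the gauge average and bound it by $(\E\gibbs{f(m)}_{\beta_N})^{1/2}$ for a nonnegative $f$. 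If that fails, I would use a bin-restricted free energy: compare $N^{-1}\E\log Z$ of the bin $\{|m|\ge\varepsilon\}$ with the total via the Gaussian concentration of Lemma~\ref{lem:bins}, giving exponential decay once $\varepsilon^2>Y^{1/2}$.

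\emph{Step 4: consequences.} For~(b), suppose $\mu_*\in\Om(\beta,\gamma)$ with $\mu_*^2>Y(\delta)^{1/2}$. Concentration at $(\beta,\gamma)$ then puts nonvanishing Gibbs mass near some nonzero maximizer, or at least by convexity the right derivative of $F(\beta,\cdot)$ is positive. For this I use continuity of $Y$ on a right neighbourhood $\gamma'\downarrow\gamma$: the left derivative at $\gamma'>\gamma$ dominates $\mu_*^2/2$ and is bounded by $\limsup_N\E\gibbs{m^2}_{\beta,\gamma'}/2\le Y(\delta')^{1/2}/2$. Letting $\gamma'\downarrow\gamma$ and using continuity of $y$ gives~(b). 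The special cases follow directly: $Y=0$ for $\delta\le0$ gives $\Om=\{0\}$, and $y(\delta)\le12\delta$ for $\delta\le1/20$ gives the explicit bounds.
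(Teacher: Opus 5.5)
Your Steps~1--3 are the paper's proof of~(a), and your right-neighbourhood argument in Step~4 is its proof of~(b). The gap is in~(c), and it comes from attacking~(c) before~(b).

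\emph{Your first route is false as stated.} You propose $\E\gibbs{f(m)}_{\beta,\gamma}\le(\E\gibbs{f(m)}_{J_0,J_0^2})^{1/2}$ for nonnegative $f$. The gauge identity gives $\E\gibbs{\sigma_A}_{\beta,\gamma}=\E[\gibbs{\sigma_A}_{J_0,J_0^2}\gibbs{\sigma_A}_{\beta,\gamma}]\le(\E\gibbs{\sigma_A}_{J_0,J_0^2})^{1/2}$ for each correlation $\sigma_A$. So the square-root transfer holds for nonnegative combinations of the $\sigma_A$, such as $m^2$ or $m^{2k}$. It does not hold for an indicator of $\{|m|\ge\varepsilon\}$, whose expansion in the $\sigma_A$ has coefficients of both signs. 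Already for $N=2$, the nonnegative $f=1-m^2=(1-\sigma_1\sigma_2)/2$ violates it: for large $J_0$ the right side is near $0$, and for small $\beta$ the left side is near $\frac12$.

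\emph{Your second route omits the key reason.} You assert exponential decay once $\varepsilon^2>Y(\delta)^{1/2}$ but never say why the bin $\{|m|\ge\varepsilon\}$ loses to the full partition function. In this model the comparison goes through Chen's formula~\eqref{eq:chen}; Lemma~\ref{lem:bins} is the $p$-spin bound. The reason the bin loses is exactly~(b).

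\emph{The missing observation is that~(c) needs no transfer at all.} Lemma~\ref{lem:concentration} holds at every $(\beta,\gamma)$ and only needs the location of $\Om(\beta,\gamma)$. So prove~(b) first. Then $\max_{\Om(\beta,\gamma)}|\mu|\le Y(\delta)^{1/4}<\varepsilon$ puts $U=\{x\in[-1,1]:|x|\ge\varepsilon\}$ at positive distance from $\Om(\beta,\gamma)$, and the lemma gives $Ke^{-N/K}$ directly. This is the paper's proof.

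Your transfer idea can also be repaired with even moments. One has $\E\gibbs{m^{2k}}_{\beta,\gamma}\le(\E\gibbs{m^{2k}}_{J_0,J_0^2})^{1/2}$ for every $k$. Take $k\asymp N$ and $Y(\delta)^{1/2}<\varepsilon_1<\varepsilon^2$; Markov's inequality then turns the Nishimori-point bound on $\{|m|\ge\varepsilon_1\}$ into an exponential bound on $\{|m|\ge\varepsilon\}$.

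Two smaller points:
\begin{itemize}
\item In Step~1, a nonzero maximizer gives only $\varphi_{J_0}(J_0^2\mu)-J_0^2\mu^2/2\ge0$, since $0$ may be a maximizer too. The non-strict inequality still yields $x\le\bar x(J_0)$, which is all you need.
\item In Step~4, the bound $\partial^+_\gamma F(\beta,\gamma)\ge\mu_*^2/2$ needs justification. Use the fixed-field trial $\gamma\mu_*/\gamma'$ at $\gamma'$ (Lemma~\ref{lem:fixed-field}(b)). The trial $\mu_*$ itself would require a derivative of $F^{\rm SK}$ in $h$.
\end{itemize}
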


The constant $K$ in~(c) is not explicit. The bounds are nontrivial only for $\delta<\delta_c$.

\needspace{6\baselineskip}
\emph{Proof of Theorem~\ref{thm:skline}, given the lemmas of
Section~\ref{sec:skline-ingredients}, whose longer proofs are in Appendix~\ref{app:mf-skline}.} Put
$J_0=\gamma/\beta=1+\delta$.

(a) By Lemma~\ref{lem:cw} at the Nishimori point $(J_0,J_0^2)$ and $|\mu|\le1$, Jensen's bound
confines the maximizers there: $\max_{\Om(J_0,J_0^2)}\mu^2\le Y(\delta)$. By Chen's concentration
theorem (Lemma~\ref{lem:concentration}), $\limsup_N\E\gibbs{m^2}_{J_0,J_0^2}\le Y(\delta)$. By the
gauge transfer (Lemma~\ref{lem:gauge-transfer}) with $b=\beta$,
$\limsup_N\E\gibbs{m^2}_{\beta,\gamma}\le Y(\delta)^{1/2}$.

(b) Let $\mu_*\in\Om(\beta,\gamma)$ and $\gamma'>\gamma$. By convexity in $\gamma$ and a
comparison at fixed field (Lemma~\ref{lem:fixed-field}(c)), together with part~(a) at $\gamma'$,
$\mu_*^2\le\frac{\gamma'}\gamma Y(\gamma'/\beta-1)^{1/2}$. Let $\gamma'\downarrow\gamma$ and use
the continuity of $Y$ (Lemma~\ref{lem:y}(b)).

(c) Apply Lemma~\ref{lem:concentration} with $\varepsilon>Y(\delta)^{1/4}\ge\max_\Om|\mu|$.

For $\gamma\le\beta$ one has $\delta\le0$ and $Y(\delta)=0$, which gives the statement for
$0<\gamma\le\beta$. The quantitative forms follow from Lemma~\ref{lem:y}(c).\qed

\medskip
The hypotheses enter as follows. The Nishimori property enters only in
Lemma~\ref{lem:gauge-transfer}: the law $N(J_0/N,1/N)$ of the couplings has Nishimori inverse
temperature $J_0$. Lemma~\ref{lem:jensen} needs only that the SK couplings are centred Gaussians,
so that their law is gauge invariant; Lemma~\ref{lem:cw} adds Chen's formula and the trial
$\mu=0$. Lemma~\ref{lem:concentration} needs $\gamma\ge0$, Gaussian couplings and the existence
of $F^{\rm SK}$ at every field. The value of $\gamma/\beta$ decides where the Nishimori point
lies: for $\gamma\le\beta$ it has $J_0^2\le1$, where $\bar x=0$, and for $\gamma$ slightly above
$\beta$ it has $J_0$ slightly above $1$, where $\bar x$ is small. No restriction $\beta>1$ is
needed; it matters only in Corollary~\ref{cor:continuity}, where the magnetized side comes from
Theorem~\ref{thm:sk}(c),(d).

\begin{figure}[t]
\centering
\begin{tikzpicture}[x=5cm,y=4cm,>=Stealth]
\draw[->] (0,0) -- (1.72,0) node[below left]{$J_0$};
\draw[->] (0,0) -- (0,1.58) node[below left]{$T$};
\draw (1,0) -- (1,-0.03) node[below]{$1$};
\draw (1.2,0) -- (1.2,-0.03) node[below]{$1+\delta$};
\draw (0,1) -- (-0.03,1) node[left]{$1$};
\draw[gray] (0,1) -- (1,1);
\draw[thick] (1,1) -- (1.5,1.5);
\draw[thick] (1,0) -- (1,1);
\draw[dashed,domain=0.66:1.66,smooth,variable=\x] plot ({\x},{1/\x});
\node[anchor=east] at (0.64,1.46) {\small Nishimori line};
\fill (1,1) circle (1.3pt) node[above left]{\small $M$};
\draw[blue!60!black,thick] (1.2,0) -- (1.2,0.8333);
\fill[blue!60!black] (1.2,0.8333) circle (1.7pt);
\node[blue!60!black,anchor=west] at (1.23,0.93) {\small $\mu^2\le y(\delta)$};
\draw[->,blue!60!black] (1.25,0.72) -- (1.25,0.14);
\node[blue!60!black,anchor=west] at (1.27,0.43) {\small $\limsup_N\E\gibbs{m^2}\le Y(\delta)^{1/2}$};
\node at (0.45,1.25) {\small PM};
\node at (0.45,0.45) {\small SG};
\node at (1.5,1.12) {\small FM};
\end{tikzpicture}
\caption{The mechanism of Theorem~\ref{thm:skline} in the variables
$(J_0,T)=(\gamma/\beta,1/\beta)$. Thick lines bound the magnetized region $\{J_0>\max(1,T)\}$ of
Theorem~\ref{thm:sk}, the thin line $T=1$ separates the paramagnetic (PM) and spin-glass (SG)
regions, $M$ is the multicritical point, and the dashed curve is the Nishimori line $T=1/J_0$.
At fixed bias $J_0=1+\delta$ the Nishimori point (dot) lies just past $M$. There Jensen's bound
confines the maximizers to $\mu^2\le y(\delta)\approx12\delta$
(Lemma~\ref{lem:cw}), and the gauge transfer (Lemma~\ref{lem:gauge-transfer}) carries the bound
down the vertical line to every temperature, as
$\limsup_N\E\gibbs{m^2}\le Y(\delta)^{1/2}$. Letting $\delta\downarrow0$ and using convexity in
$\gamma$ gives $\Om=\{0\}$ on $J_0=1$. The value of $\delta$ is exaggerated.}
\label{fig:transfer}
\end{figure}

\subsection{Ingredients}\label{sec:skline-ingredients}

This subsection and Appendix~\ref{app:mf-skline} prove the lemmas used above: the Jensen bound
and its consequence for the maximizers, the gauge transfer, Chen's concentration theorem, the
fixed-field comparison, and the elementary properties of $y$.

Write $F^{\rm SK}_N(\beta,h)=N^{-1}\E\log\sum_\sigma\exp\bigl(\beta N^{-1/2}\sum_{i<j}g_{ij}\sigma_i\sigma_j+hNm(\sigma)\bigr)$,
and let $Z^{\rm SK}_N(\beta,h)$ be the partition function in this expression. Then
$F^{\rm SK}(\beta,h)=\lim_NF^{\rm SK}_N(\beta,h)$, and the limit exists for every $h$ by Guerra and
Toninelli's superadditivity argument (see~\cite[eq.~(2.1)]{Chen2014}). Since $|\partial_hF^{\rm SK}_N|=|\E\gibbs{m}|\le1$, the function
$F^{\rm SK}(\beta,\cdot)$ is $1$-Lipschitz, and it is even by spin flip. Recall from
Section~\ref{sec:sk} that $\partial_\gamma F_N=\E\gibbs{m^2}/2$ and that $F_N(\beta,\cdot)$ is
convex, so $\gamma\mapsto\E\gibbs{m^2}_{\beta,\gamma}$ is nondecreasing. The trial $\mu=0$
in~\eqref{eq:chen} gives $F(\beta,\gamma)\ge F^{\rm SK}(\beta,0)$.

\begin{lemma}[Jensen field bound]\label{lem:jensen}
For every $\beta\ge0$, $h\in\R$ and $N$, $F^{\rm SK}_N(\beta,h)-F^{\rm SK}_N(\beta,0)\le\lc h$.
Hence $\varphi_\beta(h)\le\lc h$.
\end{lemma}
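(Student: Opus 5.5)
The plan is a gauge (random sign) transformation followed by Jensen's inequality over the gauge variables.

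Fix $N$, $\beta$ and $h$. For any $\epsilon\in\{\pm1\}^N$ set $g^\epsilon_{ij}=\epsilon_i\epsilon_jg_{ij}$. The law of the centred Gaussian family $(g_{ij})$ is invariant under this map, so
\[
\E\log Z^{\rm SK}_N(\beta,h)=\E\log\sum_\sigma\exp\Bigl(\frac{\beta}{\sqrt N}\sum_{i<j}g_{ij}\epsilon_i\epsilon_j\sigma_i\sigma_j+h\sum_i\sigma_i\Bigr).
\]
Substituting $\tau_i=\epsilon_i\sigma_i$ turns the sum into $\sum_\tau\exp\bigl(\frac{\beta}{\sqrt N}\sum_{i<j}g_{ij}\tau_i\tau_j+h\sum_i\epsilon_i\tau_i\bigr)$.

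Now let $\epsilon$ be uniform on $\{\pm1\}^N$ and independent of $g$, and average the identity over $\epsilon$. By concavity of $\log$ (Jensen in $\epsilon$, for each fixed $g$),
\[
\E_\epsilon\log\sum_\tau e^{H^0(\tau)+h\sum_i\epsilon_i\tau_i}\le\log\sum_\tau e^{H^0(\tau)}\,\E_\epsilon e^{h\sum_i\epsilon_i\tau_i}=\log\sum_\tau e^{H^0(\tau)}+N\lc h,
\]
where $H^0(\tau)=\frac{\beta}{\sqrt N}\sum_{i<j}g_{ij}\tau_i\tau_j$. The last equality holds because $\E_\epsilon e^{h\epsilon_i\tau_i}=\cosh h$ for each $i$, independently of $\tau$.

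Taking $\E_g$ and dividing by $N$ gives $F^{\rm SK}_N(\beta,h)\le F^{\rm SK}_N(\beta,0)+\lc h$ for every $N$. Since $F^{\rm SK}(\beta,h)=\lim_NF^{\rm SK}_N(\beta,h)$ exists for every $h$, as recalled above, passing to the limit yields $\varphi_\beta(h)\le\lc h$.

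There is no real obstacle. The only points to check are that the $\epsilon$-average may be taken inside $\E_g$ (Fubini, since all quantities are integrable) and that the gauge invariance uses only that each $g_{ij}$ is a symmetric, centred Gaussian. This is the remark after Theorem~\ref{thm:skline} that the lemma needs only gauge invariance of the coupling law.
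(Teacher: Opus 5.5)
Your proof is correct and uses the same two ingredients as the paper's: gauge invariance of the centred Gaussian couplings and Jensen's inequality. The only difference is where Jensen is applied. The paper bounds $N^{-1}\E\log\gibbs{e^{hNm}}_{\beta,0}$ by $N^{-1}\log\E\gibbs{e^{hNm}}_{\beta,0}$, i.e.\ Jensen over the disorder, and then uses gauge invariance to show that the disorder-averaged zero-field Gibbs measure is uniform, so that $\E\gibbs{e^{hNm}}_{\beta,0}=\cosh^Nh$; you apply Jensen only over the auxiliary gauge $\epsilon$ at fixed $g$, which gives a slightly stronger intermediate inequality and the same conclusion.
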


\begin{proof}
This is the argument of~\cite[\S3.1]{DeyKang2026}. The left side equals
$N^{-1}\E\log\gibbs{e^{hNm}}_{\beta,0}$, which is at most
$N^{-1}\log\E\gibbs{e^{hNm}}_{\beta,0}$ by Jensen's inequality. For $\tau\in\{\pm1\}^N$ the map
$g_{ij}\mapsto g_{ij}\tau_i\tau_j$ preserves the law of the centred couplings, so the averaged
zero-field Gibbs measure $\E\gibbs{\ind_{\{\sigma\}}}_{\beta,0}$ is invariant under
$\sigma\mapsto\sigma\tau$ and hence uniform. Thus
$\E\gibbs{e^{hNm}}_{\beta,0}=2^{-N}\sum_\sigma e^{h\sum_i\sigma_i}=\cosh^Nh$.
\end{proof}

\begin{lemma}[Curie--Weiss comparison at the maximizers]\label{lem:cw}
For every $\beta\ge0$, $\gamma>0$ and $\mu\in\Om(\beta,\gamma)$, one has
$\lc(\gamma\mu)\ge\gamma\mu^2/2$, and hence $\gamma|\mu|\le\bar x(\sqrt\gamma)$. At a Nishimori
point $(b,b^2)$ this reads $\mu^2\le y(b-1)$; in particular $\Om(b,b^2)=\{0\}$ for $b\le1$.
\end{lemma}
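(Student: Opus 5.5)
Since $\mu\in\Om(\beta,\gamma)$ is a maximizer and the trial $0$ is admissible, we get
\[
F^{\rm SK}(\beta,\gamma\mu)-\frac{\gamma\mu^2}2\ \ge\ F^{\rm SK}(\beta,0).
\]
In other words, $\varphi_\beta(\gamma\mu)\ge\gamma\mu^2/2$. Combining this with the Jensen field bound (Lemma~\ref{lem:jensen}), $\varphi_\beta(h)\le\lc h$ at $h=\gamma\mu$, yields the first claim $\lc(\gamma\mu)\ge\gamma\mu^2/2$ directly.

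Next, put $x=\gamma|\mu|\ge0$. Since $\lc$ is even, the first claim reads $\lc x\ge x^2/(2\gamma)=x^2/(2b^2)$ with $b=\sqrt\gamma$. I would use the shape of $\psi_b(x)=\lc x-x^2/(2b^2)$, which is presumably what Lemma~\ref{lem:y} records. It vanishes at $0$, and $\psi_b''(x)=\sech^2x-1/b^2$. For $b\le1$ this is negative for $x>0$, so $\psi_b<0$ on $(0,\infty)$ and $x=0$. For $b>1$, $\sech^2$ is decreasing, so $\psi_b$ is first convex and then concave. Since $\psi_b(x)\to-\infty$, $\psi_b$ is positive on $(0,\bar x(b))$ and negative beyond its unique positive root $\bar x(b)$. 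Either way, $\{x\ge0:\psi_b(x)\ge0\}=[0,\bar x(b)]$, with $\bar x(b)=0$ for $b\le1$, and hence $\gamma|\mu|\le\bar x(\sqrt\gamma)$.

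Finally, at a Nishimori point we take $\beta=b$, $\gamma=b^2$, so $\sqrt\gamma=b$. The bound becomes $|\mu|\le\bar x(b)/b^2$, i.e.\ $\mu^2\le(\bar x(1+\delta)/(1+\delta)^2)^2=y(\delta)$ with $\delta=b-1$. For $b\le1$ we have $\bar x(b)=0$, so every maximizer is $0$. The set $\Om(b,b^2)$ is nonempty because the maximum in~\eqref{eq:chen} is attained on the compact set $[-1,1]$ by continuity of $F^{\rm SK}(\beta,\cdot)$, so $\Om(b,b^2)=\{0\}$.

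The only point needing care is the sign structure of $\psi_b$ on $(0,\infty)$, i.e.\ uniqueness of the positive root for $b>1$. I would either cite Lemma~\ref{lem:y} or use the convex-then-concave argument above.
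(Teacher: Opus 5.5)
Your proof is correct and follows the paper's argument: compare the maximizer with the trial $0$ in~\eqref{eq:chen}, apply the Jensen field bound (Lemma~\ref{lem:jensen}), then read off the set where $\lc x\ge x^2/(2\gamma)$. The one difference is that the paper obtains that set from the strict decrease of $\lc(x)/x^2$ on $(0,\infty)$ (Lemma~\ref{lem:y}(a)), whereas you use the convex-then-concave shape of $\lc x-x^2/(2b^2)$; this works too, but it needs $\psi_b'(0)=0$ as well as $\psi_b(0)=0$, which you should state explicitly.
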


\begin{proof}
By $F(\beta,\gamma)\ge F^{\rm SK}(\beta,0)$ and Lemma~\ref{lem:jensen},
\[
0\le F(\beta,\gamma)-F^{\rm SK}(\beta,0)=\varphi_\beta(\gamma\mu)-\frac{\gamma\mu^2}2
\le\lc(\gamma\mu)-\frac{\gamma\mu^2}2 .
\]
With $x=\gamma|\mu|>0$ this says $\lc(x)/x^2\ge1/(2\gamma)$. Since $\lc(x)/x^2$ decreases strictly
from $1/2$ to $0$ on $(0,\infty)$ (Lemma~\ref{lem:y}(a)), this fails for every $x>0$ if
$\gamma\le1$, and for $\gamma>1$ it holds if and only if $x\le\bar x(\sqrt\gamma)$. At $(b,b^2)$
we have $\sqrt\gamma=b$, so $\mu^2\le(\bar x(b)/b^2)^2=y(b-1)$.
\end{proof}

For $\gamma\le1$ this is~\cite[Proposition~1.1(i)]{DeyKang2026}, with the same proof. For
$\gamma>1$ the bound is informative only for $\gamma$ near $1$; the next lemma moves every
$(\beta,\gamma)$ with $\gamma/\beta$ near $1$ to such a point.

\begin{lemma}[Gauge transfer]\label{lem:gauge-transfer}
For every $b>0$, $J_0>0$ and $N$, with $\gamma=bJ_0$,
\[
\E\gibbs{m^2}_{b,\gamma}\le\bigl(\E\gibbs{m^2}_{J_0,J_0^2}\bigr)^{1/2}.
\]
\end{lemma}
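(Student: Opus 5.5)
The plan is to use Nishimori's gauge trick for the Gaussian model with couplings $J_{ij}\sim N(J_0/N,1/N)$ at inverse temperature $b$. The Gibbs weight at $(b,\gamma)$ with $\gamma=bJ_0$ is $\exp(b\sum_{i<j}J_{ij}\sigma_i\sigma_j)$, up to the constant $\gamma/2$. The Nishimori point of the same bias is $(J_0,J_0^2)$, which is the same coupling law at inverse temperature $J_0$. Expanding $m^2=N^{-2}\sum_{i,j}\sigma_i\sigma_j$, it suffices to bound $N^{-2}\sum_{i\ne j}\E\gibbs{\sigma_i\sigma_j}_b$; the diagonal terms contribute exactly $1/N$ on both sides.

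\textbf{Step 1: rewrite both sides with gauge-averaged weights.} Write the Gaussian density of $J$ as $P(J)\propto\exp(-\frac N2\sum_{i<j}(J_{ij}-J_0/N)^2)$. Its $J$-dependent part is $\exp(J_0\sum_{i<j}J_{ij})\exp(-\frac N2\sum J_{ij}^2)$, and the second factor is gauge invariant. Under the gauge transformation $J_{ij}\mapsto J_{ij}\tau_i\tau_j$, $\sigma\mapsto\sigma\tau$, the quantity $\gibbs{\sigma_i\sigma_j}_b$ becomes $\tau_i\tau_j\gibbs{\sigma_i\sigma_j}_b$. Averaging over $\tau$ gives the identity
\[
\E\gibbs{\sigma_i\sigma_j}_b=\E\Bigl[\frac{\sum_\tau\tau_i\tau_je^{J_0\sum J_{kl}\tau_k\tau_l}}{\sum_\tau e^{J_0\sum J_{kl}\tau_k\tau_l}}\gibbs{\sigma_i\sigma_j}_b\Bigr]
=\E\bigl[\gibbs{\sigma_i\sigma_j}_{J_0}\gibbs{\sigma_i\sigma_j}_b\bigr],
\]
where the expectation is under the original law. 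Checking the normalization of this identity is the only place that needs care: the factor $2^{-N}\sum_\tau e^{J_0\sum J\tau\tau}$ must be exactly the ratio of the biased density to the symmetric one. That fact also yields, at $b=J_0$, the Nishimori identity $\E\gibbs{\sigma_i\sigma_j}_{J_0}=\E\gibbs{\sigma_i\sigma_j}_{J_0}^2$.

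\textbf{Step 2: sum over pairs.} The identity in Step 1 holds for every pair. Summing it over all pairs gives
\[
\E\gibbs{m^2}_b=\E\Bigl[N^{-2}\sum_{i,j}\gibbs{\sigma_i\sigma_j}_{J_0}\gibbs{\sigma_i\sigma_j}_b\Bigr].
\]
The diagonal terms also fit this form, since both factors equal $1$ there. Now apply Cauchy--Schwarz, first over the index pairs and then over the disorder. This bounds the right side by
\[
\Bigl(\E N^{-2}\sum_{i,j}\gibbs{\sigma_i\sigma_j}_{J_0}^2\Bigr)^{1/2}\Bigl(\E N^{-2}\sum_{i,j}\gibbs{\sigma_i\sigma_j}_b^2\Bigr)^{1/2}.
\]
The second factor is at most $1$, because $|\gibbs{\sigma_i\sigma_j}|\le1$. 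By the Nishimori identity, the first factor is $(\E N^{-2}\sum_{i,j}\gibbs{\sigma_i\sigma_j}_{J_0})^{1/2}=(\E\gibbs{m^2}_{J_0,J_0^2})^{1/2}$. This is the stated bound, valid for every $N$.

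The main obstacle is bookkeeping rather than any idea. Two points need checking. First, the Curie--Weiss parametrization with $\gamma Nm^2/2$ must match the coupling law $N(J_0/N,1/N)$, including the $i<j$ convention and the irrelevant constant $\gamma/2$. Second, the Nishimori temperature of this law must be exactly $J_0$, so that the tilt $e^{J_0\sum J\tau\tau}$ reproduces the Gibbs weight at the point $(J_0,J_0^2)$. This is the Cauchy--Schwarz step of Itoi--Sakamoto.
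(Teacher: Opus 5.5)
Your proof is correct and follows the paper's argument. It uses the gauge-averaged identity $\E\gibbs{\sigma_i\sigma_j}_{b,bJ_0}=\E\bigl[\gibbs{\sigma_i\sigma_j}_{J_0,J_0^2}\gibbs{\sigma_i\sigma_j}_{b,bJ_0}\bigr]$, its case $b=J_0$ as the Nishimori identity, and Cauchy--Schwarz with $|\gibbs{\sigma_i\sigma_j}|\le1$. You keep the diagonal pairs in the sum, so no extra $1/N$ appears. Your opening remark about splitting off the diagonal is therefore never used, and it is not accurate for the square-root side.

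The only difference from the paper is cosmetic. The paper applies Cauchy--Schwarz pair by pair, $\E\gibbs{\sigma_i\sigma_j}_{b,bJ_0}\le(\E\gibbs{\sigma_i\sigma_j}_{J_0,J_0^2})^{1/2}$, and then uses concavity of the square root over the $N^2$ pairs. You apply it once on the product of the uniform pair measure and the disorder, which is equivalent.
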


This is Nishimori's correlation inequality~\cite{Nishimori1981},~\cite[\S4.6.2]{NishimoriBook}, in
the averaged form of~\cite[Theorem~2.2 and eq.~(26)]{ItoiSakamoto2023}. It is the inequality used
in the proof of Theorem~\ref{thm:sk}(a) (Appendix~\ref{app:mf-sk}). There it carries an extra term $1/N$ and is applied only
for $J_0<1$; neither the term nor the restriction is needed.

\begin{proof}
Write $J_{ij}=g_{ij}/\sqrt N+J_0/N$, so that $\gibbs{\cdot}_{b,bJ_0}$ is proportional to
$\exp(b\sum_{i<j}J_{ij}\sigma_i\sigma_j)$, and $\gibbs{\cdot}_{J_0,J_0^2}$ is the same measure with
$b$ replaced by $J_0$. The law of $J=(J_{ij})_{i<j}$ has density $\rho(J)e^{J_0\sum_{i<j}J_{ij}}$
with $\rho(J)=\prod_{i<j}c_Ne^{-NJ_{ij}^2/2}$. For $\tau\in\{\pm1\}^N$ the map
$J\mapsto J^\tau$, $J^\tau_{ij}=J_{ij}\tau_i\tau_j$, preserves Lebesgue measure and $\rho$, so
averaging over $\tau$ gives, for every integrable $f$,
\[
\E f(J)=\int\rho(J)\,2^{-N}\sum_\tau e^{J_0\sum_{i<j}J_{ij}\tau_i\tau_j}f(J^\tau)\,\dd J .
\]
Apply this to $f=\gibbs{\sigma_i\sigma_j}_{b,bJ_0}$, which satisfies $f(J^\tau)=\tau_i\tau_jf(J)$,
and to the gauge-invariant $f=\gibbs{\sigma_i\sigma_j}_{J_0,J_0^2}\gibbs{\sigma_i\sigma_j}_{b,bJ_0}$.
Since $2^{-N}\sum_\tau\tau_i\tau_je^{J_0\sum J_{kl}\tau_k\tau_l}$ equals
$2^{-N}\sum_\tau e^{J_0\sum J_{kl}\tau_k\tau_l}$ times $\gibbs{\sigma_i\sigma_j}_{J_0,J_0^2}$, both
integrals coincide:
\[
\E\gibbs{\sigma_i\sigma_j}_{b,bJ_0}
=\E\bigl[\gibbs{\sigma_i\sigma_j}_{J_0,J_0^2}\gibbs{\sigma_i\sigma_j}_{b,bJ_0}\bigr]
\qquad\text{for every }b>0.
\]
At $b=J_0$ this gives
$a_{ij}:=\E\gibbs{\sigma_i\sigma_j}_{J_0,J_0^2}=\E\gibbs{\sigma_i\sigma_j}_{J_0,J_0^2}^2\ge0$, and
Cauchy--Schwarz gives $\E\gibbs{\sigma_i\sigma_j}_{b,bJ_0}\le a_{ij}^{1/2}$, also for $i=j$, where
$a_{ii}=1$. Averaging over all $N^2$ pairs and using concavity of the square root,
$\E\gibbs{m^2}_{b,bJ_0}\le N^{-2}\sum_{i,j}a_{ij}^{1/2}\le(N^{-2}\sum_{i,j}a_{ij})^{1/2}
=(\E\gibbs{m^2}_{J_0,J_0^2})^{1/2}$.
\end{proof}

\begin{lemma}[Concentration on the maximizers]\label{lem:concentration}
Let $\beta\ge0$ and $\gamma\ge0$. For every $U\subset[-1,1]$ at positive distance from
$\Om(\beta,\gamma)$ there is $K=K(\beta,\gamma,U)$ such that
$\E\gibbs{\ind\{m\in U\}}_{\beta,\gamma}\le Ke^{-N/K}$ for every $N$. In particular, for every
$\varepsilon>\max_{\Om(\beta,\gamma)}|\mu|$ there is $K$ with
$\E\gibbs{\ind\{|m|\ge\varepsilon\}}_{\beta,\gamma}\le Ke^{-N/K}$ for every $N$, and
$\limsup_N\E\gibbs{m^2}_{\beta,\gamma}\le\max_{\Om(\beta,\gamma)}\mu^2$.
\end{lemma}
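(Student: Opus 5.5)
The plan is to run the classical magnetization-bin argument, taking care to apply the exponential bound on the event where it holds; the paper notes that the printed proof of~\cite[Proposition~1]{Chen2014} gets this step wrong.

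\emph{Step 1: cover $U$ by short bins and bound each bin's free energy.} Since $U$ is at positive distance from $\Om(\beta,\gamma)$ and $\mu\mapsto F^{\rm SK}(\beta,\gamma\mu)-\gamma\mu^2/2$ is continuous, the maximum of this function over $\bar U$ is strictly below $F(\beta,\gamma)$, by some margin $4\eta>0$. Cover $\bar U$ by finitely many intervals $B_k$ of length $\epsilon$, each centred at some $\mu_k$. On $\{m\in B_k\}$ we have $\gamma Nm^2/2\le\gamma N\mu_km-\gamma N\mu_k^2/2+\gamma N\epsilon^2/2$, which follows from $m^2\le 2\mu_k m-\mu_k^2+(m-\mu_k)^2$. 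Dropping the indicator therefore gives
\[
Z_N(B_k)\le e^{N(-\gamma\mu_k^2/2+\gamma\epsilon^2/2)}Z^{\rm SK}_N(\beta,\gamma\mu_k).
\]
Hence $N^{-1}\E\log Z_N(B_k)\le F^{\rm SK}_N(\beta,\gamma\mu_k)-\gamma\mu_k^2/2+\gamma\epsilon^2/2$. Choose $\epsilon$ so that $\gamma\epsilon^2/2<\eta$. Since $F^{\rm SK}_N\to F^{\rm SK}$ at each of the finitely many fields $\gamma\mu_k$, for $N$ large this is at most $F(\beta,\gamma)-3\eta$.

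\emph{Step 2: concentration of both quantities.} Both $\log Z_N(B_k)$ and $\log Z_N$ are Lipschitz functions of the Gaussian vector $(g_{ij})$, with Lipschitz constant $\beta\sqrt{N}$ up to constants, because the gradient in $g$ has norm at most $\beta N^{-1/2}\cdot(N^2/2)^{1/2}$. Gaussian concentration then gives
\[
\Pp\bigl(|\log Z-\E\log Z|\ge N\eta\bigr)\le 2e^{-N\eta^2/(C\beta^2)},
\]
and the same holds for each $Z_N(B_k)$. In addition, $N^{-1}\E\log Z_N\ge F(\beta,\gamma)-\eta$ for $N$ large, since $F_N\to F$.

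\emph{Step 3: combine correctly.} Let $E$ be the event that $\log Z_N\ge\E\log Z_N-N\eta$ and that $\log Z_N(B_k)\le\E\log Z_N(B_k)+N\eta$ for all $k$. On $E$,
\[
\gibbs{\ind\{m\in U\}}\le\sum_kZ_N(B_k)/Z_N\le k_{\max}e^{-N\eta}.
\]
On the complement $E^c$ we use only the trivial bound $\gibbs{\cdot}\le1$, and $\Pp(E^c)\le(2k_{\max}+2)e^{-cN}$. Taking expectations gives $\E\gibbs{\ind\{m\in U\}}\le Ke^{-N/K}$ for $N\ge N_0$, and enlarging $K$ covers the finitely many smaller $N$. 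This splitting between $E$ and $E^c$ is exactly the point where the printed proof needs repair.

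\emph{Consequences.} For the ``in particular'' statements, take $U=\{|\mu|\ge\varepsilon\}\cap[-1,1]$. For the $\limsup$ bound, fix $\varepsilon>\max_\Om|\mu|$ and write $\E\gibbs{m^2}\le\varepsilon^2+\E\gibbs{\ind\{|m|\ge\varepsilon\}}$. Then let $\varepsilon\downarrow\max_\Om|\mu|$.

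The main obstacle is uniformity in Step 1, since $F^{\rm SK}_N$ converges only pointwise. Using a finite cover at fixed fields avoids this, because the bound then reduces to finitely many limits. Continuity of $F^{\rm SK}$, which is $1$-Lipschitz in $h$, is what yields the positive margin $4\eta$ on the compact set $\bar U$.
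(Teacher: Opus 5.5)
Your approach is the paper's: split by magnetization, use a deterministic free-energy gap and Gaussian concentration, and use the trivial bound off a good event. The one difference is that you use a fixed finite cover by bins of width $\epsilon$ where the paper uses the $N+1$ exact values of $m$, and that variant is fine. But the constants in Step~3 do not close. Step~1 gives $N^{-1}\E\log Z_N(B_k)\le F(\beta,\gamma)-3\eta$, Step~2 gives $N^{-1}\E\log Z_N\ge F(\beta,\gamma)-\eta$, and your event $E$ allows deviations of size $N\eta$ in each. On $E$ you therefore only get
\[
\tfrac1N\log Z_N(B_k)\le F(\beta,\gamma)-2\eta\le\tfrac1N\log Z_N ,
\]
that is, $Z_N(B_k)/Z_N\le1$, not $e^{-N\eta}$. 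The four losses of $\eta$ use up the whole margin $4\eta$: the bin width, $F_N$ versus $F$, and the two deviations. This is exactly the second half of the repair the paper makes to the printed proof. It is not enough to apply the bound on the right event; the concentration threshold must also be small compared with the gap. Your remark that the $E$/$E^c$ split is ``exactly the point'' of the repair misses this half, and it is the half where your version fails.

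The fix is one line. Define $E$ with deviations $N\eta/2$. On $E$,
\[
\tfrac1N\bigl(\log Z_N(B_k)-\log Z_N\bigr)\le\bigl(F-3\eta+\tfrac\eta2\bigr)-\bigl(F-\eta-\tfrac\eta2\bigr)=-\eta ,
\]
which is your claimed bound, and $\Pp(E^c)\le2(k_{\max}+1)e^{-N\eta^2/(4C\beta^2)}$ is still exponentially small. Alternatively, start from a margin of $6\eta$.

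Two smaller points. First, the centres $\mu_k$ must lie in $\bar U$, because Step~1 uses $F^{\rm SK}(\beta,\gamma\mu_k)-\gamma\mu_k^2/2\le F(\beta,\gamma)-4\eta$. Second, bins with $Z_N(B_k)=0$ have $\log Z_N(B_k)=-\infty$, so concentration says nothing about them; they contribute nothing and should be discarded, and for large $N$ there are none.

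For comparison, the paper avoids $F_N\to F$ altogether. The inequalities $m^2\ge2\mu m-\mu^2$ and $\gamma\ge0$ give $Z_N\ge Z^{\rm SK}_N(\beta,\gamma\mu)e^{-N\gamma\mu^2/2}$ for every $\mu$, so only the existence of $F^{\rm SK}$ is needed. It also treats uniformity as no obstacle: the $F^{\rm SK}_N(\beta,\cdot)$ are $1$-Lipschitz, so their pointwise convergence is uniform on $[-\gamma,\gamma]$. That is what lets the paper use all $N+1$ exact slices with no $\epsilon^2$ slack, at the cost of a union bound over $N+1$ fields.
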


This is Chen's Proposition~1~\cite{Chen2014}, stated there for open $U$ and a possibly random
external field. We include the proof, which does not use openness, because the printed
proof~\cite[p.~12]{Chen2014} applies the exponential bound on the complement of the event where
it holds. With the threshold used there, twice the limiting gap, that event is moreover the rare
one, so Step~3 of the proof redefines the good event rather than only exchanging it with its
complement. The idea is to split $Z_N$ by the value of $m$, which compares the Gibbs weight of
$\{m\in U\}$ with $N+1$ SK partition functions at the fields $\gamma\mu$; their free energies
converge uniformly to the function maximized in~\eqref{eq:chen}, and Gaussian concentration
controls all $N+1$ fluctuations at once.
The proof is in Appendix~\ref{app:mf-skline}.

\begin{lemma}[Convexity and a fixed-field comparison]\label{lem:fixed-field}
Fix $\beta>0$.
\begin{enumerate}
\item[\textup{(a)}] For $0\le\gamma<\gamma'$,
$0\le F(\beta,\gamma')-F(\beta,\gamma)\le\frac{\gamma'-\gamma}2\limsup_N\E\gibbs{m^2}_{\beta,\gamma'}$.
\item[\textup{(b)}] For $\gamma>0$, $\mu_*\in\Om(\beta,\gamma)$ and $\gamma'>\gamma$,
$F(\beta,\gamma')-F(\beta,\gamma)\ge\frac{\gamma\mu_*^2(\gamma'-\gamma)}{2\gamma'}$.
\item[\textup{(c)}] Hence $\mu_*^2\le\frac{\gamma'}\gamma\limsup_N\E\gibbs{m^2}_{\beta,\gamma'}$ for
every $\gamma>0$, $\mu_*\in\Om(\beta,\gamma)$ and $\gamma'>\gamma$.
\end{enumerate}
\end{lemma}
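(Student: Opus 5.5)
For part (a) I would use convexity of $F(\beta,\cdot)$ together with the derivative identity $\partial_\gamma F_N=\E\gibbs{m^2}/2$. Each $F_N(\beta,\cdot)$ is convex and nondecreasing, so
\[
F_N(\beta,\gamma')-F_N(\beta,\gamma)\le\frac{\gamma'-\gamma}2\,\E\gibbs{m^2}_{\beta,\gamma'} .
\]
The derivative at $\gamma'$ dominates the secant. Taking $\limsup_N$ on the right and the limit $F_N\to F$ on the left (the limit exists by \eqref{eq:chen}) gives the upper bound. The lower bound $0\le\cdots$ follows because $F_N$ is nondecreasing, since its derivative is $\E\gibbs{m^2}/2\ge0$.

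For part (b) I would test Chen's formula \eqref{eq:chen} at $\gamma'$ with a candidate chosen so that the field is unchanged. Put $\mu'=\gamma\mu_*/\gamma'$, so that $\gamma'\mu'=\gamma\mu_*$ and $|\mu'|\le|\mu_*|\le1$, which makes $\mu'$ admissible. Then
\[
F(\beta,\gamma')\ge F^{\rm SK}(\beta,\gamma\mu_*)-\frac{\gamma'\mu'^2}2
=F(\beta,\gamma)+\frac{\gamma\mu_*^2}2-\frac{\gamma^2\mu_*^2}{2\gamma'} .
\]
Here I used that $\mu_*$ is a maximizer at $\gamma$, so $F(\beta,\gamma)=F^{\rm SK}(\beta,\gamma\mu_*)-\gamma\mu_*^2/2$. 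The extra terms simplify to $\frac{\gamma\mu_*^2(\gamma'-\gamma)}{2\gamma'}$, which is exactly the claimed bound.

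For part (c) I would combine (a) and (b) and divide by $(\gamma'-\gamma)/2>0$. This gives $\gamma\mu_*^2/\gamma'\le\limsup_N\E\gibbs{m^2}_{\beta,\gamma'}$, which rearranges to the stated inequality.

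There is no serious obstacle. The only points to check are that the rescaled candidate $\mu'$ stays in $[-1,1]$, which it does because $\gamma<\gamma'$, and that the convexity inequality passes to the limit as a $\limsup$ rather than requiring convergence of $\E\gibbs{m^2}$.
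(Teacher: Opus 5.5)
Your proposal is correct and follows essentially the same route as the paper. In part (a) you bound the finite-$N$ secant by the derivative $\E\gibbs{m^2}_{\beta,\gamma'}/2$ at the right endpoint before letting $N\to\infty$; the paper writes the same step as an integral of the nondecreasing $\E\gibbs{m^2}_{\beta,s}$. Part (b) uses the same fixed-field trial $\mu'=\gamma\mu_*/\gamma'$ in Chen's formula, and part (c) combines the two exactly as the paper does.
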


No derivative of the limit $F$ is taken, and none of $F^{\rm SK}$ in $h$: the argument uses only
the monotonicity of $\partial_\gamma F_N$ at finite $N$ and a comparison at fixed field
(Appendix~\ref{app:mf-skline}).

\subsection{Consequences}

\begin{corollary}[Free energy near the line]\label{cor:free-energy-line}
Fix $\beta>0$.
\begin{enumerate}
\item[\textup{(a)}] For $0\le\gamma<\gamma'$,
$0\le F(\beta,\gamma')-F(\beta,\gamma)\le\frac{\gamma'-\gamma}2Y(\gamma'/\beta-1)^{1/2}$.
\item[\textup{(b)}] $F(\beta,\gamma)=F^{\rm SK}(\beta,0)$ for all $\gamma\in[0,\beta]$.
\item[\textup{(c)}] $F(\beta,\cdot)$ is differentiable at $\gamma=\beta$ with derivative $0$. For
$\gamma\ge\beta$,
$F(\beta,\gamma)-F^{\rm SK}(\beta,0)\le\frac{\gamma-\beta}2Y(\gamma/\beta-1)^{1/2}$, which is at
most $\frac{\gamma-\beta}2(12(\gamma/\beta-1))^{1/2}=O((\gamma-\beta)^{3/2})$ for
$\gamma\le21\beta/20$.
\end{enumerate}
\end{corollary}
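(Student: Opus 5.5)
The plan is to derive all three parts from Lemma~\ref{lem:fixed-field} and Theorem~\ref{thm:skline}(a), together with the trivial lower bound $F(\beta,\gamma)\ge F^{\rm SK}(\beta,0)$ given by the trial $\mu=0$ in~\eqref{eq:chen}. No further input about the Parisi solution should be needed.

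For (a), the left inequality $F(\beta,\gamma')\ge F(\beta,\gamma)$ is the monotonicity in Lemma~\ref{lem:fixed-field}(a). For the right inequality, insert Theorem~\ref{thm:skline}(a) at the point $(\beta,\gamma')$ into the upper bound of Lemma~\ref{lem:fixed-field}(a); this gives
\[
\limsup_N\E\gibbs{m^2}_{\beta,\gamma'}\le Y(\gamma'/\beta-1)^{1/2},
\]
and (a) follows immediately.

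For (b), fix $\gamma\in[0,\beta]$. The right side of (a), applied to the pair $(0,\gamma)$, involves $Y(\gamma/\beta-1)$, and this vanishes because $\gamma/\beta-1\le0$ and $y=0$ on $(-1,0]$ by Lemma~\ref{lem:y}. Hence $F(\beta,\gamma)\le F(\beta,0)$. At $\gamma=0$ the Hamiltonian is the SK Hamiltonian without field, so $F(\beta,0)=F^{\rm SK}(\beta,0)$. Together with the trivial lower bound this gives equality. The edge case $\gamma'=\gamma$ is trivial, and the case $\gamma=0$ is just this identification.

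For (c), take the pair $(\beta,\gamma)$ with $\gamma>\beta$ in (a) and use (b) at $\beta$. This gives
\[
0\le F(\beta,\gamma)-F^{\rm SK}(\beta,0)\le\frac{\gamma-\beta}2\,Y(\gamma/\beta-1)^{1/2}.
\]
By continuity of $Y$ at $0$ with $Y(0)=0$ (Lemma~\ref{lem:y}(b)), the right side is $o(\gamma-\beta)$. So the right derivative at $\beta$ is $0$. The left derivative is also $0$, because $F$ is constant on $[0,\beta]$ by (b). Hence $F(\beta,\cdot)$ is differentiable at $\beta$ with derivative $0$. The quantitative form follows from $Y\le y\le 12\delta$ for $0\le\delta\le1/20$ (Lemma~\ref{lem:y}(c)) with $\delta=\gamma/\beta-1$. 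This gives the bound $\frac{\gamma-\beta}2(12(\gamma/\beta-1))^{1/2}$, which is $O((\gamma-\beta)^{3/2})$ at fixed $\beta$.

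The only delicate point is making sure Lemma~\ref{lem:fixed-field}(a) is applied with the $\limsup$ at the upper endpoint $\gamma'$, not at $\gamma$. That is what lets Theorem~\ref{thm:skline}(a) enter at the right point; otherwise the argument is bookkeeping.
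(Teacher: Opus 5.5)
Your proposal is correct and is essentially the paper's proof: (a) combines Lemma~\ref{lem:fixed-field}(a) with Theorem~\ref{thm:skline}(a) at the upper endpoint $\gamma'$; (b) takes an upper endpoint in $(0,\beta]$, where $Y=0$, and uses $F(\beta,0)=F^{\rm SK}(\beta,0)$; and (c) uses the vanishing right difference quotient, constancy on $[0,\beta]$, and Lemma~\ref{lem:y}(c). Your appeal to the trial $\mu=0$ in (b) is harmless but redundant, since the left inequality of (a) already gives $F(\beta,\gamma)\ge F(\beta,0)$.
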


\begin{proof}
(a) is Lemma~\ref{lem:fixed-field}(a) with Theorem~\ref{thm:skline}(a) at $\gamma'$. For (b),
take $\gamma'\le\beta$ in (a), where $Y=0$, and use $F(\beta,0)=F^{\rm SK}(\beta,0)$. For (c), the
right difference quotient at $\beta$ is at most $\frac12Y(\gamma'/\beta-1)^{1/2}\to0$ as
$\gamma'\downarrow\beta$, and the left derivative is $0$ by (b).
\end{proof}

\begin{corollary}[Continuity across the vertical boundary]\label{cor:continuity}
Let $\beta>1$. For every $\gamma>0$,
\[
\sup_{\mu\in\Om(\beta,\gamma)}|\mu|\le Y\bigl((\gamma/\beta-1)_+\bigr)^{1/4},\qquad
\limsup_N\E\gibbs{m^2}_{\beta,\gamma}\le Y\bigl((\gamma/\beta-1)_+\bigr)^{1/2},
\]
and both bounds tend to $0$ as $\gamma\to\beta$.
\end{corollary}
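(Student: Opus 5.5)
The plan is to combine Theorem~\ref{thm:skline} for the upper bounds with Theorem~\ref{thm:sk} for the location of the transition. We split according to the sign of $\delta=\gamma/\beta-1$.

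For $\gamma\le\beta$ we have $(\gamma/\beta-1)_+=0$ and $Y(0)=0$, so both bounds assert $\Om(\beta,\gamma)=\{0\}$ and $\E\gibbs{m^2}_{\beta,\gamma}\to0$. This is exactly the ``in particular'' statement of Theorem~\ref{thm:skline}. For $\gamma>\beta$, Theorem~\ref{thm:skline}(b) gives $\mu^2\le Y(\delta)^{1/2}$ for every $\mu\in\Om(\beta,\gamma)$. Taking fourth roots yields $\sup_{\Om}|\mu|\le Y(\delta)^{1/4}$. The second bound is Theorem~\ref{thm:skline}(a) verbatim, and since $\delta=(\gamma/\beta-1)_+$ in this case, both displayed inequalities hold for every $\gamma>0$.

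For the limit, Lemma~\ref{lem:y} gives that $y$ is continuous with $y=0$ on $(-1,0]$. Hence $Y=\min\{1,y\}$ is continuous with $Y(0)=0$, and $Y((\gamma/\beta-1)_+)\to0$ as $\gamma\to\beta$ from either side. Quantitatively, $y(\delta)\le12\delta$ for $0\le\delta\le1/20$ gives the rates $(12\delta)^{1/4}$ and $(12\delta)^{1/2}$.

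The hypothesis $\beta>1$ plays no role in the bounds themselves. It serves only to place the statement in context: by Theorem~\ref{thm:sk}(c),(d), the region $\gamma>\beta$ is magnetized, so the bounds show that the order parameter tends to $0$ as $\gamma\downarrow\beta$ from the magnetized side. There is no real obstacle here; everything reduces to the earlier theorem and the continuity of $Y$ at $0$. The only care needed is to check that the case $\gamma<\beta$ is covered with $\delta<0$, where $Y(\delta)=0$ as well.
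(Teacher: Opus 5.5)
Your proof is correct and is essentially the paper's argument. You take the case $\gamma\le\beta$ from the ``in particular'' clause of Theorem~\ref{thm:skline}, the case $\gamma>\beta$ from its parts (a) and (b), and the limit from continuity of $Y$ with $Y(0)=0$. The paper does the same, except that it splits the cases at $\gamma<\beta$ versus $\gamma\ge\beta$.
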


\begin{proof}
For $\gamma<\beta$, $\Om=\{0\}$ and $\E\gibbs{m^2}\to0$ by Theorem~\ref{thm:skline}. For
$\gamma\ge\beta$, use Theorem~\ref{thm:skline}(a),(b). The limit holds because $Y$ is continuous
with $Y(0)=0$.
\end{proof}

By Theorem~\ref{thm:sk}(c),(d), $0\notin\Om(\beta,\gamma)$ and
$\liminf_N\E\gibbs{m^2}_{\beta,\gamma}>0$ for $\gamma>\beta>1$. So the magnetization vanishes
continuously at $\gamma=\beta$, i.e.\ at $J_0=1$. Quantitatively, with
$\delta=\gamma/\beta-1\in[0,1/20]$, $\max_\Om|\mu|\le(12\delta)^{1/4}$. The exponent $1/4$ is an
upper bound only: it combines the bound $y(\delta)\sim12\delta$ at the Nishimori point with the
square root in Lemma~\ref{lem:gauge-transfer}. On the Nishimori line itself no transfer is needed,
and Lemma~\ref{lem:cw} gives exponent $1/2$ there unconditionally. Off the line,
Remark~\ref{rem:spiked} feeds into the same transfer the exact value
$\lim_N\E\gibbs{m^2}=O(\delta^2)$ at the Nishimori point of bias $1+\delta$, which follows from
the known Nishimori-line free energy~\cite{KoradaMacris2009,DAM2016}, and improves the exponent
to $1/2$.

For $0<\beta\le1$ the order parameter also vanishes as $\gamma\to1$, without a rate, so with
Corollary~\ref{cor:continuity} it vanishes continuously along the whole boundary
$\gamma=\max(1,\beta)$ (Remark~\ref{rem:beta-le-one}).

The next consequence concerns the envelope that Dey and Kang identify, together with Toulouse's
identity, as equivalent to $\gamma_c(\beta)=\beta$~\cite[Remark~1.3]{DeyKang2026}.
Theorem~\ref{thm:sk}(a) proves the non-strict bound $\varphi_\beta(h)\le h^2/(2\max(1,\beta))$.
For $\beta\le1$ the strict bound $\varphi_\beta(h)<h^2/(2\beta)$, $h\ne0$, follows from
Lemma~\ref{lem:jensen}, since $\lc h<h^2/2\le h^2/(2\beta)$. For every $\beta>0$,
by~\eqref{eq:chen} and Corollary~\ref{cor:free-energy-line}(b), $\Om(\beta,\beta)=\{0\}$ holds
if and only if $\varphi_\beta(h)<h^2/(2\beta)$ for $0<|h|\le\beta$. The following corollary
quantifies this.

\begin{corollary}[Strict envelope]\label{cor:envelope}
For every $\beta>0$ and $h\ne0$, $\varphi_\beta(h)<h^2/(2\beta)$. Quantitatively,
\[
\frac{h^2}{2\beta}-\varphi_\beta(h)\ge
\begin{cases}
\dfrac{h^6}{1536\,\beta^5}, & 0<|h|\le\beta,\\[2ex]
\dfrac{(|h|-\beta)^2}{2\beta}+\dfrac{\beta}{1536}, & |h|>\beta.
\end{cases}
\]
\end{corollary}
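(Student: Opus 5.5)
The plan is to obtain the margin from the free-energy bound of Corollary~\ref{cor:free-energy-line}(c), evaluated at a bias $\gamma'$ slightly above $\beta$ and paired with the trial $\mu=h/\gamma'$ in Chen's formula~\eqref{eq:chen}. The bias $\gamma'$ is then tuned to $h$. The case $|h|>\beta$ will follow from the case $|h|=\beta$ by the Lipschitz property of $\varphi_\beta$. Since $\varphi_\beta$ is even, we may take $h>0$ throughout.

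\emph{Step 1 (a one-parameter family of bounds).} Let $0<h\le\beta$ and $\gamma'=\beta(1+\delta)$ with $0<\delta\le1/20$. Then $\mu=h/\gamma'$ satisfies $|\mu|\le1$, so it is admissible in~\eqref{eq:chen}, and
\[
\varphi_\beta(h)-\frac{h^2}{2\gamma'}\le F(\beta,\gamma')-F^{\rm SK}(\beta,0)\le\frac{\beta\delta}2\,(12\delta)^{1/2},
\]
where the second inequality is Corollary~\ref{cor:free-energy-line}(c). Subtracting this from $h^2/(2\beta)$ gives
\[
\frac{h^2}{2\beta}-\varphi_\beta(h)\ge\frac{h^2}{2\beta}\cdot\frac{\delta}{1+\delta}-\sqrt3\,\beta\,\delta^{3/2}.
\]

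\emph{Step 2 (choice of $\delta$).} Put $t=h^2/\beta^2\in(0,1]$ and choose $\delta=t^2/k$ with $k\ge20$, so that $\delta\le1/20$. Using $1+\delta\le21/20$, the right side of Step~1 is at least
\[
\beta\,\delta\,t\Bigl(\frac{10}{21}-\frac{\sqrt3}{\sqrt k}\Bigr)=\frac{\beta t^3}{k}\Bigl(\frac{10}{21}-\frac{\sqrt3}{\sqrt k}\Bigr).
\]
Taking $k=48$ makes $\sqrt3/\sqrt k=1/4$. The constant is then $(10/21-1/4)/48\approx0.0047$, which exceeds $1/1536$. Since $\beta t^3=h^6/\beta^5$, this gives the first case of the corollary, and in particular strictness for $0<|h|\le\beta$. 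The only care needed here is in the elementary constant bookkeeping; any admissible $k$ with margin works.

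\emph{Step 3 ($|h|>\beta$).} The function $F^{\rm SK}(\beta,\cdot)$ is $1$-Lipschitz, hence so is $\varphi_\beta$, and Step~2 applies at $h=\beta$. Therefore
\[
\varphi_\beta(h)\le\varphi_\beta(\beta)+(h-\beta)\le\frac\beta2-\frac{\beta}{1536}+h-\beta .
\]
Consequently
\[
\frac{h^2}{2\beta}-\varphi_\beta(h)\ge\frac{h^2}{2\beta}-h+\frac\beta2+\frac\beta{1536}=\frac{(h-\beta)^2}{2\beta}+\frac{\beta}{1536},
\]
which is the second case. Strictness for every $h\ne0$ follows from the two cases together. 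The main step is Step~1, and it rests entirely on Corollary~\ref{cor:free-energy-line}(c). That corollary in turn comes from the gauge transfer at Nishimori points just past $M$, so the argument uses neither the Parisi formula nor the accumulation theorem.
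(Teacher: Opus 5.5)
Your proof is correct and follows the paper's argument. It takes the trial $\mu=h/\gamma$ at $\gamma=\beta(1+\delta)$ in Chen's formula, combines it with Corollary~\ref{cor:free-energy-line}(c), and then reduces $|h|>\beta$ to $h=\beta$ by the $1$-Lipschitz property. The only difference is bookkeeping: the paper takes $\delta=h^4/(192\beta^4)$ and bounds $1/(1+\delta)\ge1/2$, which gives exactly $1/1536$, whereas your $\delta=h^4/(48\beta^4)$ with $1+\delta\le21/20$ yields the slightly better constant $19/4032$.
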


The proof takes the trial $\mu=h/\gamma$ in~\eqref{eq:chen} at $\gamma=\beta(1+\delta)$ with
$\delta=h^4/(192\beta^4)$ for $0<|h|\le\beta$, and reduces $|h|>\beta$ to $h=\beta$ by the
$1$-Lipschitz property (Appendix~\ref{app:mf-skline}).

\begin{remark}[Theorem~\ref{thm:sk}(a) without the Parisi formula]\label{rem:parisi-free}
For every $\beta>0$ and $0<\gamma\le\max(1,\beta)$ one has $\Om(\beta,\gamma)=\{0\}$ and
$\E\gibbs{m^2}_{\beta,\gamma}\to0$, and $\varphi_\beta(h)\le h^2/(2\max(1,\beta))$ for all $h$.
For $\gamma\le1$ this follows from Lemmas~\ref{lem:cw} and~\ref{lem:concentration}, and for
$\gamma\le\beta$ from Theorem~\ref{thm:skline}. For the envelope, use Lemma~\ref{lem:jensen} if
$\beta\le1$; if $\beta\ge1$, Corollary~\ref{cor:free-energy-line}(b) at $\gamma=\beta$ gives
$\varphi_\beta(\beta\mu)\le\beta\mu^2/2$ for $|\mu|\le1$, and the Lipschitz step in the proof of
Theorem~\ref{thm:sk}(a) (Appendix~\ref{app:mf-sk}) extends this to all $h$. This proves part~(a) of Theorem~\ref{thm:sk} on
the closed region $\gamma\le\max(1,\beta)$ without the quadratic bound~\eqref{eq:quadratic}, and
hence without the Parisi formula. Neither proof of part~(a) uses Theorem~\ref{thm:accum}, which
enters only through part~(d).
\end{remark}

\section{Accumulation at the origin in the SK model}\label{sec:accum}

Throughout this section $\xi(q)=\beta^2q^2/2$ with $\beta>1$, the field is $0$, and
$\mu=\mu_\beta$ is the zero-field Parisi measure. We write $u=u_\mu$, $\alpha=\alpha_\mu$,
$\Gamma=\Gamma_\mu$, $G=G_\mu$ and $X$ for the objects of Section~\ref{sec:prelim}, and
$\theta=\theta_\beta=\partial_x^2u(0,0)$.

\begin{theorem}[Lopatto]\label{thm:accum}
For every $\beta>1$, $0$ is an accumulation point of $\operatorname{supp}\mu_\beta$.
\end{theorem}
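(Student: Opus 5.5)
We argue by contradiction, following the route announced in Section~\ref{sec:intro-ideas}. Suppose $0$ is isolated in $\operatorname{supp}\mu$. By \textup{(P6)}, $0\in\operatorname{supp}\mu$, so $\mu$ has an atom $a:=\mu(\{0\})>0$. There is then a gap $(0,q_*)$ with $q_*=\min(\operatorname{supp}\mu\setminus\{0\})$. This point exists, because $\mu\neq\delta_0$ for $\beta>1$: at $\delta_0$ one computes $\Gamma(q)=\tanh$-moments whose slope at $0$ is $\beta^2>1$, which violates the minimality condition $G\ge G(0)$ near $0$.

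On $[0,q_*]$ we have $\alpha\equiv a$, and the optimality conditions give the following.
\begin{enumerate}
\item[(i)] $\Gamma(0)=0$ and $\Gamma(q_*)=q_*$.
\item[(ii)] $G(0)=G(q_*)=\min G$, i.e.\ $\int_0^{q_*}(\Gamma(t)-t)\,\dd t=0$.
\item[(iii)] $G\ge G(0)$ on the gap, so $\int_0^{q}(\Gamma-t)\,\dd t\le 0$ for $q\in(0,q_*)$.
\item[(iv)] $\Gamma'(q_*)\le1$ by the second condition in \textup{(P6)}, and $\Gamma'(0)=\beta^2\theta^2\le1$ from (iii) near $0$.
\end{enumerate}
Set $\psi(q)=\Gamma(q)/q$, with $\psi(0)=\Gamma'(0)$. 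Then $\psi(q_*)=1$ and $\psi(0)\le1$. Condition (ii) says $\int_0^{q_*}t(\psi(t)-1)\,\dd t=0$. If $\psi$ is convex on $[0,q_*]$, then $\psi\le1$ on the whole gap, since a convex function lies below the chord joining two values that are both $\le1$. Combined with the vanishing weighted integral, this forces $\psi\equiv1$ on $[0,q_*]$, so $\Gamma(q)=q$ on the gap. Then $\Gamma''\equiv0$ there. We will show this contradicts strict positivity of some quantity, for instance that $\Gamma$ is not linear near $0$ because $\partial_x^4u$-type terms are nonzero. So everything reduces to the shape claim that $q\mapsto\Gamma(q)/q$ is convex on the gap, with equality excluded.

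To prove convexity, we work on the gap, where $\alpha\equiv a$ is constant. There the Cole--Hopf form~\eqref{eq:colehopf} makes $u(s,\cdot)$ an explicit tilted Gaussian transform of $u(q_*,\cdot)$, and $X$ is a Doob-transformed Brownian motion started at $0$. Write $f=\partial_xu$. It is odd, and we use its concavity on $[0,\infty)$, which propagates from $\tanh$ through the PDE as in the invariant argument for $\partial_x^2u\le1-(\partial_xu)^2$ in Section~\ref{sec:prelim}. Itô's formula gives $\Gamma'=\beta^2\E[(\partial_xf)^2]$, and differentiating once more gives an expression for $\Gamma''$ in terms of $\E[(\partial_x^2f)^2]$ and cross terms. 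Convexity of $\Gamma/q$ amounts to $q^2\Gamma''-2q\Gamma'+2\Gamma\ge0$. Denote the left side by $L(q)$. Then $L(0)=0$ and $L'(q)=q^2\Gamma'''$, so it suffices to show $\Gamma'''\ge0$ on the gap.

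Equivalently, we would exhibit a polynomial Lyapunov function $V$ in the variables $(f,\partial_xf,\partial_x^2f)$ (or in $f$ and $\partial_xf$ after using the ODE-type constraint from concavity) such that $\E V(\cdot)$ is monotone along the diffusion and dominates the needed combination. The required property (V1) is a polynomial inequality on a compact semialgebraic region, which we would verify in exact rational arithmetic.

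The main obstacle is this last step: finding the right quantity whose sign encodes convexity of $\Gamma/q$, and a Lyapunov polynomial for which the generator inequality closes using only $|f|\le1$, $0\le\partial_xf\le1-f^2$ and concavity of $f$ on $[0,\infty)$. We would first try $V$ quadratic in $\partial_xf$ with coefficients polynomial in $f$, fitted to be tight at $\tanh$. Only if that fails would we raise the degree and certify (V1) by computer.
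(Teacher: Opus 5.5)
\textbf{Main gap: the convexity of $\Gamma(q)/q$ on the gap is never proved, and your route to it is unlikely to close.}

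Your outer structure is the paper's. You use the first gap $[0,q_*)$ with $\alpha\equiv a$, the optimality conditions $\Gamma(q_*)=q_*$, $\beta^2\theta^2\le1$ and $\int_0^{q_*}(\Gamma(t)-t)\,\dd t=0$, and the chord argument showing that convexity of $\Gamma(q)/q$ on the gap forces $\Gamma(q)=q$ there. That is the second route of Lemma~\ref{lem:endgame}. Two smaller points need fixing.
\begin{itemize}
\item The closing contradiction is not a $\partial_x^4u$ effect. Since $\partial_x^3u(0,0)=0$ by evenness, It\^o's formula on the gap gives $\Gamma''(0)=-2a\beta^4\theta^3<0$, coming from the drift term. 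This is what contradicts linearity.
\item Concavity of $\partial_xu$ on $[0,\infty)$ does not follow from the tilted-variance formula behind $\partial_x^2u\le1-(\partial_xu)^2$. It needs its own argument: Lemma~\ref{lem:uxxx}, a Feynman--Kac representation of $\partial_x^3u$ on each interval where $\alpha$ is constant, with downward induction.
\end{itemize}

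Everything rests on the convexity step, and you leave it open. Your reduction to $\Gamma'''\ge0$ replaces it by a strictly stronger claim. You do not establish that claim and give no mechanism for it. On the gap,
\[
\Gamma'''=\beta^6\E\bigl[(\partial_x^4u)^2-12a\,\partial_x^2u\,(\partial_x^3u)^2+6a^2(\partial_x^2u)^4\bigr]
\]
along $X$. The integrand is negative wherever $\partial_x^4u=0$ and $(\partial_x^3u)^2>\frac a2(\partial_x^2u)^3$. The constraints you list ($|\partial_xu|\le1$, $0\le\partial_x^2u\le1-(\partial_xu)^2$, concavity) do not exclude this, so any proof would have to be global. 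The Lyapunov function ``monotone along the diffusion'' is never specified, and (V1) is never formulated. That part is a plan, not a proof.

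The paper's argument works differently:
\begin{itemize}
\item Gaussian integration by parts against the tilted density $p_r$ writes $\mathcal F(r)=r^2q''-2rq'+2q$ as $\langle\Pi_V\rangle_r$. The kernel $\Pi_V$ is explicit and depends only on the profile $V=U(r,\cdot)$ at fixed time (Lemma~\ref{lem:kernel}).
\item Two integrations reduce $\Pi_V\ge0$ to $\Omega_V\ge0$ (Lemma~\ref{lem:omega}).
\item The substitution $x=e^\tau$ turns $\Omega_V$ into the solution of $\dot\omega=(5-\ell)\omega+2\widetilde Q$. This runs along a planar trajectory with $\dot\ell=2\ell-k$ and $0\le k\le\ell$, and it no longer depends on $\lambda$ or on the gap (Lemma~\ref{lem:reduction}).
\item The polynomial Lyapunov function $\mathcal Y(\ell,k)$ then acts in the spatial variable $\tau$, not along the diffusion. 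It is needed only for $1/4\le\ell\le\ell_{\max}$; for $\ell\le3/8$ and $\ell\ge\ell_2$ one has $\widetilde Q\ge0$ directly (Lemma~\ref{lem:accumulator}).
\end{itemize}
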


Theorem~\ref{thm:accum} is due to Lopatto. It first appears in the second version
of~\cite{Lopatto2026} (Theorem~1.1 there: $[0,\varepsilon_\beta]\subset\operatorname{supp}\mu_\beta$);
in v3 it is~\cite[Proposition~5.1]{Lopatto2026}, and Theorem~1.1 there
identifies $\operatorname{supp}\mu_\beta=[0,q_\beta]$. An AI-written proof that the support has no
gaps~\cite[Proposition~2.12]{ChatGPT2026} relies on the slope inequalities of the second version
of~\cite{Lopatto2026} (Proposition~3.6 and~(3.39) there).
We give a short alternative proof of
Theorem~\ref{thm:accum} that uses nothing from~\cite[\S\S3--5]{Lopatto2026}. Its inputs are
\textup{(P3)}, \textup{(P4)}, \textup{(P6)}, the sign of $\partial_x^3u$
(Lemma~\ref{lem:uxxx}), and one exact polynomial inequality checked by computer
(Appendix~\ref{sec:accum-cert}).

The strategy is as follows. If $0$ were isolated in the support, the Parisi PDE on the first gap
$[0,q_*)$ would be a Cole--Hopf flow, and the optimality conditions at $0$ and $q_*$ are
incompatible with convexity of $q\mapsto\Gamma(q)/q$ on the gap (Lemma~\ref{lem:endgame}).
Lopatto instead shows $\Gamma(q_*)<\beta^2q_*\theta^2$ from a one-crossing property of $\Gamma''$
on the gap~\cite[Proposition~4.1 and Corollary~4.2]{Lopatto2026}. Gaussian integration by parts
reduces convexity to the sign of an explicit kernel built from the profile $u(s,\cdot)$
(Lemmas~\ref{lem:kernel} and~\ref{lem:omega}). A change of variables turns the kernel into an
integral along a planar trajectory whose dynamics do not depend on the gap
(Lemma~\ref{lem:reduction}), and a polynomial Lyapunov function shows that this integral is
nonnegative (Lemma~\ref{lem:accumulator}; Appendix~\ref{app:mf-accum}). Shape properties of $\Gamma(s)/s$ on a first gap also
appear in Zhou's analysis of the pure $p$-spin model~\cite[Propositions~3.1 and 3.2]{Zhou2026b}:
for the one-step candidates considered there, every critical point of $\Gamma(s)/s$ on the gap is a
strict local maximum.%
\footnote{Precisely, for a one-step candidate $x\delta_0+(1-x)\delta_q$ (Zhou's $m$ is our $x$) satisfying the
stationarity conditions and $\Gamma'(q)<1$, a tangency $\Gamma(s)=s\Gamma'(s)$ with
$s\in(0,q)$ forces $\Gamma''(s)<0$, so that every critical point of $\Gamma(s)/s$ on $(0,q)$ is a
strict local maximum.}
In the SK model the opposite shape, convexity, excludes the gap. We
have not found the convexity statement in the literature.

\subsection{The first gap}

Suppose that $0$ is not an accumulation point of $\operatorname{supp}\mu$. By \textup{(P6)}
and Appendix~\ref{app:mf-parisi}, $0\in\operatorname{supp}\mu$. Moreover $\mu\ne\delta_0$: for $\delta_0$ one has $\theta=1$
(Section~\ref{sec:sk}), whereas \textup{(P6)} at $q=0$ gives $\beta^2\theta^2\le1$ (this
is~\cite[Lemma~2.3]{Lopatto2026}; Theorem~\ref{thm:sk}(a)--(b) also give $\theta\le1/\beta$).
Hence $\operatorname{supp}\mu\setminus\{0\}$ is nonempty and closed, and
\[
q_*:=\min\bigl(\operatorname{supp}\mu\setminus\{0\}\bigr)>0,\qquad
\lambda:=\mu(\{0\})\in(0,1).
\]
Here $\lambda>0$ because $0$ is an isolated point of the support, and $\lambda<1$ because
$\mu\ne\delta_0$. Thus $\alpha\equiv\lambda$ on $[0,q_*)$. Put $t=\beta^2q_*$,
$\psi=u(q_*,\cdot)$ and $U(r,x)=u(r/\beta^2,x)$ for $r\in[0,t]$. By~\eqref{eq:colehopf} (derived
for finitely atomic measures; for general $\mu$ approximate by finitely atomic measures that agree
with $\mu$ on $[0,q_*)$ and use \textup{(P3)}, or use uniqueness for the constant-coefficient
equation on $[0,q_*]$),
\begin{equation}\label{eq:gap-ch}
U(r,x)=\frac1\lambda\log\E\exp\bigl(\lambda\,\psi(x+\sqrt{t-r}\,g)\bigr),\qquad 0\le r<t,
\end{equation}
so $\partial_rU+\frac12\bigl(\partial_x^2U+\lambda(\partial_xU)^2\bigr)=0$ on $[0,t)$. The process
$Y_r=X_{r/\beta^2}$ solves $\dd Y_r=\lambda\,\partial_xU(r,Y_r)\,\dd r+\dd B_r$ with $Y_0=0$ and a
standard Brownian motion $B$. Set $q(r)=\Gamma(r/\beta^2)$. For $0<r<t$ write $V=U(r,\cdot)$,
with primes denoting $x$-derivatives, and
\[
\langle f\rangle_r=\int_\R f\,p_r\,\dd x,\qquad
p_r(x)=\frac{\exp\bigl(\lambda V(x)-x^2/(2r)\bigr)}{\int_\R\exp\bigl(\lambda V(y)-y^2/(2r)\bigr)\dd y}.
\]

\begin{lemma}\label{lem:gap}
\textup{(i)} For $0<r<t$, $Y_r$ has density $p_r$.
\textup{(ii)} $q\in C^1[0,t]\cap C^2[0,t)$, and for $0<r<t$
\begin{equation}\label{eq:gap-moments}
q(r)=\langle V'^2\rangle_r,\qquad q'(r)=\langle V''^2\rangle_r,\qquad
q''(r)=\langle V'''^2-2\lambda V''^3\rangle_r .
\end{equation}
Moreover $q(0)=0$, $q'(0)=\theta^2$ and $q''(0)=-2\lambda\theta^3$ (right derivatives).
\textup{(iii)} $q(t)=t/\beta^2$, $\beta^2\theta^2\le1$, $q'(t)\le1/\beta^2$ and
$\int_0^tq(r)\,\dd r=tq(t)/2$.
\end{lemma}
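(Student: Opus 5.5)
The plan is to exploit the Cole--Hopf structure \eqref{eq:gap-ch} on the gap, and then read off each identity either from the definitions in \textup{(P4)} or from the optimality conditions \textup{(P6)}, after the time change $r=\beta^2s$.

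\emph{Part (i).} Put $w(r,x)=\exp(\lambda U(r,x))$. By \eqref{eq:gap-ch}, $w$ solves the backward heat equation $\partial_rw+\frac12\partial_x^2w=0$ on $[0,t)$, and the drift satisfies $\lambda\partial_xU=\partial_x\log w$. Hence $Y$ on $[0,r]$ is the Doob $h$-transform of Brownian motion started at $0$ by the space-time harmonic function $w$. Concretely, $w(r,B_r)/w(0,0)$ is a positive martingale, and Girsanov's theorem identifies the law of $Y$ with Brownian motion reweighted by it. This uses that $\partial_xU$ is bounded by \textup{(P3)}, so Novikov's condition holds and the SDE has a unique strong solution. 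The density of $Y_r$ is therefore proportional to $e^{-x^2/(2r)}w(r,x)=\exp(\lambda V(x)-x^2/(2r))$, which is $p_r$.

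\emph{Part (ii).} The first identity in \eqref{eq:gap-moments} is the definition of $\Gamma$, together with (i). For the second, \textup{(P4)} gives $\Gamma'(s)=\beta^2\E[\partial_x^2u(s,X_s)^2]$, and dividing by $\beta^2$ under $r=\beta^2s$ yields $q'(r)=\langle V''^2\rangle_r$. For $q''$, I would apply It\^o's formula to $U''(r,Y_r)^2$ with the generator $\partial_r+\lambda U'\partial_x+\frac12\partial_x^2$. Differentiating the PDE twice in $x$ gives
\[
\partial_rU''=-\tfrac12U''''-\lambda(U''^2+U'U''').
\]
Substituting, the $U''''$ and $U'U'''$ terms cancel, leaving $\frac{\dd}{\dd r}\E[U''^2]=\E[U'''^2-2\lambda U''^3]$. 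The local martingale part is a true martingale because all $x$-derivatives are bounded on $[0,r]\times\R$ for $r<t$; this follows from smoothing by the Gaussian in \eqref{eq:gap-ch}, or from \textup{(P3)}. The same bounds give $q\in C^2[0,t)$, and $q\in C^1[0,t]$ is the $C^1$ statement in \textup{(P4)}. At $r=0$ we have $Y_0=0$, and $V=U(0,\cdot)$ is even, so $V'(0)=V'''(0)=0$. Continuity of the right-hand sides as $r\downarrow0$ then gives $q(0)=0$, $q'(0)=\theta^2$ and $q''(0)=-2\lambda\theta^3$.

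\emph{Part (iii).} Since $q_*\in\operatorname{supp}\mu$, \textup{(P6)} gives $\Gamma(q_*)=q_*$, that is, $q(t)=t/\beta^2$. The inequality in \textup{(P6)} at $0\in\operatorname{supp}\mu$ reads $\xi''(0)\partial_x^2u(0,0)^2=\beta^2\theta^2\le1$. At $q_*$ it reads $\beta^2\E[\partial_x^2u(q_*,X_{q_*})^2]\le1$, which is $q'(t)\le1/\beta^2$ by the previous step. Finally, both $0$ and $q_*$ lie in $\operatorname{argmin}G$ by \textup{(P6)}, so $G(0)=G(q_*)$. This means $\int_0^{q_*}\beta^2(\Gamma(s)-s)\,\dd s=0$, hence $\beta^2\int_0^{q_*}\Gamma=\beta^2q_*^2/2$. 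Changing variables to $r$, this becomes $\int_0^tq=tq(t)/2$.

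The main obstacle is the justification rather than the algebra: the It\^o computation for $q''$ and its continuity up to $r=0$ need uniform bounds on $U'''$ and $U''''$. For a general $\mu$, the terminal profile $\psi$ at $q_*$ is only as regular as \textup{(P3)} provides. For $r<t$, however, the Gaussian convolution in \eqref{eq:gap-ch} makes everything smooth with bounded derivatives. This is why $C^2$ is claimed only on $[0,t)$.
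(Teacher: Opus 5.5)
Your proof is correct and follows essentially the paper's route. It uses Girsanov with the space--time harmonic function $e^{\lambda U}$ for (i), then \textup{(P4)} plus It\^o's formula on $[0,t-\varepsilon]$ (where \eqref{eq:gap-ch} gives bounded $x$-derivatives of every order) for (ii), and the optimality conditions \textup{(P6)} at $0$ and $q_*$ for (iii).

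The one detail worth making explicit is that $0,q_*\in\operatorname{argmin}G$. This follows from $\mu(\operatorname{argmin}G)=1$ only because $G$ is continuous, so that $\operatorname{argmin}G$ is closed and therefore contains $\operatorname{supp}\mu$.
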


The proof (Appendix~\ref{app:mf-accum}) combines Girsanov's theorem on the gap, the formulas
of~\cite[Lemma~8.4]{JT2017} and the optimality conditions \textup{(P6)}.

\begin{lemma}[Endgame]\label{lem:endgame}
Let $\kappa(r)=q(r)/r$ for $0<r\le t$. If $\kappa$ is convex on $(0,t)$, the conclusions of
Lemma~\ref{lem:gap} are contradictory.
\end{lemma}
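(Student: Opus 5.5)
The plan is to show that the conclusions of Lemma~\ref{lem:gap} force $\kappa$ to sit below its chord on $[0,t]$ with equal weighted average, which pins $\kappa$ to a constant that contradicts its slope at the origin. The inputs are the two endpoint values of $\kappa$, the integral identity in Lemma~\ref{lem:gap}(iii), and the second-order expansion of $q$ at $0$.

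\emph{Boundary values.} Since $q\in C^1[0,t]$ with $q(0)=0$ and $q'(0)=\theta^2$, the function $\kappa$ extends continuously to $[0,t]$ with $\kappa(0)=\theta^2$ and $\kappa(t)=q(t)/t=1/\beta^2$. Since $q\in C^2[0,t)$ with $q''(0)=-2\lambda\theta^3$, Taylor's formula gives
\[
\kappa(r)=\theta^2-\lambda\theta^3r+o(r)\qquad(r\downarrow0),
\]
so the right derivative of $\kappa$ at $0$ is $-\lambda\theta^3$. Lemma~\ref{lem:gap}(iii) gives $\beta^2\theta^2\le1$, that is, $\kappa(0)\le\kappa(t)$. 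The integral identity $\int_0^tq=tq(t)/2$ reads
\[
\int_0^t r\,\kappa(r)\,\dd r=\frac{t^2}{2}\,\kappa(t).
\]

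\emph{Chord comparison.} A convex function on $(0,t)$ that is continuous on $[0,t]$ lies below its chord:
\[
\kappa(r)\le\ell(r):=\theta^2\Bigl(1-\frac rt\Bigr)+\kappa(t)\frac rt .
\]
Integrating against the weight $r$ gives
\[
\frac{t^2}2\kappa(t)=\int_0^t r\kappa\le\int_0^t r\ell=\frac{t^2}{6}\theta^2+\frac{t^2}{3}\kappa(t).
\]
Hence $\kappa(t)\le\theta^2$. Together with $\theta^2\le\kappa(t)$ this forces $\kappa(t)=\theta^2$, and all the inequalities above become equalities. In particular
\[
\int_0^t r\,(\ell-\kappa)\,\dd r=0 .
\]
The integrand is continuous and nonnegative, so $\kappa\equiv\ell$ on $[0,t]$. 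Since $\ell$ has equal endpoint values, $\kappa\equiv\theta^2$ is constant.

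\emph{Contradiction.} We split on the value of $\theta$.
\begin{itemize}
\item If $\theta=0$, then $\kappa(t)=1/\beta^2>0=\theta^2$, contradicting $\kappa(t)=\theta^2$.
\item If $\theta>0$, the constant function $\kappa$ has right derivative $0$ at the origin. But it should be $-\lambda\theta^3<0$, because $\lambda>0$. This is again a contradiction.
\end{itemize}
The inequality $q'(t)\le1/\beta^2$ is not needed in this argument.

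The only delicate points are regularity at the endpoints: the continuous extension of $\kappa$ to $[0,t]$ needed for the chord inequality, and the derivative of $\kappa$ at $0^+$. Both follow directly from $q\in C^1[0,t]\cap C^2[0,t)$ in Lemma~\ref{lem:gap}(ii), so I expect no real obstacle.
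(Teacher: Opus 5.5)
Your proof is correct and is essentially the paper's second route (via $\int_0^tq=tq(t)/2$). The paper uses $\kappa(0+)\le\kappa(t)$ to bound the chord directly by $\kappa(t)$, so that $\int_0^tr\,(\kappa-\kappa(t))\,\dd r=0$ with a nonpositive integrand forces $\kappa\equiv\kappa(t)$, hence $\theta=1/\beta$ and $\kappa'(0+)=-\lambda/\beta^3\neq0$; your explicit integration of the chord amounts to the same thing, and the paper also gives a first route through $q'(t)\le1/\beta^2$ which, as you note, is not needed.
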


\begin{proof}
By Lemma~\ref{lem:gap}(ii), $\kappa\in C^1(0,t]\cap C^2(0,t)$, $\kappa(0+)=\theta^2$, and
$\kappa'(r)=(rq'(r)-q(r))/r^2\to q''(0)/2=-\lambda\theta^3$ as $r\downarrow0$. By (iii),
$\kappa(t)=1/\beta^2\ge\theta^2=\kappa(0+)$. We give two ways to conclude.

\emph{Via $q'(t)\le1/\beta^2$.} Then $\kappa'(t)=(tq'(t)-q(t))/t^2\le0$. Since $\kappa'$ is
nondecreasing on $(0,t)$ and continuous on $(0,t]$, $\kappa'\le0$ on $(0,t]$, so
$\kappa(t)\le\kappa(0+)$. Hence $\kappa$ is constant, $\theta=1/\beta$, and
$0=\kappa'(0+)=-\lambda/\beta^3$, which is false.

\emph{Via $\int_0^tq=tq(t)/2$.} Then $\int_0^tr\bigl(\kappa(r)-\kappa(t)\bigr)\dd r=\int_0^tq-tq(t)/2=0$.
By convexity and $\kappa(0+)\le\kappa(t)$, $\kappa(r)\le(1-r/t)\kappa(0+)+(r/t)\kappa(t)\le\kappa(t)$ on
$(0,t)$. So $\kappa\equiv\kappa(t)$, hence $\theta=1/\beta$, and again
$0=\kappa'(0+)=-\lambda/\beta^3$.
\end{proof}

The second route uses the same optimality conditions as the endgame
of~\cite[Proposition~5.1]{Lopatto2026}. The first, which uses the
stability condition at both ends of the gap, is of the type used
in~\cite[Proposition~2.12]{ChatGPT2026} and, for gaps away from the origin,
in~\cite[Proposition~9.2]{Lopatto2026}. Since
\[
r^3\kappa''(r)=\mathcal F(r):=r^2q''(r)-2rq'(r)+2q(r),
\]
it remains to show $\mathcal F\ge0$ on $(0,t)$.

\subsection{Convexity on the gap}\label{sec:accum-convex}

\begin{lemma}\label{lem:uxxx}
For every probability measure $\nu$ on $[0,1]$, every $s\in[0,1]$ and every $x\ge0$,
${\partial_x^3u_\nu(s,x)\le0}$.
\end{lemma}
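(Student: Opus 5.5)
The plan is to show that $w:=\partial_xu_\nu$ is odd and concave on $[0,\infty)$ for every $s$, working first with finitely atomic $\nu$ and then passing to general $\nu$ by \textup{(P3)}. The argument uses the PDE satisfied by $z:=\partial_x^3u_\nu$, a maximum principle on the half-line $x>0$, and the oddness of $z$.

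\emph{Step 1: the equation for $z$.} Fix $\nu$ with finitely many atoms, so that $\alpha=\alpha_\nu$ is piecewise constant in $s$. On each interval of constancy, $u$ is given by~\eqref{eq:colehopf} or by the heat semigroup, so it is smooth in $(s,x)$ there. By \textup{(P3)}, $w$, $\partial_xw$ and $z$ are bounded. Differentiating~\eqref{eq:pde} once shows that $w$ solves the viscous Burgers equation
\[
\partial_sw+\tfrac{\xi''(s)}2\bigl(\partial_x^2w+2\alpha w\,\partial_xw\bigr)=0 .
\]
Differentiating twice more and using $(w\,\partial_xw)_{xx}=w\,\partial_x^3w+3\,\partial_xw\,\partial_x^2w$ gives the linear equation
\[
\partial_sz+\tfrac{\xi''(s)}2\bigl(\partial_x^2z+2\alpha w\,\partial_xz+6\alpha\,\partial_xw\,z\bigr)=0 ,
\]
whose coefficients are bounded. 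The potential $3\xi''\alpha\,\partial_xw$ is nonnegative by \textup{(P3)}. Since $u(s,\cdot)$ is even, $z(s,\cdot)$ is odd, so $z(s,0)=0$ for every $s$. At the terminal time, $z(1,x)=-2\tanh x\,\sech^2x\le0$ for $x\ge0$.

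\emph{Step 2: sign on the half-line.} Let $X$ be the diffusion $\dd X_r=\xi''(r)\alpha(r)w(r,X_r)\,\dd r+\sqrt{\xi''(r)}\,\dd B_r$ started from $X_s=x\ge0$, and let $\tau$ be its first hitting time of $0$. Feynman--Kac, stopped at $\tau\wedge1$, gives
\[
z(s,x)=\E\Bigl[z(\tau\wedge1,X_{\tau\wedge1})\exp\Bigl(\int_s^{\tau\wedge1}3\xi''\alpha\,\partial_xw(r,X_r)\,\dd r\Bigr)\Bigr].
\]
On $\{\tau\le1\}$ the integrand vanishes because $z(\tau,0)=0$. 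On $\{\tau>1\}$ we have $X_1>0$, so $z(1,X_1)\le0$, while the exponential factor is positive and bounded. Hence $z(s,x)\le0$. The jumps of $\alpha$ cause no problem: the representation is applied interval by interval, using continuity of $z$ in $s$ across the breakpoints, or directly, since the drift is merely piecewise continuous in time. Equivalently, one can apply the parabolic maximum principle to $e^{-Cs}z$ on $[0,1]\times(0,\infty)$, using the boundary value $0$ at $x=0$, the terminal sign, and boundedness at infinity.

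\emph{Step 3: general $\nu$.} Approximate $\nu$ weakly by finitely atomic $\nu_n$. By \textup{(P3)}, $\partial_x^3u_{\nu_n}\to\partial_x^3u_\nu$ uniformly, so the inequality $\partial_x^3u_\nu\le0$ on $x\ge0$ persists in the limit.

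The step needing the most care is Step 2: justifying the stopped Feynman--Kac formula on an unbounded domain with coefficients that are only piecewise continuous in $s$. This requires bounds on $z$ and $\partial_xw$ uniform in $x$. If those bounds are not readily available from \textup{(P3)}, a fallback is to verify the invariant directly through each Cole--Hopf step. For the heat step this is easy: pairing $y$ with $-y$ gives
\[
\int_0^\infty\varphi'''(y)\bigl[k(y-x)-k(y+x)\bigr]\,\dd y\le0
\]
for a symmetric decreasing kernel $k$. For the tilted steps, however, the third-cumulant term makes this route messier, which is why the PDE argument is preferred.
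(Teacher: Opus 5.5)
Your proposal is correct and is essentially the paper's proof. Both derive the linear equation for $\partial_x^3u$, whose potential $3\xi''\alpha\,\partial_x^2u$ is nonnegative and bounded, and write a Feynman--Kac representation along the drifted diffusion stopped at its hitting time of $0$, where oddness gives $\partial_x^3u=0$. Both then use the terminal sign $-2\tanh x\,\sech^2x\le0$ and pass to general $\nu$ by \textup{(P3)}.

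The only difference is cosmetic: the paper applies the representation separately on each interval where $\alpha_\nu$ is constant and concludes by downward induction, which is your interval-by-interval option. Your Cole--Hopf fallback is not needed, because on each such interval the Cole--Hopf formula already makes all $x$-derivatives of order at least one bounded, $\partial_x^3u$ included.
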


The statement is~\cite[(7.6)]{Lopatto2026}, where it is derived from the slope-coordinate
invariants of~\cite[\S3]{Lopatto2026}. A maximum-principle proof of the same sign, written for the
(regularized) zero-temperature Parisi PDE, appears in~\cite[(3.32)--(3.33)]{ChatGPT2026} and
in~\cite[Lemma~3.4 and Appendix~B.3]{Chen2026b}; we include the short argument at positive
temperature, in probabilistic form, for completeness.
The proof, by a Feynman--Kac representation of $\partial_x^3u_\nu$ on each interval where
$\alpha_\nu$ is constant and downward induction over the intervals, is in
Appendix~\ref{app:mf-accum}.

For $V=u_\nu(s,\cdot)$ and $\lambda>0$, Lemma~\ref{lem:uxxx} and \textup{(P3)} make $\lambda V'$
concave on $[0,\infty)$, vanishing at $0$ and nondecreasing; this is the only structural input of
Lemma~\ref{lem:accumulator}, on which the proof of the next proposition rests. That proof follows
the chain described at the beginning of this section (Appendix~\ref{app:mf-accum}). The polynomial
Lyapunov function used there has three properties (V1)--(V3); (V2) and (V3) are elementary, and
(V1) is the computer-assisted step: an inequality for an explicit polynomial on a rectangle,
verified in exact rational arithmetic (Appendix~\ref{sec:accum-cert}).

\begin{proposition}[Convexity on the gap]\label{prop:accum-convex}
Suppose that $0$ is not an accumulation point of $\operatorname{supp}\mu$, and use the notation of
Lemma~\ref{lem:gap}, with $\mathcal F$ as defined after Lemma~\ref{lem:endgame}. Then
$\mathcal F(r)\ge0$ for every $r\in(0,t)$; equivalently, the function
$\kappa$ of Lemma~\ref{lem:endgame} is convex on $(0,t)$.
\end{proposition}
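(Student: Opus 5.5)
The plan follows the chain announced at the beginning of Section~\ref{sec:accum}: express $\mathcal F$ through the tilted measure $p_r$, reduce it by Gaussian integration by parts to a kernel that depends only on the profile $V=U(r,\cdot)$, and show that this kernel integrates to something nonnegative using only that $\lambda V'$ is concave on $[0,\infty)$, odd, vanishes at $0$ and is nondecreasing (Lemma~\ref{lem:uxxx} and \textup{(P3)}).

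\emph{Step 1 (moment formula).} Start from \eqref{eq:gap-moments}: $\mathcal F(r)=\langle r^2(V'''^2-2\lambda V''^3)-2rV''^2+2V'^2\rangle_r$. The $r$-dependence of $V$ is awkward, so I would use the Cole--Hopf form \eqref{eq:gap-ch} to write $V$, $V'$, $V''$ and $V'''$ as cumulants of $\psi$ under the Gaussian-tilted law of $x+\sqrt{t-r}\,g$. Then I would integrate by parts against the density $p_r\propto e^{\lambda V-x^2/(2r)}$, for which $p_r'/p_r=\lambda V'-x/r$. Combining terms such as $\langle xV'V''\rangle_r$ and $\langle x^2V''^2\rangle_r$ with those in $\mathcal F$ should express $\mathcal F(r)$ as $\langle K_r\rangle_r$ for an explicit kernel $K_r(x)$ built from $V',V'',V'''$ and $x$. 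Evenness of $V$ lets me restrict to $x\ge0$. This is the content I would expect of Lemmas~\ref{lem:kernel} and~\ref{lem:omega}.

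\emph{Step 2 (planar reduction).} I would rescale $x=\sqrt r\,y$ and set $w(y)=\sqrt r\,\lambda V'(\sqrt r y)$, so that the dependence on $r$ and $t$ disappears. Then $\langle K_r\rangle_r$ becomes $r^{-1}$ times an integral $\int_0^\infty k(y,w,w',w'')\,e^{W(y)-y^2/2}\,\dd y$, where $W'=w$. Since the coefficient of $w''$ should appear linearly after one more integration by parts, I would trade it for a boundary term. The remaining integral can then be read as a functional along the planar trajectory $y\mapsto(w(y),y)$ or $(w,w')$, constrained only by the requirements that $w$ is concave, nondecreasing, and $w(0)=0$ (Lemma~\ref{lem:reduction}). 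Crucially, this class of trajectories does not depend on $\lambda$ or the gap.

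\emph{Step 3 (Lyapunov function; the main obstacle).} I would look for a polynomial $L(a,b)$ with the following properties. First, $\frac{\dd}{\dd y}\bigl[L\,e^{W-y^2/2}\bigr]$ should be bounded above by the integrand for every admissible local slope; this is (V1), a polynomial inequality on a compact rectangle after a compactifying change of variables. Second, $L$ should vanish at the start $y=0$; this is (V2). Third, the boundary term at infinity should be nonnegative; this is (V3). Integrating then gives $\int k\ge0$, hence $\mathcal F\ge0$.

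Finding $L$ is the hard part. I would first try a low-degree ansatz and fit its coefficients by linear programming on a grid. Concavity of $w$ should enter only through the sign of $w''$ multiplying a coefficient of $L$ that is controlled in sign. Once an ansatz works numerically, I would certify (V1) exactly, for instance by Bernstein expansion or a sum of squares in rational arithmetic, as in Appendix~\ref{sec:accum-cert}. Finally, equivalence with convexity of $\kappa$ is immediate from $r^3\kappa''=\mathcal F$.
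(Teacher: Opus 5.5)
Your outline has the right shape (a kernel from integration by parts, a reduction to a planar system, and a certified polynomial Lyapunov function). However, the two ideas that make it work are missing, and one of your steps fails as stated.

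\emph{The step that fails.} In Step~2 you expect $w''$ to enter linearly, so that it can be traded for a boundary term. It does not. $\mathcal F(r)$ contains $r^2\langle V'''^2\rangle_r$, which is $(w'')^2$ after your rescaling, and no integration by parts removes a squared highest derivative. In the paper this quadratic term is indispensable. It becomes $\tfrac12(\dot k-3k)^2$ in $\widetilde Q$, and $\dot k$ is treated as a free control. Completing the square in $\dot k$ then absorbs the term $-2\mathcal Y_k\dot k$ coming from the derivative of the Lyapunov function, which is why (V1) is a condition on $(\ell,k)$ alone.

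\emph{The Gaussian weight.} Your rescaling $x=\sqrt r\,y$ keeps the factor $e^{-y^2/2}$ in the weight. Then $\frac{\dd}{\dd y}\bigl[L\,e^{W-y^2/2}\bigr]$ contains $-yL$, so the problem lives in the unbounded, non-autonomous space of $(y,w,w')$. Nothing planar or compact is left, and your ``compactifying change of variables'' is never identified. Note also that the direct integrand $r^2(V'''^2-2\lambda V''^3)-2rV''^2+2V'^2$ is negative at $x=0$, where $V'=V'''=0$. So some redistribution of mass is unavoidable.

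\emph{What the paper does instead: remove $r$ rather than rescale it.} Put $\phi_f=e^{-\lambda V}\int_0^xe^{\lambda V}f$, which solves $\phi_f'+\lambda V'\phi_f=f$. The identity $\langle x\phi\rangle_r=r\langle\phi'+\lambda V'\phi\rangle_r$ then gives $r\langle f\rangle_r=\langle x\phi_f\rangle_r$. Applying this once to the $r$-term and twice to the $r^2$-term yields $\mathcal F(r)=\langle\Pi_V\rangle_r$, with a nonlocal kernel $\Pi_V$ that does not depend on $r$ (Lemma~\ref{lem:kernel}). The goal becomes the \emph{pointwise} inequality $\Pi_V\ge0$, and the Gaussian weight plays no further role. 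No Cole--Hopf cumulants of $\psi$ are needed; the argument uses only that $V$ is even, with $V''\ge0$, $V'''\le0$ on $[0,\infty)$ and bounded derivatives.

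\emph{The planar reduction.} Two integrations reduce $\Pi_V\ge0$ to $\Omega_V\ge0$, with a local integrand $Q_V$ (Lemma~\ref{lem:omega}). The logarithmic variable $\tau=\log x$, with $\ell=x\lambda V'$ and $k=x(\lambda V'-x\lambda V'')$, exploits dilation invariance. In these variables $\omega$ solves the autonomous equation $\dot\omega=(5-\ell)\omega+2\widetilde Q$ from $\tau=-\infty$, with $\dot\ell=2\ell-k$ (Lemma~\ref{lem:reduction}). Concavity enters only through the cone $0\le k\le\ell$, not through the sign of $w''$.

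\emph{Where compactness comes from.} One has $\widetilde Q\ge0$ whenever $\ell\le3/8$ or $\ell\ge\ell_2$. So the comparison $\eta=\omega-2\mathcal Y$ is needed only for $1/4\le\ell\le\ell_{\max}$. It is an initial-value comparison started where $\ell=1/4$: there Step~1 of Lemma~\ref{lem:accumulator} gives $\omega\ge1/416$ and (V3) gives $2\mathcal Y\le1/416$. It is not an argument with boundary terms at $y=0$ and $y=\infty$. Consequently your (V1)--(V3) are not the properties the argument actually needs.
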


\subsection{Proof of Theorem~\ref{thm:accum}}

Suppose that $0$ is not an accumulation point of $\operatorname{supp}\mu$, and use the notation of
Lemma~\ref{lem:gap}. By Proposition~\ref{prop:accum-convex}, $\kappa''=\mathcal F/r^3\ge0$ on
$(0,t)$, so $\kappa$ is convex, contradicting Lemma~\ref{lem:endgame}.\qed

\section{Reentrance in \texorpdfstring{$p$}{p}-spin glasses: a reduction to finite predicates}
\label{sec:pspin}

This section sets up the model and the free-energy bounds on which both $p$-spin results rest.
Section~\ref{sec:nearM} uses them to prove reentrance near the triple point for every integer
$p\ge3$ (Theorem~\ref{thm:nm-main}), with constants that are not explicit. Here we also reduce
reentrance at a given bias and two given temperatures to three finite predicates
(Theorem~\ref{thm:reduction}). Section~\ref{sec:certificates} and Appendix~\ref{app:cert34}
certify them for $p=3$ and $p=4$
at biases just larger than the triple-point bias and at cold temperatures far below the triple
point. This gives explicit
quantitative instances of the same two-point phenomenon (Theorems~\ref{thm:p3}
and~\ref{thm:p4}).

\subsection{Model}

Fix $p\ge3$ and $j_0>0$. For each $p$-subset $I=\{i_1<\dots<i_p\}$ of $\{1,\dots,N\}$ let
$\sigma_I=\sigma_{i_1}\cdots\sigma_{i_p}$. Let the $J_I$ be independent Gaussians with
\[
\E J_I=\frac{j_0\,p!}{N^{p-1}},\qquad \operatorname{Var}J_I=\frac{p!}{2N^{p-1}},
\]
which is the convention of~\cite{NishimoriWong1999,Nishimori2026} with $J=1$;~\cite{GNS2001} use an
equivalent deterministic bias with $J_0=pj_0$. The
Gibbs measure is proportional to $e^{\beta H_N}$ with $H_N(\sigma)=\sum_IJ_I\sigma_I$; write
$Z_N(\beta,j_0)$ for its partition function. The density of $J_I$ is an even function times
$e^{2j_0J_I}$, so the Nishimori line is $\beta=2j_0$.

Split $\beta H_N=\beta H^0_N+\beta j_0\,\frac{p!}{N^{p-1}}\sum_I\sigma_I$, where $H^0_N$ is
centred. Newton's identities express $\sum_I\sigma_I$ through power sums. They give
\begin{equation}\label{eq:bias}
\frac{p!}{N^{p-1}}\sum_I\sigma_I=Nm(\sigma)^p+r_{N,p}(m(\sigma)),\qquad
\sup_{N,\,|m|\le1}|r_{N,p}(m)|\le C_p .
\end{equation}
For example, $r_{N,3}(m)=-(3-2/N)m$ and $r_{N,4}(m)=-(6-8/N)m^2+3/N-6/N^2$. The covariance
of $\beta H^0_N$ is $N\xi(R)+O(1)$ with $\xi(q)=\beta^2q^p/2$. Let
$F^0(\beta,h)$ be the free energy of $\beta H^0_N$ with field term $hNm$, as in
Section~\ref{sec:prelim}. It is even in $h$: $\sigma\mapsto-\sigma$ maps $H^0_N$ to
$(-1)^pH^0_N$, which has the same law.

\subsection{Magnetization bins}

For a set $\mathcal M\subset[-1,1]$ write $Z_{N,\mathcal M}=\sum_{\sigma:m(\sigma)\in\mathcal M}e^{\beta H_N(\sigma)}$.

\begin{lemma}[Bins]\label{lem:bins}
Let $0\le a<b\le1$, $h\ge0$ and let $\mathcal M$ be $[a,b]$ or $[-b,-a]$. Then
\[
\limsup_{N}\frac1N\E\log Z_{N,\mathcal M}\le\beta j_0\max_{m\in\mathcal M}m^p+F^0(\beta,h)-ha .
\]
Suppose $\mathcal M\subset[-1,1]$ is covered by finitely many such bins $\mathcal M_k$ with bounds $B_k$, and
$\max_kB_k<\liminf_N N^{-1}\E\log Z_N$. Then $\E\gibbs{\ind\{m\in\mathcal M\}}\to0$.
\end{lemma}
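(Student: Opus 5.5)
The plan is to prove the bin bound by an exponential tilt, and then get the second statement from Gaussian concentration.

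\emph{The bound for one bin.} Take $\mathcal M=[a,b]$. By~\eqref{eq:bias}, on $\{m(\sigma)\in\mathcal M\}$,
\[
\beta H_N(\sigma)=\beta H^0_N(\sigma)+\beta j_0 Nm^p+\beta j_0 r_{N,p}(m)\le\beta H^0_N(\sigma)+\beta j_0N\max_{\mathcal M}m^p+\beta j_0C_p .
\]
Since $h\ge0$ and $m\ge a$ on the bin, $1\le e^{hN(m-a)}$. So
\[
Z_{N,\mathcal M}\le e^{\beta j_0N\max_{\mathcal M}m^p+\beta j_0C_p-hNa}\sum_{\sigma}e^{\beta H^0_N(\sigma)+hNm(\sigma)} ,
\]
where the sum now runs over all $\sigma$. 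Take $N^{-1}\E\log$ and $\limsup_N$. The definition of $F^0(\beta,h)$ as a $\limsup$ in Section~\ref{sec:prelim} then gives the claim, and the constant $C_p$ disappears after dividing by $N$.

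For $\mathcal M=[-b,-a]$, use the field $-h$ instead. There $e^{-hN(m+a)}\ge1$, which gives the same bound with $F^0(\beta,-h)$. This equals $F^0(\beta,h)$ because $F^0$ is even in $h$, as noted after~\eqref{eq:bias}. Strictly, evenness holds for the finite-$N$ expectations, since $\sigma\mapsto-\sigma$ preserves the law of $H^0_N$. Hence the $\limsup$ is the same.

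\emph{The covering statement.} I would write
\[
\E\gibbs{\ind\{m\in\mathcal M\}}\le\sum_k\E\bigl[Z_{N,\mathcal M_k}/Z_N\bigr].
\]
Fix $\varepsilon>0$ with $\max_kB_k+3\varepsilon<\liminf_NN^{-1}\E\log Z_N$. Both $\log Z_{N,\mathcal M_k}$ and $\log Z_N$ are Lipschitz functions of the Gaussian couplings $(J_I)$. The Lipschitz constant is $\beta(\sum_I\operatorname{Var}J_I)^{1/2}=O(\sqrt N)$, because $\sum_I p!/(2N^{p-1})\le N/2$. Gaussian concentration therefore gives
\[
\Pp\bigl(|N^{-1}\log Z-N^{-1}\E\log Z|>\varepsilon\bigr)\le2e^{-cN\varepsilon^2}.
\]
One caveat: if a bin contains no configuration, then $Z_{N,\mathcal M_k}=0$ and the term is simply zero.

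For $N$ large, $N^{-1}\E\log Z_{N,\mathcal M_k}\le B_k+\varepsilon$ and $N^{-1}\E\log Z_N\ge\max_kB_k+2\varepsilon$. On the good event, the ratio $Z_{N,\mathcal M_k}/Z_N$ is therefore at most $e^{-N\varepsilon}$. Off the good event, the ratio is at most $1$ and the event has exponentially small probability. Summing over the finitely many $k$ gives $\E\gibbs{\ind\{m\in\mathcal M\}}\to0$, in fact exponentially fast.

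\emph{Expected obstacle.} There is no real obstacle here. The only points needing care are the $O(1)$ remainder $r_{N,p}$ and the sign of $h$ on the negative bin, both handled above.
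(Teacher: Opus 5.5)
Your proposal is correct and is essentially the paper's proof. The single-bin bound uses the same tilt $e^{hN(\pm m-a)}\ge1$, absorbs $r_{N,p}$, and invokes evenness of $F^0$ for the negative bin; the covering statement uses Gaussian concentration of $N^{-1}\log Z_{N,\mathcal M_k}$ and $N^{-1}\log Z_N$ with Lipschitz constant $O(N^{-1/2})$.

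One bookkeeping slip needs fixing. Your good event allows deviations up to $\varepsilon$, and with the bounds $B_k+\varepsilon$ and $\max_kB_k+2\varepsilon$ that only yields $Z_{N,\mathcal M_k}/Z_N\le e^{N\varepsilon}$. Take the deviation to be $\varepsilon/4$ instead, which gives $e^{-N\varepsilon/2}$, or use the full $3\varepsilon$ margin.
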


The proof tilts by the field and uses Gaussian concentration~\cite[eq.~(1.22)]{Ledoux1999}
(Appendix~\ref{app:mf-pspin}).

\begin{lemma}[Quadratic cost of magnetization]\label{lem:universal}
For all $h$, $F^0(\beta,h)\le F^0(\beta,0)+h^2/2$. Hence for $0\le a<b\le1$ and
$\mathcal M\in\{[a,b],[-b,-a]\}$,
\[
\limsup_N\frac1N\E\log Z_{N,\mathcal M}\le F^0(\beta,0)+\beta j_0\max_{m\in\mathcal M}m^p-\frac{a^2}2 .
\]
\end{lemma}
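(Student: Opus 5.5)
The first assertion should follow from the universal quadratic bound~\eqref{eq:quadratic} of Section~\ref{sec:prelim}, applied with the covariance function $\xi(q)=\beta^2q^p/2$ of the centred part $\beta H^0_N$.

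By \textup{(P1)}, $F^0(\beta,h)\le\Par(\mu,h)$ for every $\mu$ and every $h$; this is Guerra's bound, which holds for every $h$ and, for odd $p$, uses the positivity principle. At $h=0$, \textup{(P1)} gives $F^0(\beta,0)=\min_\mu\Par(\mu,0)$. Take $\mu$ to be a zero-field minimizer; one exists by compactness and continuity, as in the proof of Theorem~\ref{thm:sk}(b). Then
\[
F^0(\beta,h)\le\Par(\mu,h)=\Par(\mu,0)+\bigl(u_\mu(0,h)-u_\mu(0,0)\bigr)\le F^0(\beta,0)+\frac{h^2}2 .
\]
The last step uses~\eqref{eq:quadratic}. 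That bound rests on two facts: $u_\mu(0,\cdot)$ is even with $\partial_xu_\mu(0,0)=0$, and $0\le\partial_x^2u_\mu\le1$ by \textup{(P3)}. The only point to check is that \textup{(P3)} applies to the general $\xi$ used here and not only to the SK case. It does, because the Cole--Hopf invariant $\partial_x^2u\le1-(\partial_xu)^2$ derived after~\eqref{eq:colehopf} is insensitive to the time change $\xi''(s)\,\dd s$.

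For the bin bound, apply Lemma~\ref{lem:bins} at any $h\ge0$ and insert the first assertion:
\[
\limsup_N\frac1N\E\log Z_{N,\mathcal M}\le\beta j_0\max_{m\in\mathcal M}m^p+F^0(\beta,0)+\frac{h^2}2-ha .
\]
Now minimize over $h\ge0$. The choice $h=a\ge0$ is admissible and turns $h^2/2-ha$ into $-a^2/2$, which gives the stated inequality. The case $\mathcal M=[-b,-a]$ is included because Lemma~\ref{lem:bins} already covers that orientation with the same $-ha$ term.

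I expect no real obstacle here. The one point needing care is that the Parisi formula is an equality only at $h=0$ when $p$ is odd, whereas Guerra's upper bound is available at every $h$. This is exactly the asymmetry the argument uses: the upper bound at field $h$ and the equality at field $0$.
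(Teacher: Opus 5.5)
Your proposal is correct and is essentially the paper's proof. Both use Guerra's bound at field $h$ with the zero-field Parisi minimizer $\mu_0$, the equality $\Par(\mu_0,0)=F^0(\beta,0)$ from \textup{(P1)} and the quadratic bound~\eqref{eq:quadratic} for the first claim, and then Lemma~\ref{lem:bins} with $h=a$ (which is $h=0$ when $a=0$) for the bin bound.
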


\begin{proof}
Let $\mu_0$ be the zero-field Parisi minimizer. By \textup{(P1)},
$F^0(\beta,h)\le\Par(\mu_0,h)$ and $\Par(\mu_0,0)=F^0(\beta,0)$, so~\eqref{eq:quadratic} gives
the first claim. Then use Lemma~\ref{lem:bins} with $h=a$ (for $a=0$, with $h=0$).
\end{proof}

\begin{lemma}[A lower bound on the free energy]\label{lem:lower}
For every $\beta$, $\liminf_N N^{-1}\E\log Z_N(\beta,j_0)\ge F^0(\beta,0)$.
\end{lemma}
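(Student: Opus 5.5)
We compare $Z_N(\beta,j_0)$ with the partition function $Z^0_N$ of the centred part $\beta H^0_N$ at zero field, using Jensen's inequality for the Gibbs measure $\gibbs{\cdot}^0$ of $\beta H^0_N$. By~\eqref{eq:bias},
\[
\beta H_N(\sigma)=\beta H^0_N(\sigma)+\beta j_0\bigl(Nm(\sigma)^p+r_{N,p}(m(\sigma))\bigr).
\]
Hence
\[
\log Z_N=\log Z^0_N+\log\gibbs{e^{\beta j_0(Nm^p+r_{N,p}(m))}}^0\ge\log Z^0_N+\beta j_0N\gibbs{m^p}^0-\beta j_0C_p .
\]
Taking expectations and dividing by $N$ gives
\[
N^{-1}\E\log Z_N\ge N^{-1}\E\log Z^0_N+\beta j_0\E\gibbs{m^p}^0-\beta j_0C_p/N .
\]
By \textup{(P1)}, $N^{-1}\E\log Z^0_N\to F^0(\beta,0)$, the limit existing at $h=0$. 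So it suffices to show $\E\gibbs{m^p}^0\ge0$.

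For even $p$ this is immediate, since $m^p\ge0$. For odd $p$ we use the symmetry noted after~\eqref{eq:bias}. The map $\sigma\mapsto-\sigma$ sends $H^0_N$ to $-H^0_N$, and $-H^0_N$ has the same law as $H^0_N$ because the centred couplings $J_I-\E J_I$ are symmetric Gaussians. Therefore the joint law of $(H^0_N(\sigma))_\sigma$ is invariant under the combined operation of reversing spins and negating the couplings. Under this operation $\gibbs{m^p}^0$ becomes $-\gibbs{m^p}^0$, so its expectation is zero. In fact $\E\gibbs{m^p}^0=0$ for odd $p$, which is the ``mean zero up to $O(1)$'' mentioned in Section~\ref{sec:intro-ideas}.

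Alternatively, the gauge argument of Lemma~\ref{lem:jensen} gives more. The law of the centred couplings is invariant under $J_I-\E J_I\mapsto(J_I-\E J_I)\tau_I$, so the averaged measure $\E\gibbs{\ind_{\{\sigma\}}}^0$ is uniform on $\{\pm1\}^N$. Then $\E\gibbs{m^p}^0=2^{-N}\sum_\sigma m(\sigma)^p$, which is $\ge0$ for every $p$.

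The only delicate point is bookkeeping: the remainder $r_{N,p}$ is bounded uniformly by $C_p$ per configuration (not per spin), so it contributes $O(1/N)$ and vanishes in the limit. No obstacle is expected beyond this; the argument does not need the bias law to be Gaussian beyond the symmetry of the centred part.
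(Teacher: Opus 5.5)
Your proposal is correct and is essentially the paper's proof: Jensen's inequality for the Gibbs measure of $\beta H^0_N$, the $O(1/N)$ contribution of $r_{N,p}$, the existence of the zero-field limit from \textup{(P1)}, and $\E\gibbs{m^p}_0\ge0$, which is immediate for even $p$ and holds with equality for odd $p$ by the joint flip $(\sigma,H^0_N)\mapsto(-\sigma,-H^0_N)$. Your alternative gauge argument is also valid. It shows that the averaged zero-field Gibbs measure is uniform, as in Lemma~\ref{lem:jensen}, and so handles both parities at once.
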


\begin{proof}
By~\eqref{eq:bias} and Jensen's inequality for the Gibbs measure of $\beta H^0_N$,
\[
\frac1N\E\log Z_N\ge\frac1N\E\log Z^0_N+\beta j_0\E\gibbs{m^p}_0-\frac{\beta j_0C_p}N .
\]
For even $p$, $\E\gibbs{m^p}_0\ge0$. For odd $p$, the map
$(\sigma,H^0_N)\mapsto(-\sigma,-H^0_N)$ preserves the joint law and reverses the sign of
$m^p$, so $\E\gibbs{m^p}_0=0$.
\end{proof}

\subsection{The Nishimori point}\label{sec:nishimori-point}

In this subsection $\beta=2j_0$, and $\xi(q)=\beta^2q^p/2=2j_0^2q^p$ is the covariance
function of $\beta H^0_N$. Then $\beta j_0=\xi(1)$, so by~\eqref{eq:bias} the bias part of
$\beta H_N$ equals $N\xi(m)+\xi(1)r_{N,p}(m)$. Put $\theta(q)=q\xi'(q)-\xi(q)=(p-1)\xi(q)$ and,
with $g\sim N(0,1)$,
\[
\psi(r)=\E\lc\bigl(r+\sqrt r\,g\bigr)\ \ (r\ge0),\qquad
\Phi(q)=\psi(\xi'(q))-\frac{\xi'(q)+\theta(q)}2\ \ (0\le q\le1),
\]
\[
\mathcal I(m)=\frac{1+m}2\log(1+m)+\frac{1-m}2\log(1-m)\ \ (|m|\le1).
\]
Thus $\Phi(0)=0$, and $\mathcal I$ is the rate function of the magnetization of a uniform
configuration. Let $\Delta_q(x)=\xi(x)-\xi(q)-\xi'(q)(x-q)$ and let $Q_p$ be the set of $q\in[0,1]$
with $\Delta_q\ge0$ on $[-1,1]$, i.e.\ at which the tangent line of $\xi$ stays below $\xi$ on
$[-1,1]$. For even $p$, $\xi$ is convex and $Q_p=[0,1]$. For odd $p$, $\Delta_q$ is convex on
$[0,1]$ with $\Delta_q(q)=\Delta_q'(q)=0$, and concave on $[-1,0]$ with $\Delta_q(0)=\theta(q)\ge0$. Hence
$q\in Q_p$ if and only if $\Delta_q(-1)=\xi(1)(pq^{p-1}+(p-1)q^p-1)\ge0$. For $p=3$ one has
$\Delta_q(x)=\xi(1)(x-q)^2(x+2q)$ and $Q_3=[1/2,1]$.

The next lemma is a Guerra-type interpolation on the Nishimori line. Interpolation bounds of
this kind are standard for the equivalent planted (spiked-tensor)
model~\cite{KoradaMacris2009}, and the limit of the free energy is identified with the
maximum of the replica-symmetric potential $\log2+j_0^2+\Phi$
in~\cite[Theorem~1]{LMLKZ2017} and~\cite[Theorem~2]{BarbierMacris2019}. We need only the
lower bound, at finite $N$. For $q\in Q_p$ it requires no perturbation of the Hamiltonian.

\begin{lemma}[Replica-symmetric lower bound on the Nishimori line]\label{lem:gn}
Let $\beta=2j_0$. For every $N\ge p$ and every $q\in Q_p$,
\[
\frac1N\E\log Z_N(2j_0,j_0)\ge\log2+j_0^2+\Phi(q)-\frac{j_0^2}N\Bigl(\frac{p(p-1)}2+C_p\Bigr).
\]
\end{lemma}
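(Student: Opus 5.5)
The plan is a Guerra-type interpolation in the planted (gauge-transformed) form of the model, chosen so that every interpolating Hamiltonian is itself on its Nishimori line. Then the Nishimori identity replaces the two-replica overlap by the overlap with the planted configuration, which after the gauge is $m(\sigma)$. For $q\in Q_p$ the remainder is $\frac12\E\gibbs{\Delta_q(m)}$ up to $O(1/N)$, and its sign is then automatic, with no positivity principle and no perturbation.

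To make the Nishimori structure exact at finite $N$, I would work with the subset kernel $K_N(\sigma,\tau)=\frac{p!}{N^{p-1}}\sum_I\sigma_I\tau_I$ rather than with $N\xi(R)$. With $\beta=2j_0$, the centred part has covariance $2j_0^2K_N(\sigma,\tau)$, and the bias equals $2j_0^2K_N(\sigma,\mathbf 1)$; this is exactly the form of a Gaussian channel with signal $\mathbf 1$. For $t\in[0,1]$ and independent standard Gaussians $z_i$, set
\[
H_t(\sigma)=\sqrt t\,\beta H^0_N(\sigma)+t\,2j_0^2K_N(\sigma,\mathbf 1)+\sqrt{1-t}\,\sqrt{\xi'(q)}\sum_iz_i\sigma_i+(1-t)\,\xi'(q)\sum_i\sigma_i .
\]
Each of the two channels has mean equal to its variance in the direction of $\mathbf 1$. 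Hence, by the gauge argument of Lemma~\ref{lem:gauge-transfer} applied to the joint Gaussian law, the Gibbs measure of $H_t$ is a posterior with planted $\mathbf 1$ for every $t$, and the Nishimori identity $\E\gibbs{f(\sigma^1,\sigma^2)}_t=\E\gibbs{f(\sigma^1,\mathbf 1)}_t$ holds.

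Set $\phi(t)=N^{-1}\E\log\sum_\sigma e^{H_t(\sigma)}$. Gaussian integration by parts on both centred parts gives
\[
\phi'(t)=\frac1N\E\Bigl\langle j_0^2\bigl(K_N(\sigma,\sigma)-K_N(\sigma^1,\sigma^2)\bigr)+2j_0^2K_N(\sigma,\mathbf 1)-\tfrac{\xi'(q)}2\bigl(N-NR_{12}\bigr)-\xi'(q)Nm\Bigr\rangle_t .
\]
Applying the Nishimori identity turns $R_{12}$ into $m$ and $K_N(\sigma^1,\sigma^2)$ into $K_N(\sigma,\mathbf 1)$, so the two kernel terms combine to $j_0^2K_N(\sigma,\mathbf 1)$. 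Next I would use $K_N(\sigma,\mathbf 1)=Nm^p+r_{N,p}(m)$ from \eqref{eq:bias}, and $K_N(\sigma,\sigma)=\binom Np p!/N^{p-1}\ge N-p(p-1)/2$ (up to the sign conventions, which I will check). Writing $j_0^2m^p=\frac12\xi(m)$ and $j_0^2=\frac12\xi(1)$, this gives
\[
\phi'(t)\ge\frac12\E\bigl\langle\xi(m)-\xi'(q)m+\xi(1)-\xi'(q)\bigr\rangle_t-\frac{j_0^2}N\Bigl(\frac{p(p-1)}2+C_p\Bigr).
\]
Since $\xi(m)-\xi'(q)m=\Delta_q(m)-\theta(q)$, the bracket is $\Delta_q(m)+\xi(1)-\xi'(q)-\theta(q)$. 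Because $q\in Q_p$ and $|m|\le1$, we have $\Delta_q(m)\ge0$, so
\[
\phi'(t)\ge\frac{\xi(1)-\xi'(q)-\theta(q)}2-\frac{j_0^2}N\Bigl(\frac{p(p-1)}2+C_p\Bigr).
\]

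At $t=0$ the spins decouple: $\phi(0)=\log2+\E\lc\bigl(\xi'(q)+\sqrt{\xi'(q)}\,g\bigr)=\log2+\psi(\xi'(q))$. Integrating over $t\in[0,1]$ yields
\[
\phi(1)\ge\log2+\psi(\xi'(q))+\frac{\xi(1)}2-\frac{\xi'(q)+\theta(q)}2-\frac{j_0^2}N\Bigl(\frac{p(p-1)}2+C_p\Bigr).
\]
Since $\xi(1)/2=j_0^2$ and $\phi(1)=N^{-1}\E\log Z_N(2j_0,j_0)$, this is $\log2+j_0^2+\Phi(q)$ minus the stated error.

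The main obstacle is the finite-$N$ bookkeeping. I need the interpolation to be an exact Nishimori model for every $t$, which is why it runs with $K_N$ and not $N\xi(R)$. I also need all discrepancies between $K_N$ and $N\xi$ to be controlled by exactly the constants $p(p-1)/2$ and $C_p$: the diagonal deficit $N-K_N(\sigma,\sigma)$, and the remainder $r_{N,p}$, which enters with both signs. I would check the diagonal via $\prod_{k<p}(1-k/N)\ge1-p(p-1)/(2N)$, and the sign conventions on the $r_{N,p}$ term, with particular care.
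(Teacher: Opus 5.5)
Your proposal is correct and is essentially the paper's proof. Since $\beta H^0_N=\sum_I\sqrt a\,g_I\sigma_I$ and $2j_0^2K_N(\sigma,\mathbf 1)=a\sum_I\sigma_I$ with $a=\xi(1)p!/N^{p-1}$, your $H_t$ is exactly the paper's interpolation, and you obtain the Nishimori identity by the same planted/gauge (Bayes) argument; the derivative, the bound $\prod_{k<p}(1-k/N)\ge1-p(p-1)/(2N)$ and the single $C_p$ term from \eqref{eq:bias} match line by line. One small precision: $\E\gibbs{f(\sigma^1,\sigma^2)}_t=\E\gibbs{f(\sigma^1,\mathbf 1)}_t$ holds only for gauge-invariant $f$, not for all $f$, but this covers the two overlaps $R_{12}$ and $K_N(\sigma^1,\sigma^2)$ that you actually use.
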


Put $a=\xi(1)p!/N^{p-1}$, so that $\beta J_I$ has mean $a$ and variance $a$. The proof
(Appendix~\ref{app:mf-pspin}) interpolates between the model and a decoupled one with fields
$\rho=\xi'(q)$. Along the interpolation the Nishimori identity
$\E\gibbs{\sigma_A}_t^2=\E\gibbs{\sigma_A}_t$ holds by Bayes' formula, and the derivative of the
interpolating free energy is at least
$\frac12(\xi(1)-\xi'(q)-\theta(q))+\frac12\E\gibbs{\Delta_q(m)}_t$ up to $O(1/N)$; the second term
is nonnegative for $q\in Q_p$.\footnote{At $q=0$ (allowed for even $p$) the lemma reduces, up to $O(1/N)$, to the annealed paramagnetic
value $\log2+\beta^2/4=\log2+j_0^2$, which also follows from gauge averaging.}

Configurations of small magnetization are controlled by a first
moment.

\begin{lemma}[First moment]\label{lem:annealed}
Let $\beta=2j_0$. For every $N\ge p$ and every set $\mathcal M\subset[-1,1]$,
\[
\E Z_{N,\mathcal M}(2j_0,j_0)\le(N+1)\exp\Bigl(N\Bigl[\log2+j_0^2+\sup_{m\in\mathcal M}\bigl(\xi(m)-\mathcal I(m)\bigr)\Bigr]+\xi(1)C_p\Bigr).
\]
\end{lemma}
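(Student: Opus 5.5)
We bound the first moment by splitting the partition function according to the value of the magnetization. For each $\sigma$, $\E e^{\beta H_N(\sigma)}$ is explicit, because the $J_I$ are independent Gaussians. At $\beta=2j_0$, the variable $\beta J_I$ has mean $a$ and variance $a$, with $a=\xi(1)p!/N^{p-1}$. Hence
\[
\E e^{\beta J_I\sigma_I}=\exp\bigl(a\sigma_I+a/2\bigr).
\]
Taking the product over all $p$-subsets $I$ gives
\[
\E e^{\beta H_N(\sigma)}=\exp\Bigl(\frac a2\binom Np+a\sum_I\sigma_I\Bigr).
\]
We treat the two terms in turn. The first is the annealed paramagnetic constant: since $\binom Np\le N^p/p!$, we get $\frac a2\binom Np\le N\xi(1)/2=Nj_0^2$, using $\xi(1)=2j_0^2$. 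The second is the bias term. By~\eqref{eq:bias} with $\beta j_0=\xi(1)$,
\[
a\sum_I\sigma_I=\xi(1)\bigl(Nm^p+r_{N,p}(m)\bigr)\le N\xi(m)+\xi(1)C_p .
\]

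Next we sum over configurations. We group the $\sigma$ with $m(\sigma)\in\mathcal M$ by the value of $m$, which ranges over at most $N+1$ values. For a fixed value $m$ the number of configurations is $\binom{N}{N(1+m)/2}$. We use the standard bound $\binom{N}{k}\le \exp(N H(k/N))$, where $H$ is the entropy. This gives
\[
\binom{N}{N(1+m)/2}\le 2^Ne^{-N\mathcal I(m)},
\]
since $\log 2-\mathcal I(m)$ is exactly the binary entropy at $(1+m)/2$. Multiplying the three factors, each term of the sum is at most
\[
\exp\bigl(N[\log2+j_0^2+\xi(m)-\mathcal I(m)]+\xi(1)C_p\bigr).
\]
Taking the supremum of $\xi(m)-\mathcal I(m)$ over $\mathcal M$ and multiplying by the $N+1$ possible values of $m$ gives the claim.

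No step is a real obstacle. The points that need care are the sign of the remainder, bounded in absolute value by $C_p$ so either sign is fine, and the inequality $\binom Np\le N^p/p!$, which points in the right direction. The condition $N\ge p$ only ensures that the $p$-subsets exist.
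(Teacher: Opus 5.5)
Your proof is correct and follows the paper's argument essentially line by line. You compute $\E e^{\beta H_N(\sigma)}$ exactly from the Gaussian moment generating function, bound the variance term by $Nj_0^2$ and the bias term by $N\xi(m)+\xi(1)C_p$ via \eqref{eq:bias}, and count configurations by magnetization with $\binom Nk\le e^{N(\log2-\mathcal I(m))}$ over the $N+1$ values of $m$; the paper differs only cosmetically, writing the variance term as $Nj_0^2\prod_{k=1}^{p-1}(1-k/N)$ and spelling out the entropy bound as $(N/k)^k(N/(N-k))^{N-k}$.
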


The proof counts configurations by magnetization (Appendix~\ref{app:mf-pspin}).

\begin{lemma}[Endpoint reduction]\label{lem:endpoint}
Let $\beta=2j_0$ and $0<m_1\le(1-2/p)^{1/2}$. Then
\[
\max_{0\le m\le m_1}\bigl(\xi(m)-\mathcal I(m)\bigr)=\max\bigl\{0,\ \xi(m_1)-\mathcal I(m_1)\bigr\}.
\]
Moreover $\xi-\mathcal I\le0$ on $[-1,0]$ if $p$ is odd, and $\xi-\mathcal I$ is even if $p$ is
even.
\end{lemma}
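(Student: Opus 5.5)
The plan is to reduce the claim to a sign pattern of the derivative of $f(m):=\xi(m)-\mathcal I(m)=2j_0^2m^p-\mathcal I(m)$ on $(0,m_1]$. Since $f(0)=0$, it suffices to show that $f$ is first nonincreasing and then nondecreasing on $[0,m_1]$; the maximum over $[0,m_1]$ is then attained at an endpoint and equals $\max\{0,f(m_1)\}$.

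First compute $f'(m)=2j_0^2p\,m^{p-1}-\operatorname{artanh} m$, using $\mathcal I'(m)=\operatorname{artanh}m$. For $m\in(0,1)$ this gives
\[
f'(m)>0\iff 2j_0^2p>k(m),\qquad k(m):=\frac{\operatorname{artanh}m}{m^{p-1}}.
\]
The key step is to show that $k$ is strictly decreasing on $(0,(1-2/p)^{1/2}]$. If so, the set $\{m\in(0,m_1]:f'(m)>0\}$ is a terminal subinterval $(a,m_1]$, possibly empty or all of $(0,m_1]$. Hence $f$ is nonincreasing on $[0,a]$ and nondecreasing on $[a,m_1]$, which gives the claimed shape. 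Note that $k(0+)=+\infty$ because $p\ge3$, so in fact $f'<0$ near $0$; this is not needed for the conclusion.

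For the monotonicity of $k$, differentiate:
\[
k'(m)\,m^{2p-2}=\frac{m^{p-1}}{1-m^2}-(p-1)m^{p-2}\operatorname{artanh}m .
\]
So $k'(m)$ has the sign of $m/(1-m^2)-(p-1)\operatorname{artanh}m$. Using $\operatorname{artanh}m\ge m$, this quantity is at most $m\bigl(1/(1-m^2)-(p-1)\bigr)$. That bound is negative whenever $1-m^2>1/(p-1)$, that is, whenever $m^2<(p-2)/(p-1)$. Since $m_1^2\le(p-2)/p<(p-2)/(p-1)$, we get $k'<0$ on $(0,m_1]$. This is the only step requiring care, namely choosing the right ratio to differentiate, and the inequality $\operatorname{artanh}m\ge m$ closes it without further estimates. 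The hypothesis $m_1\le(1-2/p)^{1/2}$ is therefore more than enough.

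For the remaining statements:
\begin{itemize}
\item If $p$ is odd and $m\in[-1,0]$, then $\xi(m)=2j_0^2m^p\le0$ and $\mathcal I(m)\ge0$, since $\mathcal I$ is a relative entropy with $\mathcal I(0)=0$. Hence $\xi-\mathcal I\le0$ there.
\item If $p$ is even, both $\xi$ and $\mathcal I$ are even functions, so $\xi-\mathcal I$ is even.
\end{itemize}
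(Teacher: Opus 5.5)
Your proof is correct, and it takes a different route from the paper's. The paper works one derivative higher. It writes $f''(m)=\bigl(\xi''(m)(1-m^2)-1\bigr)/(1-m^2)$ and notes that $\xi''(m)(1-m^2)=p(p-1)\xi(1)m^{p-2}(1-m^2)$ vanishes at $0$ and has derivative proportional to $m^{p-3}(p-2-pm^2)\ge0$ on $[0,m_1]$. So $f''$ is negative and then nonnegative, and together with $f'(0)=0$ this makes $f'$ change sign at most once, from negative to positive. In that argument the hypothesis $m_1^2\le1-2/p$ is exactly the range on which the polynomial $\xi''(m)(1-m^2)$ increases, and no transcendental estimate is needed.

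You instead factor $f'(m)=m^{p-1}\bigl(2j_0^2p-\operatorname{artanh}(m)/m^{p-1}\bigr)$ and get the single crossing directly at the level of $f'$, by showing that the ratio decreases. This costs the elementary bound $\operatorname{artanh}m\ge m$ but proves more: your argument works for every $m_1$ with $m_1^2<(p-2)/(p-1)$, so, as you observe, the stated hypothesis is not sharp for your route.

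The two approaches are closely linked. Your sign function $g(m)=m/(1-m^2)-(p-1)\operatorname{artanh}m$ satisfies $g(0)=0$ and $g'(m)=(pm^2-(p-2))/(1-m^2)^2$, which is the paper's monotonicity condition. So on $[0,(1-2/p)^{1/2}]$ you could also get $g<0$ without the $\operatorname{artanh}$ bound. Since $g$ then increases to $+\infty$, the exact range of your method is up to the unique positive zero of $g$.

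Your treatment of the odd and even cases agrees with the paper's.
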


The proof is in Appendix~\ref{app:mf-pspin}.

Predicate (W) below requires $\Phi(q_0)>0$, so the replica-symmetric potential must exceed its
value at $q=0$ somewhere in $Q_p$. By Remark~\ref{rem:nm-chen}, $\max_{[0,1]}\Phi>0$ exactly
when $j_0$ exceeds the triple-point bias $j_{0M}=\beta_1/2$ of Section~\ref{sec:nearM}, which is
the information-theoretic threshold of the planted model.\footnote{Here $j_{0M}\approx0.767595$ for $p=3$
and $j_{0M}\approx0.810526$ for $p=4$, in agreement with~\cite[Table~I]{Nishimori2026}.}

\subsection{A convexity lower bound on the zero-field free energy}

Upper bounds on $F^0$ come from any trial measure. A lower bound needs control of the
minimizer. We use convexity instead of structural information about the Parisi measure.

\begin{proposition}[Frank--Wolfe bound]\label{prop:fw}
For every probability measure $\mu^*$ on $[0,1]$,
\[
F^0(\beta,0)\ge\Par(\mu^*,0)+\min_{u\in[0,1]}D_{\mu^*}(u),
\qquad
D_{\mu^*}(u)=\frac{\dd}{\dd\varepsilon}\Par\bigl((1-\varepsilon)\mu^*+\varepsilon\delta_u,0\bigr)\Big|_{\varepsilon=0^+}.
\]
\end{proposition}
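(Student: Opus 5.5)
The bound is the standard Frank--Wolfe (linearization) lower bound for a convex functional. Its only inputs are the Parisi formula at zero field \textup{(P1)}, convexity \textup{(P2)} and the first-variation formula \textup{(P5)}.

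By \textup{(P1)}, $F^0(\beta,0)=\min_\nu\Par(\nu,0)$, attained at the minimizer $\mu_0$. It therefore suffices to show that for every probability measure $\nu$ on $[0,1]$,
\[
\Par(\nu,0)\ge\Par(\mu^*,0)+\min_{u\in[0,1]}D_{\mu^*}(u).
\]

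\emph{Step 1 (convexity gives the tangent bound).} Set $f(\varepsilon)=\Par((1-\varepsilon)\mu^*+\varepsilon\nu,0)$ for $\varepsilon\in[0,1]$. By \textup{(P2)}, $f$ is convex. Hence $f(1)-f(0)\ge f'(0^+)$, where the right derivative exists (possibly as $-\infty$ a priori; Step~2 rules this out).

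\emph{Step 2 (identifying the directional derivative).} By \textup{(P5)},
\[
f(\varepsilon)-f(0)=\varepsilon\int G_{\mu^*}\,\dd(\nu-\mu^*)+O(\varepsilon^2).
\]
This holds because $d\bigl((1-\varepsilon)\mu^*+\varepsilon\nu,\mu^*\bigr)=\varepsilon\int_0^1|\alpha_\nu-\alpha_{\mu^*}|\le\varepsilon$. So $f'(0^+)=\int G_{\mu^*}\,\dd\nu-\int G_{\mu^*}\,\dd\mu^*$. Taking $\nu=\delta_u$ gives $D_{\mu^*}(u)=G_{\mu^*}(u)-\int G_{\mu^*}\dd\mu^*$.

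\emph{Step 3 (linearity in $\nu$).} The derivative is linear in $\nu$, so
\[
f'(0^+)=\int D_{\mu^*}(u)\,\nu(\dd u)\ge\min_{u\in[0,1]}D_{\mu^*}(u).
\]
The minimum is attained because $G_{\mu^*}$ is continuous in $u$. This follows since $\Gamma_{\mu^*}$ is bounded and measurable, so the integral defining $G_{\mu^*}$ is continuous, indeed Lipschitz. Combining Steps 1--3 with $\nu=\mu_0$ gives the claim.

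\emph{Main obstacle.} The delicate point is the uniformity of the $O(\varepsilon^2)$ remainder in \textup{(P5)}. The paper quotes it from the proof of \cite[Lemma~3.2]{JT2017}, and Step~2 needs it along the segment towards a general $\nu$, not only towards Dirac masses. If that is problematic, I would use only Dirac directions. For a general $\nu$, approximate by finitely atomic measures, apply the argument there, and pass to the limit using continuity of $\Par(\cdot,0)$, which follows from \textup{(P3)}. For finitely atomic $\nu=\sum_kc_k\delta_{u_k}$, the Gateaux derivative of $\Par$ at $\mu^*$ in direction $\nu-\mu^*$ is $\sum_kc_kD_{\mu^*}(u_k)$. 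This only needs linearity of the first variation, which \textup{(P5)} supplies in any case.
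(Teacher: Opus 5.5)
Your argument is correct and is essentially the paper's proof: convexity \textup{(P2)} along the segment from $\mu^*$ to the zero-field minimizer $\mu_0$, the first variation \textup{(P5)} to identify the one-sided derivative as $\int G_{\mu^*}\,\dd(\mu_0-\mu^*)$, and the identity $D_{\mu^*}(u)=G_{\mu^*}(u)-\int G_{\mu^*}\,\dd\mu^*$ to bound that derivative below by $\min_uD_{\mu^*}$. The fallback in your last paragraph is not needed, because \textup{(P5)} is stated for arbitrary $\tilde\mu$ with remainder $O(d(\tilde\mu,\mu)^2)$; it also would not remove the dependence on \textup{(P5)}, since convexity alone only makes directional derivatives subadditive, and that bounds the derivative toward a mixture of Dirac masses from above rather than from below.
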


\begin{proof}
Let $\mu_0$ be the zero-field minimizer and $\mu_\varepsilon=(1-\varepsilon)\mu^*+\varepsilon\mu_0$.
By convexity \textup{(P2)},
$\Par(\mu_0)-\Par(\mu^*)\ge\varepsilon^{-1}(\Par(\mu_\varepsilon)-\Par(\mu^*))$. By the first
variation \textup{(P5)}, the right side tends to $\int G_{\mu^*}\,\dd(\mu_0-\mu^*)$ as
$\varepsilon\to0$. The same formula with $\mu_0$ replaced by $\delta_u$ gives
$D_{\mu^*}(u)=G_{\mu^*}(u)-\int G_{\mu^*}\,\dd\mu^*$. Hence
$\int G_{\mu^*}\,\dd(\mu_0-\mu^*)=\int D_{\mu^*}\,\dd\mu_0\ge\min_uD_{\mu^*}(u)$.
\end{proof}

For a two-atom trial $\mu^*=x\delta_0+(1-x)\delta_q$ with $0<x\le1$ and $0<q<1$, the
function $D_{\mu^*}$ has closed forms in terms of one- and two-dimensional Gaussian integrals.
Put $\theta(s)=s\xi'(s)-\xi(s)$, let $Y_s\sim N(0,\xi'(s))$, and set
\[
A=\E\cosh^x Y_q,\qquad K=\frac{\E[\cosh^xY_q\,\lc Y_q]}{A},\qquad
C=-\frac{\log A}{x^2}+\frac Kx-\frac{\theta(q)}2 .
\]
Here $C=\partial_x\Par(x\delta_0+(1-x)\delta_q)$, and
$\Par(\mu^*,0)=\log2+\frac1x\log A+\frac12[\xi(1)-\xi'(q)+(1-x)\theta(q)]$.

\begin{lemma}[Directional derivatives]\label{lem:D}\leavevmode
\begin{enumerate}
\item[\textup{(i)}] For $0\le u\le q$, let
\[
B(y)=\E\cosh^x\bigl(y+\sqrt{\xi'(q)-\xi'(u)}\,g\bigr).
\]
Then
\[
D(u)=\frac{\log A}x+\frac{1-x}xK-\frac{\E[B(Y_u)\log B(Y_u)]}{x^2A}+\frac{\theta(u)-(1-x)\theta(q)}2 .
\]
\item[\textup{(ii)}] For $q\le u\le1$, let $s^2=\xi'(u)-\xi'(q)$ and $Y_u=Y_q+s\,g$ with $g$
independent. Then
\[
D(u)=\frac{\log A}x+\frac{s^2}2-\frac{\E[\cosh^{x-1}Y_q\,\cosh Y_u\,\lc Y_u]}{Ae^{s^2/2}}
+\frac{x\theta(q)+\theta(u)-\theta(q)}2 .
\]
\item[\textup{(iii)}] $D$ is continuous, $D(0)=(1-x)C$ and $D(q)=-xC$. Moreover
$D'(u)=-\frac12\xi''(u)(\Gamma(u)-u)$, where $\Gamma=\Gamma_{\mu^*}$ is given by
\[
\Gamma(u)=\frac{\E[B_1(Y_u)^2/B(Y_u)]}{A}\ \ (u\le q),\qquad
\Gamma(u)=\frac{\E[\cosh^{x-1}Y_q\,\cosh Y_u\tanh^2Y_u]}{Ae^{s^2/2}}\ \ (u\ge q),
\]
with $B_1(y)=\E[\cosh^x\tanh](y+\sqrt{\xi'(q)-\xi'(u)}\,g)$.
\item[\textup{(iv)}] $\Gamma$ is nondecreasing, $\Gamma(0)=0$ and $0\le\Gamma'\le\xi''$. In
particular $\Gamma\le\xi'$, so $D$ is nondecreasing on $[0,u_0]$, where $\xi'(u_0)=u_0$,
i.e.\ $u_0=(2/(p\beta^2))^{1/(p-2)}$.
\end{enumerate}
\end{lemma}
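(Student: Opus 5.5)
Everything here is explicit for the two-atom measure $\mu^*=x\delta_0+(1-x)\delta_q$, so I would compute $u=u_{\mu^*}$ in closed form by the Cole--Hopf step~\eqref{eq:colehopf} and then read off $D$ from the first-variation formula. On $[q,1]$, $\alpha\equiv1$, so $u(s,y)=\log\E\cosh(y+\sqrt{\xi'(1)-\xi'(s)}\,g)+\text{const}=\lc y+\frac12(\xi'(1)-\xi'(s))$. On $[0,q)$, $\alpha\equiv x$, so $u(s,y)=\frac1x\log B_s(y)+\frac12(\xi'(1)-\xi'(q))$, with $B_s(y)=\E\cosh^x(y+\sqrt{\xi'(q)-\xi'(s)}\,g)$; at $s=0$, $y=0$ this gives $\frac1x\log A$. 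Substituting into~\eqref{eq:parisi}, with $\int_0^1\xi''s\alpha\,\dd s=x\theta(q)+(\theta(1)-\theta(q))$ and $\theta(1)=\xi'(1)-\xi(1)$, yields the stated value of $\Par(\mu^*,0)$. Differentiating that expression in $x$, using $\partial_xA=\E[\cosh^xY_q\lc Y_q]=AK$, gives $C$.

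For (i)--(ii) I use the proof of Proposition~\ref{prop:fw}: $D(u)=G(u)-\int G\,\dd\mu^*=G(u)-xG(0)-(1-x)G(q)$, with $G(u)=\frac12\int_u^1\xi''(\Gamma-t)\,\dd t$. This gives (iii)'s $D'=-\frac12\xi''(\Gamma-u)$ directly. To get closed forms, I compute $\Gamma$ along the optimal diffusion by Girsanov, as in Lemma~\ref{lem:gap}(i). On $[0,q]$ the law of $X_u$ is the Gaussian $Y_u$ reweighted by $B_u(Y_u)/A$, so $\Gamma(u)=\E[(\partial_yu)^2B_u]/A$ with $\partial_yu=B_1/(xB)$. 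The factor $x$ is absorbed correctly because $\partial_y\cosh^x=x\cosh^x\tanh$, giving $B_1^2/B$. On $[q,1]$ there is a further reweighting by $\cosh(Y_u)e^{-s^2/2}/\cosh Y_q$ times $\cosh^xY_q/A$. The printed formulas for $\Gamma$ then follow.

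Rather than integrate $\xi''\Gamma$ by hand, I would verify (i) and (ii) by differentiation. Both right-hand sides are continuous, so I check that their $u$-derivatives equal $-\frac12\xi''(\Gamma-u)$, using the heat-equation identity $\partial_v\E f(y+\sqrt v g)=\frac12\E f''$ and Gaussian integration by parts. I then fix the constants at one point each: at $u=0$, $B(Y_0)=A$ and (i) reduces to $(1-x)C$; at $u=q$, $s=0$ and (i) and (ii) coincide and equal $-xC$. These values also come out of the abstract formula, since $G(0)-G(q)=C$ by differentiating $\Par$ in $x$ (the envelope identity $\partial_x\Par=\int G\,\dd(\delta_0-\delta_q)$ from (P5)). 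This settles (iii).

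For (iv): $\Gamma'=\xi''\E[(\partial_y^2u)^2]\in[0,\xi'']$ by (P4) and $0\le\partial_y^2u\le1$ from (P3); moreover $\Gamma(0)=0$ by evenness, so $\Gamma\le\xi'$. Since $\xi'(u)\le u$ exactly for $u\le u_0$ (as $\xi'(u)/u=\frac{p\beta^2}2u^{p-2}$ is increasing), we get $\Gamma(u)-u\le0$ there, hence $D'\ge0$. I expect the main obstacle to be the bookkeeping in the derivative check for (i), specifically the $B\log B$ term, whose derivative must produce $\E[B_1^2/B]$ after Itô and integration by parts.
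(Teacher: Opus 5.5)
Your proof is correct, but it runs in the opposite direction from the paper's.

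\textbf{The paper's route.} The paper does not use (P5) for this lemma. It writes $\Par(\mu_\varepsilon,0)$, with $\mu_\varepsilon=(1-\varepsilon)\mu^*+\varepsilon\delta_u$, explicitly by two Cole--Hopf steps. The distribution function takes the values $(1-\varepsilon)x$, $x+\varepsilon(1-x)$, $1$ if $u\le q$, and $(1-\varepsilon)x$, $1-\varepsilon$, $1$ if $u\ge q$. It differentiates at $\varepsilon=0$ to get (i) and (ii), and only then differentiates these closed forms in $u$ to obtain (iii). Its (iv) is your bound written out: $\frac{\dd\Gamma}{\dd t}=\E[f_t(\partial_y^2\log f_t)^2]/(x^2A)$, together with (P3).

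\textbf{Your route.} You start from $D=G-\int G\,\dd\mu^*$, so $D'=G'$ comes for free. You match $\Gamma_{\mu^*}$ with the explicit formulas by Girsanov. You then confirm (i)--(ii) by the same $u$-derivative computation plus one boundary value each. The step you flag as the main obstacle is short: with $f_t=P_{\xi'(q)-t}\cosh^x$ and $Y\sim N(0,t)$, the backward flow of $f_t$ and the forward flow of $Y$ cancel all $\log f_t$ terms. This leaves $\frac{\dd}{\dd t}\E[f_t\log f_t(Y)]=\frac12\E[f_t'^2/f_t]$.

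\textbf{What each buys.} Your route replaces the nested $\varepsilon$-bookkeeping, including the outer Cole--Hopf step whose parameter $(1-\varepsilon)x$ also moves, by a structural argument. It also makes explicit the identification of the printed formulas with the diffusion-based $\Gamma_{\mu^*}$ of (P4), which the paper's proof leaves implicit. The paper's route proves the lemma by calculus on explicit expressions alone. It needs neither the diffusion nor the remainder estimate in (P5) here, although (P5) enters later anyway, through Proposition~\ref{prop:fw}.

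\textbf{The boundary values.} In your route they are not a cross-check: they are what fixes the constants, since the derivative comparison determines (i) and (ii) only up to additive constants. You can obtain them without (P5). Perturbing toward $\delta_0$ or $\delta_q$ keeps the measure two-atom, with weight $x+\varepsilon(1-x)$ or $(1-\varepsilon)x$ at $0$. Hence $D(0)=(1-x)C$ and $D(q)=-xC$ follow by differentiating $\Par$ in $x$.
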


The derivation is in Appendix~\ref{app:D}. Items~(iii) and~(iv) turn
Proposition~\ref{prop:fw} into a finite computation (Appendix~\ref{app:cert34}).

\subsection{The reduction}

\begin{theorem}\label{thm:reduction}
Let $p\ge3$, $j_0>0$, $\beta_{\rm w}=2j_0$, $\beta_{\rm cold}>0$ and $L_F\in\R$. Consider the
following predicates.
\begin{enumerate}
\item[\textup{(W)}] There are $q_0\in Q_p$ and $m_1\in(0,(1-2/p)^{1/2}]$ such that
$\Phi(q_0)>\max\{0,\ \xi(m_1)-\mathcal I(m_1)\}$, computed with $\xi(q)=\beta_{\rm w}^2q^p/2$.
\item[\textup{(F)}] $L_F\le F^0(\beta_{\rm cold},0)$.
\item[\textup{(L)}] There is $m_{\rm lo}\in(0,1]$ such that
$g(m):=\beta_{\rm cold}j_0m^p-m^2/2<0$ on $(0,m_{\rm lo}]$. There are also finitely many intervals
$[a_i,b_i]\subset[0,1]$ covering $[m_{\rm lo},1]$, with trials $\tau_i$ and fields $h_i\ge0$ such that
$U_i(m):=\beta_{\rm cold}j_0m^p+\Par(\tau_i,h_i)-h_im<L_F$ at $m\in\{a_i,b_i\}$. Here $\Par$ uses
$\xi(q)=\beta_{\rm cold}^2q^p/2$.
\end{enumerate}
Let $\mathcal M_1=[-1,m_1]$ if $p$ is odd and $\mathcal M_1=[-m_1,m_1]$ if $p$ is even. Then:
\begin{enumerate}
\item[\textup{(i)}] if \textup{(W)} holds, then at $(\beta_{\rm w},j_0)$ there is $\eta_{\rm w}>0$ with
$\E\gibbs{\ind\{m\in\mathcal M_1\}}\le e^{-\eta_{\rm w}N}$ for all large $N$; in particular
$\E\gibbs{\ind\{m>m_1\}}\to1$ if $p$ is odd, and $\E\gibbs{\ind\{|m|>m_1\}}\to1$ if $p$ is even;
\item[\textup{(ii)}] if \textup{(F)} and \textup{(L)} hold, then at $(\beta_{\rm cold},j_0)$,
$\E\gibbs{|m|}\to0$.
\end{enumerate}
\end{theorem}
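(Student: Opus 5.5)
For part~(i) I would combine the replica-symmetric lower bound of Lemma~\ref{lem:gn} with the first-moment bound of Lemma~\ref{lem:annealed}. Let $\Delta:=\Phi(q_0)-\max\{0,\xi(m_1)-\mathcal I(m_1)\}>0$, as given by (W). Lemma~\ref{lem:endpoint} gives $\sup_{m\in\mathcal M_1}(\xi(m)-\mathcal I(m))=\max\{0,\xi(m_1)-\mathcal I(m_1)\}$. For odd $p$ this uses that $\xi-\mathcal I\le0$ on $[-1,0]$. For even $p$ it uses evenness, which reduces the sup over $[-m_1,m_1]$ to the sup over $[0,m_1]$. Lemma~\ref{lem:annealed} and Markov then give
\[
\Pp\bigl(Z_{N,\mathcal M_1}\ge e^{N(\log2+j_0^2+\Phi(q_0)-\Delta/2)}\bigr)\le (N+1)e^{\xi(1)C_p}e^{-N\Delta/2}.
\]
To pass from the Gibbs probability to a ratio of partition functions, I would use Gaussian concentration of $N^{-1}\log Z_N$ about its mean, with fluctuations of order $N^{-1/2}$ and sub-Gaussian tails at scale $N$ (\cite[eq.~(1.22)]{Ledoux1999}). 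Together with Lemma~\ref{lem:gn}, this shows that $Z_N\ge e^{N(\log2+j_0^2+\Phi(q_0)-\Delta/4)}$ outside an event of probability $e^{-cN}$. On the intersection of the two good events, $\gibbs{\ind\{m\in\mathcal M_1\}}=Z_{N,\mathcal M_1}/Z_N\le e^{-N\Delta/4}$. On the bad events the Gibbs probability is at most $1$. Adding the contributions gives $\E\gibbs{\ind\{m\in\mathcal M_1\}}\le e^{-\eta_{\rm w}N}$ for large $N$. The consequences for $\{m>m_1\}$ (odd $p$) and $\{|m|>m_1\}$ (even $p$) are immediate, since these sets are the complements of $\mathcal M_1$.

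For part~(ii) I would use the bin machinery of Lemma~\ref{lem:bins}, with the reference value $L_F\le F^0(\beta_{\rm cold},0)\le\liminf_NN^{-1}\E\log Z_N$ supplied by Lemma~\ref{lem:lower}. Fix $\varepsilon\in(0,m_{\rm lo})$. The goal is $\E\gibbs{\ind\{|m|\ge\varepsilon\}}\to0$, which gives $\E\gibbs{|m|}\le\varepsilon+o(1)$ and hence the claim.

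\emph{Large positive magnetizations.} The first step is to cover $[m_{\rm lo},1]$ by subbins on which the bin bound of Lemma~\ref{lem:bins} stays below $L_F$. On a subbin $[a,b]\subset[a_i,b_i]$ with field $h_i$, that lemma and (P1) give the bound $\beta j_0b^p+\Par(\tau_i,h_i)-h_ia$. This equals $U_i(b)+h_i(b-a)$. The $U_i$ are finitely many continuous functions, and their maximum on $[a_i,b_i]$ need not occur at the endpoints, so I would refine here. Each $U_i$ is the sum of the convex function $m\mapsto\beta j_0m^p$ on $[0,1]$ and an affine function of $m$, so $U_i$ is convex. Hence $U_i<L_F$ at both endpoints implies $U_i<L_F$ on all of $[a_i,b_i]$, with a uniform margin by compactness. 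Subbins fine enough then keep the error $h_i(b-a)$ below that margin.

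\emph{Intermediate positive magnetizations $[\varepsilon,m_{\rm lo}]$.} Here I would apply Lemma~\ref{lem:universal}: a subbin $[a,b]$ has bound $F^0(\beta,0)+\beta j_0b^p-a^2/2$. By (L), $g(m)=\beta j_0m^p-m^2/2<0$ on $(0,m_{\rm lo}]$, and it is continuous there. Compactness of $[\varepsilon,m_{\rm lo}]$ gives $g\le-\eta$, and fine subbins give bounds below $F^0(\beta,0)-\eta/2$.

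\emph{Negative magnetizations.} For even $p$ these follow by symmetry. For odd $p$ the bias term $m^p$ is negative there, and the bound of Lemma~\ref{lem:universal} on $[-b,-a]$ is at most $F^0(\beta,0)-a^2/2<F^0(\beta,0)$ for $a\ge\varepsilon$.

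Here lies what I expect to be the main obstacle. Lemma~\ref{lem:bins} requires $\max_kB_k<\liminf N^{-1}\E\log Z_N$. For the bins below $m_{\rm lo}$ the comparison is directly with $F^0(\beta,0)$, which Lemma~\ref{lem:lower} bounds by the liminf. For the bins above $m_{\rm lo}$ the comparison goes through $L_F$ and (F). In both cases the inequality is strict with a margin, which is what Lemma~\ref{lem:bins} needs.
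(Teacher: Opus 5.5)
Your proposal is correct and follows the paper's proof essentially step for step. For (i) you use the same ingredients: Lemma~\ref{lem:endpoint}, the first-moment bound of Lemma~\ref{lem:annealed} with Markov's inequality, the replica-symmetric lower bound of Lemma~\ref{lem:gn}, and Gaussian concentration; the only difference is that the paper splits the margin into thirds where you use $\Delta/2$ and $\Delta/4$. For (ii) you use the same three families of bins: convexity of $U_i$ reduces (L) to its endpoint values, Lemma~\ref{lem:universal} handles small and negative magnetizations, and Lemma~\ref{lem:lower} together with (F) supplies the reference value.
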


\begin{proof}
(i) Let $s=\sup_{m\in\mathcal M_1}(\xi(m)-\mathcal I(m))$. By Lemma~\ref{lem:endpoint},
$s=\max\{0,\xi(m_1)-\mathcal I(m_1)\}$, so $\eta:=\Phi(q_0)-s>0$ by (W). Let
$X_N=N^{-1}\log Z_N(\beta_{\rm w},j_0)$. By Lemma~\ref{lem:gn} with $q=q_0$,
$\E X_N\ge\log2+j_0^2+\Phi(q_0)-C/N$ for a constant $C$. By Lemma~\ref{lem:annealed} and
Markov's inequality, the event
$A_N=\{Z_{N,\mathcal M_1}\ge\exp(N(\log2+j_0^2+s+\eta/3))\}$ has probability at most
$(N+1)e^{\xi(1)C_p-N\eta/3}$. With $a=\xi(1)p!/N^{p-1}$ as above, the gradient of
$X_N$ in the standardized couplings has squared norm $N^{-2}a\sum_I\gibbs{\sigma_I}^2\le\xi(1)/N$,
so $X_N$ is $(\xi(1)/N)^{1/2}$-Lipschitz in them. Gaussian concentration for Lipschitz
functions~\cite[eq.~(1.22)]{Ledoux1999}, applied to $-X_N$, therefore bounds the probability of
$B_N=\{X_N\le\E X_N-\eta/3\}$ by $e^{-N\eta^2/(18\xi(1))}$. Outside $A_N\cup B_N$,
\[
\gibbs{\ind\{m\in\mathcal M_1\}}=\frac{Z_{N,\mathcal M_1}}{Z_N}\le\exp\Bigl(N\bigl(s+\tfrac\eta3\bigr)-N\bigl(\Phi(q_0)-\tfrac\eta3\bigr)+C\Bigr)=e^{C-N\eta/3} .
\]
Since $\gibbs{\ind\{m\in\mathcal M_1\}}\le1$ always,
$\E\gibbs{\ind\{m\in\mathcal M_1\}}\le e^{C-N\eta/3}+(N+1)e^{\xi(1)C_p-N\eta/3}+e^{-N\eta^2/(18\xi(1))}$.

(ii) By Lemma~\ref{lem:lower} and (F), $\liminf_NN^{-1}\E\log Z_N(\beta_{\rm cold},j_0)\ge L_F$;
by (L), Lemma~\ref{lem:universal} and Lemma~\ref{lem:bins}, for every $\varepsilon>0$ every fine bin
of magnetizations with $|m|\ge\varepsilon$ lies below $L_F$, and Lemma~\ref{lem:bins} gives
$\E\gibbs{\ind\{|m|\ge\varepsilon\}}\to0$ (Appendix~\ref{app:mf-pspin}).
\end{proof}

Part~(i) uses Lemmas~\ref{lem:gn}, \ref{lem:annealed} and~\ref{lem:endpoint} and Gaussian
concentration only. It does not use the Parisi formula, Lemma~\ref{lem:universal}, or any
information on Parisi measures.

\section{Reentrance near the triple point for every \texorpdfstring{$p\ge3$}{p>=3}}
\label{sec:nearM}

Section~\ref{sec:pspin} reduces reentrance at one bias and two temperatures to finitely many
numerical predicates, and Section~\ref{sec:certificates} and
Appendix~\ref{app:cert34} verify them for $p=3$ and $p=4$.
Nishimori's replica computation predicts more~\cite{Nishimori2026}. For every $p>2$ the
ferromagnet--spin-glass boundary leaves the triple point $M$ tangent to the vertical and bends
toward larger bias. Its curvature is proportional to $C_{\rm SG}-C_{\rm FM}$, the difference
of the specific heats of the one-step spin-glass phase and the replica-symmetric ferromagnet at
$M$. Nishimori checked the sign of this difference numerically for integer $3\le p\le12$ and in
a scan over real $p$ from near $2$ to about $12$, and asymptotically as $p\to\infty$ and
$p\to2^+$, and he notes that a proof uniform in $p$ remains
open~\cite[p.~19]{Nishimori2026}. In this section we prove the corresponding two-point statement
for every integer $p\ge3$: at each bias slightly larger than that of the triple point, the model is
magnetized at its Nishimori temperature and unmagnetized at an explicit lower temperature
(Theorem~\ref{thm:nm-main}).

Two obstructions must be overcome. Near $M$ the ferromagnetic and spin-glass free energies
agree to first order, so the comparison is decided by second-order coefficients $A_{\rm FM}$
and $A_{\rm SG}$. On the spin-glass side we therefore need a \emph{lower} bound on the Parisi
free energy that is exact to second order just below the temperature at which replica symmetry
breaks. Trial measures give only upper bounds, and convexity alone loses the second-order term
(Section~\ref{sec:nm-route}). We show instead that every stationary one-step measure near the
triple point is the Parisi measure (Lemma~\ref{lem:nm-LS}); this gives an analytic one-step
branch and the exact expansion (Proposition~\ref{prop:nm-SG}). The second obstruction is that
$\kappa_p=\beta_1^2(A_{\rm FM}-A_{\rm SG})$ is of order $2^{-p}p^{-1/2}$, while $A_{\rm FM}$
and $A_{\rm SG}$ are each exponentially close to $\frac12$, so the sign cannot be read off
numerically for all $p$. An exact identity writes the leading part of $\kappa_p$ as an integral with a
nonnegative kernel (Proposition~\ref{prop:nm-J}). Together with interval arithmetic for
$p\le25$, it gives $\kappa_p>0$ for every integer $p\ge3$.

\subsection{The triple point}\label{sec:nm-triple}

Throughout this section $p\ge3$ is an integer. For $\beta>0$ put $\xi_\beta(q)=\beta^2q^p/2$
and $\theta_\beta(q)=q\xi_\beta'(q)-\xi_\beta(q)=(p-1)\xi_\beta(q)$, and, with $\psi$ as in
Section~\ref{sec:nishimori-point} and $g\sim N(0,1)$,
\begin{equation}\label{eq:nm-Phi}
\Phi_\beta(q)=\psi\bigl(\xi_\beta'(q)\bigr)-\frac{\xi_\beta'(q)+\theta_\beta(q)}2,\qquad
\Gamma_\beta(q)=\E\tanh\bigl(\xi_\beta'(q)+\sqrt{\xi_\beta'(q)}\,g\bigr)\qquad(0\le q\le1).
\end{equation}
For $\beta=2j_0$, $\Phi_\beta$ is the replica-symmetric potential $\Phi$ of
Lemma~\ref{lem:gn}. The following Gaussian identities are used throughout.

\begin{lemma}[Nishimori-line identities]\label{lem:nm-NLid}
Let $r>0$ and $h\sim N(r,r)$.
\begin{enumerate}
\item[\textup{(a)}] For $f\in C^2$ with $f,f',f''$ of at most exponential growth,
$\frac{\dd}{\dd r}\E f(h)=\E[f'(h)+\frac12f''(h)]$.
\item[\textup{(b)}] If $F$ is even and $\E|F(h)|<\infty$, then
$\E[F(h)\tanh h]=\E[F(h)\tanh^2h]$. In particular $\E\tanh^{2k-1}h=\E\tanh^{2k}h$ for
$k\ge1$.
\item[\textup{(c)}] For even $F$, $\E[F(\sqrt r\,g)\cosh(\sqrt r\,g)]\,e^{-r/2}=\E F(h)$.
\end{enumerate}
Consequently $\mu(r):=\E\tanh h$ satisfies $\mu(r)=\E\tanh^2h$,
$\mu'(r)=\E\sech^4h\in(0,1)$, $\mu(0^+)=0$ and $\mu(r)<\min(r,1)$, and
$\psi'(r)=(1+\mu(r))/2$.
\end{lemma}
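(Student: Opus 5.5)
The plan is to derive everything from the explicit Gaussian density of $h\sim N(r,r)$,
\[
\varphi_r(y)=(2\pi r)^{-1/2}\exp\bigl(-(y-r)^2/(2r)\bigr)=(2\pi r)^{-1/2}e^{-y^2/(2r)}\,e^{y}\,e^{-r/2},
\]
in which the tilt $e^{y}=\cosh y+\sinh y$ is the Nishimori structure.

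\emph{Part (c).} Write $\E F(h)=e^{-r/2}\E[F(\sqrt r g)e^{\sqrt r g}]$ and note that the odd part $\sinh(\sqrt r g)F(\sqrt r g)$ has mean zero when $F$ is even. This is immediate.

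\emph{Part (b).} Apply the same symmetrization to $\E[F(h)\tanh h]=e^{-r/2}\E[F\tanh(\sqrt rg)e^{\sqrt rg}]$. The even part of $\tanh(y)e^y$ is $\tanh y\sinh y=\cosh y\tanh^2 y$, so the expectation equals $e^{-r/2}\E[F\tanh^2(\sqrt rg)\cosh(\sqrt rg)]$. By (c) with the even function $F\tanh^2$, this is $\E[F(h)\tanh^2h]$. Taking $F=\tanh^{2k-2}$ gives $\E\tanh^{2k-1}h=\E\tanh^{2k}h$. Integrability is trivial because $\tanh$ is bounded.

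\emph{Part (a).} Write $h=r+\sqrt r g$ and differentiate under the expectation, which the exponential growth bounds and dominated convergence permit:
\[
\frac{\dd}{\dd r}\E f(h)=\E\bigl[f'(h)\bigl(1+g/(2\sqrt r)\bigr)\bigr].
\]
Gaussian integration by parts, $\E[g\,f'(r+\sqrt rg)]=\sqrt r\,\E f''(h)$, turns the second term into $\frac12\E f''(h)$. Alternatively, the density solves $\partial_r\varphi=-\partial_y\varphi+\frac12\partial_y^2\varphi$, and one integrates by parts.

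\emph{Consequences.} The identity $\mu=\E\tanh^2h$ is (b) with $k=1$. For $\mu'$, apply (a) with $f=\tanh$, for which $f'=\sech^2$ and $f''=-2\tanh\sech^2$, to get
\[
\mu'=\E[\sech^2h-\tanh h\,\sech^2h].
\]
By (b) with the even function $F=\sech^2$, $\E[\tanh h\sech^2h]=\E[\tanh^2h\sech^2h]$. Hence $\mu'=\E[\sech^2h(1-\tanh^2h)]=\E\sech^4h$, which lies in $(0,1)$ because $0<\sech h<1$ almost surely.

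Next, $\mu(0^+)=0$ by bounded convergence, since $h\to0$ in law. Then $\mu(r)=\int_0^r\mu'<r$, and $\mu=\E\tanh^2h<1$.

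For $\psi$, note $\psi(r)=\E\lc h$. Apply (a) with $f=\lc$, for which $f'=\tanh$ and $f''=\sech^2=1-\tanh^2$:
\[
\psi'=\E\tanh h+\tfrac12\bigl(1-\E\tanh^2h\bigr)=\mu+\tfrac12(1-\mu)=\tfrac{1+\mu}2 .
\]

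The only mildly delicate step is justifying differentiation under the integral near $r=0$ in (a). I would handle it by restricting to $r$ in a compact subinterval of $(0,\infty)$, where the exponential growth of $f,f',f''$ is dominated uniformly by Gaussian tails.
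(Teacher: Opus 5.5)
Your proposal is correct and follows the paper's proof essentially step for step. You use the tilted density ($e^{h-r/2}$ times the $N(0,r)$ density) for (c), and the even/odd split of $e^h=\cosh h+\sinh h$ with $\tanh h\sinh h=\tanh^2h\cosh h$ for (b). For (a) you differentiate in $r$ and integrate by parts in $g$, and the consequences come from (a) with $f=\tanh$ and $f=\lc$ together with (b) for $F=\sech^2$, exactly as in the paper, with your integrability and dominated-convergence remarks only making explicit what the paper leaves implicit.
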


The proof is in Appendix~\ref{app:nm-proofs}.

In the notation of Lemma~\ref{lem:nm-NLid}, $\Gamma_\beta=\mu\circ\xi_\beta'$, and $\Gamma_\beta=\Gamma_{\delta_0}$ is the function
$\Gamma$ of Lemma~\ref{lem:D} at $x=1$. Since $\theta_\beta'(q)=q\xi_\beta''(q)$,
\begin{equation}\label{eq:nm-Phiprime}
\Phi_\beta(0)=0,\qquad \Phi_\beta'(q)=\frac{\xi_\beta''(q)}2\bigl(\Gamma_\beta(q)-q\bigr).
\end{equation}
The triple point is where $\Phi_\beta$ touches zero at a positive overlap:
\begin{equation}\label{eq:nm-system}
\Phi_\beta(q)=0,\qquad \Phi_\beta'(q)=0,\qquad 0<q<1 .
\end{equation}
This is Zhou's tangency system for the end of the replica-symmetric phase of the pure
$p$-spin model~\cite[(2)]{Zhou2024}, written in our normalization (Zhou's inverse temperature is
$\beta/\sqrt2$).

\begin{proposition}[The triple point]\label{prop:nm-triple}
For every integer $p\ge3$, the system~\eqref{eq:nm-system} has exactly one solution
$(\beta_1,q_1)$ with $\beta_1>0$. At this solution:
\begin{enumerate}
\item[\textup{(M1)}] $\Phi_{\beta_1}(q)<0$ for every $q\in(0,1]\setminus\{q_1\}$;
\item[\textup{(M2)}] $\lambda:=1-\xi_{\beta_1}''(q_1)\,\E\sech^4h_c>0$, where
$\Lambda_c:=\xi_{\beta_1}'(q_1)$ and $h_c\sim N(\Lambda_c,\Lambda_c)$.
\end{enumerate}
Moreover $q_1=\mu(\Lambda_c)$, and $\Lambda_c$ is the unique zero on $(0,\infty)$ of
\[
D_p(\Lambda)=\psi(\Lambda)-\frac\Lambda2-\frac{p-1}{2p}\,\Lambda\,\mu(\Lambda),
\]
with $D_p<0$ on $(0,\Lambda_c)$, $D_p>0$ on $(\Lambda_c,\infty)$ and $\Lambda_c<2p\log2$.
\end{proposition}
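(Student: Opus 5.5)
The plan is to change variables from $(\beta,q)$ to $\Lambda=\xi_\beta'(q)=p\beta^2q^{p-1}/2$ and the overlap $q$. Then $\theta_\beta(q)=(p-1)\xi_\beta(q)=\frac{p-1}{p}q\Lambda$, so
\[
\Phi_\beta(q)=\psi(\Lambda)-\frac{\Lambda}2-\frac{p-1}{2p}\,q\Lambda .
\]
By \eqref{eq:nm-Phiprime} and $\xi''_\beta>0$ on $(0,1]$, the condition $\Phi_\beta'(q)=0$ with $q>0$ is equivalent to $\Gamma_\beta(q)=q$, that is, $q=\mu(\Lambda)$. Substituting $q=\mu(\Lambda)$ reduces \eqref{eq:nm-system} to $D_p(\Lambda)=0$. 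Since $\mu(\Lambda)\in(0,1)$ for every $\Lambda>0$ by Lemma~\ref{lem:nm-NLid}, every zero $\Lambda>0$ of $D_p$ gives exactly one solution, with $q=\mu(\Lambda)$ and $\beta^2=2\Lambda/(pq^{p-1})$. Hence existence and uniqueness of $(\beta_1,q_1)$ are equivalent to $D_p$ having exactly one zero on $(0,\infty)$.

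\emph{Sign pattern of $D_p$.} By Lemma~\ref{lem:nm-NLid}, $\psi'=(1+\mu)/2$, so
\[
D_p'(\Lambda)=\frac{\mu(\Lambda)}{2p}-\frac{p-1}{2p}\Lambda\mu'(\Lambda).
\]
Near $0$, the expansion $\mu(\Lambda)=\Lambda-\Lambda^2+O(\Lambda^3)$ gives $\psi(\Lambda)=\Lambda/2+\Lambda^2/4-\Lambda^3/6+\dots$. Hence $D_p(\Lambda)=\Lambda^2/4-\frac{p-1}{2p}\Lambda^2+O(\Lambda^3)=-\frac{p-2}{4p}\Lambda^2+O(\Lambda^3)<0$ for $p\ge3$. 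At infinity, $\psi(\Lambda)\ge\Lambda-\log 2$ because $\lc x\ge|x|-\log2$ and $\E|h|\ge\E h$. Therefore $D_p(\Lambda)\ge\frac{\Lambda}{2}(1-\frac{p-1}{p}\mu)-\log2\ge\frac{\Lambda}{2p}-\log 2$, which is positive for $\Lambda\ge 2p\log2$. This gives a zero and also $\Lambda_c<2p\log2$.

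For uniqueness I would show that $D_p$ crosses zero only once. Consider $R(\Lambda)=D_p(\Lambda)/\Lambda$. Its derivative has the sign of $\Lambda D_p'-D_p=\frac{\Lambda}{2}(1+\mu)-\psi-\frac{p-1}{2p}\Lambda^2\mu'$, and I would aim to prove that this is positive wherever $D_p\le 0$ fails to be decisive. An alternative route would use the convexity-type behaviour of $\Lambda\mapsto\Lambda\mu'(\Lambda)/\mu(\Lambda)$: $D_p'$ changes sign once, from negative to positive, exactly when $\Lambda\mu'/\mu$ crosses $1/(p-1)$ once. Given $D_p(0)=0$ and $D_p<0$ near $0$, that single sign change forces exactly one zero. \textbf{This monotonicity of $\Lambda\mu'(\Lambda)/\mu(\Lambda)$ (decreasing from $1$ to $0$) is the main obstacle.} I would first attempt a Gaussian-identity proof using Lemma~\ref{lem:nm-NLid}, writing $\mu'=\E\sech^4h$ and $\mu=\E\tanh^2h$. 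If that fails, I would certify it in interval arithmetic on a compact range and use tail bounds $\mu'\le Ce^{-\Lambda/2}$ and $\mu\to1$ for large $\Lambda$. Only the fixed function $\Lambda\mu'/\mu$ enters, so a single certificate covers all $p$.

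\emph{(M1) and (M2).} For (M1), fix $\beta_1$ and regard $\Phi_{\beta_1}(q)$ as a function of $q$. Its critical points in $(0,1]$ are the solutions of $\Gamma_{\beta_1}(q)=q$. At any such point, $\Phi_{\beta_1}(q)=D_p(\Lambda)+\frac{p-1}{2p}\Lambda(\mu(\Lambda)-q)=D_p(\Lambda)$ with $\Lambda=\xi'_{\beta_1}(q)$. Every critical point other than $q_1$ has $\Lambda\ne\Lambda_c$. If $\Lambda<\Lambda_c$ the value is negative. For $\Lambda>\Lambda_c$, I would show that on the fixed-point curve $q\mapsto\xi'_{\beta_1}(q)$ is increasing, and that such fixed points beyond $q_1$ do not occur except as local minima; equivalently, $q_1$ is the largest fixed point and is a local maximum. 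Together with $\Phi_{\beta_1}<0$ near $0^+$, $\Phi_{\beta_1}(0)=0$, and a boundary check at $q=1$, this gives (M1).

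For (M2), note that $\lambda=1-\frac{\dd}{\dd q}\Gamma_{\beta_1}(q_1)$. So $\lambda>0$ says that $\Gamma_{\beta_1}$ crosses the diagonal transversally from above at $q_1$, which is what makes $q_1$ a strict local maximum of $\Phi_{\beta_1}$. I would derive $\lambda\ge0$ from (M1), because $\Phi''_{\beta_1}(q_1)=\frac{\xi''}{2}(\Gamma'-1)\le0$ at a maximum. Strictness would follow from implicit differentiation of the tangency system: $\lambda=0$ would make the Jacobian degenerate, contradicting the simple zero $D_p'(\Lambda_c)>0$ obtained in the uniqueness step.
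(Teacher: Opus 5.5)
Your reduction is the paper's. The change of variables $\Lambda=\xi_\beta'(q)$, the bijection between solutions of \eqref{eq:nm-system} and zeros of $D_p$, and the endpoint signs of $D_p$ all match. So does the key input you identify: that $e(\Lambda)=\Lambda\mu'(\Lambda)/\mu(\Lambda)$ decreases strictly from $1$ to $0$. This is the paper's Lemma~\ref{lem:nm-U}, and through $D_p'=\frac{\mu}{2p}(1-(p-1)e)$ it gives uniqueness, (M1) and (M2).

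The gap is that this lemma is the whole analytic content of the proposition, and you do not prove it. The $R=D_p/\Lambda$ route is not an argument, and the ``Gaussian-identity proof'' is not specified. The computational fallback as described also does not close. The crossing points $s_p^*$ of the levels $1/(p-1)$ tend to infinity with $p$. Tail bounds like $\mu'\le Ce^{-\Lambda/2}$ and $\mu\to1$ give only $e\to0$, not a single crossing of each small level on an unbounded range; you would need a bound on $(\log e)'$ in the tail.

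The paper proves the lemma for every level $\ell\in(0,1)$ by variation diminishing:
\begin{itemize}
\item One Gaussian integration by parts gives $s\mu'(s)-\ell\mu(s)=\nu(s)\int_\R e^{-h^2/(2s)}\varphi_\ell(h)\,\dd h$, with $\nu(s)=e^{-s/2}/\sqrt{2\pi s}$ and $\varphi_\ell(h)=h\int_0^h\sech^3-\ell\sinh h\tanh h$.
\item The even function $\varphi_\ell$ is positive for $0<|h|<h_0$ and negative for $|h|>h_0$, because $\varphi_\ell''=\sech^3h\cdot\Delta_\ell(h)$ with $\Delta_\ell$ decreasing on $(0,\infty)$.
\item The map $s\mapsto\int e^{-(h^2-h_0^2)/(2s)}\varphi_\ell\,\dd h$ has derivative $\frac1{2s^2}\int(h^2-h_0^2)e^{-(h^2-h_0^2)/(2s)}\varphi_\ell\,\dd h<0$.
\end{itemize}
Hence $e-\ell$ has at most one zero.

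The (M1) and (M2) steps also need repair. Your identity $\Phi_{\beta_1}(q)=D_p(\xi'_{\beta_1}(q))$ at fixed points of $\Gamma_{\beta_1}$ is the right tool. But it shows that \emph{any} fixed point $q>q_1$ would have $\Phi_{\beta_1}(q)>0$. So fixed points beyond $q_1$ ``only as local minima'' are not allowed: there must be none. You assert, rather than prove, that $q_1$ is the largest fixed point.

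For (M2) no Jacobian argument is needed. From $\xi''_{\beta_1}(q_1)=(p-1)\Lambda_c/\mu(\Lambda_c)$ one gets $\lambda=1-(p-1)e(\Lambda_c)=\frac{2p}{\mu(\Lambda_c)}D_p'(\Lambda_c)$. So $\lambda>0$ is exactly $\Lambda_c>s_p^*$, which needs the single crossing; a unique sign change of $D_p$ alone gives only $D_p'(\Lambda_c)\ge0$.

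With $\lambda>0$, your identity finishes (M1) without further input:
\begin{itemize}
\item Since $\Gamma_{\beta_1}'(q_1)=1-\lambda<1$, we have $\Gamma_{\beta_1}-{\rm id}<0$ just to the right of $q_1$. At a first fixed point $q'>q_1$ one would then have $\Phi_{\beta_1}(q')<\Phi_{\beta_1}(q_1)=0$, contradicting $\Phi_{\beta_1}(q')=D_p(\xi'_{\beta_1}(q'))>0$. Hence $\Phi_{\beta_1}$ decreases strictly on $[q_1,1]$.
\item On $(0,q_1)$, a point with $\Phi_{\beta_1}\ge0$ would produce an interior maximum of $\Phi_{\beta_1}$ on $[0,q_1]$, since both endpoint values are $0$. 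That maximum is a fixed point with $\Lambda<\Lambda_c$, whose value is $D_p(\Lambda)<0$, a contradiction.
\end{itemize}
The paper instead reads the full sign pattern of $\Gamma_{\beta_1}-{\rm id}$ off the unimodality of $\chi_p(s)=\log\mu(s)-\frac{\log s}{p-1}$, whose derivative is $(e(s)-\frac1{p-1})/s$.
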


The proof (Appendix~\ref{app:nm-U}) rests on one property of the Nishimori-line magnetization
$\mu$. It is independent of $p$.

\begin{lemma}[Single crossing]\label{lem:nm-U}
The elasticity $e(s):=s\mu'(s)/\mu(s)$ is a continuous, strictly decreasing bijection of
$(0,\infty)$ onto $(0,1)$.
\end{lemma}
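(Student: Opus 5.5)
The plan is to treat continuity and the two endpoint limits directly, and to get strict monotonicity from the monotone form of l'H\^opital's rule applied to $e(s)=s\mu'(s)/\mu(s)$.

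\emph{Continuity and limits.} By Lemma~\ref{lem:nm-NLid}, $\mu$ is smooth on $(0,\infty)$ with $\mu'(s)=\E\sech^4h\in(0,1)$, and $\mu>0$ there. Hence $e$ is continuous and positive. As $s\downarrow0$, $\mu'(0^+)=1$ and $\mu(0^+)=0$, so $\mu(s)=s+O(s^2)$ and $e(s)\to1$. As $s\to\infty$, $\mu(s)\to1$. For the numerator, use Lemma~\ref{lem:nm-NLid}(c) with the even function $F=\sech^4$ to rewrite
\[
\mu'(s)=e^{-s/2}\,\E\sech^3(\sqrt s\,g).
\]
This gives $s\mu'(s)=O(\sqrt s\,e^{-s/2})\to0$, so $e(s)\to0$. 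Once strict decrease is shown, $e$ is therefore a bijection onto $(0,1)$.

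\emph{Strict decrease.} Both $s\mu'(s)$ and $\mu(s)$ vanish at $0^+$. By the monotone l'H\^opital rule, $e$ is strictly decreasing as soon as the ratio of derivatives
\[
\frac{(s\mu')'}{\mu'}=1+\frac{s\mu''(s)}{\mu'(s)}
\]
is strictly decreasing on $(0,\infty)$. So it suffices to show that the elasticity $s\,\frac{\dd}{\dd s}\log\mu'(s)$ of $\mu'$ is strictly decreasing. With the representation above,
\[
s\,\frac{\dd}{\dd s}\log\mu'(s)=-\frac s2+s\,\frac{\dd}{\dd s}\log S(s),\qquad S(s):=\E\sech^3(\sqrt s\,g).
\]
The term $-s/2$ is strictly decreasing, so it remains to show that $s\mapsto s\,\frac{\dd}{\dd s}\log S(s)$ is nonincreasing. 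Equivalently, $t\mapsto\log S(e^t)$ should be concave.

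$S$ is the heat semigroup applied to $\sech^3$ at time $s$. Writing $v=\sqrt s\,g$ and using Gaussian integration by parts, I would express $s\,\partial_s\log S$ as $\frac12\E_\pi[v\,\phi'(v)]$, where $\phi=\log\sech^3=-3\log\cosh$ and $\pi$ is the tilted measure proportional to $\sech^3(v)\,e^{-v^2/(2s)}$. Since $v\phi'(v)=-3v\tanh v$, the claim becomes: the $\pi$-average of $v\tanh v$ is nondecreasing in $s$. Its $s$-derivative is a $\pi$-covariance of $v\tanh v$ with $v^2$, up to a positive factor $1/(2s^2)$. Both functions are even and increasing in $|v|$. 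Moreover $\pi$, restricted to $|v|$, is a family totally positive in $(|v|,s)$, because of the factor $e^{-v^2/(2s)}$. The covariance is therefore nonnegative by an FKG/Chebyshev argument for comonotone functions under a TP$_2$ family, and it is strictly positive because the functions are strictly increasing. This yields strict decrease of $e$.

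\emph{Main obstacle.} I expect the last step to be the main obstacle: checking that the covariance identity is exactly what the derivative produces, and that the sign is not spoiled by the normalization of $\pi$. If this direct argument fails, my fallback is the standard computer-assisted route. On a compact interval $[s_0,s_1]$, certify $(s\mu')'\mu-s\mu'^2<0$ in interval arithmetic. On $(0,s_0]$, use the Taylor expansion $\mu(s)=s-2s^2+\dots$, which gives $e(s)=1-2s+O(s^2)$. On $[s_1,\infty)$, use the exponential asymptotics $\mu'\sim c\,s^{-1/2}e^{-s/2}$, which make $e'<0$ explicit. The paper's remark that a second, computer-assisted proof exists for integer $p$ suggests this fallback is viable.
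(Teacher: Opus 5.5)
Your proof is correct, and it takes a different route from the paper's.

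\emph{The paper's route.} The paper fixes a level $\ell\in(0,1)$ and writes $s\mu'(s)-\ell\mu(s)=\nu(s)K_{\varphi_\ell}(s)$, where $\varphi_\ell(h)=h\,k(h)-\ell\sinh h\tanh h$ and $k=\int_0^h\sech^3$. A second-derivative analysis ($\varphi_\ell''=\sech^3h\cdot\Delta_\ell$ with $\Delta_\ell$ decreasing) shows that $\varphi_\ell$ changes sign exactly once in $|h|$. A variation-diminishing computation then differentiates $e^{h_0^2/(2s)}K_{\varphi_\ell}(s)$ in $s$ and shows that $e-\ell$ has at most one zero, crossing from $+$ to $-$. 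Strict decrease follows by letting $\ell$ range over $(0,1)$.

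\emph{Your route.} You use the single identity $\mu'(s)=e^{-s/2}\E\sech^3(\sqrt s\,g)$ together with the monotone l'H\^opital rule. The rule applies here: $s\mu'$ and $\mu$ both tend to $0$ at $0^+$, $\mu'>0$, and strict monotonicity of the ratio of derivatives transfers to the ratio. This reduces everything to monotonicity in $s$ of $\E_{\pi_s}[v\tanh v]$.

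\emph{The step you flagged.} It is not an obstacle. The normalization is already accounted for: $\frac{\dd}{\dd s}\E_{\pi_s}F=\frac1{2s^2}\operatorname{Cov}_{\pi_s}(F,v^2)$ holds exactly. At fixed $s$ this is the covariance of two functions that are strictly increasing in the single variable $|v|$. Chebyshev's inequality (the same tool as in Lemma~\ref{lem:nm-G}) makes it strictly positive. The TP$_2$ property is not needed for that. It would instead give a derivative-free alternative through monotone likelihood ratio. The computer-assisted fallback is unnecessary.

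\emph{What each route buys.} Yours is shorter and needs no case analysis of $\varphi_\ell''$ or of the special level $\ell=1$. It also proves more: $t\mapsto\log\mu'(e^t)$ is strictly concave. Equivalently, the elasticity $s\mu''/\mu'=-s/2-\frac32\E_{\pi_s}[v\tanh v]$ of $\mu'$ itself decreases strictly, from $0$ to $-\infty$. The paper's level-by-level argument yields directly the single-crossing form at each level that is used in Proposition~\ref{prop:nm-triple}. That form is an immediate consequence of strict decrease, so your route loses nothing.
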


Solutions of~\eqref{eq:nm-system} correspond to zeros of $D_p$ through $\Lambda=\xi_\beta'(q)$
and $q=\mu(\Lambda)$, and $D_p'=\frac{\mu}{2p}\bigl(1-(p-1)e\bigr)$. By
Lemma~\ref{lem:nm-U}, $D_p$ first decreases and then increases, which gives existence and
uniqueness. The same lemma makes $s\mapsto\log\mu(s)-(\log s)/(p-1)$ unimodal, which gives the
sign pattern of $\Gamma_{\beta_1}-{\rm id}$ behind (M1) and (M2).

\begin{remark}[Relation to earlier work]\label{rem:nm-chen}
W.-K.~Chen showed that the set of $\beta$ at which the pure $p$-spin free energy equals its
annealed value is an interval $(0,\beta_p]$, and characterized it by a first-variation
criterion~\cite[Theorem~2]{Chen2019}. His auxiliary function $\rho_\beta$ equals $\Gamma_\beta$
by Lemma~\ref{lem:nm-NLid}(c), and his criterion reads: $\beta\le\beta_p$ if and only if
$\Phi_\beta\le0$ on $(0,1]$, and $\Phi_\beta<0$ on $(0,1]$ if $\beta<\beta_p$. With (M1) it
follows that $\beta_1=\beta_p$, since $\Phi_{\beta_1}\le0$ and $\Phi_{\beta_1}(q_1)=0$. So
$\beta_1$ is the inverse temperature at which replica symmetry breaks in the pure $p$-spin model, and,
by~\cite[Theorem~1]{Chen2019}, the distinguishability and weak-recovery threshold of the spiked
Rademacher tensor, which after a gauge transformation is our model on the Nishimori line, up to
the passage from ordered $p$-tuples to $p$-subsets (not used here); see
also~\cite{LMLKZ2017,BarbierMacris2019}. In particular $\max_{[0,1]}\Phi_\beta>0$ exactly when
$\beta>\beta_1$. Zhou characterized this inverse temperature by~\eqref{eq:nm-system} and stated that its
solution exists and is unique~\cite[\S2.2]{Zhou2024}; Proposition~\ref{prop:nm-triple} gives a
proof. For integer $p$, the unimodality used in the proof also follows from Zhou's convexity
lemma~\cite[Lemma~6]{Zhou2024} after the change of variables $s=\xi_\beta'(q)$. The final
numerical step of that lemma needs a finer bound than the printed one, whose value is
$-0.043$; interval arithmetic on a finer cover gives a positive margin of $0.04$. We do not use
it. At replica level, Gillin, Nishimori and Sherrington found that the Nishimori line passes
through the paramagnet--spin-glass--ferromagnet triple point~\cite[App.~C.2]{GNS2001}; the
corresponding identity here is Lemma~\ref{lem:nm-FMloc}(a): at $M$ the replica-symmetric
ferromagnetic critical value equals the paramagnetic free energy.
\end{remark}

Let $(\beta_1,q_1)$ be the solution of~\eqref{eq:nm-system}. On the Nishimori line $\beta=2j_0$
the triple point is
\[
M=(j_{0M},T_M)=\bigl(\beta_1/2,\,1/\beta_1\bigr),\qquad \mu:=q_1 ,
\]
where $\mu$ is the common value at $M$ of the spin-glass overlap and the ferromagnetic
magnetization. By Proposition~\ref{prop:nm-triple}, $q_1=\mu(\Lambda_c)$ is the value at
$\Lambda_c$ of the function $\mu$ of Lemma~\ref{lem:nm-NLid}; from here on $\mu$ without an
argument denotes this number. With $\Lambda_c$ and $h_c$ as in Proposition~\ref{prop:nm-triple}, so that
$\mu=\E\tanh h_c=\E\tanh^2h_c$, define the constants and the coefficients
\begin{equation}\label{eq:nm-const}
\begin{gathered}
\tau=\E\tanh^3h_c,\qquad V=\operatorname{Var}\bigl(\lc h_c\bigr),\qquad B=1-3\mu+2\tau,\\
\Omega=\xi_{\beta_1}''(\mu)=\frac{(p-1)\Lambda_c}\mu,\qquad
\varphi_{xx}=V-\frac{(p-1)\Lambda_c\mu}p ,
\end{gathered}
\end{equation}
\begin{equation}\label{eq:nm-A}
A_{\rm SG}=\frac{(\beta_1\mu^p/2)^2}{\varphi_{xx}},\qquad
A_{\rm FM}=\frac{\mu^p}2+\frac{\Lambda_c^2}{\beta_1^2}\cdot\frac{B}{1-\Omega B},\qquad
\kappa_p=\beta_1^2\,(A_{\rm FM}-A_{\rm SG}).
\end{equation}
By Lemma~\ref{lem:nm-NLid}(b), $\E\sech^4h_c=1-2\mu+\tau$, so $\lambda=1-\Omega(1-2\mu+\tau)$.
We show $\varphi_{xx}>0$ in Lemma~\ref{lem:nm-G} and $0<\Omega B<1$ in
Theorem~\ref{thm:nm-nondeg}, so~\eqref{eq:nm-A} is well defined. These are Nishimori's
second-order coefficients~\cite[(36), (42), (44)--(45)]{Nishimori2026}:
$\frac12-A_{\rm SG}$ and $\frac12-A_{\rm FM}$ are the second $\beta$-derivatives, at fixed
$j_0=j_{0M}$, of the one-step spin-glass and replica-symmetric ferromagnetic free energies at $M$
(Proposition~\ref{prop:nm-SG} and Lemma~\ref{lem:nm-closed}). Hence
$\kappa_p=C_{\rm SG}-C_{\rm FM}$, where $C_{\rm SG}=\beta_1^2(\frac12-A_{\rm SG})$ and
$C_{\rm FM}=\beta_1^2(\frac12-A_{\rm FM})$ are his specific heats of the two phases at $M$.\footnote{For $p=3$, $j_{0M}\approx0.767595$, $T_M\approx0.651385$, $\mu\approx0.813518$ and
$\kappa_3\approx0.0303916$; for $p=4$, $j_{0M}\approx0.810526$, $T_M\approx0.616883$,
$\mu\approx0.948088$ and $\kappa_4\approx0.0131494$ (Table~\ref{tab:nm-cert}).
Nishimori writes $\beta_c$ for our $\beta_1$.}

\subsection{Main result}\label{sec:nm-result}

\begin{theorem}[Reentrance near the triple point]\label{thm:nm-main}
Let $p\ge3$ be an integer, and let $j_{0M}$, $\mu$, $A_{\rm SG}$ and $A_{\rm FM}$ be as in
Section~\ref{sec:nm-triple}. Then $A_{\rm FM}>A_{\rm SG}$. Put
$m_1=\min\{(1-2/p)^{1/2},\,4^{-1/(p-2)}\}$, and let $\mathcal M_1=[-1,m_1]$ if $p$ is odd and
$\mathcal M_1=[-m_1,m_1]$ if $p$ is even. There is $\bar\delta=\bar\delta(p)>0$ such that for every
$j_0\in(j_{0M},j_{0M}+\bar\delta)$, with $\delta=j_0-j_{0M}$ and
\[
b(\delta)=\Bigl(\frac{4\beta_1\mu^p\,\delta}{A_{\rm FM}-A_{\rm SG}}\Bigr)^{1/2},
\]
the following hold.
\begin{enumerate}
\item[\textup{(i)}] \emph{Order at the Nishimori point.} At $T_{\rm w}=1/(2j_0)$ there is
$\eta_{\rm w}=\eta_{\rm w}(\delta)>0$ with $\E\gibbs{\ind\{m\in\mathcal M_1\}}\le e^{-\eta_{\rm w}N}$
for all large $N$. In particular $\E\gibbs{\ind\{m>m_1\}}\to1$ if $p$ is odd, and
$\E\gibbs{\ind\{|m|>m_1\}}\to1$ if $p$ is even.
\item[\textup{(ii)}] \emph{No order at a lower temperature.} At
$T_{\rm cold}=1/(\beta_1+b(\delta))$, $\E\gibbs{|m|}\to0$.
\item[\textup{(iii)}] $T_{\rm cold}<T_{\rm w}$.
\end{enumerate}
\end{theorem}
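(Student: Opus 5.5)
The plan is to derive all three parts from the reduction, Theorem~\ref{thm:reduction}, with its predicates checked asymptotically as $\delta\downarrow0$ rather than numerically. Two inputs are assumed from later in this section: the nondegeneracy and sign facts at $M$ (Theorem~\ref{thm:nm-nondeg}, including $\kappa_p>0$, hence $A_{\rm FM}>A_{\rm SG}$), and the second-order expansions of the two competing free energies (Proposition~\ref{prop:nm-SG}, Lemma~\ref{lem:nm-closed}).

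\emph{Part (i).} I would verify predicate (W) of Theorem~\ref{thm:reduction} with $q_0=q_1$ and $\beta_{\rm w}=\beta_1+2\delta$.
\begin{itemize}
\item \emph{Positivity of $\Phi(q_1)$.} Since $\Phi_{\beta_1}(q_1)=0$, it suffices to show $\partial_\beta\Phi_\beta(q_1)>0$ at $\beta_1$. Because $\partial_\beta$ acts through $\xi'_\beta$ and $\theta_\beta$, this reduces to $\psi'(\Lambda_c)\Lambda_c-\tfrac12(\Lambda_c+\theta_{\beta_1}(q_1))>0$. By Lemma~\ref{lem:nm-NLid} this equals $\frac{\Lambda_c}{2}\bigl(\mu-\frac{p-1}{p}\mu\bigr)=\Lambda_c\mu/(2p)>0$. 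Hence $\Phi_{\beta_{\rm w}}(q_1)\ge c\delta>0$.
\item \emph{Membership $q_1\in Q_p$ for odd $p$.} This needs $pq_1^{p-1}+(p-1)q_1^p\ge1$, which I would get from the bounds on $\mu=q_1$ in Proposition~\ref{prop:nm-triple}, using interval arithmetic for small $p$ if necessary.
\item \emph{The competing term.} I must show $\xi(m_1)-\mathcal I(m_1)<0$. Since $\mathcal I(m)\ge m^2/2$, it is enough that $\beta_{\rm w}^2m_1^{p-2}<1$. The choice $m_1\le4^{-1/(p-2)}$ reduces this to $\beta_{\rm w}<2$, i.e.\ $\beta_1<2$ with room for small $\delta$. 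I would derive $\beta_1<2$ from $\Lambda_c<2p\log2$, $q_1=\mu(\Lambda_c)$ and $\beta_1^2=2\Lambda_c/(pq_1^{p-1})$, again with a certified check for small $p$ if the crude bound is insufficient.
\end{itemize}
Theorem~\ref{thm:reduction}(i) then gives the exponential bound.

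\emph{Part (ii).} Set $\beta=\beta_1+b$ with $b=b(\delta)$.
\begin{itemize}
\item \emph{Lower bound on the full free energy.} By Lemma~\ref{lem:lower}, the full free energy is at least $F^0(\beta,0)$. I would invoke Proposition~\ref{prop:nm-SG}: the one-step branch is the Parisi measure (Lemma~\ref{lem:nm-LS}) and gives
\[F^0(\beta,0)=\log2+\beta^2/4-A_{\rm SG}b^2/2+o(b^2).\]
\item \emph{Upper bound on magnetization bins, split into three ranges.}
\begin{itemize}
\item For $|m|\le m_{\rm lo}$, Lemma~\ref{lem:universal} leaves the excess $\beta j_0m^p-m^2/2<0$.
\item For $m$ near $\mu$, I would use the replica-symmetric Guerra bound at the ferromagnetic stationary point, expanded by Lemma~\ref{lem:nm-closed}. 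It gives at most $\log2+\beta^2/4+\beta_1\mu^p\delta-A_{\rm FM}b^2/2+o(\delta+b^2)$, the $\delta$-term being the first-order gain $\beta j_0\mu^p$ from the bias.
\item For $m$ in the intermediate range, away from both $0$ and $\mu$, I would use Guerra-type bounds whose values at $M$ lie strictly below $\log2+\beta_1^2/4$ by (M1). This gap persists for small $(\delta,b)$ by continuity and compactness.
\end{itemize}
\item \emph{Comparison.} With $b^2=4\beta_1\mu^p\delta/(A_{\rm FM}-A_{\rm SG})$, the spin glass wins by $(A_{\rm FM}-A_{\rm SG})b^2/2-\beta_1\mu^p\delta=\beta_1\mu^p\delta$, up to $o(\delta)$. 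Lemma~\ref{lem:bins} then gives $\E\gibbs{\ind\{|m|\ge\varepsilon\}}\to0$ for every $\varepsilon>0$, hence $\E\gibbs{|m|}\to0$.
\end{itemize}

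\emph{Part (iii).} We have $1/T_{\rm cold}=\beta_1+b(\delta)$ and $1/T_{\rm w}=\beta_1+2\delta$. Since $b(\delta)\asymp\delta^{1/2}\gg2\delta$, shrinking $\bar\delta$ gives $T_{\rm cold}<T_{\rm w}$.

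\emph{Expected obstacle.} The delicate step is making the $o(\cdot)$ remainders uniform. This matters for the second-order lower bound on the spin-glass side, which rests on the persistence Lemma~\ref{lem:nm-LS}, and for the sign $\kappa_p>0$ for all $p$. For the latter I would rely on the positive-kernel identity (Proposition~\ref{prop:nm-J}) and on Theorem~\ref{thm:nm-LP} for $p\ge26$.
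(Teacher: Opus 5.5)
Your proposal is correct and follows the paper's proof essentially step for step: predicate (W) with $q_0=\mu$ at the warm point, and at the cold point the lower bound $F^0(\beta,0)$ from Lemma~\ref{lem:lower} and Proposition~\ref{prop:nm-SG}, compared against three kinds of bins: small bins via Lemma~\ref{lem:universal}, intermediate bins via diagonal Guerra trials controlled by (M1), and bins near $\mu$ via the stationary value of Lemma~\ref{lem:nm-FMloc}, with the same second-order comparison at $b=b(\delta)$. Two small points on part~(i): first, $q_1\in Q_p$ for odd $p$ is exactly (N3), which you already assume through Theorem~\ref{thm:nm-nondeg}; second, your crude route to $\beta_1<2$ does not close at $p=3$ (there $\mu^2\approx0.66<\log2$), whereas the paper gets $\beta_1^2<4\log2$ without computation by combining $\Phi_{\beta_1}(1)<0$ (from (M1)) with $\Phi_\beta(1)>\beta^2/4-\log2$ (from $\lc y\ge|y|-\log2$).
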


Parts~(i) and~(ii) concern one disorder law at two temperatures, so cooling from $T_{\rm w}$ to
$T_{\rm cold}$ destroys ferromagnetic order. In inverse temperature the warm point is
$\beta_1+2\delta$ and the cold point is $\beta_1+b(\delta)$ with $b(\delta)\asymp\delta^{1/2}$
(Figure~\ref{fig:nm-local}). The theorem is a two-point statement at each fixed bias: it
constructs no boundary curve, and $\bar\delta$ is neither explicit nor uniform in $p$. It
concerns integer $p$ only. Nishimori formulates the problem for real $p>2$, including the limit
$p\to2^+$; that question is not resolved here.\footnote{Nishimori's boundary is
$j_0^*(\beta)-j_{0M}=K_b(\beta-\beta_1)^2+O(|\beta-\beta_1|^3)$ with
$K_b=(A_{\rm FM}-A_{\rm SG})/(2\beta_1\mu^p)$~\cite[(51)]{Nishimori2026}. At bias
$j_{0M}+\delta$ it is crossed at $\beta_1+b_{\rm N}$ with $b_{\rm N}=(\delta/K_b)^{1/2}$, and
$b(\delta)=\sqrt2\,b_{\rm N}$. At $\beta_1+b(\delta)$ his boundary lies at
$j_{0M}+2\delta+O(\delta^{3/2})>j_0$, so the unmagnetized point lies on the spin-glass side of
the predicted boundary. The factor $\sqrt2$ yields a free-energy margin of order $\delta$ in the
proof (step~(e) in Section~\ref{sec:nm-proof}).}

The inputs that depend on $p$ are $A_{\rm FM}>A_{\rm SG}$ and two further nondegeneracies
(Theorem~\ref{thm:nm-nondeg}). For $3\le p\le25$ they are verified in interval arithmetic, and
for $p\ge26$ they follow from explicit bounds whose constants are also certified in interval
arithmetic. Theorem~\ref{thm:nm-main} is therefore a computer-assisted theorem.

\paragraph{The warm point.} Part~(i) without the explicit threshold $m_1$ is, in substance,
W.-K.~Chen's weak-recovery theorem for the spiked Rademacher tensor~\cite[proofs of Lemma~4,
p.~13, and of Theorem~1(ii), p.~14]{Chen2019}, read through the gauge map, since
$2j_0>\beta_1=\beta_p$ (Remark~\ref{rem:nm-chen}). The positivity of the limiting overlap for
$\beta>\beta_p$ is established in the proof, not the statement, of Theorem~1(ii) there. Chen's
Hamiltonian sums over ordered $p$-tuples, and we do not carry
out the passage to $p$-subsets. Instead we prove~(i) directly, with the explicit threshold $m_1$, from the
Nishimori-line lemmas of Section~\ref{sec:nishimori-point}.

\begin{figure}[t]
\centering
\begin{tikzpicture}[x=3.4cm,y=2.1cm,>=Stealth]
\draw[->] (-0.95,-2.0) -- (1.55,-2.0) node[below left]{$j_0$};
\draw[->] (-0.95,-2.0) -- (-0.95,0.95) node[below left]{$T$};
\draw[gray,thick] (-0.95,0) -- (0,0);
\draw[gray,thick] (0,0) .. controls (0.45,0.35) and (0.95,0.6) .. (1.5,0.72);
\draw[thick,domain=0:1.48,smooth,variable=\y] plot ({0.6944*\y*\y},{-\y});
\draw[thick,dashed] (-0.8,0.28) -- (1.5,-0.525);
\node[anchor=west] at (1.53,-0.525) {\small Nishimori line};
\draw[gray] (1,-1.9) -- (1,0.55);
\node[below] at (1,-2.0) {\small $j_{0M}+\delta$};
\draw[gray] (0,-2.03) -- (0,-1.97);
\node[below] at (0,-2.0) {\small $j_{0M}$};
\fill (0,0) circle (1.3pt) node[above left]{\small $M$};
\fill[blue!60!black] (1,-0.35) circle (1.7pt);
\node[blue!60!black,anchor=west] at (1.03,-0.30) {\small ordered ($T_{\rm w}$)};
\draw[thick] (1,-1.2) circle (1.7pt);
\node[anchor=south west] at (1.02,-1.19) {\small replica crossing};
\fill[red!70!black] (1,-1.697) circle (1.7pt);
\node[red!70!black,anchor=west] at (1.03,-1.697) {\small disordered ($T_{\rm cold}$)};
\node at (-0.45,0.6) {\small PM};
\node at (-0.45,-1.1) {\small SG};
\node at (0.5,0.12) {\small FM};
\end{tikzpicture}
\caption{Schematic neighbourhood of the triple point $M$ in the $(j_0,T)$ plane, not to scale.
Dashed: the Nishimori line. Black curve below $M$: Nishimori's replica prediction for the
ferromagnet--spin-glass boundary, $j_0-j_{0M}=K_b(\beta-\beta_1)^2+O(|\beta-\beta_1|^3)$,
tangent to the vertical at $M$; grey: the other replica phase boundaries. At the bias
$j_{0M}+\delta$, Theorem~\ref{thm:nm-main} proves order at the Nishimori point (blue) and its
absence at inverse temperature $\beta_1+b(\delta)$ (red). The open circle marks the replica
crossing, at inverse temperature $\beta_1+b_{\rm N}$, and $b(\delta)=\sqrt2\,b_{\rm N}$. Only the two filled points are
proved; the phase boundaries drawn are replica predictions.}
\label{fig:nm-local}
\end{figure}

\subsection{Mechanism and route}\label{sec:nm-route}

Fix $j_0=j_{0M}+\delta$ and $\beta=\beta_1+b$ with $b>0$. The proof compares two free energies
at $(\beta,j_0)$:
\begin{align*}
F^0(\beta,0)&=\varphi_{\rm PM}(\beta)-\tfrac{A_{\rm SG}}2\,b^2+O(b^3),\\
\hat\varphi_{\rm FM}(\beta,j_0)&=\varphi_{\rm PM}(\beta)-\tfrac{A_{\rm FM}}2\,b^2
+\beta_1\mu^p\delta+O(|b|^3+|b|\delta+\delta^2),
\end{align*}
where $\varphi_{\rm PM}(\beta)=\log2+\beta^2/4$. The first is the zero-field free energy,
which bounds the full free energy from below (Lemma~\ref{lem:lower}); its expansion is
Proposition~\ref{prop:nm-SG}. The second is the value of the replica-symmetric ferromagnetic
critical point (Lemma~\ref{lem:nm-FMloc}), which bounds the free energy of every magnetization
bin near $\mu$ from above. Both equal $\varphi_{\rm PM}(\beta_1)$ at $M$ and have the same
$\beta$-derivative $\beta_1/2$ there; only the ferromagnet gains from the bias. Their
difference is
\[
\tfrac12(A_{\rm FM}-A_{\rm SG})\,b^2-\beta_1\mu^p\delta+O(|b|^3+|b|\delta+\delta^2),
\]
which is positive and of order $\delta$ at $b=b(\delta)$ once $A_{\rm FM}>A_{\rm SG}$.
Magnetizations away from $\mu$ are handled by Guerra-type bounds whose values at $M$ are
$\varphi_{\rm PM}(\beta_1)+\Phi_{\beta_1}(m)<\varphi_{\rm PM}(\beta_1)$ by (M1)
(Lemma~\ref{lem:nm-diag}), and small magnetizations by Lemma~\ref{lem:universal}.

\paragraph{The obstruction.} The spin-glass side needs a lower bound on $F^0(\beta,0)$ with the
exact coefficient $A_{\rm SG}$. Convexity of $\beta\mapsto F^0(\beta,0)$ and its tangent at
$\beta_1$ give only
$F^0(\beta,0)\ge\varphi_{\rm PM}(\beta)-b^2/4$~\cite[proof of Lemma~3.1]{Talagrand2000}, i.e.\
$A_{\rm SG}$ replaced
by $\frac12$. The comparison would then require $A_{\rm FM}>\frac12$, i.e.\ $C_{\rm FM}<0$.
This fails: $C_{\rm FM}=\beta_1^2(\frac12-A_{\rm FM})\approx0.198$ for $p=3$, and interval
arithmetic gives $C_{\rm FM}>0$ for every $p=3,\dots,25$. The lower bound must therefore resolve
the spin-glass specific heat to within $\kappa_p$. Talagrand studied the present model at
$j_0=0$, with the same normalization; his critical value $\beta_p$ is our $\beta_1$
(Remark~\ref{rem:nm-chen}). His results~\cite[Theorems~1.3--1.5 and~1.8]{Talagrand2000} are statements about the Gibbs measure
for large $p$, made before the Parisi formula was proved, and they give no second-order expansion
of the free energy.\footnote{For $p$ larger than an absolute constant $L$ that is not made explicit
and $\beta\le\beta_1+1/L$, he proved that the overlap of two replicas is asymptotically either
close to $0$ or within $1/p$ of $\pm1$, and that the Gibbs measure splits into lumps, which come in pairs related by the global spin flip,
with vanishing overlaps between lumps of different pairs~\cite[Theorems~1.3--1.5]{Talagrand2000};
under a hypothesis on the weights of the lumps, which a note added in proof reports as proved, the
lumps are pure states for $\beta\le\beta_1+1/(pL)$~\cite[Theorem~1.8 and p.~360]{Talagrand2000}.} Zhou argued in an unrefereed preprint that the
Parisi measure is one-step just above $\beta_1$~\cite[Theorem~1(2) and \S4.2]{Zhou2024}, by an
intermediate-value argument. As printed, that argument needs repairs: one intermediate claim is
false as stated, and uniformity near $0$ and near $q$ is not argued. It also gives no continuity of the one-step parameters in $\beta$ and no expansion of the
free energy; Proposition~\ref{prop:nm-SG} recovers the one-step structure together with the
expansion.

\paragraph{Proof architecture.} The proof has five parts.
\begin{enumerate}
\item \emph{Structure at $M$} (Proposition~\ref{prop:nm-triple}): uniqueness of the triple
point, the strict sign condition (M1) and the nondegeneracy (M2), from Lemma~\ref{lem:nm-U}.
\item \emph{Spin-glass lower bound} (Section~\ref{sec:nm-sg}). By (M1)--(M2), the
Frank--Wolfe certificate of $\delta_0$ at $M$ is strict away from its two forced zeros, and it
persists along stationary one-step measures near $M$ (Lemma~\ref{lem:nm-LS}). The one-step
stationarity equations are analytic across $x=1$ with a nonsingular Jacobian, so the implicit
function theorem gives a one-step branch (Lemma~\ref{lem:nm-branch}); its value is $F^0$, and
an envelope computation gives the coefficient (Proposition~\ref{prop:nm-SG}).
\item \emph{Ferromagnetic upper bounds} (Section~\ref{sec:nm-fm}): a Nishimori-line identity
for bins away from $\mu$ (Lemma~\ref{lem:nm-diag}) and the replica-symmetric critical value for
bins near $\mu$ (Lemmas~\ref{lem:nm-FMloc} and~\ref{lem:nm-closed}).
\item \emph{Assembly} (Section~\ref{sec:nm-proof}), with the constants fixed in an order that
does not depend on the bin width.
\item \emph{Nondegeneracy for every $p$} (Section~\ref{sec:nm-nondeg}): $\kappa_p>0$ and two
further conditions, certified for $p\le25$ and proved by explicit bounds for $p\ge26$.
\end{enumerate}
The ingredients we have not found in the literature are Lemma~\ref{lem:nm-LS}, which adapts the
persistence argument of~\cite[Proposition~2.7]{AronowLopatto2026} to a point where it does not
apply directly (see the discussion after the lemma), Proposition~\ref{prop:nm-SG}, the local
ferromagnetic analysis and the positive-kernel identity of Proposition~\ref{prop:nm-J}. The
Parisi formula, the first-variation criterion and Guerra's bounds enter through
Proposition~\ref{prop:facts}.

\subsection{The spin-glass side}\label{sec:nm-sg}

We use the two-atom notation of Section~\ref{sec:pspin}. For $0<x\le1$, $0<q<1$ and $\beta>0$
write $\Par(x,q;\beta)=\Par(x\delta_0+(1-x)\delta_q,0)$ at inverse temperature $\beta$,
$C(x,q;\beta)=\partial_x\Par$ as in Lemma~\ref{lem:D}, and
\[
T(x,q;\beta)=\frac{\E[\cosh^xY_q\,\tanh^2Y_q]}{A},\qquad Y_q\sim N\bigl(0,\xi_\beta'(q)\bigr),
\]
which is $\Gamma_{\mu^*}(q)$ in Lemma~\ref{lem:D}(iii). Differentiating the formula for
$\Par(\mu^*,0)$ before Lemma~\ref{lem:D}, with $\frac{\dd}{\dd r}\E f(\sqrt r g)=\frac12\E
f''(\sqrt rg)$ and $(\cosh^x)''=x\cosh^x(1-(1-x)\tanh^2)$, gives
\begin{equation}\label{eq:nm-stat}
\partial_q\Par(x,q;\beta)=-(1-x)\,\frac{\xi_\beta''(q)}2\,\bigl(T(x,q;\beta)-q\bigr).
\end{equation}
So for $0<x<1$ the measure $x\delta_0+(1-x)\delta_q$ is stationary for $\Par$ exactly when
\begin{equation}\label{eq:nm-15}
C(x,q;\beta)=0,\qquad T(x,q;\beta)=q .
\end{equation}
At $x=1$ the measure is $\delta_0$ for every $q$, $\Par(1,q;\beta)=\varphi_{\rm PM}(\beta)$, and
Lemma~\ref{lem:nm-NLid}(c) gives $C(1,q;\beta)=\Phi_\beta(q)$ and $T(1,q;\beta)=\Gamma_\beta(q)$.
Put
\[
\varphi_{xx}^{(r)}=\partial_x C(1,q;\beta)\quad\text{at }\xi'_\beta(q)=r,\qquad
\varphi_{x\beta}=\partial_\beta C(1,q_1;\beta_1).
\]
Since $\partial_\beta\Phi_\beta(q)=\beta^{-1}(\xi_\beta'(q)\Gamma_\beta(q)-\theta_\beta(q))$ by
Lemma~\ref{lem:nm-NLid}, and $\Gamma_{\beta_1}(q_1)=q_1$,
$\varphi_{x\beta}=\xi_{\beta_1}(\mu)/\beta_1=\beta_1\mu^p/2>0$.

\begin{lemma}[Curvature in the one-step parameter]\label{lem:nm-G}
At $x=1$, $\partial_xC(1,q;\beta)$ depends on $(q,\beta)$ only through $r=\xi_\beta'(q)$. With
$\ell=\lc$ and $Q$ the law $N(r,r)$,
\[
\varphi_{xx}^{(r)}=\operatorname{Var}_Q\ell-2\E_Q\ell+r,\qquad
\frac{\dd}{\dd r}\varphi_{xx}^{(r)}=\operatorname{Cov}_Q(\ell,\tanh^2)>0 .
\]
Hence $\varphi_{xx}^{(r)}>0$ for $r>0$. At $M$, $\varphi_{xx}^{(\Lambda_c)}=\varphi_{xx}$ of
\eqref{eq:nm-const}.
\end{lemma}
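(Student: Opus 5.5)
The plan is to compute $\partial_xC$ at $x=1$ directly from the closed form of $C$ before Lemma~\ref{lem:D}. Then I differentiate in $r$ using Lemma~\ref{lem:nm-NLid}, and finally identify the value at $M$ through the equation $D_p(\Lambda_c)=0$ of Proposition~\ref{prop:nm-triple}.

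\emph{Step 1: the formula.} Put $f(x)=\log\E\cosh^xY_q$ with $Y_q\sim N(0,r)$ and $r=\xi_\beta'(q)$. This is finite and analytic in $x$ near $1$. Then $\log A=f(x)$ and $K=f'(x)$, so
\[
C=-\frac{f(x)}{x^2}+\frac{f'(x)}x-\frac{\theta_\beta(q)}2 ,
\qquad
\partial_xC=\frac{2f}{x^3}-\frac{2f'}{x^2}+\frac{f''}x .
\]
Since $\theta_\beta(q)$ does not depend on $x$, at $x=1$ this gives $\partial_xC=2f(1)-2f'(1)+f''(1)$. I evaluate the three terms as follows.
\begin{itemize}
\item $f(1)=r/2$.
\item $f'(1)$ is the mean of $\ell(Y_q)$ under the tilt $\cosh Y_q\,e^{-r/2}$.
\item $f''(1)$ is the variance of $\ell(Y_q)$ under the same tilt.
\end{itemize}
Since $\ell$ is even, Lemma~\ref{lem:nm-NLid}(c) converts the tilted law into $Q=N(r,r)$. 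This gives $\varphi^{(r)}_{xx}=\operatorname{Var}_Q\ell-2\E_Q\ell+r$. The right side depends only on $r$, which proves the first claim.

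\emph{Step 2: the derivative.} I apply Lemma~\ref{lem:nm-NLid}(a), with $\ell'=\tanh$ and $\ell''=\sech^2$, and use (b) to replace $\E[\ell\tanh]$ by $\E[\ell\tanh^2]$ and $\E\tanh$ by $\mu(r)=\E\tanh^2$.
\begin{itemize}
\item $\frac{\dd}{\dd r}\E_Q\ell=\frac{1+\mu}2$.
\item $\frac{\dd}{\dd r}\E_Q\ell^2=\E[2\ell\tanh+\tanh^2+\ell\sech^2]=\E[\ell\tanh^2]+\mu+\E\ell$.
\end{itemize}
Hence $\frac{\dd}{\dd r}\operatorname{Var}_Q\ell=\operatorname{Cov}_Q(\ell,\tanh^2)+\mu$. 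Subtracting $2\cdot\frac{1+\mu}2$ and adding $1$ leaves exactly $\operatorname{Cov}_Q(\ell,\tanh^2)$.

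\emph{Step 3: positivity.} Both $\ell$ and $\tanh^2$ are strictly increasing functions of $|h|$. By Chebyshev's association inequality for the law of $|h|$, their covariance is strictly positive, since the law of $|h|$ is nondegenerate. This step needs care only in checking that Chebyshev's inequality applies to functions of the single variable $|h|$, which it does.

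As $r\downarrow0$, all three terms $\operatorname{Var}_Q\ell$, $\E_Q\ell$ and $r$ tend to $0$ by dominated convergence. So $\varphi^{(0+)}_{xx}=0$, and strict monotonicity gives $\varphi^{(r)}_{xx}>0$ for every $r>0$.

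\emph{Step 4: the value at $M$.} Here $r=\Lambda_c$ and $\operatorname{Var}_Q\ell=V$. Moreover $\E_Q\ell=\psi(\Lambda_c)$, because $\psi(r)=\E\lc(r+\sqrt r\,g)$. The condition $D_p(\Lambda_c)=0$ reads $\psi(\Lambda_c)=\frac{\Lambda_c}2+\frac{p-1}{2p}\Lambda_c\mu$. Therefore $-2\E_Q\ell+\Lambda_c=-\frac{(p-1)\Lambda_c\mu}p$, which matches $\varphi_{xx}$ in \eqref{eq:nm-const}.

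The only potential obstacle is justifying differentiation in $x$ at the endpoint $x=1$, where the measure degenerates to $\delta_0$. The closed form for $C$ is analytic in $x$ on a neighbourhood of $1$, since $\E\cosh^xY_q<\infty$ for all real $x$. So the computation is legitimate there.
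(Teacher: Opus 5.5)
Your proposal is correct and follows the paper's proof essentially step by step. You take the cumulant generating function $\log\E\cosh^x Y_q$ and move the tilt to $N(r,r)$ by Lemma~\ref{lem:nm-NLid}(c), you differentiate in $r$ by parts (a) and (b), you use Chebyshev's association inequality in $|h|$, and you read off the value at $M$ from the triple-point equation. The differences are cosmetic: the paper writes $\partial_x^2\Par|_{x=1}$ (which equals $\partial_xC$), and it uses $\Phi_{\beta_1}(q_1)=0$ where you use the equivalent $D_p(\Lambda_c)=0$.
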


The proof, by Chebyshev's association inequality, is in Appendix~\ref{app:nm-proofs}.

At $(x,q,\beta)=(1,q_1,\beta_1)$ the Frank--Wolfe derivative of $\delta_0$ is
$D_{\delta_0}=-\Phi_{\beta_1}$, by Lemma~\ref{lem:D}(iii) at $x=1$ and~\eqref{eq:nm-Phiprime}. By
(M1) it is strictly positive on $(0,1]\setminus\{q_1\}$. Its zero at $0$ has order $p$, and its
zero at $q_1$ is quadratic and nondegenerate, since
$\Phi_{\beta_1}''(q_1)=-\frac12\xi_{\beta_1}''(q_1)\lambda<0$ by (M2). The next lemma shows that
this strict certificate persists along stationary one-step measures near $M$. Near $0$ we use a
sign argument, near $q$ the derivative, and in the bulk continuity.

\begin{lemma}[Local sufficiency]\label{lem:nm-LS}
There is $\rho>0$ with the following property. If $|\beta-\beta_1|+|x-1|+|q-q_1|<\rho$,
$0<x<1$, and $(x,q)$ solves~\eqref{eq:nm-15} at $\beta$, then
$F^0(\beta,0)=\Par(x,q;\beta)$ and $x\delta_0+(1-x)\delta_q$ is the Parisi measure at $\beta$
and zero field.
\end{lemma}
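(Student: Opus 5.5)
We will verify the optimality criterion \textup{(P6)}: writing $\mu^*=x\delta_0+(1-x)\delta_q$, it suffices to show $\mu^*(\operatorname{argmin}G_{\mu^*})=1$. Equivalently, by Lemma~\ref{lem:D}(iii), the Frank--Wolfe derivative $D(u)=G_{\mu^*}(u)-\int G_{\mu^*}\,\dd\mu^*$ satisfies $D\ge0$ on $[0,1]$. Then Proposition~\ref{prop:fw} gives $F^0(\beta,0)\ge\Par(\mu^*,0)$. Together with \textup{(P1)} this yields equality, and uniqueness \textup{(P2)} identifies $\mu^*$ as the Parisi measure. Stationarity~\eqref{eq:nm-15} gives $C=0$, so $D(0)=(1-x)C=0$ and $D(q)=-xC=0$. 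Moreover $D'(u)=-\frac12\xi''(u)(\Gamma(u)-u)$ vanishes at $u=q$ because $T=q$. The task is therefore to show $D\ge0$ away from these two forced zeros, uniformly for $(\beta,x,q)$ in a $\rho$-neighbourhood of $(\beta_1,1,q_1)$. At the reference point $D=D_{\delta_0}=-\Phi_{\beta_1}$, which is strictly positive on $(0,1]\setminus\{q_1\}$ by (M1).

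We split $[0,1]$ into three regions.

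\emph{Near $0$, on $[0,u_0']$.} Here we use Lemma~\ref{lem:D}(iv): $\Gamma\le\xi'$, and $\xi'(u)\le u$ on $[0,u_0]$ with $u_0=(2/(p\beta^2))^{1/(p-2)}$ bounded below uniformly near $\beta_1$. So $D$ is nondecreasing there and $D\ge D(0)=0$. This is the sign argument.

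\emph{Near $q$, on $[q-\eta,q+\eta]$.} We Taylor expand around $q$: $D(q)=D'(q)=0$, and
\[
D''(q)=-\tfrac12\xi''(q)\bigl(\Gamma'(q^\pm)-1\bigr).
\]
At the reference point $\Gamma'(q_1)=\xi''_{\beta_1}(q_1)\E\sech^4h_c=1-\lambda<1$ by (M2). We must show $\Gamma$ is $C^1$ on each side of $q$ with one-sided derivatives depending continuously on $(x,q,\beta)$ as $x\to1$. We will use the explicit Gaussian formulas of Lemma~\ref{lem:D}(iii): $\Gamma'$ on each side is a Gaussian integral, continuous in all parameters up to $x=1$, where both sides agree. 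Hence $D''\ge\frac14\xi''(q_1)\lambda>0$ on a fixed neighbourhood $[q-\eta,q+\eta]$ for $\rho$ small, and $D\ge0$ there.

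\emph{The bulk, $[u_0',q-\eta]\cup[q+\eta,1]$.} Here $-\Phi_{\beta_1}\ge c>0$ by (M1) and compactness. We need $\sup_u|D(u)-D_{\delta_0}(u)|\to0$ as $(\beta,x,q)\to(\beta_1,1,q_1)$. This follows from the closed forms in Lemma~\ref{lem:D}(i),(ii), whose ingredients ($A$, $K$, $B$, the $\cosh^{x-1}$ integrals) are jointly continuous in $(x,q,\beta,u)$ up to $x=1$ by dominated convergence. Alternatively it follows from \textup{(P3)}: $\mu^*\Rightarrow\delta_0$, so $\Gamma_{\mu^*}\to\Gamma_{\delta_0}$ uniformly, and $D$ is an integral of $\Gamma$.

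The main obstacle is the matching near $q$: we need the quadratic lower bound with constants uniform as $x\uparrow1$, where the atom at $q$ has vanishing mass and the two one-sided formulas for $\Gamma$ degenerate into the single replica-symmetric one. This is where the persistence argument of Aronow--Lopatto does not apply directly, since at $x=1$ the measure forgets $q$. We would handle it by checking continuity of $\partial_u\Gamma$ from both sides via the formulas of Lemma~\ref{lem:D}(iii) at $x=1$, using Lemma~\ref{lem:nm-NLid}(c) to identify the limit with $\xi''\E\sech^4h$. Finally, we choose $\eta$ from (M2) first, then $u_0'$ and the bulk constant $c$, then $\rho$.
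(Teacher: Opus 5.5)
Your proposal is correct and is essentially the paper's proof. You reduce via Proposition~\ref{prop:fw}, \textup{(P1)} and \textup{(P2)} to $D_{\mu^*}=-\tilde f\ge0$, and then treat three regions: near $0$ with $\Gamma\le\xi'$, near $q$ with (M2) and joint continuity of $\Gamma'$ from the Gaussian formulas of Lemma~\ref{lem:D}(iii), and the bulk with (M1) and uniform continuity. The only difference is cosmetic: near $q$ the paper argues at first order, reading the sign of $\Gamma(s)-s$ off $\Gamma'\le1-\lambda/2$, rather than through $D''$; this spares it the term $-\tfrac12\xi'''(u)(\Gamma(u)-u)$, which your formula for $D''$ drops away from $u=q$ but which is $O(\eta)$ there and hence harmless.
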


\begin{proof}
\emph{Setup.} Let $\Gamma=\Gamma_{\mu^*}$ for $\mu^*=x\delta_0+(1-x)\delta_q$ at $\beta$, and put
$\tilde f(u)=\int_q^u\frac12\xi_\beta''(s)(\Gamma(s)-s)\,\dd s$. By Lemma~\ref{lem:D}(iii),
$D_{\mu^*}=-xC-\tilde f$. Under~\eqref{eq:nm-15}, $C=0$ and $\Gamma(q)=T=q$, so
$D_{\mu^*}=-\tilde f$ and $\tilde f(0)=\tilde f(q)=0$. By Proposition~\ref{prop:fw} and
\textup{(P1)},
$\Par(x,q;\beta)\ge F^0(\beta,0)\ge\Par(x,q;\beta)+\min_u(-\tilde f(u))$, so it suffices to show
$\tilde f\le0$ on $[0,1]$; uniqueness of the minimizer is \textup{(P2)}.

Showing $\tilde f\le0$ is done in Appendix~\ref{app:nm-proofs}, in the three regions described
before the lemma.
\end{proof}

Lemma~\ref{lem:nm-LS} perturbs a strict first-order certificate, as in the persistence argument
of Aronow and Lopatto for nondegenerate finite-step Parisi measures~\cite[Proposition~2.7]%
{AronowLopatto2026}, which also treats the origin, the support points and the remainder
separately. Their hypotheses do not hold here: their class consists of mixed even covariances
with a $q^4$ term at zero field, and they require the first-variation function to vanish exactly
on the support. At $M$, $D_{\delta_0}=-\Phi_{\beta_1}$ also vanishes at $q_1\notin\operatorname{supp}\delta_0$.
Lemma~\ref{lem:nm-LS} works instead along stationary two-atom measures with $x<1$ near $x=1$.

\begin{lemma}[The one-step branch]\label{lem:nm-branch}
The map $\mathcal E(x,q,\beta)=\bigl(C(x,q;\beta),\,T(x,q;\beta)-q\bigr)$ extends real-analytically to a neighbourhood of
$(1,q_1,\beta_1)$ in $\R^3$. There are $\varepsilon_1>0$, a neighbourhood $\mathcal N$ of
$(1,q_1)$ and a real-analytic map $\beta\mapsto(x(\beta),q(\beta))$ on
$(\beta_1-\varepsilon_1,\beta_1+\varepsilon_1)$ such that $(x(\beta),q(\beta))$ is the only zero of
$\mathcal E(\cdot,\cdot,\beta)$ in $\mathcal N$. It satisfies $(x(\beta_1),q(\beta_1))=(1,q_1)$ and
$x'(\beta_1)=-\varphi_{x\beta}/\varphi_{xx}<0$; in particular $0<x(\beta)<1$ for
$\beta\in(\beta_1,\beta_1+\varepsilon_1)$ after shrinking $\varepsilon_1$.
\end{lemma}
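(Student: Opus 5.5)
The plan is to apply the real-analytic implicit function theorem to $\mathcal E$ at $(1,q_1,\beta_1)$. This needs three things: analyticity of $\mathcal E$ near that point, the value $\mathcal E(1,q_1,\beta_1)=0$, and invertibility of the partial Jacobian in $(x,q)$.

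\emph{Analyticity.} Write $Y_q=\sqrt{v}\,g$ with $v=\xi_\beta'(q)=p\beta^2q^{p-1}/2$, which is polynomial in $(q,\beta)$ and bounded away from $0$ near $(q_1,\beta_1)$. Then
\[
A=\E\cosh^x(\sqrt v\,g)=\E\exp\bigl(x\lc(\sqrt v\,g)\bigr).
\]
Since $0\le\lc y\le|y|$, the integrand extends to complex $x$ and complex $v$ near $(1,\Lambda_c)$ with a Gaussian-integrable dominating function. This also covers $v$ complex near $\Lambda_c$: write the expectation as an integral against the density of $N(0,v)$, which is analytic in $v$ for $\operatorname{Re}v>0$, and use $\lc y\le |y|$ for the growth. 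Dominated convergence and Morera's theorem then make $A$, $\E[\cosh^xY_q\lc Y_q]$ and $\E[\cosh^xY_q\tanh^2Y_q]$ analytic in $(x,v)$ near $(1,\Lambda_c)$, including $x>1$. Next, $A$ is real and positive near the base point, and $C=-\log A/x^2+K/x-\theta_\beta(q)/2$ has its only possible singularity at $x=0$. Hence $C$ and $T=\E[\cosh^xY_q\tanh^2Y_q]/A$ extend real-analytically to a neighbourhood of $(1,q_1,\beta_1)$ in $\R^3$.

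\emph{Base point and Jacobian.} At $x=1$ we have $C(1,q;\beta)=\Phi_\beta(q)$ and $T(1,q;\beta)=\Gamma_\beta(q)$. By~\eqref{eq:nm-system} and~\eqref{eq:nm-Phiprime}, $\Phi_{\beta_1}(q_1)=0$ and $\Gamma_{\beta_1}(q_1)=q_1$, so $\mathcal E(1,q_1,\beta_1)=0$. For the partial derivatives I would compute:
\begin{itemize}
\item $\partial_qC(1,q_1;\beta_1)=\Phi_{\beta_1}'(q_1)=0$, by the tangency condition;
\item $\partial_xC(1,q_1;\beta_1)=\varphi_{xx}>0$, by Lemma~\ref{lem:nm-G};
\item $\partial_q(T-q)$ at the base point equals $\Gamma'_{\beta_1}(q_1)-1=\xi''_{\beta_1}(q_1)\mu'(\Lambda_c)-1=\xi''_{\beta_1}(q_1)\E\sech^4h_c-1=-\lambda$. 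This uses $\Gamma_\beta=\mu\circ\xi'_\beta$ and $\mu'=\E\sech^4h$ from Lemma~\ref{lem:nm-NLid}, and $\lambda>0$ by (M2).
\end{itemize}
Because $\partial_qC$ vanishes, the partial Jacobian in $(x,q)$ is triangular with determinant $-\lambda\varphi_{xx}\neq0$, whatever the value of $\partial_xT$. The analytic implicit function theorem then gives $\varepsilon_1$, a neighbourhood $\mathcal N$, and a unique analytic zero $(x(\beta),q(\beta))$ of $\mathcal E(\cdot,\cdot,\beta)$ in $\mathcal N$ with $(x(\beta_1),q(\beta_1))=(1,q_1)$.

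\emph{Slope and sign.} Differentiating $C(x(\beta),q(\beta);\beta)=0$ at $\beta_1$ gives
\[
\varphi_{xx}\,x'(\beta_1)+\partial_qC\cdot q'(\beta_1)+\varphi_{x\beta}=0 .
\]
The middle term vanishes because $\partial_qC=0$ at the base point, so $x'(\beta_1)=-\varphi_{x\beta}/\varphi_{xx}$. This is negative, since $\varphi_{x\beta}=\beta_1\mu^p/2>0$ and $\varphi_{xx}>0$. Since $x(\beta_1)=1$, after shrinking $\varepsilon_1$ we get $x(\beta)\in(0,1)$ for $\beta\in(\beta_1,\beta_1+\varepsilon_1)$.

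I expect the analytic extension across $x=1$ to be the only point needing care. Everything else follows from (M2), Lemma~\ref{lem:nm-G} and the tangency system. For the extension, the growth bound $\lc y\le|y|$ is exactly what makes $\cosh^x$ integrable against a Gaussian for complex $x$ and complex variance near the base point.
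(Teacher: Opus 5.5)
Your proposal is correct and follows the paper's proof essentially step for step. You extend the Gaussian integrals holomorphically across $x=1$ by domination; the paper's bound $e^{2|y|-cy^2}$ on the integrand is exactly your $\lc y\le|y|$ combined with $\operatorname{Re}(1/v)>0$. You then use the lower-triangular Jacobian with determinant $-\varphi_{xx}\lambda$, which comes from $\partial_qC=\Phi_{\beta_1}'(q_1)=0$, Lemma~\ref{lem:nm-G} and (M2), and conclude with the implicit function theorem and $\varphi_{xx}x'(\beta_1)+\varphi_{x\beta}=0$.
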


\begin{proof}
The map $\mathcal E$ extends real-analytically across $x=1$
(Appendix~\ref{app:nm-proofs}). At $(1,q_1,\beta_1)$: $C=\Phi_{\beta_1}(q_1)=0$ and $T-q_1=0$ by~\eqref{eq:nm-system};
$\partial_xC=\varphi_{xx}>0$ (Lemma~\ref{lem:nm-G});
$\partial_qC=\Phi_{\beta_1}'(q_1)=0$; $\partial_q(T-q)=\Gamma_{\beta_1}'(q_1)-1=-\lambda<0$
by (M2); and $\partial_\beta C=\varphi_{x\beta}$. The Jacobian in $(x,q)$ is lower triangular
with determinant $-\varphi_{xx}\lambda\ne0$, and the implicit function theorem gives the branch,
with $\varphi_{xx}x'(\beta_1)+\varphi_{x\beta}=0$.
\end{proof}

\begin{proposition}[The spin-glass free energy to second order]\label{prop:nm-SG}
Let $\hat\varphi_{\rm SG}(\beta)=\Par(x(\beta),q(\beta);\beta)$ on the branch of
Lemma~\ref{lem:nm-branch}. After shrinking $\varepsilon_1$, for $b\in[0,\varepsilon_1)$,
\[
F^0(\beta_1+b,0)=\hat\varphi_{\rm SG}(\beta_1+b),
\]
and for $b\in(0,\varepsilon_1)$ the Parisi measure at $\beta=\beta_1+b$ is
$x(\beta)\delta_0+(1-x(\beta))\delta_{q(\beta)}$ with $0<x(\beta)<1$. The function
$\hat\varphi_{\rm SG}$ is real-analytic, with $\hat\varphi_{\rm SG}(\beta_1)=\varphi_{\rm PM}(\beta_1)$,
$\hat\varphi_{\rm SG}'(\beta_1)=\beta_1/2$ and
$\hat\varphi_{\rm SG}''(\beta_1)=\frac12-\varphi_{x\beta}^2/\varphi_{xx}=\frac12-A_{\rm SG}$.
Hence, uniformly for $b\in[0,\varepsilon_1)$,
\[
F^0(\beta_1+b,0)=\varphi_{\rm PM}(\beta_1+b)-\frac{A_{\rm SG}}2\,b^2+O(b^3).
\]
\end{proposition}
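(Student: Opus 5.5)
The plan has three steps: identify $F^0$ with the branch value using Lemmas~\ref{lem:nm-branch} and~\ref{lem:nm-LS}, then compute the first two derivatives of $\hat\varphi_{\rm SG}$ by an envelope argument in $x$, and finally Taylor expand.

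\emph{Identification with $F^0$.} By Lemma~\ref{lem:nm-branch}, $(x(\beta),q(\beta))$ is real-analytic with $(x,q)(\beta_1)=(1,q_1)$ and $0<x(\beta)<1$ for $\beta\in(\beta_1,\beta_1+\varepsilon_1)$. Shrink $\varepsilon_1$ so that $|\beta-\beta_1|+|x(\beta)-1|+|q(\beta)-q_1|<\rho$, with $\rho$ from Lemma~\ref{lem:nm-LS}. For $b\in(0,\varepsilon_1)$, Lemma~\ref{lem:nm-LS} then applies to the stationary pair $(x(\beta),q(\beta))$. It gives $F^0(\beta,0)=\Par(x(\beta),q(\beta);\beta)=\hat\varphi_{\rm SG}(\beta)$ and identifies the Parisi measure. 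At $b=0$ the measure is $\delta_0$, and $\Par(1,q;\beta_1)=\varphi_{\rm PM}(\beta_1)$. Equality with $F^0(\beta_1,0)$ holds there because $\Phi_{\beta_1}\le0$ (M1), i.e.\ $\beta_1=\beta_p$ is in the annealed region (Remark~\ref{rem:nm-chen}); alternatively, take the limit $b\downarrow0$ using continuity of $F^0$ in $\beta$. Analyticity of $\hat\varphi_{\rm SG}$ follows because $\Par(x,q;\beta)$, given by the closed form before Lemma~\ref{lem:D}, extends analytically across $x=1$, by the same argument as for $\mathcal E$.

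\emph{Derivatives by the envelope.} Write $\hat\varphi(\beta)=\Par(x(\beta),q(\beta);\beta)$. Stationarity $C=0$ and \eqref{eq:nm-stat} with $T=q$ kill the $x$- and $q$-partials along the branch, so $\hat\varphi'(\beta)=\partial_\beta\Par(x(\beta),q(\beta);\beta)$. At $\beta_1$ this equals $\partial_\beta\varphi_{\rm PM}=\beta_1/2$. Differentiating once more,
\[
\hat\varphi''(\beta_1)=\partial_\beta^2\Par+\partial_x\partial_\beta\Par\,x'(\beta_1)+\partial_q\partial_\beta\Par\,q'(\beta_1),
\]
evaluated at $(1,q_1,\beta_1)$. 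Three facts evaluate this. First, $\Par(1,q;\beta)=\varphi_{\rm PM}(\beta)$, so $\partial_\beta^2\Par=\tfrac12$. Second, $\partial_q\partial_\beta\Par=0$ at $x=1$, since $\partial_q\Par$ carries the factor $(1-x)$; the same identity makes the $\partial_q$ terms drop. Third, $\partial_x\partial_\beta\Par=\partial_\beta C=\varphi_{x\beta}$. With $x'(\beta_1)=-\varphi_{x\beta}/\varphi_{xx}$ this gives $\hat\varphi''(\beta_1)=\tfrac12-\varphi_{x\beta}^2/\varphi_{xx}$. Substituting $\varphi_{x\beta}=\beta_1\mu^p/2$ and $\varphi_{xx}$ from Lemma~\ref{lem:nm-G} shows that this equals $\tfrac12-A_{\rm SG}$, by \eqref{eq:nm-A}.

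\emph{Taylor expansion and main obstacle.} Taylor's theorem for the analytic $\hat\varphi_{\rm SG}-\varphi_{\rm PM}$, which vanishes to first order at $\beta_1$ and has second derivative $-A_{\rm SG}$ there, gives the stated expansion uniformly on $[0,\varepsilon_1)$. The delicate point is the second-derivative computation. The mixed partials must be taken for the analytic extension across $x=1$, so one must check that the closed form of $\Par$ and its derivatives extend continuously there, and that $\partial_\beta C(1,q;\beta)=\partial_\beta\Phi_\beta(q)$. I would verify this by writing $A$, $K$ and $C$ as analytic functions of $(x,\xi'_\beta(q))$ and using Lemma~\ref{lem:nm-NLid}(c) at $x=1$.
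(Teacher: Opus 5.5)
Your proposal is correct and follows the paper's proof essentially step for step. You use Lemma~\ref{lem:nm-LS} on the shrunk branch, the envelope identity $\hat\varphi_{\rm SG}'=\partial_\beta\Par$, the same three evaluations at $(1,q_1,\beta_1)$ together with $x'(\beta_1)=-\varphi_{x\beta}/\varphi_{xx}$, and Taylor's formula. The only variation is at $b=0$, where the paper obtains $F^0(\beta_1,0)=\varphi_{\rm PM}(\beta_1)$ from the Frank--Wolfe bound (Proposition~\ref{prop:fw} with $D_{\delta_0}=-\Phi_{\beta_1}\ge0$) plus \textup{(P1)}; your alternatives, Chen's annealed-region theorem or continuity of the convex function $F^0(\cdot,0)$, work equally well.
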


\begin{proof}
Shrink $\varepsilon_1$ so that the branch lies in the $\rho$-neighbourhood of
Lemma~\ref{lem:nm-LS}. For $b\in(0,\varepsilon_1)$ the branch solves~\eqref{eq:nm-15} with
$0<x<1$, so Lemma~\ref{lem:nm-LS} gives the first two claims. For $b=0$,
$D_{\delta_0}=-\Phi_{\beta_1}\ge0$ by (M1), so Proposition~\ref{prop:fw} gives
$F^0(\beta_1,0)\ge\Par(\delta_0,0)=\varphi_{\rm PM}(\beta_1)$, and \textup{(P1)} gives the
reverse inequality. On the branch $\partial_x\Par=C=0$ and $\partial_q\Par=0$
by~\eqref{eq:nm-stat}, so $\hat\varphi_{\rm SG}'=\partial_\beta\Par$ along the branch. Since
$\Par(1,q;\beta)=\varphi_{\rm PM}(\beta)$ for every $q$, at $(1,q_1,\beta_1)$ we have
$\partial_\beta\Par=\beta_1/2$, $\partial_\beta^2\Par=\frac12$ and $\partial_q\partial_\beta\Par=0$,
while $\partial_x\partial_\beta\Par=\varphi_{x\beta}$. Therefore
$\hat\varphi_{\rm SG}''(\beta_1)=\frac12+\varphi_{x\beta}\,x'(\beta_1)=\frac12-\varphi_{x\beta}^2/\varphi_{xx}$.
Shrinking $\varepsilon_1$ once more, so that $\hat\varphi_{\rm SG}'''$ is bounded on
$[\beta_1,\beta_1+\varepsilon_1)$, Taylor's formula for the analytic function
$\hat\varphi_{\rm SG}$, compared with the quadratic $\varphi_{\rm PM}(\beta)=\log2+\beta^2/4$, gives
the expansion, uniformly for $b\in[0,\varepsilon_1)$.
\end{proof}

\subsection{The ferromagnetic side}\label{sec:nm-fm}

For $h\ge0$ and $q\in[0,1]$ the replica-symmetric Parisi functional is
\[
\Par(\delta_q,h)=\log2+\E\lc\bigl(h+\sqrt{\xi_\beta'(q)}\,g\bigr)
+\frac12\bigl(\xi_\beta(1)-\xi_\beta(q)-(1-q)\xi_\beta'(q)\bigr),
\]
and we put $U_{\beta,j_0}(m;h,q)=\beta j_0m^p+\Par(\delta_q,h)-hm$. By Lemma~\ref{lem:bins}
with the trial $\delta_q$ and \textup{(P1)}, for every bin $[m',m'']\subset[0,1]$,
\begin{equation}\label{eq:nm-binU}
\limsup_N\frac1N\E\log Z_{N,[m',m'']}(\beta,j_0)\le U_{\beta,j_0}(m'';h,q)+h\,(m''-m') .
\end{equation}
For odd $p$ and $q\notin Q_p$, \textup{(P1)} at $h\ne0$ uses the positivity principle, with the
deterministic field included as in Appendix~\ref{app:mf-parisi}. For
$q\in Q_p$ it is Guerra's replica-symmetric interpolation~\cite{Guerra2003}: with $\Delta_q$ as in
Section~\ref{sec:nishimori-point} for $\xi_\beta$,
$N^{-1}\E\log\sum_\sigma e^{\beta H^0_N+hNm}=\Par(\delta_q,h)-\frac12\int_0^1
\E\gibbs{\Delta_q(R_{12})}_t\,\dd t+O(1/N)$, and $\Delta_q\ge0$ on $[-1,1]$.

\begin{lemma}[Nishimori-line diagonal]\label{lem:nm-diag}
Let $\beta=2j_0$. For $m\in[0,1]$,
$U_{\beta,j_0}\bigl(m;\xi_\beta'(m),m\bigr)=\log2+j_0^2+\Phi_\beta(m)$.
\end{lemma}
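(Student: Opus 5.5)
The identity is a direct substitution; no probabilistic input is needed beyond the definitions of $\Par(\delta_q,h)$, $\psi$, $\theta$ and $\Phi$ in Sections~\ref{sec:nishimori-point} and~\ref{sec:nm-triple}. Fix $m\in[0,1]$, write $\xi=\xi_\beta$ with $\beta=2j_0$, and put $r=\xi'(m)$, so the trial is $h=r$, $q=m$. I would expand $U_{\beta,j_0}(m;r,m)$ term by term and match it against $\log2+j_0^2+\psi(r)-\frac12(r+\theta(m))$.

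\emph{Step 1: the bias term.} On the Nishimori line $\xi(1)=\beta^2/2=2j_0^2=\beta j_0$, as already noted in Section~\ref{sec:nishimori-point}. Hence $\beta j_0m^p=\xi(1)m^p=\xi(m)$, and also $\frac12\xi(1)=j_0^2$.

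\emph{Step 2: the Gaussian term.} With $h=r$ and $\xi'(q)=\xi'(m)=r$, the term $\E\lc(h+\sqrt{\xi'(q)}\,g)$ is $\E\lc(r+\sqrt r\,g)=\psi(r)$ by definition of $\psi$. This is the only place where the choice $h=\xi'(m)$, $q=m$ matters: it matches the mean and the variance of the Gaussian argument, so that the Nishimori-line law $N(r,r)$ appears.

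\emph{Step 3: collect the remaining terms.} The sum is
\[
\log2+\psi(r)+\xi(m)+\tfrac12\bigl(\xi(1)-\xi(m)-(1-m)r\bigr)-rm .
\]
Using $\frac12\xi(1)=j_0^2$, the remaining terms are $\frac12\xi(m)-\frac12 r-\frac12 mr$. On the other hand, $-\frac12(r+\theta(m))$ with $\theta(m)=m\xi'(m)-\xi(m)=mr-\xi(m)$ equals $-\frac12r-\frac12mr+\frac12\xi(m)$. The two expressions agree, which gives $U_{\beta,j_0}(m;\xi'(m),m)=\log2+j_0^2+\Phi_\beta(m)$.

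The edge case $m=0$ is consistent: there $r=0$, $\psi(0)=\log\cosh 0=0$, and both sides equal $\log2+j_0^2$. No step is a real obstacle. The only points needing care are the identity $\beta j_0=\xi(1)$, which is specific to $\beta=2j_0$, and keeping track of the $-hm$ term, which supplies the $-mr$ that turns $\frac12(1-m)r$ into $\frac12(1+m)r$.
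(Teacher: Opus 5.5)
Your proposal is correct and follows the paper's proof: the same direct substitution, using $\beta j_0m^p=\xi_\beta(m)$ and $\xi_\beta(1)/2=j_0^2$ on the Nishimori line. As in the paper, you identify the Gaussian term as $\psi(\xi_\beta'(m))$ and collect the remaining terms into $-\frac12\bigl(\xi_\beta'(m)+\theta_\beta(m)\bigr)$.
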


\begin{proof}
On the Nishimori line $\beta j_0m^p=\xi_\beta(m)$ and $\xi_\beta(1)/2=j_0^2$. At $h=\xi_\beta'(m)$
and $q=m$ the Gaussian term of $\Par(\delta_m,h)$ is $\psi(\xi_\beta'(m))$, so
$U=\log2+j_0^2+\psi(\xi_\beta'(m))-\frac12\xi_\beta'(m)+[\xi_\beta(m)+\frac12m\xi_\beta'(m)
-\frac12\xi_\beta(m)-m\xi_\beta'(m)]$, and the bracket is $-\theta_\beta(m)/2$.
\end{proof}

Let $S$ be the Hessian of $(h,q)\mapsto\Par(\delta_q,h)$ at $(\Lambda_c,\mu)$ for
$\beta=\beta_1$. By Lemma~\ref{lem:nm-NLid},
\begin{equation}\label{eq:nm-S}
S_{hh}=1-\mu,\qquad S_{hq}=-\Omega(\mu-\tau),\qquad
S_{qq}=\frac\Omega2\bigl(1-\Omega(1-4\mu+3\tau)\bigr),
\end{equation}
and we put $\tilde B''(\mu)=\Omega-S_{qq}/\det S$ when $\det S\ne0$. The per-$p$ conditions used
below are
\begin{enumerate}
\item[(N1)] $\kappa_p>0$, i.e.\ $A_{\rm FM}>A_{\rm SG}$;
\item[(N2)] $\det S\ne0$ and $\tilde B''(\mu)<0$;
\item[(N3)] if $p$ is odd, $p\mu^{p-1}+(p-1)\mu^p>1$, i.e.\ $\mu$ lies in the interior of $Q_p$
(Section~\ref{sec:nishimori-point}).
\end{enumerate}
They hold for every integer $p\ge3$ (Theorem~\ref{thm:nm-nondeg}).

\begin{lemma}[Ferromagnetic bins near $\mu$]\label{lem:nm-FMloc}
Assume \textup{(N2)}. Then \textup{(a)} and \textup{(b)} hold, and \textup{(c)} holds if in
addition \textup{(N3)} holds when $p$ is odd.
\begin{enumerate}
\item[\textup{(a)}] $(\mu,\Lambda_c,\mu)$ is a critical point of $U_{\beta_1,j_{0M}}$, and
$U_{\beta_1,j_{0M}}(\mu;\Lambda_c,\mu)=\varphi_{\rm PM}(\beta_1)$.
\item[\textup{(b)}] There is a neighbourhood $\mathcal V$ of $(\beta_1,j_{0M})$ on which
$U_{\beta,j_0}$ has a unique critical point $z^*(\beta,j_0)$ near $(\mu,\Lambda_c,\mu)$, depending
smoothly on $(\beta,j_0)$. Its value $\hat\varphi_{\rm FM}(\beta,j_0)$ is $C^\infty$ on
$\mathcal V$, and at $(\beta_1,j_{0M})$ one has $\partial_\beta\hat\varphi_{\rm FM}=\beta_1/2$
and $\partial_{j_0}\hat\varphi_{\rm FM}=\beta_1\mu^p$.
\item[\textup{(c)}] There is $r_0>0$ such that, after shrinking $\mathcal V$, for
$(\beta,j_0)\in\mathcal V$ and every bin $[m',m'']\subset[\mu-r_0,\mu+r_0]$,
\[
\limsup_N\frac1N\E\log Z_{N,[m',m'']}(\beta,j_0)\le\hat\varphi_{\rm FM}(\beta,j_0)+2\Lambda_c(m''-m').
\]
\end{enumerate}
\end{lemma}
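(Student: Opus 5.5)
Parts (a) and (b) are local calculus at the point $z_M:=(\mu,\Lambda_c,\mu)$, using the Nishimori-line identities of Lemma~\ref{lem:nm-NLid}. Part (c) comes from maximizing the bin bound~\eqref{eq:nm-binU} over a reduced one-variable function of $m$, whose concavity at $\mu$ is exactly the sign condition in (N2).

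\emph{(a).} Write $U=U_{\beta,j_0}(m;h,q)$. We have $\partial_mU=\beta j_0pm^{p-1}-h$, which at $(\beta_1,j_{0M})$ equals $\xi_{\beta_1}'(m)-h$ because $\beta_1j_{0M}m^p=\xi_{\beta_1}(m)$; it vanishes at $h=\Lambda_c=\xi_{\beta_1}'(\mu)$. Next, $\partial_hU=\E\tanh(h+\sqrt{\xi'(q)}\,g)-m$. At $h=\xi'(q)=\Lambda_c$ the argument has law $N(\Lambda_c,\Lambda_c)$, so this equals $\mu(\Lambda_c)-\mu=0$ by Proposition~\ref{prop:nm-triple}. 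Finally, differentiating the replica-symmetric functional gives $\partial_qU=\frac{\xi''(q)}2\bigl(q-\E\tanh^2(h+\sqrt{\xi'(q)}\,g)\bigr)$, which vanishes at $z_M$ by Lemma~\ref{lem:nm-NLid}(b). The value at $z_M$ is $\log2+j_{0M}^2+\Phi_{\beta_1}(\mu)$ by Lemma~\ref{lem:nm-diag}. Since $\Phi_{\beta_1}(q_1)=0$ and $j_{0M}^2=\beta_1^2/4$, this is $\varphi_{\rm PM}(\beta_1)$.

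\emph{(b).} The Hessian of $U$ in $(m,h,q)$ at $z_M$ has entries $U_{mm}=\xi''_{\beta_1}(\mu)=\Omega$, $U_{mh}=-1$, $U_{mq}=0$, and the $(h,q)$ block is $S$. Expanding along the first row gives
\[
\det=\Omega\det S-S_{qq}=\det S\cdot\tilde B''(\mu),
\]
which is nonzero by (N2). The implicit function theorem, applied to $\nabla U=0$ with $U$ smooth in all variables including $(\beta,j_0)$, gives a unique nearby critical point $z^*(\beta,j_0)$, depending smoothly on $(\beta,j_0)$, with smooth value $\hat\varphi_{\rm FM}$. By the envelope principle, $\partial_{j_0}\hat\varphi_{\rm FM}=\partial_{j_0}U=\beta m^p$, which is $\beta_1\mu^p$ at $M$.

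For the $\beta$-derivative, $\partial_\beta\hat\varphi_{\rm FM}=\partial_\beta U$ at $z_M$. This equals $j_{0M}\mu^p$ plus $\partial_\beta\Par(\delta_q,h)$. In the latter, the Gaussian term contributes $\frac12\E\sech^2h_c\cdot2\Lambda_c/\beta_1=(1-\mu)\Lambda_c/\beta_1$, and the deterministic term contributes $\beta_1^{-1}(\xi(1)-\xi(\mu)-(1-\mu)\Lambda_c)$. Using $\xi(\mu)=\beta_1j_{0M}\mu^p$ and $\xi(1)=\beta_1^2/2$, the sum collapses to $\beta_1/2$.

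\emph{(c).} Let $(\beta,j_0)\in\mathcal V$ and $m$ near $\mu$. Since $\det S\ne0$ and $S$ depends continuously on the parameters, the implicit function theorem gives a smooth critical point $(h(m),q(m))$ of $(h,q)\mapsto U(m;h,q)$ near $(\Lambda_c,\mu)$. Set $\tilde B(m)=U(m;h(m),q(m))$. By the chain rule, $\tilde B'(m)=\partial_mU$, so the critical points of $\tilde B$ are exactly the $m$-components of critical points of $U$. Moreover $\tilde B''$ equals the Schur complement of the $(h,q)$ block in the Hessian of $U$, which at $M$ is $\Omega-S_{qq}/\det S=\tilde B''(\mu)<0$ by (N2). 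Shrinking $\mathcal V$ and choosing $r_0$ small, $\tilde B''<0$ on $[\mu-r_0,\mu+r_0]$ uniformly over $\mathcal V$, and the unique critical point there is the $m$-component of $z^*$. Hence $\tilde B\le\tilde B(m^*)=\hat\varphi_{\rm FM}$ on that interval.

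Now apply~\eqref{eq:nm-binU} with the trial $(h,q)=(h(m''),q(m''))$. The trial is admissible by Guerra's replica-symmetric interpolation: $q(m'')$ is near $\mu$, and for odd $p$, (N3) places $\mu$ in the interior of $Q_p$. This gives the bound $\tilde B(m'')+h(m'')(m''-m')$. Since $h(m'')$ is near $\Lambda_c$, we have $h(m'')\le2\Lambda_c$ after shrinking once more, which yields (c).

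The main obstacle is the uniformity in (c): the reduced function $\tilde B$ must stay strictly concave with a single maximizer, over the whole neighbourhood and interval, with $r_0$ and $\mathcal V$ fixed independently of the bin. I would handle this by one application of the implicit function theorem in the joint variables $(m,\beta,j_0)$ and continuity of $\tilde B''$, followed by a verification that the Schur-complement formula reproduces $\tilde B''(\mu)$ exactly as defined before (N1)--(N3).
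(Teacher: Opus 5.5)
Your proposal is correct and follows the paper's proof essentially step for step. For (a) you use the same first-derivative check and Lemma~\ref{lem:nm-diag}. For (b) you use the same expansion $\Omega\det S-S_{qq}=\det S\cdot\tilde B''(\mu)$ with the implicit function and envelope theorems. For (c) you use the same reduced concave function $\tilde B$, the trial $(h(m''),q(m''))$ in~\eqref{eq:nm-binU} with $h(m'')\le2\Lambda_c$, and (N3) to keep $q(m'')\in Q_p$.

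Two small points are worth stating explicitly, as the paper does. First, fix $r_0$ before shrinking $\mathcal V$, so that $\tilde B'$ changes sign on $[\mu-r_0,\mu+r_0]$; this gives the interior critical point you call the $m$-component of $z^*$. Second, note that $h(m'')>0$, which Lemma~\ref{lem:bins} requires.
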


\begin{proof}
(a) The first derivatives are $\partial_mU=p\beta j_0m^{p-1}-h$,
$\partial_hU=\E\tanh(h+\sqrt{\xi_\beta'(q)}g)-m$ and, by the heat identity,
$\partial_qU=\frac12\xi_\beta''(q)\bigl(q-\E\tanh^2(h+\sqrt{\xi_\beta'(q)}g)\bigr)$. At $M$ and
$(m,h,q)=(\mu,\Lambda_c,\mu)$ they vanish: $p\beta_1j_{0M}\mu^{p-1}=\xi_{\beta_1}'(\mu)=\Lambda_c$,
$\E\tanh h_c=\mu$ by~\eqref{eq:nm-system}, and $\E\tanh^2h_c=\mu$ by
Lemma~\ref{lem:nm-NLid}(b). The value is $\varphi_{\rm PM}(\beta_1)+\Phi_{\beta_1}(\mu)
=\varphi_{\rm PM}(\beta_1)$ by Lemma~\ref{lem:nm-diag}.

(b) and (c) follow from the implicit function theorem at this critical point, which is nondegenerate
by \textup{(N2)}, and the envelope theorem; see Appendix~\ref{app:nm-proofs}.
\end{proof}

\begin{lemma}[Closed forms at the triple point]\label{lem:nm-closed}
At $M$,
\[
\det S=\frac\Omega2\bigl(\lambda B+2(\mu-\tau)\bigr),\qquad
\tilde B''(\mu)=-\frac{\lambda\,(1-\Omega B)}{\lambda B+2(\mu-\tau)} .
\]
Since $\lambda>0$ by (M2), $\mu-\tau=\E[\tanh^2h_c\sech^2h_c]>0$ and $B>0$
(Proposition~\ref{prop:nm-J}), condition \textup{(N2)} is equivalent to $\Omega B<1$. Under
\textup{(N2)}, $A_{\rm FM}$ of~\eqref{eq:nm-A} equals
$\frac12-\partial_\beta^2\hat\varphi_{\rm FM}(\beta_1,j_{0M})$, the derivative taken at fixed
$j_0=j_{0M}$.
\end{lemma}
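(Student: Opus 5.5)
The identities for $\det S$ and $\tilde B''(\mu)$ are pure algebra from \eqref{eq:nm-S}; I will organize it with the shorthand $d:=\mu-\tau$ and two relations. By Lemma~\ref{lem:nm-NLid}(b), $\E\sech^4h_c=1-2\mu+\tau=B+d$, hence
\[
\lambda=1-\Omega(B+d),\qquad 1-\mu=B+2d,\qquad 1-\Omega(1-4\mu+3\tau)=\lambda+2\Omega d .
\]
Then
\[
\det S=S_{hh}S_{qq}-S_{hq}^2=\tfrac\Omega2\bigl[(B+2d)(\lambda+2\Omega d)-2\Omega d^2\bigr].
\]
Expanding the bracket gives $\lambda B+2d(\lambda+\Omega B+\Omega d)$, and $\lambda+\Omega B+\Omega d=1$ by the first relation. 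This yields $\det S=\frac\Omega2(\lambda B+2d)$. Similarly,
\[
\tilde B''(\mu)=\Omega-\frac{\lambda+2\Omega d}{\lambda B+2d}=\frac{\Omega\lambda B-\lambda}{\lambda B+2d}=-\frac{\lambda(1-\Omega B)}{\lambda B+2(\mu-\tau)}.
\]

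For the sign statements, I use Lemma~\ref{lem:nm-NLid}(b) with $F=\tanh^2$ (applied to the odd power, $\E\tanh^3h=\E\tanh^4h$). This gives $\mu-\tau=\E[\tanh^2h_c-\tanh^4h_c]=\E[\tanh^2h_c\sech^2h_c]>0$. I also take $B>0$ from Proposition~\ref{prop:nm-J} and $\lambda>0$ from (M2). The denominator $\lambda B+2(\mu-\tau)$ is then positive and $\Omega>0$, so $\det S\neq0$ automatically. Moreover $\tilde B''(\mu)<0$ holds exactly when $1-\Omega B>0$, which is the claimed equivalence.

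For $A_{\rm FM}$, I will use Lemma~\ref{lem:nm-FMloc}(b): $\hat\varphi_{\rm FM}(\beta,j_{0M})=U_{\beta,j_{0M}}(z^*(\beta))$ with $\partial_zU=0$ along the branch. Differentiating twice gives the standard second-order envelope formula
\[
\partial_\beta^2\hat\varphi_{\rm FM}=U_{\beta\beta}-U_{\beta z}\,(U_{zz})^{-1}\,U_{z\beta},\qquad z=(m,h,q),
\]
evaluated at $(\mu,\Lambda_c,\mu)$. I would first reduce from three variables to the $(h,q)$ block by a Schur complement in $m$. The $m$-row is $U_{mm}=p(p-1)\beta_1j_{0M}\mu^{p-2}$, $U_{mh}=-1$, $U_{mq}=0$, $U_{m\beta}=p j_{0M}\mu^{p-1}=\Lambda_c/\beta_1$. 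The remaining entries are $S$ plus a rank-one correction, which produces the factor $\Omega=\xi''_{\beta_1}(\mu)$ through $p(p-1)\beta_1 j_{0M}\mu^{p-2}=\Omega/2\cdot$(Nishimori-line normalization $\beta_1=2j_{0M}$).

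The $\beta$-derivatives of the Gaussian term come from $\partial_\beta\xi'_\beta=2\xi'_\beta/\beta$ together with Lemma~\ref{lem:nm-NLid}(a),(b). I expect the following: the direct second derivative $U_{\beta\beta}$ contributes $\frac12-\frac{\mu^p}2$, and the correction term, after inversion, contributes $-\frac{\Lambda_c^2}{\beta_1^2}\frac{B}{1-\Omega B}$. That the correction should take this form is suggested by $1-\Omega B$ appearing in $\tilde B''$.

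The main obstacle is this last inversion. The $3\times3$ Hessian and the mixed $\beta$-column must be simplified with the Nishimori identities ($\E\tanh=\E\tanh^2$, $\E\tanh^3=\E\tanh^4$) until everything is expressed in $\mu,\tau,\Lambda_c,\Omega$. I would organize it by solving $U_{zz}w=U_{z\beta}$ directly rather than inverting, eliminating $m$ first and then $q$. Throughout I would check the result against the closed form of $\det S$ already obtained, and against the numerical value $C_{\rm FM}\approx0.198$ for $p=3$.
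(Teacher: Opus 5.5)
Your algebra for $\det S$ and $\tilde B''(\mu)$, with $d=\mu-\tau$, is correct. So is your sign argument showing that \textup{(N2)} is equivalent to $\Omega B<1$. Both match the paper, which only routes $\tilde B''(\mu)$ through $\det\mathbf H=\Omega\det S-S_{qq}=-\frac\Omega2\lambda(1-\Omega B)$; here $\mathbf H$ is the $(m,h,q)$-Hessian of $U$ at $z_0=(\mu,\Lambda_c,\mu)$. Note that $U_{mm}=p(p-1)\beta_1j_{0M}\mu^{p-2}=\xi''_{\beta_1}(\mu)=\Omega$ exactly, not $\Omega/2$ times a factor.

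The gap is the last claim, $A_{\rm FM}=\frac12-\partial_\beta^2\hat\varphi_{\rm FM}$. You pick the paper's tool, the second-order envelope formula, but you stop before the computation that is the proof, and the split you predict is wrong. Write $\xi_\beta=\beta^2\xi_1$. The variance $\xi_\beta'(q)=\beta^2\xi_1'(q)$ in the Gaussian term depends quadratically on $\beta$. In its second $\beta$-derivative, the first-order piece $\xi_1'(\mu)(1-\mu)$ cancels the term $-(1-q)\xi_1'(q)$ of the penalty. What remains is $\frac{\Lambda_c^2}{\beta_1^2}\E(\sech^2)''(h_c)=-\frac{2\Lambda_c^2}{\beta_1^2}(B-d)$, so
\[
U_{\beta\beta}=\tfrac12-\tfrac{\mu^p}2-\tfrac{2\Lambda_c^2}{\beta_1^2}(B-d),\qquad B-d=1-4\mu+3\tau .
\]
The correction $-\mathbf d^\top\mathbf H^{-1}\mathbf d$ is therefore not $-\frac{\Lambda_c^2}{\beta_1^2}\frac{B}{1-\Omega B}$; only the sum of the two terms has that form. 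Reading the pieces off the target formula, or checking $C_{\rm FM}\approx0.198$ numerically, does not prove the identity.

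The missing step is short and needs no matrix inversion. Start from $\partial_\beta U=j_0m^p+\beta[\xi_1(1)-\xi_1(q)+q\xi_1'(q)]-\beta\xi_1'(q)\E\tanh^2(h+\beta\sqrt{\xi_1'(q)}\,g)$. With Lemma~\ref{lem:nm-NLid}(b) this gives $\beta_1\mathbf d=\Lambda_c\mathbf k$ with $\mathbf k=(1,-2d,-\Omega(B-d))^\top$.

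Then check directly that $\mathbf y=-(1-\Omega B)^{-1}(B,1,2B)^\top$ solves $\mathbf H\mathbf y=\mathbf k$; the middle row uses $1-\mu=B+2d$. Hence $\mathbf k^\top\mathbf H^{-1}\mathbf k=\mathbf k^\top\mathbf y=\frac{2d-B+2\Omega B(B-d)}{1-\Omega B}$, and $2(B-d)+\mathbf k^\top\mathbf y=\frac{B}{1-\Omega B}$. This yields $\beta_1^2\bigl(\frac12-\partial_\beta^2\hat\varphi_{\rm FM}\bigr)=\frac{\beta_1^2\mu^p}2+\frac{\Lambda_c^2B}{1-\Omega B}=\beta_1^2A_{\rm FM}$.

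Record two further details. The term $\frac12\xi_\beta'''(q)(q-\E\tanh^2)$ in $U_{qq}$ vanishes at $z_0$. And $\mathbf H$ is invertible under \textup{(N2)}, because $\det\mathbf H=\det S\cdot\tilde B''(\mu)$.
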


The proof, a computation with the Hessian in $(m,h,q)$, is in Appendix~\ref{app:nm-closed}.
Nishimori obtained the same coefficient at replica level~\cite[(44)--(45),
App.~A4]{Nishimori2026}, in the parametrization $h=p\beta j_0m^{p-1}$.

\subsection{Proof of Theorem~\ref{thm:nm-main}}\label{sec:nm-proof}

By Theorem~\ref{thm:nm-nondeg}, (N1)--(N3) hold; in particular $A_{\rm FM}>A_{\rm SG}$.
Throughout, $j_0=j_{0M}+\delta$.

\paragraph{The warm point.} Let $\beta_{\rm w}=2j_0=\beta_1+2\delta$. As computed before
Lemma~\ref{lem:nm-G}, $\partial_\beta\Phi_\beta(\mu)|_{\beta=\beta_1}=\beta_1\mu^p/2>0$, and
$\Phi_{\beta_1}(\mu)=0$, so $\Phi_{\beta_{\rm w}}(\mu)>0$ for small $\delta>0$. Next,
$\lc y\ge|y|-\log2$ and $\E|h|>\E h=r$ give $\psi(r)>r-\log2$, hence
$\Phi_\beta(1)>\beta^2/4-\log2$. By (M1), $\Phi_{\beta_1}(1)<0$, so
$\xi_{\beta_1}(1)=\beta_1^2/2<2\log2<2$, and $\xi_{\beta_{\rm w}}(1)<2$ for small $\delta$.
Since $m_1^{p-2}\le\frac14$ and $\mathcal I(m)\ge m^2/2$,
\[
\xi_{\beta_{\rm w}}(m_1)=\xi_{\beta_{\rm w}}(1)\,m_1^{p-2}m_1^2\le\frac{m_1^2}2\le\mathcal I(m_1).
\]
So predicate (W) of Theorem~\ref{thm:reduction} holds at $j_0$ with $q_0=\mu$, which lies in
$Q_p$ by (N3) for odd $p$ (and $Q_p=[0,1]$ for even $p$), and with $m_1$ as in
Theorem~\ref{thm:nm-main}. Theorem~\ref{thm:reduction}(i) gives part~(i). This part uses no
Parisi input.

\paragraph{The cold point: order of constants.} Part~(ii) requires
$\E\gibbs{\ind\{|m|\ge\varepsilon\}}\to0$ for every $\varepsilon>0$ at one fixed cold point.
We fix the constants in the following order; none of them depends on $\varepsilon$.
\begin{enumerate}
\item Put $m_{\rm lo}=\frac12\beta_1^{-2/(p-2)}$. Then $m_{\rm lo}<q_1=\mu$. Indeed,
$m_{\rm lo}\le u_0(\beta_1)=(2/(p\beta_1^2))^{1/(p-2)}$ because $2^{-(p-2)}\le2/p$. Also the
function $s\mapsto\mu(s)$ of Lemma~\ref{lem:nm-NLid} satisfies $\mu(s)<s$ for $s>0$, so
$q_1=\mu(\Lambda_c)<\Lambda_c=\xi_{\beta_1}'(q_1)$,
i.e.\ $q_1>u_0(\beta_1)$. Fix $r\in(0,r_0]$ with $r<q_1-m_{\rm lo}$, where $r_0$ is as in
Lemma~\ref{lem:nm-FMloc}(c).
\item Put $\eta_r=-\max\{\Phi_{\beta_1}(m):m\in[m_{\rm lo},\mu-r]\cup[\mu+r,1]\}$. Then
$\eta_r>0$ by (M1).
\item Choose $\bar\delta>0$ such that for every $\delta\in(0,\bar\delta)$, with $b=b(\delta)$ and
$\beta=\beta_1+b$:
\begin{itemize}
\item $(\beta,j_0)\in\mathcal V$ (Lemma~\ref{lem:nm-FMloc}) and $b<\varepsilon_1$
(Proposition~\ref{prop:nm-SG});
\item $U_{\beta,j_0}(m;\xi_{\beta_1}'(m),m)\le\varphi_{\rm PM}(\beta_1)-\eta_r/2$ for
$m\in[m_{\rm lo},\mu-r]\cup[\mu+r,1]$, and $F^0(\beta,0)\ge\varphi_{\rm PM}(\beta_1)-\eta_r/4$;
\item the $O(b^3)$ term of Proposition~\ref{prop:nm-SG} is at most $\eta b^2$, where
$\eta=(A_{\rm FM}-A_{\rm SG})/8$;
\item the remainder $O(|b|^3+|b|\delta+\delta^2)$ in~\eqref{eq:nm-FMexp} below is less than
$\beta_1\mu^p\delta/4$;
\item $\beta j_0m_{\rm lo}^{p-2}<\frac12$ (at $M$ this quantity is $2^{-(p-1)}$);
\item $\xi_{2j_0}(1)<2$ and $\Phi_{2j_0}(\mu)>0$ (the warm point);
\item $\delta<\beta_1\mu^p/(A_{\rm FM}-A_{\rm SG})$.
\end{itemize}
Each condition holds for small $\delta$, because $b(\delta)\asymp\delta^{1/2}\to0$ and the
quantities involved are continuous in $(\beta,j_0)$; the second uses the joint continuity of
$(\beta,j_0,m)\mapsto U_{\beta,j_0}(m;\xi_{\beta_1}'(m),m)$ with the trials fixed at $\beta_1$,
Lemma~\ref{lem:nm-diag} at $M$, and continuity of the convex function $F^0(\cdot,0)$.
\item Fix $j_0\in(j_{0M},j_{0M}+\bar\delta)$; this fixes $\delta$, $b$ and the cold point. Now let
$\varepsilon\in(0,m_{\rm lo})$ be arbitrary. It enters only through the bins.
\end{enumerate}
The regions $[\varepsilon,m_{\rm lo}]$, $[m_{\rm lo},\mu-r]\cup[\mu+r,1]$ and
$[\mu-r,\mu+r]$ cover $[\varepsilon,1]$.

\paragraph{The cold point: the bounds.} Let $\beta=\beta_1+b(\delta)$.
\begin{enumerate}
\item[(a)] \emph{Lower bound.} By Lemma~\ref{lem:lower} and Proposition~\ref{prop:nm-SG},
$\liminf_NN^{-1}\E\log Z_N(\beta,j_0)\ge F^0(\beta,0)\ge\varphi_{\rm PM}(\beta)
-(\frac12A_{\rm SG}+\eta)b^2$.
\item[(b)] \emph{Small magnetization.} On bins $[m',m'']\subset[\varepsilon,m_{\rm lo}]$,
Lemma~\ref{lem:universal} gives $F^0(\beta,0)+g(m'')+(m''-m')$ with
$g(m)=\beta j_0m^p-m^2/2\le-m^2(\frac12-\beta j_0m_{\rm lo}^{p-2})$, which is below
$F^0(\beta,0)$ for bins finer than $\min_{[\varepsilon,m_{\rm lo}]}|g|$. For odd $p$, the
same lemma bounds every bin in $[-1,-\varepsilon]$ by $F^0(\beta,0)-\varepsilon^2/2$; for even
$p$ the bounds depend only on $|m|$, so negative bins are handled as positive ones.
\item[(c)] \emph{Intermediate magnetization.} On bins in $[m_{\rm lo},\mu-r]\cup[\mu+r,1]$ use
\eqref{eq:nm-binU} with the Nishimori-line diagonal trials $(h,q)=(\xi_{\beta_1}'(m''),m'')$. At
$M$ the bound is $\varphi_{\rm PM}(\beta_1)+\Phi_{\beta_1}(m'')$ up to the bin slack
(Lemma~\ref{lem:nm-diag}). By the second condition of the order of constants, fine bins are
bounded by $\varphi_{\rm PM}(\beta_1)-\eta_r/3<F^0(\beta,0)$.
\item[(d)] \emph{Near $\mu$.} By Lemma~\ref{lem:nm-FMloc}(c), bins in $[\mu-r,\mu+r]$ are
bounded by $\hat\varphi_{\rm FM}(\beta,j_0)$ plus the slack $2\Lambda_c(m''-m')$. By
Lemma~\ref{lem:nm-FMloc}(b), Lemma~\ref{lem:nm-closed} and Taylor's formula at $M$,
\begin{equation}\label{eq:nm-FMexp}
\hat\varphi_{\rm FM}(\beta,j_{0M}+\delta)=\varphi_{\rm PM}(\beta)-\frac{A_{\rm FM}}2b^2
+\beta_1\mu^p\delta+O(|b|^3+|b|\delta+\delta^2).
\end{equation}
\item[(e)] \emph{Comparison.} Since $b^2=4\beta_1\mu^p\delta/(A_{\rm FM}-A_{\rm SG})$, the lower
bound (a) minus the right side of~\eqref{eq:nm-FMexp} is at least
\[
\Bigl(\frac{A_{\rm FM}-A_{\rm SG}}2-\eta\Bigr)b^2-\beta_1\mu^p\delta-\frac{\beta_1\mu^p\delta}4
=\frac{A_{\rm FM}-A_{\rm SG}}8\,b^2-\frac{\beta_1\mu^p\delta}4=\frac{\beta_1\mu^p\delta}4>0 .
\]
This is where (N1) enters.
\end{enumerate}
With bins of fixed width finer than all these margins, Lemma~\ref{lem:bins} gives
$\E\gibbs{\ind\{|m|\ge\varepsilon\}}\to0$. As $\varepsilon$ is arbitrary, $\E\gibbs{|m|}\to0$,
which is part~(ii). Part~(iii) holds because $b(\delta)>2\delta$ exactly when
$\delta<\beta_1\mu^p/(A_{\rm FM}-A_{\rm SG})$, the last condition on $\bar\delta$.\qed

For odd $p$, step~(b) uses \textup{(P1)} with a field for the zero-field Parisi measure
(Lemma~\ref{lem:universal}), and step~(c) uses it for trial overlaps below $\min Q_p$; for
$p=3$, $Q_3=[\frac12,1]$ while $m_{\rm lo}\approx0.21$. Both rely on the positivity principle.
Step~(d) and the warm point do not. The size of $\bar\delta$ is
not computed.\footnote{For orientation only, a floating-point evaluation at $p=3$ of the step-(e) gap,
computed from the free energies along $b(\delta)$ without the second-order expansions, is
positive on a grid of values $\delta\le10^{-4}$; at $\delta=10^{-4}$, $b(\delta)\approx0.16$.}

\subsection{Nondegeneracy for every \texorpdfstring{$p$}{p}}\label{sec:nm-nondeg}

\begin{theorem}[Nondegeneracy at the triple point]\label{thm:nm-nondeg}
For every integer $p\ge3$, conditions \textup{(N1)}--\textup{(N3)} hold, and $\lambda>0$,
$0<\Omega B<1$ and $\varphi_{xx}>0$. For $3\le p\le25$ the conditions hold with the certified
margins of Table~\ref{tab:nm-cert}. For $p\ge26$ they follow from
Theorem~\ref{thm:nm-LP}.
\end{theorem}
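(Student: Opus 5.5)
The plan splits the argument at $p=26$, as the statement does, and first reduces all conditions to a few scalar functions of $\Lambda_c$. By Proposition~\ref{prop:nm-triple}, $\Lambda_c$ is the unique zero of $D_p$ on $(0,\infty)$, with the sign pattern $D_p<0$ on $(0,\Lambda_c)$ and $D_p>0$ on $(\Lambda_c,\infty)$. Every constant in \eqref{eq:nm-const}--\eqref{eq:nm-A} is then an explicit one-dimensional Gaussian expectation at $h_c\sim N(\Lambda_c,\Lambda_c)$: $\mu$, $\tau$, $V$, $B$ and $\Omega=(p-1)\Lambda_c/\mu$. The same holds for $\beta_1$, since $\beta_1^2=2\Lambda_c/(p\mu^{p-1})$. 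Two facts are already available: $\lambda>0$ is (M2), and $\varphi_{xx}>0$ is Lemma~\ref{lem:nm-G}. By Lemma~\ref{lem:nm-closed}, (N2) reduces to $0<\Omega B<1$, with $B>0$ supplied by Proposition~\ref{prop:nm-J}. (N3) reads $p\mu^{p-1}+(p-1)\mu^p>1$ for odd $p$. So the proof has to establish three scalar inequalities at $\Lambda_c$: $\kappa_p>0$, $\Omega B<1$, and (N3).

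For $3\le p\le25$ I would run interval arithmetic, case by case. First, enclose $\Lambda_c$ in a tight interval: certify that $D_p<0$ at the left endpoint and $D_p>0$ at the right endpoint, which is enough by the sign pattern. Next, enclose each Gaussian expectation by rigorous quadrature with tail bounds, noting that the integrands $\tanh^k$ and $\lc$ are bounded or linearly growing. Then propagate the enclosures into $\kappa_p$, $\Omega B$ and $p\mu^{p-1}+(p-1)\mu^p-1$. Finally, check that each enclosure excludes the critical value, and record the margins in Table~\ref{tab:nm-cert}.

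The hard part of this step is $\kappa_p=\beta_1^2(A_{\rm FM}-A_{\rm SG})$. It is a difference of two quantities that are each exponentially close to $\frac12$, while $\kappa_p$ itself is of order $2^{-p}p^{-1/2}$. Cancellation would destroy naive enclosures. So instead of computing $\kappa_p$ as a difference, I would evaluate it through the exact identity of Proposition~\ref{prop:nm-J}, which writes it as a positive-kernel leading part plus a remainder. I would enclose these two pieces separately, each to relative rather than absolute accuracy.

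For $p\ge26$ the route is Theorem~\ref{thm:nm-LP}. As $p\to\infty$ one has $\Lambda_c\approx2p\log2$ (Proposition~\ref{prop:nm-triple} gives the upper bound $\Lambda_c<2p\log2$), and $h_c$ concentrates at a large positive mean. Consequently $1-\mu$, $\mu-\tau$ and $B$ are all $O(\Lambda_c^{-1/2}e^{-\Lambda_c/2})$-type quantities, and this makes $\Omega B\to0$. I would first derive explicit two-sided bounds on $\Lambda_c$, from the sign of $D_p$ at explicit test points, for instance $2p\log2-c\log p$ and $2p\log2$. From these I would get two-sided bounds on each Gaussian tail moment via $\sech^2 h\le4e^{-2|h|}$ together with Mills-ratio estimates.

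With these bounds the easy conditions follow directly. (N3) holds because $\mu^{p}\ge1-p(1-\mu)$ and $p(1-\mu)$ is exponentially small. $\Omega B<1$ holds because $\Omega B\lesssim p\Lambda_c e^{-c\Lambda_c}$. For (N1), I would bound the negative remainder terms of Proposition~\ref{prop:nm-J} by an explicit constant times $p^{-1}$ multiplied by the leading kernel integral, which has order $2^{-p}p^{-1/2}$. The constants in these bounds would themselves be certified numerically, and only for $p=26$, the worst case if the bounds are monotone in $p$.

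I expect the main obstacle to be this last domination for $p\ge26$: the remainder has to be dominated by the leading part uniformly in $p$. Nishimori's asymptotics give only the leading order, so every error term, including those coming from the uncertainty in $\Lambda_c$, must be made explicit with constants good enough to close already at $p=26$.
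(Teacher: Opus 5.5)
Your proposal is correct and follows the paper's proof. As in the paper, $\lambda>0$ comes from (M2) and $\varphi_{xx}>0$ from Lemma~\ref{lem:nm-G}, (N2) is reduced to $\Omega B<1$ by Lemma~\ref{lem:nm-closed} with $B>0$ from Proposition~\ref{prop:nm-J}, the case $3\le p\le25$ is handled by ball arithmetic at an enclosure of $\Lambda_c$, and the case $p\ge26$ by the explicit bounds of Theorem~\ref{thm:nm-LP}, which dominate the remainder in \eqref{eq:nm-A1} by the positive kernel. The differences are only in detail: for $p\le25$ the paper evaluates $\kappa_p$ directly at $256$ bits, where the cancellation you worry about is harmless (the certified $\kappa_p$ balls have relative radius below $10^{-54}$), and for $p\ge26$ the sign conditions need no two-sided localization of $\Lambda_c$, only $\Lambda_c>35$ (from $D_p(35)<0$) and monotonicity in $\Lambda$, because every bound is normalized by $\nu(\Lambda_c)$; the sharp localization in Theorem~\ref{thm:nm-LP}(a) only feeds the constants in~(d).
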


\begin{proof}
$\lambda>0$ is (M2) and $\varphi_{xx}>0$ is Lemma~\ref{lem:nm-G}. By
Lemma~\ref{lem:nm-closed}, (N2) is equivalent to $\Omega B<1$, and $B>0$ by
Proposition~\ref{prop:nm-J}. For $3\le p\le25$, the zero $\Lambda_c$ of $D_p$ is enclosed in
Arb ball arithmetic, and $\kappa_p$, $\lambda$, $\varphi_{xx}$, $\det S$, $\tilde B''(\mu)$ and
$p\mu^{p-1}+(p-1)\mu^p-1$ are evaluated at the enclosure with the signs of
Table~\ref{tab:nm-cert} (Appendix~\ref{app:nm-cert}). For $p\ge26$ use
Theorem~\ref{thm:nm-LP}(b)--(d).
\end{proof}

For $p\ge26$ no quadrature is needed. Everything is expressed through Gaussian averages at the
single point $\Lambda=\Lambda_c\approx2p\log2$. There $1-\mu$, $B$ and $\kappa_p$ are governed by
the values of $h\sim N(\Lambda,\Lambda)$ of order one, which have probability of order
$\nu(\Lambda_c)\approx1/N_p$, where
\[
\nu(\Lambda)=\frac{e^{-\Lambda/2}}{\sqrt{2\pi\Lambda}},\qquad N_p=2^p(4\pi p\log2)^{1/2}.
\]
We also put $L(h)=\log(1+e^{-2h})$, so that $\lc h=h+L(h)-\log2$.
Writing $T_1=\Lambda_c\mu/p$ and $X=V-\Lambda_c\mu$, so that $\varphi_{xx}=T_1+X$, the
definitions~\eqref{eq:nm-A} give $\beta_1^2A_{\rm SG}=T_1^2/(T_1+X)$ and
$\beta_1^2A_{\rm FM}=T_1+\Lambda_c^2B/(1-\Omega B)$, hence
\begin{equation}\label{eq:nm-A1}
\kappa_p=\frac{T_1X}{T_1+X}+\frac{\Lambda_c^2B}{1-\Omega B}
=\kappa_0-\frac{X^2}{\varphi_{xx}}+\frac{\Lambda_c^2B\,\Omega B}{1-\Omega B},
\qquad \kappa_0=X+\Lambda_c^2B .
\end{equation}
Here $X$ and $\Lambda_c^2B$ are each of order $\Lambda_c\nu(\Lambda_c)$, with opposite signs, while
$\kappa_p$ is of order $\nu(\Lambda_c)$. The next identity performs the cancellation exactly.

\begin{proposition}[A positive kernel]\label{prop:nm-J}
Let $\Lambda>0$, $h\sim N(\Lambda,\Lambda)$, and let $\mu,\tau,V,B$ be defined by the
formulas~\eqref{eq:nm-const} with $h$ in place of $h_c$. Then
\[
V-\Lambda\mu+\Lambda^2B=\nu(\Lambda)\int_\R e^{-h^2/(2\Lambda)}J(h)\,\dd h-\bigl(\E L(h)\bigr)^2,
\]
where
\[
J(h)=\tfrac12\bigl(e^hL(h)^2+e^{-h}L(-h)^2\bigr)-h^2\sech h .
\]
The function $J$ is even, and for $h\ge0$
\[
J(h)=h^2e^{-2h}\sech h+2he^{-h}L(h)+\cosh(h)\,L(h)^2>0 .
\]
Moreover $B=(\nu(\Lambda)/\Lambda)\int e^{-h^2/(2\Lambda)}h\tanh h\sech h\,\dd h>0$,
$0<\E L(h)\le\pi\nu(\Lambda)$, and
\[
c_0:=\int_\R J=4\pi\log2-\frac{\pi^3}4=0.95877519\ldots
\]
\end{proposition}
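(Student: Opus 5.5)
The plan is to turn every Gaussian average into a Lebesgue integral against $e^{-h^2/(2\Lambda)}$, using the explicit density of $h\sim N(\Lambda,\Lambda)$:
\[
\frac{e^{-(h-\Lambda)^2/(2\Lambda)}}{\sqrt{2\pi\Lambda}}=\nu(\Lambda)\,e^{-h^2/(2\Lambda)}\,e^{h},
\qquad\text{so}\qquad
\E f(h)=\nu(\Lambda)\int_\R e^{-h^2/(2\Lambda)}e^hf(h)\,\dd h .
\]
This is Lemma~\ref{lem:nm-NLid}(c) in another form. Since $e^{-h^2/(2\Lambda)}$ is even, only the even part of $e^hf(h)$ matters. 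The reason this produces a factor $\nu(\Lambda)\approx 1/N_p$ is that the quantities involved are "one minus something". For instance $e^h(1-\tanh h)=\sech h$, so
\[
1-\mu=\nu(\Lambda)\int e^{-h^2/(2\Lambda)}\sech h\,\dd h .
\]
For $B$, I would use Lemma~\ref{lem:nm-NLid}(b) to write $B=\E\bigl[(1-\tanh h)^2(1+2\tanh h)\bigr]$ and multiply the integrand by $e^h$. Then I would write the even part as an $h$-derivative and integrate by parts once against $e^{-h^2/(2\Lambda)}$; the factor $-h/\Lambda$ this produces gives $B=(\nu/\Lambda)\int e^{-h^2/(2\Lambda)}h\tanh h\sech h\,\dd h$. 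The integrand is even and positive, so $B>0$.

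For the main identity I would write $\lc h=h+L(h)-\log2$, so that
\[
V=\operatorname{Var}(h+L)=\Lambda+2\E[hL]-2\Lambda\E L+\E L^2-(\E L)^2 .
\]
The terms of order $\Lambda$ and $\Lambda^2$ must cancel against $-\Lambda\mu+\Lambda^2B$. To make this happen I would again use Gaussian integration by parts, $\E[hf]=\Lambda\E f+\Lambda\E f'$, or equivalently the $\nu$-integral representation with $\partial_h e^{-h^2/(2\Lambda)}=-(h/\Lambda)e^{-h^2/(2\Lambda)}$. The aim is to put $\Lambda-\Lambda\mu$, $2\E[hL]-2\Lambda\E L$ and $\Lambda^2B$ all in the form $\nu\int e^{-h^2/(2\Lambda)}(\cdot)$ with $\Lambda$-free integrands. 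After symmetrizing $e^hL(h)^2$ and the $h$-weighted terms, the even part should collapse to $J$.

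This bookkeeping is the step I expect to be the main obstacle: getting the cancellations exactly right, including the boundary terms in the integrations by parts. I would first check it numerically at a single value of $\Lambda$ before writing out the algebra.

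The remaining claims are elementary.
\begin{itemize}
\item \emph{$J$ even and the formula for $h\ge0$.} Evenness is immediate from the definition. For $h\ge0$, use $L(-h)=2h+L(h)$:
\[
\tfrac12e^{-h}L(-h)^2=e^{-h}\bigl(2h^2+2hL+\tfrac12L^2\bigr),
\qquad
2e^{-h}-\sech h=\frac{e^{-2h}}{\cosh h}.
\]
These give the displayed formula, and each of its three terms is positive for $h>0$.
\item \emph{Bounds on $\E L$.} Positivity is clear because $L>0$. For the upper bound, $e^{-h^2/(2\Lambda)}\le1$ gives $\E L\le\nu\int_\R e^hL(h)\,\dd h$. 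The substitution $y=e^h$ turns this integral into $\int_0^\infty\log(1+y^{-2})\,\dd y=\pi$.
\item \emph{The constant $c_0$.} Split $\int_\R J$ into two pieces. With $y=e^h$, the first is
\[
\tfrac12\int\bigl(e^hL(h)^2+e^{-h}L(-h)^2\bigr)\dd h=\int_0^\infty\log^2(1+y^{-2})\,\dd y=4\pi\log2,
\]
which follows from standard integrals. The second is the classical value $\int h^2\sech h\,\dd h=\pi^3/4$.
\end{itemize}
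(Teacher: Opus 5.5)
Everything except the main identity is correct and follows the paper. This includes the representation $\E f(h)=\nu(\Lambda)\int e^{-h^2/(2\Lambda)}e^hf(h)\,\dd h$, the formula for $B$ via $(\sech h\tanh h)'=2\sech^3h-\sech h$, the explicit form of $J$ on $[0,\infty)$, and $c_0$. For positivity at $h=0$, use the third term $\cosh(h)L(h)^2$, which is positive for every $h\ge0$, so $J(0)=(\log2)^2$. Your bound $\E L\le\nu\int_\R e^hL=\pi\nu$ is a one-integral version of the paper's split into $\E|h|-\Lambda\le2\nu$ and $\E r(h)\le(\pi-2)\nu$.

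The gap is the main identity, which is the content of the proposition. You state an aim and propose to check it numerically, but you never exhibit the cancellation. Your description of it is also slightly off in two ways.

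First, the $h$-weighted term does not survive into $J$. Your Stein identity with $f=L$ and $L'=\tanh-1$ gives $\operatorname{Cov}(h,L)=\Lambda\E L'=-\Lambda(1-\mu)$. Hence $V-\Lambda\mu=\operatorname{Var}L-\Lambda(1-\mu)$, and $\E[hL]$ is gone entirely.

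Second, the order-$\Lambda$ and order-$\Lambda^2$ terms do not cancel; they combine. Use your own representations $1-\mu=\nu\int e^{-h^2/(2\Lambda)}\sech h\,\dd h$ and $\Lambda B=\nu\int e^{-h^2/(2\Lambda)}h\tanh h\sech h\,\dd h$, and integrate by parts once more with the antiderivative $h\sech h$:
\[
\Lambda^2B-\Lambda(1-\mu)=-\Lambda\nu\int_\R e^{-h^2/(2\Lambda)}\bigl(h\sech h\bigr)'\,\dd h=-\nu\int_\R e^{-h^2/(2\Lambda)}h^2\sech h\,\dd h=-\E\bigl[h^2\sech^2h\bigr].
\]
The paper obtains the same identity $\Lambda^2B=\Lambda(1-\mu)-\E[h^2\sech^2h]$ in a different way: it applies the iterated Stein identity to $g=\sech^2$ and uses a polynomial identity in $\tanh h$ together with Lemma~\ref{lem:nm-NLid}(b).

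Adding the two steps gives $V-\Lambda\mu+\Lambda^2B=\E L^2-\E[h^2\sech^2h]-(\E L)^2$. Finally, $\E L^2=\nu\int e^{-h^2/(2\Lambda)}\tfrac12\bigl(e^hL(h)^2+e^{-h}L(-h)^2\bigr)\dd h$, and subtracting $h^2\sech h$ from this even part gives exactly $J$. Without these steps your proposal establishes the auxiliary claims but not the displayed identity.
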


The proof, by two Gaussian integrations by parts, is in Appendix~\ref{app:nm-J}. Nishimori
tracks the same two cancellations by asymptotic expansion and obtains the leading
term~\cite[(A118)--(A126)]{Nishimori2026}, and $c_0$ is his coefficient: he derives
$C_{\rm SG}-C_{\rm FM}\simeq(4\pi\log2-\pi^3/4)\,\nu(\Lambda_c)$ with a relative error
$O(\log\Lambda_c/\Lambda_c)$~\cite[(56), (A98)]{Nishimori2026}, and notes that he derived no
bound on that error and no explicit threshold for large $p$~\cite[p.~40]{Nishimori2026}. The
kernel identity turns his asymptotics into bounds with explicit constants.

\begin{theorem}[Large $p$]\label{thm:nm-LP}
For every integer $p\ge26$:
\begin{enumerate}
\item[\textup{(a)}] $0<2p\log2-\Lambda_c\le4.97\times10^{-6}$;
\item[\textup{(b)}] $\lambda\ge1-2.4\times10^{-6}$ and $0<\Omega B\le1.35\times10^{-7}$;
\item[\textup{(c)}] $p\mu^{p-1}+(p-1)\mu^p-1\ge(2p-2)(1-1.35\times10^{-7})$;
\item[\textup{(d)}] $0.94616\le N_p\kappa_p\le0.96105$;
\item[\textup{(e)}] $\varphi_{xx}\ge1.38629$ and $\beta_1^2/2<2\log2$.
\end{enumerate}
With the Arb enclosures of $c_0$ and $c_2$ of Appendix~\ref{app:nm-cert}, \textup{(d)} sharpens
to $0.94844\le N_p\kappa_p\le0.95880$. Moreover
\[
\frac{\kappa_p}{\nu(\Lambda_c)}=c_0-\frac{c_2}{2\Lambda_c}+\vartheta_p,\qquad
-R_-(\Lambda_c)\le\vartheta_p\le\frac{c_4}{8\Lambda_c^2}+R_+(\Lambda_c),
\]
where $c_k=\int_\R h^kJ(h)\,\dd h$ (so that $c_0$ is as above), $c_2=0.72233912\ldots$ and
$c_4=2.02594193\ldots$, and
$R_\pm(\Lambda)=O(\Lambda^{3/2}e^{-\Lambda/2})$ are explicit and at most $1.5\times10^{-5}$ for
$\Lambda\ge35$ (Appendix~\ref{app:nm-LP}). In particular $\vartheta_p=O(p^{-2})$.
\end{theorem}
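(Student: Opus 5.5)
The plan is to work entirely at the single point $\Lambda=\Lambda_c$, close to $2p\log2$, and express every quantity through Gaussian averages of $h\sim N(\Lambda,\Lambda)$. The small parameter is the weight $\nu(\Lambda)$ of the region where $h=O(1)$. The first step is (a). Since $D_p<0$ on $(0,\Lambda_c)$ and $D_p>0$ on $(\Lambda_c,\infty)$ (Proposition~\ref{prop:nm-triple}), it suffices to show $D_p(\Lambda)>0$ at $\Lambda=2p\log2$ and $D_p(\Lambda)<0$ at $\Lambda=2p\log2-4.97\times10^{-6}$. We write $\psi(\Lambda)=\E\lc h=\Lambda-\log2+\E L(h)$ and $\mu=1-\E[1-\tanh h]$, and bound both $\E L(h)$ and $1-\mu$ between explicit multiples of $\nu(\Lambda)$. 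For the upper bound on $\E L$ we use the Gaussian identity of Lemma~\ref{lem:nm-NLid}(c) together with $\E L\le\pi\nu$ from Proposition~\ref{prop:nm-J}; the lower bounds come from restricting to $|h|\le1$. Then
\[
D_p=\frac\Lambda2-\log2+\E L-\frac{p-1}{2p}\Lambda\mu
=\frac{\Lambda}{2p}-\log2+O(\Lambda\nu),
\]
so $\Lambda_c=2p\log2-O(p^2\nu)$. Because $\nu(\Lambda_c)\approx1/N_p$ is astronomically small for $p\ge26$, the constant comes out far below $4.97\times10^{-6}$. Monotonicity of all the error bounds in $\Lambda$ then lets us reduce to the worst case $p=26$, which is certified in interval arithmetic.

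Parts (b), (c) and (e) follow from the same tail estimates. For (b), we have $1-\lambda=\Omega\,\E\sech^4h_c$ with $\Omega=(p-1)\Lambda_c/\mu$. Writing $B=1-3\mu+2\tau=\E[(1-\tanh h)^2(1+2\tanh h)]$, Proposition~\ref{prop:nm-J} gives $B$ as $\nu/\Lambda$ times an integral of $h\tanh h\sech h\,e^{-h^2/2\Lambda}$. Hence $\Omega B\le(p-1)c\,\nu/\mu$ for an explicit constant $c$, and similarly $\E\sech^4h\le c'\nu$ by (c) of Lemma~\ref{lem:nm-NLid}. Both are tiny multiples of $p\,2^{-p}$. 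For (c), since $1-\mu=O(\nu)$, Bernoulli's inequality gives $\mu^{p-1}\ge1-(p-1)(1-\mu)$, and the stated form follows. For (e), the bound $\beta_1^2/2<2\log2$ is already in the proof of Theorem~\ref{thm:nm-main}, and $\varphi_{xx}=T_1+X$ with $T_1=\Lambda_c\mu/p\approx2\log2=1.3863$ and $X=O(\Lambda\nu)$.

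The heart of the proof is (d) and the asymptotic formula, and this is where I expect the main obstacle. We start from \eqref{eq:nm-A1} and Proposition~\ref{prop:nm-J}:
\[
\kappa_p=\nu\int e^{-h^2/(2\Lambda)}J(h)\,\dd h-(\E L)^2-\frac{X^2}{\varphi_{xx}}+\frac{\Lambda^2B\,\Omega B}{1-\Omega B}.
\]
The last three terms are $O(\Lambda^2\nu^2)$ or $O(p\Lambda^2\nu^2)$, so relative to $\nu$ they are $O(\Lambda^{3/2}e^{-\Lambda/2})$; these make up $R_\pm$. For the main integral, we expand $e^{-h^2/(2\Lambda)}$ using
\[
1-\frac{h^2}{2\Lambda}\le e^{-h^2/(2\Lambda)}\le1-\frac{h^2}{2\Lambda}+\frac{h^4}{8\Lambda^2},
\]
which holds because $J>0$ and $J$ decays like $h^2e^{-|h|}$. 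This yields $c_0-c_2/(2\Lambda)$ with the one-sided error $c_4/(8\Lambda^2)$. The constants $c_0$ (in closed form, via dilogarithm integrals $\int L^2$), $c_2$ and $c_4$ are enclosed in Arb.

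Finally, $N_p\kappa_p=(N_p\nu(\Lambda_c))(c_0-c_2/2\Lambda_c+\vartheta_p)$. Here $N_p\nu(\Lambda_c)=e^{(2p\log2-\Lambda_c)/2}\sqrt{2p\log2/\Lambda_c}$, which lies in $[1,1+10^{-5}]$ by (a). The bracket is monotone in $\Lambda_c\ge2\cdot26\log2-10^{-5}\approx36$, so the extreme values of the bounds occur at $p=26$ and as $p\to\infty$, giving the interval in (d).

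The delicate part is to make every remainder (the tails of $h$ outside the window where $e^{-h^2/2\Lambda}$ is expanded, and the bounds on $\E L$ and $X$) explicit with constants small enough at $\Lambda=36$. To do this, I would write each remainder as $\nu$ times an integral with a positive integrand and bound it by elementary majorants such as $L(h)\le e^{-2h}$ and $\sech h\le2e^{-|h|}$. The few numerical constants that remain are then certified with interval arithmetic.
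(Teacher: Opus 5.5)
This is essentially the paper's proof: the same bounds $\E L,\,1-\mu\le\pi\nu$, $\E\sech^4h\le\frac\pi2\nu$ and $B\le\pi\nu/\Lambda$, the decomposition \eqref{eq:nm-A1} with the kernel identity of Proposition~\ref{prop:nm-J}, the bounds $1-y\le e^{-y}\le1-y+y^2/2$ integrated against $J\ge0$, and monotonicity in $\Lambda$ to reduce to the smallest admissible $\Lambda_c$. The deviations are minor: you locate $\Lambda_c$ by the sign of $D_p$ at $2p\log2-4.97\times10^{-6}$, whereas the paper first proves $\Lambda_c>35$ from $Q(35)<p$; and you correctly note that $\beta_1^2/2<2\log2$ already follows for every $p$ from (M1), as in the warm-point step of Theorem~\ref{thm:nm-main}, whereas the paper re-derives it from $\beta_1^2/2=T_1\mu^{-p}$.
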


\begin{remark}[Scope and relation to other work]\label{rem:nm-scope}
Theorems~\ref{thm:nm-main} and~\ref{thm:nm-nondeg} settle the sign of $\kappa_p$ and the
two-point reentrance for every integer $p\ge3$. Nishimori's open problem~\cite[p.~19]{Nishimori2026}
concerns the sign of $C_{\rm SG}-C_{\rm FM}$ for all real $p>2$, including $p\in(2,3)$ and the limit
$p\to2^+$, and it is posed for the curvature of a boundary curve. We treat neither non-integer $p$
nor the curve, and $\bar\delta$ is not explicit, so that problem is not resolved here. Lemma~\ref{lem:nm-U} holds at
every level in $(0,1)$, so Proposition~\ref{prop:nm-triple} extends to real $p>2$; we do not use
this. H.-B.~Chen and Issa recently proved, in a preprint, a Parisi-type formula and a
large-deviation principle for the magnetization of vector spin glasses with a general Mattis
interaction whose spin-glass part satisfies a convexity
assumption~\cite[Theorems~1.2 and~1.4]{ChenIssa2026}. As far as we can see, these results apply to
the present model and reduce the question to a variational problem in the magnetization; our
argument does not use them and can be read as an explicit analysis of that problem near the
triple point. We are not aware of an earlier rigorous proof of reentrance in
this model.
\end{remark}

\section{Explicit instances for \texorpdfstring{$p=3$}{p=3} and \texorpdfstring{$p=4$}{p=4}}
\label{sec:certificates}

Certifying the predicates of Theorem~\ref{thm:reduction} for $p=3$ and $p=4$, at biases just
larger than the triple-point bias and at cold temperatures far below the triple point, gives the
following explicit instances.

\begin{theorem}[$p=3$]\label{thm:p3}
Let $p=3$ and $j_0=3841/5000$. At the Nishimori point $\beta_{\rm w}=2j_0=3841/2500$
($T_{\rm w}=1/(2j_0)\approx0.65087$), $\E\gibbs{\ind\{m\le11/25\}}\to0$ exponentially fast; in particular
$\E\gibbs{\ind\{m>11/25\}}\to1$. At $\beta_{\rm cold}=5/2$ ($T=0.4$), $\E\gibbs{|m|}\to0$.
\end{theorem}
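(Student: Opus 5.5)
The plan is to apply Theorem~\ref{thm:reduction} with $p=3$, $j_0=3841/5000$, $\beta_{\rm w}=3841/2500$ and $\beta_{\rm cold}=5/2$. Everything then reduces to certifying the three finite predicates (W), (F) and (L) in interval arithmetic. Since $p$ is odd, part~(i) with $m_1=11/25$ gives $\E\gibbs{\ind\{m\le 11/25\}}\le e^{-\eta_{\rm w}N}$, which is the exponential statement at the warm point. Part~(ii) gives $\E\gibbs{|m|}\to0$ at $T=0.4$.

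\emph{Predicate (W).} First check $m_1=11/25\le(1-2/3)^{1/2}\approx0.577$. Next choose $q_0\in Q_3=[1/2,1]$ near the maximizer of $\Phi$, which should lie close to the triple-point overlap, roughly $0.81$. Then enclose $\Phi(q_0)=\psi(\xi'(q_0))-\tfrac12(\xi'(q_0)+2\xi(q_0))$ and $\xi(m_1)-\mathcal I(m_1)$, with $\xi(q)=\beta_{\rm w}^2q^3/2$. The only non-elementary ingredient is the one-dimensional Gaussian integral $\psi(r)=\E\lc(r+\sqrt r g)$. I would enclose it by truncating the Gaussian at a few tens of standard deviations, with an explicit tail bound, and applying validated quadrature to the rest. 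The bias exceeds $j_{0M}\approx0.767595$ only by about $6\times10^{-4}$, so $\Phi(q_0)$ will be tiny (the stated margin is about $1.6\times10^{-4}$). It therefore needs tight enclosures, but it is a single number.

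\emph{Predicate (F).} Choose an explicit one-step trial $\mu^*=x\delta_0+(1-x)\delta_q$ at $\beta_{\rm cold}=5/2$. I would obtain it by numerically solving the stationarity system~\eqref{eq:nm-15}, then rounding to rationals. Then set $L_F=\Par(\mu^*,0)+\min_{u}D_{\mu^*}(u)$ minus rounding error, as in Proposition~\ref{prop:fw}. Evaluating $\Par(\mu^*,0)$ needs the closed form through $A$ and $K$. Evaluating $D$ uses Lemma~\ref{lem:D}(i)--(ii), with one- and two-dimensional Gaussian integrals.

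The minimum over $u\in[0,1]$ requires a rigorous cover. On $[0,u_0]$, Lemma~\ref{lem:D}(iv) says $D$ is nondecreasing, so there the minimum is $D(0)=(1-x)C$. On the rest of the interval I would subdivide $u$ and bound $D$ on each cell from its value at an endpoint plus the derivative bound $|D'|=\tfrac12\xi''|\Gamma-u|\le\tfrac12\xi''\max(\xi',1)$, refining cells near the near-zeros at $u=q$.

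\emph{Predicate (L).} First pick $m_{\rm lo}$ with $g(m)=\tfrac52 j_0m^3-m^2/2<0$ on $(0,m_{\rm lo}]$. This holds for $m<1/(5j_0)\approx0.26$, so take $m_{\rm lo}=1/4$. Then cover $[1/4,1]$ by finitely many intervals $[a_i,b_i]$. On each interval choose a replica-symmetric trial $\delta_{q_i}$, or a one-step trial near the spin-glass/ferromagnetic competition region around $m\approx0.8$, together with a field $h_i\approx\tfrac{15}{2}j_0m^2$, the stationary value. Then verify $U_i<L_F$ at both endpoints. Two facts make endpoint checks sufficient: $U_i$ is affine in $m$ apart from the convex term $\beta j_0m^3$, hence convex on $[a_i,b_i]$, and the bin slack is absorbed as in Lemma~\ref{lem:bins}. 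One caveat: trials with $q_i<1/2$ fall outside $Q_3$, and there (P1) relies on the positivity principle, which the earlier results supply.

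\emph{Main obstacle.} I expect the hardest step to be (L) near the magnetization where the ferromagnetic replica-symmetric bound comes closest to $L_F$. The cold point lies deep in the spin-glass phase, so replica-symmetric trials in a field may be too loose there. If they fail, I would try one-step trials in a field, whose Parisi functional is a nested Gaussian integral, and adaptive refinement of both the bins and the trial parameters. The last safeguard is an independent reimplementation, as the paper does.
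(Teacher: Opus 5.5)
Your proposal is correct and follows the paper's route: Theorem~\ref{thm:p3} is obtained from Theorem~\ref{thm:reduction} by certifying (W), (F) and (L) in interval arithmetic. Your choices are essentially the paper's ($q_0$ near the triple-point overlap, $m_{\rm lo}=1/4$, a near-stationary two-atom trial for (F), and endpoint checks of the convex $U_i$ on a finite cover of $[1/4,1]$). The one difference is in bounding $\min_uD$ on cells: the paper uses the monotonicity of $\Gamma$, as in \eqref{eq:cell}, whereas you use a uniform bound $\tfrac12\xi''\max(\xi',1)$ on $|D'|$; yours is valid but loose (about $70$ near $u=q$ at $\beta_{\rm cold}=5/2$), so it would need many more two-dimensional integrals than the paper's $73$ grid points.
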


\begin{theorem}[$p=4$]\label{thm:p4}
Let $p=4$ and $j_0=8107/10000$. At the Nishimori point $\beta_{\rm w}=2j_0=8107/5000$
($T_{\rm w}=1/(2j_0)\approx0.61675$), $\E\gibbs{\ind\{|m|\le16/25\}}\to0$ exponentially fast; in
particular $\E\gibbs{\ind\{|m|>16/25\}}\to1$. At $\beta_{\rm cold}=10/3$ ($T=0.3$), $\E\gibbs{|m|}\to0$.
\end{theorem}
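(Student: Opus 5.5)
The statement to prove is Theorem~\ref{thm:p4}. It is an instance of Theorem~\ref{thm:reduction} with $p=4$, $j_0=8107/10000$, $\beta_{\rm w}=2j_0$ and $\beta_{\rm cold}=10/3$. The plan is to exhibit explicit data for the three predicates (W), (F), (L) and to certify each resulting finite inequality in interval arithmetic. For $p=4$ the function $\xi$ is even, so $Q_4=[0,1]$ and $\mathcal M_1=[-m_1,m_1]$. Part~(i) of Theorem~\ref{thm:reduction} then gives $\E\gibbs{\ind\{|m|\le16/25\}}\le e^{-\eta_{\rm w}N}$. Part~(ii) gives $\E\gibbs{|m|}\to0$ at $T=0.3$.

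\emph{Warm point.} Take $m_1=16/25$. This is admissible since $(1-2/p)^{1/2}=2^{-1/2}>0.64$. Choose $q_0$ near the maximizer of $\Phi$, which lies near the triple-point overlap $\mu\approx0.948$. Because $j_0$ exceeds $j_{0M}\approx0.810526$ only by about $1.7\times10^{-4}$, $\Phi(q_0)$ is a small positive number of order $10^{-4}$. So the evaluation of $\psi(\xi'(q_0))=\E\lc(r+\sqrt r g)$ must be rigorous: I would compute this one-dimensional Gaussian integral by validated quadrature with tail bounds. One also needs $\xi(m_1)-\mathcal I(m_1)<0$ with $\xi(q)=2j_0^2q^4$. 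This is elementary, since $2j_0^2m_1^4\approx0.22$ while $\mathcal I(0.64)\approx0.23$; the margin is thin, so interval evaluation is needed here too.

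\emph{Cold point, (F).} Take a one-step trial $\mu^*=x\delta_0+(1-x)\delta_q$ at $\beta=10/3$, chosen numerically as an approximate stationary point of \eqref{eq:nm-15}. Set $L_F=\Par(\mu^*,0)+\min_uD_{\mu^*}(u)$, which is a valid lower bound by Proposition~\ref{prop:fw}. The closed forms of Lemma~\ref{lem:D} reduce $D$ to one- and two-dimensional Gaussian integrals. To bound $\min_uD$ I would proceed by region. On $[0,u_0]$, with $u_0=(2/(4\beta^2))^{1/2}$, $D$ is nondecreasing by Lemma~\ref{lem:D}(iv), so its minimum there is $D(0)=(1-x)C$. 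On $[u_0,1]$, cover by small intervals and bound $D$ from below on each using $D'=-\frac12\xi''(\Gamma-u)$ with $0\le\Gamma'\le\xi''$. This makes $D$ Lipschitz with an explicit constant, so an enclosure at the interval endpoints suffices.

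\emph{Cold point, (L).} First take $m_{\rm lo}$ with $g(m)=\beta j_0m^4-m^2/2<0$ on $(0,m_{\rm lo}]$; $m_{\rm lo}^2<1/(2\beta j_0)\approx0.185$ works. Then cover $[m_{\rm lo},1]$ by intervals. On each, use a replica-symmetric trial $\tau_i=\delta_{q_i}$ or a one-step trial in field $h_i$, optimized numerically, and check that $U_i<L_F$ at both endpoints; this suffices because $U_i$ is convex in $m$ for $h_i\ge0$ and $p=4$. The main obstacle is the interval near the cold-point ferromagnetic candidate $m\approx0.95$. There the certified margin against $L_F$ is only about $10^{-3}$, so replica-symmetric trials may fail, and one-step trials in a field require validated two-dimensional quadrature of $\log\E\cosh^x$-type integrals with a fine cover. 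I would try one-step trials there first. Finally, every predicate is re-verified by an independent implementation, as in Appendix~\ref{app:cert34}.
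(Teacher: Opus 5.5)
Your plan is the paper's own route. Theorem~\ref{thm:p4} is obtained from Theorem~\ref{thm:reduction} by certifying three predicates, each re-verified independently: (W) with $m_1=16/25$ and $q_0=948189/10^6$ near $\mu$; (F) with a two-atom trial $x\delta_0+(1-x)\delta_q$ via Proposition~\ref{prop:fw}, using $\min_{[0,u_0]}D=D(0)$ and a cell bound on $[u_0,1]$ from $D'=-\frac12\xi''(\Gamma-u)$ and the monotonicity of $\Gamma$; and (L) with $m_{\rm lo}=2/5$ and a $67$-interval cover checked at the endpoints by convexity. Two small corrections: the paper uses two-atom trials only for $m\le0.73$ and replica-symmetric trials above, and these suffice even at the worst interval near $m\approx0.96$ (cold margin about $1.27\times10^{-3}$); and a trial $x\delta_0+(1-x)\delta_q$ in a field needs only the one-dimensional integral $\E\cosh^x(h+\sqrt{\xi'(q)}\,g)$, not two-dimensional quadrature.
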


Both theorems follow from Theorem~\ref{thm:reduction} once the predicates (W), (F) and (L)
are certified; Table~\ref{tab:cert} (Appendix~\ref{app:cert34}) summarizes the certified values. The triple point of
Section~\ref{sec:nearM} has $j_{0M}=\beta_1/2\approx0.767595$ and $T_M\approx0.6514$ for $p=3$,
and $j_{0M}\approx0.810526$ and $T_M\approx0.6169$ for $p=4$, in agreement with the replica
values of~\cite[Table~I]{Nishimori2026}. The cold temperatures lie far below $T_M$.
Theorem~\ref{thm:nm-main} does not decide these points, since its $\bar\delta$ is not explicit and
its cold temperature is $1/(\beta_1+b(\delta))$. Theorems~\ref{thm:p3} and~\ref{thm:p4} exhibit
the same two-point phenomenon with explicit constants. The reentrance there is small, as
predicted: for $p=3$ at $T=0.4$, Nishimori's replica computation places the boundary about
$1.1\times10^{-3}$ above the triple-point bias~\cite[Table~VI]{Nishimori2026}, while our bias
$j_0=0.7682$ exceeds the triple-point bias by only $6.1\times10^{-4}$. The certified point thus
lies in the region that his computation predicts to be non-ferromagnetic at $T=0.4$. He labels
his values below $T\approx0.435$ provisional, because the ferromagnetic branch used there is
replicon unstable. For $p=4$ our bias exceeds the triple-point bias by $1.7\times10^{-4}$. Nishimori
does not tabulate his $p=4$ boundary at $T=0.3$; at $T=0.70\,T_M\approx0.432$ it lies
$3.1\times10^{-4}$ above the triple-point bias~\cite[Table~VII]{Nishimori2026}.

Every predicate is certified in interval arithmetic and certified again by an independent
implementation; Appendix~\ref{app:cert34} describes what is computed and how.

\section{Reentrant Gibbs states on a decorated square lattice}\label{sec:lattice}

The theorems of the previous sections concern fully connected models. This section proves
reentrance at the level of infinite-volume Gibbs states for one planar, $\Z^2$-periodic lattice with
independent $\pm J$ couplings. Throughout the section $p\in(0,\frac12)$ denotes the probability of an
antiferromagnetic bond (not the order of an interaction), $\vartheta:=1-2p$, and
\[
\beta_N(p):=\tfrac12\log\frac{1-p}{p}
\]
is the Nishimori inverse temperature, so that $\tanh\beta_N(p)=\vartheta$.

\paragraph{The obstruction on plain lattices.} The two standard rigorous criteria for order and for
uniqueness in random-bond Ising models that we use see the couplings through one-edge functionals. The quenched Peierls bound of
Horiguchi and Morita~\cite{HM1982} yields a positive plus-boundary magnetization once
$\E e^{-2s\beta J}$ is small enough for some $s\in[0,1]$. Newman's comparison with Bernoulli
percolation~\cite{Newman1994,Newman1997} yields a unique Gibbs state once $\E(1-e^{-2\beta|J|})$ lies
below the critical probability. For couplings of the form $\beta J$ with $J$ of fixed law,
$\min_{s\in[0,1]}\E e^{-2s\beta J}=\min_{t\in[0,\beta]}\E e^{-2tJ}$ is nonincreasing in $\beta$ and
$\E(1-e^{-2\beta|J|})$ is nondecreasing. On a lattice with independent couplings these criteria
therefore certify order only on a low-temperature half-line and uniqueness only on a
high-temperature interval. They cannot detect reentrance. Proposition~\ref{prop:lat-floor} below
sharpens the first half: for $\pm J$ couplings, a contour criterion of this Chernoff--Peierls type
holds at some temperature only if it holds at the Nishimori temperature.

\paragraph{The idea.} Replace each bond of $\Z^2$ by a fixed finite gadget. Summing out the gadget
interiors is exact and yields an Ising model on $\Z^2$ whose couplings are again independent and
identically distributed, at every temperature. Their common law is not of the form $\beta J$: for
the gadget used here the effective coupling first strengthens and then weakens on cooling, because
frustrated units cap it at low temperature. Both criteria then apply at each fixed $\beta$ to the
decimated couplings. They give uniqueness on both sides of a window of non-uniqueness that contains
the Nishimori temperature.

\subsection{The model and the result}\label{sec:lat-result}

\paragraph{The gadget.} A \emph{two-terminal graph} is a finite multigraph with two distinct
distinguished vertices $\pi_0,\pi_1$, its poles; the other vertices are \emph{interior}. The
\emph{series} composition $B_1\cdot B_2$ identifies $\pi_1$ of $B_1$ with $\pi_0$ of $B_2$ and has
poles $\pi_0$ of $B_1$ and $\pi_1$ of $B_2$. The \emph{parallel} composition $B_1\parallel B_2$
identifies the two $\pi_0$ and the two $\pi_1$. Graphs built from single edges by these operations are
\emph{series--parallel}. Let $P_b$ be a path of $b$ edges. For $b_0\ge2$ and $k\ge1$ define the unit
and the gadget
\[
U:=e_d\parallel\bigl(e_1\cdot(P_{b_0}\parallel P_{b_0})\cdot e_2\bigr),\qquad
H_4(b_0,k):=e_a\cdot U^{(1)}\cdot U^{(2)}\cdots U^{(k)}\cdot e_b ,
\]
where $e_a,e_b,e_d,e_1,e_2$ are single edges and the $U^{(i)}$ are copies of $U$
(Figure~\ref{fig:lat-gadget}). The gadget has $k(2b_0+3)+2$ edges and $I=2kb_0+k+1$ interior
vertices. Its poles have degree one and every other vertex has degree at most four, which the
subscript in $H_4$ records. We use the design
\[
H:=H_4(1000,10),\qquad 20032\text{ edges},\qquad I=20011 .
\]

\begin{figure}[t]
\centering
\begin{tikzpicture}[x=1cm,y=1cm,
  v/.style={circle,fill,inner sep=1.1pt},
  s/.style={circle,fill,inner sep=0.55pt},
  pole/.style={circle,draw,fill=white,inner sep=1.7pt}]
\node[pole,label=below:{\small$\pi_0$}] (p0) at (0,2.6) {};
\node[v,label=below:{\small$z_0$}] (z0) at (1.1,2.6) {};
\node[v,label=below:{\small$z_1$}] (z1) at (2.9,2.6) {};
\node[v,label=below:{\small$z_2$}] (z2) at (4.7,2.6) {};
\node[v,label=below:{\small$z_{k-1}$}] (zk1) at (6.9,2.6) {};
\node[v,label=below:{\small$z_k$}] (zk) at (8.7,2.6) {};
\node[pole,label=below:{\small$\pi_1$}] (p1) at (9.8,2.6) {};
\draw (p0) -- node[above]{\small$e_a$} (z0);
\draw (zk) -- node[above]{\small$e_b$} (p1);
\foreach \a/\b/\l in {z0/z1/{U^{(1)}},z1/z2/{U^{(2)}},zk1/zk/{U^{(k)}}}{
  \draw (\a) to[bend left=38] (\b);
  \draw (\a) to[bend right=38] (\b);
  \node at ($(\a)!0.5!(\b)$) {\small$\l$};
}
\node at (5.8,2.6) {$\cdots$};
\node[v,label=left:{\small$z_{i-1}$}] (a) at (1.2,0) {};
\node[v,label=right:{\small$z_i$}] (o) at (8.6,0) {};
\node[v,label=below:{\small$w_1$}] (w1) at (2.5,-0.6) {};
\node[v,label=below:{\small$w_2$}] (w2) at (7.3,-0.6) {};
\draw (a) to[bend left=12] node[above]{\small$e_d$} (o);
\draw (a) -- node[below left=-1pt]{\small$e_1$} (w1);
\draw (w2) -- node[below right=-1pt]{\small$e_2$} (o);
\draw (w1) to[bend left=16]
  node[s,pos=0.1]{} node[s,pos=0.2]{} node[s,pos=0.3]{} node[s,pos=0.4]{} node[s,pos=0.5]{}
  node[s,pos=0.6]{} node[s,pos=0.7]{} node[s,pos=0.8]{} node[s,pos=0.9]{} (w2);
\draw (w1) to[bend right=16]
  node[s,pos=0.1]{} node[s,pos=0.2]{} node[s,pos=0.3]{} node[s,pos=0.4]{} node[s,pos=0.5]{}
  node[s,pos=0.6]{} node[s,pos=0.7]{} node[s,pos=0.8]{} node[s,pos=0.9]{} (w2);
\node at (4.9,-1.55) {\small two paths of $b_0$ edges};
\draw[densely dotted,gray] (2.9,2.1) -- (a);
\draw[densely dotted,gray] (4.7,2.1) -- (o);
\end{tikzpicture}
\caption{The gadget $H_4(b_0,k)$ (top): a series chain of $k$ units between two terminal edges. One
unit $U^{(i)}$ (bottom): a direct edge $e_d$ in parallel with a branch made of a connector $e_1$, two
parallel paths of $b_0$ edges, and a connector $e_2$. The design has $b_0=1000$ and $k=10$. Every
bond of $\Z^2$ is replaced by a copy of the gadget, with $\pi_0,\pi_1$ at its endpoints.}
\label{fig:lat-gadget}
\end{figure}

\paragraph{The lattice.} Let $\mathbf e_1,\mathbf e_2$ be the standard unit vectors of $\Z^2$.
Orient each edge $g=\{x,x+\mathbf e_j\}$ of $\Z^2$ from $x$ to $x+\mathbf e_j$ and replace it
by a copy $H^g$ of $H$ with $\pi_0\mapsto x$ and $\pi_1\mapsto x+\mathbf e_j$; interiors of distinct copies are
disjoint. The resulting graph $\Z^2[H]$ is planar, because a series--parallel graph embeds in a disc
with both poles on the boundary. It is $\Z^2$-periodic, and its maximum degree is four, since each
vertex of $\Z^2$ (a \emph{backbone vertex}) meets four gadgets through one edge each. The same
construction applied to the free box $\Lambda_L=\{1,\dots,L\}^2$ and to the torus
$T_L=(\Z/L\Z)^2$, $L\ge3$, gives finite graphs $\Lambda_L[H]$ and $T_L[H]$.

\paragraph{Disorder and Gibbs measures.} The couplings $J_e\in\{\pm1\}$ of the microscopic edges are
independent with $\Pp(J_e=-1)=p$; write $\Pp_p$ and $\E$ for their law and expectation. On a finite
graph with vertex set $V$ the Gibbs measure is $\mu_{J,\beta}(\sigma)\propto
\exp(\beta\sum_{e=\{u,v\}}J_e\sigma_u\sigma_v)$ with expectation $\gibbs{\cdot}_\beta$ (free boundary
condition in the graph sense), and
\[
M_L:=\frac1{|V|}\sum_{w\in V}\sigma_w
\]
averages all microscopic spins of $\Lambda_L[H]$ or $T_L[H]$. On the infinite graph, $\mathcal
G(J,\beta)$ denotes the set of Gibbs measures in the sense of Dobrushin, Lanford and Ruelle: the
probability measures on $\{\pm1\}^{V(\Z^2[H])}$ under which, for every finite set $\Delta$ of
vertices, the conditional law of $\sigma_\Delta$ given the spins off $\Delta$ is proportional to
$\exp(\beta\sum_{e\cap\Delta\neq\emptyset}J_e\sigma_u\sigma_v)$. This set is nonempty.

\begin{theorem}[Reentrant Gibbs states on $\Z^2{[H]}$]\label{thm:lat-main}
Let $H=H_4(1000,10)$ and $p_0=9/10000$, with Nishimori inverse temperature
$\beta_N:=\beta_N(p_0)=\frac12\log(9991/9)=3.50610\ldots$. Put
\[
\beta_{\rm hot}:=\frac{30847}{20000},\qquad \beta_-:=\frac{24577}{12500},\qquad
\beta_+:=\frac{98231}{20000},\qquad \beta_{\rm cold}:=\frac{349277}{50000},
\]
that is $1.54235$, $1.96616$, $4.91155$ and $6.98554$, or $0.43990\ldots\beta_N$,
$0.56078\ldots\beta_N$, $1.40085\ldots\beta_N$ and $1.99239\ldots\beta_N$. Then
$\beta_{\rm hot}<\beta_-<\beta_N<\beta_+<\beta_{\rm cold}$, and for the iid $\pm J$ model on $\Z^2[H]$ with
$\Pp(J_e=-1)=p_0$ the following hold.
\begin{enumerate}
\item[\textup{(a)}] \emph{Uniqueness.} For each fixed $\beta\in(0,\beta_{\rm hot}]\cup[\beta_{\rm cold},\infty)$,
almost surely $|\mathcal G(J,\beta)|=1$; the unique Gibbs measure is invariant under the global spin
flip.
\item[\textup{(b)}] \emph{Non-uniqueness.} For each fixed $\beta\in[\beta_-,\beta_+]$, almost surely
$|\mathcal G(J,\beta)|\ge2$.
\item[\textup{(c)}] \emph{Deterministic form.} The set
$\mathcal N:=\{\beta>0:\Pp(|\mathcal G(J,\beta)|\ge2)=1\}$ satisfies
$[\beta_-,\beta_+]\subset\mathcal N\subset(\beta_{\rm hot},\beta_{\rm cold})$, and
$\Pp(|\mathcal G(J,\beta)|=1)=1$ for every $\beta\notin\mathcal N$.
\item[\textup{(d)}] \emph{Order at the Nishimori temperature.} On $\Lambda_L[H]$ and on $T_L[H]$,
$\liminf_{L\to\infty}\E\gibbs{M_L^2}_{\beta_N}\ge0.08$.
\item[\textup{(e)}] \emph{The Nishimori line.} For every $p\in(0,1/1000]$,
$\liminf_{L\to\infty}\E\gibbs{M_L^2}_{\beta_N(p)}>0$ on $\Lambda_L[H]$ and on $T_L[H]$. For every
$p\in(0,3/2500]$, $\Pp_p$-almost surely $|\mathcal G(J,\beta_N(p))|\ge2$.
\end{enumerate}
The proof is computer-assisted: finitely many inequalities about the explicit law of the decimated
coupling (Proposition~\ref{prop:lat-law}) are certified in interval arithmetic
(Appendix~\ref{app:lat-cert}).
\end{theorem}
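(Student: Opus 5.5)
The proof reduces everything to an Ising model on $\Z^2$ with iid couplings, by exact decimation of the gadget interiors. Summing out the interior spins of each copy $H^g$ (a series--parallel graph, so decimation proceeds by the series rule $\tanh K=\tanh K_1\tanh K_2$ and the parallel rule $K=K_1+K_2$) gives, for each backbone edge $g$, an effective coupling $K_g(\beta)$ that is a deterministic function of the microscopic couplings of $H^g$. The copies share no interior vertices, so the $K_g$ are iid for each fixed $\beta$. The Gibbs measures on $\Z^2[H]$ are in bijection with those of the backbone model with couplings $K_g$: the backbone marginal of a DLR measure is DLR for the decimated specification, and conversely the interior spins are resampled from the finite gadget conditionals. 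This is the content of Lemmas~\ref{lem:lat-BR} and~\ref{lem:lat-decimation}. The law of $K_g$ is then written explicitly (Proposition~\ref{prop:lat-law}). Each long path contributes $\tanh^{b_0}\beta$ with a sign given by the product of its couplings; each unit is a small rational function of the unit's signs and of $\tanh\beta$; and the chain composes $k=10$ iid units in series with the two terminal edges. The law is therefore a finite mixture that can be enclosed in interval arithmetic at any rational $\beta$, together with its Lipschitz dependence on $\beta$.

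Part~(a) follows from Newman's domination (Lemma~\ref{lem:lat-newman}). If $\E(1-e^{-2|K_g|})\le\frac12$, the FK-type random-cluster measure is dominated by Bernoulli percolation at parameter at most $\frac12$. By Harris's theorem this has no infinite cluster, so almost surely the Gibbs state is unique and flip invariant. I would certify $\E(1-e^{-2|K|})<\frac12$ on $(0,\beta_{\rm hot}]$ and on $[\beta_{\rm cold},\infty)$ (Corollary~\ref{cor:lat-UH}). On $(0,\beta_{\rm hot}]$ monotone bounds in $\beta$ should suffice. The interval $[\beta_{\rm cold},\infty)$ is the main obstacle: it is unbounded and the quantity is not monotone. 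I would split it into a compact part, covered by finitely many $\beta$-subintervals with monotone enclosures of each unit's coupling, and a tail $\beta\ge\beta_0$. On the tail I would bound the coupling of each unit configuration explicitly, to show the frustrated units are capped near $\frac12\log2$, and use the limiting value $0.374\ldots$ plus an explicit error in $e^{-\beta}$.

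Part~(b) uses the quenched Peierls bound of Horiguchi--Morita applied to the backbone (Propositions~\ref{prop:lat-CP} and~\ref{prop:lat-NU}). With plus boundary conditions, the probability that the origin is enclosed by a contour of length $n$ is at most $\sum_n c\,3^n w^n$, where $w=\min_{s\in[0,1]}\E e^{-2sK}$. I would certify $3w<1$ with enough margin on $[\beta_-,\beta_+]$, again by a finite cover in $\beta$. This gives plus and minus states with $\E Y>0$, hence non-uniqueness. Part~(c) then follows from (a) and (b) once we know that non-uniqueness is a translation-invariant, hence zero-one, event.

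For (d) and (e) I would work on the Nishimori line with the microscopic spins. The gauge identity gives $\E\gibbs{\sigma_u\sigma_v}=\E\gibbs{\sigma_u\sigma_v}^2\ge0$. Kitatani's inequality makes these averaged correlations monotone under adding edges, so they can be compared with a planted, Peierls-type estimate that has two defect edges to reach interior spins (Theorem~\ref{thm:lat-A}). Averaging over pairs then yields $\E\gibbs{M_L^2}\ge0.08$ from a certified contour sum. For all $p\le1/1000$ (respectively $3/2500$), the channel-degradation argument (Lemma~\ref{lem:lat-degradation}) shows that the one-gadget contour weight is monotone in $p$ along the Nishimori line, so a certificate at the endpoint suffices.
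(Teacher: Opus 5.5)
Your route is the paper's: exact decimation to iid backbone couplings, Newman's domination with Harris's theorem on both sides of the window, the Horiguchi--Morita bound with the zero--one law for non-uniqueness, and gauge identities, Kitatani's inequality, a two-defect Peierls bound and channel degradation on the Nishimori line. Two steps need correcting, and the second has a missing ingredient.

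\emph{The non-uniqueness threshold in (b).} The condition $3w<1$ is only Horiguchi and Morita's convergence criterion, and it does not make the plus-state bound positive. There are up to $\frac{\ell-2}{2}3^{\ell-2}$ dual cycles of length $\ell$ around the origin, so the count is not $c\,3^\ell$. The contour sum is therefore $\mathsf P(w)=9w^4/(1-9w^2)^2$, which diverges as $w\uparrow\frac13$. What you need is $2\mathsf P(w)<1$, i.e.\ $w_H(\beta)<w_+=(9+3\sqrt2)^{-1/2}\approx0.27479$. The window $[\beta_-,\beta_+]$ is exactly where $w_H\le W_\star=0.274797<w_+$ is certified. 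At its ends $1-2\mathsf P(W_\star)$ is only about $2\times10^{-5}$, and $w_H>w_+$ is certified within $10^{-4}$ outside. So ``enough margin'' must mean precisely this threshold. Also, $\E Y>0$ gives non-uniqueness only with positive probability, so the zero--one law is already needed in (b). A smaller point concerns (a): for fixed $K$ the random-cluster measure is dominated by inhomogeneous percolation with $p_g=1-e^{-2|K_g|}$, not by Bernoulli$(\le\frac12)$. Only the disorder average is Bernoulli$(\bar p_H)$, so almost-sure uniqueness comes from $\E B_n\to0$ together with monotonicity of $B_n$ in $n$.

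\emph{The constant in (d).} Your account does not produce $0.08$. The two-defect Peierls bound controls only a backbone two-point function in which the gadgets containing $u$ and $v$ carry zero coupling. It contributes the factor $1-\mathsf h(v_H(\beta_N))\ge0.332$. Interior vertices are all but a fraction $1/(1+2I)\approx2.5\times10^{-5}$ of the spins, so you also need a quantitative lower bound on how strongly an interior spin is tied to a pole of its gadget. In the paper, the gadget's edges at the other pole are deleted, which Kitatani's inequality allows. The remaining pole is then a cut vertex, and the cut-vertex factorization with independent disorder gives $\E\gibbs{\sigma_u\sigma_v}\ge\kappa(u)\kappa(v)$ times that backbone two-point function. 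Averaging over pairs yields $\hat\kappa_H^2\,(1-\mathsf h(v_H(\beta_N)))$. The bound $\bar\kappa_H\ge0.4915$ behind $0.08$ is a separate certified computation (Lemma~\ref{lem:lat-kappa}, input (C6)). It uses admissible chains of whole units and of two-path cycles, whose averaged correlations multiply and are given by explicit series--parallel formulas at $\beta_N$. Without it you have only $\hat\kappa_H\ge1/(1+2I)$, from $\kappa_H\ge0$. That suffices for the positivity in (e), but in (d) it gives about $2\times10^{-10}$, not $0.08$.
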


On cooling at the fixed disorder strength $p_0$, the Gibbs state is therefore unique, then
non-unique, then unique again, and the non-unique window contains the Nishimori temperature. Nothing is
claimed for $\beta\in(\beta_{\rm hot},\beta_-)$ or $\beta\in(\beta_+,\beta_{\rm cold})$. The almost-sure
statements hold for each fixed $\beta$, and the exceptional null set may depend on $\beta$; part~(c)
is a statement about a deterministic set and needs no simultaneity in $\beta$.

Part~(e) places $p_0$ strictly inside the ordered part of the Nishimori line. Put
\[
p_N^{M}:=\sup\bigl\{p:\liminf_{L}\E\gibbs{M_L^2}_{\beta_N(p)}>0\text{ on }T_L[H]\bigr\},\qquad
p_N^{\rm DLR}:=\sup\bigl\{p:\Pp_p\text{-a.s. }|\mathcal G(J,\beta_N(p))|\ge2\bigr\}.
\]
Then $p_0<1/1000\le p_N^M$ and $p_0<3/2500\le p_N^{\rm DLR}$. The locations of $p_N^M$ and
$p_N^{\rm DLR}$ are not determined.

The graph is not the nearest-neighbour square lattice: each bond of $\Z^2$ is replaced by a fixed
two-terminal series--parallel gadget with $20032$ edges, built from paths of length $1000$.

\begin{corollary}[Finite volume]\label{rem:lat-finite}
The same certified inputs give the finite-volume counterpart of~(a): for every
$\beta\in(0,\beta_{\rm hot}]\cup[\beta_{\rm cold},\infty)$, $\E\gibbs{M_L^2}_\beta\to0$ and
$\E\gibbs{Q_L^2}_\beta\to0$ on $\Lambda_L[H]$ and on $T_L[H]$, where $Q_L$ is the overlap of two
independent replicas. So at low temperature there is neither ferromagnetic nor Edwards--Anderson order
in this sense (Appendix~\ref{app:lat-unique}).
\end{corollary}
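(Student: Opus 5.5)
The plan is to reduce the finite-volume statement to the same decimation used for part~(a) of Theorem~\ref{thm:lat-main}. First, sum out the gadget interiors exactly (Lemma~\ref{lem:lat-BR}). On $\Lambda_L[H]$ or $T_L[H]$ this gives an Ising model on $\Lambda_L$ or $T_L$ with iid backbone couplings $K_g$, whose law is Proposition~\ref{prop:lat-law}. It also gives an explicit description of the interior spins conditionally on the two backbone spins at the poles of each gadget.

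Second, write $\E\gibbs{M_L^2}$ as $|V|^{-2}\sum_{w,w'}\E\gibbs{\sigma_w\sigma_{w'}}$. At $\beta\le\beta_{\rm hot}$ or $\beta\ge\beta_{\rm cold}$ the certified input is $\E(1-e^{-2|K|})<1/2$. Newman's domination (Lemma~\ref{lem:lat-newman}) then bounds $|\gibbs{\sigma_x\sigma_y}|$ for backbone vertices $x,y$ by the probability that $x$ and $y$ are connected in an FK-type random-cluster configuration. That configuration is dominated by Bernoulli bond percolation with parameter $p_1:=\E(1-e^{-2|K_g|})<1/2$, and the edge variables are independent across gadgets because the $K_g$ are iid. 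The domination holds with free or periodic boundary conditions alike.

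For an interior vertex $w$ of gadget $g$, the interior spins are conditionally independent of the rest given the pole spins $\sigma_x,\sigma_{x'}$. So $\gibbs{\sigma_w\sigma_{w'}}$ for $w,w'$ in different gadgets $g,g'$ equals an average of $\gibbs{\sigma_w\mid\sigma_x,\sigma_{x'}}\gibbs{\sigma_{w'}\mid\sigma_y,\sigma_{y'}}$. By spin-flip symmetry each conditional expectation is an odd function of the pole spins, that is, a linear combination of $\sigma_x$ and $\sigma_{x'}$ with coefficients bounded by $1$. This gives $|\gibbs{\sigma_w\sigma_{w'}}|\le\sum_{u\in\{x,x'\},v\in\{y,y'\}}|\gibbs{\sigma_u\sigma_v}|$.

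Averaging, it suffices to show $L^{-4}\sum_{x,y\in\Lambda_L}\Pp(x\leftrightarrow y)\to0$, plus a term of order $|V|^{-1}\cdot$(gadget size) for pairs inside one gadget, which is $O(L^{-2})$. The percolation sum vanishes by Harris's theorem~\cite{Harris1960}. At $p_1\le1/2$ there is no infinite cluster, so $\Pp(0\leftrightarrow\partial\Lambda_R)\to0$. On the torus we need the same with wrap-around paths. Any connection between $x$ and $y$ at distance greater than $R$ in $T_L$ forces a crossing of an annulus of radius $R$ around $x$, whose probability is the same as in $\Z^2$ for $R<L/2$. Hence the average is at most $\Pp(0\leftrightarrow\partial\Lambda_R)+O(R^2/L^2)\to0$.

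For the overlap, the same bound applies to two independent replicas: $\E\gibbs{Q_L^2}=|V|^{-2}\sum\E\gibbs{\sigma_w\sigma_{w'}}^2\le|V|^{-2}\sum\E|\gibbs{\sigma_w\sigma_{w'}}|$, and the preceding estimate is for absolute values, so $\E\gibbs{Q_L^2}\to0$ as well.

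The main obstacle is making Newman's domination hold uniformly in finite volume with free and periodic boundaries while keeping the required independence. If the percolation comparison is only available for the infinite-volume plus/free difference, I would instead use the Edwards--Sokal coupling directly on the decimated finite graph. There, $\gibbs{\sigma_x\sigma_y}$ equals the FK connection probability, and the FK edge marginals are dominated by independent Bernoulli($1-e^{-2|K_g|}$) variables given $J$; averaging over $J$ yields iid Bernoulli($p_1$). This comparison is exact and holds for any boundary condition, including periodic.

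A smaller point is that the torus has noncontractible cycles, so the percolation estimate must be phrased through annulus crossings rather than through clusters of the infinite lattice.
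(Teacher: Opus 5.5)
Your proposal is correct and follows essentially the paper's own argument. The shared steps are: exact decimation; the odd-function reduction of each interior spin to a combination of its pole spins with conditional independence across gadgets; the finite-graph bound $|\gibbs{\sigma_x\sigma_y}_K|\le\Phi_{p(K)}(x\leftrightarrow y)$, averaged over the independent $K_g$ to Bernoulli($\bar p_H(\beta)$) percolation; Harris's theorem, with ball/annulus events on the torus; and $\E\gibbs{\sigma_u\sigma_v}^2\le\E|\gibbs{\sigma_u\sigma_v}|$ for the overlap.

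Two corrections to your fallback. With couplings of both signs, the Edwards--Sokal coupling gives only $|\gibbs{\sigma_x\sigma_y}|\le$ the connection probability, not equality. The domination by independent Bernoulli($1-e^{-2|K_g|}$) edges holds because the $\omega$-marginal has density proportional to $2^{\#\text{clusters}}\ind\{\text{every open cycle has sign product }+1\}$ with respect to the product measure. This density is decreasing, so Harris's correlation inequality applies; this is exactly how the paper justifies that step.
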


\subsection{The decimated coupling}\label{sec:lat-coupling}

For a two-terminal graph $B$ with real couplings $a_e$, summing over the interior spins gives
$Z_B(s_0,s_1)=A_Be^{K_Bs_0s_1}$ for the pole values $s_0,s_1$, with $A_B>0$ and an \emph{effective
coupling} $K_B\in\R$ (Lemma~\ref{lem:lat-reduction}). Series composition multiplies $\tanh K$ and
parallel composition adds $K$. For the gadget with microscopic couplings $\beta J$ write
$K_H(J,\beta)$, and $K_H(\beta)$ for the random variable when $J$ is iid. The whole argument rests on
the following functionals of its law:
\[
v_H(\beta):=\E e^{-K_H(\beta)},\qquad
F_H(\beta,s):=\E e^{-2sK_H(\beta)},
\]
\[
w_H(\beta):=\min_{s\in[0,1]}F_H(\beta,s),\qquad
\bar p_H(\beta):=\E\bigl[1-e^{-2|K_H(\beta)|}\bigr].
\]
Taking $s=0$ and $s=\frac12$ shows $w_H\le\min(1,v_H)$.

\begin{proposition}[Law of the decimated coupling]\label{prop:lat-law}
Let $H=H_4(b_0,k)$ and $\beta>0$. Put $\theta:=\tanh\beta$, $\chi:=\operatorname{artanh}(\theta^{b_0})$,
$\tau:=\theta^2\tanh(2\chi)$ and $\chi_2:=\operatorname{artanh}\tau$, so that $0\le\chi_2<\beta$. Put
$q:=(1-\vartheta^{b_0})/2$ (recall $\vartheta=1-2p$; the two letters agree, $\theta=\vartheta$, only
at $\beta=\beta_N(p)$),
\[
\alpha_1:=2q(1-q),\qquad \alpha_3:=(1-q)^2\tfrac{1-\vartheta^3}2+q^2\tfrac{1+\vartheta^3}2,\qquad
\alpha_2:=1-\alpha_1-\alpha_3,
\]
and $y_1:=\theta$, $y_2:=\tanh(\beta+\chi_2)$, $y_3:=\tanh(\beta-\chi_2)$.
\begin{enumerate}
\item[\textup{(i)}] For each unit, $|\tanh K_U|=y_c$ with probability $\alpha_c$, $c=1,2,3$, and the sign of
$K_U$ is the sign of the coupling of its direct edge $e_d$.
\item[\textup{(ii)}] $|\tanh K_H|=\theta^2\prod_{i=1}^k|\tanh K_{U^{(i)}}|$ with independent factors.
Hence, given the class counts $n=(n_1,n_2,n_3)$, which are multinomial with parameters $k$ and
$(\alpha_1,\alpha_2,\alpha_3)$, one has $|\tanh K_H|=x_n:=\theta^2y_1^{n_1}y_2^{n_2}y_3^{n_3}$ and
$\Pp(K_H>0\mid n)=(1+A_n)/2$ with $A_n:=\vartheta^2\rho_1^{n_1}\rho_2^{n_2}\rho_3^{n_3}$, where
$\rho_c$ is the mean of the direct-edge coupling of a unit in class $c$. The $\alpha_c$ and $\rho_c$
depend on $p$ and $b_0$ only, not on $\beta$.
\item[\textup{(iii)}] As $\beta\to\infty$, $\beta-\chi_2\to\frac12\log2$, so $y_3\to\frac13$, while
$y_1,y_2\to1$.
\end{enumerate}
\end{proposition}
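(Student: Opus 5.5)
The plan is to compute $K_H$ exactly by repeated use of the two composition rules of Lemma~\ref{lem:lat-reduction}: series composition multiplies $\tanh K$, and parallel composition adds $K$. I will work bottom-up through the gadget and track the random signs separately from the deterministic magnitudes.

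\emph{Step 1: paths, the double path and the branch.} For a path $P_{b_0}$ with couplings $\beta J_e$, the series rule gives $\tanh K_P=\theta^{b_0}\prod_{e\in P}J_e$. So $K_P=s\chi$, where the path sign $s=\prod_eJ_e$ has $\Pp(s=-1)=(1-\vartheta^{b_0})/2=q$. For two parallel paths with signs $s,s'$ the parallel rule gives $K=s\chi+s'\chi$. This equals $0$ if $s\ne s'$, which has probability $2q(1-q)=\alpha_1$, and equals $2s\chi$ if $s=s'$. Putting the connectors $e_1,e_2$ in series, the branch has $\tanh K_{\rm br}=\theta^2J_{e_1}J_{e_2}\tanh(K)$. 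Hence $K_{\rm br}=0$ when $s\ne s'$, and $K_{\rm br}=\epsilon\chi_2$ with $\epsilon=sJ_{e_1}J_{e_2}$ when $s=s'$.

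The bound $\chi_2<\beta$ follows from $\tau<\theta^2\le\theta$.

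\emph{Step 2: the unit.} By the parallel rule, $K_U=\beta J_{e_d}+K_{\rm br}$. There are three cases.
\begin{itemize}
\item If $s\ne s'$, then $|\tanh K_U|=\theta=y_1$.
\item If $s=s'$ and $\epsilon J_{e_d}=+1$, then $|K_U|=\beta+\chi_2$, so $|\tanh K_U|=y_2$.
\item If $s=s'$ and $\epsilon J_{e_d}=-1$, then $|K_U|=\beta-\chi_2>0$, so $|\tanh K_U|=y_3$.
\end{itemize}
In all three cases $\operatorname{sign}K_U=J_{e_d}$, because $\chi_2<\beta$.

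For the probability of class 3, condition on the common path sign: $s=s'=+1$ has probability $(1-q)^2$ and $s=s'=-1$ has probability $q^2$. The product $J_{e_1}J_{e_2}J_{e_d}$ is independent of the path signs and equals $-1$ with probability $(1-\vartheta^3)/2$. Class 3 requires $s\cdot J_{e_1}J_{e_2}J_{e_d}=-1$. This gives exactly $\alpha_3=(1-q)^2\tfrac{1-\vartheta^3}2+q^2\tfrac{1+\vartheta^3}2$, and $\alpha_2=1-\alpha_1-\alpha_3$. The conditional mean $\rho_c$ of $J_{e_d}$ given the class is a function of $p$ and $b_0$ only, since the class events are defined by signs alone. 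This proves (i).

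\emph{Step 3: the gadget.} The series rule gives $\tanh K_H=\tanh(\beta J_{e_a})\tanh(\beta J_{e_b})\prod_i\tanh K_{U^{(i)}}$. Hence $|\tanh K_H|=\theta^2\prod_i|\tanh K_{U^{(i)}}|$ and $\operatorname{sign}K_H=J_{e_a}J_{e_b}\prod_iJ_{e_d^{(i)}}$. Distinct units use disjoint edges, so the pairs (class, direct-edge sign) are iid across units. Therefore the class counts $n$ are multinomial. Conditionally on the classes, the direct-edge signs are independent with means $\rho_c$ and are independent of $J_{e_a},J_{e_b}$, which have mean $\vartheta$. So $\E[\operatorname{sign}K_H\mid n]=A_n$, and since $x_n>0$, $\Pp(K_H>0\mid n)=(1+A_n)/2$. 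This proves (ii).

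\emph{Step 4: the limit $\beta\to\infty$.} This is the only step needing care, because $\beta-\chi_2$ is a difference of two diverging quantities. I would expand $1-\tau=(1-\theta^2)+\theta^2(1-\tanh 2\chi)$ using $1-\theta^2\sim4e^{-2\beta}$. Next, $1-\theta^{b_0}\sim2b_0e^{-2\beta}$, which gives $e^{-2\chi}\sim b_0e^{-2\beta}$. Then $1-\tanh2\chi\sim2e^{-4\chi}=O(b_0^2e^{-4\beta})$, which is negligible against $e^{-2\beta}$. Hence $e^{-2\chi_2}\sim(1-\tau)/2\sim2e^{-2\beta}$, so $\beta-\chi_2\to\frac12\log2$ and $y_3\to\tanh(\frac12\log2)=\frac13$. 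Finally, $y_1=\theta\to1$ and $y_2=\tanh(\beta+\chi_2)\to1$ are immediate, which proves (iii).
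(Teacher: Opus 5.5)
Your proof is correct and follows the paper's argument step for step. It uses the same bottom-up application of the series and parallel rules of Lemma~\ref{lem:lat-reduction}, the same three-case classification of $K_U=\beta J_{e_d}+K_{\rm br}$ with $|K_{\rm br}|\in\{0,\chi_2\}$ and $\chi_2<\beta$, the same sign bookkeeping for $K_H$, and the same splitting $1-\tau=(1-\theta^2)+\theta^2(1-\tanh2\chi)$ for (iii). The only difference is cosmetic: in (iii) the paper writes explicit bounds such as $1-\tanh2\chi\le2e^{-4\chi}\le8b_0^2e^{-4\beta}$, where you use asymptotic equivalences.
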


For $H_4(1000,10)$ the law of $K_H(\beta)$ is thus a mixture of $66$ magnitudes and $132$ signed atoms,
each an explicit function of $\beta$. The three classes are: the two long paths disagree (class~1,
$|K_U|=\beta$); they agree and the branch reinforces the direct edge (class~2, $|K_U|=\beta+\chi_2$);
they agree and the branch opposes it (class~3, the \emph{cancel} event, $|K_U|=\beta-\chi_2$). At
$p_0$ one has $q\approx0.4175$ and $(\alpha_1,\alpha_2,\alpha_3)\approx(0.4864,0.3389,0.1747)$.

The mechanism is visible in the classes. At the Nishimori temperature a path of $1000$ edges is a weak
and unreliable bond: its sign is negative with probability $q\approx0.42$, and $\theta^{b_0}\approx0.165$,
so $\chi_2\approx0.33$ is small compared with $\beta_N$. Every unit is then strongly coupled, and the
backbone coupling is a series chain of twelve strong bonds whose main defect is the sign disorder of the
two terminal edges and the ten direct edges; its Bhattacharyya parameter is
$v_H(\beta_N)=0.2052950\ldots$. On cooling the paths freeze, $\chi_2$ catches up with $\beta$, and on
the cancel event the unit coupling tends to $\frac12\log2$. There the two relative orientations of
the poles have the same ground-state energy: each costs one broken edge, the direct edge $e_d$ or
one of the connectors $e_1,e_2$. Their ground-state degeneracies are therefore in the ratio $2:1$
in favour of the orientation preferred by the direct edge. Since $\tanh(\frac12\log2)=\frac13$
and each of the ten units cancels with probability $\alpha_3$, the backbone couplings do not become
reliable at low temperature: as $\beta\to\infty$, $\E\tanh|K_H(\beta)|\to(1-2\alpha_3/3)^{10}\approx0.2898$ and
$\bar p_H(\beta)\to0.37434\ldots<\frac12$. This is the classical decorated-bond mechanism
(Section~\ref{sec:lat-scope}); what is specific here is that the couplings are quenched and iid on
every microscopic edge.

\subsection{Proof architecture}\label{sec:lat-proof}

The proof has five steps. Complete proofs of the steps are in Appendix~\ref{app:lattice}; the proof
of Theorem~\ref{thm:lat-main} from them is at the end of this subsection.

\paragraph{Step 1: exact decimation.} Summing out the interior of each gadget turns the model on
$\Z^2[H]$ into the Ising model on $\Z^2$ with couplings $K_g:=K_H(J|_{H^g},\beta)$. Distinct gadgets
have disjoint edge sets, so the $K_g$ are iid with the law of $K_H(\beta)$ at every $\beta$, on or off
the Nishimori line. This holds in finite volume and in infinite volume:

\begin{lemma}[Backbone reduction]\label{lem:lat-BR}
Fix $J$ and $\beta$. Let $\mathcal G^{\rm bb}(K)$ be the set of Gibbs measures of the Ising model on
$\Z^2$ with couplings $K_g=K_g(J,\beta)$. Taking the marginal on the backbone spins is a bijection
$\mathcal G(J,\beta)\to\mathcal G^{\rm bb}(K)$. Its inverse attaches to a backbone configuration
independent gadget interiors with their conditional Gibbs laws. The event $\{|\mathcal G(J,\beta)|=1\}$
is measurable, and $\Pp(|\mathcal G(J,\beta)|\ge2)\in\{0,1\}$.
\end{lemma}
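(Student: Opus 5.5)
The proof has three parts: a local identity for the gadget interiors, a bijection of Gibbs measures built on it, and a measurability/ergodicity step.

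\emph{Local identity.} Fix a gadget copy $H^g$ with poles $x,y$ and interior $I_g$. The interior spins interact only with each other and with $\sigma_x,\sigma_y$. Hence, conditionally on everything outside $I_g$, the law of $\sigma_{I_g}$ is the finite Gibbs measure of $H^g$ with pinned poles, and it depends only on $(\sigma_x,\sigma_y)$. Summing it out yields the factor $A_{H^g}e^{K_g\sigma_x\sigma_y}$ of Lemma~\ref{lem:lat-reduction}.

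\emph{Forward map.} For $\mu\in\mathcal G(J,\beta)$ and a finite box $\Delta$ of backbone vertices, apply the DLR equation to the finite set $\Delta\cup\bigcup I_g$, where $g$ ranges over the bonds meeting $\Delta$. Summing over those interior spins turns each gadget weight into $A e^{K_g\sigma_x\sigma_y}$. The constants $A$ cancel in the normalization. What remains is exactly the backbone DLR kernel with couplings $K$ on $\Delta$, given the backbone spins outside $\Delta$. The gadget interiors outside this set have been integrated out; this is legitimate because, conditionally on the backbone, they are independent of $\sigma_\Delta$ by the Markov property. So the backbone marginal lies in $\mathcal G^{\rm bb}(K)$.

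\emph{Inverse map.} Given $\nu\in\mathcal G^{\rm bb}(K)$, let $\hat\nu$ attach independent interiors with their pole-conditioned gadget laws. To check that $\hat\nu$ is DLR, take a finite $\Delta$ of microscopic vertices and enlarge it to a set $\Delta'$ that is a union of backbone vertices and whole gadget interiors. It suffices to check the DLR equation on such sets, since the specification is consistent. On $\Delta'$ the specification factorizes as the backbone kernel with couplings $K$ times the product of the gadget interior laws. This matches $\hat\nu$ by construction and by the DLR property of $\nu$.

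\emph{Bijection.} The composition marginal$\circ$attach is the identity on $\mathcal G^{\rm bb}(K)$. The composition attach$\circ$marginal is the identity on $\mathcal G(J,\beta)$ because, by the first step, any $\mu\in\mathcal G(J,\beta)$ has conditionally independent interiors with exactly those laws given the backbone. This part is routine.

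\emph{Measurability.} I expect this step to be the main obstacle. I would use the standard fact that $|\mathcal G^{\rm bb}(K)|=1$ if and only if, for every finite $\Delta$ and every local event, the finite-volume probabilities under plus and minus boundary conditions on the boxes $\Lambda_n$ converge to the same limit. For ferromagnetic-sign-free couplings I cannot lean on monotonicity, so I would use the general criterion instead: uniqueness holds if and only if, for every cylinder function $f$, $\sup_{\eta,\eta'}\bigl|\gibbs{f}^\eta_{\Lambda_n}-\gibbs{f}^{\eta'}_{\Lambda_n}\bigr|\to0$. Each supremum depends measurably on finitely many couplings; the supremum over boundary conditions $\eta$ reduces to a maximum over finitely many configurations on the boundary of $\Lambda_n$. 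A countable intersection and union of such events then gives measurability of $\{|\mathcal G^{\rm bb}(K)|=1\}$, and by the bijection of $\{|\mathcal G(J,\beta)|=1\}$. The $K_g$ are measurable functions of $J|_{H^g}$.

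\emph{Zero--one law.} The event $\{|\mathcal G(J,\beta)|\ge2\}$ is invariant under $\Z^2$ translations, which act on the iid family $(J|_{H^g})_g$. That family is ergodic under these shifts, so the event has probability $0$ or $1$.
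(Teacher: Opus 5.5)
Your proposal is correct and follows the paper's proof essentially step for step: DLR on finite unions of gadget interiors gives the conditional product structure $\mu=\Pi\mu\otimes\Psi$, and DLR on the cofinal gadget-closed boxes $\hat\Lambda$ gives both directions of the bijection. Measurability comes from the criterion $\delta_n(f)\to0$ over countably many cylinder indicators, each $\delta_n$ being continuous in finitely many $K_g$, and the zero--one law comes from ergodicity of the iid disorder under backbone shifts.

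One point to state precisely: in the inverse direction the enlarged set $\Delta'$ must be gadget-closed, i.e.\ contain the interiors of \emph{all} gadgets meeting its backbone vertices, since otherwise the specification on $\Delta'$ does not factorize as the backbone kernel times the gadget laws. Taking $\Delta'=\hat\Lambda$ for a backbone box $\Lambda$ with $\Delta\subset\hat\Lambda$, as in your forward step, fixes this.
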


Both halves of Theorem~\ref{thm:lat-main}(a)--(c) are thus statements about a planar Ising model with
iid couplings of both signs whose law is given by Proposition~\ref{prop:lat-law}.

\paragraph{Step 2: non-uniqueness from a quenched Peierls bound.} Let $\nu^+_{\Lambda,K}$ be the Ising
measure on a finite $\Lambda\subset\Z^2$ with couplings $K_g$ on the edges meeting $\Lambda$ and all
spins outside $\Lambda$ equal to $+1$. Define the Peierls functions
\[
\mathsf P(w):=\frac{9w^4}{(1-9w^2)^2},\qquad \mathsf Q(w):=\frac{9w^3}{1-9w^2},\qquad
\mathsf h(w):=4\mathsf P(w)+4\mathsf Q(w)\qquad(0<w<\tfrac13),
\]
and let $w_+:=(9+3\sqrt2)^{-1/2}=0.27479\ldots$ and $v_P=0.22240\ldots$ be the roots of $2\mathsf P=1$
and of $\mathsf h=1$ in $(0,\frac13)$. All three functions increase on $(0,\frac13)$.

\begin{proposition}[Plus-state bound]\label{prop:lat-CP}
Let $(K_g)_{g\in E(\Z^2)}$ be independent real random variables, and let $s\in[0,1]$ and
$w\in(0,\frac13)$ satisfy $\E e^{-2sK_g}\le w$ for every $g$. Then for every finite $\Lambda\subset\Z^2$
with connected complement and every $x\in\Lambda$,
\[
\E\,\nu^+_{\Lambda,K}(\sigma_x)\ge1-2\mathsf P(w).
\]
\end{proposition}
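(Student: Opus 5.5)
The proof will be a quenched Peierls argument in the style of Horiguchi and Morita~\cite{HM1982}, written per site with one Chernoff factor per contour. The steps are as follows.

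\emph{Contour event and energy estimate.} Fix $\Lambda$ with connected complement and $x\in\Lambda$. Since every spin outside $\Lambda$ is $+1$, the event $\{\sigma_x=-1\}$ implies that some closed dual contour $\gamma$ separates $x$ from $\Lambda^c$. Here $\gamma$ is the boundary of the $-$ cluster of $x$, or a component of that boundary, chosen as an outermost circuit around $x$, so that $\sigma$ is $-1$ just inside $\gamma$ and $+1$ just outside. For a fixed $\gamma$, flipping all spins inside $\gamma$ is a bijection from configurations with this contour onto configurations in which the edges of $\gamma$ are unbroken. The energy changes only on the edges crossing $\gamma$, so
\[
\nu^+_{\Lambda,K}(\gamma\text{ present})\le\exp\Bigl(-2\sum_{g\in\gamma}K_g\Bigr).
\]
The same bound holds with $s\in[0,1]$ inserted in the exponent. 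Write $e^{-2K\cdot\ind}$ and use that the probability is at most $1$: then $\nu^+(\gamma)\le\min(1,e^{-2\sum K_g})\le e^{-2s\sum_{g\in\gamma}K_g}$. Taking expectations and using independence gives $\E\nu^+(\gamma)\le w^{|\gamma|}$. This is the single Chernoff factor per contour.

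\emph{Counting and the bound on $\E\nu^+(\sigma_x)$.} Next write $\nu^+(\sigma_x)=1-2\nu^+(\sigma_x=-1)$. It then suffices to show $\E\nu^+(\sigma_x=-1)\le\mathsf P(w)$. A union bound over contours surrounding $x$ gives
\[
\E\nu^+(\sigma_x=-1)\le\sum_{n}N_x(n)\,w^n,
\]
where $N_x(n)$ is the number of dual circuits of length $n$ surrounding $x$. The exact form of $\mathsf P(w)=9w^4/(1-9w^2)^2$ suggests a concrete count. A circuit surrounding $x$ has even length $n=2\ell\ge4$, and each surrounding circuit crosses the horizontal ray to the right of $x$ at some distance $d\le\ell-1$. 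From that crossing point the circuit is a self-avoiding walk with at most $3$ choices per step. One should therefore get a bound of the shape $N_x(2\ell)\le(\ell-1)\,3^{2\ell-2}$, or the analogous bound with $3^{2\ell}/9$ normalization. Summing,
\[
\sum_{\ell\ge2}(\ell-1)\,9^{\ell-1}w^{2\ell}=\frac{9w^4}{(1-9w^2)^2}=\mathsf P(w),
\]
which converges for $w<\tfrac13$. So the whole content reduces to the counting inequality $N_x(2\ell)\le(\ell-1)9^{\ell-1}$.

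\emph{Main obstacle.} The main difficulty is the combinatorial step: proving $N_x(2\ell)\le(\ell-1)9^{\ell-1}$ exactly, with no loss in the constant, for circuits in the dual of $\Z^2$ with ambiguous vertices. I will handle the ambiguous vertices with the standard splitting convention (for instance, always turn left), so that each contour is a simple circuit. I will then count as follows. Choose the crossing of the ray, which lies in one of at most $\ell-1$ positions because a circuit of length $2\ell$ around $x$ reaches distance at most $\ell-1$. Make the first step deterministically vertical from that crossing. For the remaining $2\ell-2$ steps, use non-backtracking. The last step is forced by closure, and that saves the extra factor needed to match $9^{\ell-1}$.

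If this exact count turns out to be off by a constant, I would fall back on a slightly weaker but still valid Peierls constant. However, the statement's $\mathsf P$ indicates that this count is the intended one. I also need the assumption that $\Lambda^c$ is connected, because it ensures that the outermost $-$ region containing $x$ is bounded by a single circuit separating $x$ from $\Lambda^c$. The $\{\sigma_x=-1\}$-to-contour map is then well defined, with the contour lying within edges meeting $\Lambda$.
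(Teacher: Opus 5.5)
Your proposal is correct and follows the paper's proof essentially step for step. The common steps are the flip bound $\nu^+(\partial A\subset D(\sigma))\le\min(1,e^{-2S})\le e^{-2sS}$, independence over the distinct edges of the contour, a union bound, and the count $N_{2m}\le(m-1)\,9^{m-1}$; that count comes from the position of a crossing of the right ray (at most $m-1$ choices, because the circuit must also cross the left ray) and at most three choices per step, with the last step forced.

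The only difference is cosmetic. The paper needs no splitting convention to get a simple dual cycle: it fills in the holes of the $-$ cluster of $x$, taking $A=\Z^2\setminus O$, so that $\partial A$ is a minimal cut and its dual is simple. This is also where the connectedness of $\Z^2\setminus\Lambda$ gives $A\subset\Lambda$.
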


This is the quenched Peierls bound of Horiguchi and Morita~\cite[\S2, (2.12)--(2.19),
pp.~3553--3554]{HM1982}, which holds for independent nearest-neighbour couplings on the square lattice
of arbitrary law and either sign. Their convergence criterion
$\min_{0\le t\le2\beta}\gibbs{e^{-tJ}}_c<\frac13$ is $3w<1$ in the present notation ($K=\beta J$,
$t=2s\beta$). We write the bound per site, uniformly in $\Lambda$, and bound each contour weight by a
single Chernoff factor: flipping the spins inside a contour $\partial A$ shows that it is present with
probability at most $\min(1,e^{-2\sum_{\partial A}K})\le e^{-2s\sum_{\partial A}K}$ for every
$s\in[0,1]$, whatever the signs. Independence then turns each contour of length $\ell$ into $w^\ell$.
Exact decimation is what makes the bound applicable: the $K_g$ are independent with a common law at
every $\beta$, and taking $s$ optimal gives $w=w_H(\beta)$.\footnote{Earlier, for a small concentration
of random antiferromagnetic bonds, ferromagnetic order at low temperature had been proved by
Peierls--Griffiths arguments on the square lattice by Avron, Roepstorff and Schulman~\cite{ARS1981}
(see~\cite[p.~3551]{HM1982}), and on hypercubic lattices of any dimension by
Roepstorff~\cite{Roepstorff1981}.}

\begin{proposition}[Non-uniqueness]\label{prop:lat-NU}
Fix $\beta>0$, and suppose there are $s\in[0,1]$ and $W$ with $F_H(\beta,s)\le W$, $9W^2<1$ and
$2\mathsf P(W)<1$; equivalently, $w_H(\beta)<w_+$. Then there is a measurable function $Y$ of the
disorder with values in $[-1,1]$ and $\E Y\ge1-2\mathsf P(W)>0$ such that for every $J$ there are $\mu^\pm_J\in\mathcal G(J,\beta)$ with
$\mu^\pm_J(\sigma_0)=\pm Y(J)$ at the backbone origin. Almost surely $|\mathcal G(J,\beta)|\ge2$.
\end{proposition}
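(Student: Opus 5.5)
We first build plus and minus states on the backbone $\Z^2$ with couplings $K_g=K_g(J,\beta)$, and then lift them to $\Z^2[H]$ with Lemma~\ref{lem:lat-BR}. The couplings $K_g$ are iid with the law of $K_H(\beta)$, so $\E e^{-2sK_g}=F_H(\beta,s)\le W$ for every $g$, and Proposition~\ref{prop:lat-CP} applies with $w=W$. The equivalence with $w_H(\beta)<w_+$ holds because $\mathsf P$ is increasing on $(0,\frac13)$ and $2\mathsf P(w_+)=1$: if $w_H(\beta)<w_+$, take $s$ optimal and $W=w_H(\beta)$. Conversely, $2\mathsf P(W)<1$ with $W<\frac13$ forces $W<w_+$, and $w_H\le W$.

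\emph{Plus state.} Take the boxes $\Lambda_n=\{-n,\dots,n\}^2$, whose complements are connected. By the FKG inequality, which holds for ferromagnetic-plus boundary conditions only when all couplings are positive, we cannot use monotonicity directly, since the $K_g$ have both signs. We therefore use compactness instead. For fixed $J$, the measures $\nu^+_{\Lambda_n,K}$ live on a compact space, and any subsequential weak limit is a backbone Gibbs measure, because the DLR specification is quasilocal for finite-range interactions. The subsequence must be chosen measurably in $J$, and it must be the same for the mean bound. To achieve this, define
\[
Y(J):=\limsup_n\nu^+_{\Lambda_n,K}(\sigma_0).
\]
This is measurable as a limsup of measurable functions. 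Its mean satisfies $\E Y\ge\limsup_n\E\nu^+_{\Lambda_n,K}(\sigma_0)\ge1-2\mathsf P(W)$ by reverse Fatou, since the variables are bounded by $1$. For each $J$, pick a subsequence $n_k(J)$ along which $\nu^+_{\Lambda_{n_k}}(\sigma_0)\to Y(J)$, and then a further subsequence along which the measures converge weakly. The limit $\mu^{+,\rm bb}_J$ is Gibbs and satisfies $\mu^{+,\rm bb}_J(\sigma_0)=Y(J)$, because $\sigma_0$ is a local function. Only $Y$ needs to be measurable; the measures themselves need not depend measurably on $J$, as the statement requires.

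\emph{Minus state and lifting.} Flipping all spins is a symmetry of the specification, since there is no field. It maps $\nu^+_{\Lambda,K}$ to the minus-boundary measure, so the image $\mu^{-,\rm bb}_J$ of $\mu^{+,\rm bb}_J$ is Gibbs with $\mu^{-,\rm bb}_J(\sigma_0)=-Y(J)$. Apply the inverse bijection of Lemma~\ref{lem:lat-BR} to obtain $\mu^\pm_J\in\mathcal G(J,\beta)$. The backbone origin is a vertex of $\Z^2[H]$ and the marginal is preserved, so $\mu^\pm_J(\sigma_0)=\pm Y(J)$.

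\emph{Almost-sure non-uniqueness.} On the event $\{Y>0\}$ the two states differ, so $|\mathcal G|\ge2$ there. This event has positive probability because $\E Y>0$. Lemma~\ref{lem:lat-BR} gives $\Pp(|\mathcal G(J,\beta)|\ge2)\in\{0,1\}$, so this probability is $1$.

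The main obstacle is the measurability bookkeeping for a limit taken without monotonicity. We handle it by making only $Y$ measurable, via the limsup, and choosing the subsequence pointwise in $J$. One check remains: the Peierls bound of Proposition~\ref{prop:lat-CP} must indeed hold for each box and not only in some averaged sense, which is how it is stated.
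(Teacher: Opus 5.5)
Your proof is correct and follows the paper's argument. Both define $Y=\limsup$ of the plus-box magnetizations, apply reverse Fatou with Proposition~\ref{prop:lat-CP}, pick a subsequence pointwise in $J$ for the weak limit, use the global flip for the minus state, and conclude with the zero--one law of Lemma~\ref{lem:lat-BR}. The only difference is where the limit is taken: the paper takes weak limits of $\mu^{J,\beta}_{\hat\Lambda_n}(\cdot\mid+)$ directly on $\Z^2[H]$, using Lemma~\ref{lem:lat-decimation} only to identify $\mu^{J,\beta}_{\hat\Lambda_n}(\sigma_0\mid+)$ with $\nu^+_{\Lambda_n,K}(\sigma_0)$, whereas you take them on the backbone and lift with the bijection of Lemma~\ref{lem:lat-BR}; the two routes are equivalent.
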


Here $Y$ is the limit superior of the plus-state magnetizations of growing boxes; the bound on $\E Y$
survives by reverse Fatou. The flip gives the partner $\mu^-_J$, and ergodicity of the disorder under
backbone translations upgrades $\Pp(Y>0)>0$ to almost-sure non-uniqueness. This is the standard
flip-and-ergodicity step~\cite[Corollary~2.8, p.~45]{Bovier2001}; compare~\cite[p.~417,
Remark~1]{CCF1985}.

\paragraph{Step 3: uniqueness from Newman's domination.} For couplings of arbitrary sign, Newman
bounded the influence of the boundary condition on a finite set by a Bernoulli connection probability:

\begin{lemma}[Newman~\cite{Newman1994,Newman1997}]\label{lem:lat-newman}
Let $\Lambda\subset\Z^2$ be finite, let $K_g\in\R$ for the edges $g$ meeting $\Lambda$, and let
$\nu^\eta_{\Lambda,K}$ be the Ising measure on $\Lambda$ with boundary condition $\eta$ on the outer
vertex boundary $\partial\Lambda$. For $A\subset\Lambda$ and $\sigma_A:=\prod_{x\in A}\sigma_x$,
\[
\bigl|\nu^\eta_{\Lambda,K}(\sigma_A)-\nu^{\eta'}_{\Lambda,K}(\sigma_A)\bigr|
\le2\,\Phi_{p(K)}(A\leftrightarrow\partial\Lambda),\qquad p_g(K):=1-e^{-2|K_g|},
\]
where $\Phi_{p}$ is independent bond percolation on the edges meeting $\Lambda$ with $\Pp(g\text{ open})=p_g$.
\end{lemma}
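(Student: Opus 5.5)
We will prove Lemma~\ref{lem:lat-newman} by the standard disagreement-percolation coupling, adapted to couplings of both signs. First, gauge the problem. On $\Lambda$ the spins are Ising spins, and we reduce to the case where the two boundary conditions are compared through a single Markov-chain coupling. For each edge $g$ with $K_g\neq0$, write the Gibbs weight $e^{K_g\sigma_x\sigma_y}$ in Fortuin--Kasteleyn-type form with respect to the \emph{satisfied} relation $\sigma_x\sigma_y=\operatorname{sgn}K_g$:
\[
e^{K_g\sigma_x\sigma_y}=e^{-|K_g|}\bigl(1+(e^{2|K_g|}-1)\ind\{\sigma_x\sigma_y=\operatorname{sgn}K_g\}\bigr).
\]
So the joint Edwards--Sokal measure on spins and bonds has bond $g$ open with conditional probability $p_g(K)=1-e^{-2|K_g|}$ given that the edge is satisfied, and closed otherwise. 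Open bonds force the relation $\sigma_x\sigma_y=\operatorname{sgn}K_g$, which is a gauge-twisted equality, so the random-cluster argument works for arbitrary signs.

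Second, we bound the difference of expectations by the probability that a cluster of the random-cluster configuration touching $A$ reaches $\partial\Lambda$. Given the bond configuration, every cluster not meeting $\partial\Lambda$ receives a uniformly random global sign, independently of the boundary condition, because the constraints are consistent on it whenever the configuration has positive weight. Clusters meeting $\partial\Lambda$ are fixed by $\eta$. Hence, on the event that no cluster of a vertex of $A$ meets $\partial\Lambda$, the conditional expectation of $\sigma_A$ is the same for $\eta$ and $\eta'$; it equals zero or $\pm1$ depending only on the clusters inside. The marginal law of bonds does, however, depend on $\eta$. To avoid this, we instead use Newman's route. We construct a monotone coupling of the two spin measures by a sequential (exploration) procedure that reveals spins starting from $\partial\Lambda$. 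At each step, the two conditional laws of the next vertex $x$, given agreeing neighbours, coincide. If some neighbour $y$ disagrees, the conditional laws of $\sigma_x$ differ in total variation by at most $\tanh$-type quantities bounded by $1-e^{-2|K_{xy}|}$ per disagreeing edge. Using a maximal coupling, disagreement propagates across $g$ with probability at most $p_g(K)$, independently given the past.

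Third, the disagreement set is therefore stochastically dominated by the cluster of $\partial\Lambda$ in independent percolation with parameters $p_g(K)$. This is a Markov-type domination that we justify by revealing edges one at a time, as in the proof of disagreement percolation of van den Berg--Maes. Since $\sigma_A$ takes values in $\{\pm1\}$, we get
\[
|\nu^\eta(\sigma_A)-\nu^{\eta'}(\sigma_A)|\le2\,\Pp(\sigma|_A\neq\sigma'|_A)\le2\,\Phi_{p(K)}(A\leftrightarrow\partial\Lambda).
\]

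The main obstacle is the single-site total-variation estimate when several neighbours disagree simultaneously. The clean bound $1-\prod_{g}(1-p_g)$ must hold for the union over the disagreeing edges, so that the domination is by independent bonds. We would first verify it by writing the conditional law of $\sigma_x$ as the normalisation of $\prod_g e^{K_g\sigma_x\sigma_y}$ and coupling factor by factor through the FK decomposition above: each edge independently either transmits its neighbour's value (probability at most $p_g$) or transmits nothing. That gives exactly the product form. Alternatively, since this is Newman's published lemma, we may simply cite \cite{Newman1994,Newman1997}, noting that the argument there is sign-independent.
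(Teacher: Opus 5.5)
Your first step (the Edwards--Sokal representation for couplings of both signs) is sound. The gap is in the second and third steps: the spin exploration does not give domination by bond percolation.

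\paragraph{Why the per-edge step fails.} In a van den Berg--Maes construction, the law you couple at the next vertex $x$ is the conditional law of $\sigma_x$ given the \emph{revealed} spins only. That is a marginal of the Gibbs measure on the unrevealed region, with boundary values $\sigma_S$ versus $\sigma'_S$. Through the unrevealed neighbours of $x$, this marginal feels every disagreement on the boundary of the unrevealed region, not only the disagreeing edges at $x$.

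A concrete example: let $\Lambda$ be a row of $n$ sites joined by couplings $M$, each joined to its boundary neighbours by couplings $\varepsilon$, with $\eta\equiv+$ and $\eta'\equiv-$. For $M$ and $n\varepsilon$ large, the row behaves as one spin in a field of order $n\varepsilon$. So for \emph{every} choice of first vertex $x$, the two laws of $\sigma_x$ are at total-variation distance close to $1$. Yet $1-\prod_{g\in D_x}(1-p_g)\le1-e^{-6\varepsilon}$. For the same reason, agreeing revealed neighbours do not make the two conditional laws of $\sigma_x$ coincide.

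Your single-site estimate $\tanh(\sum_{g\in D_x}|K_g|)\le1-\prod_{g\in D_x}(1-p_g)$ is correct. But it applies only to the specification with \emph{all} neighbours of $x$ fixed, and that is never the law sampled in a sequential construction. The uniform bound available there is the diameter of the specification, $\tanh(\sum_{g\ni x}|K_g|)$. With it, van den Berg--Maes give domination by independent \emph{site} percolation with these parameters. The bond domination with $p_g(K)=1-e^{-2|K_g|}$ claimed in your third step does not follow.

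\paragraph{What works instead.} The paper gives no proof of its own. It cites Newman's decomposition, which uses the wired random-cluster measure in place of $\Phi_{p(K)}$, together with Newman's domination of that measure by $\Phi_{p(K)}$. So the random-cluster route you abandoned is the cited one. The $\eta$-dependence of the bond marginal is harmless if you explore \emph{bonds} rather than spins.

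1. The bond marginal of your Edwards--Sokal measure is $\Phi^\eta(\omega)\propto\Phi_{p(K)}(\omega)\,2^{k_{\rm int}(\omega)}\ind\{\omega\text{ consistent with }K\text{ and }\eta\}$, where $k_{\rm int}$ counts open clusters avoiding $\partial\Lambda$. This weight is decreasing in $\omega$, also after fixing any revealed edges. By Harris's inequality, each newly revealed edge is therefore open with conditional probability at most $p_g$.
2. Explore, edge by edge, the union $W$ of the open clusters of $\partial\Lambda$ in $\omega\sim\Phi^\eta$ and $\omega'\sim\Phi^{\eta'}$. Couple both values at each edge below a common Bernoulli$(p_g)$ variable. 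Then $W$ lies in the $\Phi_{p(K)}$-cluster of $\partial\Lambda$.
3. Once every edge leaving $W$ has been revealed closed in both configurations, the conditional law of the remaining edges is the same for both boundary conditions: the free constrained measure on $\Lambda\setminus W$. So the remaining bonds, and the spins off $W$, can be taken equal.

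Hence $\sigma_A=\sigma'_A$ unless $A$ meets $W$, which gives the bound $2\,\Phi_{p(K)}(A\leftrightarrow\partial\Lambda)$. Your closing fallback of citing Newman matches what the paper does, but the spin exploration you attribute to him is not the cited (random-cluster) argument.
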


Appendix~\ref{app:lat-unique} gives the exact locators in~\cite{Newman1994,Newman1997} and the
translation to our normalization.

\begin{corollary}[Uniqueness]\label{cor:lat-UH}
If $\bar p_H(\beta)\le\frac12$, then almost surely $|\mathcal G(J,\beta)|=1$, and the unique Gibbs
measure is flip-invariant.
\end{corollary}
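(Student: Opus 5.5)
We pass through the backbone reduction and then apply Newman's domination together with Harris's theorem on $\Z^2$.

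By Lemma~\ref{lem:lat-BR}, for fixed $\beta$ the backbone couplings $K_g$ are iid with the law of $K_H(\beta)$. Taking marginals on backbone spins is a bijection $\mathcal G(J,\beta)\to\mathcal G^{\rm bb}(K)$. It therefore suffices to show that almost surely the Ising model on $\Z^2$ with couplings $K$ has a unique Gibbs measure, and that this measure is flip invariant. Flip invariance then lifts, because the inverse map attaches gadget interiors by conditional Gibbs laws, and those laws commute with the global flip.

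Next consider the random edge weights $p_g(K)=1-e^{-2|K_g|}$. These are iid with mean $\bar p_H(\beta)\le\frac12$. Conditionally on $K$, the percolation $\Phi_{p(K)}$ is independent across edges. Averaged over the disorder, it is therefore Bernoulli bond percolation on $\Z^2$ with parameter $\bar p_H(\beta)$, because the edges are independent and each is open with probability $\E p_g=\bar p_H$. By Harris's theorem~\cite{Harris1960}, bond percolation at parameter at most $\frac12$ has no infinite cluster a.s. Hence, for a fixed finite $A$, we get $\E\,\Phi_{p(K)}(A\leftrightarrow\partial\Lambda_n)\to0$ along boxes $\Lambda_n\uparrow\Z^2$. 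The conditional probability $\Phi_{p(K)}(A\leftrightarrow\partial\Lambda_n)$ is monotone decreasing in $n$. Its expectation tends to $0$, so it tends to $0$ for almost every $K$. A countable union over finite $A$ keeps this a.s.

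By Lemma~\ref{lem:lat-newman}, for such $K$ and any two boundary conditions $\eta,\eta'$ we have
\[
\bigl|\nu^\eta_{\Lambda_n,K}(\sigma_A)-\nu^{\eta'}_{\Lambda_n,K}(\sigma_A)\bigr|\to0 .
\]
Any Gibbs measure $\nu$ satisfies the DLR equations, so $\nu(\sigma_A)$ is an average over $\eta$ of $\nu^\eta_{\Lambda_n,K}(\sigma_A)$. Thus all Gibbs measures agree on every $\sigma_A$, and these correlations determine the measure. This gives uniqueness. The flipped measure is again Gibbs for the same $K$, since the Hamiltonian is even, so by uniqueness it coincides with the original measure.

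The main care point is the measure-theoretic one. We must make sure that the mixture over the disorder is exactly Bernoulli$(\bar p_H)$, which needs independence of the $K_g$ from Lemma~\ref{lem:lat-BR}. We must also pass from the annealed statement to a quenched almost-sure statement, which the monotonicity in $n$ handles. The boundary case $\bar p_H=\frac12$ is covered by Harris's theorem at criticality.
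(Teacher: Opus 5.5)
Your proof is correct and follows essentially the same route as the paper's proof. Both arguments use the backbone reduction, Newman's bound, averaging the independent edge weights $p_g(K_g)$ into Bernoulli$(\bar p_H(\beta))$ percolation, Harris's theorem, and monotonicity in $n$ to pass from the annealed bound to an almost-sure one, followed by the flip argument. One step deserves a word of justification, as the paper gives it: the monotonicity of $\Phi_{p(K)}(A\leftrightarrow\partial\Lambda_n)$ in $n$ holds because a path stopped at its first vertex outside $\Lambda_n$ uses only edges meeting $\Lambda_n$.
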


Averaging over the independent $K_g$ turns $\Phi_{p(K)}$ into Bernoulli percolation with density
$\bar p_H(\beta)$, and Harris's theorem that bond percolation on $\Z^2$ has no infinite cluster at
density $\frac12$~\cite{Harris1960} makes the bound vanish as $\Lambda\uparrow\Z^2$. For
$\bar p_H(\beta)<\frac12$ this is Newman's uniqueness criterion~\cite[Theorems~3.10--3.11,
pp.~39--41]{Newman1997} applied to the decimated couplings.\footnote{Uniqueness at high temperature is
almost free: the terminal edges alone give $|\tanh K_H|\le\tanh^2\beta$, hence $\bar p_H(\beta)\le\frac12$
for $\beta\le\frac12\log(2+\sqrt3)=0.658\ldots$ at every $p$.} The substance is uniqueness at low
temperature. It is possible here because $|K_H(J,\beta)|$ is not monotone in $\beta$ on disorders with
cancel units (Proposition~\ref{prop:lat-law}(iii)).

\paragraph{Step 4: order at the Nishimori temperature.} Part~(d) concerns the magnetization of all
microscopic spins, including the interior ones.

\begin{theorem}[Order at the Nishimori temperature]\label{thm:lat-A}
Let $H$ be a two-terminal series--parallel gadget with $I$ interior vertices and let $p\in(0,\frac12)$.
Let $H^{(i)}$ be $H$ with the edges at $\pi_{1-i}$ deleted, and for an interior vertex $u$ put
$\kappa_H(u):=\max_{i\in\{0,1\}}\E\gibbs{\sigma_u\sigma_{\pi_i}}_{H^{(i)}}$, the average being at
$\beta_N(p)$. Let
$\bar\kappa_H$ be the mean of $\kappa_H(u)$ over the interior vertices and
$\hat\kappa_H:=(1+2I\bar\kappa_H)/(1+2I)$. If $v_H(\beta_N(p))<v_P$, then on $\Lambda_L[H]$ and on
$T_L[H]$
\[
\liminf_{L\to\infty}\E\gibbs{M_L^2}_{\beta_N(p)}\ge\hat\kappa_H^2\,\bigl(1-\mathsf h(v_H(\beta_N(p)))\bigr)>0 .
\]
Moreover $\kappa_H(u)\ge0$ for every $u$, so $\hat\kappa_H\ge1/(1+2I)$.
\end{theorem}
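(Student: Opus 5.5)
The plan is to write $\E\gibbs{M_L^2}_{\beta_N}=|V|^{-2}\sum_{w,w'}\E\gibbs{\sigma_w\sigma_{w'}}$ and bound each pair correlation from below by a product: a factor attached to $w$, a factor attached to $w'$, and a backbone two-point function. Each vertex of $\Lambda_L[H]$ or $T_L[H]$ is assigned to a backbone site. A backbone site $x$ owns itself and the interiors of the two gadgets $H^g$ oriented out of $x$. Up to boundary terms of order $L^{-1}$, each site therefore owns $1+2I$ vertices, which explains the normalization of $\hat\kappa_H$.

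\emph{Step 1 (gauge nonnegativity and Kitatani monotonicity).} On the Nishimori line, the gauge identity of Lemma~\ref{lem:gauge-transfer} holds on any graph and gives $\E\gibbs{\sigma_A\sigma_B}=\E\gibbs{\sigma_A\sigma_B}^2\ge0$. Applied to $H^{(i)}$, this yields $\kappa_H(u)\ge0$ and hence $\hat\kappa_H\ge1/(1+2I)$. I then invoke Kitatani's inequality~\cite{Kitatani1994} in the form of~\cite[(3)]{Kitatani2009}: on the Nishimori line, averaged correlations are nondecreasing under adding edges, with new couplings drawn from the same law. For an interior $u$ in gadget $H^g$ I choose the pole $\pi_i$ attaining $\kappa_H(u)$ and delete the edges of $H^g$ at the other pole, and I do the same for $u'$. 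This can only decrease $\E\gibbs{\sigma_u\sigma_{u'}}$. After the deletions $u$ hangs off $\pi_i$ through a piece that is connected to the rest of the graph only at that single vertex. The Gibbs measure is then Markov across the cut vertex, so $\gibbs{\sigma_u\sigma_{u'}}=\gibbs{\sigma_u\sigma_{x}}_{H^{(i)}}\gibbs{\sigma_x\sigma_y}'\gibbs{\sigma_y\sigma_{u'}}_{H^{(j)}}$. Here $x,y$ are the chosen poles and $\gibbs\cdot'$ is the model with the two gadgets removed. The three factors depend on disjoint sets of couplings, so the averages factorize: $\E\gibbs{\sigma_u\sigma_{u'}}\ge\kappa_H(u)\kappa_H(u')\,\E\gibbs{\sigma_x\sigma_y}'$. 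The case $u=u'$ within one gadget is an $O(L^{-2})$ fraction of pairs and can be discarded.

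\emph{Step 2 (two-defect Peierls bound).} After decimation (Lemma~\ref{lem:lat-BR}), $\gibbs{\sigma_x\sigma_y}'$ is a backbone Ising correlation with iid couplings of law $K_H(\beta_N)$. At most two edges have been removed and carry coupling $0$. I will bound $\E\gibbs{\sigma_x\sigma_y}'\ge1-\mathsf h(v)$ with $v=v_H(\beta_N)$, uniformly in $x,y,L$. The spins $\sigma_x$ and $\sigma_y$ disagree only if a dual contour separates them, and the contour weight is handled by the Chernoff step at $s=\frac12$ as in Proposition~\ref{prop:lat-CP}, which gives $v^\ell$ for a contour of length $\ell$. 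Contours avoiding the defects are counted as in $\mathsf P$, with two such sums, one around each point. This is where the bound must avoid any plus boundary condition: I would compare to free or periodic boundary conditions, using the Nishimori-line symmetry, and I expect to lean on Kitatani again to pass between them. Contours through a defect edge pay one fewer factor, which produces the $\mathsf Q$ terms. Counting two defects and two endpoints gives $4\mathsf P+4\mathsf Q$. On the torus, non-contractible contours also appear; their weight is $O(L(3v)^L)$ and vanishes as $L\to\infty$.

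\emph{Step 3 (assembly).} Summing over pairs gives $\E\gibbs{M_L^2}\ge(1-\mathsf h(v))\bigl(|V|^{-1}\sum_w\kappa_H(w)\bigr)^2-o(1)$, where $\kappa_H:=1$ at backbone vertices. The vertex average tends to $(1+2I\bar\kappa_H)/(1+2I)=\hat\kappa_H$, which gives the claim. Positivity follows from $v<v_P$ and the monotonicity of $\mathsf h$.

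The step I expect to be the main obstacle is Step 2. Making the two-point Peierls estimate uniform in the separation and in the boundary condition, with the exact constant $4\mathsf P+4\mathsf Q$, requires care with contours that encircle both points or cross the defect edges, and with how the disorder average interacts with the conditioning on a contour.
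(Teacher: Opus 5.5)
Your Steps~1 and~3 follow the paper's route. You use nonnegativity from the gauge identity and Kitatani deletion of the gadget edges at the non-anchor pole. You then factorize at the cut vertices into $\kappa_H(u)\kappa_H(u')$ times a backbone two-point function with zero coupling on at most two edges, and bound that function by contours.

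The gap is in Step~2, where you assert $\E\gibbs{\sigma_x\sigma_y}'\ge1-\mathsf h(v)$ uniformly in $x,y,L$. The counting you describe does not give this, for two reasons.
\begin{itemize}
\item A separating contour may pass through \emph{both} defect edges. It then loses two factors of $v$, and such contours are not among your $4\mathsf P+4\mathsf Q$ terms. Their total weight is bounded only by roughly $\sum_{\ell\ge2D_0}(3v)^{\ell-2}$, where $D_0$ is the distance between the defects, and this is small only when $D_0$ is large.
\item On $\Lambda_L$ with free boundary conditions, the dual cycle separating $x$ from $y$ may pass through the outer face. It is then a dual path from boundary to boundary that surrounds neither point. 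For $x$ near $\partial\Lambda_L$ such paths are short and are not controlled by $\mathsf P$. On $T_L$ you also need the null-homologous cycles of length at least $L$, and the fact that non-null-homologous cycles separate only in groups of at least two.
\end{itemize}
Your proposed remedy, passing between boundary conditions with Kitatani's inequality, runs the wrong way. Deleting edges can only lower averaged correlations, so the free box cannot be bounded below by a graph with more edges or by a plus boundary condition.

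No uniform bound is needed. Every term of $\E\gibbs{M_L^2}$ is nonnegative, so you may drop the following pairs.
\begin{itemize}
\item All pairs whose backbone attachments (the vertex itself, or the endpoints of its gadget) are within $\ell^\infty$-distance $R$. There are at most $N_R:=(2R+2)^2(1+4I)$ such partners per vertex, independently of $L$.
\item On $\Lambda_L$, all pairs involving a vertex attached within distance $\delta L$ of the boundary.
\end{itemize}
For the remaining pairs, contours through both defects contribute at most $2(3v)^{2R-4}/(1-3v)$. Contours through the outer face (length at least $2\delta L-4$), and contours of length at least $L$ on the torus, contribute a term $2\varepsilon_L(v)\to0$. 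This is Proposition~\ref{prop:lat-twopoint}.

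Your Step~3 then gives $\E\gibbs{M_L^2}\ge|V|^{-2}\bigl(S_c^2-|V|N_R\bigr)\Pi_{R,L}$, where $\Pi_{R,L}=1-\mathsf h(v)-2(3v)^{2R-4}/(1-3v)-2\varepsilon_L(v)$ and $S_c$ sums $\kappa$ over the retained vertices. Let $L\to\infty$, then $R\to\infty$, then $\delta\to0$. This uses $S_c/|V|=\hat\kappa_H$ on $T_L$ and $S_c/|V|\to(1-2\delta)^2\hat\kappa_H$ on $\Lambda_L$.
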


On the Nishimori line the Gibbs measure is the posterior of a planted configuration, and gauge
symmetry makes every $\E\gibbs{\sigma_u\sigma_v}$ nonnegative~\cite{Nishimori1981} and monotone under
adding edges (Kitatani's inequality~\cite{Kitatani1994}, in the form stated
in~\cite[(3)]{Kitatani2009}). For two interior vertices $u,v$ in far
apart gadgets, deleting the edges of each gadget at one pole isolates $u$ and $v$ behind cut vertices,
and $\E\gibbs{\sigma_u\sigma_v}$ is at least $\kappa_H(u)\kappa_H(v)$ times a backbone two-point
function in which the two gadgets carry zero coupling. A Peierls bound with two such defect edges
controls that two-point function once $\mathsf h(v_H)<1$; this is where the threshold $v_P$ enters. For
$H_4(1000,10)$ at $p_0$, chains of whole units and of two-path cycles give $\bar\kappa_H\ge0.49146$,
and $\hat\kappa_H^2(1-\mathsf h(v_H(\beta_N)))\ge0.0802$ (Lemma~\ref{lem:lat-kappa}).

\paragraph{Step 5: the Nishimori line.}

\begin{lemma}[Monotonicity along the Nishimori line]\label{lem:lat-degradation}
For every two-terminal graph $H$, the map $p\mapsto v_H(\beta_N(p))$ is nondecreasing on $(0,\frac12)$.
\end{lemma}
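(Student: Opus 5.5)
The plan is to identify $v_H(\beta_N(p))$ with the Bhattacharyya parameter of a binary channel, and then to show that raising $p$ degrades that channel by independent sign flips, which can only increase the Bhattacharyya parameter.

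\emph{Step 1: $K_H$ as a log-likelihood ratio.} Fix $p$ and write $\beta=\beta_N(p)$. Plant spins $\sigma$ on $H$ and let each edge $e=\{u,v\}$ independently emit $J_e=\pm1$ with
\[
\Pp(J_e\mid\sigma)=\frac{e^{\beta J_e\sigma_u\sigma_v}}{2\cosh\beta}.
\]
At the Nishimori temperature this is exactly the flip channel: $J_e$ agrees with $\sigma_u\sigma_v$ with probability $1-p$. Now let the interior spins be uniform and sum them out. The likelihood of $J$ given the pole values is $P(J\mid s_0,s_1)=c\,Z_H(s_0,s_1)=c\,A_He^{K_Hs_0s_1}$, where $c$ depends on neither $J$ nor the poles (Lemma~\ref{lem:lat-reduction}). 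Hence $K_H=\frac12\log\bigl(P(J\mid +)/P(J\mid -)\bigr)$, where $\pm$ refers to $s_0s_1$.

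Under the planted configuration $\sigma\equiv+1$, the observations $J$ are iid with $\Pp(J_e=-1)=p$. This is the law used in $v_H$. By gauge invariance, the marginal law of $J$ given $s_0s_1=+1$ is the gauge orbit of this law, and $K_H$ is gauge invariant, so $K_H$ has the same law under both. Therefore
\[
v_H(\beta_N(p))=\E_+e^{-K_H}=\sum_JP(J\mid +)\Bigl(\frac{P(J\mid -)}{P(J\mid +)}\Bigr)^{1/2}=\sum_J\sqrt{P(J\mid +)P(J\mid -)},
\]
which is the Bhattacharyya parameter $\mathsf B_p$ of the channel $s_0s_1\mapsto J$. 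Here the sum runs over gauge-representative data, or equivalently one works with the full observation $J$, since the sum is taken over all $J$. This step is the one I expect to need the most care: the constants, and the passage between the planted all-plus law and conditioning on $s_0s_1=+1$, must be checked so that $v_H$ equals $\mathsf B_p$ exactly rather than up to a factor.

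\emph{Step 2: degradation.} For $p<p'<\frac12$, define $r$ by $1-2p'=(1-2p)(1-2r)$, so that $r\in(0,\frac12)$. Let $J'_e=J_e\xi_e$ with $\xi_e$ iid, $\Pp(\xi_e=-1)=r$, independent of everything else. Then $J'$ is exactly the observation of the channel at level $p'$: given $\sigma$, the $J'_e$ are independent and agree with $\sigma_u\sigma_v$ with probability $1-p'$. So $P'(J'\mid s)=\sum_JW(J'\mid J)P(J\mid s)$ with a kernel $W$ that does not depend on $s$.

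\emph{Step 3: data processing.} By Cauchy--Schwarz, for each $J'$,
\[
\sqrt{P'(J'\mid +)P'(J'\mid -)}\ge\sum_JW(J'\mid J)\sqrt{P(J\mid +)P(J\mid -)}.
\]
Summing over $J'$ and using $\sum_{J'}W(J'\mid J)=1$ gives $\mathsf B_{p'}\ge\mathsf B_p$. Combined with Step 1 this is $v_H(\beta_N(p'))\ge v_H(\beta_N(p))$. The argument uses nothing about $H$ beyond its being a two-terminal graph, so it holds for every two-terminal graph $H$.
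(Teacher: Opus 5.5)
Your proposal is correct and follows the paper's proof essentially step for step. The paper also identifies $v_H(\beta_N(p))$ with the Bhattacharyya parameter of the channel $\tau_{\pi_0}\tau_{\pi_1}\mapsto J$ (via $W_p(J\mid x)=2C_HA_He^{xK_H}$ and the invariance of $K_H$ under gauges with $\tau_{\pi_0}\tau_{\pi_1}=1$), degrades $W_p$ to $W_{p'}$ by independent sign flips with probability $(p'-p)/(1-2p)$, and concludes by Cauchy--Schwarz; the constant issue you flag in Step 1 is harmless, since $cA_H$ cancels in the likelihood ratio, so $v_H(\beta_N(p))=\mathsf B_p$ exactly.
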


The coupling $K_H(\beta_N(p))$ is half the log-likelihood ratio of a binary-input channel that observes
the product of the two pole spins of a uniformly random configuration through the edge signs, each
flipped with probability $p$; $v_H(\beta_N(p))$ is its Bhattacharyya parameter. Raising $p$ degrades
the channel, and a degradation cannot decrease the Bhattacharyya parameter (Cauchy--Schwarz;
see~\cite{RichardsonUrbanke2008} for this language). By the lemma, bounds on $v_H$ certified at
$p=1/1000$ and at $p=3/2500$ hold for all smaller $p$, which gives part~(e) through
Theorem~\ref{thm:lat-A} and Proposition~\ref{prop:lat-NU} (with $s=\frac12$).

\paragraph{The finite inputs.} Steps 2--5 reduce Theorem~\ref{thm:lat-main} to explicit inequalities
about the finite law of Proposition~\ref{prop:lat-law}: $w_H(\beta)<w_+$ on $[\beta_-,\beta_+]$ (C2, C3),
$\bar p_H(\beta)<\frac12$ on $(0,\beta_{\rm hot}]$ (C1) and on $[\beta_{\rm cold},\infty)$ (C4), $v_H(\beta_N)<v_P$ (C5) together
with the chain bound on $\bar\kappa_H$ (C6), $v_H(\beta_N(1/1000))<v_P$ (C7), and $v_H(\beta_N(3/2500))<w_+$ (C8).
Appendix~\ref{app:lat-cert} lists them with their certified values and describes the certificates. Each
input was certified by at least two implementations that share no code, and each criterion is
certified to fail at a point within $10^{-4}$ (an absolute distance in $\beta$) outside its certified
range (Remark~\ref{rem:lat-sharp}). The gaps
$(\beta_{\rm hot},\beta_-)$ and $(\beta_+,\beta_{\rm cold})$ therefore reflect the methods.

\begin{proof}[Proof of Theorem~\ref{thm:lat-main}]
Here (C1)--(C8) are the certified inputs of Table~\ref{tab:lat-inputs}, and $W_\star$, $V_0$, $V_1$,
$W_N$ the certified constants recorded there and in Appendix~\ref{app:lat-cert}.
\par\emph{Ordering.} $\beta_{\rm hot}<\beta_-$ and $\beta_+<\beta_{\rm cold}$ are rational comparisons, and since
$e^{2\beta_N}=9991/9$,
\[
e^{2\beta_-}<e^4<55<\tfrac{9991}9<7600<e^9<e^{2\beta_+},
\]
because $2\beta_-=3.93232<4$ and $2\beta_+=9.8231>9$, with $2.71828<e<2.71829$.
\par\emph{(a)} By (C1) and (C4), $\bar p_H(\beta)<\frac12$ on $(0,\beta_{\rm hot}]\cup[\beta_{\rm cold},\infty)$; apply
Corollary~\ref{cor:lat-UH} at each such $\beta$.
\par\emph{(b)} For $\beta\in[\beta_-,\beta_+]$, the cell of (C2) containing $\beta$ gives $s_j\in[0,1]$ with
$F_H(\beta,s_j)\le W_\star$, and (C3) gives $9W_\star^2<1$ and $2\mathsf P(W_\star)<1$; apply
Proposition~\ref{prop:lat-NU}. At $\beta=\beta_N$ one may instead take $s=\frac12$ and $W=V_0$ from (C5), which
gives $\E Y\ge0.917$.
\par\emph{(c)} By the zero--one law of Lemma~\ref{lem:lat-BR}, for each $\beta$ either $\beta\in\mathcal N$ or
$\Pp(|\mathcal G(J,\beta)|=1)=1$. Part (b) gives $[\beta_-,\beta_+]\subset\mathcal N$ and part (a) gives
$\mathcal N\cap\bigl((0,\beta_{\rm hot}]\cup[\beta_{\rm cold},\infty)\bigr)=\emptyset$.
\par\emph{(d)} By (C5), $v_H(\beta_N)\le V_0<v_P$, so Theorem~\ref{thm:lat-A} applies, and Lemma~\ref{lem:lat-kappa}
evaluates its constant: $\hat\kappa_H^2(1-\mathsf h(v_H(\beta_N)))\ge\hat\kappa_H^2(1-\mathsf h(V_0))\ge0.0802$.
\par\emph{(e)} Let $p\le1/1000$. By Lemma~\ref{lem:lat-degradation} and input (C7),
$v_H(\beta_N(p))\le V_1<v_P$, so Theorem~\ref{thm:lat-A} applies with
$\hat\kappa_H\ge1/(1+2I)$ and gives $\liminf_L\E\gibbs{M_L^2}_{\beta_N(p)}\ge(1-\mathsf h(V_1))/(1+2I)^2>0$,
where $1-\mathsf h(V_1)\ge0.135491$. Let $p\le3/2500$. By Lemma~\ref{lem:lat-degradation} and input (C8),
$F_H(\beta_N(p),\frac12)=v_H(\beta_N(p))\le W_N$ with $9W_N^2<1$ and $2\mathsf P(W_N)<1$, so
Proposition~\ref{prop:lat-NU} applies at $\beta=\beta_N(p)$ with $s=\frac12$, and $\E Y\ge1-2\mathsf P(W_N)
\ge0.770991$. Since $v_P<w_+$, the Gibbs-state criterion reaches further along the Nishimori line than the
magnetization criterion.
\end{proof}

\subsection{The Nishimori floor of the contour weight}\label{sec:lat-floor}

The set of temperatures certified by the contour criterion is bounded here (see the end of this
subsection), while on a lattice with couplings $\beta J$ it is a half-line. The reason is that the
Chernoff weight $w_H$ is not monotone in $\beta$.
Where it can be small is nevertheless fixed by gauge symmetry, for every gadget.

\begin{proposition}[Nishimori floor]\label{prop:lat-floor}
Let $H$ be a two-terminal graph and $p\in(0,\frac12)$, with $J$ iid and $\Pp(J_e=-1)=p$. For all real
$\beta$ and $s$,
\[
F_H(\beta,s)=\E_pe^{-2sK_H(J,\beta)}\ \ge\ v_H(\beta_N(p))=F_H(\beta_N(p),\tfrac12).
\]
Consequently $w_H(\beta)\ge w_H(\beta_N(p))=v_H(\beta_N(p))$ for every $\beta>0$, and for every $c$ the
set $\{\beta>0:w_H(\beta)<c\}$ is nonempty if and only if it contains $\beta_N(p)$.
\end{proposition}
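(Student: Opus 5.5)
The plan is a single gauge-averaging identity followed by AM--GM. I will first check two elementary facts about the effective coupling of Lemma~\ref{lem:lat-reduction}.

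\emph{Gauge covariance.} Let $\tau\in\{\pm1\}^{V(H)}$ and $J^\tau_e=J_e\tau_u\tau_v$. Relabelling the interior spins $\sigma_w\mapsto\tau_w\sigma_w$ in $Z_H$ gives $A_H(J^\tau,\beta)=A_H(J,\beta)$ and $K_H(J^\tau,\beta)=\tau_{\pi_0}\tau_{\pi_1}K_H(J,\beta)$ for every $\beta$.

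\emph{Nishimori form of the law.} The law of $J$ is $\Pp_p(J)=\prod_e e^{\beta_NJ_e}/(2\cosh\beta_N)$ with $\beta_N=\beta_N(p)$. The map $J\mapsto J^\tau$ is a bijection of $\{\pm1\}^E$. Averaging over $\tau$ uniformly therefore gives, for every $f$,
\[
\E_pf(J)=\frac{1}{(2\cosh\beta_N)^{|E|}}\sum_J2^{-|V|}\sum_\tau e^{\beta_N\sum_eJ_e\tau_u\tau_v}f(J^\tau).
\]

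Now take $f(J)=e^{-2sK_H(J,\beta)}$. Then $f(J^\tau)$ depends on $\tau$ only through $\varepsilon=\tau_{\pi_0}\tau_{\pi_1}$. Summing the weight $e^{\beta_N\sum J\tau\tau}$ over $\tau$ with fixed $\varepsilon$ (both choices of $\tau_{\pi_0}$, all interior spins) gives $2A_H(J,\beta_N)e^{\varepsilon K_N}$, where $K_N:=K_H(J,\beta_N)$. Writing $K=K_H(J,\beta)$ and $c_N=2^{1-|V|}(2\cosh\beta_N)^{-|E|}$, this yields
\[
F_H(\beta,s)=c_N\sum_JA_H(J,\beta_N)\bigl(e^{K_N-2sK}+e^{-K_N+2sK}\bigr).
\]

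The same computation with $f(J)=e^{-K_H(J,\beta_N)}$, i.e.\ $\beta=\beta_N$ and $s=\frac12$, makes both exponents vanish:
\[
v_H(\beta_N)=F_H(\beta_N,\tfrac12)=c_N\sum_J2A_H(J,\beta_N).
\]
Since $e^{y}+e^{-y}\ge2$ termwise with $y=K_N-2sK$, and $A_H>0$, this gives $F_H(\beta,s)\ge v_H(\beta_N)$ for all real $\beta,s$. The first display of the proposition follows.

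The consequences then follow directly. Minimizing over $s\in[0,1]$ gives $w_H(\beta)\ge v_H(\beta_N)$ for every $\beta>0$. Taking $s=\frac12$ at $\beta_N$ gives $w_H(\beta_N)\le v_H(\beta_N)$, hence $w_H(\beta_N)=v_H(\beta_N)$. Finally, if $w_H(\beta)<c$ for some $\beta>0$, then $w_H(\beta_N)\le w_H(\beta)<c$, so the set $\{\beta>0:w_H(\beta)<c\}$ contains $\beta_N$; the converse is trivial.

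The only step needing care is the bookkeeping in the gauge sum. One must check that summing over all $\tau$ with $\tau_{\pi_0}\tau_{\pi_1}=\varepsilon$ reproduces exactly $2A_He^{\varepsilon K_N}$, with the same constant for both signs of $\varepsilon$ and for both choices of $f$. This holds because $Z_H(s_0,s_1)=A_He^{K_Hs_0s_1}$ does not depend on the joint sign of the poles.
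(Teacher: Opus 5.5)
Your proposal is correct and follows the paper's proof essentially step for step. It uses the same average over all $2^{|V|}$ gauges, the same identity $F_H(\beta,s)=C_H\sum_JA_H(J,\beta_N)\,2\cosh\bigl(K_H(J,\beta_N)-2sK_H(J,\beta)\bigr)$, the same bound $\cosh\ge1$ with equality at $(\beta_N,\frac12)$, and then minimizes over $s$.

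One wording point: the gauge covariance of $K_H$ needs the substitution $\sigma_v\mapsto\tau_v\sigma_v$ at every vertex, poles included. That substitution maps the pole values $(s_0,s_1)$ to $(\tau_{\pi_0}s_0,\tau_{\pi_1}s_1)$, whereas relabelling the interior spins alone does not produce the factor $\tau_{\pi_0}\tau_{\pi_1}$.
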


\begin{proof}
Write $\gamma:=\beta_N(p)$, $V$ and $E$ for the vertex and edge sets of $H$, and $Z_H(a,b)(\beta J)$ for
the sum over interior spins with pole values $(a,b)$, so that $Z_H(a,b)=A_He^{K_Hab}$. For
$\tau\in\{\pm1\}^V$ let $(J^\tau)_e:=\tau_u\tau_vJ_e$. Substituting $\sigma_v\mapsto\tau_v\sigma_v$ at
every vertex gives $K_H(J^\tau,\beta)=\tau_{\pi_0}\tau_{\pi_1}K_H(J,\beta)$ and
$A_H(J^\tau,\beta)=A_H(J,\beta)$ (Lemma~\ref{lem:lat-reduction}). Since $J\mapsto J^\tau$ is a
bijection, $F_H(\beta,s)=\sum_J\Pp_p(J^\tau)e^{-2s\tau_{\pi_0}\tau_{\pi_1}K_H(J,\beta)}$ for every
$\tau$. Average over all $2^{|V|}$ gauges and use $\Pp_p(J)=\prod_ee^{\gamma J_e}/(2\cosh\gamma)$. For
fixed $J$ and $x\in\{\pm1\}$,
\[
\sum_{\tau:\,\tau_{\pi_0}\tau_{\pi_1}=x}e^{\gamma\sum_eJ_e\tau_u\tau_v}=2Z_H(1,x)(\gamma J)
=2A_H(J,\gamma)e^{xK_H(J,\gamma)},
\]
where the factor $2$ pairs $\tau$ with $-\tau$. With $C_H:=2^{1-|V|}(2\cosh\gamma)^{-|E|}$ this gives
\begin{equation}\label{eq:lat-floor}
F_H(\beta,s)=C_H\sum_JA_H(J,\gamma)\,2\cosh\bigl(K_H(J,\gamma)-2sK_H(J,\beta)\bigr).
\end{equation}
Since $\cosh\ge1$, $F_H(\beta,s)\ge2C_H\sum_JA_H(J,\gamma)$, and at $(\beta,s)=(\gamma,\frac12)$ every
$\cosh$ in~\eqref{eq:lat-floor} equals $1$. So the lower bound is $F_H(\gamma,\frac12)=v_H(\gamma)$.
Minimizing over $s\in[0,1]$ gives $w_H(\beta)\ge v_H(\gamma)\ge w_H(\gamma)$, and the last claim follows.
\end{proof}

This is a gauge inequality in the spirit of Nishimori's~\cite{Nishimori1981}: a gauge average followed
by $\cosh\ge1$. Its consequence for contour methods is structural. At fixed $p$ every Chernoff--Peierls
criterion (Horiguchi and Morita's convergence criterion $3w_H<1$, the plus-state criterion $w_H<w_+$,
and the two-point criterion $\mathsf h(v_H)<1$ used in Theorem~\ref{thm:lat-A}; indeed every
criterion of the form $w_H(\beta)<c$ or $v_H(\beta)<c$, since $v_H(\beta)=F_H(\beta,\frac12)$) holds
at some temperature if and only if it holds at the Nishimori temperature, and by Lemma~\ref{lem:lat-degradation} the set of $p$ at which it holds anywhere is an
initial segment of $(0,\frac12)$. Such methods therefore certify order off the Nishimori line only at
disorder strengths where the Nishimori point itself is certified; they see no further than the
Nishimori line. On a lattice with couplings $\beta J$ the certified set is a low-temperature half-line
through $\beta_N(p)$. On $\Z^2[H]$, at $p_0$, the set $\{\beta:w_H(\beta)<w_+\}$ contains $\beta_N$,
where $w_H(\beta_N)\le0.20530$, and it is bounded: it lies in $(\beta_{\rm hot},\beta_{\rm cold})$,
since at each $\beta$ outside that interval Theorem~\ref{thm:lat-main}(a), through
Corollary~\ref{cor:lat-UH}, gives almost-sure uniqueness, which excludes $w_H(\beta)<w_+$ by Proposition~\ref{prop:lat-NU}. Its connected
component containing $\beta_N$ lies strictly between the points $0.56075\beta_N$ and
$1.40088\beta_N$ of Remark~\ref{rem:lat-sharp}, where $w_H>w_+$.\footnote{Floating-point evaluations
suggest that the set is an interval.} The reentrance itself, however, comes from the uniqueness side, through
the non-monotone law of $|K_H(\beta)|$; a Chernoff--Peierls contour argument alone cannot produce it.

\subsection{Scope, and relation to earlier work}\label{sec:lat-scope}

\paragraph{What the lattice result says.} We are not aware of an earlier rigorous (computer-assisted)
proof that, for the Ising model with quenched iid $\pm J$ couplings on a planar, $\Z^2$-periodic graph of
maximum degree $4$, ferromagnetic long-range order present at the Nishimori temperature is absent at all
sufficiently low temperatures at the same disorder strength. Specifically, on $\Z^2[H]$, $H=H_4(1000,10)$, with
$\Pp(J=-1)=9/10000$, the Gibbs state is almost surely non-unique at the Nishimori temperature and
$\liminf_L\E\gibbs{M_L^2}\ge0.08$ there, in free and periodic boxes, while for each fixed
$\beta\ge1.9924\beta_N$ the Gibbs state is almost surely unique and $\E\gibbs{M_L^2}\to0$. We make no
claim for $\beta\in(0.4399\beta_N,0.5608\beta_N)$ or $\beta\in(1.4008\beta_N,1.9924\beta_N)$.

The mechanism is the classical decorated-bond one: at high temperature long decorations are
entropically weak and the direct bond dominates, whereas at low temperature frustrated decorations cap
the effective coupling. Exact paramagnet--ferromagnet--paramagnet reentrance on deterministic decorated
square lattices goes back to Syozi and Miyazima~\cite{Syozi1968,Miyazima1968}, and Kitatani, Miyashita
and Suzuki~\cite{KMS1985,KMS1986} obtained it from the exact free-fermion critical conditions of the
square lattice itself with a periodic pattern of competing nearest-neighbour couplings, relating it to
ground-state degeneracy; see also Miyashita~\cite{Miyashita2010} and Miyashita and
Vincent~\cite{MiyashitaVincent2001}. Exact reentrance with annealed bond randomness was found by Kasai,
Miyazima and Syozi~\cite{KasaiMiyazimaSyozi1969} and Kasai and Syozi~\cite{KasaiSyozi1973}.

For quenched random-bond models, Wolff and Zittartz~\cite{WolffZittartz1985,WolffZittartz1986} argued,
assuming that the critical temperature depends continuously on the couplings, that the Ising model on
$\Z^d$ with couplings $J$ and $aJ$, $-1<a<0$, has a backbending ferromagnetic boundary; their argument
does not cover the $\pm J$ case $a=-1$, which their qualitative phase diagram draws without
backbending, and does not involve the Nishimori line. Numerical evidence for reentrance in the
nearest-neighbour two-dimensional $\pm J$ model is reported by Nobre~\cite{Nobre2001} and Thomas and
Katzgraber~\cite{ThomasKatzgraber2011}, and on hierarchical lattices by Hinczewski and
Berker~\cite{HinczewskiBerker2005} and G\"uven et al.~\cite{Guven2008}; Carlson et
al.~\cite{CCCST1990} proved a reentrant local bifurcation of the magnetization recursion on the
Bethe lattice with fixed boundary conditions. Relative to these works, what we add, as far as we
are aware, is a rigorous statement for quenched iid $\pm J$ couplings, anchored at the Nishimori
temperature. The probabilistic tools are known: Horiguchi and Morita's quenched Peierls bound,
Newman's random-cluster domination with Harris's theorem, and the flip-and-ergodicity step. Their
combination through the exact decimation, and the certified inputs, are our contribution.
A related use of decimation followed by percolation is
Okuyama and Ohzeki's proof that spin-glass order is absent on certain Migdal--Kadanoff hierarchical
lattices~\cite{OkuyamaOhzeki2026}; there a single decimation step produces zero couplings with a
temperature-independent probability, and the conclusion holds at all temperatures.

\paragraph{Nishimori's verticality argument.} Section~\ref{sec:intro-prior} explains how
Theorem~\ref{thm:lat-main} bears on Nishimori's verticality argument~\cite{Nishimori1986}: its
conclusion does not hold on every two-dimensional periodic lattice, and at least one of its two
heuristic steps fails on $\Z^2[H]$; we do not determine which. Kitatani~\cite{Kitatani1992} proved the
verticality of the ferromagnetic--spin-glass boundary ``assuming an appropriate condition'', which
concerns a modified $\pm J$ model with correlated disorder~\cite[\S4.8]{NishimoriBook}. Nishimori,
Ohzeki and Okuyama observed that reentrance would violate this condition~\cite[\S III.F]{NOO2025};
we do not discuss it further.

\paragraph{A proxy, not the nearest-neighbour lattice.} The fully connected models of
Sections~\ref{sec:sk}--\ref{sec:certificates} and the decorated lattice are proxies for the
nearest-neighbour $\pm J$ model on $\Z^2$. Together they show that verticality below the
multicritical point is not universal: the Sherrington--Kirkpatrick boundary is vertical, the $p$-spin boundaries
with $p\ge3$ are reentrant near the triple point, and so is the boundary on one two-dimensional
periodic lattice at one disorder strength. They say nothing about the
nearest-neighbour lattices $\Z^d$. Two features of the present argument do not transfer to $\Z^2$. The
first is the exact decimation to iid backbone couplings, whose law is not of the form $\beta J$. The
second is the frustration-capped effective coupling of the gadget, which is what makes low-temperature
uniqueness possible. Chernoff--Peierls contour criteria, moreover, are anchored at the Nishimori
temperature for every gadget (Proposition~\ref{prop:lat-floor}), so they cannot produce reentrance by
themselves; on $\Z^2$ the
low-temperature side would need a different argument (Section~\ref{sec:discussion}).
\section{Discussion}\label{sec:discussion}

\paragraph{Verticality is model-dependent.} Gauge symmetry fixes the side on which the
ferromagnetic boundary can depart from the vertical line through the multicritical point. The
argument of the gauge transfer (Lemma~\ref{lem:gauge-transfer}) applies verbatim to
$p$-body couplings; \cite[\S3]{ItoiSakamoto2023} state it for two-body interactions, and the proof is
the same. Magnetic order at $(\beta,j_0)$ therefore forces
magnetic order at the Nishimori point $(2j_0,j_0)$. A non-vertical boundary must bend so that
cooling destroys order, which is what Theorem~\ref{thm:reduction} exhibits. Gauge symmetry does
not decide whether a departure happens. In the SK model
the boundary is vertical at every temperature below the multicritical point, and the order
parameter vanishes continuously across it (Theorem~\ref{thm:skline}). Verticality holds because
the zero-field spin-glass phase is marginally stable: its susceptibility equals $1/\beta$
(Theorem~\ref{thm:sk}(b),(d)), a consequence of the absence of a gap at the origin in the support of
the Parisi measure (Theorem~\ref{thm:accum}). For $p\ge3$ the transition from the spin-glass
phase to the ferromagnet is discontinuous in the replica picture~\cite{GNS2001,Nishimori2026}.
The competing free-energy branches then have different temperature dependence, and the boundary
bends. This is the mechanism of Nishimori's replica computation~\cite{Nishimori2026}, where the
curvature of the boundary is proportional to the difference of the specific heats of the two
phases at the triple point; Theorem~\ref{thm:nm-main} proves the corresponding two-point
statement near the triple point for every integer $p\ge3$. On the decorated lattice of
Section~\ref{sec:lattice} the mechanism is different again, and
classical~\cite{Syozi1968,Miyazima1968,KMS1986}: the effective backbone coupling first
strengthens and then weakens on cooling, because frustrated units cap it at low temperature.

\paragraph{Temperature chaos.} Nishimori, Ohzeki and Okuyama argue that, if a spin-glass phase
exists at finite temperature, a reentrant ferromagnet--spin-glass boundary forces temperature
chaos in that phase~\cite{NOO2025}. Stated precisely~\cite[\S II.A]{Nishimori2026}, reentrance
implies that either no finite-temperature spin-glass phase exists, or the two-temperature
overlap distribution is $\delta(x)$ for at least one pair of distinct temperatures in the
spin-glass phase. Nishimori applies this to the $p$-spin model~\cite{Nishimori2026}.
Theorems~\ref{thm:nm-main}, \ref{thm:p3} and~\ref{thm:p4} supply the reentrance premise
rigorously, in two-point form. We have not verified the general implication in the form needed
here, so we draw no conclusion about temperature chaos. In the SK model temperature chaos is
expected at the replica level~\cite{Nishimori2026}, while the boundary is vertical
(Theorem~\ref{thm:sk}); at that level the SK model is an example of temperature chaos without
reentrance.

\paragraph{Open problems.}
\begin{itemize}
\item \emph{The nearest-neighbour square lattice.} For the $\pm J$ model on $\Z^2$ with
$\Pp(J_e=-1)=p$ (here, as in Section~\ref{sec:lattice}, $p$ is the probability of a negative bond),
decide whether there is a $p$ at which the Gibbs state is almost surely
non-unique at the Nishimori temperature $\beta_N(p)$ but almost surely unique at some
$\beta>\beta_N(p)$ (reentrance), or whether non-uniqueness at $\beta_N(p)$ always persists at
every $\beta\ge\beta_N(p)$ (a vertical boundary). The obstruction is the one of
Section~\ref{sec:lat-floor}. Take $H$ a single edge, so that $\Z^2[H]=\Z^2$;
Lemma~\ref{lem:lat-BR} and Proposition~\ref{prop:lat-NU}, with their proofs, hold for every finite
two-terminal graph $H$. The Chernoff weight of $\beta J$ is nonincreasing in $\beta$ and equals
$2\sqrt{p(1-p)}$ for every $\beta\ge\beta_N(p)/2$, where the optimal $s$ has $2s\beta=\beta_N(p)$.
So Proposition~\ref{prop:lat-NU} gives almost-sure non-uniqueness on the nearest-neighbour lattice
at every $\beta\ge\beta_N(p)/2$ once $2\sqrt{p(1-p)}<w_+$, which holds for $p\le0.0192$; at such
$p$ the ferromagnetic phase extends to arbitrarily low temperature. With $H$ a single edge, the
contour criterion of Proposition~\ref{prop:lat-NU} gives non-uniqueness only when
$2\sqrt{p(1-p)}<w_+$, i.e.\ $p<0.019248\ldots$; by Proposition~\ref{prop:lat-floor} a criterion of
this Chernoff--Peierls type holds at some temperature only if it holds at the Nishimori temperature.
Newman's criterion gives uniqueness on $\Z^2$ only for $\beta\le\frac12\log2$, at every $p$,
because $|J_e|=1$. Near the multicritical point of the nearest-neighbour model, where reentrance is
debated, neither criterion applies, and the shape of the boundary there remains open. A proof of
reentrance would need an argument for
uniqueness at low temperature that is not anchored at the Nishimori temperature; ours rests on
an exact decimation to independent couplings with a frustration-capped law, which $\Z^2$ does not
have. A proof of verticality would need order at every low temperature at disorder strengths
where contour arguments do not certify the Nishimori point.
\item \emph{Real $p\in(2,3)$.} Nishimori poses the sign of $C_{\rm SG}-C_{\rm FM}$ at the triple
point as an open problem for every real $p>2$, including the limit
$p\to2^+$~\cite[p.~19]{Nishimori2026}. The triple-point system~\eqref{eq:nm-system} has a unique
solution for every real $p>2$, since the argument of Lemma~\ref{lem:nm-U} applies at every level
in $(0,1)$ (Remark~\ref{rem:nm-scope}); so the constants~\eqref{eq:nm-const} are defined there. Decide
whether $0<\Omega B<1$ and $A_{\rm FM}>A_{\rm SG}$ for every $p\in(2,3)$, and determine the
behaviour of $\kappa_p$ as $p\to2^+$. Our certificates treat each integer $p\le25$ separately, and
the bounds of Theorem~\ref{thm:nm-LP} hold for integer $p\ge26$; neither covers this range.
\item \emph{The curvature of the boundary.} Theorem~\ref{thm:nm-main} places, at each bias
$j_{0M}+\delta$ with $0<\delta<\bar\delta$, an ordered and a disordered temperature. Nishimori's
replica boundary near the triple point is the curve
$j_0-j_{0M}=K_b(\beta-\beta_1)^2+O(|\beta-\beta_1|^3)$ with
$K_b=(A_{\rm FM}-A_{\rm SG})/(2\beta_1\mu^p)$~\cite[(51)]{Nishimori2026}. Prove that for small
$\delta>0$ there is $\beta^*(\delta)$ such that at bias $j_0=j_{0M}+\delta$ one has
$\liminf_N\E\gibbs{|m|}>0$ for $2j_0\le\beta<\beta^*(\delta)$ and $\E\gibbs{|m|}\to0$ for
$\beta>\beta^*(\delta)$, with $\beta^*(\delta)-\beta_1\sim(\delta/K_b)^{1/2}$, as the replica
picture predicts. Our cold point lies at
$\beta_1+\sqrt2\,(\delta/K_b)^{1/2}$, a factor $\sqrt2$ beyond the replica crossing, which buys a
free-energy margin of order $\delta$; $\bar\delta$ is neither explicit nor uniform in $p$.
On the decorated lattice the corresponding questions are whether the set $\mathcal N$ of
Theorem~\ref{thm:lat-main}(c) is an interval and where in the two gaps left by that theorem its
endpoints lie.
\item \emph{The rate at the vertical boundary.} For $\beta>1$ and $\gamma\downarrow\beta$,
Theorem~\ref{thm:skline} gives $\max_{\Om(\beta,\gamma)}|\mu|\le(12(\gamma/\beta-1))^{1/4}$, and the
exact Nishimori-line magnetization improves the exponent to $1/2$ (Remark~\ref{rem:spiked}).
Determine the true exponent. The constant in the exponential concentration of
Theorem~\ref{thm:skline}(c) is not explicit.
\item \emph{Accumulation without computer assistance.} Prove Lemma~\ref{lem:accumulator} without
the computer-assisted inequality (V1). Decide whether a shape property of $\Gamma(q)/q$ of the
kind used in Section~\ref{sec:accum} also excludes gaps of the support away from the origin;
this would give another route to Lopatto's theorem that $\operatorname{supp}\mu_\beta$ is an
interval~\cite{Lopatto2026}.
\end{itemize}

\appendix

\section{Proofs and certificates for Sections~\ref{sec:prelim}--\ref{sec:pspin} and~\ref{sec:certificates}}\label{app:mf}

This appendix contains the proofs deferred from Sections~\ref{sec:prelim}--\ref{sec:pspin} and the
certificates of Sections~\ref{sec:accum} and~\ref{sec:certificates}: facts about the Parisi
functional (Appendix~\ref{app:mf-parisi}), the SK model
(Appendices~\ref{app:mf-sk}--\ref{sec:spiked}), accumulation at the origin with the proof of
Proposition~\ref{prop:accum-convex} and its certificate (Appendix~\ref{app:mf-accum}), the
$p$-spin free-energy bounds (Appendix~\ref{app:mf-pspin}), the directional derivatives
(Appendix~\ref{app:D}) and the certificates for $p=3$ and $p=4$ (Appendix~\ref{app:cert34}).

\subsection{The Parisi functional}\label{app:mf-parisi}

Two items of Proposition~\ref{prop:facts} combine sources. In \textup{(P1)}, Panchenko writes the argument of~\cite[\S2]{Panchenko2014}
without a deterministic field~\cite[p.~3]{Panchenko2014}. With the field included it runs unchanged: the
field term does not depend on the interpolation parameter, so it enters only through the Gibbs
measure, and the positivity bound~\cite[(2.9)]{Panchenko2014} is uniform over all measures on
$\{\pm1\}^N$. In \textup{(P4)}, Auffinger and Chen define $\Gamma_\mu$ through a
Girsanov-weighted expectation; the change of measure of~\cite[Lemma~8.5 and
Corollary~8.7]{JT2017} identifies it with the expectation along $X$.

\paragraph{The last statement of (P6): $0\in\operatorname{supp}\mu$ at $h=0$.}\label{app:P6}
Let $h=0$ and suppose
$q_m=\min\operatorname{supp}\mu>0$. On $[0,q_m)$ one has $\alpha_\mu=0$, so $X$ has no drift and
$\partial_x^2u_\mu(s,X_s)$ is a martingale. Hence $\Gamma_\mu'=\xi''\E[(\partial_x^2u_\mu)^2]$ is
nondecreasing there. It is strictly increasing on $(0,q_m)$: either $\xi''$ is strictly
increasing ($a_p\ne0$ for some $p\ge3$), or $\xi''$ is constant and
$\frac{\dd}{\dd s}\E[(\partial_x^2u_\mu)^2]=\xi''\E[(\partial_x^3u_\mu(s,X_s))^2]>0$ for
$0<s<q_m$, because $X_s$ has a positive density and $\partial_x^3u_\mu(s,\cdot)\not\equiv0$
(otherwise $\partial_xu_\mu(s,\cdot)$ would be linear with slope
$\partial_x^2u_\mu\ge C/\cosh^2x>0$~\cite[Proposition~2, (16)]{AC2015a}, contradicting
$|\partial_xu_\mu|\le1$). Thus $\Gamma_\mu$ is strictly convex on $[0,q_m]$ with
$\Gamma_\mu(0)=0$ (by evenness) and $\Gamma_\mu(q_m)=q_m$, so $\Gamma_\mu(s)<s$ on $(0,q_m)$.
Then $G_\mu'=-\frac12\xi''(\Gamma_\mu-s)>0$ on $(0,q_m)$, and $G_\mu(0)<G_\mu(q_m)=\min G_\mu$, a
contradiction.

\subsection{The SK model with a Curie--Weiss bias}\label{app:mf-sk}

As in Sections~\ref{sec:sk} and~\ref{sec:skline}, $\mu$ without subscript is a real variable.

\paragraph{The zero-field Parisi measure for $\beta\le1$.}
For $\beta\le1$ one has $\mu_\beta=\delta_0$, hence
$u_{\mu_\beta}(s,x)=\frac{\beta^2}2(1-s)+\lc x$ and $\theta_\beta=1$. Indeed, for the measure
$\delta_0$ this $u$ solves~\eqref{eq:pde}, $\Gamma_{\delta_0}(0)=\tanh^2(0)=0$, and \textup{(P4)} gives
$\Gamma_{\delta_0}'=\beta^2\E\operatorname{sech}^4(X_s)\le1$. So $\Gamma_{\delta_0}(s)\le s$,
$G_{\delta_0}$ is nondecreasing, and $0\in\operatorname{argmin}G_{\delta_0}$. By \textup{(P6)} and
\textup{(P2)}, $\delta_0$ is the unique minimizer.

\paragraph{Proof of Theorem~\ref{thm:sk}(a).}
\emph{The case $\gamma<1$.} By~\eqref{eq:quadratic} applied to $\mu_\beta$,
$\varphi_\beta(h)\le h^2/2$. Hence, for $\mu\ne0$,
\[
F^{\rm SK}(\beta,\gamma\mu)-\frac{\gamma\mu^2}2-F^{\rm SK}(\beta,0)
\le\frac{\gamma\mu^2}2(\gamma-1)<0,
\]
so $\Om(\beta,\gamma)=\{0\}$ and $F(\beta,\cdot)=F^{\rm SK}(\beta,0)$ on $[0,1)$. The
functions $F_N(\beta,\cdot)$ are convex and converge pointwise to a function that is constant
on $[0,1)$. At an interior point $\gamma\in(0,1)$ this gives
$\limsup_N\partial_\gamma F_N(\beta,\gamma)\le\partial^+_\gamma F(\beta,\gamma)=0$, so
$\E\gibbs{m^2}\to0$.

\emph{The case $\beta>1$ and $1\le\gamma<\beta$.} Put $J_0=\gamma/\beta<1$. The coefficient of
$\sigma_i\sigma_j$ in $H_N$ is $\beta J_{ij}$ with $J_{ij}\sim N(J_0/N,1/N)$. The density of
$J_{ij}$ is proportional to $e^{-NJ^2/2}e^{J_0J}$, an even function times $e^{\beta_NJ}$ with
$\beta_N=J_0$. Nishimori's gauge identity~\cite{Nishimori1981} (for SK and mixed $p$-spin
models, see~\cite[Lemma~2.1 and \S3]{ItoiSakamoto2023}) states that
\[
\E\gibbs{\sigma_i\sigma_j}_\beta=\E\bigl[\gibbs{\sigma_i\sigma_j}_{\beta_N}\gibbs{\sigma_i\sigma_j}_\beta\bigr],
\qquad
a_{ij}:=\E\gibbs{\sigma_i\sigma_j}_{\beta_N}=\E\gibbs{\sigma_i\sigma_j}_{\beta_N}^2\ge0 ,
\]
where $\gibbs{\cdot}_{\beta_N}$ is the Gibbs measure with the same disorder law at inverse
temperature $\beta_N$. By Cauchy--Schwarz, as in~\cite[eq.~(26)]{ItoiSakamoto2023},
\[
\E\gibbs{m^2}_{\beta,\gamma}\le\frac1N+\frac1{N^2}\sum_{i\ne j}a_{ij}^{1/2}
\le\frac1N+\Bigl(\E\gibbs{m^2}_{J_0,J_0^2}\Bigr)^{1/2},
\]
where $(J_0,J_0^2)$ is the Nishimori point of the same bias. Since $J_0^2<1$, the first case
gives $\E\gibbs{m^2}_{J_0,J_0^2}\to0$, and hence
$\E\gibbs{m^2}_{\beta,\gamma}\to0$.

\emph{The bound on $\varphi_\beta$ and $\Om=\{0\}$.} Integrating
$\partial_\gamma F_N=\E\gibbs{m^2}/2$ and using bounded convergence gives
$F(\beta,\gamma)=F^{\rm SK}(\beta,0)$ for all $\gamma<\max(1,\beta)$. By~\eqref{eq:chen}, this
means $\varphi_\beta(\gamma\mu)\le\gamma\mu^2/2$ for $|\mu|\le1$. The same inequality holds for
$|\mu|>1$. Indeed, $|\partial_hF^{\rm SK}|\le1$ gives
$\varphi_\beta(\gamma\mu)\le\varphi_\beta(\gamma)+\gamma(|\mu|-1)$, and $\varphi_\beta(\gamma)\le\gamma/2$, so
$\varphi_\beta(\gamma\mu)-\gamma\mu^2/2\le-\gamma(|\mu|-1)^2/2\le0$. Letting $\gamma\uparrow\max(1,\beta)$
yields $\varphi_\beta(h)\le h^2/(2\max(1,\beta))$ for all $h$. For $\gamma<\max(1,\beta)$ and
$\mu\ne0$ it then follows that
$\varphi_\beta(\gamma\mu)-\gamma\mu^2/2\le\frac{\gamma\mu^2}2(\gamma/\max(1,\beta)-1)<0$, so
$\Om=\{0\}$.\qed

\subsection{The boundary line}\label{app:mf-skline}

As in Sections~\ref{sec:sk} and~\ref{sec:skline}, $\mu$ without subscript is a real variable.
This subsection proves Lemmas~\ref{lem:concentration} and~\ref{lem:fixed-field}, Lemma~\ref{lem:y}
on the function $y$ of Section~\ref{sec:skline}, and Corollary~\ref{cor:envelope};
Remarks~\ref{rem:right-nbhd} and~\ref{rem:beta-le-one} supplement Section~\ref{sec:skline}.

\begin{proof}[Proof of Lemma~\ref{lem:concentration}]
If $\gamma=0$, then $\Om=[-1,1]$ and $U=\emptyset$; so let $\gamma>0$ and $U\ne\emptyset$. For
$\mu\in[-1,1]$ put
\[
W_\mu=\frac1N\log Z^{\rm SK}_N(\beta,\gamma\mu)-\frac{\gamma\mu^2}2,\qquad
\bar W_\mu=\E W_\mu,\qquad
F_\mu=F^{\rm SK}(\beta,\gamma\mu)-\frac{\gamma\mu^2}2,
\]
and $\Delta_\mu=W_\mu-\bar W_\mu$. Let $\Theta_N=\{-1,-1+\frac2N,\dots,1\}$ be the set of values of
$m$. Thus $\Om=\operatorname{argmax}_{[-1,1]}F_\mu$.

\emph{Step 1: splitting by the value of $m$.} If $\Theta_N\cap U=\emptyset$ there is nothing to
prove. If $m(\sigma)=\mu$, then $\gamma Nm^2/2=\gamma\mu\sum_i\sigma_i-\gamma N\mu^2/2$, so
$Z_N\gibbs{\ind\{m\in U\}}\le\sum_{\mu\in\Theta_N\cap U}e^{NW_\mu}$. Also $m^2\ge2\mu m-\mu^2$ and
$\gamma\ge0$ give $Z_N\ge e^{NW_\mu}$ for every $\mu$. Hence
\[
\gibbs{\ind\{m\in U\}}\le(N+1)\exp N\Bigl(\max_{\mu\in\Theta_N\cap U}W_\mu-\max_{\mu\in\Theta_N}W_\mu\Bigr).
\]

\emph{Step 2: a deterministic gap.} The functions $h\mapsto F^{\rm SK}_N(\beta,h)$ are
$1$-Lipschitz and converge pointwise, hence uniformly on $[-\gamma,\gamma]$. So
$\bar W_\mu\to F_\mu$ uniformly on $[-1,1]$, and $\mu\mapsto F_\mu$ is $2\gamma$-Lipschitz. The
closure of $U$ is compact and disjoint from $\Om$, so
$\eta:=\max_{[-1,1]}F_\mu-\sup_{\mu\in U}F_\mu>0$. Choose $N_1$ such that
$\sup_{[-1,1]}|\bar W_\mu-F_\mu|\le\eta/16$ and $2\gamma/N\le\eta/16$ for $N\ge N_1$. Every point
of $[-1,1]$ is within $1/N$ of $\Theta_N$, so
$\max_{\Theta_N}\bar W_\mu\ge\max_{[-1,1]}F_\mu-\eta/8$ and
$\max_{\Theta_N\cap U}\bar W_\mu\le\sup_UF_\mu+\eta/16$. Hence
$\max_{\Theta_N}\bar W_\mu-\max_{\Theta_N\cap U}\bar W_\mu\ge13\eta/16\ge3\eta/4$.

\emph{Step 3: a good event.} On $E_N=\{\max_{\mu\in\Theta_N}|\Delta_\mu|<\eta/8\}$, Step~2 gives
$\max_{\Theta_N\cap U}W_\mu-\max_{\Theta_N}W_\mu\le-3\eta/4+\eta/4=-\eta/2$, so
$\gibbs{\ind\{m\in U\}}\le(N+1)e^{-N\eta/2}$. Off $E_N$, $\gibbs{\ind\{m\in U\}}\le1$.

\emph{Step 4: concentration.} Write $\gibbs{\cdot}^{\rm SK}_{\beta,h}$ for the Gibbs average of
the SK model with field $h$, i.e.\ the measure in $Z^{\rm SK}_N(\beta,h)$. Since
$\partial_{g_{ij}}\log Z^{\rm SK}_N(\beta,h)=\beta N^{-1/2}\gibbs{\sigma_i\sigma_j}^{\rm SK}_{\beta,h}$,
for every field $h$ the map $(g_{ij})_{i<j}\mapsto N^{-1}\log Z^{\rm SK}_N(\beta,h)$ is
$\beta/\sqrt{2N}$-Lipschitz. For $\beta>0$, Gaussian concentration for Lipschitz
functions~\cite[eq.~(1.22)]{Ledoux1999}, applied to this map and to its negative, gives
$\Pp(|\Delta_\mu|\ge t)\le2e^{-Nt^2/\beta^2}$ for every $\mu$ and every $t>0$, and a union bound
over the $N+1$ fields $\gamma\mu$, $\mu\in\Theta_N$, gives
$\Pp(\max_{\Theta_N}|\Delta_\mu|\ge t)\le2(N+1)e^{-Nt^2/\beta^2}$. With $t=\eta/8$,
$\Pp(E_N^c)\le2(N+1)e^{-N\eta^2/(64\beta^2)}$. For $\beta=0$, $\Delta\equiv0$ and $E_N^c=\emptyset$.

Therefore
$\E\gibbs{\ind\{m\in U\}}\le(N+1)e^{-N\eta/2}+2(N+1)e^{-N\eta^2/(64\beta^2)}$ for $N\ge N_1$,
without the second term if $\beta=0$.
The prefactors and the finitely many $N<N_1$ are absorbed into $K$, which is not explicit because
$N_1$ is not. For the particular case take $U=\{x\in[-1,1]:|x|\ge\varepsilon\}$, which is at
distance $\varepsilon-\max_\Om|\mu|>0$ from $\Om$. Finally,
$\E\gibbs{m^2}\le\varepsilon^2+\E\gibbs{\ind\{|m|\ge\varepsilon\}}$ because $m^2\le1$; let
$N\to\infty$ and then $\varepsilon\downarrow\max_\Om|\mu|$.
\end{proof}

\begin{proof}[Proof of Lemma~\ref{lem:fixed-field}]
(a) Since $\E\gibbs{m^2}_{\beta,s}$ is nondecreasing in $s$,
\[
F_N(\beta,\gamma')-F_N(\beta,\gamma)=\frac12\int_\gamma^{\gamma'}\E\gibbs{m^2}_{\beta,s}\,\dd s
\le\frac{\gamma'-\gamma}2\E\gibbs{m^2}_{\beta,\gamma'};
\]
let $N\to\infty$, using~\eqref{eq:chen}
for the existence of the limits. (b) Let $h_*=\gamma\mu_*$; then $\mu'=h_*/\gamma'\in[-1,1]$.
Formula~\eqref{eq:chen} at $\gamma'$ with the trial $\mu'$, and at $\gamma$ with the maximizer
$\mu_*$, gives
\[
F(\beta,\gamma')-F(\beta,\gamma)\ge\Bigl[F^{\rm SK}(\beta,h_*)-\frac{h_*^2}{2\gamma'}\Bigr]
-\Bigl[F^{\rm SK}(\beta,h_*)-\frac{h_*^2}{2\gamma}\Bigr]=\frac{\gamma\mu_*^2(\gamma'-\gamma)}{2\gamma'} .
\]
(c) Combine (a) and (b).
\end{proof}

\begin{remark}[Why a right neighbourhood is needed]\label{rem:right-nbhd}
Lemma~\ref{lem:fixed-field}(c) uses the Gibbs bound at $\gamma'>\gamma$. The bound at $\gamma$
itself would not suffice. For convex $F_N\to F$ one has only
\[
\partial^-F(\gamma)\le\liminf_N\partial_\gamma F_N(\gamma)\le\limsup_N\partial_\gamma F_N(\gamma)\le\partial^+F(\gamma),
\]
so $\E\gibbs{m^2}_{\beta,\beta}\to0$ yields only the trivial $\partial^-_\gamma F(\beta,\beta)\le0$,
whereas by Lemma~\ref{lem:fixed-field}(b) a maximizer $\mu_*$ at $\gamma=\beta$ forces
$\partial^+_\gamma F(\beta,\beta)\ge\mu_*^2/2$. The inequality
$\partial^+F(\gamma)\le\liminf_N\partial_\gamma F_N(\gamma)$ fails in general: take $F_N$ convex,
vanishing on $[0,\beta+1/N]$, with slope $1$ after $\beta+2/N$.
\end{remark}

The functions $\bar x$, $y$, $Y$ and the constants $\delta_c$, $\tau$ in the next lemma and its proof
are those defined before Theorem~\ref{thm:skline}.

\needspace{5\baselineskip}
\begin{lemma}[The function $y$]\label{lem:y}\leavevmode
\begin{enumerate}
\item[\textup{(a)}] $r(x)=\lc(x)/x^2$ is strictly decreasing on $(0,\infty)$, with $r(0+)=1/2$ and
$r(\infty)=0$. Hence $\bar x(b)=r^{-1}(1/(2b^2))$ is well defined for $b>1$.
\item[\textup{(b)}] $\bar x$ is continuous on $(0,\infty)$ and $\bar x(b)\to0$ as $b\downarrow1$.
Hence $y$ and $Y$ are continuous on $(-1,\infty)$.
\item[\textup{(c)}] $y(\delta)/\delta\to12$ as $\delta\downarrow0$, and $y(\delta)\le12\delta$ for
$0\le\delta\le1/20$.
\item[\textup{(d)}] $y(\delta)<1$ if and only if $\delta<\delta_c$.
\end{enumerate}
\end{lemma}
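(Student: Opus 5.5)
For~(a) I would study $r(x)=\lc(x)/x^2$ through the sign of $r'(x)=(x\tanh x-2\lc x)/x^3$. Put $k(x)=x\tanh x-2\lc x$. Then $k(0)=0$ and
\[
k'(x)=x\sech^2x-\tanh x .
\]
This derivative is negative for $x>0$, because $x<\sinh x\cosh x$ is equivalent to $2x<\sinh 2x$. Hence $k<0$ on $(0,\infty)$ and $r$ is strictly decreasing. The limits follow from the Taylor expansion $\lc x=x^2/2-x^4/12+O(x^6)$, which gives $r(0+)=1/2$, and from $\lc x\le x$, which gives $r(\infty)=0$. So $r$ is a continuous strictly decreasing bijection of $(0,\infty)$ onto $(0,1/2)$, and $\bar x(b)=r^{-1}(1/(2b^2))$ is well defined for $b>1$ because $1/(2b^2)\in(0,1/2)$.

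Part~(b) follows at once from~(a). A continuous strictly monotone bijection has a continuous inverse, so $\bar x$ is continuous on $(1,\infty)$. As $b\downarrow1$ we have $1/(2b^2)\uparrow1/2$, so $\bar x(b)\to0$, which matches $\bar x=0$ on $(0,1]$. Continuity of $y$ and $Y$ is then immediate.

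For~(c) I would expand the defining equation $r(\bar x)=1/(2b^2)$ with $b=1+\delta$. Using $r(x)=\frac12-\frac{x^2}{12}+O(x^4)$ and $1/(2b^2)=\frac12-\delta+O(\delta^2)$, we get $\bar x^2/12=\delta+O(\delta^2)$, so $\bar x^2\sim12\delta$. Since $y=\bar x^2/b^4$, this gives $y(\delta)/\delta\to12$. The explicit inequality $y\le12\delta$ on $[0,1/20]$ is the main obstacle, and I plan to handle it as follows.
\begin{itemize}
\item Use monotonicity of $r$: the claim $\bar x(b)\le x_0:=\sqrt{12\delta}\,b^2$ is equivalent to $r(x_0)\le1/(2b^2)$.
\item Use the alternating-series bound $\lc x\le x^2/2-x^4/12+x^6/45$, valid for $x$ up to about $1.1$. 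This range covers our case, since $x_0\le\sqrt{0.6}\cdot1.1025<1$.
\item The claim then reduces to a polynomial inequality in $\delta$ on $[0,1/20]$: we need $\frac12-\frac{x_0^2}{12}+\frac{x_0^4}{45}\le\frac1{2b^2}$. The left side equals $\frac12-\delta b^4+\frac{144}{45}\delta^2b^8$, and I would verify the resulting inequality directly.
\end{itemize}
One caveat: the quadratic correction may be tight. If it is, I would instead prove the inequality at the endpoint $\delta=1/20$ numerically, with rigorous bounds on $\lc$, and combine this with convexity or monotonicity of $y(\delta)/\delta$.

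For~(d), the condition $y<1$ is equivalent to $\bar x(b)<b^2$, that is, $r(b^2)<1/(2b^2)$, i.e.\ $\lc(b^2)<b^2/2$. Strict monotonicity of $r$ makes this comparison valid. Set $u=e^{b^2}$. The equality $\lc(b^2)=b^2/2$ becomes $(u+u^{-1})/2=u^{1/2}$, which after simplification yields a cubic in $u^{1/2}$ related to $t^3=t^2+t+1$, with root $\tau$. Solving for $b$ then gives $b=\sqrt{2\log\tau}$, so $\delta_c=\sqrt{2\log\tau}-1$. The remaining step is to check which side of the root gives strict inequality. Since $(\lc(x)-x/2)/x$ changes sign only once for $x>0$, it suffices to check the sign at a single point, and this determines the side.
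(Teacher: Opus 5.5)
Your proposal is correct and follows essentially the same route as the paper: the sign of $x\tanh x-2\lc x$ via its derivative for (a), continuity of the inverse for (b), monotonicity of $r$ plus the sixth-order upper bound on $\lc$ to reduce (c) to $R(\delta)=11+30\delta+40\delta^2+30\delta^3+12\delta^4+2\delta^5-\frac{32}{5}(1+\delta)^{10}\ge0$, and the factorization $(t-1)(t^3-t^2-t-1)$ together with a single sign change of $\lc s-s/2$ (from convexity) for (d). Two things to tighten. First, the bound $\lc x\le x^2/2-x^4/12+x^6/45$ should be proved rather than cited as an alternating-series estimate; the paper gets it by integrating $\tanh x\le x-x^3/3+2x^5/15$, which holds on $[0,\sqrt3]$. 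Second, your fallback is unnecessary: $(1+\delta)^{10}\le e^{1/2}<33/20$ gives $R(\delta)\ge 11/25>0$ on $[0,1/20]$.
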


\begin{proof}
(a) One has $r'=-k/x^3$ with $k(x)=2\lc x-x\tanh x$, $k(0)=0$ and
$k'(x)=\sech^2x\,(\sinh x\cosh x-x)>0$ for $x>0$.

(b) $r^{-1}$ is continuous on $(0,1/2)$ and $r^{-1}(s)\to0$ as $s\uparrow1/2$.

(c) For $x\ge0$, $\tanh x\ge x-x^3/3$, since $\tanh x\le x$ gives
$(\tanh x-x+x^3/3)'=x^2-\tanh^2x\ge0$. For $0\le x\le\sqrt3$, $\tanh x\le x-x^3/3+2x^5/15$: the
difference $g(x)=x-x^3/3+2x^5/15-\tanh x$ has $g(0)=0$ and, since $\tanh x\ge x-x^3/3\ge0$ there,
$g'(x)=\tanh^2x-x^2+2x^4/3\ge(x-x^3/3)^2-x^2+2x^4/3=x^6/9\ge0$. Integrating gives
\[
\frac{x^2}2-\frac{x^4}{12}\le\lc x\quad(x\ge0),\qquad
\lc x\le\frac{x^2}2-\frac{x^4}{12}+\frac{x^6}{45}\quad(0\le x\le\sqrt3).
\]
Let $b=1+\delta$, so $r(\bar x)=1/(2b^2)$. The lower bound gives $\bar x^2\ge6(1-b^{-2})$, and the
upper bound gives $\bar x^2\le6(1-b^{-2})/(1-4\bar x^2/15)$ once $\bar x\le\sqrt3$. Since
$\bar x\to0$ and $6(1-b^{-2})/(b^4\delta)\to12$, the limit follows. For the inequality let $0<\delta\le1/20$ (at $\delta=0$ both sides vanish) and put
$X=b^2\sqrt{12\delta}$. Then $X\le0.854<\sqrt3$, so
$r(X)\le\frac12-\frac{X^2}{12}+\frac{X^4}{45}$. Substituting $X^2=12\delta b^4$ and multiplying by
$2b^2$, the inequality $\frac12-\frac{X^2}{12}+\frac{X^4}{45}\le\frac1{2b^2}$ becomes
$\delta^2R(\delta)\ge0$ with
\[
R(\delta)=11+30\delta+40\delta^2+30\delta^3+12\delta^4+2\delta^5-\tfrac{32}5(1+\delta)^{10} .
\]
The terms $30\delta,\dots,2\delta^5$ are nonnegative and
$(1+\delta)^{10}\le e^{10\delta}\le e^{1/2}<\frac{33}{20}$, so
$R(\delta)\ge11-\frac{32}5\cdot\frac{33}{20}=\frac{11}{25}>0$. Hence $r(X)\le1/(2b^2)=r(\bar x)$, and
$\bar x\le X$ by~(a), i.e.\ $y(\delta)\le12\delta$.

(d) $y(\delta)<1$ if and only if $\bar x(b)<b^2$, i.e.\ $r(b^2)<1/(2b^2)$, i.e.\ $\lc s<s/2$ at
$s=b^2$. The function $\lc s-s/2$ is convex, vanishes at $0$ and has slope $-1/2$ there, so it is
negative exactly on $(0,s_c)$, where $s_c>0$ solves $\cosh s=e^{s/2}$. With $u=e^{s/2}$ this is
$u^4-2u^3+1=(u-1)(u^3-u^2-u-1)=0$, so $s_c=2\log\tau$ and $\delta_c=\sqrt{s_c}-1$.
\end{proof}

\begin{proof}[Proof of Corollary~\ref{cor:envelope}]
For $\gamma\ge\max(\beta,|h|)$, the trial $\mu=h/\gamma$ in~\eqref{eq:chen} gives
$\varphi_\beta(h)\le\frac{h^2}{2\gamma}+F(\beta,\gamma)-F^{\rm SK}(\beta,0)$. Set
$\gamma=\beta(1+\delta)$ with $0\le\delta\le1/20$. By Corollary~\ref{cor:free-energy-line}(c) and
$Y(\delta)^{1/2}\le(12\delta)^{1/2}$,
\[
\frac{h^2}{2\beta}-\varphi_\beta(h)\ge\frac\delta2\Bigl[\frac{h^2}{\beta(1+\delta)}-\beta\sqrt{12\delta}\Bigr].
\]
Let $0<|h|\le\beta$ and choose $\delta=h^4/(192\beta^4)$, i.e.\ $\sqrt{12\delta}=h^2/(4\beta^2)$.
Then $\delta\le1/192$ and $\gamma\ge\beta\ge|h|$. The bracket is at least
$\frac{h^2}{2\beta}-\frac{h^2}{4\beta}=\frac{h^2}{4\beta}$, which gives $h^6/(1536\beta^5)$. For
$|h|>\beta$, evenness and the $1$-Lipschitz property give
$\varphi_\beta(h)\le\varphi_\beta(\beta)+(|h|-\beta)$, and the case $h=\beta$ gives the claim.
\end{proof}

\begin{remark}[Continuity at $\gamma=1$ for $\beta\le1$]\label{rem:beta-le-one}
Let $0<\beta\le1$. Then $\sup_{\Om(\beta,\gamma)}|\mu|\to0$ and
$\limsup_N\E\gibbs{m^2}_{\beta,\gamma}\to0$ as $\gamma\to1$. Indeed,
$(\mu,\gamma)\mapsto F^{\rm SK}(\beta,\gamma\mu)-\gamma\mu^2/2$ is jointly continuous, so if
$\gamma_n\to1$, $\mu_n\in\Om(\beta,\gamma_n)$ and $\mu_n\to\bar\mu$, then passing to the limit in
the maximality of $\mu_n$ gives $\bar\mu\in\Om(\beta,1)=\{0\}$ (Lemma~\ref{lem:cw}
or~\cite[Proposition~1.1(i)]{DeyKang2026}). Lemma~\ref{lem:concentration} then gives
$\limsup_N\E\gibbs{m^2}_{\beta,\gamma}\le\max_{\Om(\beta,\gamma)}\mu^2\to0$. This gives no rate
in $\gamma-1$. Since $(\beta,\gamma)$ is magnetized for $\beta\le1<\gamma$ by
Theorem~\ref{thm:sk}(c), with Corollary~\ref{cor:continuity} the order parameter vanishes
continuously along the whole boundary $\gamma=\max(1,\beta)$.
\end{remark}

\subsection{The Nishimori line as a spiked matrix model}\label{sec:spiked}

As in Sections~\ref{sec:sk} and~\ref{sec:skline}, $\mu$ without subscript is a real variable.
Remark~\ref{rem:spiked} below records the limiting magnetization on the Nishimori line, which
follows from the known Nishimori-line free energy, and the resulting improvements of the exponent
in Theorem~\ref{thm:skline} and of the margin in Corollary~\ref{cor:envelope}.

On the Nishimori line the magnetization is known exactly, through a rank-one estimation
problem. For $\rho\ge0$ let $Z\sim N(0,1)$, $w=\rho+\sqrt\rho\,Z$,
$\iota(\rho)=\rho-\E\lc w$, and for $\lambda>0$
\[
\Psi(\rho,\lambda)=\frac\lambda4+\frac{\rho^2}{4\lambda}-\frac\rho2+\iota(\rho).
\]
Let $q^*(\lambda)$ be the largest solution in $[0,1]$ of
$q=\E\tanh(\lambda q+\sqrt{\lambda q}\,Z)$. Then $q^*(\lambda)=0$ for $\lambda\le1$,
$q^*(\lambda)\in(0,1)$ for $\lambda>1$, $q^*$ is continuous, and
$q^*(\lambda)\le4(\lambda-1)/\lambda^2$ for $1<\lambda\le3/2$. Moreover
$\rho\mapsto\Psi(\rho,\lambda)$ has the unique minimizer $\lambda q^*(\lambda)\in[0,\lambda]$ on
$[0,\infty)$. These facts, except the bound on $q^*$, follow from the strict concavity of
$G(\rho):=\E\tanh^2w$, which has slope $1$ at $\rho=0$, and from the Nishimori identity
$\E\tanh w=\E\tanh^2w$; cf.~\cite[\S3.2, Lemma~3.3 and Appendix~B.2]{DAM2016}. For the bound
on $q^*$: Stein's lemma and the Nishimori identity give $G'(\rho)=\E\sech^4w\le\E\sech^2w=1-G(\rho)$,
hence $G(\rho)\le1-e^{-\rho}$. The value $\rho^*=\lambda q^*$ satisfies $\rho^*=\lambda G(\rho^*)$, so for
$\lambda>1$, where $\rho^*>0$, one has
$\rho^*\le\lambda(1-e^{-\rho^*})\le\lambda(\rho^*-\rho^{*2}/2+\rho^{*3}/6)$. Since
$\rho^*\le\lambda\le3/2$, $\rho^{*2}/6\le\rho^*/4$, so $1\le\lambda(1-\rho^*/4)$, i.e.\
$\rho^*\le4(\lambda-1)/\lambda$; divide by $\lambda$.

\begin{remark}[Exact magnetization on the Nishimori line]\label{rem:spiked}
Let $b>0$ and $\lambda=b^2$. Let $X$ be uniform on $\{\pm1\}^N$, let $Z_{ij}$, $i<j$, be
independent standard Gaussians independent of $X$, and let
$Y_{ij}=\sqrt{\lambda/N}\,X_iX_j+Z_{ij}$: the Rademacher spiked Wigner, or
$\mathbb Z_2$-synchronization, model. The variables $g_{ij}=Z_{ij}X_iX_j$ are independent
standard Gaussians independent of $X$. Write $H_N(\cdot\,;\beta,\gamma)$ for the Hamiltonian of
Section~\ref{sec:sk} with disorder $g$. With $\tau=x\circ X$ (entrywise product),
$\sqrt{\lambda/N}\sum_{i<j}Y_{ij}x_ix_j=H_N(\tau;b,b^2)-b^2/2$. So if $x$ is drawn from the
posterior law of $X$ given $Y$, then, given $(g,X)$, $\tau$ has the law $\gibbs{\cdot}_{b,b^2}$.
Consequently, for every $N$,
\[
F_N(b,b^2)=\log2+\frac\lambda2-\frac{I(X;Y)}N,\qquad
\E\gibbs{m^2}_{b,b^2}=\frac1N+\Bigl(1-\frac1N\Bigr)\bigl(1-\mathrm{mmse}_N(\lambda)\bigr),
\]
where $\mathrm{mmse}_N(\lambda)=\E(X_1X_2-\E[X_1X_2\mid Y])^2$. The limit of the free
energy on the Nishimori line is the case $p=2$, $h=0$, $J=\lambda$ of Korada and
Macris~\cite[Theorem~3 and Appendix~A]{KoradaMacris2009}: their model at $p=2$ and $h=0$ has
couplings $N(J/N,J/N)$ at inverse temperature $1$ and no field, which at $J=b^2$ is the
Hamiltonian $H_N(\cdot\,;b,b^2)-b^2/2$. The limit
of the mutual information is also Deshpande, Abbe and Montanari's
Theorem~4.3~\cite{DAM2016}, whose Gaussian model additionally observes the diagonal entries,
which carry no information about $X$. Since the minimizer of $\Psi(\cdot,\lambda)$ lies in
$[0,\lambda]$, both results read, in our notation (Deshpande, Abbe and Montanari write $\gamma$
for $\rho$ and $I$ for $\iota$),
\begin{equation}\label{eq:spiked-fe}
\lim_NF_N(b,b^2)=\log2+\frac\lambda2-\min_{\rho\ge0}\Psi(\rho,\lambda).
\end{equation}
From~\eqref{eq:spiked-fe} one obtains
\begin{equation}\label{eq:spiked-m}
\lim_N\E\gibbs{m^2}_{b,b^2}=q^*(\lambda)^2 .
\end{equation}
Indeed, $\Phi_N(\lambda)=F_N(\sqrt\lambda,\lambda)$ satisfies
$\Phi_N'(\lambda)=\frac14(1+\E\gibbs{m^2}_{\sqrt\lambda,\lambda})$, which by the two identities above
is the I-MMSE relation $\frac{\dd}{\dd\lambda}I(X;Y)=\frac{N-1}4\,\mathrm{mmse}_N(\lambda)$~\cite[(150)--(151)
with $\varepsilon=0$]{DAM2016}, and
$\Phi_N$ is convex because $\mathrm{mmse}_N$ is nonincreasing in $\lambda$ (the observation at a
smaller $\lambda$ is a degraded version of the one at a larger $\lambda$). Since the minimizer
$\lambda q^*(\lambda)$ of $\Psi(\cdot,\lambda)$ is unique, the limit in~\eqref{eq:spiked-fe} is
differentiable in $\lambda$ with derivative $\frac14(1+q^*(\lambda)^2)$, by the envelope theorem,
and derivatives of convex functions converge where the limit is differentiable. For $b\le1$, \eqref{eq:spiked-m} holds
without~\eqref{eq:spiked-fe}: $\Om(b,b^2)=\{0\}$ by Lemma~\ref{lem:cw}, and
Lemma~\ref{lem:concentration} applies.

The free-energy identity~\eqref{eq:spiked-fe} is thus known~\cite{KoradaMacris2009,DAM2016}; see also~\cite[Theorem~1]{BarbierMacris2019}.
Through the second display above, \eqref{eq:spiked-m} is the statement that
$\mathrm{mmse}_N(\lambda)\to1-q^*(\lambda)^2$. Deshpande, Abbe and Montanari state this limit for
the two-group stochastic block model~\cite[Theorem~1.4]{DAM2016}. For symmetric rank-one estimation,
Lelarge and Miolane prove the limit of the matrix minimum mean-square error for a general prior at
every $\lambda$ at which the limiting mutual information is
differentiable~\cite[Theorem~1]{LelargeMiolane2019}, and, for a prior with finite support, at every
$\lambda$ at which their replica-symmetric potential has a unique
maximizer~\cite[\S3.2 and Corollary~17]{LelargeMiolane2019} (see also~\cite{BDMKLZ2016}).
With the prior uniform on $\{\pm1\}$ their channel~\cite[(2)]{LelargeMiolane2019} is the present
one, their matrix minimum mean-square error is $\mathrm{mmse}_N(\lambda)$, and their potential is
$\lambda/4-\Psi(\lambda q,\lambda)$, whose maximizer $q^*(\lambda)$ is unique for every $\lambda>0$.
Their result therefore gives $\mathrm{mmse}_N(\lambda)\to1-q^*(\lambda)^2$ for every $\lambda>0$, as
they note for this prior~\cite[p.~6]{LelargeMiolane2019}. We include the short
derivation of~\eqref{eq:spiked-m} only for completeness and claim no novelty for either identity.

Used at the Nishimori point in place of Lemmas~\ref{lem:cw} and~\ref{lem:concentration} in the
proof of Theorem~\ref{thm:skline}, \eqref{eq:spiked-m} improves the exponent. Let $\gamma\ge\beta$ and $\delta=\gamma/\beta-1$. By
Lemma~\ref{lem:gauge-transfer}, $\limsup_N\E\gibbs{m^2}_{\beta,\gamma}\le q^*((1+\delta)^2)$, and
as in Theorem~\ref{thm:skline}(b), using the continuity of $q^*$,
$\max_{\Om(\beta,\gamma)}\mu^2\le q^*((1+\delta)^2)$. For $0\le\delta\le\sqrt{3/2}-1$ this is at
most $4(2\delta+\delta^2)/(1+\delta)^4\le8\delta$, so $\max_\Om|\mu|\le(8\delta)^{1/2}$.
(Numerically, $q^*((1+\delta)^2)\approx2\delta$ for small $\delta$.) The margin of
Corollary~\ref{cor:envelope} improves as well. Let $0<|h|\le\beta$, $\delta=h^2/(18\beta^2)\le1/18<\sqrt{3/2}-1$
and $\gamma=\beta(1+\delta)$. By Lemma~\ref{lem:fixed-field}(a),
Corollary~\ref{cor:free-energy-line}(b) and the bound just proved,
$F(\beta,\gamma)-F^{\rm SK}(\beta,0)\le\frac{\beta\delta}2\,q^*((1+\delta)^2)\le4\beta\delta^2$. The trial
$\mu=h/\gamma$ in~\eqref{eq:chen}, as in the proof of Corollary~\ref{cor:envelope}, and
$1/(1+\delta)\ge1-\delta$ give
\[
\frac{h^2}{2\beta}-\varphi_\beta(h)\ge\frac\delta2\Bigl[\frac{h^2(1-\delta)}\beta-8\beta\delta\Bigr]
\ge\frac{c(1-9c)}2\,\frac{h^4}{\beta^3}=\frac{h^4}{72\beta^3},
\]
with $c=1/18$, where the middle step writes $\delta=ch^2/\beta^2$ and uses $h^2\le\beta^2$. For
$|h|>\beta$ the Lipschitz step of Corollary~\ref{cor:envelope} with the case $h=\beta$ gives
$(|h|-\beta)^2/(2\beta)+\beta/72$.
These improvements rely on the cited identity~\eqref{eq:spiked-fe}. The true exponent of
$\max_{\Om(\beta,\gamma)}|\mu|$ as $\gamma\downarrow\beta$ is not known.
\end{remark}

\subsection{Accumulation at the origin}\label{app:mf-accum}

As in Section~\ref{sec:accum}, $\xi(q)=\beta^2q^2/2$ with $\beta>1$, the field is $0$, and
$\mu=\mu_\beta$ is the zero-field Parisi measure. We write $u=u_\mu$, $\alpha=\alpha_\mu$,
$\Gamma=\Gamma_\mu$, $G=G_\mu$ and $X$ for the objects of Section~\ref{sec:prelim}, and
$\theta=\theta_\beta=\partial_x^2u(0,0)$. In the proofs of Lemma~\ref{lem:gap} and Proposition~\ref{prop:accum-convex} and in
Lemma~\ref{lem:kernel}, $q_*$, $\lambda$, $t$, $\psi$, $U$, $Y$, $q(r)$, $p_r$,
$\langle\cdot\rangle_r$, $\kappa$ and $\mathcal F$ are as in Section~\ref{sec:accum}, under its
assumption that $0$ is not an accumulation point of $\operatorname{supp}\mu$. This subsection proves Lemma~\ref{lem:gap}, Lemma~\ref{lem:uxxx} and
Proposition~\ref{prop:accum-convex}, which Section~\ref{sec:accum} combines into
Theorem~\ref{thm:accum}, and certifies (V1) (Appendix~\ref{sec:accum-cert}).

\subsubsection{The first gap}

\begin{proof}[Proof of Lemma~\ref{lem:gap}]
(i) By~\eqref{eq:gap-ch}, $e^{\lambda U}$ solves $\partial_rf+\frac12\partial_x^2f=0$ on $[0,t)$,
and it grows at most like $e^{\lambda|x|}$ because $|\partial_xU|\le1$ by \textup{(P3)}. For a
Brownian motion $W$ started at $0$, $e^{\lambda U(r,W_r)-\lambda U(0,0)}$ is therefore a positive
martingale with mean $1$ on $[0,t)$. By Girsanov's theorem, under the tilted law $W$ solves the
equation of $Y$, which is unique in law because $\partial_xU$ is Lipschitz. So the law of $Y_r$
is $e^{\lambda U(r,x)-\lambda U(0,0)}$ times the $N(0,r)$ density, which is $p_r$.

(ii) By \textup{(P4)}, $\Gamma\in C^1[0,1]$, so $q\in C^1[0,t]$ and
$q'(r)=\E[\partial_x^2U(r,Y_r)^2]$. For $r\le t-\varepsilon$, \eqref{eq:gap-ch} and $|\psi'|\le1$
show that $U$ is smooth with bounded $x$-derivatives of every positive order. Differentiating the
equation for $U$ and applying It\^o's formula,
$\dd\,\partial_x^2U(r,Y_r)=-\lambda\,\partial_x^2U(r,Y_r)^2\,\dd r+\partial_x^3U(r,Y_r)\,\dd B_r$,
so $\frac{\dd}{\dd r}\E[\partial_x^2U(r,Y_r)^2]=\E[\partial_x^3U(r,Y_r)^2-2\lambda\,\partial_x^2U(r,Y_r)^3]$,
which is continuous on $[0,t)$. These are the formulas of~\cite[Lemma~8.4]{JT2017} on the gap,
where $\alpha\equiv\lambda$, in the time variable $r=\beta^2s$. In the Girsanov-weighted form
of~\cite[Proposition~3]{AC2015a} they are~(20)--(22) there with $\xi'''=0$ and $\alpha=\lambda$; the
two forms are related by the change of measure of~\cite[Lemma~8.5 and Corollary~8.7]{JT2017}.
With (i) they give~\eqref{eq:gap-moments}. At $r=0$ we have $Y_0=0$, and $\partial_xU(0,0)=\partial_x^3U(0,0)=0$
by evenness. (Equivalently, $\partial_x^2U+\lambda(\partial_xU)^2$ evaluated along $Y$ is a
martingale on the gap, as follows from the representation in the proof
of~\cite[Lemma~16]{JT2016}.)

(iii) By \textup{(P6)}, $\Gamma(q)=q$ and $\xi''(q)\E[\partial_x^2u(q,X_q)^2]\le1$ for
$q\in\{0,q_*\}\subset\operatorname{supp}\mu$; see~\cite[Corollary~3.10]{JT2017}, which includes
the case $q=0$.
At $q_*$ these read $q(t)=t/\beta^2$ and $q'(t)=\E[\partial_x^2u(q_*,X_{q_*})^2]\le1/\beta^2$; at $0$
the second reads $\beta^2\theta^2\le1$. Since $G$ is continuous and $\mu(\operatorname{argmin}G)=1$,
$\operatorname{supp}\mu\subset\operatorname{argmin}G$, so $G(0)=G(q_*)$, i.e.\
$\int_0^{q_*}(\Gamma(s)-s)\,\dd s=0$. Substituting $r=\beta^2s$ gives
$\int_0^tq=t^2/(2\beta^2)=tq(t)/2$.
\end{proof}

\subsubsection{A kernel for the curvature}\label{sec:kernel}

Let $\lambda>0$ and let $V:\R\to\R$ be even and smooth, with $V''\ge0$ and $V',V'',V'''$ bounded.
Put $K_V=V'''^2-2\lambda V''^3$ and
\begin{equation}\label{eq:Pi}
\Pi_V(x)=x\,e^{-\lambda V(x)}\Bigl[\int_0^xy\int_0^ye^{\lambda V(z)}K_V(z)\,\dd z\,\dd y
-2\int_0^xe^{\lambda V(z)}V''(z)^2\,\dd z\Bigr]+2V'(x)^2 ,
\end{equation}
an even function of $x$.

\begin{lemma}[Kernel]\label{lem:kernel}
For $0<r<t$ and $V=U(r,\cdot)$, $\mathcal F(r)=\langle\Pi_V\rangle_r$.
\end{lemma}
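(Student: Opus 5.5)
The plan is to obtain each of the three terms of $\mathcal F(r)=r^2q''(r)-2rq'(r)+2q(r)$ from the corresponding term of $\Pi_V$ by Gaussian integration by parts. I take the moment formulas \eqref{eq:gap-moments} of Lemma~\ref{lem:gap} as given:
\[
q=\langle V'^2\rangle_r,\qquad q'=\langle V''^2\rangle_r,\qquad q''=\langle K_V\rangle_r .
\]
Write $\phi_r$ for the $N(0,r)$ density and $Z=\int e^{\lambda V}\phi_r$. Then $p_r=e^{\lambda V}\phi_r/Z$, so for any $f$ of moderate growth
\[
\bigl\langle e^{-\lambda V}f\bigr\rangle_r=\frac1Z\int f\,\phi_r .
\]
The factor $e^{-\lambda V}$ in \eqref{eq:Pi} is there precisely to cancel the tilt. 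After this cancellation everything reduces to integrals against the plain Gaussian $\phi_r$, where the identity $\int x f\phi_r=r\int f'\phi_r$ is available.

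\emph{Step 1: the $K_V$ term.} Put $g(x)=\int_0^xy\int_0^ye^{\lambda V(z)}K_V(z)\,\dd z\,\dd y$, so that $g'(x)=x\int_0^xe^{\lambda V}K_V$. Two Gaussian integrations by parts give
\[
\int xg\,\phi_r=r\int g'\phi_r=r\int x\Bigl(\int_0^xe^{\lambda V}K_V\Bigr)\phi_r=r^2\int e^{\lambda V}K_V\,\phi_r .
\]
Dividing by $Z$, this says $\langle xe^{-\lambda V}g\rangle_r=r^2\langle K_V\rangle_r=r^2q''(r)$.

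\emph{Step 2: the remaining terms.} The same computation with a single integration by parts gives
\[
\Bigl\langle xe^{-\lambda V}\int_0^xe^{\lambda V}V''^2\Bigr\rangle_r=r\langle V''^2\rangle_r=rq'(r),
\]
so the middle term of \eqref{eq:Pi} contributes $-2rq'(r)$. The last term contributes directly $\langle 2V'^2\rangle_r=2q(r)$. Adding the three contributions yields $\langle\Pi_V\rangle_r=\mathcal F(r)$. Evenness of $\Pi_V$ is only a consistency check and is not needed.

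\emph{Step 3: justification.} The only real point is that the boundary terms vanish and all integrals converge. For this I would use that $|V'|\le1$ by \textup{(P3)}, so $e^{\pm\lambda V(x)}\le Ce^{\lambda|x|}$. Also, for $r\le t-\varepsilon$, $V=U(r,\cdot)$ has bounded $x$-derivatives of all orders (from \eqref{eq:gap-ch}, as in the proof of Lemma~\ref{lem:gap}(ii)). Hence $K_V$ and $V''^2$ are bounded. It follows that $g$, $g'$ and $\int_0^xe^{\lambda V}V''^2$ grow at most like $(1+|x|)^3e^{\lambda|x|}$. Against the Gaussian $\phi_r$ this growth is negligible: every integral converges absolutely, and the boundary terms $f\phi_r|_{-R}^{R}$ tend to $0$ as $R\to\infty$.

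I expect this growth bookkeeping to be the only obstacle, and a mild one. The algebra itself is an exact two-fold Gaussian integration by parts that inverts the operator $f\mapsto x\int_0^x(\cdot)$ twice.
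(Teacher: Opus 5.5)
Your proof is correct and is essentially the paper's argument. The paper derives the same identity $r\langle f\rangle_r=\langle x\phi_f\rangle_r$, with $\phi_f=e^{-\lambda V}\int_0^xe^{\lambda V}f$, by integrating by parts against the tilted density $p_r$, whereas you cancel the tilt first and integrate by parts against the plain Gaussian; the only other difference is that the paper uses evenness and monotonicity of $V$ on $[0,\infty)$ to get polynomial bounds on $\phi_f$, while your exponential bounds from $|V'|\le1$ suffice equally well against $\phi_r$.
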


\begin{proof}
Since $p_r'=(\lambda V'-x/r)\,p_r$ and $p_r$ has Gaussian tails, every $C^1$ function $\phi$ such
that $\phi$ and $\phi'$ grow at most polynomially satisfies
$\langle x\phi\rangle_r=r\langle\phi'+\lambda V'\phi\rangle_r$. For a continuous function $f$ of
polynomial growth put $\phi_f(x)=e^{-\lambda V(x)}\int_0^xe^{\lambda V(z)}f(z)\,\dd z$. Then
$\phi_f'+\lambda V'\phi_f=f$, hence $r\langle f\rangle_r=\langle x\phi_f\rangle_r$. Because $V$ is
even and nondecreasing on $[0,\infty)$, $e^{\lambda(V(z)-V(x))}\le1$ for $|z|\le|x|$, so
$|\phi_f(x)|\le|\int_0^x|f||$. In particular $|\phi_f(x)|\le c|x|$ if $|f|\le c$, and
$|\phi_f(x)|\le c|x|^3/3$ if $|f(z)|\le cz^2$; also $|\phi_f'|\le|f|+\lambda\|V'\|_\infty|\phi_f|$. Apply this to
$f=V''^2$, to $f=K_V$, and to $f=W:=x\phi_{K_V}$, which satisfies
$|W(x)|\le\|K_V\|_\infty x^2$:
\[
r\langle V''^2\rangle_r=\langle x\phi_{V''^2}\rangle_r,\qquad
r^2\langle K_V\rangle_r=r\langle W\rangle_r=\langle x\phi_W\rangle_r .
\]
By~\eqref{eq:gap-moments},
$\mathcal F(r)=\langle x\phi_W-2x\phi_{V''^2}+2V'^2\rangle_r$, and
$x\phi_W-2x\phi_{V''^2}+2V'^2=\Pi_V$.
\end{proof}

\begin{lemma}[Integrated form]\label{lem:omega}
For $V$ as above let $a=xV''-V'$,
\begin{align*}
Q_V&=\tfrac12\bigl(x^2V'''-2a\bigr)^2+a^2\bigl(1+3\lambda\,x\,a-4\lambda x^2V''\bigr)+\lambda^2x^2V'^4,\\
\Omega_V(x)&=\int_0^xe^{\lambda V}K_V-\frac{2e^{\lambda V(x)}}{x^3}\bigl(a^2-\lambda xV'^3\bigr).
\end{align*}
Then, for $x>0$,
\[
\Pi_V(x)=x\,e^{-\lambda V(x)}\int_0^xy\,\Omega_V(y)\,\dd y,\qquad
\Omega_V(x)=2\int_0^xe^{\lambda V(z)}Q_V(z)\,z^{-4}\,\dd z .
\]
In particular $\Pi_V\ge0$ on $\R$ if $\Omega_V\ge0$ on $(0,\infty)$.
\end{lemma}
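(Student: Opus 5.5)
The plan is to verify both identities by differentiating and comparing with the definitions; the only inputs are the fundamental theorem of calculus and the boundary behaviour at $0$.

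\emph{First identity.} Put $P(x)=x\,e^{\lambda V(x)}\Pi_V(x)$ for $x>0$. By~\eqref{eq:Pi},
\[
P=x^2\Bigl[\int_0^xy\int_0^ye^{\lambda V}K_V\,\dd z\,\dd y-2\int_0^xe^{\lambda V}V''^2\Bigr]+2xe^{\lambda V}V'^2 .
\]
It suffices to show $P(x)=x^2\int_0^xy\,\Omega_V(y)\,\dd y$. Dividing by $x^2$ and using the definition of $\Omega_V$, this reduces to
\[
-2\int_0^xe^{\lambda V}V''^2+\frac{2e^{\lambda V(x)}V'(x)^2}{x}=-2\int_0^x\frac{e^{\lambda V(y)}}{y^2}\bigl(a^2-\lambda yV'^3\bigr)\dd y .
\]
Both sides vanish as $x\downarrow0$, because $V'(x)=O(x)$ by evenness and $a=O(x^3)$ by evenness and smoothness ($xV''-V'$ has vanishing expansion through order $x^2$). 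So it is enough to compare derivatives. The derivative of the left side is
\[
-2e^{\lambda V}V''^2+2e^{\lambda V}\Bigl(\lambda V'^3/x+2V'V''/x-V'^2/x^2\Bigr),
\]
and a one-line computation with $a^2=x^2V''^2-2xV'V''+V'^2$ shows that this equals $-2e^{\lambda V}(a^2-\lambda xV'^3)/x^2$.

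\emph{Second identity.} It is again a derivative check, with $\Omega_V(0+)=0$. Near $0$ the integral term is $O(x)$, and $a^2/x^3$ and $V'^3/x^2$ are also $O(x)$. Differentiating the definition of $\Omega_V$ gives
\[
e^{\lambda V}K_V-2e^{\lambda V}\frac{\dd}{\dd x}\Bigl[\frac{a^2-\lambda xV'^3}{x^3}\Bigr]-2\lambda V'e^{\lambda V}\frac{a^2-\lambda xV'^3}{x^3}.
\]
We then use $a'=xV'''$ and expand in powers of $\lambda$, and must show that the bracketed combination equals $2Q_V/x^4$. At order $\lambda^0$ this means $x^4V'''^2-4x^2aV'''+6a^2$ against $\tfrac12\cdot2(x^2V'''-2a)^2+2a^2=x^4V'''^2-4x^2aV'''+6a^2$, so it matches. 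The orders $\lambda^1$ and $\lambda^2$ are checked the same way after writing $xV''=a+V'$ to eliminate $V''$.

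This coefficient-matching is the main obstacle. The one thing to watch is the $-2\lambda x^4V''^3$ term coming from $K_V$: it must combine with the cross terms $-2\lambda V'a^2/x^3$, $+2\lambda(V'^3+3xV'^2V'')/x^3$ and so on to produce $a^2(3\lambda xa-4\lambda x^2V'')$ together with the $\lambda^2x^2V'^4$ term. I would do this by substituting $V''=(a+V')/x$ throughout and collecting the resulting polynomial in $(a,V',\lambda)$.

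\emph{Final claim.} If $\Omega_V\ge0$ on $(0,\infty)$, then $\Pi_V(x)\ge0$ for $x>0$ by the first identity, since $x>0$ and $e^{-\lambda V}>0$. Evenness of $\Pi_V$ gives the same for $x<0$, and $x=0$ follows by continuity.
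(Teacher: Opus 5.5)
Your proposal is correct and is essentially the paper's proof. The paper sets $\Psi=e^{\lambda V}\Pi_V/x$ (your $P/x^2$), checks $\Psi'=x\Omega_V$ and $\Omega_V'=2e^{\lambda V}Q_V/x^4$ by the product rule, and integrates from $0$ using $V'=O(x)$, $a=O(x^3)$ and $Q_V=O(x^6)$; the order-$\lambda$ matching you left in outline does close, since after substituting $xV''=a+V'$ both $x^4e^{-\lambda V}\Omega_V'$ and $2Q_V$ have $\lambda$-coefficient $-2x(a^3+4a^2V')$.
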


\begin{proof}
Let $\Psi=e^{\lambda V}\Pi_V/x$ on $(0,\infty)$. The product rule gives $\Psi'=x\Omega_V$ and
$\Omega_V'=2e^{\lambda V}Q_V/x^4$. Since $V$ is even and smooth, $V'=O(x)$, $a=O(x^3)$ and
$Q_V=O(x^6)$ as $x\downarrow0$, so $\Psi(0+)=\Omega_V(0+)=0$, and both formulas follow by
integration. The last claim uses evenness and $\Pi_V(0)=0$.
\end{proof}

\begin{lemma}[Reduction]\label{lem:reduction}
For $V$ as above write $x=e^\tau$ and regard
\[
L=\lambda V',\qquad \ell=xL,\qquad k=x\,(L-xL')
\]
as functions of $\tau\in\R$, with dots denoting $\dd/\dd\tau$. Then $\dot\ell=2\ell-k$,
\begin{equation}\label{eq:Qtilde}
\lambda^2x^2Q_V=\widetilde Q:=\tfrac12(\dot k-3k)^2+k^2(1-4\ell+k)+\ell^4 ,
\end{equation}
and $\omega:=\lambda^2x^5e^{-\lambda V}\Omega_V$ satisfies
\begin{equation}\label{eq:omega}
\omega(\tau)=\int_{-\infty}^{\tau}2\,\widetilde Q(\tau')
\exp\Bigl(\int_{\tau'}^{\tau}\bigl(5-\ell(\tau'')\bigr)\dd\tau''\Bigr)\dd\tau' .
\end{equation}
\end{lemma}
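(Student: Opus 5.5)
This is a direct change of variables. I will verify the three claims in order: the identity $\dot\ell=2\ell-k$, the expression \eqref{eq:Qtilde} for $\lambda^2x^2Q_V$, and the integral formula \eqref{eq:omega} for $\omega$.

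\emph{Step 1: the identity for $\dot\ell$.} Since $\dd/\dd\tau=x\,\dd/\dd x$, we have $\dot\ell=x(L+xL')=2xL-x(L-xL')=2\ell-k$.

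\emph{Step 2: rewriting $Q_V$.} Multiply by $\lambda^2x^2$ and express each building block of $Q_V$ through $\ell$, $k$ and $\dot k$. From $\lambda a=x\lambda V''-\lambda V'=xL'-L$ we get $\lambda xa=-k$. Also $\lambda^2x^2\cdot\lambda^2x^2V'^4=\ell^4$ after the factor of $\lambda^2$ is arranged correctly; I will check the powers of $\lambda$ carefully at this point, since that is where a slip is most likely. For the third derivative, compute $\dot k=x\frac{\dd}{\dd x}\bigl(xL-x^2L'\bigr)=x\bigl(L-xL'-x^2L''\bigr)=k-x^3L''$. Hence $\lambda x\,(x^2V'''-2a)=x^3L''+2k=-\dot k+3k$, and so $\lambda^2x^2\cdot\frac12(x^2V'''-2a)^2=\frac12(\dot k-3k)^2$. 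The middle term becomes $\lambda^2x^2a^2\bigl(1+3\lambda xa-4\lambda x^2V''\bigr)=k^2\bigl(1-3k-4(xL'x)\bigr)$. Writing $x^2L'=\ell-k$ turns this into $k^2\bigl(1-3k-4\ell+4k\bigr)=k^2(1-4\ell+k)$, which matches \eqref{eq:Qtilde}.

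\emph{Step 3: the formula for $\omega$.} Lemma~\ref{lem:omega} gives $\Omega_V'=2e^{\lambda V}Q_Vx^{-4}$ together with $\Omega_V(0+)=0$. From $\omega=\lambda^2x^5e^{-\lambda V}\Omega_V$ we obtain $\dot\omega=(5-x\lambda V')\omega+\lambda^2x^6e^{-\lambda V}\cdot 2e^{\lambda V}Q_Vx^{-4}=(5-\ell)\omega+2\widetilde Q$. This is a linear ODE, and variation of constants gives \eqref{eq:omega}, provided the boundary term at $\tau=-\infty$ vanishes. It does: $\omega=O(x^5\Omega_V)\to0$ as $x\downarrow0$, and $\ell=O(x^2)$ there, so the integrating factor behaves like $e^{5(\tau-\tau')}$ and the integral converges.

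\emph{Main obstacle.} The only delicate part is the bookkeeping of powers of $\lambda$ and $x$. I would carry it out symbolically before committing to it.
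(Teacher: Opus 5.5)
Your proof is correct and is the paper's: Steps 1--2 use exactly its substitutions ($\lambda xa=-k$, $x^3L''=k-\dot k$, $x^2L'=\ell-k$, $\lambda xV'=\ell$). The $\ell^4$ term needs no extra care, since $\lambda^2x^2\cdot\lambda^2x^2V'^4=(\lambda xV')^4$. Your ODE-plus-variation-of-constants route to \eqref{eq:omega} is the differential form of the paper's direct substitution $z=e^{\tau'}$ in $\Omega_V(x)=2\int_0^xe^{\lambda V}Q_Vz^{-4}\,\dd z$, which sidesteps the boundary term.

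One precision on that boundary term: it equals $\omega(\tau_1)e^{\int_{\tau_1}^\tau(5-\ell)}=\lambda^2x^5e^{-\lambda V(x)}\Omega_V(e^{\tau_1})$. It therefore vanishes because $\Omega_V(0+)=0$, not merely because $\omega\to0$, since the integrating factor grows like $e^{-5\tau_1}$.
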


\begin{proof}
Since $\dd/\dd\tau=x\,\dd/\dd x$, $\dot\ell=xL+x^2L'=2\ell-k$ and $\dot k=k-x^3L''$. With
$\lambda xa=-k$, $\lambda x^3V'''=x^3L''=k-\dot k$, $\lambda x^2V''=x^2L'=\ell-k$ and
$\lambda xV'=\ell$, the three terms of $\lambda^2x^2Q_V$ become the three terms
of~\eqref{eq:Qtilde}. For~\eqref{eq:omega}, substitute $z=e^{\tau'}$ in the formula for
$\Omega_V$ of Lemma~\ref{lem:omega} and multiply by $\lambda^2x^5e^{-\lambda V(x)}$: the integrand
becomes
$2\widetilde Q(\tau')\,(x/z)^5e^{-\lambda(V(x)-V(z))}\dd\tau'$, and
$\log\bigl((x/z)^5e^{-\lambda(V(x)-V(z))}\bigr)={\int_{\tau'}^\tau(5-\ell)}$.
\end{proof}

The parameter $\lambda$ has dropped out: $\widetilde Q$ and~\eqref{eq:omega} depend only on the
trajectory $(\ell,k)$. We verified the identities of Lemmas~\ref{lem:omega}
and~\ref{lem:reduction} symbolically as well (Appendix~\ref{app:repro}).

\subsubsection{The sign of \texorpdfstring{$\partial_x^3u$}{the third derivative}}

\begin{proof}[Proof of Lemma~\ref{lem:uxxx}]
First let $\nu$ have finitely many atoms, so that $\alpha_\nu$ is constant, say $\alpha_j$, on
consecutive intervals $[s_j,s_{j+1})$ covering $[0,1)$. On $[s_j,s_{j+1}]$, $u=u_\nu$ is obtained
from $u(s_{j+1},\cdot)$ by~\eqref{eq:colehopf} or by the heat semigroup, so it is smooth in the
interior, and its $x$-derivatives are bounded and continuous up to $s=s_{j+1}$. Differentiating
\eqref{eq:pde} three times, $w=\partial_x^3u$ solves
\[
\partial_sw+\frac{\beta^2}2\bigl(\partial_x^2w+2\alpha_j\,\partial_xu\,\partial_xw
+6\alpha_j\,\partial_x^2u\,w\bigr)=0 .
\]
Fix $s_0\in[s_j,s_{j+1})$ and $x_0>0$, let
$\dd Z_s=\beta^2\alpha_j\,\partial_xu(s,Z_s)\,\dd s+\beta\,\dd W_s$ with $Z_{s_0}=x_0$, and let $T_0$ be the
hitting time of $0$. For $T<s_{j+1}$, It\^o's formula shows that
$w(s,Z_s)\exp\bigl(3\beta^2\alpha_j\int_{s_0}^s\partial_x^2u(v,Z_v)\,\dd v\bigr)$, stopped at
$T\wedge T_0$, is a bounded martingale; it is bounded because $0\le\partial_x^2u\le1$. Since $w(s,\cdot)$ is odd,
$w(s,0)=0$, and so
\[
w(s_0,x_0)=\E\Bigl[w(T,Z_T)\,e^{3\beta^2\alpha_j\int_{s_0}^T\partial_x^2u(v,Z_v)\,\dd v};\ T_0>T\Bigr].
\]
Letting $T\uparrow s_{j+1}$, we see that $w(s_{j+1},\cdot)\le0$ on $[0,\infty)$ implies
$w(s_0,\cdot)\le0$ on $[0,\infty)$. Since $w(1,x)=-2\tanh x\,\sech^2x\le0$ for $x\ge0$, downward
induction over the intervals gives $w\le0$ on $[0,1]\times[0,\infty)$. For general $\nu$, take
finitely atomic $\nu_n\Rightarrow\nu$ and use the uniform convergence
$\partial_x^3u_{\nu_n}\to\partial_x^3u_\nu$ of \textup{(P3)}.
\end{proof}

In the next remark, parts (a), (b) and (d) are those of Theorem~\ref{thm:sk}, and the trial bound is
the upper bound in the proof of part~(b) in Section~\ref{sec:sk}.

\begin{remark}[A second route to the envelope]\label{rem:envelope-route}
For $\beta>1$, Theorem~\ref{thm:sk}(d) and the Parisi formula also give the envelope of part~(a), in strict form,
without gauge symmetry. By evenness, the trial bound in the proof of part~(b) reads
$\varphi_\beta(h)\le\int_0^{|h|}(|h|-y)\,\partial_x^2u_{\mu_\beta}(0,y)\,\dd y$, and by
Lemma~\ref{lem:uxxx} and part~(d), $\partial_x^2u_{\mu_\beta}(0,y)\le\theta_\beta=1/\beta$ for
$y\ge0$. Hence $\varphi_\beta(h)\le h^2/(2\beta)$. The bound is strict for $h\ne0$, because
$\partial_x^3u_\nu(0,x_0)<0$ for every $\nu$ and every $x_0>0$. To see this, let $\nu$ have
finitely many atoms, put $w=\partial_x^3u_\nu$, let $Z$ solve
$\dd Z_s=\beta^2\alpha_\nu(s)\,\partial_xu_\nu(s,Z_s)\,\dd s+\beta\,\dd W_s$ on $[0,1]$ with
$Z_0=x_0$, and let $T_0$ be its hitting time of $0$. On each interval where $\alpha_\nu$ is
constant, the proof of Lemma~\ref{lem:uxxx} writes $w$ at the left end as the expectation of $w$
at the right end, times an exponential factor, on the event that $Z$ has not hit $0$. Combining
these representations over the intervals by the Markov property, and using that the exponential
factors are at least $1$ and that $w(1,\cdot)\le0$ on $[0,\infty)$, we get
$w(0,x_0)\le\E[w(1,Z_1);\,T_0>1]$. By \textup{(P3)} the drift takes values
in $[0,\beta^2]$ when $Z_s\ge0$ and in $[-\beta^2,\beta^2]$ always. Hence, with
$Y_s=x_0+\beta W_s$, one has $Z_s\ge Y_s$ for $s\le T_0$ and $Z_1\le Y_1+\beta^2$. On the event
$A=\{\min_{[0,1]}Y>0,\ x_0\le Y_1\le x_0+1\}$, which has positive probability, it follows that
$T_0>1$ and $x_0\le Z_1\le x_0+1+\beta^2$, so
\[
\partial_x^3u_\nu(0,x_0)\le-2\tanh(x_0)\sech^2(x_0+1+\beta^2)\,\Pp(A) .
\]
The right side does not depend on $\nu$, so by \textup{(P3)} the bound holds for every $\nu$.
Therefore $\partial_x^2u_{\mu_\beta}(0,y)<1/\beta$ for $y>0$, so $\varphi_\beta(h)<h^2/(2\beta)$
for $h\ne0$, and~\eqref{eq:chen} with $\gamma=\beta$ gives $\Om(\beta,\beta)=\{0\}$.
Section~\ref{sec:skline} proves both statements without the Parisi formula or the accumulation
theorem, with explicit margins (Corollary~\ref{cor:envelope}) and rates as $\gamma\downarrow\beta$
(Theorem~\ref{thm:skline}).
\end{remark}

\subsubsection{The accumulator}

For $V=u_\nu(s,\cdot)$ and $\lambda>0$, Lemma~\ref{lem:uxxx} and \textup{(P3)} give the only
structural input of Lemma~\ref{lem:accumulator} below: $L=\lambda V'$ is concave on
$[0,\infty)$ with $L(0)=0$ and $L'\ge0$, so $L(x)\ge xL'(x)\ge0$, that is,
\begin{equation}\label{eq:cone}
0\le k\le\ell .
\end{equation}

The proof of the next lemma uses the explicit polynomial
\begin{equation}\label{eq:cert-P}
1024\,P(\ell,\delta)=222-25\ell+\delta\,(617-343\ell)+\delta^2(368\ell-833),
\end{equation}
together with $\mathcal Y(\ell,k)=\ell^4P(\ell,k/\ell)$, a polynomial in $(\ell,k)$, and
\begin{equation}\label{eq:cert-B}
\mathcal B=2(5-\ell)\mathcal Y-2\mathcal Y_\ell\,(2\ell-k)-6k\,\mathcal Y_k-\mathcal Y_k^2
+2k^2(1-4\ell+k)+2\ell^4 ,
\end{equation}
where subscripts denote partial derivatives. Let $\ell_{\max}=26181/10000$, which exceeds
$\ell_2={(3+\sqrt5)/2}\approx2.618034$. The certificate has three properties:
\begin{itemize}
\item[(V1)] $\mathcal B(\ell,k)\ge0$ for $1/4\le\ell\le\ell_{\max}$ and $0\le k\le\ell$;
\item[(V2)] $P(\ell,\delta)\ge0$ for $3/8\le\ell\le\ell_{\max}$ and $0\le\delta\le1$;
\item[(V3)] $P(1/4,\delta)\le4/13$ for $0\le\delta\le1$.
\end{itemize}
Properties (V2) and (V3) are elementary. $P$ is affine in $\ell$ and $P(\ell,1)=6/1024$ for every
$\ell$. The quadratic $1024P(3/8,\delta)=\frac{1701}8+\frac{3907}8\delta-695\delta^2$ is concave, so
its minimum on $[0,1]$ is $\min(1701/8,6)>0$. The quadratic $1024P(\ell_{\max},\delta)$ is convex
with vertex at $\delta=2810083/2609216>1$, so its minimum on $[0,1]$ is its value $6$ at $\delta=1$.
Finally $\max_\delta1024P(1/4,\delta)=\max_\delta\bigl(\frac{863}4+\frac{2125}4\delta-741\delta^2\bigr)$
is attained at $\delta=2125/5928$, and
$\max_\delta P(1/4,\delta)=14747353/48562176\approx0.303680<4/13\approx0.307692$. Property (V1) is
the computer-assisted step (Appendix~\ref{sec:accum-cert}).

\begin{lemma}[Accumulator]\label{lem:accumulator}
Let $\ell,k\in C^1(\R)$ satisfy $0\le k\le\ell$ and $\dot\ell=2\ell-k$, define $\widetilde Q$
by~\eqref{eq:Qtilde}, and assume $\int_{-\infty}^0|\widetilde Q(\tau)|e^{-5\tau}\dd\tau<\infty$. Then
the function $\omega$ defined by~\eqref{eq:omega} is nonnegative on $\R$.
\end{lemma}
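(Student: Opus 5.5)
The plan is to treat $\omega$ as the solution of a scalar linear ODE and to follow the trajectory $(\ell,k)$ through three regimes of $\ell$, using $2\mathcal Y$ as a moving lower barrier for $\omega$ in the middle regime. Differentiating~\eqref{eq:omega}, which is legitimate by the integrability assumption, gives
\[
\dot\omega=2\widetilde Q+(5-\ell)\,\omega,\qquad \omega(-\infty)=0 .
\]
On any interval where $\widetilde Q\ge0$, variation of constants shows that $\omega$ stays nonnegative if it is nonnegative at the left end, whatever the sign of $5-\ell$. Since $\dot\ell=2\ell-k\ge\ell\ge0$ by~\eqref{eq:cone}, $\ell$ is nondecreasing. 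If $\ell$ vanishes somewhere, then $\ell\equiv0$ before that point, so $k\equiv0$ there and $\widetilde Q=\frac12\dot k^2\ge0$; hence nothing is lost. The trajectory therefore meets the regimes $\ell<1/4$, $1/4\le\ell\le\ell_{\max}$ and $\ell>\ell_{\max}$ in this order, each on an interval (possibly empty).

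\emph{Regime $\ell<1/4$.} Here $1-4\ell+k>0$, so $\widetilde Q\ge\ell^4\ge0$ and $\omega\ge0$. I also need a quantitative bound at the exit time $\tau_0$ where $\ell(\tau_0)=1/4$. From $\dot\ell\le2\ell$ we get $\ell(\tau')\ge\ell(\tau_0)e^{-2(\tau_0-\tau')}$, and $5-\ell\ge19/4$ on this interval. Inserting $\widetilde Q\ge\ell^4$ into~\eqref{eq:omega} gives
\[
\omega(\tau_0)\ge2\ell(\tau_0)^4\int_0^\infty e^{-8s}e^{19s/4}\,\dd s=\tfrac{8}{13}\,\ell(\tau_0)^4 .
\]
By (V3) this is at least $2\ell^4P(1/4,k/\ell)=2\mathcal Y$ at $\tau_0$. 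If $\ell\ge1/4$ from $\tau=-\infty$ on, then $\ell\equiv0$ there by monotonicity and the exponential lower bound, so this case does not arise.

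\emph{Regime $1/4\le\ell\le\ell_{\max}$ (Lyapunov step).} Put $D=\omega-2\mathcal Y(\ell,k)$. Write $v=\dot k-3k$, so that $2\widetilde Q=v^2+2k^2(1-4\ell+k)+2\ell^4$ and $\dot k=v+3k$. Then
\[
\dot D=(5-\ell)D+2(5-\ell)\mathcal Y-2\mathcal Y_\ell(2\ell-k)-2\mathcal Y_k(v+3k)+v^2+2k^2(1-4\ell+k)+2\ell^4 .
\]
Minimizing $v^2-2\mathcal Y_kv$ over $v$ gives $-\mathcal Y_k^2$, so $\dot D\ge(5-\ell)D+\mathcal B(\ell,k)$. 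By (V1), $\mathcal B\ge0$ in this regime. Since $D(\tau_0)\ge0$, Gr\"onwall gives $D\ge0$ throughout, hence $\omega\ge2\mathcal Y=2\ell^4P(\ell,k/\ell)$. This is $\ge0$ once $\ell\ge3/8$, by (V2); on the part with $1/4\le\ell<3/8$ we still have $1-4\ell+k>0$, so $\widetilde Q\ge0$ and $\omega\ge0$ directly.

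\emph{Regime $\ell>\ell_{\max}$.} At entry $\omega\ge2\mathcal Y\ge0$ by the previous step and (V2). Here I claim $\widetilde Q\ge0$. The cubic $c(k)=\ell^4+k^2(1-4\ell+k)$ has its interior critical point at $k=2(4\ell-1)/3>\ell$, so it is decreasing on $[0,\ell]$ and its minimum is
\[
c(\ell)=\ell^2(\ell^2-3\ell+1),
\]
which is positive for $\ell>\ell_2$. The ODE then keeps $\omega\ge0$.

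The only genuinely hard step is (V1), the nonnegativity of the explicit polynomial $\mathcal B$ on the rectangle. The plan is to verify it by exact rational arithmetic: subdivide the region in $(\ell,\delta)=(\ell,k/\ell)$, bound $\mathcal B/\ell^4$ from below on each cell by Bernstein coefficients or monotonicity bounds, and refine near the tight corner $\delta=1$. The tightness there is visible in the value $P(\ell,1)=6/1024$.
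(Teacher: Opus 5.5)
Your plan is the paper's proof. It uses the same three regimes in $\ell$ and the same exit bound $\omega(\tau_0)\ge\frac{8}{13}\ell(\tau_0)^4=\frac1{416}$, matched against (V3). It uses the same barrier $\omega-2\mathcal Y$, with the square completed in $\dot k$ to produce $\mathcal B$, and the same monotone cubic for large $\ell$.

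\textbf{Gap.} One step is false as written. On the part $1/4\le\ell<3/8$ of the middle regime you claim $1-4\ell+k>0$, so that $\widetilde Q\ge0$ directly. There $1-4\ell\le0$ and $k$ can be as small as $0$; for instance $\ell=0.3$, $k=0$ gives $1-4\ell+k=-0.2$. You invoke (V2) only for $\ell\ge3/8$, so your barrier $\omega\ge2\mathcal Y$ does not cover this range either. As written, $\omega\ge0$ is unproved for $\ell\in(1/4,3/8)$.

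\textbf{Repair.} The paper closes exactly this range by minimizing the cubic. For $1/4\le\ell\le3/8$, the map $k\mapsto k^2(1-4\ell+k)$ has minimum $-(8\ell-2)^3/54$ on $[0,\ell]$, attained at $k=(8\ell-2)/3\le\ell$. Moreover $54\ell^4\ge(8\ell-2)^3$ there: with $y=8\ell-2\in(0,1]$ this reads $(y+2)^4/y^3\ge4096/54$, and the left side decreases to $81$ at $y=1$. Hence $\widetilde Q\ge0$ whenever $\ell\le3/8$.

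Your own route admits an even shorter repair. $P$ is affine in $\ell$, and $1024P(1/4,\delta)=\frac{863}4+\frac{2125}4\delta-741\delta^2$ is concave in $\delta$ with endpoint values $\frac{863}4$ and $6$. Combined with (V2) at $\ell=3/8$, this gives $P\ge0$ on all of $[1/4,\ell_{\max}]\times[0,1]$. The barrier alone then yields $\omega\ge0$ throughout the middle regime.

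\textbf{Smaller points.}
\begin{itemize}
\item Your sentence excluding ``$\ell\ge1/4$ from $\tau=-\infty$ on'' has the wrong reason. The correct one is that $\dot\ell\ge\ell$ makes $e^{-\tau}\ell$ nondecreasing, so $\ell(\tau)\to0$ as $\tau\to-\infty$ (not $\ell\equiv0$).
\item Likewise, $\dot\ell\le2\ell$ shows that if $\ell$ vanishes at one point it vanishes on all of $\R$. The degenerate case then simply gives $\omega\equiv0$.
\item For certifying (V1), which this lemma takes as an input: $\mathcal B(\ell,\delta\ell)$ is divisible only by $\ell^2$, since its lowest term is $2\delta^2\ell^2$. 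So $\mathcal B/\ell^4$ is not a polynomial, and Bernstein bounds should be applied to $\ell^{-2}\mathcal B(\ell,\delta\ell)$, as the paper does.
\item The minimum of that quotient on the rectangle lies on the edge $\ell=1/4$ near $\delta\approx0.106$, not at the corner $\delta=1$.
\end{itemize}
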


\begin{proof}
\emph{Preliminaries.} We have $\ell\le\dot\ell\le2\ell$, so $e^{-\tau}\ell$ is nondecreasing and
$e^{-2\tau}\ell$ is nonincreasing. If $\ell$ vanishes somewhere, these two facts force
$\ell\equiv0$, hence $k\equiv0$, $\widetilde Q\equiv0$ and $\omega\equiv0$. Otherwise $\ell>0$,
$\ell$ is strictly increasing, and $\ell(\tau)\to0$ as $\tau\to-\infty$. Since
$\exp\int_{\tau'}^\tau(5-\ell)\le e^{5(\tau-\tau')}$, the integral~\eqref{eq:omega} converges
absolutely, $\omega$ is $C^1$ with $\dot\omega=(5-\ell)\omega+2\widetilde Q$, and for
$\tau_1\le\tau$
\begin{equation}\label{eq:omega-restart}
\omega(\tau)=\omega(\tau_1)e^{\int_{\tau_1}^\tau(5-\ell)}
+\int_{\tau_1}^\tau2\widetilde Q(\tau')e^{\int_{\tau'}^\tau(5-\ell)}\dd\tau' .
\end{equation}

\emph{Step 1: $\ell\le3/8$.} If $\ell\le1/4$, then $1-4\ell+k\ge0$ and $\widetilde Q\ge\ell^4$. If
$1/4\le\ell\le3/8$, then $\min_{0\le k\le\ell}k^2(1-4\ell+k)=-(8\ell-2)^3/54$, attained at
$k=(8\ell-2)/3\le\ell$, and $\ell^4\ge(8\ell-2)^3/54$: for $\ell>1/4$, with $y=8\ell-2\in(0,1]$,
this reads $(y+2)^4/y^3\ge4096/54$, and $(y+2)^4/y^3$ is decreasing on $(0,1]$ with value $81$ at $y=1$. So
$\widetilde Q\ge0$ whenever $\ell\le3/8$. Since $\ell$ is increasing, \eqref{eq:omega} gives
$\omega(\tau)\ge0$ whenever $\ell(\tau)\le3/8$. Let $\tau_0$ be the time with $\ell(\tau_0)=1/4$,
if there is one. For $\tau'\le\tau_0$, $\widetilde Q(\tau')\ge\ell(\tau')^4\ge4^{-4}e^{-8(\tau_0-\tau')}$
and $5-\ell\ge19/4$, so
\[
\omega(\tau_0)\ge2\cdot4^{-4}\int_{-\infty}^{\tau_0}e^{-8(\tau_0-\tau')+\frac{19}4(\tau_0-\tau')}\dd\tau'
=2\cdot4^{-4}\cdot\frac4{13}=\frac1{416}.
\]

\emph{Step 2: $1/4\le\ell\le\ell_{\max}$.} Let $\eta=\omega-2\mathcal Y(\ell,k)$. Then
$\dot\eta=(5-\ell)\eta+2(5-\ell)\mathcal Y+2\widetilde Q-2\mathcal Y_\ell\dot\ell-2\mathcal Y_k\dot k$.
Completing the square in $\dot k$,
$(\dot k-3k)^2-2\mathcal Y_k\dot k\ge-6k\mathcal Y_k-\mathcal Y_k^2$, with equality at
$\dot k=3k+\mathcal Y_k$. Hence $\dot\eta\ge(5-\ell)\eta+\mathcal B(\ell,k)$, and by (V1)
$\dot\eta\ge(5-\ell)\eta$ while $1/4\le\ell\le\ell_{\max}$. By Step~1 and (V3),
$\eta(\tau_0)\ge\frac1{416}-2\cdot4^{-4}\cdot\frac4{13}=0$. Therefore $\eta\ge0$, i.e.\
$\omega\ge2\ell^4P(\ell,k/\ell)$, while $1/4\le\ell\le\ell_{\max}$, and by (V2) $\omega\ge0$ while
$3/8\le\ell\le\ell_{\max}$.

\emph{Step 3: $\ell\ge\ell_2$.} For $\ell\ge1/2$ the map $k\mapsto k^2(1-4\ell+k)$ is decreasing on
$[0,\ell]$, since its derivative is $k(2-8\ell+3k)\le k(2-5\ell)\le0$. Hence
$\widetilde Q\ge\ell^4+\ell^2(1-3\ell)=\ell^2(\ell^2-3\ell+1)\ge0$ for $\ell\ge\ell_2$. Let $\tau_2$ be
the time with $\ell(\tau_2)=\ell_2\in[3/8,\ell_{\max}]$. By Step~2, $\omega(\tau_2)\ge0$,
and~\eqref{eq:omega-restart} with $\tau_1=\tau_2$ gives $\omega\ge0$ for $\tau\ge\tau_2$.

Since $\ell$ is increasing, the three steps cover every $\tau\in\R$.
\end{proof}

\begin{proof}[Proof of Proposition~\ref{prop:accum-convex}]
Fix $r\in(0,t)$ and let $V=U(r,\cdot)=u(r/\beta^2,\cdot)$. By \textup{(P3)}
and~\eqref{eq:gap-ch}, $V$ is even and smooth, $V',V'',V'''$ are bounded, and $V''\ge0$. By
Lemma~\ref{lem:uxxx} and~\eqref{eq:cone}, the functions $\ell,k$ of Lemma~\ref{lem:reduction} satisfy
$0\le k\le\ell$; they are $C^1$ in $\tau$, and $\dot\ell=2\ell-k$. Moreover
$\widetilde Q=\lambda^2x^2Q_V=O(x^8)=O(e^{8\tau})$ as $\tau\to-\infty$, so
Lemma~\ref{lem:accumulator} applies and gives $\omega\ge0$. Thus $\Omega_V\ge0$ on $(0,\infty)$,
so $\Pi_V\ge0$ by Lemma~\ref{lem:omega}, and $\mathcal F(r)=\langle\Pi_V\rangle_r\ge0$ by
Lemma~\ref{lem:kernel}.
\end{proof}

\begin{remark}\label{rem:accum}
Three comments on the argument.
\begin{itemize}
\item[(i)] The argument shows more: for every $\lambda>0$ and every even smooth $V$ with bounded
derivatives, $V''\ge0$ on $\R$ and $V'''\le0$ on $[0,\infty)$, the kernel $\Pi_V$ is
nonnegative. The Cole--Hopf flow~\eqref{eq:gap-ch} preserves these properties (the sign of $V''$
by the formula $\partial_x^2u=\gibbs{\phi''}+a\operatorname{Var}(\phi')$ of
Section~\ref{sec:prelim}, the sign of $V'''$ by the proof of Lemma~\ref{lem:uxxx} on a single
interval), so $q(r)/r$ is convex along the flow from any such terminal profile $\psi$. The optimality conditions enter only through
Lemma~\ref{lem:endgame}.
\item[(ii)] The SK covariance enters twice: $\xi''\equiv\beta^2$ makes the time change linear, and it
gives $\kappa(t)=1/\beta^2\ge\theta^2$. Some such input is necessary: for mixed models with
$\xi''(0)<1$, the Parisi measure puts no mass on $(0,\hat q)$, where $\xi''(\hat q)=1$, so $0$ is
isolated in its support~\cite[Theorem~1]{AC2015a}.
\item[(iii)] Without the sign~\eqref{eq:cone} the kernel can be negative. Numerically, for
$\lambda=4$ and the even profile with $V''=0.05+0.9\,(e^{-(x-2.5)^2}+e^{-(x+2.5)^2})$, which has
$V'''>0$ at some $x>0$, both $\min\Pi_V$ and $\langle\Pi_V\rangle_r$ at $r=0.3$ are negative. (This
profile has unbounded $V'$, so it lies outside the standing assumptions of Appendix~\ref{sec:kernel};
it serves only as a numerical illustration.)
\end{itemize}
\end{remark}

\subsubsection{The certificate}\label{sec:accum-cert}

Property (V1) follows from the positivity of
$\widehat{\mathcal B}(\ell,\delta)=\ell^{-2}\mathcal B(\ell,\delta\ell)$ on the rectangle
$[1/4,\ell_{\max}]\times[0,1]$. From~\eqref{eq:cert-P} and~\eqref{eq:cert-B},
\begin{align*}
1024^2\,\mathcal B(\ell,k)={}&733184\ell^4-198656\ell^5-329489\ell^6+423262\ell^7-117649\ell^8\\
&+k\,\bigl(-3235840\ell^3+2695168\ell^4+2758308\ell^5-2051100\ell^6+504896\ell^7\bigr)\\
&+k^2\bigl(2097152-8388608\ell+12320768\ell^2-6379520\ell^3-3529220\ell^4\\
&\qquad+2452352\ell^5-541696\ell^6\bigr)
+k^3\bigl(2097152-3411968\ell+2260992\ell^2\bigr),
\end{align*}
and $\widehat{\mathcal B}$ is a polynomial of bidegree $(6,3)$ with $20$ monomials. We verified
(V1) in exact rational arithmetic by two methods.
\begin{itemize}
\item \emph{Bernstein subdivision.} A polynomial whose Bernstein coefficients on a box are all
positive is positive on that box. Six exact bisections of the rectangle (thirteen boxes examined)
suffice for $\widehat{\mathcal B}$.
\item \emph{Critical points.} On each of the four edges of the rectangle,
$\widehat{\mathcal B}$ has no zero, by Sturm sequences. The resultant
$\operatorname{Res}_\delta(\partial_\ell\widehat{\mathcal B},\partial_\delta\widehat{\mathcal B})$
has degree $23$ and four real roots in $[1/4,\ell_{\max}]$. Exact rational lower bounds on small
boxes around the interior critical points above these roots give
$\widehat{\mathcal B}\ge0.0358$ there.
\end{itemize}
The minimum of $\widehat{\mathcal B}$ on the rectangle is about $0.0320$, attained on the edge
$\ell=1/4$ near $\delta=0.106$. The same Bernstein check also confirms (V2), (V3) and the Step~1
inequality $54\ell^4>(8\ell-2)^3$ on $[1/4,3/8]$. The certificate was found by a convex search over
polynomial Lyapunov functions of bidegree $(1,2)$ in $(\ell,\delta)$, followed by rounding to the
denominator $1024$. A second certificate, of bidegree $(2,2)$ with nine coefficients and
denominator $4096$, also passes both exact checks. It plays no role in the proof.

\subsection{Free-energy bounds for the \texorpdfstring{$p$}{p}-spin models}\label{app:mf-pspin}

This subsection proves Lemmas~\ref{lem:bins}, \ref{lem:gn}, \ref{lem:annealed} and~\ref{lem:endpoint}
and part~(ii) of Theorem~\ref{thm:reduction}; the notation is that of Section~\ref{sec:pspin}.

\begin{proof}[Proof of Lemma~\ref{lem:bins}]
On $\{m\in[a,b]\}$ we have $e^{hN(m-a)}\ge1$, and on $\{m\in[-b,-a]\}$ we have
$e^{hN(-m-a)}\ge1$. By~\eqref{eq:bias}, $Z_{N,\mathcal M}$ is at most
$e^{\beta j_0(N\max_{\mathcal M}m^p+C_p)-hNa}\sum_\sigma e^{\beta H^0_N(\sigma)\pm hNm(\sigma)}$.
Taking $N^{-1}\E\log$ and using $F^0(\beta,-h)=F^0(\beta,h)$ proves the first claim. For the
second, $N^{-1}\log Z_{N,\mathcal M_k}$ and $N^{-1}\log Z_N$ are Lipschitz functions of the Gaussian
disorder with constant $O(N^{-1/2})$, so they concentrate at exponential rates around their
means~\cite[eq.~(1.22)]{Ledoux1999}. With probability $1-e^{-cN}$ we then have $Z_{N,\mathcal M}/Z_N\le e^{-\eta N}$ for some
$\eta>0$, while always $\gibbs{\ind\{m\in\mathcal M\}}\le1$.
\end{proof}

\begin{proof}[Proof of Lemma~\ref{lem:gn}]
Put $a=\xi(1)p!/N^{p-1}$. Then $\beta J_I$ has mean $a$ and variance $a$, so
$Z_N=\sum_\sigma\exp\sum_I(a+\sqrt a\,g_I)\sigma_I$ with independent standard Gaussians $g_I$.
Let $\rho=\xi'(q)$ and let $z_1,\dots,z_N$ be standard Gaussians independent of the $g_I$. For
$t\in[0,1]$ let $\gibbs\cdot_t$ be the Gibbs measure with Hamiltonian
\[
H_t(\sigma)=\sum_I\bigl(ta+\sqrt{ta}\,g_I\bigr)\sigma_I+\sum_{i=1}^N\bigl((1-t)\rho+\sqrt{(1-t)\rho}\,z_i\bigr)\sigma_i ,
\]
and let $\varphi(t)=N^{-1}\E\log\sum_\sigma2^{-N}e^{H_t(\sigma)}$. Then
$\varphi(1)=N^{-1}\E\log Z_N-\log2$ and $\varphi(0)=\psi(\rho)$.

We first check the Nishimori identity $\E\gibbs{\sigma_A}_t^2=\E\gibbs{\sigma_A}_t$ for every
$A\subset\{1,\dots,N\}$, where $\sigma_A=\prod_{i\in A}\sigma_i$. Let $\tau$ be uniform on
$\{\pm1\}^N$, independent of $(g,z)$, and consider the observations $Y_I=ta\tau_I+\sqrt{ta}\,g_I$ and
$y_i=(1-t)\rho\tau_i+\sqrt{(1-t)\rho}\,z_i$ (if $\rho=0$, omit $y$). Since $\tau_I^2=\tau_i^2=1$, the density of $(Y,y)$ given
$\tau$ is $\exp(\sum_IY_I\tau_I+\sum_iy_i\tau_i)$ times a function of $(Y,y)$ alone. By Bayes'
formula the conditional law of $\tau$ given $(Y,y)$ is therefore the Gibbs measure
$\gibbs\cdot_{Y,y}$ with weights $\exp(\sum_IY_I\sigma_I+\sum_iy_i\sigma_i)$, and
$\E\gibbs{\sigma_A}_{Y,y}^2=\E[\tau_A\gibbs{\sigma_A}_{Y,y}]$. The substitution
$\sigma_i\mapsto\tau_i\sigma_i$ turns $\gibbs\cdot_{Y,y}$ into $\gibbs\cdot_t$ with $g_I$ and $z_i$
replaced by $\tau_Ig_I$ and $\tau_iz_i$. These have the joint law of $(g,z)$ and are independent
of $\tau$. Hence $\tau_A\gibbs{\sigma_A}_{Y,y}$ and $\gibbs{\sigma_A}_{Y,y}^2$ have the laws of
$\gibbs{\sigma_A}_t$ and $\gibbs{\sigma_A}_t^2$, which proves the identity.

The function $\varphi$ is continuous on $[0,1]$. For $0<t<1$, differentiating under the
expectation, integrating by parts in $g_I$ and $z_i$, and using the identity with $A=I$ and
$A=\{i\}$ gives
\[
\varphi'(t)=\frac a{2N}\sum_I\bigl(1+\E\gibbs{\sigma_I}_t\bigr)-\frac\rho2\bigl(1+\E\gibbs{m}_t\bigr).
\]
By~\eqref{eq:bias}, $\frac aN\sum_I\sigma_I=\xi(m)+\xi(1)r_{N,p}(m)/N$, and
$\frac a{2N}\binom Np=j_0^2\prod_{k=1}^{p-1}(1-\frac kN)\ge j_0^2(1-\frac{p(p-1)}{2N})$. Since
$\xi(m)-\xi'(q)m=\Delta_q(m)-\theta(q)$ and $\xi(1)=2j_0^2$,
\[
\varphi'(t)\ge\frac12\bigl(\xi(1)-\xi'(q)-\theta(q)\bigr)+\frac12\E\gibbs{\Delta_q(m)}_t
-\frac{j_0^2}N\Bigl(\frac{p(p-1)}2+C_p\Bigr).
\]
For $q\in Q_p$ the middle term is nonnegative. Integrating over $t\in[0,1]$ and using
$\xi(1)/2=j_0^2$ proves the lemma.
\end{proof}

\begin{proof}[Proof of Lemma~\ref{lem:annealed}]
For fixed $\sigma$, independence of the $J_I$ gives
\[
\E e^{\beta H_N(\sigma)}=\exp\Bigl(\beta j_0\frac{p!}{N^{p-1}}\sum_I\sigma_I+\frac{\beta^2}2\cdot\frac{p!}{2N^{p-1}}\binom Np\Bigr).
\]
The first term in the exponent is $N\xi(m)+\xi(1)r_{N,p}(m)\le N\xi(m)+\xi(1)C_p$, and the
second is $Nj_0^2\prod_{k=1}^{p-1}(1-k/N)\le Nj_0^2$. The number of $\sigma$ with
$m(\sigma)=m=2k/N-1$ is $\binom Nk\le{(N/k)^k(N/(N-k))^{N-k}}=e^{N(\log2-\mathcal I(m))}$ (with
$0^0=1$), and $m$ takes $N+1$ values.
\end{proof}

\begin{proof}[Proof of Lemma~\ref{lem:endpoint}]
Let $f=\xi-\mathcal I$. Then $f(0)=f'(0)=0$ and $f''(m)=(k(m)-1)/(1-m^2)$ with
$k(m)=\xi''(m)(1-m^2)=p(p-1)\xi(1)m^{p-2}(1-m^2)$. Since
$k'(m)=p(p-1)\xi(1)m^{p-3}\bigl(p-2-pm^2\bigr)\ge0$ on $[0,m_1]$ and $k(0)=0$, there is
$s\in(0,m_1]$ with $f''<0$ on $[0,s)$ and $f''\ge0$ on $(s,m_1]$. So $f'<0$ on $(0,s]$ and $f'$
is nondecreasing on $[s,m_1]$. Hence $f'$ changes sign at most once on $(0,m_1]$, from negative
to positive, and the maximum of $f$ on $[0,m_1]$ is attained at $0$ or at $m_1$. For odd $p$ and
$m\le0$, $\xi(m)\le0\le\mathcal I(m)$.
\end{proof}

\begin{proof}[Proof of Theorem~\ref{thm:reduction}(ii)]
By Lemma~\ref{lem:lower} and (F), $\liminf_NN^{-1}\E\log Z_N(\beta_{\rm cold},j_0)\ge
F^0(\beta_{\rm cold},0)\ge L_F$. Fix $\varepsilon>0$.
\begin{itemize}
\item Bins in $[\varepsilon,m_{\rm lo}]$: Lemma~\ref{lem:universal} gives
$F^0(\beta_{\rm cold},0)+g(b)+(b-a)$, which is below $F^0(\beta_{\rm cold},0)$ once the bins are finer than
$\min_{[\varepsilon,m_{\rm lo}]}|g|$.
\item Bins in $[m_{\rm lo},1]$: subdivide each $[a_i,b_i]$ into sub-bins $[a,b]$. By
Lemma~\ref{lem:bins} with $\tau_i,h_i$, each sub-bin is bounded by
$U_i(b)+h_i(b-a)$. Since $U_i$ is convex on $[0,1]$, $U_i(b)\le\max(U_i(a_i),U_i(b_i))<L_F$, so fine
sub-bins stay below $L_F$.
\item Negative bins $[-b,-a]$ with $a\ge\varepsilon$: for odd $p$, Lemma~\ref{lem:universal}
gives $F^0(\beta_{\rm cold},0)-a^2/2$. For even $p$ they are handled like the positive bins, since the
bounds depend only on $|m|$.
\end{itemize}
Lemma~\ref{lem:bins} gives $\E\gibbs{\ind\{|m|\ge\varepsilon\}}\to0$ for every $\varepsilon>0$.
\end{proof}

\subsection{Proof of Lemma~\ref{lem:D}}\label{app:D}

Throughout, $\mu^*=x\delta_0+(1-x)\delta_q$ and the field is $0$. Here $u(s,y)$ denotes the
solution of the Parisi PDE, with time $s$ and spatial variable $y$ (the variable called $x$
in~\eqref{eq:pde} and \textup{(P3)}, since $x$ is now the weight of the atom at $0$); the scalar
$u\in[0,1]$ is the location of the perturbing atom $\delta_u$, and for $u\ge q$,
$s^2=\xi'(u)-\xi'(q)$ as in Lemma~\ref{lem:D}(ii). On $[q,1]$ the Parisi PDE
has $\alpha\equiv1$, so $u(s,y)=\lc y+\frac12(\xi'(1)-\xi'(s))$. On $[0,q)$ it has
$\alpha\equiv x$, so~\eqref{eq:colehopf} gives
$u(0,0)=\frac1x\log A+\frac12(\xi'(1)-\xi'(q))$. With
$\int_a^bs\xi''(s)\,\dd s=\theta(b)-\theta(a)$, this yields the formula for $\Par(\mu^*,0)$
stated before Lemma~\ref{lem:D} and $C=\partial_x\Par$.

\paragraph{Case $u\le q$.} The perturbed measure
$\mu_\varepsilon=(1-\varepsilon)\mu^*+\varepsilon\delta_u$ has distribution function
$(1-\varepsilon)x$ on $[0,u)$, $b_\varepsilon=x+\varepsilon(1-x)$ on $[u,q)$ and $1$ on $[q,1]$.
Two Cole--Hopf steps give
\[
u_\varepsilon(0,0)=\frac{1}{(1-\varepsilon)x}\log\E\exp\Bigl(\frac{(1-\varepsilon)x}{b_\varepsilon}
\log\E_g\cosh^{b_\varepsilon}\bigl(Y_u+\sqrt{\xi'(q)-\xi'(u)}\,g\bigr)\Bigr)+\frac12(\xi'(1)-\xi'(q)),
\]
where the outer expectation is over $Y_u\sim N(0,\xi'(u))$. The penalty term of
$\Par(\mu_\varepsilon)$ is
$-\frac12[(1-\varepsilon)x\theta(u)+b_\varepsilon(\theta(q)-\theta(u))+\theta(1)-\theta(q)]$.
Differentiate at $\varepsilon=0$, using $\E_{Y_u}B(Y_u)=A$ and the tower property
$\E_{Y_u}\E_g[\cosh^x\lc](\cdot)=AK$. This gives~(i).

\paragraph{Case $u\ge q$.} Now the distribution function is $(1-\varepsilon)x$ on $[0,q)$,
$1-\varepsilon$ on $[q,u)$ and $1$ on $[u,1]$. The middle step is
$u_\varepsilon(q,y)=\frac1{1-\varepsilon}\log\E\cosh^{1-\varepsilon}(y+sg)+\frac12(\xi'(1)-\xi'(u))$.
At $\varepsilon=0$ its derivative is
$\lc y+\frac{s^2}2-\E[\cosh(y+sg)\lc(y+sg)]/(\cosh(y)e^{s^2/2})$. Averaging against the tilt
$\cosh^x(Y_q)/A$ gives $K+\frac{s^2}2-\E[\cosh^{x-1}Y_q\cosh Y_u\lc Y_u]/(Ae^{s^2/2})$. The
outer step on $[0,q)$ also has parameter $(1-\varepsilon)x$; differentiating in that
parameter, with $u_\varepsilon(q,\cdot)$ held at $\varepsilon=0$, contributes
$\frac{\log A}x-K$. Adding these two terms and the derivative of the penalty,
$\frac12[x\theta(q)+\theta(u)-\theta(q)]$, gives~(ii).

\paragraph{Items (iii) and (iv).} Continuity at $q$ and the values $D(0)$, $D(q)$ follow by
substitution. For $u<q$ write $t=\xi'(u)$ and let $f_t=P_{\xi'(q)-t}\cosh^x$ be the backward
heat flow, so that $B=f_t$ and $B_1=f_t'/x$. The heat equation gives
$\frac{\dd}{\dd t}\E[f_t\log f_t(Y)]=\frac12\E[f_t'^2/f_t]$ with $Y\sim N(0,t)$. Hence
$\frac{\dd}{\dd u}\E[B\log B]=\frac{x^2}2\xi''(u)A\,\Gamma(u)$. Also
$\frac{\dd}{\dd u}\theta(u)=u\,\xi''(u)$. Together these give
$D'(u)=-\frac12\xi''(u)(\Gamma(u)-u)$. For $u>q$ the analogous computation uses
$\frac{\dd}{\dd s^2}\bigl[e^{-s^2/2}\E_g\cosh(y+sg)\lc(y+sg)\bigr]
=\frac12e^{-s^2/2}\E_g[\cosh(y+sg)(1+\tanh^2(y+sg))]$, and the same identity follows.

For monotonicity, below $q$ one computes
$\frac{\dd\Gamma}{\dd t}=\frac{1}{x^2A}\E\bigl[f_t\,(\partial_y^2\log f_t)^2\bigr]\ge0$, and
this is at most $1$ because $0\le\partial_y^2\log f_t/x=\partial_y^2u\le1$ by \textup{(P3)}.
Above $q$, $\Gamma=1-\E[\cosh^{x-1}Y_q\,\E_g\sech Y_u]/(Ae^{s^2/2})$ and
$\frac{\dd\Gamma}{\dd s^2}=\E[\cosh^{x-1}Y_q\,\E_g\sech^3Y_u]/(Ae^{s^2/2})\in[0,1]$.
Since $\dd t=\xi''(u)\,\dd u$, we get $0\le\Gamma'\le\xi''$. Finally, $\Gamma(0)=0$ because
$u(0,\cdot)$ is even.\qed

\subsection{Certificates for \texorpdfstring{$p=3$}{p=3} and \texorpdfstring{$p=4$}{p=4}}\label{app:cert34}

This subsection describes how the predicates (W), (F) and (L) of Theorem~\ref{thm:reduction} are
certified at the parameters of Theorems~\ref{thm:p3} and~\ref{thm:p4}; the notation is that of
Section~\ref{sec:pspin}, and Table~\ref{tab:cert} lists the certified values. We begin with the cell
bound used for (F).

Items (iii) and (iv) of Lemma~\ref{lem:D} turn
Proposition~\ref{prop:fw} into a finite computation. Evaluate $D$ and $\Gamma$ at grid points
$u_0<u_1<\dots<u_n=1$ that include $u=q$. Since $\Gamma$ is monotone, on each cell
$[u_k,u_{k+1}]$,
\begin{equation}\label{eq:cell}
\min_{[u_k,u_{k+1}]}D\ge\max\Bigl\{D(u_k)-\tfrac12\int_{u_k}^{u_{k+1}}\xi''(t)\bigl(\Gamma(u_{k+1})-t\bigr)^+\dd t,\
D(u_{k+1})-\tfrac12\int_{u_k}^{u_{k+1}}\xi''(t)\bigl(t-\Gamma(u_k)\bigr)^+\dd t\Bigr\},
\end{equation}
and $\min_{[0,u_0]}D=D(0)$.

\begin{table}[!htb]
\centering\small
\begin{tabular}{@{}lll@{}}
\toprule
& $p=3$ & $p=4$\\
\midrule
$j_0$, $\beta_{\rm w}=2j_0$, $\beta_{\rm cold}$ & $3841/5000$, $3841/2500$, $5/2$ & $8107/10000$, $8107/5000$, $10/3$\\
(W) $q_0$, $m_1$ & $\frac{814809}{10^6}$, $\frac{11}{25}$ & $\frac{948189}{10^6}$, $\frac{16}{25}$\\
(W) $\Phi(q_0)$ & $\ge5.0150255\times10^{-4}$ & $\ge2.2785439\times10^{-4}$\\
 & ($\ge5.0150261\times10^{-4}$) & ($\ge2.2785462\times10^{-4}$)\\
(W) $\xi(m_1)-\mathcal I(m_1)$ & $\le3.4561599\times10^{-4}$ & $\le-1.2223937\times10^{-3}$\\
(W) margin & $\ge1.5588657\times10^{-4}$ & $\ge2.2785439\times10^{-4}$\\
 & ($\ge1.5588662\times10^{-4}$) & ($\ge2.2785462\times10^{-4}$)\\
(L) $m_{\rm lo}$, $g(m_{\rm lo})$ & $1/4$, $-159/128000$ & $2/5$, $-2536/234375$\\
(L) cover of $[m_{\rm lo},1]$ & 75 intervals & 67 intervals\\
(L) $\max_iU_i$ at endpoints & $\le2.0451151094$ & $\le2.7532998672$\\
(F) trial $\mu^*=x\delta_0+(1-x)\delta_q$ & $x=\frac{554371}{10^6}$, $q=\frac{929223}{10^6}$ & $x=\frac{930529}{2\cdot10^6}$, $q=\frac{4930831}{5\cdot10^6}$\\
(F) $\Par(\mu^*,0)$ & $\ge2.04575395212$ & $\ge2.75458158138$\\
(F) certified $\min_uD(u)$ & $\ge-6.8\times10^{-5}$ ($\ge-6.6\times10^{-7}$) & $\ge-4.9\times10^{-6}$ ($\ge-1.5\times10^{-7}$)\\
(F) $L_F$ & $2.0456862122$ ($2.0457533010$) & $2.7545766959$ ($2.7545814375$)\\
cold margin $L_F-\max_iU_i$ & $5.7\times10^{-4}$ ($6.3\times10^{-4}$) & $1.27\times10^{-3}$ ($1.28\times10^{-3}$)\\
\bottomrule
\end{tabular}
\caption{Certified predicates. Numbers are rigorous enclosure endpoints, rounded outward in
the last displayed digit (lower bounds down, upper bounds up). Where two values are shown, the
first is the primary certificate and the parenthesized one is from the independent
implementation, whose bounds are tighter. The (W) margin is
$\Phi(q_0)-\max\{0,\xi(m_1)-\mathcal I(m_1)\}$. The (L) trials are replica-symmetric $\delta_q$ or two-atom
$\{0,q\}$ measures with a field, listed in the frozen trial files described in
Appendix~\ref{app:repro}.}
\label{tab:cert}
\end{table}

\subsubsection{What is computed}

\begin{itemize}
\item \emph{(W).} Here $\Phi(q_0)$ is the single one-dimensional Gaussian integral
\[
\psi(\xi'(q_0))=\E\lc\bigl(\xi'(q_0)+\sqrt{\xi'(q_0)}\,g\bigr)
\]
minus the rational number $(\xi'(q_0)+\theta(q_0))/2$, and $\xi(m_1)-\mathcal I(m_1)$ is a rational number minus a
combination of $\log(1+m_1)$ and $\log(1-m_1)$. The conditions $q_0\in Q_p$ (for $p=3$,
$q_0\ge1/2$) and $m_1^2\le1-2/p$ are exact rational checks. The value $q_0$ approximates the
maximizer of $\Phi$, and $m_1$ lies just below the root of $\xi(m)-\mathcal I(m)=\Phi(q_0)$,
which is about $0.44073$ for $p=3$ and $0.64232$ for $p=4$.
\item \emph{(L).} Each interval needs one integral, either $\E\lc(h+\sqrt{\xi'(q)}g)$ or
$\E\cosh^x(h+\sqrt{\xi'(q)}g)$. The bound $U_i$ is convex in $m\ge0$, so its endpoint values
suffice. Two-atom trials are used for $m\le0.57$ ($p=3$) and $m\le0.73$ ($p=4$), and
replica-symmetric trials above. The worst interval sits at the replica-symmetric ferromagnetic branch
($m\approx0.84$ for $p=3$, $m\approx0.96$ for $p=4$).
\item \emph{(F).} Here $\Par(\mu^*,0)$, $A$ and $K$ are one-dimensional integrals, while $D(u)$
and $\Gamma(u)$ at grid points are two-dimensional (Lemma~\ref{lem:D}). The minimum of $D$ over
$[0,1]$ is bounded through~\eqref{eq:cell} and the monotone region
$[0,u_0]$ ($u_0\ge8/75$ for $p=3$, $\ge21/100$ for $p=4$). At the optimum $D$ vanishes at $0$
and at $q$; its minimum is controlled by near-stationarity of $\mu^*$ and the grid spacing
near $q$.
\end{itemize}

\subsubsection{Rigorous arithmetic and independent verification}

The primary certificate uses IEEE double-precision interval arithmetic with outward rounding.
Elementary functions are widened by three units in the last place, which assumes the platform
\texttt{exp} and \texttt{log} err by less than two units. Integrals over the real line are
truncated with explicit Gaussian tail bounds. Each remaining interval is handled by composite
Simpson rules whose remainders are enclosed with interval Taylor arithmetic of order four.
The two-dimensional integrals in Lemma~\ref{lem:D}(i) are nested: the inner integral is
enclosed tightly at the outer nodes, and crude enclosures of its derivatives control the
outer remainder. For $p=4$, predicate (F) at the colder $\beta_{\rm cold}=10/3$ ($\Par(\mu^*,0)$,
$A$, $K$, $D$ and $\Gamma$) was certified instead with the trapezoid rule on $\R$, the
analytic-strip bound of~\cite[Theorem~5.1]{TW2014}, and the a priori bound
$\gamma_{n-1}\sum_i|x_i|$ on the rounding error of a floating-point sum of $n$ terms
$x_i$~\cite{Higham2002}, where $\gamma_n=nu/(1-nu)$ and $u$ is the unit roundoff.
For (W), the primary certificate encloses $\psi$ with the same Simpson rules and evaluates the
logarithms in exact rational arithmetic, through $\log y=2\operatorname{artanh}((y-1)/(y+1))$
with an explicit geometric bound on the tail of the series.

For (L) and (F), the second implementation was developed independently of the first and shares
no code with it. For (W), the two drivers share only the problem data and differ in arithmetic
and quadrature (Simpson rules with Taylor remainders and exact rational logarithms, versus
\texttt{mpmath}~\cite{mpmath} intervals with the trapezoid rule). The second implementation
rederives Lemma~\ref{lem:D}, including the monotonicity of $\Gamma$ from the heat-semigroup
representation, and differs from the primary implementation as follows:
\begin{itemize}
\item \texttt{mpmath} interval arithmetic with directed rounding at $80$--$140$ bits;
\item the trapezoid rule on $\R$ with the error bound of Trefethen and
Weideman~\cite[Theorem~5.1]{TW2014} for integrands analytic in a strip, with explicit
majorants on the strip;
\item Higham's forward error bound~\cite{Higham2002} for the floating-point convolutions in the
nested integrals.
\end{itemize}
It does not use the platform's transcendental functions for any certified value. For $p=3$ and
for the $p=4$ predicates (W) and (L), its quadrature also differs from the primary
Simpson/Taylor rules. For $p=4$ (F), both implementations use the trapezoid rule with strip
bounds and a priori summation bounds; there they differ in the evaluation of elementary
functions, in node lattices and strip widths, and in the organization of the nested integrals.
For $p=3$ every enclosure it produces lies inside the corresponding primary enclosure. For
$p=4$ its (W), (L), $\Par(\mu^*,0)$, $D(0)$ and $D(q)$ enclosures lie inside the primary ones;
at the $41$ shared grid points its $D$ and $\Gamma$ enclosures overlap the primary ones but are
not nested in general (all but one $D$ enclosure is nested, most $\Gamma$ enclosures are not). It certifies all three predicates for both values of $p$.

\section{Proofs and certificates for Section~\ref{sec:nearM}}
\label{app:largep}

This appendix proves Lemma~\ref{lem:nm-closed}, Lemma~\ref{lem:nm-U} with
Proposition~\ref{prop:nm-triple}, Proposition~\ref{prop:nm-J} and Theorem~\ref{thm:nm-LP}, and
describes the certificates behind Theorem~\ref{thm:nm-nondeg}; Appendix~\ref{app:nm-proofs} proves
Lemmas~\ref{lem:nm-NLid}, \ref{lem:nm-G} and~\ref{lem:nm-LS}, the analytic extension in
Lemma~\ref{lem:nm-branch} and Lemma~\ref{lem:nm-FMloc}(b),(c). In the Gaussian averages below,
$\Lambda>0$ and $h\sim N(\Lambda,\Lambda)$, except in Appendix~\ref{app:nm-closed}, where $h$ is the
field variable of $U$ and $h_c\sim N(\Lambda_c,\Lambda_c)$ plays this role. Throughout,
$\mu=\mu(\Lambda)=\E\tanh h$, $\tau=\E\tanh^3h$ and
$V=\operatorname{Var}(\lc h)$. By Lemma~\ref{lem:nm-NLid} we may write
\[
\bar\mu:=1-\mu=\E\sech^2h,\qquad v:=\mu-\tau=\E[\tanh^2h\sech^2h],\qquad
w:=1-2\mu+\tau=\E\sech^4h=\mu'(\Lambda),
\]
so that $B=\bar\mu-2v$ and $1-4\mu+3\tau=\bar\mu-3v=B-v$. We also use
$W(\Lambda)=\psi(\Lambda)-\Lambda/2$, so that $D_p=W-\frac{p-1}{2p}\Lambda\mu$, and $W'=\mu/2$ by
Lemma~\ref{lem:nm-NLid}.

\subsection{Closed forms at the triple point}\label{app:nm-closed}

\begin{proof}[Proof of Lemma~\ref{lem:nm-closed}]
All quantities are evaluated at $M$ and $z_0=(\mu,\Lambda_c,\mu)$, where $\Lambda=\Lambda_c$.
Differentiating $U$ with $\partial_h\E f(h+\sqrt r g)=\E f'$ and
$\partial_r\E f(h+\sqrt r g)=\frac12\E f''$, and using $(\tanh)''=-2\tanh\sech^2$ and
$(\tanh^2)''=2\sech^2(1-3\tanh^2)$, one finds that the Hessian of $U$ in $(m,h,q)$ is
\[
\mathbf H=\begin{pmatrix}\Omega&-1&0\\-1&\bar\mu&-\Omega v\\0&-\Omega v&\frac\Omega2\bigl(1-\Omega(B-v)\bigr)\end{pmatrix};
\]
the term $\frac12\xi'''(q)(q-\E\tanh^2)$ in $\partial_q^2U$ vanishes at $z_0$. This gives
\eqref{eq:nm-S}. Since $\bar\mu^2-3\bar\mu v+2v^2=(\bar\mu-v)(\bar\mu-2v)=wB$,
\[
\det S=\frac\Omega2\bigl(\bar\mu-\Omega wB\bigr)=\frac\Omega2\bigl(\lambda B+2v\bigr),\qquad
\det\mathbf H=\Omega\det S-S_{qq}=\frac\Omega2\bigl(\Omega\lambda B-\lambda\bigr)
=-\frac\Omega2\,\lambda\,(1-\Omega B),
\]
using $\lambda=1-\Omega w$ and $\bar\mu-v=w$. Hence
$\tilde B''(\mu)=\det\mathbf H/\det S=-\lambda(1-\Omega B)/(\lambda B+2v)$.

For $A_{\rm FM}$, let $z^*(\beta)$ be the critical point of Lemma~\ref{lem:nm-FMloc}(b) at
$j_0=j_{0M}$, which uses only (N2), so that
$\hat\varphi_{\rm FM}(\beta,j_{0M})=U(z^*(\beta);\beta)$. Differentiating twice
and using $\mathbf H\,z^{*\prime}+\partial_\beta\nabla U=0$ gives
\[
\partial_\beta^2\hat\varphi_{\rm FM}=\partial_\beta^2U-\mathbf d^{\top}\mathbf H^{-1}\mathbf d,
\qquad \mathbf d=\partial_\beta\nabla_{(m,h,q)}U ,
\]
all at $(z_0,\beta_1)$ and with $(m,h,q,j_0)$ held fixed in the partial derivatives. Write
$\xi_\beta=\beta^2\xi_1$ with $\xi_1(q)=q^p/2$, so that $\beta_1^2\xi_1'(\mu)=\Lambda_c$. From
$\partial_\beta U=j_0m^p+\beta\bigl[\xi_1(1)-\xi_1(q)+q\xi_1'(q)\bigr]-\beta\xi_1'(q)\,
\E\tanh^2\bigl(h+\beta\sqrt{\xi_1'(q)}\,g\bigr)$ one gets, at $(z_0,\beta_1)$,
\[
\partial_\beta^2U=\frac12-\frac{\mu^p}2-\frac{2\Lambda_c^2}{\beta_1^2}(B-v),\qquad
\beta_1\mathbf d=\Lambda_c\,\mathbf k,\qquad \mathbf k=\bigl(1,\,-2v,\,-\Omega(B-v)\bigr)^{\top}.
\]
For instance $\partial_\beta\partial_mU=pj_{0M}\mu^{p-1}=\Lambda_c/\beta_1$, and
$\partial_\beta\partial_hU=(2\Lambda_c/\beta_1)\cdot\frac12\E(\tanh)''(h_c)=-2\Lambda_cv/\beta_1$.
One checks directly, with $\bar\mu=B+2v$, that
\[
\mathbf H\mathbf y=\mathbf k\qquad\text{for}\qquad \mathbf y=-\frac{1}{1-\Omega B}\,(B,\,1,\,2B)^{\top} .
\]
Hence $\mathbf k^\top\mathbf H^{-1}\mathbf k=\mathbf k^\top\mathbf y
=\bigl(2v-B+2\Omega(B-v)B\bigr)/(1-\Omega B)$, and
\[
\beta_1^2\Bigl(\frac12-\partial_\beta^2\hat\varphi_{\rm FM}\Bigr)
=\frac{\beta_1^2\mu^p}2+\Lambda_c^2\bigl[2(B-v)+\mathbf k^\top\mathbf y\bigr]
=\frac{\beta_1^2\mu^p}2+\frac{\Lambda_c^2B}{1-\Omega B},
\]
which is $\beta_1^2A_{\rm FM}$ by~\eqref{eq:nm-A}. Under (N2), $\det\mathbf H=\det S\cdot\tilde
B''(\mu)\ne0$, so $\lambda(1-\Omega B)\ne0$ and every step is justified.
\end{proof}

The same second derivative computed in Nishimori's parametrization $h=p\beta j_0m^{p-1}$ gives his
(44)--(45); the on-shell second derivative does not depend on the parametrization of the
critical point. In that parametrization the Hessian is Nishimori's $2\times2$ Hessian $H$ of
$(m,q)\mapsto U_{\beta_1,j_{0M}}(m;p\beta_1j_{0M}m^{p-1},q)$ at $(\mu,\mu)$. Since $\partial_hU=0$
at $z_0$ and $\frac{\dd}{\dd m}p\beta_1j_{0M}m^{p-1}=\Omega$ at $m=\mu$, $H$ is the restriction of
$\mathbf H$ to the directions $(1,\Omega,0)$ and $(0,0,1)$, and a short computation gives
$\det H=\Omega\det\mathbf H=-\frac{\Omega^2}2\lambda(1-\Omega B)$. The certificates of
Appendix~\ref{app:nm-cert} do not code~\eqref{eq:nm-A} directly. They evaluate $A_{\rm FM}$
through $H$, in the three codings described there; the coding by closed forms reduced on the
Nishimori line equals~\eqref{eq:nm-A} by the identity for $\det H$.

\subsection{Single crossing and the triple point}\label{app:nm-U}

For a function $\varphi$ on $\R$ put $K_\varphi(s)=\int_\R e^{-h^2/(2s)}\varphi(h)\,\dd h$. The
density of $h\sim N(s,s)$ is $\nu(s)e^{-h^2/(2s)}e^h$ with $\nu(s)=e^{-s/2}/\sqrt{2\pi s}$, so for
even $F$ one has $\E F(h)=\nu(s)K_{F\cosh}(s)$. In particular, by Lemma~\ref{lem:nm-NLid}(b),
\[
\mu(s)=\E\tanh^2h=\nu(s)K_{\sinh\cdot\tanh}(s),\qquad \mu'(s)=\E\sech^4h=\nu(s)K_{\sech^3}(s).
\]

\begin{proof}[Proof of Lemma~\ref{lem:nm-U}]
Fix $\ell\in(0,1)$.

\emph{Representation.} Let $k(h)=\int_0^h\sech^3=\frac12(\sech h\tanh h+\arctan\sinh h)$, an odd
function with $|k|<\pi/4$. One Gaussian integration by parts,
$s\int e^{-h^2/(2s)}k'(h)\,\dd h=\int he^{-h^2/(2s)}k(h)\,\dd h$, gives
$s\mu'(s)=\nu(s)K_{hk}(s)$. Hence
\[
s\mu'(s)-\ell\mu(s)=\nu(s)K_{\varphi_\ell}(s),\qquad \varphi_\ell(h)=h\,k(h)-\ell\sinh h\tanh h,
\]
and $e(s)-\ell$ has the sign of $K_{\varphi_\ell}(s)$.

\emph{$\varphi_\ell$ changes sign once on $(0,\infty)$.} With $f=hk$ and $G=\sinh\cdot\tanh$ one
computes $f''=\sech^3h\,(2-3h\tanh h)$ and $G''=\sech^3h\,(\cosh^4h-\cosh^2h+2)$, so
$\varphi_\ell''=\sech^3h\cdot\Delta_\ell(h)$ with
$\Delta_\ell=2-3h\tanh h-\ell(\cosh^4h-\cosh^2h+2)$. On $(0,\infty)$, $\Delta_\ell'<0$,
$\Delta_\ell(0)=2(1-\ell)>0$ and $\Delta_\ell\to-\infty$, so $\varphi_\ell'$ first increases and
then decreases. Also $\varphi_\ell(0)=\varphi_\ell'(0)=0$, and $\varphi_\ell'\to-\infty$ because
$f'=k+h\sech^3h$ is bounded while $G'=\sinh h(1+\sech^2h)\to\infty$. So $\varphi_\ell'$
changes sign once on $(0,\infty)$, from $+$ to $-$, and $\varphi_\ell$ increases from $0$ and then
decreases to $-\infty$. Since $\varphi_\ell$ is even, there is $h_0>0$ with $\varphi_\ell>0$ for
$0<|h|<h_0$ and
$\varphi_\ell<0$ for $|h|>h_0$. For $\ell=1$,
$\Delta_1=-3h\tanh h-\cosh^2h\sinh^2h<0$ on $(0,\infty)$, so $\varphi_1<0$ on $\R\setminus\{0\}$.

\emph{Variation diminishing.} Let $\mathcal G(s)=e^{h_0^2/(2s)}K_{\varphi_\ell}(s)
=\int e^{-(h^2-h_0^2)/(2s)}\varphi_\ell(h)\,\dd h$. Differentiation under the integral is
justified on compact $s$-intervals, and
\[
\mathcal G'(s)=\frac1{2s^2}\int(h^2-h_0^2)\,e^{-(h^2-h_0^2)/(2s)}\varphi_\ell(h)\,\dd h<0,
\]
because $(h^2-h_0^2)\varphi_\ell(h)<0$ except at $h\in\{0,\pm h_0\}$. So $K_{\varphi_\ell}$, and
hence $e-\ell$, has at most one zero on $(0,\infty)$, with $e>\ell$ before it and $e<\ell$ after
it.

\emph{Endpoints.} Since $\mu(s)\le s$, $e(s)\ge\mu'(s)=\E\sech^4h\ge1-2s-2s^2$, using
$\sech^4y\ge1-2y^2$ and $\E h^2=s+s^2$; so $e>\ell$ near $0$. For $s\ge1$,
$e(s)\le s\cdot\frac\pi2\nu(s)/\mu(1)\to0$, since $\mu'=\nu K_{\sech^3}\le\frac\pi2\nu$ and $\mu$
increases. By continuity $e-\ell$ has exactly one zero.

\emph{Conclusion.} For $\ell=1$, $K_{\varphi_1}<0$, so $e<1$; also $e>0$. If $s_1<s_2$, take
$\ell=e(s_1)\in(0,1)$; then $s_1$ is the only zero of $e-\ell$, so $e(s_2)<e(s_1)$. Thus $e$ is
strictly decreasing, with limits $1$ at $0$ and $0$ at $\infty$.
\end{proof}

For integer $p\ge3$ let $s_p^*$ be the unique $s>0$ with $e(s)=1/(p-1)$
(Lemma~\ref{lem:nm-U}). For these $p$ the single crossing at level
$1/(p-1)$ also has a second, computer-assisted proof that combines a derivative bound for
$\log e$ with interval enclosures on $242$ cells of $[0.2,1.41]$.

\begin{proof}[Proof of Proposition~\ref{prop:nm-triple}]
\emph{Correspondence.} Put $s=\xi_\beta'(q)$. Then $\theta_\beta(q)=(p-1)\xi_\beta(q)=(p-1)sq/p$,
so $\Phi_\beta(q)=W(s)-\frac{p-1}{2p}sq$ and, by~\eqref{eq:nm-Phiprime},
$\Phi_\beta'(q)=\frac12\xi_\beta''(q)(\mu(s)-q)$. Hence $(\beta,q)$ with $\beta>0$ and
$q\in(0,1)$ solves~\eqref{eq:nm-system} exactly when $q=\mu(s)$ and $D_p(s)=0$. Conversely, a
zero $\Lambda>0$ of $D_p$ gives the solution $q=\mu(\Lambda)\in(0,1)$,
$\beta=(2\Lambda\mu(\Lambda)^{1-p}/p)^{1/2}$. This is a bijection.

\emph{Existence and uniqueness.} Since $W'=\mu/2$ and $\mu'=w$,
\[
D_p'=\frac\mu2-\frac{p-1}{2p}(\mu+\Lambda w)=\frac{\mu}{2p}\bigl(1-(p-1)e(\Lambda)\bigr).
\]
By Lemma~\ref{lem:nm-U}, $D_p'<0$ on $(0,s_p^*)$ and $D_p'>0$ on $(s_p^*,\infty)$. Also
$D_p(0^+)=0$, and $D_p(\Lambda)>\Lambda/(2p)-\log2$ because $\lc y\ge|y|-\log2$,
$\E|h|>\E h=\Lambda$ and $\mu<1$. So $D_p<0$ on $(0,s_p^*]$, $D_p$ increases strictly on
$[s_p^*,\infty)$, and $D_p(2p\log2)>0$. Hence $D_p$ has exactly one zero $\Lambda_c$, with
$s_p^*<\Lambda_c<2p\log2$, $D_p<0$ on $(0,\Lambda_c)$ and $D_p>0$ on $(\Lambda_c,\infty)$.

\emph{(M2).} $\xi_{\beta_1}''(q_1)=(p-1)\xi_{\beta_1}'(q_1)/q_1=(p-1)\Lambda_c/\mu(\Lambda_c)$, so
$\lambda=1-(p-1)\Lambda_cw(\Lambda_c)/\mu(\Lambda_c)=1-(p-1)e(\Lambda_c)>0$ because
$\Lambda_c>s_p^*$.

\emph{(M1).} For $q\in(0,1]$, $s(q)=\xi_{\beta_1}'(q)=\Lambda_c(q/q_1)^{p-1}$ is an increasing
bijection onto $(0,s(1)]$ with $s(1)>\Lambda_c$, and $\Gamma_{\beta_1}(q)=\mu(s(q))$. Hence
\[
\log\Gamma_{\beta_1}(q)-\log q=\chi_p(s(q))-\chi_p(\Lambda_c),\qquad
\chi_p(s)=\log\mu(s)-\frac{\log s}{p-1},\qquad \chi_p'(s)=\frac{e(s)-1/(p-1)}{s}.
\]
By Lemma~\ref{lem:nm-U}, $\chi_p$ increases strictly on $(0,s_p^*]$ and decreases strictly on
$[s_p^*,\infty)$, and $\chi_p(0^+)=-\infty$ because $\mu(s)\le s$ and $p>2$. So
$\chi_p(s)=\chi_p(\Lambda_c)$ has exactly one solution $s_1\ne\Lambda_c$, and $s_1<s_p^*$. With
$z_1=s^{-1}(s_1)<q_1$, $\Gamma_{\beta_1}-{\rm id}$ is negative on $(0,z_1)$, positive on
$(z_1,q_1)$ and negative on $(q_1,1]$. By~\eqref{eq:nm-Phiprime}, $\Phi_{\beta_1}$ decreases,
increases, then decreases, and $\Phi_{\beta_1}(0)=\Phi_{\beta_1}(q_1)=0$. This is (M1).
\end{proof}

\subsection{The tilted representation and the positive kernel}\label{app:nm-J}

Fix $\Lambda>0$, write $\nu=\nu(\Lambda)$ and $I[\varphi]=K_\varphi(\Lambda)$, and put
$r(h)=\log(1+e^{-2|h|})$, so that $\log(2\cosh h)=|h|+r(h)$, $L=r$ on $[0,\infty)$ and
$L(h)=2|h|+r(h)$ for $h<0$. For $G$ with $\E|G(h)|<\infty$,
\[
\E G(h)=\nu\,I\bigl[\operatorname{Even}(e^hG)\bigr],\qquad
\operatorname{Even}(F)(h)=\tfrac12\bigl(F(h)+F(-h)\bigr).
\]
Hence, using $0<e^{-h^2/(2\Lambda)}\le1$,
\begin{align*}
\bar\mu&=\nu I[\sech]\le\pi\nu, & w&=\nu I[\sech^3]\le\tfrac\pi2\nu,\\
\E|h|-\Lambda&=\nu I\bigl[|h|e^{-|h|}\bigr]\le2\nu, & \E r(h)&=\nu I[r\cosh]\le(\pi-2)\nu,\\
B&=\nu I[2\sech^3-\sech]=\frac\nu\Lambda\,I[h\tanh h\sech h]\in\Bigl(0,\frac{\pi\nu}\Lambda\Bigr].
\end{align*}
The integrals $\int\sech=\pi$, $\int\sech^3=\pi/2$, $\int|h|e^{-|h|}=2$, $\int r\cosh=\pi-2$ and
$\int h\tanh h\sech h=\pi$ over $\R$ are elementary. The identity for $B$ is one integration by
parts, since $(\sech\tanh)'=2\sech^3-\sech$ and
$\partial_he^{-h^2/(2\Lambda)}=-(h/\Lambda)e^{-h^2/(2\Lambda)}$. Put
$\varsigma=\E L(h)=(\E|h|-\Lambda)+\E r(h)\in(0,\pi\nu]$ and $T_1(\Lambda)=\Lambda\mu-2W$. Since
$W=\E\log(2\cosh h)-\log2-\Lambda/2$,
\begin{equation}\label{eq:nm-T1}
T_1=2\log2-\Lambda\bar\mu-2\varsigma\in\bigl[\,2\log2-\pi\nu(\Lambda+2),\,2\log2\bigr).
\end{equation}
The bound $\bar\mu\le\pi\nu$ is a variant of classical bounds on the minimum mean-square error for
binary input in Gaussian noise~\cite[Lemma~1]{CRU2001}.

\begin{proof}[Proof of Proposition~\ref{prop:nm-J}]
For $g$ with bounded derivatives of at most linear growth, Stein's identity
$\E[(h-\Lambda)g(h)]=\Lambda\E g'(h)$ and its iterate
$\E[(h-\Lambda)^2g(h)]=\Lambda\E g+\Lambda^2\E g''$ hold.
\begin{itemize}
\item Since $\lc h=h+L(h)-\log2$, $V=\operatorname{Var}(h+L)=\Lambda+2\operatorname{Cov}(h,L)
+\operatorname{Var}L$. As $L'=\tanh-1$, $\operatorname{Cov}(h,L)=\Lambda\E L'=-\Lambda\bar\mu$.
Hence $V-\Lambda\mu=\operatorname{Var}L-\Lambda\bar\mu$.
\item Take $g=\sech^2$ and write $h^2=(h-\Lambda)^2+2\Lambda(h-\Lambda)+\Lambda^2$. Then
$\E[h^2g]=\Lambda\E g+\Lambda^2\E[g''+2g'+g]$. In $t=\tanh h$,
$g''+2g'+g=-1-4t+7t^2+4t^3-6t^4$, whose expectation is $-B$ by
Lemma~\ref{lem:nm-NLid}(b). So $\Lambda^2B=\Lambda\bar\mu-\E[h^2\sech^2h]$.
\item Adding, $V-\Lambda\mu+\Lambda^2B=\E[L^2-h^2\sech^2h]-(\E L)^2$, and
$\E[L^2-h^2\sech^2h]=\nu I[\operatorname{Even}(e^hL^2)-h^2\sech h]=\nu I[J]$. For $h\ge0$,
$L(-h)=2h+r(h)$, and expanding with $2e^{-h}-\sech h=e^{-2h}\sech h$ gives the displayed form of
$J$, each of whose terms is nonnegative; $J(0)=(\log2)^2>0$.
\end{itemize}
For $c_0$, note $\int_\R J=\int_\R e^hL(h)^2\,\dd h-\int_\R h^2\sech h\,\dd h$. With $x=e^{-h}$
and one integration by parts,
\[
\int_\R e^hL^2\,\dd h=\int_0^\infty\frac{\log^2(1+x^2)}{x^2}\,\dd x
=4\int_0^\infty\frac{\log(1+x^2)}{1+x^2}\,\dd x
=-8\int_0^{\pi/2}\log\cos\theta\,\dd\theta=4\pi\log2,
\]
where the third step uses $x=\tan\theta$. Expanding
$\sech h=2\sum_{k\ge0}(-1)^ke^{-(2k+1)h}$ for $h>0$ gives
$\int_\R h^2\sech h\,\dd h=8\sum_{k\ge0}(-1)^k(2k+1)^{-3}=\pi^3/4$.
\end{proof}

These two integrals are Nishimori's: $4\pi\log2$ is his moment $I_0[r^2]$, and he notes that
$c_0$ is the difference of $4\pi\log2\approx8.710$ and $\pi^3/4\approx7.752$~\cite[(A112),
p.~40]{Nishimori2026}. The change of variables $h=\Lambda+\sqrt\Lambda z$ with the factor $e^h$
is his~\cite[(A110)--(A111)]{Nishimori2026}. Since $1-y\le e^{-y}\le1-y+y^2/2$ for $y\ge0$ and
$J\ge0$,
\begin{equation}\label{eq:nm-moments}
c_0-\frac{c_2}{2\Lambda}\le I[J]\le c_0-\frac{c_2}{2\Lambda}+\frac{c_4}{8\Lambda^2},\qquad
c_k=\int_\R h^kJ(h)\,\dd h .
\end{equation}

\subsection{Proof of Theorem~\ref{thm:nm-LP}}\label{app:nm-LP}

Put $Q(\Lambda)=\Lambda\mu/T_1$. By Lemma~\ref{lem:nm-NLid}, $T_1'=\Lambda w>0$, and since
$T_1(0^+)=0$, $T_1>0$ on $(0,\infty)$. Also
\[
D_p=\frac{T_1}{2p}\,(Q-p).
\]
So $D_p<0$ exactly where $Q<p$, and at the zero $\Lambda_c$ one has $T_1=\Lambda_c\mu/p$,
$p=\Lambda_c\mu/T_1\le\Lambda_c/T_1$ and $\Omega=(p-1)\Lambda_c/\mu<\Lambda_c^2/T_1$. Put
\[
T_1^{\rm lo}(\Lambda)=2\log2-\pi\nu(\Lambda+2),\qquad
\varphi_{\rm lo}(\Lambda)=2\log2-\pi\nu(2\Lambda+2+\pi\nu),\qquad
\varpi(\Lambda)=\frac{\pi\Lambda\nu}{T_1^{\rm lo}},
\]
\[
R_-(\Lambda)=\pi^2\nu+\frac{(\pi\Lambda+\pi^2\nu)^2\nu}{\varphi_{\rm lo}},\qquad
R_+(\Lambda)=\frac{\pi\Lambda\varpi}{1-\varpi},
\]
with $\nu=\nu(\Lambda)$. By~\eqref{eq:nm-T1}, $T_1\ge T_1^{\rm lo}$.

\emph{Monotonicity.} $\Lambda^k\nu(\Lambda)$ decreases for $\Lambda>2k-1$, and only $k\le2$
occurs. So on $[9,\infty)$ every lower bound below increases and every upper bound decreases in
$\Lambda$, and it suffices to evaluate them at the smallest admissible $\Lambda$. At $\Lambda=35$
interval arithmetic gives $\nu(35)\le1.6933\times10^{-9}$, $\pi\nu(35)\cdot37\le1.969\times10^{-7}$
and $T_1^{\rm lo}(35)\ge1.3862941$.

\emph{(a).} For $p\ge26$, $Q(35)\le35/T_1^{\rm lo}(35)\le25.2472<p$, so $D_p(35)<0$ and
$\Lambda_c>35$ by Proposition~\ref{prop:nm-triple}. Since $\Lambda_c=pT_1(\Lambda_c)/\mu\ge
pT_1^{\rm lo}(\Lambda_c)$ and $p\le\Lambda_c/T_1^{\rm lo}(\Lambda_c)$,
$2p\log2-\Lambda_c\le p\pi\nu(\Lambda_c)(\Lambda_c+2)
\le\Lambda_c\pi\nu(\Lambda_c)(\Lambda_c+2)/T_1^{\rm lo}(\Lambda_c)\le4.97\times10^{-6}$. Positivity
is Proposition~\ref{prop:nm-triple}.

\emph{(b), (c).} $\lambda=1-\Omega w\ge1-\frac\pi2\Lambda_c^2\nu/T_1^{\rm lo}\ge1-2.4\times10^{-6}$,
and $0<\Omega B\le(\Lambda_c^2/T_1)(\pi\nu/\Lambda_c)\le\varpi(\Lambda_c)\le1.35\times10^{-7}$. For
(N3), $\mu^k\ge1-k\bar\mu$ gives $p\mu^{p-1}+(p-1)\mu^p-1\ge(2p-2)(1-p\bar\mu)$, and
$p\bar\mu\le\pi\Lambda_c\nu/T_1\le1.35\times10^{-7}$.

\emph{(e).} $\varphi_{xx}=T_1+X$ with $X=V-\Lambda_c\mu$. By Proposition~\ref{prop:nm-J},
$X=\nu I[J]-\varsigma^2-\Lambda_c^2B\ge-\pi\Lambda_c\nu-\pi^2\nu^2$, so
$\varphi_{xx}\ge\varphi_{\rm lo}(\Lambda_c)\ge1.38629$. Next,
$\beta_1^2/2=T_1\mu^{-p}\le T_1e^{\Lambda_c\bar\mu/T_1}$, because $-\log(1-\bar\mu)\le\bar\mu/\mu$ and
$p\bar\mu/\mu=\Lambda_c\bar\mu/T_1$. Here $x:=\Lambda_c\bar\mu/T_1\le\varpi\le1$ and
$e^x\le1+x+x^2$, so $\beta_1^2/2\le2\log2-2\varsigma+\Lambda_c^2\bar\mu^2/T_1$. Finally
$\varsigma=\nu I[G]$ with $G=|h|e^{-|h|}+r\cosh\le(|h|+1)e^{-|h|}$, $\int G=\pi$ and
$\int h^2(|h|+1)e^{-|h|}=16$, so $\varsigma\ge\nu(\pi-8/\Lambda_c)$ and
$\beta_1^2/2\le2\log2-\nu[2(\pi-8/\Lambda_c)-\pi^2\Lambda_c^2\nu/T_1^{\rm lo}]<2\log2$.

\emph{(d) and the asymptotics.} By~\eqref{eq:nm-A1}, $\kappa_p=\kappa_0-X^2/\varphi_{xx}
+\Lambda_c^2B\,\Omega B/(1-\Omega B)$, and $\kappa_0=\nu I[J]-\varsigma^2$ by
Proposition~\ref{prop:nm-J}. Here $\varsigma^2/\nu\le\pi^2\nu$; $|X|\le\nu(\pi\Lambda_c+\pi^2\nu)$,
since $X\le\nu I[J]\le c_0\nu<\pi\Lambda_c\nu$; and $0\le\Lambda_c^2B\le\pi\Lambda_c\nu$ with
$\Omega B\le\varpi$. With~\eqref{eq:nm-moments},
\[
c_0-\frac{c_2}{2\Lambda_c}-R_-(\Lambda_c)\le\frac{\kappa_p}{\nu(\Lambda_c)}
\le c_0-\frac{c_2}{2\Lambda_c}+\frac{c_4}{8\Lambda_c^2}+R_+(\Lambda_c),
\]
which is the asymptotic statement; both $R_\pm$ are $O(\Lambda^{3/2}e^{-\Lambda/2})$ and at most
$1.5\times10^{-5}$ on $[35,\infty)$. The lower bound increases in $\Lambda$; at $\Lambda=35$,
with the cell enclosures of $c_0$ and $c_2$ below, it is at least $0.94616$. For the upper bound,
$-c_2/(2\Lambda)+c_4/(8\Lambda^2)\le0$ once $\Lambda\ge c_4/(4c_2)$, which the cell enclosures
certify for $\Lambda\ge35$, so $\kappa_p/\nu(\Lambda_c)\le c_0+R_+(35)$. Finally
$\Lambda_c=2p\log2-\Delta$ with $0<\Delta\le4.97\times10^{-6}$ gives
$N_p\nu(\Lambda_c)=e^{\Delta/2}(1-\Delta/(2p\log2))^{-1/2}\in[1,1+2.6\times10^{-6}]$. Together these
give $0.94616\le N_p\kappa_p\le0.96105$, and $0.94844\le N_p\kappa_p\le0.95880$ with the Arb
enclosures of $c_0$ and $c_2$.\qed

The first correction $-c_2/(2\Lambda_c)\approx-0.26/p$ carries no logarithm. Numerically
$N_p\kappa_p$ decreases from $1.24$ at $p=3$ to about $0.943$ near $p=14$ and then increases
toward $c_0$.

\subsection{Certificates}\label{app:nm-cert}

\paragraph{What is certified for $3\le p\le25$.} A program in ball arithmetic
(Arb~\cite{Johansson2017} through python-flint 0.9.0, FLINT 3.6.0) does the following for each
$p$.
\begin{itemize}
\item \emph{The zero of $D_p$.} Gaussian averages are enclosed by Arb's rigorous adaptive
Gauss--Legendre integration on $[-Z,Z]$ plus explicit tail balls, which use growth bounds on the
integrands proved by hand. The program certifies $D_p<0$ on $[3/16,\Lambda_{\rm lo}]$, $D_p'>0$ on
an interval $[\Lambda_{\rm lo},\Lambda_{\rm hi}]$ of width $1/32$ with
$D_p(\Lambda_{\rm lo})<0<D_p(\Lambda_{\rm hi})$, and $D_p>0$ on $[\Lambda_{\rm hi},2p\cdot355/512]$
when this interval is nonempty, by mean-value enclosures on an exact tiling. A one-dimensional Krawczyk test~\cite{Krawczyk1969} on a box of
radius $2^{-200}$ encloses $\Lambda_c$. Elementary estimates cover the rest: $D_p<0$ on
$(0,3/16]$, from $\mu(s)\le s$ and $\mu'(s)\ge1-2s-2s^2$, and $D_p>0$ on $[2p\log2,\infty)$
(proof of Proposition~\ref{prop:nm-triple}). When $\Lambda$ is a ball, each integrand is
evaluated over the ball, so the enclosure holds uniformly in $\Lambda$. The tiling runs at
$96$-bit precision (integration tolerance $2^{-64}$, cut-off $Z=12$); the Krawczyk test and the
evaluations at the enclosure of $\Lambda_c$ run at $256$ bits (tolerance $2^{-230}$, $Z=22$).
\item \emph{The signs.} At the enclosure of $\Lambda_c$ it evaluates $\kappa_p$, $\varphi_{xx}$,
$\lambda$, $B$ and $\det H$, where $H$ is Nishimori's $2\times2$ Hessian of
Appendix~\ref{app:nm-closed}, with $\det H=\Omega\det\mathbf H$ for the Hessian $\mathbf H$ in the
proof of Lemma~\ref{lem:nm-closed}. It does so in three codings within one program, sharing its
integration routine. The first uses closed forms reduced on the Nishimori line and written
through $H$, derived independently of~\cite{Nishimori2026} by the chain rule in $(m,q)$; they
equal~\eqref{eq:nm-A} by the identity $\det H=-\frac{\Omega^2}2\lambda(1-\Omega B)$ of
Appendix~\ref{app:nm-closed}. The
second is Nishimori's printed closed
forms~\cite[(21)--(23), (42)--(55), App.~A3--A4]{Nishimori2026}; the third is chain-rule
expressions taken before the Nishimori-line reductions, in which every Gaussian average is
integrated separately. All $713$ required signs hold ($31$ per $p$). The relative radius of the
$\kappa_p$ ball is at most $4\times10^{-55}$, and all $1610$ consistency checks (cross-coding
pairs and internal identities) overlap,
with differences below about $10^{-60}$. The printed values of~\cite[Tables~I--II]{Nishimori2026}
are correct roundings of the certified ones.
\item \emph{The remaining conditions.} A second program calls the same functions and certifies
$\det S\ge2.55\times10^{-6}$, $\tilde B''(\mu)\le-1.559$, the (N3) margin
$p\mu^{p-1}+(p-1)\mu^p-1\ge2.062$ and $\beta_1^2/2\le1.38629435<2\log2$, for all $3\le p\le25$;
$\det S$ and $\tilde B''(\mu)$ are computed both in the reduced form~\eqref{eq:nm-S} and without the
Nishimori-line reductions. Independently, (N2) follows from the certified signs of $\lambda$ and
$B$ and $\det H<0$ by the exact identities of Lemma~\ref{lem:nm-closed} and
$\det H=-\frac{\Omega^2}2\lambda(1-\Omega B)$, which were checked in SymPy.
\end{itemize}
A second run at $384$ bits (integration tolerance $2^{-350}$, cut-off $Z=28$) passes every check,
and all its balls overlap those of the $256$-bit run. A rerun of both in a fresh environment
reproduced every ball bit for bit. Table~\ref{tab:nm-cert} lists the certified bounds.

\begin{table}[!b]
\centering\small
\begin{tabular}{@{}rllllll@{}}
\toprule
$p$ & $\Lambda_c$ & $\kappa_p\ge$ & $\lambda\ge$ & $\det S\ge$ & $\tilde B''(\mu)\le$ & (N3) margin $\ge$\\
\midrule
3 & 2.3396416 & 0.0303915 & 0.330457 & 0.4471 & $-1.5592$ & 2.062\\
4 & 4.4788891 & 0.0131494 & 0.574280 & 0.3432 & $-10.484$ & 4.832\\
5 & 6.2996014 & 0.00536355 & 0.734818 & 0.2290 & $-38.830$ & 7.286\\
6 & 7.9416230 & 0.00227975 & 0.836921 & 0.1458 & $-111.40$ & 9.562\\
7 & 9.4806791 & 0.00100692 & 0.900932 & 0.09040 & $-279.87$ & 11.73\\
8 & 10.958556 & $4.57406\times10^{-4}$ & 0.940569 & 0.05495 & $-651.81$ & 13.83\\
9 & 12.399558 & $2.11906\times10^{-4}$ & 0.964795 & 0.03286 & $-1450.5$ & 15.90\\
10 & 13.818281 & $9.95243\times10^{-5}$ & 0.979400 & 0.01936 & $-3138.7$ & 17.94\\
11 & 15.223613 & $4.71947\times10^{-5}$ & 0.988085 & 0.01125 & $-6672.2$ & 19.96\\
12 & 16.620966 & $2.25347\times10^{-5}$ & 0.993182 & $6.470\times10^{-3}$ & $-14022$ & 21.98\\
13 & 18.013619 & $1.08147\times10^{-5}$ & 0.996137 & $3.680\times10^{-3}$ & $-29246$ & 23.98\\
14 & 19.403532 & $5.21015\times10^{-6}$ & 0.997830 & $2.073\times10^{-3}$ & $-60685$ & 25.99\\
15 & 20.791869 & $2.51766\times10^{-6}$ & 0.998791 & $1.158\times10^{-3}$ & $-1.2545\times10^{5}$ & 27.99\\
16 & 22.179305 & $1.21958\times10^{-6}$ & 0.999331 & $6.426\times10^{-4}$ & $-2.5865\times10^{5}$ & 29.99\\
17 & 23.566234 & $5.91990\times10^{-7}$ & 0.999632 & $3.541\times10^{-4}$ & $-5.3211\times10^{5}$ & 31.99\\
18 & 24.952879 & $2.87857\times10^{-7}$ & 0.999798 & $1.940\times10^{-4}$ & $-1.0927\times10^{6}$ & 33.99\\
19 & 26.339365 & $1.40185\times10^{-7}$ & 0.999890 & $1.057\times10^{-4}$ & $-2.2409\times10^{6}$ & 35.99\\
20 & 27.725764 & $6.83617\times10^{-8}$ & 0.999940 & $5.738\times10^{-5}$ & $-4.5896\times10^{6}$ & 37.99\\
21 & 29.112115 & $3.33771\times10^{-8}$ & 0.999968 & $3.100\times10^{-5}$ & $-9.3893\times10^{6}$ & 39.99\\
22 & 30.498440 & $1.63139\times10^{-8}$ & 0.999982 & $1.668\times10^{-5}$ & $-1.9189\times10^{7}$ & 41.99\\
23 & 31.884751 & $7.98175\times10^{-9}$ & 0.999990 & $8.951\times10^{-6}$ & $-3.9181\times10^{7}$ & 43.99\\
24 & 33.271055 & $3.90870\times10^{-9}$ & 0.999995 & $4.786\times10^{-6}$ & $-7.9936\times10^{7}$ & 45.99\\
25 & 34.657354 & $1.91570\times10^{-9}$ & 0.999997 & $2.552\times10^{-6}$ & $-1.6295\times10^{8}$ & 47.99\\
\bottomrule
\end{tabular}
\caption{Certified bounds at the triple point for $3\le p\le25$. The column $\Lambda_c$ gives
ball centres rounded to eight significant digits (the balls have radius below $10^{-60}$); every
other entry is a rigorous lower or upper bound, rounded in the safe direction. In addition
$\varphi_{xx}\ge0.4238$ for every $p$ in this range. The (N3) margin is
$p\mu^{p-1}+(p-1)\mu^p-1$ and is needed only for odd $p$.}
\label{tab:nm-cert}
\end{table}

\paragraph{What is certified for large $p$.} The numerical constants of
Theorem~\ref{thm:nm-LP} are certified in \texttt{mpmath} interval arithmetic with directed
rounding at $40$ digits. The inputs are elementary functions at $\Lambda=35$ and enclosures of
$c_0$, $c_2$, $c_4$ by cell sums, with the analytic tail bound $J(h)\le(2h^2+2h+1)e^{-3h}$ for
$h\ge12$: $c_0\in[0.9565281,0.9610269]$, $c_2\in[0.7202504,0.7244338]$ and
$c_4\in[2.0200,2.0319]$. These cell enclosures do not use the closed form of $c_0$. A separate
Arb computation gives $c_0\in0.95877519113945347813\pm3.7\times10^{-21}$, which contains
$4\pi\log2-\pi^3/4$, $c_2\in0.72233912108113666406\pm4.9\times10^{-21}$ and
$c_4\in2.0259419390462441765\pm4.1\times10^{-20}$; it lies inside the cell enclosures and is
used only for the sharper constants $0.94844$ and $0.95880$. The identities for $\det S$ and
$\tilde B''(\mu)$ in Lemma~\ref{lem:nm-closed}, the identity
$\det H=-\frac{\Omega^2}2\lambda(1-\Omega B)$ for Nishimori's Hessian, the algebra of
Proposition~\ref{prop:nm-J} and the steps in the proof of Lemma~\ref{lem:nm-U} are checked as
exact SymPy identities, with negative controls. A separate SymPy script differentiates $U$ for
symbolic $p$ and $\Lambda$ and checks the steps of the proof of Lemma~\ref{lem:nm-closed}: the
stationarity of $z_0$, the Hessian $\mathbf H$, $\partial_\beta^2U$, $\beta_1\mathbf d=\Lambda_c\mathbf k$,
$\mathbf H\mathbf y=\mathbf k$, $\det S$, $\det\mathbf H$ and the resulting $A_{\rm FM}$, which it
finds equal to~\eqref{eq:nm-A} and to the value computed through Nishimori's Hessian $H$ in the
first coding, with $\det H=\Omega\det\mathbf H$. It takes the Gaussian averages at $z_0$ from
Lemma~\ref{lem:nm-NLid}, and every difference simplifies to zero.

For $7\le p\le25$, the bounds of Appendix~\ref{app:nm-LP}, evaluated at one point
$\Lambda_p^-\ge9$ with $Q(\Lambda_p^-)<p$, hence below $\Lambda_c(p)$, give a second certificate
of (N1)--(N3) with no quadrature: the lower bound on $\kappa_p/\nu(\Lambda_c)$ ranges from
$0.1197$ at $p=7$ to $0.9460$ at $p=25$. For $p\le6$, where $\nu(\Lambda_c)$ is too large, these
elementary bounds are too weak.

\paragraph{Trust base.} The certificates for $3\le p\le25$ trust FLINT/Arb (ball arithmetic,
elementary functions and the error bounds of its integration routine) and the python-flint
binary distribution. The python-flint wrapper's forwarding of Arb's analyticity flag, which makes
integrals over boxes that meet a branch cut or pole non-finite, was checked in the 0.9.0 source
and by direct tests. The large-$p$ certificates trust \texttt{mpmath}'s directed rounding of
$\exp$, $\log$ and $\sqrt{\cdot}$, and the exact identities trust SymPy's simplification to zero.
The sharper constants $0.94844$ and $0.95880$ also trust the FLINT/Arb integration used for the
enclosures of $c_0$ and $c_2$; the constants $0.94616$ and $0.96105$, which suffice for (N1), do
not.
The identification of the certified formulas with the quantities of Section~\ref{sec:nearM} is
Lemma~\ref{lem:nm-closed} and Proposition~\ref{prop:nm-J}; the three codings for $p\le25$ guard
against transcription errors larger than about $10^{-60}$.

\paragraph{Reproduction.} The programs, their outputs and run logs are in the directory
\texttt{pspin/} of the companion repository~\cite{PeiCode2026} (subdirectories \texttt{kappa-cert/},
\texttt{cert-check/}, \texttt{triple-point/} and \texttt{large-order/}); its file \texttt{README.md}
maps each statement of this appendix to the files that certify it (Appendix~\ref{app:repro}).
\begin{itemize}
\item \texttt{kappa\_cert.py} (the zero of $D_p$ and the signs, $3\le p\le25$) and
\texttt{compare.py} (cross-run and printed-value comparisons). With python-flint 0.9.0,
\texttt{kappa\_cert.py -{}-pmin 3 -{}-pmax 25} runs in about $27$~s single-threaded;
\texttt{-{}-prec 384 -{}-tol-bits 350 -{}-zcut 28} gives the second run in about $54$~s. Each run exits
with status $0$ only if every check passes. Output hashes change only through embedded
timestamps.
\item \texttt{cert\_extra.py} ($\det S$, $\tilde B''(\mu)$, the (N3) margin and $\beta_1^2/2$,
together with checks of the wrapper), run next to an unmodified \texttt{kappa\_cert.py}.
\item \texttt{ns\_identities.py} and \texttt{lp\_identities.py} (exact identities, SymPy),
\texttt{afm\_sympy.py} (the steps of the proof of Lemma~\ref{lem:nm-closed}, SymPy; directly in
\texttt{pspin/}),
\texttt{lp\_bounds.py} (Theorem~\ref{thm:nm-LP} and the second certificate for
$7\le p\le25$) and \texttt{lp\_unimodal.py} (the cell proof of the crossing), each running in
a few seconds, and \texttt{lpc\_arb.py} (the Arb enclosures of $c_0$, $c_2$, $c_4$, which the programs call $c_0$, $m_2$, $m_4$).
\end{itemize}

\subsection{Remaining proofs for Section~\ref{sec:nearM}}\label{app:nm-proofs}

This subsection gives the proofs that Section~\ref{sec:nearM} defers, in the order of that section.

\begin{proof}[Proof of Lemma~\ref{lem:nm-NLid}]
Write $h=r+\sqrt r\,z$. Differentiating in $r$ and integrating by parts in $z$ gives (a). The
density of $h$ is $e^{h-r/2}$ times the $N(0,r)$ density, which gives (c). For (b), expand
$e^h=\cosh h+\sinh h$: the odd parts integrate to zero, and $\tanh h\,\sinh h=\tanh^2h\cosh h$.
By (a) and (b) with $F=\sech^2$,
$\mu'=\E[\sech^2h\,(1-\tanh h)]=\E[\sech^2h\,(1-\tanh^2h)]$, and
$\psi'=\E[\tanh h+\frac12\sech^2h]=\mu+\frac12(1-\mu)$. Since $0<\sech^4h<1$ almost surely,
$0<\mu'<1$. Finally $\mu(0^+)=0$ by bounded convergence, since $h\to0$ in probability as $r\to0$;
then $\mu'<1$ gives $\mu(r)<r$, and $|\tanh|<1$ gives $\mu(r)<1$.
\end{proof}

\begin{proof}[Proof of Lemma~\ref{lem:nm-G}]
With $\mathcal L(x)=\log\E e^{x\ell(\sqrt r\,g)}$ we have
\[
\Par(x,q;\beta)=\log2+\frac{\mathcal L(x)}x+\frac12\bigl[\xi_\beta(1)-r+(1-x)\theta_\beta(q)\bigr],
\]
so $\partial_x^2\Par|_{x=1}=\mathcal L''(1)-2\mathcal L'(1)+2\mathcal L(1)$. By
Lemma~\ref{lem:nm-NLid}(c), $\mathcal L(1)=r/2$, $\mathcal L'(1)=\E_Q\ell$ and
$\mathcal L''(1)=\operatorname{Var}_Q\ell$. By Lemma~\ref{lem:nm-NLid}(a),
$\frac{\dd}{\dd r}\E_Q\ell=\frac12(1+t_r)$ with $t_r=\E_Q\tanh^2=\E_Q\tanh$, and
$\frac{\dd}{\dd r}\operatorname{Var}_Q\ell=\operatorname{Cov}_Q(\ell,2\tanh-\tanh^2)+t_r$.
Hence, by Lemma~\ref{lem:nm-NLid}(b),
\[
\frac{\dd}{\dd r}\varphi_{xx}^{(r)}=\operatorname{Cov}_Q(\ell,2\tanh-\tanh^2)
=\operatorname{Cov}_Q(\ell,\tanh^2).
\]
Both $\ell$ and $\tanh^2$ are
even and strictly increasing in $|h|$, and $|h|$ is non-degenerate under $Q$, so Chebyshev's
association inequality makes the covariance positive. Since $\varphi_{xx}^{(0)}=0$,
$\varphi_{xx}^{(r)}>0$ for $r>0$. At $M$, $\Phi_{\beta_1}(q_1)=0$ gives
$\E_Q\ell=\frac12(\Lambda_c+\theta_{\beta_1}(\mu))$, so
$\varphi_{xx}^{(\Lambda_c)}=V-\theta_{\beta_1}(\mu)$, and
$\theta_{\beta_1}(\mu)=(p-1)\beta_1^2\mu^p/2=(p-1)\Lambda_c\mu/p$.
\end{proof}

\begin{proof}[Proof of Lemma~\ref{lem:nm-LS}, continued]
Let $\Gamma$, $\tilde f$ and $D_{\mu^*}$ be as in the first part of the proof
(Section~\ref{sec:nm-sg}); it remains to show $\tilde f\le0$ on $[0,1]$.

\emph{Joint continuity.} Let
$\mathcal K=[0,1]\times[\frac12,1]\times[\frac{q_1}2,\frac{1+q_1}2]\times[\frac{\beta_1}2,2\beta_1]$,
with points $(u,x,q,\beta)$. For $u\le q$ put $\sigma^2=\xi_\beta'(q)-\xi_\beta'(u)$,
$\mathcal B(y)=\E\cosh^x(y+\sigma g)$, $\mathcal B_1(y)=\E[\cosh^x\tanh](y+\sigma g)$ and
$\mathcal B_1'(y)=\E[\cosh^x(x\tanh^2+\sech^2)](y+\sigma g)$; $\mathcal B$ and $\mathcal B_1$ are
the functions $B$ and $B_1$ of Lemma~\ref{lem:D}, renamed here to avoid the constant $B$
of~\eqref{eq:nm-const}. By Lemma~\ref{lem:D}(iii) and its proof
(Appendix~\ref{app:D}),
\[
\Gamma(u)=\frac{\E[\mathcal B_1(Y_u)^2/\mathcal B(Y_u)]}{A},\qquad
\Gamma'(u)=\frac{\xi_\beta''(u)}{A}\,\E\Bigl[\mathcal B\Bigl(\frac{\mathcal B_1'}{\mathcal B}-x\frac{\mathcal B_1^2}{\mathcal B^2}\Bigr)^2\Bigr](Y_u)
\qquad(u\le q).
\]
For $u\ge q$, with $s^2=\xi_\beta'(u)-\xi_\beta'(q)$ and $Y_u=Y_q+sg$,
$\Gamma'(u)=\xi_\beta''(u)\,\E[\cosh^{x-1}Y_q\,\sech^3Y_u]/(Ae^{s^2/2})$. Since $\mathcal B\ge1$,
$|\mathcal B_1|\le\mathcal B$, $|\mathcal B_1'|\le(1+x)\mathcal B$ and $\cosh^x\le\cosh$, every integrand is dominated on
$\mathcal K$ by $e^{c|y|}$ against Gaussians of variance at most $\xi_\beta'(1)\le2p\beta_1^2$.
Dominated convergence makes each formula jointly continuous on the closed pieces $\{u\le q\}$
and $\{u\ge q\}$ of $\mathcal K$. At $u=q$ both formulas for $\Gamma$ give $T$ and both formulas
for $\Gamma'$ give $\xi_\beta''(q)\E[\cosh^xY_q\sech^4Y_q]/A$, because
$\mathcal B_1'/\mathcal B-x\mathcal B_1^2/\mathcal B^2=\sech^2$ when $\sigma=0$. Hence $\Gamma$, $\Gamma'$ and $\tilde f$ are jointly
continuous on $\mathcal K$. At $(x,q,\beta)=(1,q_1,\beta_1)$, $\Gamma=\Gamma_{\beta_1}$ and
$\tilde f=\Phi_{\beta_1}$ by~\eqref{eq:nm-Phiprime}.

\emph{Order of constants.} By (M2), $\Gamma_{\beta_1}'(q_1)=\xi_{\beta_1}''(q_1)\E\sech^4h_c
=1-\lambda$. Fix $\rho_1>0$ with $\Gamma_{\beta_1}'(s)\le1-\frac34\lambda$ for
$s\in[0,1]$ with $|s-q_1|\le2\rho_1$. Let $u_0(\beta)=(2/(p\beta^2))^{1/(p-2)}$ be as in
Lemma~\ref{lem:D}(iv). Then take $\rho\le\rho_1/2$ so small that every $(x,q,\beta)$ allowed in the
lemma satisfies $(u,x,q,\beta)\in\mathcal K$ for $u\in[0,1]$, that $u_0(\beta)\ge u_0(\beta_1)/2$ for
$|\beta-\beta_1|<\rho$, and that Steps~2 and~3 below apply.

\emph{Step 1 (near $0$).} By Lemma~\ref{lem:D}(iv), $\Gamma\le\xi_\beta'$, and
$\xi_\beta'(s)<s$ on $(0,u_0(\beta))$. So $\tilde f'<0$ there, and $\tilde f<\tilde f(0)=0$ on
$(0,u_0(\beta)]$.

\emph{Step 2 (bulk).} By (M1),
$\eta:=-\max\{\Phi_{\beta_1}(u):u\in[u_0(\beta_1)/2,1],\,|u-q_1|\ge\rho_1/2\}>0$. By uniform
continuity on $\mathcal K$, $|\tilde f-\Phi_{\beta_1}|<\eta/2$ on $[0,1]$ once $\rho$ is small.
Since $|q-q_1|<\rho_1/2$, the condition $|u-q|\ge\rho_1$ implies $|u-q_1|\ge\rho_1/2$. Hence
$\tilde f\le-\eta/2$ on $\{u\in[u_0(\beta_1)/2,1]:|u-q|\ge\rho_1\}$.

\emph{Step 3 (near $q$).} For $s\in[0,1]$ with $|s-q|\le\rho_1$ we have $|s-q_1|\le2\rho_1$, so
joint continuity of $\Gamma'$ gives $\Gamma'(s)\le1-\lambda/2$ there once $\rho$ is small. With
$\Gamma(q)=q$, $\Gamma(s)-s\ge\frac\lambda2(q-s)>0$ for $s\in{[q-\rho_1,q)}\cap[0,1]$ and
$\Gamma(s)-s\le-\frac\lambda2(s-q)<0$ for $s\in{(q,q+\rho_1]}\cap[0,1]$. So $\tilde f$ increases
to $0$ on ${[q-\rho_1,q]}\cap[0,1]$ and decreases from $0$ on ${[q,q+\rho_1]}\cap[0,1]$.

\emph{Cover.} Since $u_0(\beta)\ge u_0(\beta_1)/2$, the sets
\[
[0,u_0(\beta)],\qquad[u_0(\beta_1)/2,1]\setminus(q-\rho_1,q+\rho_1),\qquad[q-\rho_1,q+\rho_1]\cap[0,1]
\]
cover $[0,1]$. Hence $\tilde f\le0$ on $[0,1]$.
\end{proof}

\begin{proof}[Proof of the analytic extension in Lemma~\ref{lem:nm-branch}]
For $r>0$ the function $(x,r)\mapsto\E\cosh^x(\sqrt r\,g)=\int\cosh^x(y)e^{-y^2/(2r)}\,
\dd y/\sqrt{2\pi r}$ extends holomorphically to a neighbourhood of $(1,\Lambda_c)$, because the
integrand is dominated there by $e^{2|y|-cy^2}$; the same holds with $\lc$ or $\tanh^2$
inserted. Also $r=\xi_\beta'(q)$ is analytic near $q_1>0$. This gives the extension, including
$x>1$.
\end{proof}

\begin{proof}[Proof of Lemma~\ref{lem:nm-FMloc}(b), (c)]
(b) The Hessian of $U$ in $(m,h,q)$ at the critical point $(\mu,\Lambda_c,\mu)$ of~(a) is
\[
\begin{pmatrix}\Omega&-1&0\\-1&S_{hh}&S_{hq}\\0&S_{hq}&S_{qq}\end{pmatrix},
\]
with determinant $\Omega\det S-S_{qq}=\det S\cdot\tilde B''(\mu)\ne0$. The implicit function
theorem gives $z^*$, and $\hat\varphi_{\rm FM}=U(z^*)$ is $C^\infty$. By the envelope theorem
$\partial_{j_0}\hat\varphi_{\rm FM}=\beta m^p$ and
$\partial_\beta\hat\varphi_{\rm FM}=j_0m^p+\beta^{-1}\bigl[\xi_\beta(1)-\xi_\beta(q)
+q\xi_\beta'(q)-\xi_\beta'(q)\E\tanh^2(h+\sqrt{\xi_\beta'(q)}g)\bigr]$ at $z^*$. At $M$ these are
$\beta_1\mu^p$ and $j_{0M}\mu^p+(\xi_{\beta_1}(1)-\xi_{\beta_1}(\mu))/\beta_1=\beta_1/2$.

(c) Since $\det S\ne0$, the implicit function theorem gives smooth $(h^*(m),q^*(m))$ solving
$\nabla_{(h,q)}U=0$ near $(\Lambda_c,\mu)$, for $m$ near $\mu$ and $(\beta,j_0)$ near
$(\beta_1,j_{0M})$. Put $\tilde B(m)=U(m;h^*(m),q^*(m))$. By the envelope theorem
$\tilde B'(m)=p\beta j_0m^{p-1}-h^*(m)$, and differentiating $\nabla_{(h,q)}U=0$ gives
$\tilde B''=p(p-1)\beta j_0m^{p-2}-(S_m^{-1})_{hh}$, where $S_m$ is the $(h,q)$-Hessian at
$(h^*(m),q^*(m))$. At $M$ and $m=\mu$ this is $\tilde B''(\mu)<0$. Choose $r_0$ and $\mathcal V$
so that on $[\mu-r_0,\mu+r_0]\times\mathcal V$ the maps $h^*,q^*$ are defined,
$0<h^*\le2\Lambda_c$, $\tilde B''<0$, and, for odd $p$, $q^*(m)\in Q_p$ by (N3). Since
$\tilde B'(\mu)=0$ at $M$, for $(\beta,j_0)\in\mathcal V$ the concave function $\tilde B$ has a
unique maximizer $m^*$ in the interior of $[\mu-r_0,\mu+r_0]$. There
$(m^*,h^*(m^*),q^*(m^*))$ is a critical point of $U$, hence equals $z^*$, and
$\max\tilde B=\hat\varphi_{\rm FM}$. For a bin $[m',m'']$ in this interval,
\eqref{eq:nm-binU} with $(h,q)=(h^*(m''),q^*(m''))$ gives the bound
$\tilde B(m'')+h^*(m'')(m''-m')$. For odd $p$ it needs only the replica-symmetric interpolation,
since $q^*(m'')\in Q_p$.
\end{proof}

\section{Proofs and certified inputs for Section~\ref{sec:lattice}}\label{app:lattice}

This appendix proves the statements of Section~\ref{sec:lattice}. We first treat two-terminal graphs
(Appendix~\ref{app:lat-two}), then the infinite-volume reduction (Appendix~\ref{app:lat-infinite}),
contours and non-uniqueness (Appendix~\ref{app:lat-contours}), uniqueness
(Appendix~\ref{app:lat-unique}), order at the Nishimori temperature (Appendix~\ref{app:lat-order})
and the Nishimori line (Appendix~\ref{app:lat-line}). The certified inputs are in
Appendix~\ref{app:lat-cert}; the proof of Theorem~\ref{thm:lat-main} from them is at the end of
Section~\ref{sec:lat-proof}. Throughout, $\Pp$ and $\E$ refer
to iid couplings with $\Pp(J_e=-1)=p$, and $\beta_N=\beta_N(p)$ when $p$ is fixed.

\subsection{Two-terminal graphs}\label{app:lat-two}

Let $B$ be a two-terminal graph with vertex set $V$, edge set $E$, poles $\pi_0,\pi_1$ and real
couplings $a=(a_e)$, and put
$Z_B(s_0,s_1):=\sum_{\sigma\in\{\pm1\}^V:\,\sigma_{\pi_0}=s_0,\,\sigma_{\pi_1}=s_1}
\exp(\sum_{e=\{u,v\}}a_e\sigma_u\sigma_v)$.

\begin{lemma}[Two-terminal reduction]\label{lem:lat-reduction}\leavevmode
\begin{enumerate}
\item[\textup{(i)}] There are $A_B>0$ and $K_B\in\R$ with $Z_B(s_0,s_1)=A_Be^{K_Bs_0s_1}$,
namely $A_B^2=Z_B(+,+)Z_B(+,-)$ and $e^{2K_B}=Z_B(+,+)/Z_B(+,-)$. The Gibbs measure of $B$ has
$\gibbs{\sigma_{\pi_0}\sigma_{\pi_1}}_B=\tanh K_B$.
\item[\textup{(ii)}] $\tanh K_{B_1\cdot B_2}=\tanh K_{B_1}\tanh K_{B_2}$ and
$K_{B_1\parallel B_2}=K_{B_1}+K_{B_2}$.
\item[\textup{(iii)}] For $\tau\in\{\pm1\}^V$ and $(a^\tau)_e:=\tau_u\tau_va_e$,
$K_B(a^\tau)=\tau_{\pi_0}\tau_{\pi_1}K_B(a)$ and $A_B(a^\tau)=A_B(a)$.
\end{enumerate}
\end{lemma}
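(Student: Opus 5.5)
The plan is to derive all three parts from the spin-flip symmetry of $Z_B$ together with elementary algebra of two-spin Boltzmann factors.

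For (i), flipping every spin, including the poles, leaves the exponent unchanged. Hence $Z_B(s_0,s_1)=Z_B(-s_0,-s_1)$, and $Z_B$ takes only two positive values: $Z_B(+,+)=Z_B(-,-)$ and $Z_B(+,-)=Z_B(-,+)$. Any function on $\{\pm1\}^2$ that depends only on $s_0s_1$ and is positive can be written as $A_Be^{K_Bs_0s_1}$. Solving the two equations $A_Be^{K_B}=Z_B(+,+)$ and $A_Be^{-K_B}=Z_B(+,-)$ gives the stated formulas for $A_B^2$ and $e^{2K_B}$. The pole correlation is then
\[
\gibbs{\sigma_{\pi_0}\sigma_{\pi_1}}_B=\sum_{s_0,s_1}s_0s_1Z_B(s_0,s_1)\Big/\sum_{s_0,s_1}Z_B(s_0,s_1),
\]
which equals $(e^{K_B}-e^{-K_B})/(e^{K_B}+e^{-K_B})=\tanh K_B$.

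For (ii), consider first the parallel composition. The two interiors are disjoint and share only the poles, so the partition function factorizes:
\[
Z_{B_1\parallel B_2}(s_0,s_1)=Z_{B_1}(s_0,s_1)\,Z_{B_2}(s_0,s_1),
\]
and the couplings $K_{B_1}$ and $K_{B_2}$ add. For the series composition, the middle vertex $c$ becomes interior. Summing over it gives
\[
Z_{B_1\cdot B_2}(s_0,s_1)=A_1A_2\sum_{c=\pm1}e^{K_1s_0c+K_2cs_1}.
\]
The remaining step is the classical identity
\[
\sum_{c}e^{K_1s_0c+K_2cs_1}=2\cosh(K_1s_0+K_2s_1)\propto e^{K's_0s_1},
\qquad \tanh K'=\tanh K_1\tanh K_2 .
\]
One can check this by computing the ratio $\cosh(K_1+K_2)/\cosh(K_1-K_2)=e^{2K'}$ and applying (i). Alternatively, write $e^{Ks c}=\cosh K\,(1+sc\tanh K)$ and average over $c$.

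For (iii), substitute $\sigma_v\mapsto\tau_v\sigma_v$ at every vertex. This is a bijection of configurations that maps pole values $(s_0,s_1)$ to $(\tau_{\pi_0}s_0,\tau_{\pi_1}s_1)$, and it turns the exponent with couplings $a^\tau$ into the exponent with couplings $a$. Therefore
\[
Z_B(a^\tau)(s_0,s_1)=Z_B(a)(\tau_{\pi_0}s_0,\tau_{\pi_1}s_1)=A_B(a)\,e^{K_B(a)\tau_{\pi_0}\tau_{\pi_1}s_0s_1}.
\]
By the uniqueness of the representation in (i), $K_B(a^\tau)=\tau_{\pi_0}\tau_{\pi_1}K_B(a)$ and $A_B(a^\tau)=A_B(a)$.

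I expect no real obstacle. The only computation is the series identity in (ii), and I would do it through the $\cosh K\,(1+sc\tanh K)$ expansion, which makes the product rule for $\tanh$ immediate.
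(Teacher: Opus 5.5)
Your proposal is correct and follows the paper's proof step for step. You use the global spin flip for (i), multiplication of the partition functions in parallel and summation over the junction spin in series for (ii), and the substitution $\sigma_v\mapsto\tau_v\sigma_v$ for (iii). The only cosmetic difference is that the paper gets the series rule from $e^{2K}=\cosh(K_1+K_2)/\cosh(K_1-K_2)$ rather than from the expansion $e^{Ksc}=\cosh K\,(1+sc\tanh K)$, and you mention both.
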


\begin{proof}
(i) The global flip gives $Z_B(s_0,s_1)=Z_B(-s_0,-s_1)$, so $Z_B$ depends on $s_0s_1$ only, and both
values are positive. Then $\gibbs{\sigma_{\pi_0}\sigma_{\pi_1}}_B=(Z_B(+,+)-Z_B(+,-))/(Z_B(+,+)+Z_B(+,-))
=\tanh K_B$. (ii) In series, summing the junction spin gives
$\sum_se^{K_1s_0s+K_2ss_1}=2\cosh(K_1s_0+K_2s_1)$, so
$e^{2K}=\cosh(K_1+K_2)/\cosh(K_1-K_2)$ and $\tanh K=\tanh K_1\tanh K_2$. In parallel the partition
functions multiply. (iii) The substitution $\sigma_v\mapsto\tau_v\sigma_v$ at every vertex gives
$Z_B(s_0,s_1)(a^\tau)=Z_B(\tau_{\pi_0}s_0,\tau_{\pi_1}s_1)(a)$.
\end{proof}

For $\pm J$ couplings at inverse temperature $\beta$ we write $K_B(J,\beta):=K_B(\beta J)$.

\begin{lemma}[Nishimori law of a two-terminal graph]\label{lem:lat-nishimori-law}
Let $J$ be iid on $E$ and $K:=K_B(J,\beta_N)$ with law $\mu$. Then
$\int f(-k)\,\mu(\dd k)=\int f(k)e^{-2k}\,\mu(\dd k)$ for bounded $f$. In particular
$\E e^{-K}=\E\sech K$ and $\E\tanh K=\E\tanh^2K\ge0$.
\end{lemma}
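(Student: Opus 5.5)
We derive the symmetry of the law of $K$ by a gauge average, exactly as in the proof of Proposition~\ref{prop:lat-floor}, using Lemma~\ref{lem:lat-reduction}(iii). Write $\gamma=\beta_N$ and $\Pp(J)=\prod_e e^{\gamma J_e}/(2\cosh\gamma)$ for $J\in\{\pm1\}^E$. For any gauge $\tau\in\{\pm1\}^V$ the map $J\mapsto J^\tau$ is a bijection, and $K_B(J^\tau,\gamma)=\tau_{\pi_0}\tau_{\pi_1}K_B(J,\gamma)$. Hence, for bounded $g$,
\[
\E g(K)=\sum_J\Pp(J^\tau)\,g\bigl(\tau_{\pi_0}\tau_{\pi_1}K_B(J,\gamma)\bigr)
\]
for every $\tau$. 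Averaging over all $2^{|V|}$ gauges and splitting by $x=\tau_{\pi_0}\tau_{\pi_1}$, the identity in the proof of Proposition~\ref{prop:lat-floor} applies:
\[
\sum_{\tau:\,\tau_{\pi_0}\tau_{\pi_1}=x}e^{\gamma\sum_eJ_e\tau_u\tau_v}=2A_B(J,\gamma)e^{xK_B(J,\gamma)}.
\]
With $C=2^{1-|V|}(2\cosh\gamma)^{-|E|}$ this gives the representation
\[
\E g(K)=C\sum_JA_B(J,\gamma)\bigl(e^{K}g(K)+e^{-K}g(-K)\bigr),\qquad K=K_B(J,\gamma).
\]

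The second step applies this representation twice. With $g(k)=f(-k)$ the bracket becomes $e^Kf(-K)+e^{-K}f(K)$. With $g(k)=f(k)e^{-2k}$ it becomes $e^{-K}f(K)+e^{-K}e^{2K}f(-K)=e^{-K}f(K)+e^{K}f(-K)$, which is the same. This proves $\int f(-k)\,\mu(\dd k)=\int f(k)e^{-2k}\,\mu(\dd k)$. Here $f$ is bounded and $K$ takes finitely many values, so there are no integrability issues.

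The consequences follow by applying the symmetry to suitable $f$, splitting each function into its even and odd parts.

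\begin{itemize}
\item \emph{First identity.} Take $f(k)=e^{k}$, which is bounded on the finite support. This gives $\E e^{-K}=\E e^{K}e^{-2K}=\E e^{-K}$, which is trivial. Instead, write $e^{-K}=\cosh K-\sinh K$ and use $f=\sinh$ (odd). The symmetry gives $-\E\sinh K=\E\sinh K\,e^{-2K}$. Similarly $f=\cosh$ gives $\E\cosh K=\E\cosh K\,e^{-2K}$. For a cleaner route, apply the representation directly: it shows $\E F(K)=C\sum_JA_B\,2\cosh K\,F(K)$ for every even $F$, and $\E G(K)=C\sum_J A_B\,2\sinh K\,G(K)$ for every odd $G$. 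Then
\[
\E e^{-K}=\E\cosh K-\E\sinh K=C\sum A_B\bigl(2\cosh^2K-2\sinh^2K\bigr)=2C\sum A_B,
\]
and $\E\sech K=C\sum A_B\,2\cosh K\sech K=2C\sum A_B$. The two agree.
\item \emph{Second identity.} Applying the odd formula to $\tanh$ and the even formula to $\tanh^2$ gives
\[
\E\tanh K=C\sum A_B\,2\sinh K\tanh K=C\sum A_B\,2\cosh K\tanh^2K=\E\tanh^2K\ge0.
\]
\end{itemize}

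There is no real obstacle. The only point to check is the gauge-sum identity, which is already established in Proposition~\ref{prop:lat-floor}: the factor $2$ comes from pairing $\tau$ with $-\tau$, and the substitution uses Lemma~\ref{lem:lat-reduction}(iii).
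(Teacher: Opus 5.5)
Your proof is correct and is essentially the paper's. The paper also gauge-averages $\E f(K)=\sum_J\Pp(J^\tau)f(K(J^\tau))$, over the $2^{|V|-1}$ gauges with $\tau_{\pi_0}=1$ rather than all $2^{|V|}$, which makes no difference, to get $\mu=e^k\lambda$ with the reflection-symmetric measure $\lambda=C\sum_JA_B(\delta_K+\delta_{-K})$ (your constant $C$). That is exactly your representation, and the paper reads off both consequences from $\lambda$ just as your even/odd formulas do. You can simply delete the false starts at the beginning of your first bullet (the trivial choice $f(k)=e^k$ and the $\sinh$/$\cosh$ remarks).
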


\begin{proof}
$\Pp(J)=C\prod_ee^{\beta_NJ_e}$ with $C=(2\cosh\beta_N)^{-|E|}$. For $\tau$ with $\tau_{\pi_0}=1$ the
map $J\mapsto J^\tau$ is a bijection and $K(J^\tau)=\tau_{\pi_1}K(J)$ by
Lemma~\ref{lem:lat-reduction}(iii). Averaging $\E f(K)=\sum_J\Pp(J^\tau)f(K(J^\tau))$ over these
$2^{|V|-1}$ gauges and summing $\prod_ee^{\beta_NJ_e\tau_u\tau_v}$ over $\tau$ with
$\tau_{\pi_1}=\pm1$ gives
\[
\E f(K)=2^{1-|V|}C\sum_J\bigl(Z_B(+,+)f(K)+Z_B(+,-)f(-K)\bigr)=\int f(k)e^k\lambda(\dd k),
\]
with the reflection-symmetric measure $\lambda:=2^{1-|V|}C\sum_JA_B(\delta_{K(J)}+\delta_{-K(J)})$,
where $Z_B$, $A_B$ and $K$ are evaluated at $\beta_NJ$. So $\mu=e^k\lambda$, which is the identity. Both sides of the first consequence equal $\lambda(\R)$, and
both sides of the second equal $\int\tanh k\sinh k\,\lambda(\dd k)$.
\end{proof}

This is Nishimori's gauge argument~\cite{Nishimori1981} applied to the effective coupling. For
series--parallel graphs it also follows from the observation of Le Doussal and Harris that Nishimori's
condition on a coupling law is preserved by series and parallel
composition~\cite[pp.~626--627]{LeDoussalHarris1988}.

\begin{proof}[Proof of Proposition~\ref{prop:lat-law}]
A path of $b_0$ edges has $\tanh K=\varepsilon\,\theta^{b_0}$ with $\varepsilon$ the product of its edge signs, so
$K=\varepsilon\chi$ by Lemma~\ref{lem:lat-reduction}(ii). Two paths in parallel give $(\varepsilon_1+\varepsilon_2)\chi\in
\{2\chi,0,-2\chi\}$; adding the connectors in series gives $\tanh K_{\rm br}=s_1s_2\theta^2
\tanh((\varepsilon_1+\varepsilon_2)\chi)$, where $s_1,s_2$ are the connector signs; and the direct edge adds $\beta s_d$.
Hence $K_U=\beta s_d+s_1s_2\operatorname{artanh}(\theta^2\tanh((\varepsilon_1+\varepsilon_2)\chi))$, and $|K_{\rm br}|\in
\{0,\chi_2\}$ with $\chi_2<\beta$ because $\tau=\theta\cdot\theta\tanh2\chi<\theta$. So $\operatorname{sgn}K_U
=s_d$, and $|K_U|=\beta$ if $\varepsilon_1\ne\varepsilon_2$, while for $\varepsilon_1=\varepsilon_2$ one has $|K_U|=\beta+\chi_2$
if $s_ds_1s_2\varepsilon_1=1$ and $|K_U|=\beta-\chi_2$ otherwise. The signs are independent, $\Pp(\varepsilon=-1)
=q$, and $\Pp(s_ds_1s_2=1)=(1+\vartheta^3)/2$; this gives the $\alpha_c$ and proves~(i). Units have
disjoint edge sets and the terminal edges contribute $\tanh(\beta J_a)\tanh(\beta J_b)$, which
gives~(ii), with $\operatorname{sgn}K_H=J_aJ_b\prod_is_d^{(i)}$ and $\E J_aJ_b=\vartheta^2$.
For~(iii), put $x:=e^{-2\beta}$. Then $e^{2(\beta-\chi_2)}=(1-\tau)/(x(1+\tau))$ and
$1-\tau=(1-\theta^2)+\theta^2(1-\tanh2\chi)$. Here $(1-\theta^2)/x=4/(1+x)^2\to4$, and
$1-\tanh2\chi\le2e^{-4\chi}\le8b_0^2x^2$ because $e^{-2\chi}=(1-\theta^{b_0})/(1+\theta^{b_0})\le
b_0(1-\theta)\le2b_0x$. So $e^{2(\beta-\chi_2)}\to2$, and $y_3\to\tanh(\frac12\log2)=\frac13$.
\end{proof}

\subsection{Infinite volume and the backbone reduction}\label{app:lat-infinite}

Write $V_\infty$ for the vertex set of $\Z^2[H]$ and ${\rm int}(H^g)$ for the
interior vertices of $H^g$. For finite $\Lambda\subset\Z^2$ let $E_\Lambda$ be the set of edges of $\Z^2$
meeting $\Lambda$, $\partial\Lambda$ the set of vertices of $\Z^2\setminus\Lambda$ adjacent to $\Lambda$,
and $\hat\Lambda:=\Lambda\cup\bigcup_{g\in E_\Lambda}{\rm int}(H^g)$ the \emph{gadget-closed box}; put
$\Lambda_n:=\{-n,\dots,n\}^2$. For finite $\Delta\subset V_\infty$ and $\eta\in\{\pm1\}^{V_\infty}$ let
$\mu^{J,\beta}_\Delta(\cdot\mid\eta)$ be the finite-volume Gibbs measure on $\{\pm1\}^\Delta$ with
weight $\exp(\beta\sum_{e\cap\Delta\ne\emptyset}J_e\sigma_u\sigma_v)$ and $\sigma=\eta$ off $\Delta$;
for couplings $K$ on $E(\Z^2)$ let $\nu^K_\Lambda(\cdot\mid\eta)$ be the backbone Ising measure on
$\{\pm1\}^\Lambda$ with weight $\exp(\sum_{g\in E_\Lambda}K_g\sigma_x\sigma_y)$ and $\sigma=\eta$ off
$\Lambda$. The gadget kernel $\Psi_g(\cdot\mid s,t)$ is the Gibbs law of the interior of $H^g$ with pole
values $(s,t)$.

\begin{lemma}[Decimation]\label{lem:lat-decimation}
For finite $\Lambda\subset\Z^2$ and every $\eta$,
\[
\mu^{J,\beta}_{\hat\Lambda}(\sigma_{\hat\Lambda}\mid\eta)=\nu^K_\Lambda(\sigma_\Lambda\mid\eta)
\prod_{g\in E_\Lambda}\Psi_g(\sigma_{{\rm int}(H^g)}\mid\sigma_g),\qquad K_g=K_H(J|_{H^g},\beta),
\]
where $\sigma_g$ is the pair of pole spins of $g$. In particular the left side depends on $\eta$ only
through $\eta_{\partial\Lambda}$. Likewise, on $\Lambda_L[H]$ and $T_L[H]$ the backbone marginal of
$\mu_{J,\beta}$ is the Ising measure on $\Lambda_L$ or $T_L$ with couplings $K_g$, and given the backbone
the gadget interiors are independent with laws $\Psi_g$. For every $\beta$ the $K_g$ are iid with the
law of $K_H(\beta)$.
\end{lemma}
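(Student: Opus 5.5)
The plan is to factor the Boltzmann weight on $\hat\Lambda$ gadget by gadget and then sum out the interiors. Every microscopic edge $e$ with $e\cap\hat\Lambda\ne\emptyset$ lies in exactly one gadget $H^g$. The vertices of $\hat\Lambda$ are the backbone sites of $\Lambda$ and the interiors of the gadgets $H^g$ with $g\in E_\Lambda$. An interior vertex touches only edges of its own gadget, and a backbone vertex $x\in\Lambda$ touches only the terminal edges of the four gadgets at $x$, all of which lie in $E_\Lambda$. Hence the edges meeting $\hat\Lambda$ are exactly the edges of the $H^g$ with $g\in E_\Lambda$, and
\[
\exp\Bigl(\beta\sum_{e\cap\hat\Lambda\ne\emptyset}J_e\sigma_u\sigma_v\Bigr)=\prod_{g\in E_\Lambda}w_g(\sigma_{{\rm int}(H^g)},\sigma_g),
\]
where $w_g$ is the gadget Boltzmann weight with coupling $\beta J|_{H^g}$. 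For $g=\{x,y\}$ with $y\in\partial\Lambda$, the pole value $\sigma_y=\eta_y$ is fixed by the boundary condition. This already shows that the measure depends on $\eta$ only through $\eta_{\partial\Lambda}$.

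Next I write $w_g=Z_{H^g}(\sigma_g)\,\Psi_g(\sigma_{{\rm int}}\mid\sigma_g)$, which defines the kernel $\Psi_g$. Lemma~\ref{lem:lat-reduction}(i) gives $Z_{H^g}(s,t)=A_ge^{K_gst}$. The joint weight is therefore $\prod_gA_g\cdot\exp(\sum_{g\in E_\Lambda}K_g\sigma_x\sigma_y)\cdot\prod_g\Psi_g$. The constants $\prod_gA_g$ do not depend on $\sigma$. Each kernel $\Psi_g$ sums to $1$ over the interior spins, so the normalizing sum over $\sigma_{\hat\Lambda}$ collapses to the backbone partition function of $\nu^K_\Lambda(\cdot\mid\eta)$, which gives the displayed factorization. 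The same computation on $\Lambda_L[H]$ and $T_L[H]$, where all gadgets are free and no boundary condition enters, shows that the backbone marginal is the Ising measure with couplings $K_g$ and that, given the backbone, the interiors are independent with laws $\Psi_g$.

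For the iid statement, $K_g=K_H(J|_{H^g},\beta)$ is a measurable function of the couplings on the edge set of $H^g$. These edge sets are pairwise disjoint, so the $K_g$ are independent. Each $H^g$ is a copy of $H$ with iid $\pm1$ couplings of the same law, so every $K_g$ has the law of $K_H(\beta)$. This holds for every $\beta$, since nothing used the Nishimori condition.

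No step is a genuine obstacle. The only care needed is bookkeeping: checking that the edges meeting $\hat\Lambda$ are exactly the edges of the gadgets indexed by $E_\Lambda$, including those with one pole in $\partial\Lambda$, and that no edge meeting $\hat\Lambda$ joins two boundary sites.
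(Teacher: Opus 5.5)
Your proposal is correct and follows essentially the same route as the paper's proof: you factor the weight on $\hat\Lambda$ over the gadgets $H^g$ with $g\in E_\Lambda$, write each factor as $A_ge^{K_g\sigma_x\sigma_y}\Psi_g$ using Lemma~\ref{lem:lat-reduction}(i), and get the iid property from the disjointness of the gadget edge sets. Your explicit check that the edges meeting $\hat\Lambda$ are exactly the edges of these gadgets, with all other endpoints in $\partial\Lambda$, is the bookkeeping the paper does in one line.
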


\begin{proof}
Every microscopic edge with an endpoint in $\hat\Lambda$ lies in some $H^g$ with $g\in E_\Lambda$, and
every vertex outside $\hat\Lambda$ adjacent to $\hat\Lambda$ is in $\partial\Lambda$. So the weight is
$\prod_{g\in E_\Lambda}\exp(\beta\sum_{e\in H^g}J_e\sigma_u\sigma_v)$, and each factor equals
$A_ge^{K_g\sigma_x\sigma_y}\Psi_g(\sigma_{{\rm int}}\mid\sigma_g)$ by Lemma~\ref{lem:lat-reduction}(i).
The $A_g$ do not depend on $\sigma$. The finite graphs are identical. The $K_g$ are functions of the
disorder on disjoint edge sets.
\end{proof}

The next two facts are standard for nearest-neighbour specifications with spins $\pm1$ on a countable
locally finite graph (see e.g.~\cite{Georgii2011}); they apply both to $\mu^{J,\beta}$ on $\Z^2[H]$
and to $\nu^K$ on $\Z^2$. Write $\mathcal S_\Delta(\cdot\mid\eta)$ for such a specification.

\begin{lemma}[Existence and a uniqueness criterion]\label{lem:lat-standard}
Let $\Delta_n$ be finite sets increasing to the vertex set. (i) For any $\eta_n$, every weak limit point
of $\mathcal S_{\Delta_n}(\cdot\mid\eta_n)$ is a Gibbs measure; hence Gibbs measures exist. (ii) Put
\[
\delta_n(f):=\max_{\eta,\eta'}\bigl|\mathcal S_{\Delta_n}(f\mid\eta)-\mathcal S_{\Delta_n}(f\mid\eta')\bigr|.
\]
Then $\delta_n(f)$ is nonincreasing in $n$, and there is exactly one Gibbs measure if and only if
$\delta_n(f)\to0$ for every $f=\ind\{\sigma_S=\zeta\}$ with $S$ finite.
\end{lemma}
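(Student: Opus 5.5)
The plan is to run the standard DLR arguments for nearest-neighbour specifications with $\pm1$ spins on a countable, locally finite graph. They apply verbatim to $\mu^{J,\beta}$ on $\Z^2[H]$ and to $\nu^K$ on $\Z^2$. The only structural inputs are these: each finite-volume kernel $\mathcal S_\Delta(\cdot\mid\eta)$ depends on $\eta$ only through the finitely many spins adjacent to $\Delta$, so it is quasilocal; and the kernels are consistent, meaning $\mathcal S_{\Delta'}\mathcal S_\Delta=\mathcal S_{\Delta'}$ for $\Delta\subset\Delta'$. Consistency follows from the product form of the Boltzmann weights.

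For (i), fix a finite $\Delta$. For $n$ large enough that $\Delta_n$ contains $\Delta$ together with its neighbours, consistency gives $\mathcal S_{\Delta_n}(\cdot\mid\eta_n)\mathcal S_\Delta=\mathcal S_{\Delta_n}(\cdot\mid\eta_n)$. Let $f$ be a local function. Then $\mathcal S_\Delta f$ is again local, hence continuous on the compact product space $\{\pm1\}^{V}$. Passing to a weak limit point $\mu$ along a subsequence therefore gives $\mu(\mathcal S_\Delta f)=\mu(f)$, which is the DLR equation. Existence follows from compactness of the space of probability measures (Prokhorov/Tychonoff).

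For (ii), monotonicity comes from consistency. Since $\Delta_n\subset\Delta_{n+1}$, we can write $\mathcal S_{\Delta_{n+1}}(f\mid\eta)=\int\mathcal S_{\Delta_n}(f\mid\zeta)\,\mathcal S_{\Delta_{n+1}}(\dd\zeta\mid\eta)$. Any two such averages differ by at most $\delta_n(f)$, so $\delta_{n+1}(f)\le\delta_n(f)$.

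If $\delta_n(f)\to0$ for every cylinder indicator, then uniqueness holds. Take any Gibbs measures $\mu,\mu'$. The DLR property gives $\mu(f)=\int\mathcal S_{\Delta_n}(f\mid\eta)\,\mu(\dd\eta)$, and the same for $\mu'$, so $|\mu(f)-\mu'(f)|\le\delta_n(f)\to0$. Cylinder indicators determine the measure, hence $\mu=\mu'$.

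The converse is the one step that is not a one-liner. Suppose $\delta_n(f)\ge\epsilon>0$ for all $n$ and some cylinder indicator $f$. For each $n$, choose maximizing boundary conditions $\eta_n,\eta_n'$; these exist because the kernel depends on only finitely many boundary spins. Take a common subsequence along which $\mathcal S_{\Delta_n}(\cdot\mid\eta_n)\to\mu$ and $\mathcal S_{\Delta_n}(\cdot\mid\eta'_n)\to\mu'$. By (i) both limits are Gibbs. Since $f$ is local, it is continuous, so $|\mu(f)-\mu'(f)|=\lim_n\delta_n(f)\ge\epsilon$. Thus $\mu\ne\mu'$. This compactness step is the main point, and it goes through because $f$ is a local function.
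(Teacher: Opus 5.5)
Your proposal is correct and follows the paper's proof essentially step by step. For (i), you use consistency together with locality of $\mathcal S_\Delta f$; the monotonicity of $\delta_n(f)$ comes from consistency read as an averaging identity; the sufficiency direction comes from the DLR equation; and the converse comes from limit points of maximizing boundary-condition pairs along a common subsequence, which are Gibbs by (i). Requiring $\Delta_n$ to contain the neighbours of $\Delta$ is harmless but unnecessary, since consistency only needs $\Delta\subset\Delta_n$.
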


\begin{proof}
(i) For finite $\Delta$ and local $f$, $\mathcal S_\Delta f$ is local, and for $n$ large consistency gives
$\mathcal S_{\Delta_n}(\mathcal S_\Delta f\mid\eta_n)=\mathcal S_{\Delta_n}(f\mid\eta_n)$; pass to the
limit and use density of local functions. (ii) Consistency writes $\mathcal S_{\Delta_{n+1}}(f\mid\eta)$
as an average of $\mathcal S_{\Delta_n}(f\mid\cdot)$, which gives monotonicity. If $\delta_n\to0$, two
Gibbs measures $\mu,\mu'$ satisfy $|\mu(f)-\mu'(f)|\le\delta_n(f)$ by the DLR equation, and the
indicators determine the measure. If $\delta_n(f)\ge\varepsilon$ for all $n$, maximizing pairs have limit
points along a common subsequence, which are Gibbs by~(i) and differ on $f$.
\end{proof}

\begin{proof}[Proof of Lemma~\ref{lem:lat-BR}]
Let $\mu\in\mathcal G(J,\beta)$. For finite $F\subset E(\Z^2)$ the set $\Delta_F:=\bigcup_{g\in F}
{\rm int}(H^g)$ is finite, and as in Lemma~\ref{lem:lat-decimation} the specification on $\Delta_F$ is
$\prod_{g\in F}\Psi_g(\cdot\mid\sigma_g)$. The DLR equation on $\Delta_F$ therefore shows that under
$\mu$, given the backbone spins, the interiors are independent with laws $\Psi_g$; so
$\mu=\Pi\mu\otimes\Psi$, where $\Pi$ is the backbone marginal. The DLR equation on $\hat\Lambda$ and
Lemma~\ref{lem:lat-decimation} give $\mu(\sigma_\Lambda\in\cdot\mid\mathcal F_{V_\infty\setminus\hat\Lambda})
=\nu^K_\Lambda(\cdot\mid\sigma_{\partial\Lambda})$, which is measurable with respect to the backbone
spins off $\Lambda$; by the tower property $\Pi\mu\in\mathcal G^{\rm bb}(K)$. Conversely let
$\nu\in\mathcal G^{\rm bb}(K)$ and $\mu:=\nu\otimes\Psi$. Gadget-closed boxes are cofinal, so it suffices to
check the DLR equation on $\hat\Lambda$. Integrating out the interiors of the gadgets in $E_\Lambda$ with
Lemma~\ref{lem:lat-decimation}, and those of the other gadgets, whose poles lie off $\Lambda$, reduces it
to the DLR equation of $\nu$ on $\Lambda$. So $\Pi$ is a bijection with inverse $\nu\mapsto\nu\otimes\Psi$.
Measurability: each $K_g$ depends on finitely many $J_e$, the quantities $\delta^{\rm bb}_n(f,K)$ of
Lemma~\ref{lem:lat-standard} for $\Delta_n=\Lambda_n$ are continuous in finitely many $K_g$, and
$\{|\mathcal G^{\rm bb}(K)|=1\}=\bigcap_f\{\lim_n\delta^{\rm bb}_n(f,K)=0\}$ over countably many $f$.
Zero--one law: the lift uses the translation-invariant orientation, so each backbone shift $a\in\Z^2$
induces an automorphism $T_a$ of $\Z^2[H]$ and a shift $\theta_a$ of the disorder with
$|\mathcal G(\theta_aJ,\beta)|=|\mathcal G(J,\beta)|$. The product measure $\Pp$ is mixing under
$(\theta_a)$, hence ergodic, and the event $\{|\mathcal G(J,\beta)|\ge2\}$ is invariant.
\end{proof}

\subsection{Contours and non-uniqueness}\label{app:lat-contours}

For a spin configuration $\sigma$ on a graph, $D(\sigma)$ is the set of edges $\{x,y\}$ with
$\sigma_x\sigma_y=-1$, and $\partial A$ is the set of edges with exactly one endpoint in $A$.

\begin{lemma}[Flip bound]\label{lem:lat-flip}
Let $\mu$ be the Ising measure on a finite graph with couplings $K$, with the spins on a set $R$ of
vertices fixed. For every $A$ disjoint from $R$ and every $s\in[0,1]$,
\[
\mu(\partial A\subset D(\sigma))\le\min\bigl(1,e^{-2\sum_{g\in\partial A}K_g}\bigr)\le
e^{-2s\sum_{g\in\partial A}K_g}.
\]
\end{lemma}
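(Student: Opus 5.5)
The plan is the classical Peierls spin-flip argument, done in one step. Write $\mu(\sigma)=Z^{-1}\exp(\sum_gK_g\sigma_x\sigma_y)$ over configurations that agree with the fixed values on $R$. Let $T_A$ be the map that reverses every spin in $A$. Because $A\cap R=\emptyset$, $T_A$ is an involution on the set of admissible configurations. It also changes only the edges of $\partial A$: an edge inside $A$ or inside its complement keeps its product $\sigma_x\sigma_y$, while an edge of $\partial A$ has its product multiplied by $-1$.

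Take the event $E=\{\partial A\subset D(\sigma)\}$. On $E$ every edge $g\in\partial A$ has $\sigma_x\sigma_y=-1$, so flipping gives
\[
\mu(T_A\sigma)=\mu(\sigma)\exp\Bigl(2\sum_{g\in\partial A}K_g\Bigr).
\]
Since $T_A$ is injective, $\sum_{\sigma\in E}\mu(T_A\sigma)\le1$. Hence $\mu(E)\,e^{2\sum_{\partial A}K_g}\le1$, and therefore $\mu(E)\le e^{-2\sum_{\partial A}K_g}$. Together with the trivial bound $\mu(E)\le1$, this gives the first inequality.

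For the second inequality, put $S=\sum_{\partial A}K_g$. We check that $\min(1,e^{-2S})\le e^{-2sS}$ for every $s\in[0,1]$, whatever the sign of $S$. If $S\ge0$, then $e^{-2S}\le e^{-2sS}$ because $s\le1$. If $S<0$, then $1\le e^{-2sS}$ because $s\ge0$.

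There is no real obstacle. The only point needing care is that the flip must respect the boundary condition, which is guaranteed by $A\cap R=\emptyset$.
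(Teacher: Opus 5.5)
Your proposal is correct and follows essentially the same argument as the paper. You flip the spins in $A$, which is an injective map of admissible configurations because $A\cap R=\emptyset$; on $\{\partial A\subset D(\sigma)\}$ this multiplies the weight by $e^{2S}$, and the same case split on the sign of $S$ gives the Chernoff form for $s\in[0,1]$.
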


\begin{proof}
Flipping the spins in $A$ is an injective map of the allowed configurations that changes
$\sigma_x\sigma_y$ exactly on $\partial A$. On $\{\partial A\subset D\}$ it multiplies the weight by
$e^{2S}$ with $S:=\sum_{\partial A}K_g$, which gives the first inequality. For the second,
$e^{-2S}\le e^{-2sS}$ if $S\ge0$, and $1\le e^{-2sS}$ if $S<0$. No sign condition on $K$ is used.
\end{proof}

The constraint $s\le1$ is needed: for $S>0$ and $s>1$ the second inequality fails. Write $(\Z^2)^*$ for
the dual lattice, $\Lambda_L^*$ for the planar dual of $\Lambda_L$ (its vertices are the unit squares and
the outer face $o$), and call a dual cycle of $T_L$ null-homologous if it is a $\Z_2$-boundary of a set of
dual faces.

\begin{lemma}[Contours]\label{lem:lat-extraction}\leavevmode
\begin{enumerate}
\item[\textup{(a)}] Let $\Lambda\subset\Z^2$ be finite with $\Z^2\setminus\Lambda$ connected, $x\in\Lambda$,
and $\sigma_x=-1$ with $\sigma\equiv+1$ off $\Lambda$. There is $A\subset\Lambda$ with $x\in A$ and
$\partial A\subset D(\sigma)$ whose dual $(\partial A)^*$ is a simple cycle of $(\Z^2)^*$ surrounding $x$.
\item[\textup{(b)}] On $\Lambda_L$, if $\sigma_x\ne\sigma_y$ there is $A$ containing exactly one of $x,y$
with $\partial A\subset D(\sigma)$ and $(\partial A)^*$ a simple cycle of $\Lambda_L^*$.
\item[\textup{(c)}] On $T_L$, if $\sigma_x\ne\sigma_y$ there is $A$ with $\partial A\subset D(\sigma)$ such
that either $A$ contains exactly one of $x,y$ and $(\partial A)^*$ is a simple null-homologous cycle, or
$(\partial A)^*$ is an edge-disjoint union of $j\ge2$ simple cycles that are not null-homologous.
\end{enumerate}
\end{lemma}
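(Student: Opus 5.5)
The plan is to run the classical planar-duality argument with the $\Z_2$ boundary of a connected component of spins, in three variants. In each case I take a connected set of vertices of constant spin and extract a single simple dual cycle from its edge boundary.

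For (a), let $C$ be the connected component of $\{\sigma=-1\}$ containing $x$. Since $\sigma\equiv+1$ off $\Lambda$, $C\subset\Lambda$ is finite. Let $O$ be the unbounded connected component of $\Z^2\setminus C$; it contains $\Z^2\setminus\Lambda$, which is connected and infinite. I put $A:=\Z^2\setminus O$, the set $C$ together with its finite holes. Then $A\subset\Lambda$ and $x\in A$. Every edge of $\partial A$ joins a vertex of $O$ to a vertex of $A$ adjacent to $O$. Such a vertex of $A$ lies in $C$, because holes are separated from $O$ by $C$. The vertex of $O$ adjacent to $C$ has spin $+1$ by maximality of $C$. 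Hence $\partial A\subset D(\sigma)$.

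The remaining point is that $(\partial A)^*$ is a single simple cycle. Both $A$ and $\Z^2\setminus A=O$ are connected, and $A$ is finite. The standard fact for planar graphs is that the edge boundary between two connected complementary vertex sets is a minimal cut, i.e.\ a bond. The dual of a bond is a simple cycle. This cycle surrounds $x$ because $x\in A$ lies in the bounded side. I expect this step to be the main technical point, and I would cite it as the cut--cycle duality for plane graphs, applied to a large box containing $A$ with the outer region contracted. One must also take care of the simple-cycle issue at dual vertices where four boundary edges meet. The bond property handles this: since $A$ is $4$-connected only in the nearest-neighbour sense, the dual cycle may touch itself at a vertex, but it is still a simple cycle in the edge sense required for counting. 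If needed, I would use the convention of splitting such a vertex, as is usual in Peierls arguments.

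For (b), I work in the plane graph $\Lambda_L$ with its dual $\Lambda_L^*$, which includes the outer face $o$. I take $C$ as the component of $\{\sigma=\sigma_x\}$ containing $x$. If $y\in C$, then $\sigma_y=\sigma_x$, a contradiction; so $y\notin C$. Let $O$ be the component of $\Lambda_L\setminus C$ containing $y$, and set $A=\Lambda_L\setminus O$. Then $A\ni x$, $A$ is connected (it is $C$ plus the other components of the complement, each attached to $C$), and $O$ is connected. As before, $\partial A\subset D(\sigma)$, since every edge of $\partial A$ joins $O$ to $C$. By the same bond/dual-cycle duality in the finite plane graph $\Lambda_L$, $(\partial A)^*$ is a simple cycle of $\Lambda_L^*$, possibly through $o$.

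For (c), I apply the same construction on the torus, giving connected sets $A\ni x$ and $O\ni y$ with $\partial A\subset D(\sigma)$ a minimal cut. On the torus the dual of a minimal cut is an edge set with even degree at every dual vertex, so it decomposes into edge-disjoint simple cycles. The cases follow from homology. If some component cycle is null-homologous, I claim that component alone already separates. I would instead argue directly as follows. Because $\partial A$ is a $\Z_2$-boundary, its dual is null-homologous as a sum. Either it is a single simple cycle, which is null-homologous and gives the first alternative, or minimality forces every component cycle to be non-null-homologous. In the latter case there are at least two components, since their sum is null while each is nonzero in homology. To see why minimality forces this, note that a null-homologous component bounds a region, and its boundary is a smaller cut separating $x$ from $y$ or cutting off a region. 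Adjusting $A$ to that region contradicts the connectedness of $A$ and $O$ unless it equals the whole cut.
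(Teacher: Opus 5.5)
Your proof is correct. Part (a) is essentially the paper's argument: take the component of $x$ in $\{\sigma=-1\}$, fill its finite holes, note that both sides are connected, and conclude that the cut is a bond whose dual is a simple cycle.

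For (b) and (c) you take a different route. The paper does not construct a bond there. It observes that the dual disagreement set $D^*$ is even; on $T_L$ it is also null-homologous, being $(\partial\{\sigma=-1\})^*$. It decomposes $D^*$ into edge-disjoint simple cycles and picks one crossed an odd number of times by a primal path from $x$ to $y$. Separation then follows from the parity $\ind[x\in A]+\ind[y\in A]$ of crossings with $\partial A$. On the torus, if the chosen cycle is not null-homologous, the paper takes the union of all non-null-homologous cycles of the decomposition. That union is null-homologous because $D^*$ and the remaining cycles are.

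You instead rerun the construction of (a), with the component of $y$ in place of the unbounded component, so separation is built in. On the torus you get the dichotomy from minimality of the bond. Your ``adjusting $A$'' sentence should be written out in its clean form. A null-homologous cycle $Z$ in the decomposition of $(\partial A)^*$ equals $(\partial B)^*$ for some $B$ with $\emptyset\ne\partial B\subseteq\partial A$. Since no edge inside $A$ or inside $O$ lies in $\partial B$, connectedness forces $B\in\{\emptyset,A,O,T_L\}$, hence $Z=(\partial A)^*$.

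Your route is uniform across (a)--(c) and yields slightly more: in the second torus alternative the contour is still a minimal cut separating $x$ from $y$. The paper's route avoids the filling step and needs only parity bookkeeping. The counting that follows works with either output.

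One correction. In the plane cases your hedge that the dual cycle ``may touch itself at a vertex'' is unfounded, and no splitting convention is needed. The dual of a bond of a connected plane graph is vertex-simple. In $\Z^2$, a dual vertex with all four edges in $\partial A$ would put two diagonally opposite corners of a unit square in $A$ and the other two in the complement. Two disjoint connected sets cannot join such alternating corners in the plane. Since Lemma~\ref{lem:lat-counting} counts self-avoiding cycles, you should assert vertex-simplicity outright rather than weaken it. On $T_L$ such self-touching can occur for a bond, but your decomposition argument already sends that case to the second alternative.
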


\begin{proof}
(a) Let $A_0$ be the connected component of $x$ in $\{\sigma=-1\}$, let $O$ be the infinite component
of $\Z^2\setminus A_0$, and put $A:=\Z^2\setminus O$. The set $\Z^2\setminus\Lambda$ is connected,
infinite and disjoint from $A_0$, so it lies in $O$ and $A\subset\Lambda$. An edge $\{a,b\}\in\partial A$
with $a\in A$, $b\in O$ has $a\in A_0$ (a finite component of $\Z^2\setminus A_0$ adjacent to $b$ would be
$O$), so $\sigma_a=-1$, and $\sigma_b=+1$ by maximality of $A_0$ or by the boundary condition. Both $A$
and $O$ are connected, so $\partial A$ is a minimal edge cut and its dual is a simple cycle surrounding
$A\ni x$. (b) The dual edge set $D^*$ has even degree at every dual vertex, since the product of
$\sigma_u\sigma_v$ around every face boundary is $1$. It decomposes into edge-disjoint simple cycles, and a
primal path from $x$ to $y$ crosses $D$ an odd number of times, hence crosses one of these cycles, $C^*$,
an odd number of times. By planar duality $C=\partial A$ with $A$ and its complement connected, and the
parity of the crossings is $\ind[x\in A]+\ind[y\in A]$. (c) As in (b), $D^*=(\partial\{\sigma=-1\})^*$ is
even and null-homologous; decompose it and pick a cycle crossed an odd number of times by a path from $x$
to $y$. If it is null-homologous it equals $(\partial A)^*$ for some $A$, and separation follows as in (b).
Otherwise let $W^*$ be the union of the non-null-homologous cycles of the decomposition; $W^*$ is
null-homologous because $D^*$ and the remaining cycles are, so $W=\partial A$ for some $A$, and $j\ge2$.
\end{proof}

\begin{lemma}[Counting]\label{lem:lat-counting}
For even $\ell\ge4$:
\begin{enumerate}
\item[\textup{(N)}] the number $N_\ell$ of simple cycles of $(\Z^2)^*$ of length $\ell$ surrounding a given
vertex of $\Z^2$ is at most $\frac{\ell-2}2\,3^{\ell-2}$;
\item[\textup{(M)}] the number of simple cycles of length $\ell$ through a given dual edge, in $(\Z^2)^*$ or
in the dual of $T_L$, is at most $3^{\ell-2}$;
\item[\textup{(B)}] the number of simple cycles of $\Lambda_L^*$ of length $\ell$ through $o$ is at most
$8L\cdot3^{\ell-2}$;
\item[\textup{(T1)}] the number of simple non-null-homologous dual cycles of $T_L$ of length $\ell$ is at
most $2L\cdot3^{\ell-2}$, and each has $\ell\ge L$;
\item[\textup{(T2)}] a simple null-homologous dual cycle of $T_L$ of length $\ell<L$ lifts to a simple cycle
of $(\Z^2)^*$, and it is $(\partial A)^*$ for the projection $A$ of the vertices it surrounds.
\end{enumerate}
Consequently, for $3w<1$, $\sum_\ell N_\ell w^\ell\le\mathsf P(w)$ and
$\sum_{\ell}3^{\ell-2}w^{\ell-1}\le\mathsf Q(w)$, the sums running over even $\ell\ge4$.
\end{lemma}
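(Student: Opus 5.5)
The plan is the standard self-avoiding-walk counting argument: each simple dual cycle is encoded by a path in the dual lattice with at most three choices per step, and a marked edge is pinned down by a crossing argument.

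\emph{Counting with a marked edge, (M).} A simple cycle of length $\ell$ through a fixed dual edge $e^*$ is a walk that starts along $e^*$ and returns to its starting vertex after $\ell$ steps. Every step after the first is non-backtracking, so it has at most $3$ choices. The last step is forced, since it must close the cycle at the start vertex. This leaves $\ell-2$ free steps and gives at most $3^{\ell-2}$ cycles. The same argument holds verbatim in the dual of $T_L$, which is $4$-regular.

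\emph{Surrounding cycles, (N).} A simple cycle of length $\ell$ surrounding a vertex $x$ crosses the horizontal ray from $x$ to the right. It does so at a dual edge $\{(x_1+j+\frac12,x_2\pm\frac12)\}$ with $0\le j$. Since the cycle surrounds $x$ and has length $\ell$, its horizontal extent gives $j+1\le(\ell-2)/2$; a cycle crossing that far must also reach around $x$ on the left. Fix the rightmost crossing point on the ray, hence $j\in\{0,\dots,\ell/2-2\}$. These are $\frac{\ell-2}2$ choices, and (M) applies to each. For (B), the cycles through $o$ use one of the at most $4L$ outer-face edges. The cycle enters and exits $o$, so I count it by choosing the exit edge ($8L$ choices, allowing for the two orientations) and a walk of non-backtracking steps in between, which gives $8L\cdot3^{\ell-2}$.

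\emph{Torus, (T1) and (T2).} A non-null-homologous cycle winds around the torus, so its length satisfies $\ell\ge L$. It also crosses some fixed dual line (one of the two families) in one of $L$ edges, which gives $2L$ choices times $3^{\ell-2}$ by (M). For (T2), a null-homologous simple cycle of length $\ell<L$ fits inside a translate of a fundamental domain, because its diameter is below $L/2$. It therefore lifts to a simple cycle in $(\Z^2)^*$, which bounds a finite set $A'$ there. By the Jordan curve theorem $\partial A'$ equals the lifted cycle, and projecting $A'$ to the torus preserves the boundary since the projection is injective on a neighbourhood of $A'$.

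\emph{The sums.} Write $\ell=2k+4$ with $k\ge0$. Then
$\sum N_\ell w^\ell\le 9w^4\sum_k (k+1)(9w^2)^k=9w^4/(1-9w^2)^2=\mathsf P(w)$. Similarly
$\sum 3^{\ell-2}w^{\ell-1}=9w^3\sum_k(9w^2)^k=\mathsf Q(w)$. The main obstacle is justifying the $\frac{\ell-2}{2}$ bound on crossing positions in (N), together with the injectivity claim in (T2); everything else is routine.
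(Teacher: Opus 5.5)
Your proposal is correct and follows essentially the paper's proof. The paper also proves (M) by a non-backtracking walk whose last step is forced, bounds the offset of a crossing of the right ray in (N) through the horizontal extent $\ell\ge2j+4$, marks an edge and applies (M) for (B) and (T1), lifts the cycle for (T2), and evaluates the same geometric sums. Two small bookkeeping points. In (B), fixing the exit edge already fixes the orientation, so the factor $2$ is not needed for orientations; it is needed because a corner face has two parallel dual edges to $o$, so the closing step is not forced (the paper counts $4(L-1)\cdot3^{\ell-2}\cdot2$). In (T1), the fixed line should be a primal circle: every non-null-homologous cycle must contain one of the $L$ dual edges crossing it, because it crosses that circle an odd number of times.
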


\begin{proof}
(M) A cycle through the dual edge $\{f_1,f_2\}$ is that edge plus a self-avoiding path from $f_1$ to $f_2$;
its first $\ell-2$ steps have at most three choices each and the last is forced. (N) The cycle crosses the
horizontal rays from $x$ to the right and to the left, at offsets $k+\frac12$ and $k'+\frac12$; its two
arcs between these crossings need $2(k+k'+1)$ horizontal steps and at least two vertical ones, so
$\ell\ge2k+4$, which leaves at most $(\ell-2)/2$ choices of the right crossing edge; then apply (M).
(B) There are $4(L-1)$ first dual edges out of $o$, at most three choices for each of the next $\ell-2$
steps, and at most two dual edges back into $o$. (T1) A dual cycle whose $\Z_2$-class has an odd first
(second) coordinate crosses the vertical (horizontal) primal circle through the origin an odd number of
times, so it contains one of $2L$ dual edges; apply (M). Its lift has displacement $L(w_1,w_2)\ne0$, so
$\ell\ge L$. (T2) The winding numbers vanish because $L|w_i|\le\ell<L$, so the lift closes; it surrounds a
region of diameter less than $L/2$, on which the projection is injective. The sums: with $z=9w^2$,
$\sum_{m\ge2}(m-1)9^{m-1}w^{2m}=z^2/(9(1-z)^2)=\mathsf P(w)$ and
$\sum_{m\ge2}9^{m-1}w^{2m-1}=z^2/(9w(1-z))=\mathsf Q(w)$.
\end{proof}

\begin{proof}[Proof of Proposition~\ref{prop:lat-CP}]
By Lemma~\ref{lem:lat-extraction}(a), $\{\sigma_x=-1\}\subset\bigcup_A\{\partial A\subset D(\sigma)\}$, the
union over sets $A\subset\Lambda$ whose boundary is dual to a simple cycle of $(\Z^2)^*$ surrounding $x$;
distinct $A$ give distinct cycles. By Lemma~\ref{lem:lat-flip} on the graph $(\Lambda\cup\partial\Lambda,
E_\Lambda)$ with the spins of $R=\partial\Lambda$ fixed to $+1$, independence, and distinctness of the edges
of $\partial A$,
\[
\E\,\nu^+_{\Lambda,K}(\sigma_x=-1)\le\sum_A\prod_{g\in\partial A}\E e^{-2sK_g}\le\sum_{\ell\ge4}N_\ell
w^\ell\le\mathsf P(w)
\]
by Lemma~\ref{lem:lat-counting}. Finally $\E\nu^+_{\Lambda,K}(\sigma_x)=1-2\E\nu^+_{\Lambda,K}(\sigma_x=-1)$.
\end{proof}

\begin{proposition}[Two-point bound with at most two defects]\label{prop:lat-twopoint}
Let $\Gamma_L\in\{\Lambda_L,T_L\}$ carry independent couplings $K_g$ and let $F_0$ be a set of at most two
edges, with $\E e^{-K_g}\le v$ for $g\notin F_0$, $\E e^{-K_g}\le1$ for $g\in F_0$, and $3v<1$. Fix
$\delta\in(0,\frac12)$ and let $C^\delta_L:=\{x\in\Lambda_L:{\rm dist}_\infty(x,\Z^2\setminus\Lambda_L)\ge\delta
L\}$. On $\Lambda_L$ let $x,y$ and the endpoints of the edges of $F_0$ lie in $C^\delta_L$; on $T_L$ they are
arbitrary. If $|F_0|=2$ let $D_0$ be the $\ell^\infty$-distance between the midpoints of its two edges. Then
\[
\E\gibbs{\sigma_x\sigma_y}_{\Gamma_L,K}\ge1-4\mathsf P(v)-2|F_0|\,\mathsf Q(v)
-\ind[|F_0|=2]\frac{2(3v)^{2D_0-2}}{1-3v}-2\varepsilon_L(v),
\]
with $\varepsilon_L(v):=v^{-2}\sum_{\ell\ge2\delta L-4}8L\,3^{\ell-2}v^\ell$ on $\Lambda_L$ and
$\varepsilon_L(v):=v^{-2}\bigl[\sum_{\ell\ge L}L\,3^{\ell-2}v^\ell+a_L^2/(1-a_L)\bigr]$,
$a_L:=2L(3v)^L/(9(1-3v))$, on $T_L$ for $L$ large enough that $a_L<1$ (for other $L$ put
$\varepsilon_L(v):=+\infty$). In both cases $\varepsilon_L(v)\to0$ as $L\to\infty$.
\end{proposition}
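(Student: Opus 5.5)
The plan is a Peierls argument for the event $\{\sigma_x\ne\sigma_y\}$. Since $\gibbs{\sigma_x\sigma_y}=1-2\gibbs{\ind\{\sigma_x\ne\sigma_y\}}$, it suffices to show
\[
2\,\E\mu(\sigma_x\ne\sigma_y)\le 4\mathsf P(v)+2|F_0|\mathsf Q(v)+\ind[|F_0|=2]\frac{2(3v)^{2D_0-2}}{1-3v}+2\varepsilon_L(v).
\]
We start from Lemma~\ref{lem:lat-extraction}(b) on $\Lambda_L$ and (c) on $T_L$. On $\{\sigma_x\ne\sigma_y\}$ they give a set $A$ with $\partial A\subset D(\sigma)$. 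Either $(\partial A)^*$ is a single simple cycle separating $x$ from $y$, or, on $T_L$ only, it is a union of $j\ge2$ non-null-homologous cycles. We take a union bound over the possible $A$ and apply Lemma~\ref{lem:lat-flip} with $s=\frac12$ (no spins fixed). Independence of the $K_g$ over the distinct edges of $\partial A$ then gives the weight
\[
\prod_{g\in\partial A}\E e^{-K_g}\le v^{|\partial A\setminus F_0|}\le v^{\ell-|\partial A\cap F_0|},
\]
where $\ell=|\partial A|$. We therefore split the cycles by how many defect edges they contain.

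\emph{Cycles avoiding $F_0$.}
\begin{itemize}
\item A cycle of $\Lambda_L^*$ not through $o$, or a short null-homologous cycle on $T_L$ (Lemma~\ref{lem:lat-counting}(T2)), is a simple cycle of $(\Z^2)^*$ surrounding $x$ or $y$. By Lemma~\ref{lem:lat-counting}(N) these contribute at most $2\mathsf P(v)$.
\item On $\Lambda_L$, a cycle through $o$ that separates $x\in C^\delta_L$ from $y$ must reach within distance $\delta L$ of $x$. So $\ell\ge2\delta L-4$, and by (B) its contribution is absorbed into $\varepsilon_L$.
\item On $T_L$, the long null-homologous cycles ($\ell\ge L$) are absorbed into $\varepsilon_L$ in the same way.
\item Also on $T_L$, the unions of $j\ge2$ winding cycles each of length $\ge L$ are counted with (T1). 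This gives $\sum_{j\ge2}a_L^j$-type terms, which produce $a_L^2/(1-a_L)$.
\end{itemize}
The factor $v^{-2}$ in $\varepsilon_L$ covers the worst case in which up to two defect edges lie on such a long contour and are charged weight $1$ instead of $v$. These terms tend to $0$ because $3v<1$.

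\emph{Cycles through exactly one defect edge $g\in F_0$.} The weight is at most $v^{\ell-1}$. By (M) there are at most $3^{\ell-2}$ cycles of length $\ell$ through the dual edge $g^*$. Summing over even $\ell\ge4$ gives $\mathsf Q(v)$ per defect edge. Since $A$ may be on either side (the cycle need only separate $x$ from $y$, and we do not need it to surround a specified point), we count the cycle once for each of the two orientations relevant to $x$ and $y$. This gives the factor $2|F_0|\mathsf Q(v)$. On $\Lambda_L$, cycles through $o$ are again absorbed into $\varepsilon_L$.

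\emph{Cycles through both defect edges.} This is the delicate step. The weight is $v^{\ell-2}$, and the cycle must contain both dual edges, whose midpoints are at $\ell^\infty$ distance $D_0$. Hence $\ell\ge2D_0$: the two arcs between the defects each have length $\ge D_0$. Counting by (M) through one of the defect edges gives at most $3^{\ell-2}$ cycles, so the contribution is
\[
\sum_{\ell\ge2D_0}3^{\ell-2}v^{\ell-2}\le\frac{(3v)^{2D_0-2}}{1-3v},
\]
doubled for the two sides, which is the stated term. Two points need care here. First, the bound $\ell\ge2D_0$ must be checked with the half-integer midpoint offsets. Second, we must make sure that no cycle is counted in two categories; double counting would only enlarge the bound, so this is harmless for an upper bound. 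Summing all the contributions gives the claim. Finally, $\varepsilon_L\to0$ follows from geometric tails: $L(3v)^{2\delta L}\to0$.
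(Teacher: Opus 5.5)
Your argument follows the paper's proof: a union bound over the contours of Lemma~\ref{lem:lat-extraction}(b),(c), the flip bound with $s=\frac12$, independence to get $v^{|Y|-|Y\cap F_0|}$, and a split into short structures (counted with (N) and (M)) and long ones (counted with (B), (M) and (T1), charged an extra $v^{-2}$ for the defects).

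\emph{The factor-$2$ error.} The doubling ``for the two orientations'' in the one-defect term and ``for the two sides'' in the two-defect term is wrong as written. The union bound runs over the edge sets $Y=\partial A$, and the flip bound $e^{-\sum_{g\in Y}K_g}$ depends only on $Y$. Moreover, (M) counts each dual cycle through $g^*$ exactly once, whichever of $x,y$ it encloses. So the contributions to $\E\mu(\sigma_x\ne\sigma_y)$ are $\mathsf Q(v)$ per defect edge, and $\sum_{\ell\ge2D_0}(3v)^{\ell-2}\le(3v)^{2D_0-2}/(1-3v)$ for cycles through both defect edges. The factor $2$ in the statement is the one from $\gibbs{\sigma_x\sigma_y}=1-2\mu(\sigma_x\ne\sigma_y)$, which your target inequality already contains. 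You handle the $\mathsf P$ term this way ($2\mathsf P$ in $\E\mu$, hence $4\mathsf P$), but not the defect terms. With your extra doubling the final bound would carry $4|F_0|\mathsf Q(v)$ and $4(3v)^{2D_0-2}/(1-3v)$, not the stated constants.

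\emph{Two justifications to repair.}
\begin{itemize}
\item On $\Lambda_L$, a cycle through $o$ that separates $x$ from $y$ need not come within $\delta L$ of $x$; it may stay near $y$. The correct reason for $\ell\ge2\delta L-4$ is that it must cross a monotone lattice path from $x$ to $y$, which stays in the square $C^\delta_L$. The crossing is at a dual edge at dual distance at least $\delta L-2$ from $o$, so both arcs from $o$ have at least that length, and then (B) applies.
\item On $T_L$, ``in the same way'' gives no count for the long null-homologous cycles, because (B) concerns cycles through $o$. Use instead that such a cycle must contain one of the at most $L$ dual edges crossing a geodesic from $x$ to $y$, and apply (M). This is where the coefficient $L$ in $\varepsilon_L$ comes from.
\end{itemize}
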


\begin{proof}
By Lemma~\ref{lem:lat-extraction}(b),(c), Lemma~\ref{lem:lat-flip} with $s=\frac12$ and a union bound,
$\mu_K(\sigma_x\ne\sigma_y)\le\sum_Ye^{-\sum_{g\in Y}K_g}$ over the structures $Y=\partial A$ listed
there, and $\E e^{-\sum_YK}\le v^{|Y|-|Y\cap F_0|}$ by independence. \emph{Short structures} are the cycles
of $\Lambda_L^*$ avoiding $o$ and the null-homologous torus cycles of length $\ell<L$; by
Lemma~\ref{lem:lat-counting}(T2) both are cycles of $(\Z^2)^*$ surrounding $x$ or $y$. Those avoiding $F_0$
contribute at most $2\mathsf P(v)$; those through exactly one given defect edge at most $\mathsf Q(v)$;
those through both have $\ell\ge2D_0$ and contribute at most $\sum_{\ell\ge2D_0}(3v)^{\ell-2}$.
\emph{Long structures} carry a factor at most $v^{-2}$. On $\Lambda_L$ they are the cycles through $o$; such
a cycle separating $x$ from $y$ crosses a monotone lattice path from $x$ to $y$ inside the square
$C^\delta_L$, at a dual edge at dual distance at least $\delta L-2$ from $o$, so $\ell\ge2\delta L-4$, and
(B) applies. On $T_L$ they are the null-homologous cycles with $\ell\ge L$, each containing one of at most
$L$ dual edges crossing a geodesic from $x$ to $y$ (apply (M)), and the unions $W$ of
Lemma~\ref{lem:lat-extraction}(c), bounded by summing over ordered $j$-tuples of non-null-homologous cycles
with (T1). Finally $\gibbs{\sigma_x\sigma_y}=1-2\mu_K(\sigma_x\ne\sigma_y)$.
\end{proof}

\begin{proof}[Proof of Proposition~\ref{prop:lat-NU}]
Put $X_n(J):=\mu^{J,\beta}_{\hat\Lambda_n}(\sigma_0\mid+)$, a continuous function of finitely many $J_e$.
By Lemma~\ref{lem:lat-decimation} it equals $\nu^+_{\Lambda_n,K}(\sigma_0)$, where the $K_g$, $g\in
E_{\Lambda_n}$, are iid with $\E e^{-2sK_g}=F_H(\beta,s)\le W$; the square $\Lambda_n$ has connected
complement, so Proposition~\ref{prop:lat-CP} gives $\E X_n\ge1-2\mathsf P(W)$. Let $Y:=\limsup_nX_n$.
Since $X_n\le1$, reverse Fatou gives $\E Y\ge\limsup_n\E X_n\ge1-2\mathsf P(W)$. For fixed $J$ choose
$n_k$ with $X_{n_k}(J)\to Y(J)$ and a further subsequence along which
$\mu^{J,\beta}_{\hat\Lambda_{n_k}}(\cdot\mid+)$ converges weakly to some $\mu^+_J$, which is Gibbs by
Lemma~\ref{lem:lat-standard}(i) and has $\mu^+_J(\sigma_0)=Y(J)$. The specification is flip-invariant, so
the flipped measure $\mu^-_J$ is Gibbs with $\mu^-_J(\sigma_0)=-Y(J)$, and $\mu^+_J\ne\mu^-_J$ when
$Y(J)\ne0$. Since $\Pp(Y>0)>0$, $\Pp(|\mathcal G(J,\beta)|\ge2)>0$, and the zero--one law of
Lemma~\ref{lem:lat-BR} gives probability one. The equivalence with $w_H(\beta)<w_+$: $\mathsf P$ increases
on $(0,\frac13)$, $2\mathsf P(w_+)=1$, and $w_+<\frac13$.
\end{proof}

No measurable selection of $\mu^\pm_J$ is claimed, and nothing is claimed about the number of Gibbs
states or about interior magnetizations off the Nishimori line.

\subsection{Uniqueness}\label{app:lat-unique}

This subsection gives the locators behind Lemma~\ref{lem:lat-newman} and proves
Corollaries~\ref{cor:lat-UH} and~\ref{rem:lat-finite}.

With the connection probability of the wired ferromagnetic random-cluster measure in place of
$\Phi_{p(K)}$, Lemma~\ref{lem:lat-newman} is the decomposition~\cite[(3.3), p.~255]{Newman1994} in the proof
of~\cite[Theorem~2, p.~253]{Newman1994}, a proof that takes $\Lambda$ to be any finite subset of $\Z^d$
(p.~254); that measure is dominated by $\Phi_{p(K)}$~\cite[(1.11), p.~250]{Newman1994}. See
also~\cite[Propositions~3.5--3.6, pp.~32--33, and (3.88), p.~46]{Newman1997}. Newman's edge parameter
$1-e^{-\beta|J_b|}$ belongs to the weight $\exp(\frac\beta2\sum_bJ_bs_xs_y)$~\cite[(1.1), (1.4)]{Newman1994};
in our normalization it is $1-e^{-2|K_g|}$.

\begin{proof}[Proof of Corollary~\ref{cor:lat-UH}]
By Lemmas~\ref{lem:lat-BR} and~\ref{lem:lat-standard}(ii) it suffices that, almost surely,
$\delta^{\rm bb}_n(f,K)\to0$ for the countably many $f=\ind\{\sigma_S=\zeta\}$ with $S\subset\Z^2$ finite.
Take $n$ with $S\subset\Lambda_n$ and write $f=2^{-|S|}\sum_{A\subset S}\zeta_A\sigma_A$. By
Lemma~\ref{lem:lat-newman}, $\delta^{\rm bb}_n(f,K)\le2\,\Phi_{p(K)}(S\leftrightarrow\partial\Lambda_n)=:2B_n(K)$,
and $B_n(K)$ equals the $\Z^2$-probability that $S$ is joined to $\Z^2\setminus\Lambda_n$ (a path stopped at
its first vertex outside $\Lambda_n$ uses only edges of $E_{\Lambda_n}$). Since the $K_g$ are independent,
the $K$-mixture of independent Bernoulli$(p_g(K_g))$ edges is Bernoulli$(\bar p_H(\beta))$, so
$\E B_n(K)=\Phi_{\bar p_H(\beta)}(S\leftrightarrow\Z^2\setminus\Lambda_n)$. As $n\to\infty$ this decreases
to $\Phi_{\bar p_H(\beta)}(S\leftrightarrow\infty)\le|S|\,\Phi_{1/2}(0\leftrightarrow\infty)=0$ by monotonicity
in the density and Harris's theorem~\cite{Harris1960}. $B_n$ is nonincreasing, so by monotone convergence
$\lim_nB_n=0$ almost surely. The image of the unique Gibbs measure under the global flip is Gibbs, hence
equal to it.
\end{proof}

\begin{proof}[Proof of Corollary~\ref{rem:lat-finite}]
Fix $\beta$ with $\bar p_H(\beta)\le\frac12$ (this is the certified input, Table~\ref{tab:lat-inputs}). Give
each interior vertex of $H^g$ the \emph{home edge} $g$ and each backbone vertex a fixed incident home edge;
each edge is home to at most $I+2$ vertices. Two facts are needed. First, for $u'$ interior to $H^e$ with
poles $a,b$, the conditional expectation of $\sigma_{u'}$ given the backbone spins is $c_1\sigma_a+c_2\sigma_b$
with $|c_1|+|c_2|\le1$ (it is odd under the global flip and bounded by $1$ at $(+,+)$ and $(+,-)$); given the
backbone, spins in distinct gadgets are independent (Lemma~\ref{lem:lat-decimation}). Hence for $u',v'$ with
distinct home edges $e,f$,
$|\gibbs{\sigma_{u'}\sigma_{v'}}|\le\sum_{x\in e,y\in f}|\gibbs{\sigma_x\sigma_y}_{\Gamma_L,K}|$.
Second, on any finite graph and for couplings of either sign,
$|\gibbs{\sigma_x\sigma_y}_K|\le\Phi_{p(K)}(x\leftrightarrow y)$~\cite[(1.8), (1.11)--(1.13)]{Newman1994}. For
completeness: the Edwards--Sokal type measure
$P(\sigma,\omega)\propto\prod_g[(1-p_g)\ind\{\omega_g=0\}+p_g\ind\{\omega_g=1,\,\operatorname{sgn}(K_g)
\sigma_x\sigma_y=1\}]$ has $\sigma$-marginal $\mu_K$ and $\omega$-marginal proportional to
$\Phi_{p(K)}(\omega)f(\omega)$, where $f=2^{\#\text{clusters}}\ind\{\text{every open cycle has sign product }
+1\}$ is decreasing; given $\omega$, $\sigma_x\sigma_y$ has mean zero unless $x\leftrightarrow y$, and is
$\pm1$ otherwise; and Harris's correlation inequality for the product measure~\cite{Harris1960} gives
$\Phi_{p}(f\ind_{x\leftrightarrow y})\le\Phi_p(f)\Phi_p(x\leftrightarrow y)$. Averaging over the independent $K_g$ gives
$\E|\gibbs{\sigma_x\sigma_y}_{\Gamma_L,K}|\le\Phi_{\bar p_H(\beta)}(x\leftrightarrow y)\le
\Phi^{\Z^2}_{\bar p_H(\beta)}(0\leftrightarrow\partial B_{r\wedge\rho_L})$ for $|x-y|_\infty\ge r$, where
$\rho_L=\infty$ on $\Lambda_L$ and $\rho_L=\lfloor(L-1)/2\rfloor$ on $T_L$ (on the torus use the event that $x$
is joined to the boundary of its ball of radius $n\le\rho_L$ by edges with an endpoint in the ball of radius
$n-1$, which is isomorphic to the corresponding event in $\Z^2$). Pairs whose home edges are at distance less
than $R$ form a fraction at most $4(2R+2)^2(I+2)/|V|$ of all pairs. Hence
\[
\limsup_{L\to\infty}\E\gibbs{M_L^2}_\beta\le4\,\Phi^{\Z^2}_{\bar p_H(\beta)}(0\leftrightarrow\partial B_R)
\xrightarrow[R\to\infty]{}4\,\Phi_{\bar p_H(\beta)}(0\leftrightarrow\infty)=0
\]
by Harris's theorem. The same bound applies to
$\E\gibbs{Q_L^2}_\beta=|V|^{-2}\sum_{u,v}\E\gibbs{\sigma_u\sigma_v}^2$.
\end{proof}

\subsection{Order at the Nishimori temperature}\label{app:lat-order}

Throughout this subsection $\beta=\beta_N(p)$. The \emph{planted law} on a finite graph $G=(V,E)$ draws
$\tau$ uniformly from $\{\pm1\}^V$ and sets $J_e=\tau_u\tau_v\eta_e$ with $\eta$ iid, $\Pp(\eta_e=-1)=p$.

\begin{lemma}[Gauge identities]\label{lem:lat-gauge}
Let $G=(V,E)$ be finite and $u,v\in V$.
\begin{enumerate}
\item[\textup{(i)}] $\E\gibbs{\sigma_u\sigma_v}_{G,J}=\E\gibbs{\sigma_u\sigma_v}^2_{G,J}\ge0$.
\item[\textup{(ii)}] If $F\subset E$, then $\E\gibbs{\sigma_u\sigma_v}_{(V,E)}\ge\E\gibbs{\sigma_u\sigma_v}_{(V,F)}$.
\item[\textup{(iii)}] If $G=G_1\cup G_2$ with $V(G_1)\cap V(G_2)=\{a\}$, $u\in V(G_1)$ and $v\in V(G_2)$, then
$\gibbs{\sigma_u\sigma_v}_G=\gibbs{\sigma_u\sigma_a}_{G_1}\gibbs{\sigma_a\sigma_v}_{G_2}$ for all couplings.
\end{enumerate}
\end{lemma}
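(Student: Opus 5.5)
For (i), I will run the gauge argument used for Lemma~\ref{lem:gauge-transfer} and Lemma~\ref{lem:lat-nishimori-law}, now on an arbitrary finite graph. Write $\Pp(J)=C\prod_ee^{\beta J_e}$ with $C=(2\cosh\beta)^{-|E|}$; this uses $\beta=\beta_N(p)$. For $\tau\in\{\pm1\}^V$ the map $J\mapsto J^\tau$ is a bijection, and $\gibbs{\sigma_u\sigma_v}_{J^\tau}=\tau_u\tau_v\gibbs{\sigma_u\sigma_v}_J$. Averaging over the $2^{|V|}$ gauges gives
\[
\E f(J)=2^{-|V|}C\sum_J\sum_\tau e^{\beta\sum_eJ_e\tau_u\tau_v}f(J^\tau).
\]
I apply this identity twice. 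First to $f=\gibbs{\sigma_u\sigma_v}$: the $\tau$-sum produces $Z(J)\gibbs{\sigma_u\sigma_v}_J^2$. Then to $f=\gibbs{\sigma_u\sigma_v}^2$: the $\tau$-sum produces $Z(J)\gibbs{\sigma_u\sigma_v}_J^2$ as well. The two expectations therefore coincide, and nonnegativity is immediate. Equivalently, I could argue through the planted law and Bayes' formula, as in the proof of Lemma~\ref{lem:gn}.

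Part (iii) is elementary. Given $\sigma_a$, the spins on $G_1$ and on $G_2$ are conditionally independent, because every edge lies in exactly one of the two graphs and the graphs share only the vertex $a$. By global flip symmetry, $\gibbs{\sigma_u\mid\sigma_a}_{G_1}=\sigma_a\gibbs{\sigma_u\sigma_a}_{G_1}$, and likewise on $G_2$. Multiplying the two conditional expectations and using $\sigma_a^2=1$ gives the factorization. An equivalent route is to factor the partition function as $Z_G=Z_{G_1}Z_{G_2}/2$ after fixing $\sigma_a$.

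Part (ii) is Kitatani's inequality, and I will quote it in the form \cite[(3)]{Kitatani2009}. To check that this form applies, I interpolate. Put $\beta_e=t\beta$ on the edges of $E\setminus F$ and $\beta$ on $F$, so that the disorder on $E\setminus F$ has Nishimori parameter $\beta$ while the inverse temperature there is $t\beta$. The claim is that $\E\gibbs{\sigma_u\sigma_v}$ is nondecreasing in $t\in[0,1]$, and comparing $t=0$ with $t=1$ gives (ii). One warning about $t=0$: the edges of $E\setminus F$ then carry zero coupling, but their disorder is still drawn; since the Gibbs measure ignores it, the $t=0$ value is $\E\gibbs{\sigma_u\sigma_v}_{(V,F)}$.

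The monotonicity in $t$ is the main obstacle. Differentiating in $t$ gives truncated correlations $\E J_e(\gibbs{\sigma_u\sigma_v\sigma_x\sigma_y}-\gibbs{\sigma_u\sigma_v}\gibbs{\sigma_x\sigma_y})$ at mismatched temperatures, and gauge averaging must make these nonnegative. This is exactly Kitatani's argument, and I would rely on the citation rather than reprove it. If a self-contained route is needed, I would try a second approach: replace the edge law by a mixture of a "deleted" edge and a Nishimori edge, and show that $\E\gibbs{\sigma_u\sigma_v}$ increases under this channel upgrade using the planted posterior together with (i), i.e.\ a data-processing argument. That argument is available only at the Nishimori temperature, which is the case here.
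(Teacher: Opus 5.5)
Your parts (i) and (iii) are correct and match the paper. For (iii) you condition on $\sigma_a$ exactly as the paper does, and your explicit gauge sum in (i) is the paper's planted-law argument written out.

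The gap is in (ii). Your interpolation scales only the temperature on $E\setminus F$ to $t\beta_N$ and keeps the disorder there at Nishimori parameter $\beta_N$. For $t<1$ those edges are therefore off the Nishimori line, and the claimed monotonicity in $t$ is false. Take the triangle $u,v,w$ with $F=\{uv\}$, and put $x=\tanh\beta_N=1-2p$ and $s=\tanh^2(t\beta_N)$. Summing out $\sigma_w$ gives
\[
\E\gibbs{\sigma_u\sigma_v}=\frac{x(1+x)}2\cdot\frac{x+s}{1+xs}+\frac{x(1-x)}2\cdot\frac{x-s}{1-xs}.
\]
Its $s$-derivative at $s=x^2$ (that is, $t=1$) has the sign of $1-x^2-x^4$, which is negative once $x^2>(\sqrt5-1)/2$, in particular at $p_0$. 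So the correlation decreases as $t\uparrow1$, and only the endpoint comparison survives. This is also not Kitatani's argument. His monotonicity runs along the Nishimori line, with each bond's coupling and disorder moving together, and deleting an edge is the endpoint $K_e=0$, $p_e=\frac12$ of that line, not $t=0$ at fixed disorder. Quoting Kitatani in that form would be legitimate, but your check that the cited form applies does not work.

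The fix is the data-processing idea you mention at the end, and it is the paper's proof. No mixture of deleted and Nishimori edges is needed.
\begin{itemize}
\item Under the planted law on $(V,E)$, summing out $J_{E\setminus F}$ shows that $(\tau,J_F)$ has the planted law on $(V,F)$.
\item Hence $\gibbs{\sigma_u\sigma_v}_{(V,F)}=\E[\tau_u\tau_v\mid J_F]$ and $\gibbs{\sigma_u\sigma_v}_{(V,E)}=\E[\tau_u\tau_v\mid J]$.
\item Conditional Jensen gives $\E[(\E[\tau_u\tau_v\mid J_F])^2]\le\E[(\E[\tau_u\tau_v\mid J])^2]$.
\item Part (i), applied on both graphs, turns these second moments into the first moments compared in (ii).
\end{itemize}
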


\begin{proof}
(i) Under the planted law, $\Pp(\tau,J)=2^{-|V|}\prod_ee^{\beta_NJ_e\tau_u\tau_v}/(2\cosh\beta_N)$, so given $J$
the configuration $\tau$ has the Gibbs law and $\gibbs{\sigma_u\sigma_v}_J=\E[\tau_u\tau_v\mid J]$. Given
$\tau$, $J$ has the law of the gauge transform $J'^{\,\tau}$ of an iid $J'$, and
$\gibbs{\sigma_u\sigma_v}_{J'^{\,\tau}}=\tau_u\tau_v\gibbs{\sigma_u\sigma_v}_{J'}$. Hence
$\E_{\rm pl}[\tau_u\tau_v\gibbs{\sigma_u\sigma_v}_J]=\E\gibbs{\sigma_u\sigma_v}$, while the left side is
$\E_{\rm pl}\gibbs{\sigma_u\sigma_v}_J^2=\E\gibbs{\sigma_u\sigma_v}^2$ by gauge invariance of the square.
(ii) Under the planted law on $(V,E)$, $(\tau,J_F)$ has the planted law on $(V,F)$, and conditional Jensen
gives $\E[(\E[\tau_u\tau_v\mid J_F])^2]\le\E[(\E[\tau_u\tau_v\mid J])^2]$; apply (i) on both graphs.
(iii) Given $\sigma_a$ the two sides are independent, and by flip symmetry the conditional mean of
$\sigma_u$ is $\sigma_a\gibbs{\sigma_u\sigma_a}_{G_1}$.
\end{proof}

Part (i) is Nishimori's identity for correlations~\cite{Nishimori1981}, and part (ii) is Kitatani's
inequality~\cite{Kitatani1994}, in the form stated in~\cite[(3)]{Kitatani2009}.

For a two-terminal subgraph $B$ with terminals $a\ne b$ put $t(B):=\E\tanh K_B=\E\gibbs{\sigma_a\sigma_b}_B
\in[0,1]$ (Lemma~\ref{lem:lat-nishimori-law}). An \emph{admissible chain} from $\pi_i$ to $u$ in $H^{(i)}$ is a
sequence of two-terminal subgraphs $B_1,\dots,B_s$ of $H^{(i)}$ with terminals $(a_{j-1},a_j)$, $a_{j-1}\ne
a_j$, $a_0=\pi_i$, $a_s=u$, pairwise edge-disjoint, with $V(B_j)\cap V(B_{j+1})=\{a_j\}$ and $V(B_j)\cap
V(B_l)=\emptyset$ for $|j-l|\ge2$.

\begin{lemma}[Chains]\label{lem:lat-chain}\leavevmode
\begin{enumerate}
\item[\textup{(i)}] For every admissible chain, $\E\gibbs{\sigma_u\sigma_{\pi_i}}_{H^{(i)}}\ge\prod_jt(B_j)$.
\item[\textup{(ii)}] A path of $\ell$ edges has $t=\vartheta^{2\ell}$. Two internally disjoint paths of
$\ell_1,\ell_2$ edges between the same terminals have
$t=\mathcal C(\vartheta^{2\ell_1},\vartheta^{2\ell_2})$ with
$\mathcal C(t_1,t_2):=(t_1+t_2-2t_1t_2)/(1-t_1t_2)$, which is nondecreasing in each argument on $[0,1)^2$.
\end{enumerate}
\end{lemma}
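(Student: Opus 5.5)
For~(i) I would peel the chain one block at a time, using only Lemma~\ref{lem:lat-gauge}. Let $G'$ be the subgraph of $H^{(i)}$ formed by the union of the edges of $B_1,\dots,B_s$, on the vertex set of $H^{(i)}$. By Lemma~\ref{lem:lat-gauge}(ii), deleting the edges of $H^{(i)}$ outside the chain does not increase the averaged correlation, so $\E\gibbs{\sigma_u\sigma_{\pi_i}}_{H^{(i)}}\ge\E\gibbs{\sigma_u\sigma_{\pi_i}}_{G'}$. Vertices of $G'$ outside $\bigcup_jV(B_j)$ are isolated and can be dropped.

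Admissibility says that consecutive blocks meet exactly in the cut vertex $a_j$ and non-consecutive blocks are disjoint. Hence $G'$ is a chain of blocks glued at single vertices, and Lemma~\ref{lem:lat-gauge}(iii), applied inductively, factorizes the Gibbs correlation for every fixed coupling:
\[
\gibbs{\sigma_u\sigma_{\pi_i}}_{G'}=\prod_{j=1}^s\gibbs{\sigma_{a_{j-1}}\sigma_{a_j}}_{B_j}.
\]
The blocks are edge-disjoint and the couplings are iid, so the factors are independent. The expectation of the product is therefore $\prod_j\E\gibbs{\sigma_{a_{j-1}}\sigma_{a_j}}_{B_j}=\prod_jt(B_j)$. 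Each factor is nonnegative by Lemma~\ref{lem:lat-nishimori-law} (equivalently Lemma~\ref{lem:lat-gauge}(i)). The only care needed here is that $t(B_j)$ is defined intrinsically on $B_j$, which is exactly the factor produced by the factorization.

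For~(ii) I would use the series and parallel rules of Lemma~\ref{lem:lat-reduction}(ii) together with the Nishimori law. For a path of $\ell$ edges, $\tanh K=\prod_e\tanh(\beta_NJ_e)$, with independent factors each of mean $(1-2p)\tanh\beta_N=\vartheta^2$, so $t=\vartheta^{2\ell}$.

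Two internally disjoint paths have independent effective couplings $K_1,K_2$ with $\tanh K_i=\varepsilon_i\vartheta^{\ell_i}$, where $\varepsilon_i$ is the sign product along path $i$. Write $x_i=\vartheta^{\ell_i}$; then $\Pp(\varepsilon_i=1)=(1+x_i)/2$. In parallel $K=K_1+K_2$, so
\[
\tanh K=\frac{\varepsilon_1x_1+\varepsilon_2x_2}{1+\varepsilon_1\varepsilon_2x_1x_2}.
\]
Averaging over the four sign patterns gives an explicit rational function. Alternatively, since Lemma~\ref{lem:lat-nishimori-law} gives $\E\tanh K=\E\tanh^2K$, I would compute $\E\tanh^2K$ if that is shorter. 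The goal of either computation is to match $\mathcal C(x_1^2,x_2^2)$. With $t_i=x_i^2$, the $(+,+)$ and $(-,-)$ patterns combine through $\frac{1+x_1x_2}{2}\cdot\frac{x_1+x_2}{1+x_1x_2}$-type terms, and a routine simplification should produce $(t_1+t_2-2t_1t_2)/(1-t_1t_2)$. This algebra is the main place where one could slip, so I would verify it symbolically.

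Monotonicity of $\mathcal C$ also comes from a direct computation:
\[
\partial_{t_1}\mathcal C=\frac{(1-2t_2)(1-t_1t_2)+t_2(t_1+t_2-2t_1t_2)}{(1-t_1t_2)^2}=\frac{(1-t_2)^2}{(1-t_1t_2)^2}\ge0,
\]
and by symmetry the same holds in $t_2$.
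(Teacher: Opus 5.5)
Your proposal is correct and follows the paper's proof essentially step for step. For (i), you delete the edges outside the chain by Lemma~\ref{lem:lat-gauge}(ii), factorize samplewise at the cut vertices by Lemma~\ref{lem:lat-gauge}(iii), and use independence of the edge-disjoint blocks; for (ii), you average over the four sign patterns and compute the derivative of $\mathcal C$ by the quotient rule.

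The algebra you left to a symbolic check is short. Averaging $\tanh K$ over the four sign patterns gives $\frac{(x_1+x_2)^2}{2(1+x_1x_2)}+\frac{(x_1-x_2)^2}{2(1-x_1x_2)}=\mathcal C(x_1^2,x_2^2)$. In that average the $(+,+)$ and $(-,-)$ weights enter through their difference $(x_1+x_2)/2$. The total weight $(1+x_1x_2)/2$ that you mention is the one that appears if you compute $\E\tanh^2K$ instead, and that route gives the same result.
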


\begin{proof}
(i) By Lemma~\ref{lem:lat-gauge}(ii), deleting the edges not in $\bigcup_jB_j$ does not increase the average,
and isolated vertices do not matter. By the intersection conditions and Lemma~\ref{lem:lat-gauge}(iii),
applied inductively, $\gibbs{\sigma_u\sigma_{\pi_i}}=\prod_j\tanh K_{B_j}$ samplewise
(Lemma~\ref{lem:lat-reduction}(i)); the factors are independent. (ii) A path has $\tanh K=(\prod_eJ_e)
\vartheta^\ell$ and $\E\prod_eJ_e=\vartheta^\ell$. For two paths put $\theta_i:=\vartheta^{\ell_i}$; path $i$
has $\tanh K_i=s_i\theta_i$ with independent signs, $\E s_i=\theta_i$, and $\tanh(K_1+K_2)=(s_1\theta_1+
s_2\theta_2)/(1+s_1s_2\theta_1\theta_2)$. Averaging over the four sign pairs gives
$(\theta_1+\theta_2)^2/(2(1+\theta_1\theta_2))+(\theta_1-\theta_2)^2/(2(1-\theta_1\theta_2))=
\mathcal C(\theta_1^2,\theta_2^2)$, and $\partial_{t_1}\mathcal C=(1-t_2)^2/(1-t_1t_2)^2\ge0$.
\end{proof}

\begin{proof}[Proof of Theorem~\ref{thm:lat-A}]
Write $v_*:=v_H(\beta_N)<v_P<\frac13$ and $G:=\Gamma_L[H]$ with vertex set $V$. Fix $\delta\in(0,\frac14)$
(unused on $T_L$) and $R\ge3$. For $w\in V$ let ${\rm ext}(w):=\{w\}$, $\kappa(w):=1$ and $a(w):=w$ if $w$ is a
backbone vertex; if $w$ is interior to $H^g$, let ${\rm ext}(w)$ be the endpoints of $g$,
$\kappa(w):=\kappa_H(w)$ computed in $H^g$, and $a(w)$ the pole of $g$ with a maximizing index $i(w)$.
We call $a(w)$ the \emph{anchor} of $w$; the other pole of $g$ is its non-anchor pole. Call
$w$ central if ${\rm ext}(w)\subset C^\delta_L$ (always on $T_L$), and a pair far if
${\rm dist}_\infty({\rm ext}(u),{\rm ext}(v))\ge R$.

\emph{One far central pair.} Let $F_0$ be the set of home edges of the interior vertices among $u,v$
(as in the proof of Corollary~\ref{rem:lat-finite}, the home edge of a vertex interior to $H^g$ is
$g$); its edges are vertex-disjoint. For $g\in F_0$ delete the edges of $H^g$ at its non-anchor pole; by
Lemma~\ref{lem:lat-gauge}(ii) this does not increase $\E\gibbs{\sigma_u\sigma_v}$. The resulting graph is the
union of $R_*$, which is $G$ without the edges and interior vertices of the gadgets in $F_0$, and of the pieces
$(H^g)^{(i)}$ with their now isolated non-anchor poles removed; each piece meets $R_*$ exactly in its anchor.
Lemma~\ref{lem:lat-gauge}(iii), applied once or twice, and independence give
\[
\E\gibbs{\sigma_u\sigma_v}_G\ge\kappa(u)\,\E\gibbs{\sigma_{a(u)}\sigma_{a(v)}}_{R_*}\,\kappa(v).
\]
By Lemma~\ref{lem:lat-decimation}, $\gibbs{\sigma_a\sigma_c}_{R_*}$ is the backbone correlation with $K_g=0$ on
$F_0$ and $K_g$ iid of law $K_H(\beta_N)$ elsewhere. The anchors are distinct and lie with $F_0$ in
$C^\delta_L$, and $D_0\ge R-1$. Proposition~\ref{prop:lat-twopoint} with $v=v_*$ and $4\mathsf P+2|F_0|\mathsf Q\le\mathsf h$
give
\[
\E\gibbs{\sigma_u\sigma_v}_G\ge\kappa(u)\kappa(v)\,\Pi_{R,L},\qquad
\Pi_{R,L}:=1-\mathsf h(v_*)-\frac{2(3v_*)^{2R-4}}{1-3v_*}-2\varepsilon_L(v_*).
\]

\emph{Summation.} All terms of $\E\gibbs{M_L^2}=|V|^{-2}\sum_{u,v}\E\gibbs{\sigma_u\sigma_v}$ are nonnegative
(Lemma~\ref{lem:lat-gauge}(i)), so non-central and non-far pairs may be dropped. Each $w$ has at most
$N_R:=(2R+2)^2(1+4I)$ partners that are not far, independently of $L$. With $S_c:=\sum_{w\text{ central}}\kappa(w)$
and $\kappa\le1$, whenever $\Pi_{R,L}\ge0$,
\[
\E\gibbs{M_L^2}_{\beta_N}\ge|V|^{-2}\bigl(S_c^2-|V|N_R\bigr)\,\Pi_{R,L}.
\]
On $T_L$, $S_c=L^2(1+2I\bar\kappa_H)$ and $|V|=L^2(1+2I)$, so $S_c/|V|=\hat\kappa_H$. On $\Lambda_L$ about
$(1-2\delta)^2L^2$ backbone vertices and $2(1-2\delta)^2L^2$ backbone edges are central, and
$|V|=L^2(1+2I)+O(L)$, so $S_c/|V|\to(1-2\delta)^2\hat\kappa_H$. Letting $L\to\infty$, then $R\to\infty$, then
$\delta\to0$ proves the bound. Finally $\kappa_H(u)\ge0$ by Lemma~\ref{lem:lat-gauge}(i).
\end{proof}

\begin{lemma}[Interior constant of $H_4$]\label{lem:lat-kappa}
Let $H=H_4(b_0,k)$, $r:=\vartheta^2$ and $t_U:=\E\tanh K_U=\alpha_1y_1^2+\alpha_2y_2^2+\alpha_3y_3^2$ at
$\beta_N$, and write $\mathcal C_r(\ell_1,\ell_2):=\mathcal C(r^{\ell_1},r^{\ell_2})$ for integers
$\ell_1,\ell_2$. Label the path vertices
of unit $i$ by $(i,c,j)$, with $c\in\{1,2\}$ the path and $j$ the distance from $w_1^{(i)}$, so that
$j=0$ is $w_1^{(i)}$ and $j=b_0$ is $w_2^{(i)}$. Then $\kappa_H(z_i)\ge r\,t_U^{\min(i,k-i)}$ and
\[
\kappa_H(i,c,j)\ge g_i(j):=\max\bigl\{r\,t_U^{\,i-1}\,\mathcal C_r(j+1,b_0-j+2),\ r\,t_U^{\,k-i}\,
\mathcal C_r(b_0-j+1,j+2)\bigr\}.
\]
Hence $I\bar\kappa_H\ge\sum_{i=0}^kr\,t_U^{\min(i,k-i)}+\sum_{i=1}^k\bigl(g_i(0)+g_i(b_0)+2\sum_{j=1}^{b_0-1}
g_i(j)\bigr)$. For $H_4(1000,10)$ at $p_0$ this gives $\bar\kappa_H\ge0.4914640678$, and with
$1-\mathsf h(v_H(\beta_N))\ge0.332172142$ (Table~\ref{tab:lat-inputs}),
$\hat\kappa_H^2(1-\mathsf h(v_H(\beta_N)))\ge0.0802$.
\end{lemma}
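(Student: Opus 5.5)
The plan is to build, for each interior vertex $u$ of $H^{(i)}$, an explicit admissible chain from $\pi_i$ to $u$. Lemma~\ref{lem:lat-chain}(i) then bounds $\E\gibbs{\sigma_u\sigma_{\pi_i}}_{H^{(i)}}$ below by the product of the block values $t(B_j)$, and Lemma~\ref{lem:lat-chain}(ii) evaluates paths and two-path cycles. Since $\kappa_H(u)$ is a maximum over $i\in\{0,1\}$, it suffices to exhibit one chain from each pole and take the larger bound. The constant $t_U$ is $\E\tanh K_U$ at $\beta_N$. By Lemma~\ref{lem:lat-nishimori-law}, $\E\tanh K_U=\E\tanh^2K_U$. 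By Proposition~\ref{prop:lat-law}(i), $\tanh^2K_U=y_c^2$ on class $c$, which gives $t_U=\sum_c\alpha_cy_c^2$.

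\emph{Chains for $z_i$.} From $\pi_0$ take the terminal edge $e_a$ (a path of one edge, so $t=r$) followed by the whole units $U^{(1)},\dots,U^{(i)}$. Consecutive blocks meet exactly in $z_{j}$, and nonconsecutive blocks are vertex-disjoint, so the chain is admissible. It gives the bound $r\,t_U^{\,i}$. Symmetrically, from $\pi_1$ take $e_b$ and then units $k,\dots,i+1$, giving $r\,t_U^{\,k-i}$. The maximum of the two is at least $r\,t_U^{\min(i,k-i)}$.

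\emph{Chains for a path vertex $(i,c,j)$.} From $\pi_0$ take $e_a$, then $U^{(1)},\dots,U^{(i-1)}$, arriving at $z_{i-1}$. The last block is the cycle formed by two internally disjoint paths from $z_{i-1}$ to $(i,c,j)$. The first runs through $e_1$ and the first $j$ edges of path $c$, so it has $j+1$ edges. The second runs through $e_d$, then $e_2$ backwards to $w_2$, then along the other path to $w_1$ and forwards along path $c$ to $(i,c,j)$. That is $1+1+b_0+j$ edges.

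This last block is where I expect the difficulty. The two-path formula needs two internally disjoint paths, and the route I just described reuses the vertex $w_1$. So I would instead use the paths $e_1$ plus $j$ edges of path $c$ (length $j+1$), and $e_d$ plus $e_2$ plus the remaining $b_0-j$ edges of path $c$ traversed backwards (length $b_0-j+2$). These are internally disjoint, and the second path of the unit is simply discarded, which Kitatani's monotonicity permits. This yields $\mathcal C_r(j+1,b_0-j+2)$, exactly the expression in $g_i$. The chain from $\pi_1$ is the mirror image: $e_b$, units $k,\dots,i+1$ to $z_i$, then the cycle of length $b_0-j+1$ (via $e_2$) and $j+2$ (via $e_d,e_1$). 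For admissibility, the final cycle meets the preceding unit only in the cut vertex $z_{i-1}$ (respectively $z_i$), and it is edge-disjoint from all earlier blocks. The endpoint cases $j=0,b_0$ correspond to $w_1,w_2$, which are shared by both paths and are counted once.

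\emph{Summation and numbers.} Summing these bounds over all interior vertices gives the displayed lower bound on $I\bar\kappa_H$. The sum over $i=0,\dots,k$ counts each $z_i$, and the factor $2$ accounts for the two paths $c\in\{1,2\}$. The monotonicity of $\mathcal C$ from Lemma~\ref{lem:lat-chain}(ii) is not needed here. The numerical bound $\bar\kappa_H\ge0.49146\ldots$ comes from evaluating this finite sum at $p_0$ in interval arithmetic, with $t_U$ computed from Proposition~\ref{prop:lat-law}. Finally I would compute $\hat\kappa_H=(1+2I\bar\kappa_H)/(1+2I)$ and multiply $\hat\kappa_H^2$ by the certified value $1-\mathsf h(v_H(\beta_N))\ge0.332172142$ to obtain $\ge0.0802$. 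The main risk is bookkeeping: checking that the chain's intersection conditions hold exactly and that the edge counts in the two cycle paths match $\mathcal C_r(j+1,b_0-j+2)$.
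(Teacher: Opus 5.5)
Your proposal is correct and is essentially the paper's proof. It uses the same chains: $e_a$ or $e_b$, then whole units to $z_{i-1}$ or $z_i$, then the two-path cycle in unit $i$ formed by path $c$ split at $u$ together with $e_1,e_d,e_2$, with the other path discarded by Kitatani monotonicity. It also obtains $t_U$ the same way, from Lemma~\ref{lem:lat-nishimori-law} and Proposition~\ref{prop:lat-law}(i), and takes the numbers from the certified inputs (C5) and (C6).

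Drop your first description of the second cycle path: it passes through $w_1$ twice, so it is not a valid two-path block. Your corrected paths, of $j+1$ and $b_0-j+2$ edges toward $\pi_0$ and of $b_0-j+1$ and $j+2$ edges toward $\pi_1$, are exactly the paper's. As for the monotonicity of $\mathcal C$, you are right that the symbolic bound does not need it; the paper uses it only to justify rounding down inside the exact-rational certificate (C6).
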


\begin{proof}
Apply Lemma~\ref{lem:lat-chain}. For $z_i$ toward $\pi_0$ take the edge $e_a$ and the units
$U^{(1)},\dots,U^{(i)}$ (value $r\,t_U^i$), and toward $\pi_1$ the units $U^{(i+1)},\dots,U^{(k)}$ and $e_b$.
For $(i,c,j)$ toward $\pi_0$ take $e_a$, the units $U^{(1)},\dots,U^{(i-1)}$, and the cycle with terminals
$z_{i-1}$ and $u$ formed by $e_1$ followed by the arc of path $c$ from $w_1$ to $u$ ($j+1$ edges) and by
$e_d$, $e_2$ and the arc from $w_2$ to $u$ ($b_0-j+2$ edges); it meets $U^{(i-1)}$ only in $z_{i-1}$ and uses
no edge at $\pi_1$. Toward $\pi_1$ the symmetric chain uses the cycle with terminals $u$ and $z_i$ formed by
the arc to $w_2$ and $e_2$ ($b_0-j+1$ edges) and by the arc to $w_1$, $e_1$ and $e_d$ ($j+2$ edges), then
$U^{(i+1)},\dots,U^{(k)}$ and $e_b$. The values follow from Lemma~\ref{lem:lat-chain}(ii), and $t_U$ from
Lemma~\ref{lem:lat-nishimori-law} and Proposition~\ref{prop:lat-law}(i). The numerical values are certified
inputs (C5), (C6) of Table~\ref{tab:lat-inputs}; with $I=20011$,
$\hat\kappa_H\ge(1+2I\cdot0.4914640678)/(1+2I)\ge0.491476$ and $0.491476^2\cdot0.332172142\ge0.08023$.
\end{proof}

\subsection{The Nishimori line}\label{app:lat-line}

\begin{proof}[Proof of Lemma~\ref{lem:lat-degradation}]
A binary-input channel $W$ from $\{\pm1\}$ to a finite set has Bhattacharyya parameter
$Z(W):=\sum_y\sqrt{W(y\mid+)W(y\mid-)}$; a degradation $W'=D\circ W$ by a stochastic matrix $D$ satisfies
$Z(W')\ge Z(W)$, since by Cauchy--Schwarz $\sqrt{\sum_yD(y'|y)W(y|+)\sum_yD(y'|y)W(y|-)}\ge\sum_yD(y'|y)
\sqrt{W(y|+)W(y|-)}$ for each $y'$. Let $W_p$ be the channel that draws $\tau$ uniformly on $\{\pm1\}^{V(H)}$
given $\tau_{\pi_0}\tau_{\pi_1}=x$ and outputs $J_e=\tau_u\tau_v\eta_e$ with $\eta$ iid, $\Pp(\eta_e=-1)=p$.
For $p\le p'<\frac12$, flipping each output sign independently with probability
$\delta=(p'-p)/(1-2p)$ turns $W_p$ into $W_{p'}$, so $W_{p'}$ is a degradation of $W_p$. With
$C_H=2^{1-|V|}(2\cosh\beta_N)^{-|E|}$ and the gauge sum of Proposition~\ref{prop:lat-floor},
$W_p(J\mid x)=2C_H\,A_H(J,\beta_N)e^{xK_H(J,\beta_N)}$, so $Z(W_p)=\sum_JW_p(J\mid+)e^{-K_H(J,\beta_N)}$.
Given $x=+1$, $J$ is a gauge transform of an iid $\eta$ with $\tau_{\pi_0}\tau_{\pi_1}=1$, so
$K_H(J,\beta_N)=K_H(\eta,\beta_N)$ by Lemma~\ref{lem:lat-reduction}(iii), and $Z(W_p)=v_H(\beta_N(p))$.
\end{proof}

\subsection{Certified inputs}\label{app:lat-cert}

\paragraph{What is computed.} Every input is an inequality about the law of Proposition~\ref{prop:lat-law}
for $H=H_4(1000,10)$: the $66$ class weights (multinomial in exact rational $\alpha_c$), the magnitudes $x_n$,
and the sign biases $A_n$. For example
\[
F_H(\beta,s)=\sum_n w_n\Bigl[\frac{1+A_n}2e^{-2sk_n}+\frac{1-A_n}2e^{2sk_n}\Bigr],\qquad
\bar p_H(\beta)=\sum_nw_n\frac{2x_n}{1+x_n},
\]
with $w_n$ the multinomial weights and $k_n=\operatorname{artanh}x_n$. At $\beta=\beta_N(p)$ with rational $p$,
$\tanh\beta_N=1-2p$ is rational, so every $x_n$ is rational and $v_H=\E\sech K_H=\sum_nw_n(1-x_n^2)^{1/2}$
(Lemma~\ref{lem:lat-nishimori-law}) needs only certified square roots. The thresholds are handled exactly.
The rational test $\mathsf h(\text{lower})<1<\mathsf h(\text{upper})$ places $v_P$ in
$[0.222407439452191,\,0.222407439452192]$. For $w_+=(9+3\sqrt2)^{-1/2}$ one has $W_\star:=274797/10^6<w_+$,
because $9W_\star^2<1$ and $2\mathsf P(W_\star)<1$ exactly.

\begin{table}[!htb]
\centering\small
\begin{tabular}{@{}lll@{}}
\toprule
& statement & used for\\
\midrule
(C1) & $\bar p_H(\beta)<\frac12$ for all $\beta\in(0,\beta_{\rm hot}]$ & (a), Cor.~\ref{rem:lat-finite}\\
(C2) & $1202$ cells cover $[\beta_-,\beta_+]$; on each, $F_H(\beta,s_j)\le W_\star:=274797/10^6$ & (b)\\
& for one rational $s_j\in[0.457,1]$ & \\
(C3) & $9W_\star^2<1$ and $2\mathsf P(W_\star)<1$ & (b)\\
(C4) & $\bar p_H(\beta)<\frac12$ for all $\beta\ge\beta_{\rm cold}$ & (a), Cor.~\ref{rem:lat-finite}\\
(C5) & $v_H(\beta_N)\le V_0:=20529504599/10^{11}$, & (b), (d)\\
& $1-\mathsf h(V_0)\ge0.332172142$, $1-2\mathsf P(V_0)\ge0.91700$ & \\
(C6) & chain bound of Lemma~\ref{lem:lat-kappa}: $\bar\kappa_H\ge0.4914640678$ & (d)\\
(C7) & $v_H(\beta_N(1/1000))\le V_1:=54076342043/(2.5\cdot10^{11})$, & (e)\\
& $\mathsf h(V_1)\le0.864508105378$ & \\
(C8) & $v_H(\beta_N(3/2500))\le W_N:=236586916634239/10^{15}$, & (e)\\
& $9W_N^2<1$, $1-2\mathsf P(W_N)\ge0.770991$ & \\
\bottomrule
\end{tabular}
\caption{Certified inputs for $H=H_4(1000,10)$ and $p_0=9/10000$ (for (C7) and (C8) the value of
$p$ is stated). The last column lists the parts of Theorem~\ref{thm:lat-main} that use each input.
Decimals are rounded in the safe direction.}
\label{tab:lat-inputs}
\end{table}

\needspace{4\baselineskip}
\paragraph{How each input is certified.}
\begin{itemize}
\item \emph{(C2).} For each cell $[\beta_l,\beta_r]$ the program chooses $s_j\in(1/1000)\Z\cap(0,1]$ by a
floating-point minimization, computes a ball-arithmetic upper bound $F_{\rm up}$ of $F_H(\beta_l,s_j)$ at the
rational point $\beta_l$, and checks exactly that $F_{\rm up}(2+y)/(2-y)\le W_\star$ with
$y:=6s_j(\beta_r-\beta_l)<2$. This bounds
$F_H(\beta,s_j)$ on the whole cell by the rule below. Consecutive cells share endpoints exactly; widths range
from $4\cdot10^{-7}$ to $3.6\cdot10^{-3}$. As a cross-check that avoids the rule, $F_H(\cdot,s_j)$ was also
enclosed over each whole cell.
\item \emph{(C1), (C4).} The intervals are covered by finitely many $\beta$-intervals, and $\bar p_H$ is
enclosed on each; no monotonicity of $\bar p_H$ in $\beta$ is used. On the cold side the enclosure of the last piece
$[64,\infty)$ uses the variables $e^{-2\beta}$, $e^{-2\chi}$ and $e^{-4\chi+2\beta}$, in which the law of
Proposition~\ref{prop:lat-law} has denominators bounded below by $1$, so one enclosure covers the limit
$\beta=\infty$.
\item \emph{(C5), (C7), (C8).} Exact rational magnitudes and weights, certified square roots, and exact
comparison with the thresholds.
\item \emph{(C6).} The chain sum of Lemma~\ref{lem:lat-kappa} in exact rational arithmetic, with every product
and quotient rounded down, using the monotonicity of $\mathcal C$.
\end{itemize}

\begin{lemma}[Cell rule]\label{lem:lat-cell}
Let $H$ be series--parallel with a fixed decomposition, and define $\lambda(e)=1$ for an edge,
$\lambda(B_1\cdot B_2)=\max(\lambda(B_1),\lambda(B_2))$ and $\lambda(B_1\parallel B_2)=\lambda(B_1)+\lambda(B_2)$. Then
$|\partial_\beta K_H(J,\beta)|\le\lambda(H)$ for every $J\in\{\pm1\}^E$, and $\lambda(H_4)=3$. Consequently, for
$s\ge0$, $\beta_l\le\beta\le\beta_r$ and $y:=6s(\beta_r-\beta_l)<2$, one has
$F_{H_4}(\beta,s)\le F_{H_4}(\beta_l,s)\,(2+y)/(2-y)$.
\end{lemma}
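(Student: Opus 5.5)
The plan is to prove the derivative bound by structural induction along the fixed series--parallel decomposition, and then to integrate it in $\beta$ inside the finite mixture that defines $F_{H_4}$.

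\emph{Base case.} For a single edge, $K=\beta J_e$, so $|\partial_\beta K|=1=\lambda(e)$.

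\emph{Parallel step.} Here $K=K_1+K_2$, so the triangle inequality gives $|\partial_\beta K|\le\lambda(B_1)+\lambda(B_2)$.

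\emph{Series step.} Here $\tanh K=\tanh K_1\tanh K_2$ by Lemma~\ref{lem:lat-reduction}(ii). Write $t_i=\tanh K_i$ and differentiate:
\[
\operatorname{sech}^2\!K\,\partial_\beta K=\operatorname{sech}^2\!K_1\,t_2\,\partial_\beta K_1+t_1\operatorname{sech}^2\!K_2\,\partial_\beta K_2 .
\]
Since $\operatorname{sech}^2K=1-t_1^2t_2^2$, it suffices to check
\[
(1-t_1^2)|t_2|+|t_1|(1-t_2^2)\le1-t_1^2t_2^2 .
\]
With $a=|t_1|$ and $b=|t_2|$ in $[0,1]$, the difference of the two sides is $1-a^2b^2-b+a^2b-a+ab^2=(1-a)(1-b)(1+a+b+ab)\cdot$, which is $\ge0$. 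Hence the weights on $\partial_\beta K_1$ and $\partial_\beta K_2$ sum to at most $1$, and $|\partial_\beta K|\le\max(\lambda(B_1),\lambda(B_2))$. The sign factorization is the step most likely to hide an error, so I would verify it explicitly by expanding $(1-a)(1-b)(1+a+b)$ (up to terms of the right sign). If it fails, I would fall back on the convexity argument: $K$ is an artanh of a product of contractions, and its derivative is a convex combination.

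\emph{Value of $\lambda$ on $H_4$.} A path has $\lambda=1$. Two paths in parallel give $2$. Adding the connectors in series keeps $2$. Putting $e_d$ in parallel gives $3$ for a unit $U$. The series chain of units and terminal edges then has $\lambda(H_4)=3$.

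\emph{Cell rule.} For each disorder, $|K(\beta)-K(\beta_l)|\le3(\beta-\beta_l)$, so
\[
e^{\mp2sK(\beta)}\le e^{\mp2sK(\beta_l)}e^{6s(\beta-\beta_l)}\le e^{\mp2sK(\beta_l)}e^{y}.
\]
This yields the factor $e^{y}$. To get the stated rational factor, use $e^{y}\le(2+y)/(2-y)$ for $0\le y<2$. This holds because $\log\frac{2+y}{2-y}=2\operatorname{artanh}(y/2)\ge y$, which follows from $\operatorname{artanh}z\ge z$. Finally, take the expectation over $J$ to obtain $F_{H_4}(\beta,s)\le F_{H_4}(\beta_l,s)(2+y)/(2-y)$.
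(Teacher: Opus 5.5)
Your route is the paper's: induction along the decomposition with the series bound, then $K_{H_4}(J,\beta)\ge K_{H_4}(J,\beta_l)-3(\beta-\beta_l)$ and $e^y\le(2+y)/(2-y)$.

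The one defect is the factorization you give for the series inequality. In fact $(1-a)(1-b)(1+a+b+ab)=(1-a^2)(1-b^2)$, which is not your difference: at $a=b=\tfrac12$ it equals $\tfrac9{16}$, while the difference equals $\tfrac3{16}$. The correct identity is
\[
1-a^2b^2-(1-a^2)b-a(1-b^2)=(1-a)(1-b)(1-ab)\ge0 .
\]
Equivalently, as the paper writes it, $(1-a^2)b+a(1-b^2)=(a+b)(1-ab)$ and $1-a^2b^2=(1-ab)(1+ab)$, so the two weights sum to $(a+b)/(1+ab)\le1$.

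With this correction your argument is complete, and the convexity fallback is not needed. The $\mp$ in the last step is harmless but superfluous, since $F_{H_4}$ involves only $e^{-2sK}$.
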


\begin{proof}
For $S(a,b):=\operatorname{artanh}(\tanh a\tanh b)$ and $X=|\tanh a|$, $Y=|\tanh b|$,
$|\partial_aS|+|\partial_bS|=[(1-X^2)Y+(1-Y^2)X]/(1-X^2Y^2)=(X+Y)/(1+XY)\le1$. Induction on the decomposition
tree, using the chain rule at series nodes and additivity at parallel nodes, gives the bound; for $H_4$,
$\lambda(P_{b_0}\parallel P_{b_0})=2$, $\lambda(U)=1+2$ and $\lambda(H_4)=3$. Then $K_{H_4}(J,\beta)\ge
K_{H_4}(J,\beta_l)-3(\beta-\beta_l)$, so $e^{-2sK(\beta)}\le e^{-2sK(\beta_l)}e^{y}$, and $e^y\le(2+y)/(2-y)$
for $0\le y<2$ because $\log\frac{2+y}{2-y}=2\operatorname{artanh}\frac y2\ge y$.
\end{proof}

\paragraph{Implementations.} The inputs were computed by the following programs. They are included, with
their outputs and run logs, in the directory \texttt{lattice/} of the companion repository~\cite{PeiCode2026}, whose file
\texttt{README.md} maps each input to the files that certify it and gives the commands that reproduce it.
\begin{itemize}
\item (C4), (C5) (\texttt{lattice/cert/}): a primary certificate in Arb ball arithmetic at $160$ bits (via python-flint), with adaptive
bisection of rational $\beta$-intervals, and a second implementation in pure Python with integer intervals
scaled by $2^{480}$ and outward rounding, rational Taylor bounds for $e^{-2\beta}$, and the law obtained by
enumerating the unit sign configurations and convolving along the chain rather than from the multinomial
formula.
\item (C1)--(C3), (C7), (C8) (\texttt{lattice/window/}): a certificate in Arb ball arithmetic and exact rationals, whose output is
reproducible bit for bit, and two re-certifications that share no code with it: one with its own fixed-point
interval arithmetic and \texttt{mpmath} intervals at $256$ bits, which derives the law of $K_H$ independently
from the gadget and re-proves every recorded inequality (for (C1) and (C2) with its own covers and monotone
brackets, not with Lemma~\ref{lem:lat-cell}); and one with \texttt{mpmath} intervals at $45$ digits and exact
rationals, which re-certifies (C1)--(C3) and (C7), also without Lemma~\ref{lem:lat-cell}. Input (C8) is
re-certified by the first of these and reproduced by two further enclosures.
\item (C6) (\texttt{lattice/theory/}): exact rational arithmetic with directed rounding, reproduced by an
independent $384$-bit fixed-point implementation.
\end{itemize}
None of the arithmetic engines (Arb, \texttt{mpmath} intervals, the fixed-point codes) has been audited by
us; the chains use different engines. The law of Proposition~\ref{prop:lat-law}, which is proved, was also
compared with exhaustive enumeration of small gadgets and with a numerical decimation of the full
$20032$-edge gadget on sample disorders.

\begin{remark}[The criteria fail just outside]\label{rem:lat-sharp}
The following are also certified. $w_H(98303/50000)\ge0.274829$ and $w_H(98233/20000)\ge0.274811$, both
above $w_+$, at about $0.56075\beta_N$ and $1.40088\beta_N$. $\bar p_H(38559/25000)\ge0.500006548$ at about
$0.43991\beta_N$, and $\bar p_H(698553/100000)>\frac12$. Hence the criteria used here cannot extend the
window $[\beta_-,\beta_+]$ contiguously, or the intervals $(0,\beta_{\rm hot}]$ and $[\beta_{\rm cold},\infty)$, by
more than $10^{-4}$ in $\beta$. These are failures at single points; they do not exclude certified points
farther out. Closing the gaps $(\beta_{\rm hot},\beta_-)$ and $(\beta_+,\beta_{\rm cold})$ needs different
estimates.
\end{remark}

The cell bounds of (C2) and the value of (C5) also give the lower bounds on the backbone
plus-state magnetization of Proposition~\ref{prop:lat-NU} that are listed in
Table~\ref{tab:lat-bands}. At $\beta=\beta_N$ the certified value of $v_H(\beta_N)$ gives
$\E Y\ge0.917$, and on $[24543/10^4,42073/10^4]\supset[0.70001\beta_N,1.19999\beta_N]$ the certified
cell bounds give $\E Y\ge0.885$ (Table~\ref{tab:lat-bands}).

\begin{table}[!htb]
\centering\small
\begin{tabular}{@{}lll@{}}
\toprule
$\beta$-band & in units of $\beta_N$ & $\E Y\ge$\\
\midrule
$[21037/10^4,\ 11833/2500]$ & $[0.600011,\ 1.349987]$ & $0.593675$\\
$[2279/1000,\ 45579/10^4]$ & $[0.650009,\ 1.299988]$ & $0.780507$\\
$[24543/10^4,\ 42073/10^4]$ & $[0.700008,\ 1.199991]$ & $0.885170$\\
$\beta_N$ & $1$ & $0.91700$\\
\bottomrule
\end{tabular}
\caption{Lower bounds on the backbone plus-state magnetization $\E Y$ of Proposition~\ref{prop:lat-NU}, from
the largest certified cell bound $W$ meeting each band ($1-2\mathsf P(W)$), and at $\beta_N$ from (C5). The
$\beta_N$-unit endpoints are rounded inward. Near the ends of $[\beta_-,\beta_+]$ only
$\E Y\ge1-2\mathsf P(W_\star)\ge2.05\cdot10^{-5}$ is available.}
\label{tab:lat-bands}
\end{table}

\section{Computer-assisted steps and reproducibility}\label{app:repro}

\paragraph{Computer-assisted steps.} The proofs of the theorems use computer assistance at four
places.
\begin{enumerate}
\item The accumulation theorem (Theorem~\ref{thm:accum}), which is due to
Lopatto~\cite{Lopatto2026}: our alternative proof of it verifies one polynomial inequality, (V1),
in exact rational arithmetic (Appendix~\ref{sec:accum-cert}).
\item The inputs of Theorem~\ref{thm:nm-main} that depend on $p$
(Theorem~\ref{thm:nm-nondeg}): for $3\le p\le25$ they are verified in interval arithmetic, and
for $p\ge26$ they follow from the explicit bounds of Theorem~\ref{thm:nm-LP}, whose constants are
also certified in interval arithmetic (Appendices~\ref{app:nm-LP}--\ref{app:nm-cert} and
Table~\ref{tab:nm-cert}).
\item The predicates (W), (F) and (L) for $p=3$ and $p=4$, behind Theorems~\ref{thm:p3}
and~\ref{thm:p4}: every predicate is certified in interval arithmetic and certified again by an
independent implementation (Appendix~\ref{app:cert34} and Table~\ref{tab:cert}).
\item The inputs (C1)--(C8) of Theorem~\ref{thm:lat-main}: finitely many inequalities about the
explicit law of the decimated coupling, certified in interval arithmetic, each by at least two
implementations that share no code (Appendix~\ref{app:lat-cert} and
Tables~\ref{tab:lat-inputs}--\ref{tab:lat-bands}).
\end{enumerate}
Remarks~\ref{rem:nm-chen} and~\ref{rem:lat-sharp}, Section~\ref{sec:nm-route} (the sign of
$C_{\rm FM}$ for $3\le p\le25$) and Appendices~\ref{app:nm-U} and~\ref{app:nm-cert} contain further
computer-assisted checks that no proof of a theorem needs.

The companion repository~\cite{PeiCode2026} (tag \texttt{arxiv-v1.1}, commit \texttt{1fc26e0}) contains the programs, frozen inputs, certified outputs and run logs
behind every computer-assisted statement of this paper, in the directories
\texttt{accumulation/}, \texttt{pspin/} and \texttt{lattice/}. Its file \texttt{README.md} maps
each such statement to the files that certify it, gives the commands that reproduce it, and
records the trust bases; the file \texttt{SHA256SUMS} lists the hashes of all other files.
Sections~\ref{sec:sk} and~\ref{sec:skline}, and their proofs in
Appendices~\ref{app:mf-sk}--\ref{sec:spiked}, contain no computer-assisted statement; the directory
\texttt{sk/} holds a SymPy script that rechecks the algebra and the constants of the hand proofs
of Lemma~\ref{lem:y}(c),(d) and Corollary~\ref{cor:envelope}, and the bound
$4(2\delta+\delta^2)/(1+\delta)^4\le8\delta$ of Remark~\ref{rem:spiked}. The
certificates for Sections~\ref{sec:nearM} and~\ref{sec:lattice} are described in
Appendices~\ref{app:nm-cert} and~\ref{app:lat-cert}; those for Section~\ref{sec:accum} and for
Section~\ref{sec:certificates} are in Appendices~\ref{sec:accum-cert} and~\ref{app:cert34}, and
their programs are described below.

\paragraph{Section~\ref{sec:accum} and Appendix~\ref{sec:accum-cert}.} The six coefficients of~\eqref{eq:cert-P} are stored in
the file \texttt{certificate\_\allowbreak P\_\allowbreak1x2.json} of the repository directory
\texttt{accumulation/}, together with two scripts.
\texttt{accum\_cert.py} uses only the Python standard library and exact rational arithmetic. It
derives $\mathcal B$ from $P$ by~\eqref{eq:cert-B}, compares it coefficient by coefficient with
the polynomial displayed in Appendix~\ref{sec:accum-cert}, and proves (V1), (V2), (V3) and the
Step~1 inequality of Lemma~\ref{lem:accumulator} by Bernstein subdivision; it runs in a few
seconds. \texttt{accum\_sym.py} checks with SymPy the identities of Lemmas~\ref{lem:omega}
and~\ref{lem:reduction}, the completion of the square behind~\eqref{eq:cert-B}, and the
constants of Lemma~\ref{lem:accumulator} and of (V2)--(V3).

\paragraph{Section~\ref{sec:certificates} and Appendix~\ref{app:cert34}.} The certificate programs, the
frozen trial data and the certified outputs (JSON files, with run logs where recorded) are in the
repository directories \texttt{pspin/cert-warm/} (predicate (W), both implementations),
\texttt{pspin/cert-p3/} and \texttt{pspin/cert-p4/} (predicates (L) and (F), with the independent
implementation in the subdirectory \texttt{second-implementation/}). For each $p$, the
primary certificate consists of the following programs:
\begin{itemize}
\item an interval-arithmetic library with Taylor-model derivatives (\texttt{iv.py});
\item quadrature with rigorous remainders (\texttt{gauss.py}; for the nested integrals in (F),
\texttt{nested.py} for $p=3$ and \texttt{trap4.py} for $p=4$), with the integrands $D$ and
$\Gamma$ built in \texttt{sg\_lb.py} ($p=3$) and in \texttt{sg\_lb4t.py} and
\texttt{common4.py} ($p=4$);
\item one driver per predicate, i.e.\ (W) (\texttt{warm\_cert\_A.py}, with the
enclosure of $\psi$ in \texttt{enc.py}), (L) with a separate replay of the frozen cover,
and (F): the grid computation and the final assembly.
\end{itemize}
The independent implementation consists of \texttt{rig.py} and \texttt{oned.py}, shared by both
values of $p$, with the drivers \texttt{warm\_cert\_B.py} for (W)
and both $p$, \texttt{L.py}, \texttt{fconst.py},
\texttt{fgrid.py}, \texttt{fcells.py}, \texttt{final.py} for $p=3$ and
\texttt{L4.py}, \texttt{fconst4.py}, \texttt{fgrid4.py}, \texttt{fcells4.py},
\texttt{final4.py} for $p=4$. It reads only the frozen trial parameters and recomputes every
predicate from the definitions in Sections~\ref{sec:prelim} and~\ref{sec:nishimori-point}.

All rational parameters are listed in Table~\ref{tab:cert} and in the frozen JSON files. Each
predicate takes at most about half an hour of CPU time. The nested integrals for (F) take
$11$--$29$ seconds per grid point for $p=3$ ($73$ points, about $25$ CPU-minutes in total) and
at most about $10$ seconds per point for $p=4$ ($48$ points, under four minutes). The
independent (F) grids take at most about $11$ seconds per point.

\end{document}